\definecolor{blue(munsell)}{rgb}{0.0, 0.5, 0.69}
\def\thm@space@setup{%
  \thm@preskip=\parskip \thm@postskip=0pt
}
\tikzstyle{dot}=[fill=black, draw=black, shape=circle, minimum size=1mm, inner sep=0mm]
\tikzstyle{box}=[fill={rgb,255: red,171; green,205; blue,255}, draw=black, shape=rectangle, minimum width=1.5cm, minimum height=1.0cm, text width=1.5cm, align=center]
\tikzstyle{wide}=[fill={rgb,255: red,171; green,205; blue,255}, draw=black, shape=rectangle, minimum width=3.25cm, minimum height=1.0cm, text width=2.5cm, align=center]
\tikzstyle{line}=[-]
\tikzstyle{dashed_line}=[-, dashed]
\numberwithin{equation}{section}
\theoremstyle{plain}
\newtheorem{theorem}{Theorem}[section]
\newtheorem{lemma}[theorem]{Lemma}
\newtheorem{proposition}[theorem]{Proposition}
\newtheorem{corollary}[theorem]{Corollary}
\theoremstyle{definition}
\newtheorem{definition}[theorem]{Definition}
\newtheorem{notation}[theorem]{Notation}
\newtheorem{assumption}[theorem]{Assumption}
\newtheorem{remark}[theorem]{Remark}
\newtheorem{example}[theorem]{Example}
\newcommand{\catx}{\mathcal{X}}
\newcommand{\cata}{\mathcal{A}}
\newcommand{\catc}{\mathcal{C}}
\newcommand{\caty}{\mathcal{Y}}
\newcommand{\Cat}{\mathbf{Cat}}
\newcommand{\Mon}{\mathbf{Mon}}
\newcommand{\id}{\mathrm{id}}
\def\op{\mathrm{op}}
\def\co{\mathrm{co}}
\newcommand{\SIGMA}{{\text{\Large $\Sigma$}}}
\newcommand{\SIGMAc}{\stackrel{\bullet}{\text{\Large $\Sigma$}}}
\newcommand{\SIGMAcid}{\stackrel{\bullet}{\text{\Large $\Sigma$}^{\mathrlap{\smash{\id}}}}}
\newcommand{\SIGMAcnamed}[1]{\stackrel{\bullet}{\text{\Large $\Sigma$}^{\mathrlap{\smash{#1}}}}}
\newcommand{\Assoc}{\mathrm{Assoc}}
\newcommand{\Can}{\mathsf{Can}}
\newcommand{\bd}{\mathbf{d}}
\newcommand{\bu}{\mathbf{u}}
\newcommand{\bs}{\mathbf{s}}
\newcommand{\Sg}{\Sigma}
\newcommand{\dS}{\dot{\SIGMA}}
\renewcommand{\epsilon}{\varepsilon}
\renewcommand{\phi}{\varphi}
\title{Bicategories of Lax Fractions}
\author{Graham Manuell and Lurdes Sousa\footnote{The authors acknowledge financial support by the Centre for Mathematics of the University of Coimbra (CMUC, https://doi.org/10.54499/UID/00324/2025) under the Portuguese Foundation for Science and Technology (FCT), Grants UIDB/00324/2020, UID/00324/2025 and UID/PRR/00324/2025.}}
\date{February 2026} %
\begin{document}

\maketitle

\begin{abstract} The well-known calculus of fractions of Gabriel and Zisman provides a convenient way to formally invert morphisms in a category. This was generalised to bicategories by Pronk.  We extend these constructions by presenting a calculus of lax fractions for 2-categories that formally turns given morphisms into left adjoint right inverses and given pseudo-commutative squares into Beck--Chevalley squares.
\end{abstract}

\setcounter{tocdepth}{2}
\tableofcontents

\section{Introduction}

The calculus of fractions construction by Gabriel and Zisman \cite{gabriel1967calculus} provides a universal way to formally invert a class of morphisms in a category. In \cite{pronk1996etendues} Pronk presents a 2-categorical generalisation which freely turns morphisms from a given class into equivalences. On the other hand, in \cite{sousa2017calculus} the second author introduces a calculus of \emph{lax} fractions for order-enriched categories in order to formally add right adjoint retractions to morphisms in a class, while also controlling when the Beck--Chevalley condition holds.
In this paper we provide a synthesis of these two approaches by providing a calculus of lax fractions for 2-categories.

One application of our calculus, which we will explore in the paper \cite{manuellsousa2}, is a construction of the bicategory of (strict) monoidal categories and lax monoidal functors from the 2-category of strict monoidal categories and strict monoidal functors by formally adding right adjoints to the morphisms whose underlying functors have fully faithful right adjoints.

The data for the construction involves the original 2-category $\catx$ and a collection $\Sg$ of squares,
commuting up to isomorphism, whose horizontal morphisms are to become left adjoint right inverses,
and which will themselves will become Beck–Chevalley squares. This collection of squares
must satisfy a number of axioms (Definition \ref{def:TheCalculus}), which provide the calculus of lax fractions. In particular, the horizontal morphisms of these squares together with the squares themselves form a subcategory of the arrow category $\catx^{\to\cong}$ (that is, of the category of 1-cells of $\catx$ with morphisms given by pseudo-commutative squares). The resulting bicategory of lax fractions $\catx[\Sg_*]$ will have the
same objects as $\catx$ and 1-cells given by cospans $A\xrightarrow{f}I\xleftarrow{s}B$ where $s$ is a horizontal morphism from $\Sigma$.
 The 2-cells are certain equivalence classes of diagrams of the form
	\begin{equation*}
		\begin{tikzcd}
	A && {I_1} && B \\
	&& X && B \\
	A && {I_2} && B
	\arrow["{f_1}", from=1-1, to=1-3]
	\arrow[equals, from=1-1, to=3-1]
	\arrow[""{name=0, anchor=center, inner sep=0}, "{{{{x_1}}}}"', from=1-3, to=2-3]
	\arrow["\alpha"', between={0.3}{0.7}, Rightarrow, from=1-3, to=3-1]
	\arrow["{r_1}"', from=1-5, to=1-3]
	\arrow[""{name=1, anchor=center, inner sep=0}, equals, from=1-5, to=2-5]
	\arrow["{{x_3}}"', from=2-5, to=2-3]
	\arrow["{f_2}"', from=3-1, to=3-3]
	\arrow[""{name=2, anchor=center, inner sep=0}, "{{{{x_2}}}}", from=3-3, to=2-3]
	\arrow[""{name=3, anchor=center, inner sep=0}, equals, from=3-5, to=2-5]
	\arrow["{r_2}", from=3-5, to=3-3]
	\arrow["{{{\SIGMA^{\delta_1}}}}"{description}, draw=none, from=0, to=1]
	\arrow["{{{\SIGMA^{\delta_2}}}}"{description}, draw=none, from=2, to=3]
\end{tikzcd}
	\end{equation*}
where $\Sg^{\delta_i}$ indicates that the squares are morphisms in $\Sg$ and contain 2-cells $\delta_i\colon x_3\Rightarrow x_ir_i$.
This bicategory is universal in the sense that, for each bicategory $\caty$, there is a biequivalence
between the bicategory of pseudofunctors from $\catx[\Sg_*]$ to $\caty$ and a bicategory whose objects are
pseudofunctors from $\catx$ to $\caty$ that send the horizontal morphisms of $\Sg$ to left adjoint right inverses
and the squares of $\Sg$ to Beck–Chevalley squares.

Roughly speaking, the above diagram may be viewed as a 2-cell
$$\, (\delta_{2*}^{-1}\circ f_2)\cdot (x_{3*}\circ \alpha)\cdot(\delta_{1*}\circ f_1)\colon r_{1*}f_1\Rightarrow r_{2*}f_2\,$$
\noindent where $r_{i*}$ and $x_{3*}$ denote the right adjoint left inverses of $r_i$ and $x_3$, and $\delta_{i*}$ stands for the mate of $\delta_{i}$.

When the subcategory $\Sigma$ of $\catx^{\to \cong}$ is full, our calculus is equivalent to Pronk's calculus and the resulting bicategory of lax fractions takes the objects of $\Sigma$ to equivalences (see \cref{cor:universal_prop}).

To simplify and systematise the proof of the necessary coherence conditions we develop a theory of $\Sigma$-schemes, $\Sigma$-paths and $\Omega$ 2-cells in \cref{sec:bicategory}.

The two appendices cover the more technical aspects of some of the proofs. Appendix~\ref{sec:appendix} presents the proofs of the three propositions of Subsection~\ref{sec:sigma_schemes} concerning $\Sigma$-schemes. Appendix~\ref{sec:appendixB} gives a detailed account of the proof of \cref{thm:universal_prop} using string diagrams.

In the paper \cite{manuellsousa2} we will present several special examples of bicategories of lax fractions and explore the relationship between the calculus of lax fractions, lax idempotent monads and Kan extensions.

We note some related work in the literature. Bourke and Garner \cite{bourkegarner} addressed to the construction of categories by freely adding a `section' to certain morphisms. Our approach here is different, but it would be interesting to study how their work relates to our bicategory of lax fractions.

We also recall the work of Dawson, Par\'e and Pronk \cite{dawson_pare_pronk1} on the problem of freely adding adjoints, although from a perspective entirely different from ours. They construct a 2-category by equipping every morphism of a given category with an adjoint. However, they do not impose any Beck--Chevalley conditions, nor do they provide a calculus of fractions for their construction.

Also compare the result mentioned in \cite[Theorem A.2]{hermida2000representable} stating that if $\catx$ is a 1-category with pullbacks, the bicategory of spans in $\catx$ is the universal bicategory making every morphism of $\catx$ into a left adjoint and every pullback square into a Beck--Chevalley square.

For the basic notions on 2-categories and bicategories we refer to  \cite{johnsonyau2D} and the more informal introduction \cite{lack2009}.

\section{A calculus of lax fractions}\label{sec:calculus}

Let $\catx$ be a 2-category. Denote by  $\catx^{\rightarrow\cong}$ the \textbf{\em arrow category}, whose objects are the 1-cells of $\catx$ and  whose morphisms from $f\colon X\to Y$ to $g\colon Z\to W$ are triples  $(u,v,\delta)\colon f\to g$ where $u\colon X\to Z$ and $v\colon Y\to W$ are 1-cells and $\delta\colon gu\to vf$ is an invertible 2-cell:
\begin{equation}\label{Eq-s}
\begin{tikzcd}
	X & Y \\
	Z & W
	\arrow["f", from=1-1, to=1-2]
	\arrow["g"', from=2-1, to=2-2]
	\arrow["u"', from=1-1, to=2-1]
	\arrow["v", from=1-2, to=2-2]
	\arrow["\delta", shorten <=4pt, shorten >=4pt, Rightarrow, from=2-1, to=1-2]
\end{tikzcd}\, .
\end{equation}
The identity morphisms are just identity 2-cells $(1,1,\id)\colon f\to f$. Composition is vertical composition of squares --- that is, for $\xymatrix{f\ar[r]^{(u,v,\delta)}&g\ar[r]^{(u',v',\delta^{\prime})}&h}$, the composite is given by \newline $(u',v',\delta')\cdot (u,v,\delta)=(u'u,v'v, (v'\circ \delta) \cdot (\delta'\circ u))$.

Let $\Sigma$ be a subcategory of  $\catx^{\rightarrow\cong}$.
In the following we will use a square
\[\begin{tikzcd}
	\bullet & \bullet \\
	\bullet & \bullet
	\arrow["r", from=1-1, to=1-2]
	\arrow["s"', from=2-1, to=2-2]
	\arrow[""{name=0, anchor=center, inner sep=0}, "u"', from=1-1, to=2-1]
	\arrow[""{name=1, anchor=center, inner sep=0}, "v", from=1-2, to=2-2]
	\arrow["{\text{\Large $\Sigma^{\delta}$}}"{description}, draw=none, from=0, to=1]
\end{tikzcd}\, ,\]
 with  $\Sigma^{\delta}$  in its center, to indicate that $(u,v, \delta)\colon r\to s$ is a morphism in $\Sigma$. If no danger of confusion exists, we may use $\Sigma$ alone without the superscript. We call these squares
 \textbf{\em $\Sigma$-squares}. Sometimes we reverse or flip them, but the horizontal arrows always refer to objects of $\Sigma$, and the vertical ones to the 1-cell part of the represented morphism.

 In this way, our calculus of lax fractions becomes essentially a calculus of $\Sigma$-squares.
 As we will see in Example~\ref{exas:calculus:laris}, the rules of this calculus are satisfied by left adjoint left inverses and Beck-Chevalley squares (see \cref{rem:lari}).

\begin{definition}\label{def:TheCalculus}{\textbf{A calculus of lax fractions.}}
	We say that a subcategory $\Sigma$ of $\catx^{\rightarrow\cong}$ admits a \textbf{\em calculus of left lax fractions}\footnote{In \cite{sousa2017calculus}, we say that $\Sigma$  admits a \textbf{\em left calculus of lax fractions}.} provided that the following conditions are statisfied.
	\begin{enumerate}[label={(\arabic*)}]
		\item \textbf{Identity.} Every identity 1-cell of $\catx$ is an object of $\Sigma$, and for every $\Sigma$-object $s\colon X\to Y$  we have the $\Sigma$-square
		\[\begin{tikzcd}
			X & X \\
			X & Y
			\arrow["{1_X}", from=1-1, to=1-2]
			\arrow[""{name=0, anchor=center, inner sep=0}, "{1_X}"', from=1-1, to=2-1]
			\arrow["s"', from=2-1, to=2-2]
			\arrow[""{name=1, anchor=center, inner sep=0}, "s", from=1-2, to=2-2]
			\arrow["{\SIGMA^{\id}}"{description}, draw=none, from=0, to=1]
		\end{tikzcd}\, .\]
		\item  \textbf{Repletion.}
		
		(a) \textbf{Vertical Repletion.} If $\delta\colon r\Rightarrow s$ is an invertible 2-cell and $r\colon X\to Y$ is in $\Sg$, then $s$ also belongs to $\Sigma$ and we have the $\Sigma$-square
		\[\begin{tikzcd}
			X & Y \\
			X & Y
			\arrow["s", from=1-1, to=1-2]
			\arrow[""{name=0, anchor=center, inner sep=0}, "{1_X}"', from=1-1, to=2-1]
			\arrow["r"', from=2-1, to=2-2]
			\arrow[""{name=1, anchor=center, inner sep=0}, "1_Y", from=1-2, to=2-2]
			\arrow["{\SIGMA^{\delta}}"{description}, draw=none, from=0, to=1]
		\end{tikzcd}\; .\]

		(b) \textbf{Horizontal Repletion.}  For every pair of morphisms $f,g\colon X\to Y$ and every invertible 2-cell $\gamma \colon f\Rightarrow g$, we have the $\Sigma$-square
		\[\begin{tikzcd}
			X & X \\
			Y & Y
			\arrow["1_X", from=1-1, to=1-2]
			\arrow[""{name=0, anchor=center, inner sep=0}, "{f}"', from=1-1, to=2-1]
			\arrow["1_Y"', from=2-1, to=2-2]
			\arrow[""{name=1, anchor=center, inner sep=0}, "g", from=1-2, to=2-2]
			\arrow["{\SIGMA^{\gamma}}"{description}, draw=none, from=0, to=1]
		\end{tikzcd}\; .\]
		
		\item \textbf{Composition.} If in the diagram
		\[\begin{tikzcd}
			\bullet & \bullet & \bullet \\
			\bullet & \bullet & \bullet
			\arrow["r", from=1-1, to=1-2]
			\arrow["s", from=1-2, to=1-3]
			\arrow["{r'}"', from=2-1, to=2-2]
			\arrow["{s'}"', from=2-2, to=2-3]
			\arrow[""{name=0, anchor=center, inner sep=0}, "f"', from=1-1, to=2-1]
			\arrow[""{name=1, anchor=center, inner sep=0}, "g", from=1-2, to=2-2]
			\arrow[""{name=2, anchor=center, inner sep=0}, "h", from=1-3, to=2-3]
			\arrow["{\Circled{1}}"{description}, draw=none, from=0, to=1]
			\arrow["{\Circled{2}}"{description}, draw=none, from=1, to=2]
		\end{tikzcd}\]
		\Circled{1} and \Circled{2} are both $\Sigma$-squares, then  the pasting diagram  \Circled{1}$+$\Circled{2} is also a $\Sigma$-square.
		\item \textbf{Square.} For every span
		$\begin{tikzcd}
			\bullet & \bullet & \bullet
			\arrow["f"', from=1-2, to=1-1]
			\arrow["s", from=1-2, to=1-3]
		\end{tikzcd}$   with $s\in \Sigma$, there is a $\Sigma$-square of the form
		\[\begin{tikzcd}
			\bullet & \bullet \\
			\bullet & \bullet
			\arrow["s", from=1-1, to=1-2]
			\arrow[""{name=0, anchor=center, inner sep=0}, "f"', from=1-1, to=2-1]
			\arrow[""{name=1, anchor=center, inner sep=0}, "{f'}", from=1-2, to=2-2]
			\arrow["{s'}"', from=2-1, to=2-2]
			\arrow["{\text{\Large $\Sigma$}}"{description}, draw=none, from=0, to=1]
		\end{tikzcd}\, .\]
		\item \textbf{Equi-insertion.}
		For every $\Sigma$-square and every (not-necessarily invertible) 2-cell $\alpha\colon f'r\Rightarrow gr$ as  in the diagram
		\[\begin{tikzcd}
	&& B \\
	A & B \\
	C & D
	\arrow["g", curve={height=-12pt}, from=1-3, to=3-2]
	\arrow["r", curve={height=-6pt}, from=2-1, to=1-3]
	\arrow["r", from=2-1, to=2-2]
	\arrow[""{name=0, anchor=center, inner sep=0}, "f"', from=2-1, to=3-1]
	\arrow["\alpha", shift right, shorten <=4pt, shorten >=4pt, Rightarrow, from=2-2, to=1-3]
	\arrow[""{name=1, anchor=center, inner sep=0}, "{{f'}}", from=2-2, to=3-2]
	\arrow["s"', from=3-1, to=3-2]
	\arrow["{\SIGMA^{\delta}}"{description}, draw=none, from=0, to=1]
\end{tikzcd}\]
		there is a 1-cell $d\colon D\to E$ and a 2-cell $\alpha'\colon df'\Rightarrow dg$ such that
		\[\begin{tikzcd}
			C & D \\
			C & E
			\arrow["s", from=1-1, to=1-2]
			\arrow[""{name=0, anchor=center, inner sep=0}, "d", from=1-2, to=2-2]
			\arrow["ds"', from=2-1, to=2-2]
			\arrow[""{name=1, anchor=center, inner sep=0}, Rightarrow, no head, from=1-1, to=2-1]
			\arrow["{\SIGMA^{\id}}"{description}, draw=none, from=1, to=0]
		\end{tikzcd}\] and $d\alpha=\alpha'r$.
		\item \textbf{Equification.} For every $\Sigma$-square and two 2-cells as in the diagram
		\[\begin{tikzcd}
	A && B \\
	\\
	C && D
	\arrow["r", from=1-1, to=1-3]
	\arrow[""{name=0, anchor=center, inner sep=0}, "f"', from=1-1, to=3-1]
	\arrow[""{name=1, anchor=center, inner sep=0}, "{{f'}}"', curve={height=18pt}, from=1-3, to=3-3]
	\arrow[""{name=2, anchor=center, inner sep=0}, "g", curve={height=-18pt}, from=1-3, to=3-3]
	\arrow["s"', from=3-1, to=3-3]
	\arrow["{\SIGMA^{\delta}}"{description, pos=0.6}, draw=none, from=0, to=1]
	\arrow["\beta", shift right=4, shorten <=7pt, shorten >=7pt, Rightarrow, from=1, to=2]
	\arrow["\alpha", shift left=4, shorten <=7pt, shorten >=7pt, Rightarrow, from=1, to=2]
\end{tikzcd}\]
		with $\alpha r=\beta r$
(or equivalently, $(\alpha\circ r)\cdot \delta=(\beta\circ r)\cdot \delta$), %
		there is a 1-cell $d\colon D\to E$ such that
		\[\begin{tikzcd}
			C & D \\
			C & E
			\arrow["s", from=1-1, to=1-2]
			\arrow[""{name=0, anchor=center, inner sep=0}, "d", from=1-2, to=2-2]
			\arrow["ds"', from=2-1, to=2-2]
			\arrow[""{name=1, anchor=center, inner sep=0}, Rightarrow, no head, from=1-1, to=2-1]
			\arrow["{\SIGMA^{\id}}"{description}, draw=none, from=1, to=0]
		\end{tikzcd}\]
		and $d\alpha=d\beta$.
	\end{enumerate}

Dually, given a subcategory $\Sigma$ of the arrow category $\catx^{\to\cong}$, we say that it admits a \textbf{\em calculus of right lax fractions} if the corresponding subcategory of the arrow category of $\catx^\op$ admits a calculus of left lax fractions.
We say such a $\Sigma$ admits a \textbf{\em calculus of left colax fractions} or a \textbf{\em calculus of right colax fractions} if its respective counterpart in $\catx^\co$ or $\catx^{\co\op}$ admits a calculus of left lax fractions.
\end{definition}

For the above four types of calculus, our bicategory of lax fractions construction will transform the morphisms of $\Sigma$ into left adjoint right inverses, left adjoint left inverses, right adjoint right inverses and right adjoint left inverses, respectively.

\begin{definition}\label{rem:lari} 1. Recall that a 1-cell $f\colon A\to B$ is said to be a \textbf{\em lari} (or \textbf{\em left adjoint right inverse}), if there is an adjunction

	\[\adjustbox{scale=0.9}{\begin{tikzcd}
		A && B
		\arrow[""{name=0, anchor=center, inner sep=0}, "f", curve={height=-18pt}, from=1-1, to=1-3]
		\arrow[""{name=1, anchor=center, inner sep=0}, "g", curve={height=-18pt}, from=1-3, to=1-1]
		\arrow["{\bot}", shift right=2, draw=none, from=0, to=1]
	\end{tikzcd}}\]
	with unit $\eta$ and counit $\epsilon$
	such that $\eta$ is a invertible.
 Thus, in this case, the triangle identities defining the adjunction
	\[\begin{tikzcd}
	A & B && B & A \\
	&& {\; \; \quad\text{ and}} \\
	A & B && B & A
	\arrow["f", from=1-1, to=1-2]
	\arrow[""{name=0, anchor=center, inner sep=0}, equals, from=1-1, to=3-1]
	\arrow["g"{description}, from=1-2, to=3-1]
	\arrow[""{name=1, anchor=center, inner sep=0}, "{{\;\; =\; \; \id_f}}", equals, from=1-2, to=3-2]
	\arrow["g", from=1-4, to=1-5]
	\arrow[""{name=2, anchor=center, inner sep=0}, equals, from=1-4, to=3-4]
	\arrow["f"{description}, from=1-5, to=3-4]
	\arrow[""{name=3, anchor=center, inner sep=0}, "{{\; \; =\; \;  \id_g}}", equals, from=1-5, to=3-5]
	\arrow["f"', from=3-1, to=3-2]
	\arrow["g", from=3-4, to=3-5]
	\arrow["\eta"{description}, shorten <=6pt, shorten >=3pt, Rightarrow, from=0, to=1-2]
	\arrow["\epsilon"{description}, shorten <=3pt, shorten >=6pt, Rightarrow, from=1-5, to=2]
	\arrow["\eta"{description}, shorten <=6pt, shorten >=3pt, Rightarrow, from=3, to=3-4]
	\arrow["\epsilon"{description}, shorten <=6pt, shorten >=6pt, Rightarrow, from=3-1, to=1]
\end{tikzcd}\]
	lead to
	\[f\circ \eta^{-1} = \epsilon \circ f\; \; \text{ and }\; \; \eta^{-1}\circ g = g\circ \epsilon.\]
	In the following, we use the notation $f_{\ast}$ to denote a right adjoint to $f$.

		2. A  diagram  of the form
	\[\begin{tikzcd}
		\bullet & \bullet \\
		\bullet & \bullet
		\arrow["r", from=1-1, to=1-2]
		\arrow["s"', from=2-1, to=2-2]
		\arrow["f"', from=1-1, to=2-1]
		\arrow["g", from=1-2, to=2-2]
		\arrow["\delta", shorten <=6pt, shorten >=6pt, Rightarrow, from=2-1, to=1-2]
	\end{tikzcd}\]
	with $r$ and $s$ lari 1-cells and $\delta$ an invertible 2-cell
	has the \textbf{\em Beck--Chevalley condition} if its mate is an isomorphism --- that is,
the 2-cell $\delta_*\colon fr_\ast \Rightarrow s_\ast g$ given by
	\[\begin{tikzcd}
		\bullet & \bullet \\
		\bullet & \bullet \rlap{\,,}
		\arrow["f"', from=1-1, to=2-1]
		\arrow["\delta_{\ast}", shorten <=4pt, shorten >=4pt, Rightarrow, from=1-1, to=2-2]
		\arrow["{{r_{\ast}}}"', from=1-2, to=1-1]
		\arrow["g", from=1-2, to=2-2]
		\arrow["{{s_{\ast}}}", from=2-2, to=2-1]
	\end{tikzcd}
\quad
=
\quad
\begin{tikzcd}
	& \bullet \\
	\bullet & \bullet \\
	\bullet & {\bullet } \\
	\bullet
	\arrow[""{name=0, anchor=center, inner sep=0}, "{r_{\ast}}"', from=1-2, to=2-1]
	\arrow[""{name=1, anchor=center, inner sep=0}, equals, from=1-2, to=2-2]
	\arrow["r", from=2-1, to=2-2]
	\arrow["f"', from=2-1, to=3-1]
	\arrow["g", from=2-2, to=3-2]
	\arrow["\delta", between={0.3}{0.7}, Rightarrow, from=3-1, to=2-2]
	\arrow["s", from=3-1, to=3-2]
	\arrow[""{name=2, anchor=center, inner sep=0}, equals, from=3-1, to=4-1]
	\arrow[""{name=3, anchor=center, inner sep=0}, "{s_{\ast}}", from=3-2, to=4-1]
	\arrow["{\epsilon^r}", shift right=3, between={0.2}{0.8}, Rightarrow, from=0, to=1]
	\arrow["{\eta^s}", shift left, between={0.2}{0.8}, Rightarrow, from=2, to=3]
\end{tikzcd}\]
is  invertible.
\end{definition}

An important motivating example for the axioms above is given by the class of laris themselves together with Beck--Chevalley squares. We will explore this in more detail in Example~\ref{exas:calculus:laris}, but for now it suffices to mention that they do satisfy all of the conditions above. In particular, the square from the Identity axiom is a Beck--Chevalley square precisely when $s$ is a lari.

One can also consider this calculus of left lax fractions in the more general context of bicategories. We opt for starting with a 2-category to simplify the exposition, and also because our examples are all 2-categories. %

\begin{remark}
The Horizontal Repletion axiom can actually be omitted from \cref{def:TheCalculus}, but including this natural rule simplifies the construction. %
Also note that the Composition axiom together with Horizontal Repletion allows us to view $\Sigma$ as a double category (indeed, a sub-double category of the double category of quintents).
\end{remark}

\begin{notation}\label{nota:comp-squares} For composable $\Sigma$-squares
\[\adjustbox{scale=0.90}{\begin{tikzcd}
	\bullet & \bullet && \bullet & \bullet & \bullet \\
	\bullet & \bullet && \bullet & \bullet & \bullet \\
	\bullet & \bullet
	\arrow[from=1-1, to=1-2]
	\arrow[""{name=0, anchor=center, inner sep=0}, from=1-1, to=2-1]
	\arrow[""{name=1, anchor=center, inner sep=0}, from=1-2, to=2-2]
	\arrow[from=1-4, to=1-5]
	\arrow[""{name=2, anchor=center, inner sep=0}, from=1-4, to=2-4]
	\arrow[from=1-5, to=1-6]
	\arrow[""{name=3, anchor=center, inner sep=0}, from=1-5, to=2-5]
	\arrow[""{name=4, anchor=center, inner sep=0}, from=1-6, to=2-6]
	\arrow[from=2-1, to=2-2]
	\arrow[""{name=5, anchor=center, inner sep=0}, from=2-1, to=3-1]
	\arrow["{\text{\normalsize and}}"{marking, allow upside down}, draw=none, from=2-2, to=2-4]
	\arrow[""{name=6, anchor=center, inner sep=0}, from=2-2, to=3-2]
	\arrow[from=2-4, to=2-5]
	\arrow[from=2-5, to=2-6]
	\arrow[from=3-1, to=3-2]
	\arrow["{{\SIGMA^{\alpha}}}"{description}, draw=none, from=0, to=1]
	\arrow["{{\SIGMA^{\alpha}}}"{description}, draw=none, from=2, to=3]
	\arrow["{{\SIGMA^{\gamma}}}"{description}, draw=none, from=3, to=4]
	\arrow["{{\SIGMA^{\beta}}}"{description}, draw=none, from=5, to=6]
\end{tikzcd}}\]
	we sometimes refer to the $\Sigma$-squares which result by vertical and horizontal composition as
	$\Sigma^{\beta\odot\alpha}$  and $\Sigma^{\alpha\oplus \gamma}$, respectively. %
\end{notation}

When $\Sigma$ is a full subcategory of $\catx^{\to \cong}$, the calculus of  left lax fractions yields an interesting special case. Indeed, as we show in Example~\ref{exas:calculus:pronks}, it is equivalent to Pronk's bicalculus of fractions (see also \cref{cor:universal_prop}). Moreover, in this case the  rules of the calculus simplify as follows (see  \cref{lem:Full-Sigma}):

\begin{definition}\label{def:Full-Sigma}{\textbf{Full calculus of lax fractions.}} Let $\Sigma$ be a class of morphisms of $\catx$. Then $\Sigma$ is said to admit a {\em \textbf{full calculus of left lax fractions}} if it fulfils the following rules:
\begin{enumerate}
\item[(Id)] All identity 1-cells belong to $\Sg$.
\item[(Rep)] For  every invertible 2-cell $\delta\colon r\Rightarrow s$ with $r\in \Sg$, also  $s$ belongs to $\Sg$.
    \item[(Comp)]  $\Sg$ is closed under composition.
    \item[(Sq)]  For every span
		$\begin{tikzcd}
			X & Y & Z
			\arrow["f"', from=1-2, to=1-1]
			\arrow["s", from=1-2, to=1-3]
		\end{tikzcd}$   with $s\in \Sigma$, there are 1-cells $s'\colon Z \to W$ and $f'\colon Y\to W$ with $s'$ in $\Sg$, and an invertible  2-cell
\[\begin{tikzcd}
	\bullet & \bullet \\
	\bullet & \bullet
	\arrow["s", from=1-1, to=1-2]
	\arrow["f"', from=1-1, to=2-1]
	\arrow["{{f'}}", from=1-2, to=2-2]
	\arrow["\delta", shorten <=4pt, shorten >=4pt, Rightarrow, from=2-1, to=1-2]
	\arrow["{{s'}}"', from=2-1, to=2-2]
\end{tikzcd}\, .\]
\item[(Eq1)] For every 2-cell
\(\begin{tikzcd}
	& Y \\
	X && Z \\
	& Y
	\arrow["{g_1}", from=1-2, to=2-3]
	\arrow["\alpha", shorten <=10pt, shorten >=10pt, Rightarrow, from=1-2, to=3-2]
	\arrow["r", from=2-1, to=1-2]
	\arrow["r"', from=2-1, to=3-2]
	\arrow["{g_2}"', from=3-2, to=2-3]
\end{tikzcd}\)
with $r\in \Sg$, there is $q\colon Z\to W$ in $\Sg$ and a 2-cell $\alpha'\colon qg_1\Rightarrow qg_2$ such that $\alpha'\circ r=q\circ \alpha$.
\item[(Eq2)] For every diagram
\(\begin{tikzcd}
	X & Y && Z
	\arrow["r", from=1-1, to=1-2]
	\arrow[""{name=0, anchor=center, inner sep=0}, "{g_1}", curve={height=-24pt}, from=1-2, to=1-4]
	\arrow[""{name=1, anchor=center, inner sep=0}, "{g_2}"', curve={height=24pt}, from=1-2, to=1-4]
	\arrow["\alpha"', shift right=3, shorten <=6pt, shorten >=6pt, Rightarrow, from=0, to=1]
	\arrow["\beta", shift left=3, shorten <=6pt, shorten >=6pt, Rightarrow, from=0, to=1]
\end{tikzcd}\)
with $r\in \Sg$ and $\alpha\circ r=\beta\circ r$, there is $q\colon Z\to W$ in $\Sg$  such that $q\circ \alpha=q\circ \beta$.
\end{enumerate}
\end{definition}

\begin{lemma}\label{lem:Full-Sigma} Let $\Sigma$ be a class of morphisms of $\catx$. Then $\Sigma$ admits a calculus of left lax  fractions, seen as a full subcategory of $\catx^{\to \cong}$,  if and only if it fulfils the  rules of the full calculus of left lax  fractions.
\end{lemma}

\begin{proof} The equivalence for most of the conditions is immediate. For obtaining (Eq1) from Equi-insertion, observe that
\(\begin{tikzcd}
	{} & {} \\
	{} & {}
	\arrow["r", from=1-1, to=1-2]
	\arrow["{g_1r}"', from=1-1, to=2-1]
	\arrow["{g_1}", from=1-2, to=2-2]
	\arrow["{=}"{description}, draw=none, from=2-1, to=1-2]
	\arrow[equals, from=2-1, to=2-2]
\end{tikzcd}\)
is a $\Sg$-square. Conversely, from (Eq1) and (Comp) we easily obtain Equi-insertion. A similar analysis works for (Eq2).
\end{proof}

We now give some examples of classes $\Sigma$ that admit calculi of lax fractions.
\begin{example}\label{exas:calculus}  \begin{enumerate}[ref=\thetheorem.\arabic*]
	\item\label{exas:calculus:laris} \textbf{Laris.}
	For a 2-category $\catx$, let $\Sigma$ be the subcategory of the  category  $\catx^{\to\cong}$ given by all laris and those morphisms of $\catx^{\to\cong}$ forming Beck--Chevalley squares.

	This subcategory $\Sigma$ of laris admits a calculus of left lax fractions. Indeed, Identity, Repletion  and Composition are clear. For Square, observe that, since $s$ is a lari, the square

\[\begin{tikzcd}
	X & Y \\
	Z & Z
	\arrow["s", from=1-1, to=1-2]
	\arrow["f"', from=1-1, to=2-1]
	\arrow["{{fs_{\ast}}}", from=1-2, to=2-2]
	\arrow["{f\circ \eta}", shorten <=4pt, shorten >=4pt, Rightarrow, from=2-1, to=1-2]
	\arrow[Rightarrow, no head, from=2-1, to=2-2]
\end{tikzcd}\]

	satisfies the Beck--Chevalley condition. For Equi-insertion, suppose we have

	\[\begin{tikzcd}
		&& B \\
		A & B \\
		C & D
		\arrow["r", from=2-1, to=2-2]
		\arrow[""{name=0, anchor=center, inner sep=0}, "f"', from=2-1, to=3-1]
		\arrow[""{name=1, anchor=center, inner sep=0}, "{f'}", from=2-2, to=3-2]
		\arrow["s"', from=3-1, to=3-2]
		\arrow["g", curve={height=-12pt}, from=1-3, to=3-2]
		\arrow["r", curve={height=-6pt}, from=2-1, to=1-3]
		\arrow["\alpha", shift right=1, shorten <=4pt, shorten >=4pt, Rightarrow, from=2-2, to=1-3]
		\arrow["\delta", shorten <=4pt, shorten >=4pt, Rightarrow, from=3-1, to=2-2]
	\end{tikzcd}\]

	where $\delta$ is invertible and forms a Beck--Chevalley square. Let $\delta_*$ denote the mate of $\delta$. The Equi-insertion condition is fulfilled by putting $d = s_*$ and defining $\alpha'$ to be the composite 2-morphism $s_* f' \xrightarrow{\delta_*^{-1}} fr_* \xrightarrow{(\alpha\delta)_*} s_* g$ where $(\alpha\delta)_*$ denotes the mate of the composite $\alpha \cdot \delta$.
Note that
	\[\begin{tikzcd}
		C & D \\
		C & E
		\arrow["s", from=1-1, to=1-2]
		\arrow[""{name=0, anchor=center, inner sep=0}, "s_*", from=1-2, to=2-2]
		\arrow["s_*s"', from=2-1, to=2-2]
		\arrow[""{name=1, anchor=center, inner sep=0}, Rightarrow, no head, from=1-1, to=2-1]
		\arrow["{\id}", shorten <=4pt, shorten >=4pt, Rightarrow, from=2-1, to=1-2]
	\end{tikzcd}\]
	is indeed a Beck--Chevalley square. %

	To see that $s_* \alpha = \alpha' r$, it suffices to show $s_* \alpha \cdot \delta_*r \cdot f \eta^r = \alpha' r \cdot \delta_*r \cdot f \eta^r$,  since $\delta_*$ and $\eta^r$ are invertible. But $ \alpha' r \cdot \delta_*r \cdot f \eta^r= (\alpha\delta)_*r \cdot f \eta^r$ by the definition of $\alpha'$. So we can just show that $s_* \alpha \cdot \delta_*r \cdot f \eta^r=(\alpha\delta)_*r \cdot f \eta^r$.
Expanding the definition of the mates we have
\begin{align*}
 & s_* \alpha \cdot s_\ast f' \epsilon^r r \cdot s_\ast \delta r_\ast r \cdot \eta^s f r_\ast r \cdot f \eta^r \\
 = {} & s_* \alpha \cdot s_\ast f' \epsilon^r r \cdot s_\ast \delta r_\ast r \cdot s_* s f \eta^r \cdot \eta^s f \\
 = {} & s_* \alpha \cdot s_\ast f' \epsilon^r r \cdot s_* f' r \eta^r \cdot s_\ast \delta \cdot \eta^s f \\
 = {} & s_* \alpha \cdot s_\ast \delta \cdot \eta^s f \\
\end{align*}
on the left-hand side and
\begin{align*}
 & s_\ast g \epsilon^r r \cdot s_\ast \alpha r_\ast r \cdot s_\ast \delta r_\ast r \cdot \eta^s f r_\ast r \cdot f \eta^r \\
 = {} & s_\ast g \epsilon^r r \cdot s_\ast \alpha r_\ast r \cdot s_* f' r \eta^r \cdot s_\ast \delta \cdot \eta^s f \\
 = {} & s_\ast g \epsilon^r r \cdot s_* g r \eta^r \cdot s_\ast \alpha \cdot s_\ast \delta \cdot \eta^s f \\
 = {} & s_\ast \alpha \cdot s_\ast \delta \cdot \eta^s f
\end{align*}
on the right-hand side.
Thus, they are indeed equal.

For Equification, we show that again we may take $d=s_{\ast}$. Let $\delta_*:fr_{\ast}\Rightarrow s_{\ast}f'$ be the mate of $\delta$. Let us compose $\alpha$ and $\beta$ with $\epsilon^r\colon rr_{\ast}\Rightarrow 1_B$ and $\delta_*$ as in the following diagram.

\[\begin{tikzcd}
	&&& A \\
	B & A & B & A & C
	\arrow["{{{s_{\ast}}}}", from=1-4, to=2-5]
	\arrow["{{{{{r_{\ast}}}}}}"', from=2-1, to=2-2]
	\arrow[""{name=0, anchor=center, inner sep=0}, "{{{{{1_B}}}}}", curve={height=-24pt}, from=2-1, to=2-3]
	\arrow["r"', from=2-2, to=2-3]
	\arrow[""{name=1, anchor=center, inner sep=0}, "{{{f'}}}"', from=2-3, to=1-4]
	\arrow[""{name=2, anchor=center, inner sep=0}, "g", curve={height=-18pt}, from=2-3, to=1-4]
	\arrow["{{{{{r_{\ast}}}}}}"', from=2-3, to=2-4]
	\arrow["{{\delta_*}}"', Rightarrow, from=2-4, to=1-4]
	\arrow["f"', from=2-4, to=2-5]
	\arrow["{{\epsilon^r}}", between={0}{0.8}, Rightarrow, from=2-2, to=0]
	\arrow["{\alpha,\beta}"', shift left=2, between={0.2}{0.8}, Rightarrow, from=1, to=2]
\end{tikzcd}\]
	As $s_{\ast}\circ \alpha \circ r=s_{\ast}\circ \beta \circ r$, we have $s_{\ast}\circ \alpha \circ \epsilon^r=s_{\ast}\circ \beta \circ \epsilon^r$, and hence we obtain that $(s_{\ast}\circ \alpha)\cdot (f\circ r_{\ast}\circ \epsilon^r)\cdot (\delta_*\circ r\circ r_{\ast})
	=(s_{\ast}\circ \beta)\cdot (f\circ r_{\ast}\circ \epsilon^r)\cdot (\delta_*\circ r\circ r_{\ast})$. Since $(f\circ r_{\ast}\circ \epsilon^r)\cdot (\delta_*\circ r\circ r_{\ast})$ is invertible, we conclude $s_{\ast}\circ \alpha=s_{\ast}\circ \beta$, as desired.

\item \textbf{The ordinary calculus of left fractions.} Given a 1-category $\catx$ and a class $\Sigma$ of morphisms of $\catx$, let us look at $\catx$ as a 2-category with trivial 2-cells (in particular, laris are just isomorphisms) and at $\Sigma$ as a full subcategory of the arrow category  $\catx^{\to}$. Then, for $\Sigma$ to admit a calculus of left lax fractions just means to admit a calculus of left fractions in the classical sense (\cite{gabriel1967calculus}).

\item\label{exas:calculus:pronks} \textbf{Pronk's calculus.} %
In \cite{pronk1996etendues}, Dorette Pronk introduced a {\em  bicalculus of right fractions} for a class $\Sigma$ of 1-cells generalising the classical calculus to bicategories (see also \cite{pronkscull2022}). With this calculus, the localization process yields a bicategory where morphisms in $\Sigma$ become equivalences.  Here we show that in a 2-category $\catx$, a class $\Sigma$ of 1-cells admits a  bicalculus of left fractions, in the sense of Pronk, if and only if $\Sigma$, viewed as a full subcategory of $\catx^{\to \cong}$, admits a  calculus of left lax fractions ---
that is, $\Sigma$ admits a full calculus of left lax fractions as described in \cref{def:Full-Sigma}.

Comparing our calculus with the {\em  bicalculus of left fractions} of Pronk, we see that rules (Id), (Rep), (Comp) and (Sq) are common to Pronk's calculus (except that in (Id) we only ask for identities to belong to $\Sg$ instead of all equivalences). The remaining rule of the  bicalculus of left fractions, simplified according to \cite{pronkscull2022}, states that:

\begin{enumerate}
  \item[(PR)] Given $X\xrightarrow{r}Y$ in $\Sg$, 1-cells $g_1,g_2\colon Y\to Z$ and a 2-cell $\alpha\colon g_1\circ r\Rightarrow  g_2\circ r$, we have that:

  (i) There is $q\colon Z\to Q$ in $\Sg$ and $\alpha'\colon q\circ g_1\Rightarrow  q\circ g_2$ with $\alpha'\circ r=q\circ \alpha$.

  (ii) Suppose we have $q'$ and $\alpha''$ satisfying the same conditions as $q$ and $\alpha'$  in (i) --- that is, $q'\colon Z\to Q'$ belongs to $\Sg$ and $\alpha''\colon q'\circ g_1\Rightarrow q'\circ g_2$ with $\alpha''\circ r=q'\circ \alpha$. Then there are 1-cells $u$ and $u'$ and an invertible 2-cell $\epsilon\colon u\circ q \Rightarrow  u'\circ q'$ such that $u\circ q$ and $u'\circ q'$ belong to $\Sigma$ and  $(\epsilon\circ g_2)\cdot (u\circ \alpha')=(u'\circ \alpha'')\cdot (\epsilon \circ g_1)$.
\end{enumerate}

The rule (PR)(i) is just (Eq1). Thus, in order to show that Pronk's bicalculus is equivalent to our calculus of lax fractions, we only need to prove that, in the presence of the rules (Id), (Rep), (Comp), (Sq) and (Eq1), we have that (PR)(ii)  $\Longleftrightarrow$ (Eq2).

 (PR)(ii) $\Longrightarrow$ (Eq2). Given a 1-cell $r\in \Sigma$ and  2-cells $\alpha, \beta\colon g_1\Rightarrow g_2$  as in (Eq2), put $\gamma=\alpha\circ r= \beta\circ r$. Then,
 both  $(\alpha, 1_Z)$ and $(\beta, 1_Z)$ play the same role as $(\gamma',q)$ determined by (Eq1).
 Consequently, by (PR)(ii), there are $u,u'\colon  Z\to W$ in $\Sg$ and an invertible 2-cell $\epsilon\colon u\to u'$ such that $(\epsilon \circ g_2) \cdot (u \circ \alpha) = (u' \circ \beta) \cdot (\epsilon \circ g_1)$, and hence $u\circ \alpha = u\circ \beta$ by interchange.

(Eq2) $\Longrightarrow$ (PR)(ii). Let $(q, \alpha')$ be as in (Eq1) and let another pair $(q', \alpha'')$ play the same role. Apply (Sq) to $q'$ and $q$ obtaining
\[\begin{tikzcd}
	{\bullet} & {\bullet} \\
	{\bullet} & {\bullet} \rlap{\,.}
	\arrow["q'", from=1-1, to=1-2]
	\arrow["{q}"', from=1-1, to=2-1]
	\arrow["p'", from=1-2, to=2-2]
	\arrow["\theta", between={0.2}{0.8}, Rightarrow, from=2-1, to=1-2]
	\arrow["{p}"', from=2-1, to=2-2]
\end{tikzcd}\]
 Then
 \[[(\theta\circ g_2)\cdot (p\circ \alpha')]\circ r = \theta\circ \alpha = [(p'\circ \alpha'')\cdot (\theta \circ g_1)]\circ r.\]

  By (Eq2) there is a 1-cell $d$ belonging to $\Sg$ such that $d\circ [(\theta\circ g_2)\cdot (p\circ \alpha')]= d\circ [(p'\circ \alpha'')\cdot (\theta \circ g_1)]$.
  Hence, $u=dp$, $u'=dp'$ and
  $\epsilon=d\circ \theta$ fulfil the conditions of (PR)(ii). In particular, $uq\in \Sigma$ because it is a composition of morphisms of $\Sigma$ and $u'q'\in \Sigma$ because $d\circ \theta\colon uq\Rightarrow u'q'$ is invertible.

\item \textbf{Order-enriched categories.} For an order-enriched category $\catx$ (i.e.\ a 2-category where all the hom-categories are simply posets) we can omit the Equification rule because it holds trivially.

 This way, our calculus of lax fractions becomes the calculus introduced in \cite{sousa2017calculus}  by the second author, except that Horizontal Repletion, ensuring that identity squares of the form
$\adjustbox{scale=0.60}{\begin{tikzcd}
	{} & {} \\
	{} & {}
	\arrow[equals, from=1-1, to=1-2]
	\arrow["f"', from=1-1, to=2-1]
	\arrow["f", from=1-2, to=2-2]
	\arrow[equals, from=2-1, to=2-2]
\end{tikzcd}}$
are $\Sg$-squares, was not used there. (Note that in \cite{sousa2017calculus} Equi-insertion is given by the rule called Coinsertion.)

The following examples in order-enriched categories can be found in \cite{sousa2017calculus}.
\begin{enumerate}
\item \textbf{Embbedings in $\mathbf{Pos}$.} Let $D$ be the contravariant endofunctor on $\mathbf{Pos}$ taking each poset X into the poset of lower sets of $X$, and every monotone map $f\colon X\to Y$ to the preimage map $Df\colon DY\to DX$.
    Let $\Sg$ consist of all embbeddings of $\mathbf{Pos}$ and commutative squares
     $\adjustbox{scale=0.70}{\begin{tikzcd}
	X & Y \\
	Z & W
	\arrow["m", from=1-1, to=1-2]
	\arrow["u"', from=1-1, to=2-1]
	\arrow["v", from=1-2, to=2-2]
	\arrow["n", from=2-1, to=2-2]
\end{tikzcd}}$
 such that $(Du)^*\cdot Dm=Dn\cdot (Dv)^*$, where $(-)^*$ stands for the left adjoint.
 Equivalently, $\Sigma$ consists of the squares such that, for every $y\in Y$ and $z\in Z$, if $n(z)\leq v(y)$ then there is some $x\in X$ with $z\leq u(x)$ and $m(x)\leq y$.
 Then $\Sg$ admits a  calculus of left lax fractions.
 \item \textbf{Embeddings in $\mathbf{Loc}$.}  Let $\mathbf{Loc}$ be the category of locales (i.e., frames) and localic maps, i.e., maps preserving all infima and whose left adjoints preserve finite meets. Recall that embeddings in $\mathbf{Loc}$ are precisely the localic maps $h$ that are split monomorphisms by their left adjoint: $h^*h = \mathrm{id}$.
 Let $\Sg$ consist of all embeddings and commutative squares
     $$\adjustbox{scale=0.80}{\begin{tikzcd}
	X & Y \\
	Z & W
	\arrow["m", from=1-1, to=1-2]
	\arrow["u"', from=1-1, to=2-1]
	\arrow["v", from=1-2, to=2-2]
	\arrow["n", from=2-1, to=2-2]
\end{tikzcd}}$$
 satisfying the Beck–Chevalley condition $v^*n =mu^*$. Then $\Sg$ admits a  calculus of left lax fractions.

 \item \textbf{Flat embeddings in $\mathbf{Loc}$.} In the following two cases we have also a subcategory of $\mathbf{Loc}^{\to}$ which admits a calculus of left  lax fractions:

 \begin{itemize} \item All dense embeddings and squares as above.
 \item All flat embeddings and squares as above.
 \end{itemize}
 \end{enumerate}

\item \textbf{Lax epimorphisms.}  Recall that in a 2-category $\catx$ a 1-cell $f\colon X\to Y$ is said to be a {\em lax epimorphism} (or {\em co-fully faithful}) if, for every object $Z$, the functor $\catx(Y,Z)\xrightarrow{(-)\circ f} \catx(X,Z)$ is fully faithful, i.e., every 2-cell $\alpha\colon g_1f\Rightarrow g_2f$ factors uniquely through $f$. Lax epimorphisms are stable under bi-pushouts (see \cite{lucatellisousa}).

Let $\catx$ have bi-pushouts and let $\Sg$ be the full subcategory of $\catx^{\to\cong}$ of all lax epimorphisms. Then, $\Sg$ admits a calculus of left  lax fractions --- that is, it fulfills rules (Id), (Rep), (Comp), (Sq), (Eq1) and (Eq2) of \cref{def:Full-Sigma}
above. It is easy to see all the rules hold. In particular, (Sq) is obtained as a bi-pushout.

	\item \textbf{Fully faithful functors.}  Let $\Cat$ be the 2-category of small categories. Let $\Sg$ consist of all fully faithful functors and squares
     $$\adjustbox{scale=0.80}{\begin{tikzcd}
	{\mathbb{A}} & {\mathbb{B}} \\
	{\mathbb{C}} & {\mathbb{D}}
	\arrow["M", from=1-1, to=1-2]
	\arrow["F"', from=1-1, to=2-1]
	\arrow["G", from=1-2, to=2-2]
	\arrow["\delta", shorten <=4pt, shorten >=4pt, Rightarrow, from=2-1, to=1-2]
	\arrow["N", from=2-1, to=2-2]
\end{tikzcd}}$$
  (with $M$ and $N$ fully faithful and $\delta$ invertible)  such that if $(\bar{Y}, \kappa)$ is a left Kan extension of the Yoneda embedding $Y\colon \mathbb{C} \to [\mathbb{C}^{op}, \mathbf{Set}]$ along $N$ then $(\bar{Y}G, (\bar{Y}\circ \delta)\cdot (\kappa\circ F))$ is a left Kan extension of $YF$ along $M$. This class admits a calculus of left lax fractions.

\item \textbf{Strict monoidal functors.} Let $\Cat(\Mon)$ be the 2-category of categories internal to the category $\Mon$ of monoids.  A category in $\Mon$ is the same thing as a strict monoidal category, while an internal functor is a strict monoidal functor. The class of strict monoidal functors whose underlying functors have fully faithful right adjoints and the pseudo-commutative squares whose underlying functors form Beck--Chevalley squares admits a calculus of right lax fractions.
\end{enumerate}

The last two examples, as well as other examples and the corresponding bicategories of lax fractions, will be studied in detail in the paper \cite{manuellsousa2}, where we will also explore the relation between the calculus of lax fractions and lax-idempotent monads and Kan extensions.
\end{example}

We end this section by giving some rules for working with $\Sigma$-squares that will be very ueful in what follows.
\begin{proposition}\label{pro:useful_rules}
	Let $\Sigma$  be a subcategory of $\catx^{\to\cong}$ admitting a  calculus of left lax fractions. Then it satisfies the following rules:
	\begin{enumerate}

		\item[{\rm Rule 1.}] Every square obtained as a finite (horizontal and vertical) composition of $\Sigma$-squares is a $\Sigma$-square.
		\item[{\rm Rule 2a.}] For composable $r,s\in \Sigma$, we have the $\Sigma$-square
		\[\begin{tikzcd}
			\bullet & \bullet \\
			\bullet & \bullet \rlap{\,.}
			\arrow["r", from=1-1, to=1-2]
			\arrow[""{name=0, anchor=center, inner sep=0}, "s", from=1-2, to=2-2]
			\arrow["sr"', from=2-1, to=2-2]
			\arrow[""{name=1, anchor=center, inner sep=0}, Rightarrow, no head, from=1-1, to=2-1]
			\arrow["{\SIGMA^{\id}}"{description}, draw=none, from=1, to=0]
		\end{tikzcd}\]

		\item[{\rm Rule 2b.}] Given
		$\begin{tikzcd}
			\bullet & \bullet \\
			\bullet & \bullet
			\arrow["r", from=1-1, to=1-2]
			\arrow[""{name=0, anchor=center, inner sep=0}, "s"', from=1-1, to=2-1]
			\arrow[""{name=1, anchor=center, inner sep=0}, "u", from=1-2, to=2-2]
			\arrow["t"', from=2-1, to=2-2]
			\arrow["{\SIGMA^{\delta}}"{description}, draw=none, from=0, to=1]
		\end{tikzcd} \;\; $  with $\; s\in \Sigma$,
		we have
		$\begin{tikzcd}
			\bullet & \bullet \\
			\bullet & \bullet
			\arrow["r", from=1-1, to=1-2]
			\arrow[""{name=0, anchor=center, inner sep=0}, Rightarrow, no head, from=1-1, to=2-1]
			\arrow[""{name=1, anchor=center, inner sep=0}, "u", from=1-2, to=2-2]
			\arrow["ts"', from=2-1, to=2-2]
			\arrow["{\SIGMA^{\delta}}"{description}, draw=none, from=0, to=1]
		\end{tikzcd}\; \; \; $ and $\; \; \;  %
		\begin{tikzcd}
			\bullet & \bullet \\
			\bullet & \bullet
			\arrow["s", from=1-1, to=1-2]
			\arrow[""{name=0, anchor=center, inner sep=0}, Rightarrow, no head, from=1-1, to=2-1]
			\arrow["ts"', from=2-1, to=2-2]
			\arrow[""{name=1, anchor=center, inner sep=0}, "t", from=1-2, to=2-2]
			\arrow["{\SIGMA^{\id}}"{description}, draw=none, from=0, to=1]
		\end{tikzcd}$.

		\item[{\rm Rule 3a.}] If we have diagrams of the form

		\[\begin{tikzcd}
			A && B \\
			\\
			C && D
			\arrow["r", from=1-1, to=1-3]
			\arrow[""{name=0, anchor=center, inner sep=0}, "f"', from=1-1, to=3-1]
			\arrow[""{name=1, anchor=center, inner sep=0}, "a"', curve={height=18pt}, from=1-3, to=3-3]
			\arrow[""{name=2, anchor=center, inner sep=0}, "b", curve={height=-18pt}, from=1-3, to=3-3]
			\arrow["s"', from=3-1, to=3-3]
			\arrow["{{{{\text{\Large $\Sigma$}}}}}"{description, pos=0.6}, draw=none, from=0, to=1]
			\arrow["\alpha", between={0.2}{0.8}, Rightarrow, from=1, to=2]
		\end{tikzcd}
		\; \; \quad \text{and}\;\; \quad
		\begin{tikzcd}
			A && B \\
			\\
			C && D
			\arrow["r", from=1-1, to=1-3]
			\arrow[""{name=0, anchor=center, inner sep=0}, "f"', from=1-1, to=3-1]
			\arrow[""{name=1, anchor=center, inner sep=0}, "b"', curve={height=18pt}, from=1-3, to=3-3]
			\arrow[""{name=2, anchor=center, inner sep=0}, "a", curve={height=-18pt}, from=1-3, to=3-3]
			\arrow["s"', from=3-1, to=3-3]
			\arrow["{{{{{\text{\Large $\Sigma$}}}}}}"{description, pos=0.6}, draw=none, from=0, to=1]
			\arrow["\beta", between={0.2}{0.8}, Rightarrow, from=1, to=2]
		\end{tikzcd}
		\]
		with $\alpha \circ r=(\beta\circ r)^{-1}$, then there is a 1-cell $d\colon D\to E$  such that
		\[\begin{tikzcd}
			C & D \\
			C & E
			\arrow["s", from=1-1, to=1-2]
			\arrow[""{name=0, anchor=center, inner sep=0}, "d", from=1-2, to=2-2]
			\arrow["ds"', from=2-1, to=2-2]
			\arrow[""{name=1, anchor=center, inner sep=0}, Rightarrow, no head, from=1-1, to=2-1]
			\arrow["{\text{\Large $\Sigma$}}"{description}, draw=none, from=1, to=0]
		\end{tikzcd}\] and $d \circ \alpha=(d\circ \beta)^{-1}$.

		\item[{\rm Rule 3b.}] Given $\Sigma$-squares
		\[\begin{tikzcd}
			A & B \\
			C & D
			\arrow["r", from=1-1, to=1-2]
			\arrow[from=1-1, to=2-1]
			\arrow[""{name=0, anchor=center, inner sep=0}, "f"', from=1-1, to=2-1]
			\arrow[""{name=1, anchor=center, inner sep=0}, "a", from=1-2, to=2-2]
			\arrow["s"', from=2-1, to=2-2]
			\arrow["{{\SIGMA^{\delta}}}"{description}, draw=none, from=0, to=1]
		\end{tikzcd}
		\; \; \quad \text{and}\;\; \quad
		\begin{tikzcd}
			A & B \\
			C & D
			\arrow["r", from=1-1, to=1-2]
			\arrow[""{name=0, anchor=center, inner sep=0}, "f"', from=1-1, to=2-1]
			\arrow[""{name=1, anchor=center, inner sep=0}, "b", from=1-2, to=2-2]
			\arrow["s"', from=2-1, to=2-2]
			\arrow["{{\SIGMA^{\epsilon}}}"{description}, draw=none, from=0, to=1]
		\end{tikzcd}\]
		there is a 1-cell $d\colon D\to E$ and an invertible 2-cell $\gamma\colon da\Rightarrow db$ such that  \[\begin{tikzcd}
			C & D \\
			C & E
			\arrow["s", from=1-1, to=1-2]
			\arrow[""{name=0, anchor=center, inner sep=0}, "d", from=1-2, to=2-2]
			\arrow["ds"', from=2-1, to=2-2]
			\arrow[""{name=1, anchor=center, inner sep=0}, Rightarrow, no head, from=1-1, to=2-1]
			\arrow["{\SIGMA^{\id}}"{description}, draw=none, from=1, to=0]
		\end{tikzcd}\] and  $(\gamma\circ r)\cdot (d\circ \delta)=d\circ \epsilon$.

		\item[{\rm Rule 4.}]
		If we have two  diagrams of the form
\[\begin{tikzcd}
	{B_1} & {I_1} && {B_1} & I_1 \\
	A & X && A & Y \\
	{B_2} & {I_2} && {B_2} & {I_2}
	\arrow["{r_1}", from=1-1, to=1-2]
	\arrow[""{name=0, anchor=center, inner sep=0}, "{b_1}"', from=1-1, to=2-1]
	\arrow[""{name=1, anchor=center, inner sep=0}, "{{{x_1}}}", from=1-2, to=2-2]
	\arrow["{r_1}", from=1-4, to=1-5]
	\arrow[""{name=2, anchor=center, inner sep=0}, "{b_1}"', from=1-4, to=2-4]
	\arrow[""{name=3, anchor=center, inner sep=0}, "{{{y_1}}}", from=1-5, to=2-5]
	\arrow["{x_3}", from=2-1, to=2-2]
	\arrow["{y_3}", from=2-4, to=2-5]
	\arrow[""{name=4, anchor=center, inner sep=0}, "{b_2}", from=3-1, to=2-1]
	\arrow["{r_2}"', from=3-1, to=3-2]
	\arrow[""{name=5, anchor=center, inner sep=0}, "{{{x_2}}}"', from=3-2, to=2-2]
	\arrow[""{name=6, anchor=center, inner sep=0}, "{b_2}", from=3-4, to=2-4]
	\arrow["{r_2}"', from=3-4, to=3-5]
	\arrow[""{name=7, anchor=center, inner sep=0}, "{{{y_2}}}"', from=3-5, to=2-5]
	\arrow["{\SIGMA^{\delta_1}}"{description}, draw=none, from=0, to=1]
	\arrow["{\SIGMA^{\epsilon_1}}"{description}, draw=none, from=2, to=3]
	\arrow["{\SIGMA^{\delta_2}}"{description}, draw=none, from=4, to=5]
	\arrow["{\SIGMA^{\epsilon_2}}"{description}, draw=none, from=6, to=7]
\end{tikzcd}\]
		then there are morphisms
  $X\xrightarrow{d_x}D\xleftarrow{d_y}Y$ and invertible 2-cells $\gamma_i\colon d_xx_i\Rightarrow d_yy_i$, for $i=1,2$, such that we have the following $\Sigma$-squares formed from $d_x$ and $d_y$ and an equality of pasting diagrams for the top and bottom parts of the following diagram.

\[\begin{tikzcd}
	{B_1} & {I_1} && {B_1} & {I_1} \\
	A & X && A & Y \\
	A & D && A & D \\
	A & X && A & D \\
	{B_2} & {I_2} && {B_2} & {I_2}
	\arrow["{{r_1}}", from=1-1, to=1-2]
	\arrow[""{name=0, anchor=center, inner sep=0}, "{b_1}"', from=1-1, to=2-1]
	\arrow[""{name=1, anchor=center, inner sep=0}, "{{{{{{x_1}}}}}}", from=1-2, to=2-2]
	\arrow[""{name=2, anchor=center, inner sep=0}, "{{{{d_yy_1}}}}", curve={height=-24pt}, from=1-2, to=3-2]
	\arrow["{{r_1}}", from=1-4, to=1-5]
	\arrow[""{name=3, anchor=center, inner sep=0}, "{b_1}"', from=1-4, to=2-4]
	\arrow[""{name=4, anchor=center, inner sep=0}, "{{{{{{y_1}}}}}}", from=1-5, to=2-5]
	\arrow["{{x_3}}", from=2-1, to=2-2]
	\arrow[""{name=5, anchor=center, inner sep=0}, equals, from=2-1, to=3-1]
	\arrow[""{name=6, anchor=center, inner sep=0}, "{{{{{{d_x}}}}}}", from=2-2, to=3-2]
	\arrow["{{y_3}}", from=2-4, to=2-5]
	\arrow[""{name=7, anchor=center, inner sep=0}, equals, from=2-4, to=3-4]
	\arrow[""{name=8, anchor=center, inner sep=0}, "{{{{{{d_y}}}}}}", from=2-5, to=3-5]
	\arrow["u", from=3-1, to=3-2]
	\arrow[""{name=9, anchor=center, inner sep=0}, equals, from=3-1, to=4-1]
	\arrow["{\text{\large $=$}}"{description, pos=0.6}, draw=none, from=3-2, to=3-4]
	\arrow["u", from=3-4, to=3-5]
	\arrow[""{name=10, anchor=center, inner sep=0}, equals, from=3-4, to=4-4]
	\arrow["{{x_3}}", from=4-1, to=4-2]
	\arrow[""{name=11, anchor=center, inner sep=0}, "{{{{{{d_x}}}}}}"', from=4-2, to=3-2]
	\arrow["{{y_3}}", from=4-4, to=4-5]
	\arrow[""{name=12, anchor=center, inner sep=0}, "{{d_y}}"', from=4-5, to=3-5]
	\arrow[""{name=13, anchor=center, inner sep=0}, "{{b_2}}", from=5-1, to=4-1]
	\arrow["{{r_2}}"', from=5-1, to=5-2]
	\arrow[""{name=14, anchor=center, inner sep=0}, "{{{{d_yy_2}}}}"', curve={height=24pt}, from=5-2, to=3-2]
	\arrow[""{name=15, anchor=center, inner sep=0}, "{{{{{{x_2}}}}}}"', from=5-2, to=4-2]
	\arrow[""{name=16, anchor=center, inner sep=0}, "{{b_2}}", from=5-4, to=4-4]
	\arrow["{{r_2}}"', from=5-4, to=5-5]
	\arrow[""{name=17, anchor=center, inner sep=0}, "{{{{{{y_2}}}}}}"', from=5-5, to=4-5]
	\arrow["{{\SIGMA^{\delta_1}}}"{description}, draw=none, from=0, to=1]
	\arrow["{{\SIGMA^{\epsilon_1}}}"{description}, draw=none, from=3, to=4]
	\arrow["{{\SIGMA^{\phi}}}"{description}, draw=none, from=5, to=6]
	\arrow["{{\gamma_1}}", between={0}{0.8}, Rightarrow, from=2-2, to=2]
	\arrow["{{\SIGMA^{\chi}}}"{description}, draw=none, from=7, to=8]
	\arrow["{{\SIGMA^{\phi}}}"{description}, draw=none, from=9, to=11]
	\arrow["{{\SIGMA^{\chi}}}"{description}, draw=none, from=10, to=12]
	\arrow["{{\gamma_2}}", between={0}{0.8}, Rightarrow, from=4-2, to=14]
	\arrow["{{\SIGMA^{\delta_2}}}"{description}, draw=none, from=13, to=15]
	\arrow["{{\SIGMA^{\epsilon_2}}}"{description}, draw=none, from=16, to=17]
\end{tikzcd}\]
\item[{\rm Rule 4'.}] If we have $\Sigma$-squares as the two ones on the top of the diagrams
\[\begin{tikzcd}
	B & I && B & I \\
	A & X && A & Y \\
	A & D & {} & A & D
	\arrow["r", from=1-1, to=1-2]
	\arrow[""{name=0, anchor=center, inner sep=0}, "b"', from=1-1, to=2-1]
	\arrow[""{name=1, anchor=center, inner sep=0}, "x", from=1-2, to=2-2]
	\arrow[""{name=2, anchor=center, inner sep=0}, "{{{{{d_yy}}}}}", curve={height=-24pt}, from=1-2, to=3-2]
	\arrow["r", from=1-4, to=1-5]
	\arrow[""{name=3, anchor=center, inner sep=0}, "b"', from=1-4, to=2-4]
	\arrow[""{name=4, anchor=center, inner sep=0}, "{{y}}", from=1-5, to=2-5]
	\arrow[from=2-1, to=2-2]
	\arrow[""{name=5, anchor=center, inner sep=0}, equals, from=2-1, to=3-1]
	\arrow[""{name=6, anchor=center, inner sep=0}, "{{{{{{{d_x}}}}}}}", from=2-2, to=3-2]
	\arrow[from=2-4, to=2-5]
	\arrow[""{name=7, anchor=center, inner sep=0}, equals, from=2-4, to=3-4]
	\arrow[""{name=8, anchor=center, inner sep=0}, "{{{{{{{d_y}}}}}}}", from=2-5, to=3-5]
	\arrow["u", from=3-1, to=3-2]
	\arrow["u", from=3-4, to=3-5]
	\arrow["{{\SIGMA^{\delta}}}"{description}, draw=none, from=0, to=1]
	\arrow["{\text{\large $=$}}"{pos=0.7}, shift right, draw=none, from=2, to=2-4]
	\arrow["{{\SIGMA^{\epsilon}}}"{description}, draw=none, from=3, to=4]
	\arrow["{{\SIGMA^{\phi}}}"{description}, draw=none, from=5, to=6]
	\arrow["{{\gamma}}", between={0}{0.8}, Rightarrow, from=2-2, to=2]
	\arrow["{{\SIGMA^{\chi}}}"{description}, draw=none, from=7, to=8]
\end{tikzcd}\]
		then there are 1-cells
  $X\xrightarrow{d_x}D\xleftarrow{d_y}Y$ forming $\Sigma$-squares as in the bottom of the diagrams and an invertible 2-cell $\gamma: d_xx\Rightarrow d_yy$ forming the above equality of pasting diagrams.

 \item[{\rm Rule 5.}] For every two spans $X\xleftarrow{v}B\xrightarrow{f}C$ and $X\xleftarrow{v}B\xrightarrow{g}C$ with $v\in \Sigma$, there is a morphism $w\colon C\to D$ and $\Sigma$-squares of the form
	\begin{center}
	\begin{minipage}{.33\textwidth}
	\[\begin{tikzcd}
	B & X \\
	C & D
	\arrow["v", from=1-1, to=1-2]
	\arrow[""{name=0, anchor=center, inner sep=0}, "f"', from=1-1, to=2-1]
	\arrow[""{name=1, anchor=center, inner sep=0}, "{{f'}}", from=1-2, to=2-2]
	\arrow["w"', from=2-1, to=2-2]
	\arrow["{{\SIGMA^{\delta}}}"{description}, draw=none, from=0, to=1]
	\end{tikzcd}\]
	\end{minipage}
	\quad \text{and} \quad
	\begin{minipage}{.33\textwidth}
	\[\begin{tikzcd}
	B & X \\
	C & {D \rlap{\,.}}
	\arrow["v", from=1-1, to=1-2]
	\arrow[""{name=0, anchor=center, inner sep=0}, "g"', from=1-1, to=2-1]
	\arrow[""{name=1, anchor=center, inner sep=0}, "{{g'}}", from=1-2, to=2-2]
	\arrow["w"', from=2-1, to=2-2]
	\arrow["{{\SIGMA^{\epsilon}}}"{description}, draw=none, from=0, to=1]
	\end{tikzcd}\]
	\end{minipage}
	\end{center}

		\item[{\rm Rule 6.}] Given a diagram
		\[\begin{tikzcd}
			X & B & C
			\arrow["v"', from=1-2, to=1-1]
			\arrow[""{name=0, anchor=center, inner sep=0}, "g"', curve={height=12pt}, from=1-2, to=1-3]
			\arrow[""{name=1, anchor=center, inner sep=0}, "f", curve={height=-12pt}, from=1-2, to=1-3]
			\arrow["\beta", shorten <=3pt, shorten >=3pt, Rightarrow, from=1, to=0]
		\end{tikzcd}\]
		with $v\in \Sigma$, there is a 1-cell $w\colon C\to D$,  $\Sigma$-squares of the form
		\begin{center}
		\begin{minipage}{.33\textwidth}
		\[\begin{tikzcd}
		B & X \\
		C & D
		\arrow["v", from=1-1, to=1-2]
		\arrow[""{name=0, anchor=center, inner sep=0}, "f"', from=1-1, to=2-1]
		\arrow[""{name=1, anchor=center, inner sep=0}, "{{f'}}", from=1-2, to=2-2]
		\arrow["w"', from=2-1, to=2-2]
		\arrow["{{\SIGMA^{\delta}}}"{description}, draw=none, from=0, to=1]
		\end{tikzcd}\]
		\end{minipage}
		\quad  and \quad
		\begin{minipage}{.33\textwidth}
		\[\begin{tikzcd}
		B & X \\
		C & {D \rlap{\,,}}
		\arrow["v", from=1-1, to=1-2]
		\arrow[""{name=0, anchor=center, inner sep=0}, "g"', from=1-1, to=2-1]
		\arrow[""{name=1, anchor=center, inner sep=0}, "{{g'}}", from=1-2, to=2-2]
		\arrow["w"', from=2-1, to=2-2]
		\arrow["{{\SIGMA^{\epsilon}}}"{description}, draw=none, from=0, to=1]
		\end{tikzcd}\]
		\end{minipage}
		\end{center}
		and a 2-cell $\beta'\colon f'\Rightarrow g'$ satisfying the following equality of pasting diagrams.
		\[\begin{tikzcd}
		B & X &&& B & X \\
		C & D &&& C & D
		\arrow["v", from=1-1, to=1-2]
		\arrow["f"', from=1-1, to=2-1]
		\arrow[""{name=0, anchor=center, inner sep=0}, "{{{f'}}}"', from=1-2, to=2-2]
		\arrow[""{name=1, anchor=center, inner sep=0}, "{{{g'}}}", curve={height=-30pt}, from=1-2, to=2-2]
		\arrow["v", from=1-5, to=1-6]
		\arrow[""{name=2, anchor=center, inner sep=0}, "g", from=1-5, to=2-5]
		\arrow[""{name=3, anchor=center, inner sep=0}, "f"', curve={height=30pt}, from=1-5, to=2-5]
		\arrow["{{{g'}}}", from=1-6, to=2-6]
		\arrow["\delta", between={0.2}{0.8}, Rightarrow, from=2-1, to=1-2]
		\arrow["w"', from=2-1, to=2-2]
		\arrow["\epsilon", between={0.2}{0.8}, Rightarrow, from=2-5, to=1-6]
		\arrow["w"', from=2-5, to=2-6]
		\arrow["{{{\beta'}}}"'{pos=0.4}, shift left, between={0.2}{0.8}, Rightarrow, from=0, to=1]
		\arrow["{{{=}}}"{marking, allow upside down}, draw=none, from=1, to=3]
		\arrow["\beta"', shift left, between={0.2}{0.8}, Rightarrow, from=3, to=2]
		\end{tikzcd}\; .\]
	\end{enumerate}
\end{proposition}

\begin{proof}
	\begin{enumerate}
		\item[1.] Horizontal composition of $\Sigma$-squares is given by Composition, the vertical one is the composition in the subcategory $\Sigma$.
		\item[2a.] This is obtained by using Vertical Repletion, Identity and Composition:
		\[\begin{tikzcd}
			\bullet & \bullet & \bullet & \bullet & \bullet \\
			\bullet & \bullet & \bullet & \bullet & \bullet \rlap{\,.}
			\arrow[""{name=0, anchor=center, inner sep=0}, Rightarrow, no head, from=1-1, to=2-1]
			\arrow["r"', from=2-1, to=2-2]
			\arrow[""{name=1, anchor=center, inner sep=0}, "s", from=1-3, to=2-3]
			\arrow["s"', from=2-2, to=2-3]
			\arrow[""{name=2, anchor=center, inner sep=0}, Rightarrow, no head, from=1-4, to=2-4]
			\arrow["r", from=1-4, to=1-5]
			\arrow[""{name=3, anchor=center, inner sep=0}, "s", from=1-5, to=2-5]
			\arrow["sr"', from=2-4, to=2-5]
			\arrow["r", from=1-1, to=1-2]
			\arrow[""{name=4, anchor=center, inner sep=0}, Rightarrow, no head, from=1-2, to=2-2]
			\arrow[Rightarrow, no head, from=1-2, to=1-3]
			\arrow["{\SIGMA^{\id}}"{description}, draw=none, from=2, to=3]
			\arrow["{=}"{description}, draw=none, from=1, to=2]
			\arrow["{\SIGMA^{\id}}"{description}, draw=none, from=0, to=4]
			\arrow["{\SIGMA^{\id}}"{description}, draw=none, from=4, to=1]
		\end{tikzcd}\]

		\item[2b.] Observe that
		\[\begin{tikzcd}
			\bullet & \bullet & \bullet & \bullet & \bullet \\
			\bullet & \bullet & \bullet & \bullet & \bullet
			\arrow[Rightarrow, no head, from=1-1, to=1-2]
			\arrow[""{name=0, anchor=center, inner sep=0}, Rightarrow, no head, from=1-1, to=2-1]
			\arrow["s"', from=2-1, to=2-2]
			\arrow[""{name=1, anchor=center, inner sep=0}, "s", from=1-2, to=2-2]
			\arrow["r", from=1-2, to=1-3]
			\arrow[""{name=2, anchor=center, inner sep=0}, "u", from=1-3, to=2-3]
			\arrow["t"', from=2-2, to=2-3]
			\arrow[""{name=3, anchor=center, inner sep=0}, Rightarrow, no head, from=1-4, to=2-4]
			\arrow["r", from=1-4, to=1-5]
			\arrow[""{name=4, anchor=center, inner sep=0}, "u", from=1-5, to=2-5]
			\arrow["ts"', from=2-4, to=2-5]
			\arrow["{=}"{description}, draw=none, from=2, to=3]
			\arrow["{\SIGMA^{\id}}"{description}, draw=none, from=0, to=1]
			\arrow["{\SIGMA^{\delta}}"{description}, draw=none, from=1, to=2]
			\arrow["{\SIGMA^{\delta}}"{description}, draw=none, from=3, to=4]
		\end{tikzcd}\, .\]
		The other $\Sigma$-square is obtained by Rule 2a, since $s,t\in \Sigma$.
		
		\item[3a.] Since we have $(\beta\cdot \alpha)\circ r=\id_a\circ r$ and $(\alpha\cdot \beta)\circ r=\id_b\circ r$, by Equification twice we obtain, successively, 1-cells $d_1\colon D\to D_1$ and $d_2\colon D_1\to D_2$ such that
			\[\begin{tikzcd}
				C & D && C & {D_1} \\
				C & {D_1} && C & {D_2}
				\arrow["s", from=1-1, to=1-2]
				\arrow[""{name=0, anchor=center, inner sep=0}, Rightarrow, no head, from=1-1, to=2-1]
				\arrow[""{name=1, anchor=center, inner sep=0}, "{d_1}", from=1-2, to=2-2]
				\arrow["{d_1s}", from=1-4, to=1-5]
				\arrow[""{name=2, anchor=center, inner sep=0}, Rightarrow, no head, from=1-4, to=2-4]
				\arrow[""{name=3, anchor=center, inner sep=0}, "{d_2}", from=1-5, to=2-5]
				\arrow["{d_1s}"', from=2-1, to=2-2]
				\arrow["{d_2d_1s}"', from=2-4, to=2-5]
				\arrow["\SIGMA"{description}, draw=none, from=0, to=1]
				\arrow["\SIGMA"{description}, draw=none, from=2, to=3]
			\end{tikzcd}\]
			and, also, first $d_1\circ (\beta\cdot\alpha)=d_1\circ \id_a$ and, secondly, $d_2\circ d_1\circ (\alpha\cdot \beta)=d_2\circ d_1\circ \id_b$.
			Thus, the 1-cell $d=d_2d_1$ is as desired.

			\item[3b.] We have the 2-cell $\xymatrix{ar\ar@{=>}[r]^{\delta^{-1}}&sf\ar@{=>}[r]^{\epsilon}&br}$. Then, using Equi-insertion, there is $d_1\colon D\to D_1$ and $\gamma_1\colon d_1a\Rightarrow d_1b$ such that $\begin{tikzcd}
					C & D \\
					C & {D_1}
					\arrow["s", from=1-1, to=1-2]
					\arrow[""{name=0, anchor=center, inner sep=0}, Rightarrow, no head, from=1-1, to=2-1]
					\arrow[""{name=1, anchor=center, inner sep=0}, "{d_1}", from=1-2, to=2-2]
					\arrow["{d_1s}"', from=2-1, to=2-2]
					\arrow["{\SIGMA^{\id}}"{description}, draw=none, from=0, to=1]
				\end{tikzcd}\,$
				and $\gamma_1\circ r=d_1\circ (\epsilon\cdot \delta^{-1})$. Analogously, departing from $\xymatrix{d_1br\ar@{=>}[r]^{d_1\epsilon^{-1}}&d_1sf\ar@{=>}[r]^{d_1\delta}&d_1ar}$, we obtain $d_2\colon D_1\to D_2$ and a 2-cell $\gamma_2\colon d_2d_1b\Rightarrow d_2d_1a$ such that
				$\begin{tikzcd}
					C & D \\
					C & {D_2}
					\arrow["d_1s", from=1-1, to=1-2]
					\arrow[""{name=0, anchor=center, inner sep=0}, Rightarrow, no head, from=1-1, to=2-1]
					\arrow[""{name=1, anchor=center, inner sep=0}, "{d_2}", from=1-2, to=2-2]
					\arrow["{d_2d_1s}"', from=2-1, to=2-2]
					\arrow["{\SIGMA^{\id}}"{description}, draw=none, from=0, to=1]
				\end{tikzcd}$
				and $\gamma_2 \circ r=d_2\circ ((d_1\circ \delta)\cdot (d_1\circ \epsilon^{-1}))$.
				This way, we have the $\Sigma$-square
				\[\begin{tikzcd}
					C & D \\
					C & {D_2}
					\arrow["s", from=1-1, to=1-2]
					\arrow[""{name=0, anchor=center, inner sep=0}, Rightarrow, no head, from=1-1, to=2-1]
					\arrow[""{name=1, anchor=center, inner sep=0}, "{d_2d_1}", from=1-2, to=2-2]
					\arrow["{d_2d_1s}"', from=2-1, to=2-2]
					\arrow["{\SIGMA^{\id}}"{description}, draw=none, from=0, to=1]
				\end{tikzcd}\]
				and the 2-cell composition $\xymatrix{d_2d_1a\ar@{=>}[r]^{d_2\gamma_1}&d_2d_1b\ar@{=>}[r]^{\gamma_2}&d_2d_1a}$
				such that $(\gamma_2\circ r)^{-1}=d_2\circ \gamma_1\circ r$. Using the Rule 3a, we conclude that there is a map $d_3\colon D_2\to D_3$ such that $\begin{tikzcd}
					C & D \\
					C & {D_2}
					\arrow["d_2d_1s", from=1-1, to=1-2]
					\arrow[""{name=0, anchor=center, inner sep=0}, Rightarrow, no head, from=1-1, to=2-1]
					\arrow[""{name=1, anchor=center, inner sep=0}, "{d_3}", from=1-2, to=2-2]
					\arrow["{d_3d_2d_1s}"', from=2-1, to=2-2]
					\arrow["{\SIGMA^{\id}}"{description}, draw=none, from=0, to=1]
				\end{tikzcd}$
				and $(d_3\circ \gamma_2)^{-1}=d_3d_2\gamma_1$. Hence, the 1-cell $d=d_3d_2d_1$ and the invertible 2-cell $\gamma=d_3d_2\gamma_1\colon da\Rightarrow db$ are as desired.
		
		\item[4.] We obtain successively:
		
	\begin{enumerate}[label=(\roman*)]
		\item
	\hskip3mm	$\begin{tikzcd}
			A & X \\
			Y & Z
			\arrow["{x_3}", from=1-1, to=1-2]
			\arrow[""{name=0, anchor=center, inner sep=0}, "{y_3}"', from=1-1, to=2-1]
			\arrow[""{name=1, anchor=center, inner sep=0}, "{a'}", from=1-2, to=2-2]
			\arrow["a"', from=2-1, to=2-2]
			\arrow["{\SIGMA^{\phi}}"{description}, draw=none, from=0, to=1]
		\end{tikzcd}$ {\hfill by Square}
		
		\vskip2mm
		
	\hskip10mm \rule{250pt}{0.5pt}
		
		\vskip2mm
		
		\item  \hskip3mm $\begin{tikzcd}
			A & X \\
			A & Z
			\arrow["{{x_3}}", from=1-1, to=1-2]
			\arrow[""{name=0, anchor=center, inner sep=0}, Rightarrow, no head, from=1-1, to=2-1]
			\arrow[""{name=1, anchor=center, inner sep=0}, "{{a'}}", from=1-2, to=2-2]
			\arrow["{ay_3}"', from=2-1, to=2-2]
			\arrow["{{\SIGMA^{\phi}}}"{description}, draw=none, from=0, to=1]
		\end{tikzcd}$ \hskip5mm and \hskip5mm $\begin{tikzcd}
		A & Y \\
		A & Z
		\arrow["{y_3}", from=1-1, to=1-2]
		\arrow[""{name=0, anchor=center, inner sep=0}, Rightarrow, no head, from=1-1, to=2-1]
		\arrow[""{name=1, anchor=center, inner sep=0}, "a", from=1-2, to=2-2]
		\arrow["{{ay_3}}"', from=2-1, to=2-2]
		\arrow["{{{\SIGMA^{\id}}}}"{description}, draw=none, from=0, to=1]
		\end{tikzcd}$  {\hfill by (i) and Rule 2b}
		
			\vskip2mm
		
		\hskip10mm \rule{250pt}{0.5pt}
		
		\vskip2mm
		
		\item
		\hskip3mm
		$\begin{tikzcd}
			B_i & I_i \\
			A & X \\
			A & Z
			\arrow["r_i", from=1-1, to=1-2]
			\arrow[""{name=0, anchor=center, inner sep=0}, "b_i"', from=1-1, to=2-1]
			\arrow[""{name=1, anchor=center, inner sep=0}, "{x_i}", from=1-2, to=2-2]
			\arrow["{x_3}"', from=2-1, to=2-2]
			\arrow[""{name=2, anchor=center, inner sep=0}, Rightarrow, no head, from=2-1, to=3-1]
			\arrow[""{name=3, anchor=center, inner sep=0}, "{a'}", from=2-2, to=3-2]
			\arrow["{ay_3}"', from=3-1, to=3-2]
			\arrow["{\SIGMA^{\delta_i}}"{description}, draw=none, from=0, to=1]
			\arrow["{\SIGMA^{\phi}}"{description}, draw=none, from=2, to=3]
			\end{tikzcd}$ \hskip2mm and \hskip2mm
			 $\begin{tikzcd}
				B_i & I_i \\
				A & Y \\
				A & Z
				\arrow["r_i", from=1-1, to=1-2]
				\arrow[""{name=0, anchor=center, inner sep=0}, "b_i"', from=1-1, to=2-1]
				\arrow[""{name=1, anchor=center, inner sep=0}, "{y_i}", from=1-2, to=2-2]
				\arrow["{y_3}"', from=2-1, to=2-2]
				\arrow[""{name=2, anchor=center, inner sep=0}, Rightarrow, no head, from=2-1, to=3-1]
				\arrow[""{name=3, anchor=center, inner sep=0}, "a", from=2-2, to=3-2]
				\arrow["{{ay_3}}"', from=3-1, to=3-2]
				\arrow["{\SIGMA^{\epsilon_{i}}}"{description}, draw=none, from=0, to=1]
					\arrow["{\SIGMA^{\id}}"{description}, draw=none, from=2, to=3]
				\end{tikzcd}\;$, $i=1,2$
				{\hfill from the initial data, (ii) and Rule 1}
				
					\vskip2mm
				
				\hskip10mm \rule{250pt}{0.5pt}
				
				\vskip2mm
				
				\item
				\hskip3mm
			$\begin{tikzcd}
	{B_1} & {I_1} && {B_1} & {I_1} \\
	A & X && A & Y \\
	A & Z && A & Z \\
	A & Q && A & Q
	\arrow["{{r_1}}", from=1-1, to=1-2]
	\arrow[""{name=0, anchor=center, inner sep=0}, "{b_1}"', from=1-1, to=2-1]
	\arrow[""{name=1, anchor=center, inner sep=0}, "{{{{{{x_1}}}}}}", from=1-2, to=2-2]
	\arrow[""{name=2, anchor=center, inner sep=0}, "{{qay_1}}", curve={height=-30pt}, from=1-2, to=4-2]
	\arrow["{{r_1}}", from=1-4, to=1-5]
	\arrow[""{name=3, anchor=center, inner sep=0}, "{b_1}"', from=1-4, to=2-4]
	\arrow[""{name=4, anchor=center, inner sep=0}, "{{y_1}}", from=1-5, to=2-5]
	\arrow["{{x_3}}", from=2-1, to=2-2]
	\arrow[""{name=5, anchor=center, inner sep=0}, equals, from=2-1, to=3-1]
	\arrow[""{name=6, anchor=center, inner sep=0}, "{{a'}}"', from=2-2, to=3-2]
	\arrow["{{y_3}}", from=2-4, to=2-5]
	\arrow[""{name=7, anchor=center, inner sep=0}, equals, from=2-4, to=3-4]
	\arrow[""{name=8, anchor=center, inner sep=0}, "{{a}}", from=2-5, to=3-5]
	\arrow["{{ay_3}}", from=3-1, to=3-2]
	\arrow[""{name=9, anchor=center, inner sep=0}, equals, from=3-1, to=4-1]
	\arrow[""{name=10, anchor=center, inner sep=0}, "q", from=3-2, to=4-2]
	\arrow["{ay_3}", from=3-4, to=3-5]
	\arrow[""{name=11, anchor=center, inner sep=0}, equals, from=3-4, to=4-4]
	\arrow[""{name=12, anchor=center, inner sep=0}, "q", from=3-5, to=4-5]
	\arrow["{{v_1}}"', from=4-1, to=4-2]
	\arrow["{{v_1}}"', from=4-4, to=4-5]
	\arrow["{{\SIGMA^{\delta_1}}}"{description}, draw=none, from=0, to=1]
	\arrow["{{=}}"{description, pos=0.6}, draw=none, from=2, to=7]
	\arrow["{{\SIGMA^{\epsilon_1}}}"{description}, draw=none, from=3, to=4]
	\arrow["{{\SIGMA^{\phi}}}"{description}, draw=none, from=5, to=6]
	\arrow["\gamma", shorten <=6pt, shorten >=6pt, Rightarrow, from=6, to=2]
	\arrow["{{\SIGMA^{\id}}}"{description}, draw=none, from=7, to=8]
	\arrow["{{\SIGMA^{\id}}}"{marking, allow upside down}, draw=none, from=9, to=10]
	\arrow["{{\SIGMA^{\id}}}"{description}, draw=none, from=11, to=12]
\end{tikzcd}$ \hskip3mm with $\gamma$ invertible
				 {\hfill by (iii) and Rule 3b. }
					\vskip2mm
				
				\hskip10mm \rule{250pt}{0.5pt}
				
				\vskip2mm
				
				\item
				\hskip3mm
			$\begin{tikzcd}
					{B_2} & {I_2} \\
					A & Q
					\arrow["{{r_2}}", from=1-1, to=1-2]
					\arrow[""{name=0, anchor=center, inner sep=0}, "{{b_2}}"', from=1-1, to=2-1]
					\arrow[""{name=1, anchor=center, inner sep=0}, "{{qa'x_2}}", from=1-2, to=2-2]
					\arrow["{{v_1}}"', from=2-1, to=2-2]
					\arrow["\SIGMA"{description}, draw=none, from=0, to=1]
				\end{tikzcd}$
				\hskip2mm and \hskip2mm
			$\begin{tikzcd}
					{B_2} & {I_2} \\
					A & Q
					\arrow["{{r_2}}", from=1-1, to=1-2]
					\arrow[""{name=0, anchor=center, inner sep=0}, "{{b_2}}"', from=1-1, to=2-1]
					\arrow[""{name=1, anchor=center, inner sep=0}, "{{qay_2}}", from=1-2, to=2-2]
					\arrow["{{v_1}}"', from=2-1, to=2-2]
					\arrow["\SIGMA"{description}, draw=none, from=0, to=1]
				\end{tikzcd}$
				{\hfill by composing $\Sigma$-squares from (iii) and (iv)}
				
					\vskip2mm
				
				\hskip10mm \rule{250pt}{0.5pt}
				
				\vskip2mm
				
				\item
				\hskip3mm
			$\begin{tikzcd}
	{B_2} & {I_2} &&& {B_2} & {I_2} \\
	A & Q &&& A & Q \\
	A & D &&& A & D
	\arrow["{{{r_2}}}", from=1-1, to=1-2]
	\arrow[""{name=0, anchor=center, inner sep=0}, "{{{b_2}}}"', from=1-1, to=2-1]
	\arrow[""{name=1, anchor=center, inner sep=0}, "{{{qa'x_2}}}", from=1-2, to=2-2]
	\arrow[""{name=2, anchor=center, inner sep=0}, "{{q'qay_2}}", curve={height=-40pt}, from=1-2, to=3-2]
	\arrow["{{{r_2}}}", from=1-5, to=1-6]
	\arrow[""{name=3, anchor=center, inner sep=0}, "{{{b_2}}}"', from=1-5, to=2-5]
	\arrow[""{name=4, anchor=center, inner sep=0}, "{{{qay_2}}}", from=1-6, to=2-6]
	\arrow["{{{v_1}}}"', from=2-1, to=2-2]
	\arrow[""{name=5, anchor=center, inner sep=0}, equals, from=2-1, to=3-1]
	\arrow[""{name=6, anchor=center, inner sep=0}, "{{q'}}", from=2-2, to=3-2]
	\arrow["{{{v_1}}}"', from=2-5, to=2-6]
	\arrow[""{name=7, anchor=center, inner sep=0}, equals, from=2-5, to=3-5]
	\arrow[""{name=8, anchor=center, inner sep=0}, "{{q'}}", from=2-6, to=3-6]
	\arrow["{{q'v_1}}"', from=3-1, to=3-2]
	\arrow["{{q'v_1}}"', from=3-5, to=3-6]
	\arrow["\SIGMA"{description}, draw=none, from=0, to=1]
	\arrow["{{\text{\large $=$}}}"{pos=0.7}, draw=none, from=2, to=2-5]
	\arrow["\SIGMA"{description}, draw=none, from=3, to=4]
	\arrow["{{\SIGMA^{\id}}}"{description}, draw=none, from=5, to=6]
	\arrow["{{\gamma_2}}"{pos=0.4}, between={0}{0.8}, Rightarrow, from=2-2, to=2]
	\arrow["{{\SIGMA^{\id}}}"{description}, draw=none, from=7, to=8]
\end{tikzcd}$
				\hskip2mm with $\gamma_2$ invertible
				 {\hfill by (v) and Rule 3b.}
					\vskip2mm
				
				\hskip10mm \rule{250pt}{0.5pt}
				
				\vskip2mm
			\end{enumerate}
		
		Thus, setting $d_x = q'qa'$, $d_y = q'qa$ and $\gamma_1=q'\gamma$, and using $\gamma_2$ as above, we obtain the desired result.
		
		\item[4'.] This is immediate from Rule 4. Indeed unfolding symmetrically each one of the $\Sigma$-squares, we get a particular case of Rule 4.
		
		\item[5.]
    Use Square to obtain successively
    \[\begin{tikzcd}
	B & X \\
	C & {D_1}
	\arrow["v", from=1-1, to=1-2]
	\arrow[""{name=0, anchor=center, inner sep=0}, "f"', from=1-1, to=2-1]
	\arrow[""{name=1, anchor=center, inner sep=0}, "{\tilde{f}}", from=1-2, to=2-2]
	\arrow["{{{w_1}}}"', from=2-1, to=2-2]
	\arrow["{{{\SIGMA^{\delta}}}}"{description}, draw=none, from=0, to=1]
\end{tikzcd}\, , \hspace{1cm}
\begin{tikzcd}
	B & X \\
	C & {D_2}
	\arrow["v", from=1-1, to=1-2]
	\arrow[""{name=0, anchor=center, inner sep=0}, "g"', from=1-1, to=2-1]
	\arrow[""{name=1, anchor=center, inner sep=0}, "{\tilde{g}}", from=1-2, to=2-2]
	\arrow["{{{w_2}}}"', from=2-1, to=2-2]
	\arrow["{{\SIGMA^{\epsilon}}}"{description}, draw=none, from=0, to=1]
\end{tikzcd}
\hspace{1cm} \text{and} \hspace{1cm}
\begin{tikzcd}
	C & {D_1} \\
	{D_2} & D
	\arrow["{{{w_1}}}", from=1-1, to=1-2]
	\arrow[""{name=0, anchor=center, inner sep=0}, "{{{w_2}}}"', from=1-1, to=2-1]
	\arrow[""{name=1, anchor=center, inner sep=0}, "{{{d_1}}}", from=1-2, to=2-2]
	\arrow["{{{d_2}}}"', from=2-1, to=2-2]
	\arrow["{{{\SIGMA^{\phi}}}}"{description}, draw=none, from=0, to=1]
\end{tikzcd}\,.
\]

Now using Rule 2b and Rule 1, we have:
\[\begin{tikzcd}
	B & X &&& B & X \\
	C & {D_1} &&& C & {D_2} \\
	C & D &&& C & D
	\arrow["v", from=1-1, to=1-2]
	\arrow[""{name=0, anchor=center, inner sep=0}, "f"', from=1-1, to=2-1]
	\arrow[""{name=1, anchor=center, inner sep=0}, "{{\tilde{f}}}", from=1-2, to=2-2]
	\arrow["v", from=1-5, to=1-6]
	\arrow[""{name=2, anchor=center, inner sep=0}, "g"', from=1-5, to=2-5]
	\arrow[""{name=3, anchor=center, inner sep=0}, "{{\tilde{g}}}", from=1-6, to=2-6]
	\arrow["{{w_1}}"', from=2-1, to=2-2]
	\arrow[""{name=4, anchor=center, inner sep=0}, Rightarrow, no head, from=2-1, to=3-1]
	\arrow["{{\text{\normalsize and}}}"{description}, draw=none, from=2-2, to=2-5]
	\arrow[""{name=5, anchor=center, inner sep=0}, "{{d_1}}", from=2-2, to=3-2]
	\arrow["{{w_2}}"', from=2-5, to=2-6]
	\arrow[""{name=6, anchor=center, inner sep=0}, Rightarrow, no head, from=2-5, to=3-5]
	\arrow[""{name=7, anchor=center, inner sep=0}, "{{d_2}}", from=2-6, to=3-6]
	\arrow["{{d_2w_2}}"', from=3-1, to=3-2]
	\arrow["{{d_2w_2}}"', from=3-5, to=3-6]
	\arrow["{\SIGMA^{\delta}}"{description}, draw=none, from=0, to=1]
	\arrow["{\SIGMA^{\epsilon}}"{description}, draw=none, from=2, to=3]
	\arrow["{\SIGMA^{\phi}}"{description}, draw=none, from=4, to=5]
	\arrow["{\SIGMA^{\id}}"{description}, draw=none, from=6, to=7]
\end{tikzcd}\, .\]
\item[6.] First, departing from $v,\, f$ and $g$, obtain the $\Sigma$-squares as in Rule 5 with
$\bar{\delta}$, $\bar{\epsilon}$, $\bar{D}$, $\bar{w}$, $\bar{f}$ and $\bar{g}$ instead of $\delta$, $\epsilon$, $D$, $w$, $f'$ and $g'$, respectively. Then we have a 2-cell
$\mu=\big(\bar{f}v\xRightarrow{\bar{\delta}^{-1}}\bar{w}f\xRightarrow{\bar{w}\beta}\bar{w}g\xRightarrow{\bar{\epsilon}}\bar{g}v)$.
  By Equi-insertion we get $d:\bar{D}\to D$, forming a $\Sigma$-square with $\bar{w}$,  and a 2-cell $d\bar{f}\xRightarrow{\mu'} d\bar{g}$ such that $\mu'\circ v=d\circ \mu$.  The desired $\Sigma$-squares $\Sigma^{\delta}$ and $\Sigma^{\epsilon}$, the 1-cell $w$  and the  2-cell $\beta'$ are then given by
$\delta=d\circ \bar{\delta}$, $\epsilon=d\circ \bar{\epsilon}$, $w=d\bar{w}$ and $\beta'=\mu'$. \qedhere
\end{enumerate}
\end{proof}

\section{The bicategory of lax fractions}\label{sec:bicategory}

Let $\catx$ be a 2-category and let $\Sigma$ be a subcategory of  $\catx^{\rightarrow \cong}$  admitting a calculus of left lax fractions. This section is devoted to the description of the \textbf{\em bicategory of lax fractions} $\catx[\Sigma_{\ast}]$.

The classical calculus of fractions with respect to a class of morphisms $\Sigma$ in a category $\catx$, introduced by Gabriel and Zisman  \cite{gabriel1967calculus} provides a nice description of a category $\catx[\Sigma^{-1}]$ and a functor ${P_{\Sigma}} \colon\catx \to \catx[\Sigma^{-1}]$ such that the images of morphisms from $\Sigma$ under $P_{\Sigma}$ are all invertible and $P_{\Sigma}$ is universal with respect to this property. Our definition of
$\catx[\Sigma_{\ast}]$ gives a generalisation of the classical case. In the next section,  we define a pseudofunctor $P_{\Sigma}\colon \catx \to \catx[\Sigma_{\ast}]$ which freely adds to each $P_{\Sigma}(s)$ with $s\in \text{ob}(\Sigma)$ a right adjoint making $P_{\Sigma}(s)$  a lari in  $\catx[\Sigma_{\ast}]$ and sends $\Sg$-squares to Beck-Chevalley squares. Moreover, $P_{\Sigma}$  is universal with respect to these properties.

The construction depends on a convenient calculus of $\Sigma$-squares based on the rules of \cref{def:TheCalculus}.

\subsection{The hom-categories}

The objects of $\catx[\Sigma_{\ast}]$ are just those of $\catx$, the 1-cells are  $\Sg$-cospans and the 2-cells are $\approx$-equivalence classes of 2-morphisms (see Definition \ref{data-0} and Definition \ref{data-1}). %
In this subsection we describe the hom-categories $\catx[\Sigma_{\ast}](A,B)$ for every pair of objects $A$ and $B$.

\begin{definition}\label{data-0}
A cospan $A\xrightarrow{f}I\xleftarrow{r}B$ with $r\in \Sigma$ is said to be a  \textbf{\em $\Sigma$-cospan} from $A$ to $B$, and is written $(f,I,r)$ or simply $(f,r)$. The \textbf{\em identity $\Sigma$-cospan} on an object $A$ is just $\xymatrix{A\ar[r]^{1_A}&A&A\ar[l]_{1_A}}$.
	
  Given two $\Sigma$-cospans $(f,I,r)$ and $(g,J,s)$, both from $A$ to $B$, a \textbf{\em 2-morphism} from $(f,I,r)$ to $(g,J,s)$ consists of two $\Sigma$-squares and a 2-cell $\alpha\colon x_1f\Rightarrow x_2g$ as in the following diagram:

	\begin{equation}\label{eq:2-morph}
		\begin{tikzcd}
		A && I && B \\
		&& X && B \\
		A && J && B
		\arrow["f", from=1-1, to=1-3]
		\arrow[Rightarrow, no head, from=1-1, to=3-1]
		\arrow[""{name=0, anchor=center, inner sep=0}, "{{{x_1}}}"', from=1-3, to=2-3]
		\arrow["\alpha"', shorten <=22pt, shorten >=22pt, Rightarrow, from=1-3, to=3-1]
		\arrow["r"', from=1-5, to=1-3]
		\arrow[""{name=1, anchor=center, inner sep=0}, Rightarrow, no head, from=1-5, to=2-5]
		\arrow["{x_3}"', from=2-5, to=2-3]
		\arrow["g"', from=3-1, to=3-3]
		\arrow[""{name=2, anchor=center, inner sep=0}, "{{{x_2}}}", from=3-3, to=2-3]
		\arrow[""{name=3, anchor=center, inner sep=0}, Rightarrow, no head, from=3-5, to=2-5]
		\arrow["s", from=3-5, to=3-3]
		\arrow["{{\SIGMA^{\delta_1}}}"{description}, draw=none, from=0, to=1]
		\arrow["{{\SIGMA^{\delta_2}}}"{description}, draw=none, from=2, to=3]
	\end{tikzcd}\; .
	\end{equation}
	We denote such a 2-morphism by
	$$(\alpha,x_1,x_2,x_3,\delta_1,\delta_2)\colon (f,I,r)\Rightarrow (g,J,s)$$
	or just $(\alpha,x_1,x_2)\colon (f,r)\Rightarrow (g,s)$, where the $\Sigma^{\delta_i}$ are obvious from the context.
\end{definition}

The $\Sg$-cospan $A\xrightarrow{f}I\xleftarrow{r}B$ should be interpreted as a composition of $f$ and a formal right adjoint retraction of $r$. The intuition behind the definition of 2-morphisms is that we can replace the two $\Sg$-cospans with isomorphic ones that have the same central object and right-hand side and then consider a 2-cell between the left-hand parts. There are different possible isomorphic replacements which should represent the same 2-cell and so the 2-cells in our bicategory are $\approx$-equivalence classes of these 2-morphisms for a convenient $\approx$-relation which we describe next.

\begin{definition}\label{data-1}
	\begin{enumerate}
		\item  A \textbf{\em $\Sigma$-extension} of a 2-morphism as in \eqref{eq:2-morph} above  is any 2-morphism of the form $((\theta_2\circ g)\cdot(d_x\circ \alpha)\cdot (\theta_1^{-1}\circ f), z_1,z_2)$ indicated by the wavy line part of the following diagram, where the 2-cells $\theta_i\colon d_xx_i\Rightarrow z_i$, $i=1,2$, are invertible.

\[\begin{tikzcd}
	A && I && B \\
	& X & {} & X & B \\
	&& D && B \\
	& X & {} & X & B \\
	A && J && B
	\arrow[""{name=0, anchor=center, inner sep=0}, "f", squiggly, from=1-1, to=1-3]
	\arrow[Rightarrow, squiggly, no head, from=1-1, to=5-1]
	\arrow["{{{{{x_1}}}}}"{description}, curve={height=6pt}, from=1-3, to=2-2]
	\arrow[""{name=1, anchor=center, inner sep=0}, "{{{{x_1}}}}"{description}, curve={height=-6pt}, from=1-3, to=2-4]
	\arrow["{{{z_1}}}"{description}, squiggly, from=1-3, to=3-3]
	\arrow["{{{r}}}"', squiggly, from=1-5, to=1-3]
	\arrow[""{name=2, anchor=center, inner sep=0}, Rightarrow, squiggly, no head, from=1-5, to=2-5]
	\arrow["{d_x}"{description}, curve={height=6pt}, from=2-2, to=3-3]
	\arrow["{{{{{\theta_1^{-1}}}}}}"', Rightarrow, from=2-3, to=2-2]
	\arrow["{{{{{\theta_1}}}}}"', Rightarrow, from=2-4, to=2-3]
	\arrow[""{name=3, anchor=center, inner sep=0}, "{d_x}"{description}, curve={height=-6pt}, from=2-4, to=3-3]
	\arrow["{{{{{x_3}}}}}"', from=2-5, to=2-4]
	\arrow[""{name=4, anchor=center, inner sep=0}, Rightarrow, squiggly, no head, from=2-5, to=3-5]
	\arrow["d"', squiggly, from=3-5, to=3-3]
	\arrow[""{name=5, anchor=center, inner sep=0}, Rightarrow, squiggly, no head, from=3-5, to=4-5]
	\arrow["{d_x}"{description}, curve={height=-6pt}, from=4-2, to=3-3]
	\arrow["{{{{{\theta_2}}}}}", Rightarrow, from=4-2, to=4-3]
	\arrow[""{name=6, anchor=center, inner sep=0}, "{d_x}"{description}, curve={height=6pt}, from=4-4, to=3-3]
	\arrow["{{{{{\theta_2}}}}}"', Rightarrow, from=4-4, to=4-3]
	\arrow["{{{{{x_3}}}}}"', from=4-5, to=4-4]
	\arrow[""{name=7, anchor=center, inner sep=0}, Rightarrow, squiggly, no head, from=4-5, to=5-5]
	\arrow[""{name=8, anchor=center, inner sep=0}, "g"', squiggly, from=5-1, to=5-3]
	\arrow["{{{z_2}}}"{description}, squiggly, from=5-3, to=3-3]
	\arrow["{{{{{x_2}}}}}"{description}, curve={height=-6pt}, from=5-3, to=4-2]
	\arrow[""{name=9, anchor=center, inner sep=0}, "{{{{x_2}}}}"{description}, curve={height=6pt}, from=5-3, to=4-4]
	\arrow["{{{s}}}", squiggly, from=5-5, to=5-3]
	\arrow["{d_x\circ\alpha}"', shift right=5, shorten <=26pt, shorten >=26pt, Rightarrow, from=0, to=8]
	\arrow["{{{{{\SIGMA^{\delta_1}}}}}}"{description}, draw=none, from=1, to=2]
	\arrow["{{{{{\SIGMA^{\psi}}}}}}"{description}, draw=none, from=3, to=4]
	\arrow["{{{{\SIGMA^{\psi}}}}}"{description}, draw=none, from=6, to=5]
	\arrow["{{{{{\SIGMA^{\delta_2}}}}}}"{description}, draw=none, from=9, to=7]
\end{tikzcd}\]
		\item   We say that two 2-morphisms  with common domain and codomain are \textbf{\em $\approx$-related} if they have a common $\Sigma$-extension. This is clearly equivalent to saying that two 2-morphisms $(\alpha,x_1,x_2,x_3, \delta_1,\delta_2)$ and $(\beta, y_1,y_2,y_3, \epsilon_1,\epsilon_2)$ from $(f,r)$ to $(g,s)$ are $\approx$-related if there are $\Sigma$-squares
		\[\begin{tikzcd}
			B & X \\
			B & D
			\arrow["{x_3}", from=1-1, to=1-2]
			\arrow[""{name=0, anchor=center, inner sep=0}, Rightarrow, no head, from=1-1, to=2-1]
			\arrow[""{name=1, anchor=center, inner sep=0}, "{d_x}", from=1-2, to=2-2]
			\arrow["d"', from=2-1, to=2-2]
			\arrow["{\SIGMA^{\chi}}"{description}, draw=none, from=0, to=1]
		\end{tikzcd}
		\quad\text{and}\quad
		\begin{tikzcd}
			B & X \\
			B & D
			\arrow["{y_3}", from=1-1, to=1-2]
			\arrow[""{name=0, anchor=center, inner sep=0}, Rightarrow, no head, from=1-1, to=2-1]
			\arrow[""{name=1, anchor=center, inner sep=0}, "{d_y}", from=1-2, to=2-2]
			\arrow["d"', from=2-1, to=2-2]
			\arrow["{\SIGMA^{\psi}}"{description}, draw=none, from=0, to=1]
		\end{tikzcd}\]
		and invertible 2-cells $\gamma_i:d_xx_i\Rightarrow d_yy_i$ such that the 2-morphisms given by the wavy lines in the following diagram are equal.

\[\begin{tikzcd}
	A && I && B & A & I & B \\
	& X & Y & X & B && Y & B \\
	&& D && B & {} & D & B &&& {} \\
	& X & Y & X & B && Y & B \\
	A && J && B & A & J & B
	\arrow[""{name=0, anchor=center, inner sep=0}, "f", squiggly, from=1-1, to=1-3]
	\arrow[Rightarrow, squiggly, no head, from=1-1, to=5-1]
	\arrow["{{{{{{{x_1}}}}}}}"{description}, curve={height=6pt}, from=1-3, to=2-2]
	\arrow["{{{{y_1}}}}", squiggly, from=1-3, to=2-3]
	\arrow[""{name=1, anchor=center, inner sep=0}, "{{{{{{x_1}}}}}}"{description}, curve={height=-6pt}, from=1-3, to=2-4]
	\arrow["{{{{{r}}}}}"', squiggly, from=1-5, to=1-3]
	\arrow[""{name=2, anchor=center, inner sep=0}, Rightarrow, squiggly, no head, from=1-5, to=2-5]
	\arrow["f", squiggly, from=1-6, to=1-7]
	\arrow[Rightarrow, squiggly, no head, from=1-6, to=5-6]
	\arrow[""{name=3, anchor=center, inner sep=0}, "{y_1}"', squiggly, from=1-7, to=2-7]
	\arrow["{{d_y\circ \beta}}"'{pos=0.3}, shorten <=16pt, shorten >=31pt, Rightarrow, from=1-7, to=5-6]
	\arrow["r"', squiggly, from=1-8, to=1-7]
	\arrow[""{name=4, anchor=center, inner sep=0}, Rightarrow, squiggly, no head, from=1-8, to=2-8]
	\arrow["{{{{{{{d_x}}}}}}}"{description}, curve={height=6pt}, from=2-2, to=3-3]
	\arrow["{{{{\gamma_1^{-1}}}}}"', Rightarrow, from=2-3, to=2-2]
	\arrow["{{{{d_y}}}}", squiggly, from=2-3, to=3-3]
	\arrow["{{{{\gamma_1}}}}"', Rightarrow, from=2-4, to=2-3]
	\arrow[""{name=5, anchor=center, inner sep=0}, "{{{{{{d_x}}}}}}"{description}, curve={height=-6pt}, from=2-4, to=3-3]
	\arrow["{{{{{{{x_3}}}}}}}"', from=2-5, to=2-4]
	\arrow[""{name=6, anchor=center, inner sep=0}, Rightarrow, squiggly, no head, from=2-5, to=3-5]
	\arrow[""{name=7, anchor=center, inner sep=0}, "{d_y}"', squiggly, from=2-7, to=3-7]
	\arrow["{{y_3}}", from=2-8, to=2-7]
	\arrow[""{name=8, anchor=center, inner sep=0}, Rightarrow, squiggly, no head, from=2-8, to=3-8]
	\arrow["d"', squiggly, from=3-5, to=3-3]
	\arrow["{{\text{\normalsize =}}}"{description}, draw=none, from=3-5, to=3-6]
	\arrow[""{name=9, anchor=center, inner sep=0}, Rightarrow, squiggly, no head, from=3-5, to=4-5]
	\arrow["d", from=3-8, to=3-7]
	\arrow["{{{{{{{d_x}}}}}}}"{description}, curve={height=-6pt}, from=4-2, to=3-3]
	\arrow["{{{{\gamma_2}}}}", Rightarrow, from=4-2, to=4-3]
	\arrow["{{{{d_y}}}}"', squiggly, from=4-3, to=3-3]
	\arrow[""{name=10, anchor=center, inner sep=0}, "{{{{{{d_x}}}}}}"{description}, curve={height=6pt}, from=4-4, to=3-3]
	\arrow["{{{{\gamma_2}}}}"', Rightarrow, from=4-4, to=4-3]
	\arrow["{{{{{{{x_3}}}}}}}"', from=4-5, to=4-4]
	\arrow[""{name=11, anchor=center, inner sep=0}, Rightarrow, squiggly, no head, from=4-5, to=5-5]
	\arrow[""{name=12, anchor=center, inner sep=0}, "{d_y}", squiggly, from=4-7, to=3-7]
	\arrow[""{name=13, anchor=center, inner sep=0}, Rightarrow, squiggly, no head, from=4-8, to=3-8]
	\arrow["{{y_3}}", from=4-8, to=4-7]
	\arrow[""{name=14, anchor=center, inner sep=0}, "g"', squiggly, from=5-1, to=5-3]
	\arrow["{{{{{{{x_2}}}}}}}"{description}, curve={height=-6pt}, from=5-3, to=4-2]
	\arrow["{{{{y_2}}}}"', squiggly, from=5-3, to=4-3]
	\arrow[""{name=15, anchor=center, inner sep=0}, "{{{{{{x_2}}}}}}"{description}, curve={height=6pt}, from=5-3, to=4-4]
	\arrow["{{{{{s}}}}}", squiggly, from=5-5, to=5-3]
	\arrow["g"', squiggly, from=5-6, to=5-7]
	\arrow[""{name=16, anchor=center, inner sep=0}, "{y_2}", squiggly, from=5-7, to=4-7]
	\arrow[""{name=17, anchor=center, inner sep=0}, Rightarrow, squiggly, no head, from=5-8, to=4-8]
	\arrow["s", squiggly, from=5-8, to=5-7]
	\arrow["{{{{{{{d_x\circ\alpha}}}}}}}"', shift right=5, shorten <=26pt, shorten >=26pt, Rightarrow, from=0, to=14]
	\arrow["{{{{{{{\SIGMA^{\delta_1}}}}}}}}"{description}, draw=none, from=1, to=2]
	\arrow["{\SIGMA^{\epsilon_1}}"{description}, draw=none, from=3, to=4]
	\arrow["{{{{\SIGMA^{\chi}}}}}"{description}, draw=none, from=5, to=6]
	\arrow["{\SIGMA^{\psi}}"{description}, draw=none, from=7, to=8]
	\arrow["{{{{\SIGMA^{\chi}}}}}"{description}, draw=none, from=10, to=9]
	\arrow["{\SIGMA^{\psi}}"{description}, draw=none, from=12, to=13]
	\arrow["{{{{{{{\SIGMA^{\delta_2}}}}}}}}"{description}, draw=none, from=15, to=11]
	\arrow["{\SIGMA^{\epsilon_2}}"{description}, draw=none, from=16, to=17]
\end{tikzcd}\]
	\end{enumerate}
\end{definition}

\begin{lemma}\label{lem:first-rel-on-2} The relation $\approx$ is an equivalence relation.
\end{lemma}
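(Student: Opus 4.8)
The plan is to check reflexivity, symmetry and transitivity separately; the first two are formal and transitivity carries the content.

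\emph{Reflexivity and symmetry.} For a 2-morphism $m=(\alpha,x_1,x_2,x_3,\delta_1,\delta_2)$, taking $d_x=d_y=\id_X$, the identity $\Sigma$-square $(1_B,1_X,\id)\colon x_3\to x_3$ (a morphism of $\Sigma$ because $x_3\in\Sigma$ and $\Sigma$ is a subcategory), and $\gamma_i=\id$, exhibits $m$ as a $\Sigma$-extension of itself, so $m\approx m$. Symmetry is immediate: ``having a common $\Sigma$-extension'', and equivalently the data in the second description of $\approx$ in Definition \ref{data-1}, is visibly symmetric under swapping the two 2-morphisms (interchange $d_x\leftrightarrow d_y$ and the two $\Sigma$-squares, and replace each $\gamma_i$ by $\gamma_i^{-1}$; the required pasting equality turns into itself).

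\emph{Transitivity.} Let $m_1\approx m_2$ and $m_2\approx m_3$, with middle objects $X_1,X_2,X_3$ and middle $\Sigma$-objects $x_3^{(1)},x_3^{(2)},x_3^{(3)}$. By the second description of $\approx$, the first relation supplies an object $D_{12}$, 1-cells $p\colon X_1\to D_{12}$ and $q\colon X_2\to D_{12}$, $\Sigma$-squares $(1_B,p,-)\colon x_3^{(1)}\to d_{12}$ and $(1_B,q,-)\colon x_3^{(2)}\to d_{12}$ over a common $d_{12}\in\Sigma$, together with comparison isomorphisms; the second relation supplies $D_{23}$, 1-cells $q'\colon X_2\to D_{23}$ and $p'\colon X_3\to D_{23}$, $\Sigma$-squares over a common $d_{23}\in\Sigma$, and comparison isomorphisms. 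The crucial step is to merge the two ``middle'' $\Sigma$-squares $(1_B,q,-)\colon x_3^{(2)}\to d_{12}$ and $(1_B,q',-)\colon x_3^{(2)}\to d_{23}$: they share the top arrow $x_3^{(2)}$ and the left leg $1_B$, so Rule 4' of Proposition \ref{pro:useful_rules} applies, producing an object $E$, 1-cells $e\colon D_{12}\to E$ and $e'\colon D_{23}\to E$, $\Sigma$-squares $(1_B,e,-)\colon d_{12}\to\tilde e$ and $(1_B,e',-)\colon d_{23}\to\tilde e$ over a common $\tilde e\in\Sigma$, an invertible 2-cell $\gamma\colon eq\Rightarrow e'q'$, and an equality of pasting diagrams tying $\gamma$ to the 2-cell components of these squares.

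One then assembles a witness for $m_1\approx m_3$: the target $E$; the extension 1-cells $ep\colon X_1\to E$ and $e'p'\colon X_3\to E$; the $\Sigma$-squares over $\tilde e$ obtained by composing in $\Sigma$ the $x_3^{(1)}$- and $x_3^{(3)}$-squares with $(1_B,e,-)$ and $(1_B,e',-)$ respectively; and comparison isomorphisms defined, for $i=1,2$, as the paste of the $i$-th comparison of $m_1\approx m_2$ whiskered by $e$, the 2-cell $\gamma\circ x_i^{(2)}$, and the $i$-th comparison of $m_2\approx m_3$ whiskered by $e'$. What remains --- the only laborious point --- is to verify the equality of pasting diagrams demanded by Definition \ref{data-1}. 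The 2-cell ($\alpha$-) part is handled by the interchange law: whiskering $\gamma\colon eq\Rightarrow e'q'$ against the 2-cell $\alpha_2\colon x_1^{(2)}f\Rightarrow x_2^{(2)}g$ of $m_2$ yields $(\gamma\circ x_2^{(2)}g)\cdot(eq\circ\alpha_2)=(e'q'\circ\alpha_2)\cdot(\gamma\circ x_1^{(2)}f)$, which is precisely what splices the extended 2-cell of $m_1$ onto that of $m_3$; the $\Sigma$-square part follows from combining the two given pasting equalities (whiskered by $e$ and $e'$) with the pasting equality furnished by Rule 4'. This establishes transitivity, so $\approx$ is an equivalence relation.

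\emph{Expected difficulty.} The bookkeeping in that last pasting-diagram verification is the technical heart: one must simultaneously track the comparison isomorphisms from both input relations, the 2-cell $\gamma$, and the 2-cell components of all the $\Sigma$-squares, and check that they paste together consistently --- the conceptual ingredients being only the interchange law and the pasting equalities already in hand. An alternative organisation that compartmentalises this is to first prove that a $\Sigma$-extension of a $\Sigma$-extension of $m$ is again a $\Sigma$-extension of $m$ (composing the $\Sigma$-squares via Rule 1) and that any two $\Sigma$-extensions of $m$ admit a common $\Sigma$-extension (by the Rule 4' argument above); transitivity of $\approx$ then follows formally.
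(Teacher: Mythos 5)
Your proof is correct and follows essentially the same route as the paper: the paper likewise dismisses reflexivity and symmetry as obvious and proves transitivity by applying Rule 4' of Proposition \ref{pro:useful_rules} to the two middle $\Sigma$-squares over $y_3$ (your $x_3^{(2)}$), obtaining merging 1-cells and an invertible 2-cell $\theta$ (your $\gamma$), and then forming the new comparison isomorphisms exactly as you do, namely $\mu_i=(t_2\circ\lambda_{2i})(\theta\circ y_i)(t_1\circ\lambda_{1i})$, with the required pasting equality checked via interchange and the two given equalities. No gaps.
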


\begin{proof} Reflexivity and symmetry are obvious. To show transitivity, consider three 2-morphisms
	$\bar{\alpha}=(\alpha,x_1,x_2,x_3,\delta_1,\delta_2)$, $\bar{\beta}=(\beta, y_1,y_2,y_3,\epsilon_1,\epsilon_2)$ and $\bar{\gamma}=(\gamma, z_1,z_2,z_3, \zeta_1,\zeta_2)$ from  $(f,I_1,r_1)$ to $(g,I_2,r_2)$, with
	$\bar{\alpha}\approx \bar{\beta}$ and $\bar{\beta}\approx \bar{\gamma}$.
	Let the $\approx$-relation between $\bar{\alpha}$ and $\bar{\beta}$, and the one between $\bar{\beta}$ and $\bar{\gamma}$, be given respectively by the data represented by \eqref{eq:(a)} and \eqref{eq:(b)} below.
	\begin{equation}\label{eq:(a)}\begin{tikzcd}
			A && {I_1} && B & A & {I_1} & B \\
			& X & Y & X & B && Y & B \\
			&& D && B & {} & D & B \\
			& X & Y & X & B && Y & B \\
			A && D && B & A & {I_2} & B
			\arrow[""{name=0, anchor=center, inner sep=0}, "f", squiggly, from=1-1, to=1-3]
			\arrow[Rightarrow, squiggly, no head, from=1-1, to=5-1]
			\arrow["{{{x_1}}}"{description}, curve={height=6pt}, from=1-3, to=2-2]
			\arrow["{{{y_1}}}"', squiggly, from=1-3, to=2-3]
			\arrow[""{name=1, anchor=center, inner sep=0}, "{{{x_1}}}"{description}, curve={height=-6pt}, from=1-3, to=2-4]
			\arrow["{{{r_1}}}"', squiggly, from=1-5, to=1-3]
			\arrow[""{name=2, anchor=center, inner sep=0}, Rightarrow, squiggly, no head, from=1-5, to=2-5]
				\arrow[""{name=3, anchor=center, inner sep=0}, "f", squiggly, from=1-6, to=1-7]
			\arrow[Rightarrow, squiggly, no head, from=1-6, to=5-6]
			\arrow[""{name=4, anchor=center, inner sep=0}, "{{y_1}}"', squiggly, from=1-7, to=2-7]
			\arrow["{{r_1}}"', squiggly, from=1-8, to=1-7]
			\arrow[""{name=5, anchor=center, inner sep=0}, Rightarrow, squiggly, no head, from=1-8, to=2-8]
			\arrow["{{{d_x}}}"{description}, curve={height=6pt}, from=2-2, to=3-3]
			\arrow["{{{\lambda_{11}^{-1}}}}"', Rightarrow, from=2-3, to=2-2]
			\arrow["{{{d_y}}}"', squiggly, from=2-3, to=3-3]
			\arrow["{{{\lambda_{11}}}}"', Rightarrow, from=2-4, to=2-3]
			\arrow[""{name=6, anchor=center, inner sep=0}, "{{{d_x}}}"{description}, curve={height=-6pt}, from=2-4, to=3-3]
			\arrow["{{{x_3}}}"', from=2-5, to=2-4]
			\arrow[""{name=7, anchor=center, inner sep=0}, Rightarrow, squiggly, no head, from=2-5, to=3-5]
			\arrow[""{name=8, anchor=center, inner sep=0}, "{{d_y}}"', squiggly, from=2-7, to=3-7]
			\arrow["{{y_3}}"', from=2-8, to=2-7]
			\arrow[""{name=9, anchor=center, inner sep=0}, Rightarrow, squiggly, no head, from=2-8, to=3-8]
			\arrow["d", squiggly, from=3-5, to=3-3]
			\arrow["{{\text{\normalsize =}}}"{description}, draw=none, from=3-5, to=3-6]
			\arrow[""{name=10, anchor=center, inner sep=0}, Rightarrow, squiggly, no head, from=3-5, to=4-5]
			\arrow["d"', from=3-8, to=3-7]
			\arrow["{{{d_x}}}"{description}, curve={height=-6pt}, from=4-2, to=3-3]
			\arrow["{{{\lambda_{12}}}}", Rightarrow, from=4-2, to=4-3]
			\arrow["{{{d_y}}}", squiggly, from=4-3, to=3-3]
			\arrow[""{name=11, anchor=center, inner sep=0}, "{{{d_x}}}"{description}, curve={height=6pt}, from=4-4, to=3-3]
			\arrow["{{{\lambda_{12}}}}"', Rightarrow, from=4-4, to=4-3]
			\arrow["{{{x_3}}}"', from=4-5, to=4-4]
			\arrow[""{name=12, anchor=center, inner sep=0}, Rightarrow, squiggly, no head, from=4-5, to=5-5]
			\arrow[""{name=13, anchor=center, inner sep=0}, "{{d_y}}", squiggly, from=4-7, to=3-7]
			\arrow[""{name=14, anchor=center, inner sep=0}, Rightarrow, squiggly, no head, from=4-8, to=3-8]
			\arrow["{{y_3}}"', from=4-8, to=4-7]
			\arrow[""{name=15, anchor=center, inner sep=0}, "g"', squiggly, from=5-1, to=5-3]
			\arrow["{{{x_2}}}"{description}, curve={height=-6pt}, from=5-3, to=4-2]
			\arrow["{{{y_2}}}", squiggly, from=5-3, to=4-3]
			\arrow[""{name=16, anchor=center, inner sep=0}, "{{{x_2}}}"{description}, curve={height=6pt}, from=5-3, to=4-4]
			\arrow["{{{r_2}}}", squiggly, from=5-5, to=5-3]
			\arrow[""{name=17, anchor=center, inner sep=0}, "g"', squiggly, from=5-6, to=5-7]
			\arrow[""{name=18, anchor=center, inner sep=0}, "{{y_2}}", squiggly, from=5-7, to=4-7]
			\arrow[""{name=19, anchor=center, inner sep=0}, Rightarrow, squiggly, no head, from=5-8, to=4-8]
			\arrow["{{r_2}}", squiggly, from=5-8, to=5-7]
			\arrow["{{{d_x\circ \alpha}}}"', shift right=5, shorten <=17pt, shorten >=17pt, Rightarrow, from=0, to=15]
			\arrow["{{{\SIGMA^{\delta_1}}}}"{description}, draw=none, from=1, to=2]
			\arrow["{{{d_y\circ \beta}}}"{description}, shorten <=17pt, shorten >=17pt, Rightarrow, from=3, to=17]
			\arrow["{{{\SIGMA^{\epsilon_1}}}}"{description}, draw=none, from=4, to=5]
			\arrow["{{\SIGMA^{\phi_x}}}"{description}, draw=none, from=6, to=7]
			\arrow["{{{\SIGMA^{\phi_y}}}}"{description}, draw=none, from=8, to=9]
			\arrow["{{{\SIGMA^{\phi_x}}}}"{description}, draw=none, from=11, to=10]
			\arrow["{{{\SIGMA^{\phi_y}}}}"{description}, draw=none, from=13, to=14]
			\arrow["{{{\SIGMA^{\delta_2}}}}"{description}, draw=none, from=16, to=12]
			\arrow["{{{\SIGMA^{\epsilon_2}}}}"{description}, draw=none, from=18, to=19]
		\end{tikzcd}
	\end{equation}
	\begin{equation}\label{eq:(b)}
	\begin{tikzcd}
	A && {I_1} && B & A & {I_1} & B \\
	& Y & Z & Y & B && Z & B \\
	&& E && B & {} & E & B \\
	& Y & Z & Y & B && Z & B \\
	A && D && B & A & {I_2} & B
	\arrow[""{name=0, anchor=center, inner sep=0}, "f", squiggly, from=1-1, to=1-3]
	\arrow[Rightarrow, squiggly, no head, from=1-1, to=5-1]
	\arrow["{{{{{{y_1}}}}}}"{description}, curve={height=6pt}, from=1-3, to=2-2]
	\arrow["{{{{{{z_1}}}}}}"', squiggly, from=1-3, to=2-3]
	\arrow[""{name=1, anchor=center, inner sep=0}, "{{{{{{y_1}}}}}}"{description}, curve={height=-6pt}, from=1-3, to=2-4]
	\arrow["{{{{{{r_1}}}}}}"', squiggly, from=1-5, to=1-3]
	\arrow[""{name=2, anchor=center, inner sep=0}, Rightarrow, squiggly, no head, from=1-5, to=2-5]
	\arrow[""{name=3, anchor=center, inner sep=0}, "f", squiggly, from=1-6, to=1-7]
	\arrow[Rightarrow, squiggly, no head, from=1-6, to=5-6]
	\arrow[""{name=4, anchor=center, inner sep=0}, "{{{{{z_1}}}}}"', squiggly, from=1-7, to=2-7]
	\arrow["{{{{{r_1}}}}}"', squiggly, from=1-8, to=1-7]
	\arrow[""{name=5, anchor=center, inner sep=0}, Rightarrow, squiggly, no head, from=1-8, to=2-8]
	\arrow["{{{{{{e_y}}}}}}"{description}, curve={height=6pt}, from=2-2, to=3-3]
	\arrow["{{{{{{\lambda_{21}^{-1}}}}}}}"', Rightarrow, from=2-3, to=2-2]
	\arrow["{{{{{{e_z}}}}}}"', squiggly, from=2-3, to=3-3]
	\arrow["{{{{{{\lambda_{21}}}}}}}"', Rightarrow, from=2-4, to=2-3]
	\arrow[""{name=6, anchor=center, inner sep=0}, "{{{{{{e_y}}}}}}"{description}, curve={height=-6pt}, from=2-4, to=3-3]
	\arrow["{{{{{{y_3}}}}}}"', from=2-5, to=2-4]
	\arrow[""{name=7, anchor=center, inner sep=0}, Rightarrow, squiggly, no head, from=2-5, to=3-5]
	\arrow[""{name=8, anchor=center, inner sep=0}, "{{{{{e_z}}}}}"', squiggly, from=2-7, to=3-7]
	\arrow["{{{{{z_3}}}}}"', from=2-8, to=2-7]
	\arrow[""{name=9, anchor=center, inner sep=0}, Rightarrow, squiggly, no head, from=2-8, to=3-8]
	\arrow["e", squiggly, from=3-5, to=3-3]
	\arrow["{{{{{\text{\normalsize =}}}}}}"{description}, draw=none, from=3-5, to=3-6]
	\arrow[""{name=10, anchor=center, inner sep=0}, Rightarrow, squiggly, no head, from=3-5, to=4-5]
	\arrow["e"', from=3-8, to=3-7]
	\arrow["{{{{{{e_y}}}}}}"{description}, curve={height=-6pt}, from=4-2, to=3-3]
	\arrow["{{{{{{\lambda_{22}}}}}}}", Rightarrow, from=4-2, to=4-3]
	\arrow["{{{{{{e_z}}}}}}", squiggly, from=4-3, to=3-3]
	\arrow[""{name=11, anchor=center, inner sep=0}, "{{{{{{e_y}}}}}}"{description}, curve={height=6pt}, from=4-4, to=3-3]
	\arrow["{{{{{{\lambda_{22}}}}}}}"', Rightarrow, from=4-4, to=4-3]
	\arrow["{{{{{{y_3}}}}}}"', from=4-5, to=4-4]
	\arrow[""{name=12, anchor=center, inner sep=0}, Rightarrow, squiggly, no head, from=4-5, to=5-5]
	\arrow[""{name=13, anchor=center, inner sep=0}, "{{{{{e_z}}}}}", squiggly, from=4-7, to=3-7]
	\arrow[""{name=14, anchor=center, inner sep=0}, Rightarrow, squiggly, no head, from=4-8, to=3-8]
	\arrow["{{{{{z_3}}}}}"', from=4-8, to=4-7]
	\arrow[""{name=15, anchor=center, inner sep=0}, "g"', squiggly, from=5-1, to=5-3]
	\arrow["{{{{{{y_2}}}}}}"{description}, curve={height=-6pt}, from=5-3, to=4-2]
	\arrow["{{{{{{z_2}}}}}}", squiggly, from=5-3, to=4-3]
	\arrow[""{name=16, anchor=center, inner sep=0}, "{{{{{{y_2}}}}}}"{description}, curve={height=6pt}, from=5-3, to=4-4]
	\arrow["{{{{{{r_2}}}}}}", squiggly, from=5-5, to=5-3]
	\arrow[""{name=17, anchor=center, inner sep=0}, "g"', squiggly, from=5-6, to=5-7]
	\arrow[""{name=18, anchor=center, inner sep=0}, "{{{{{z_2}}}}}", squiggly, from=5-7, to=4-7]
	\arrow[""{name=19, anchor=center, inner sep=0}, Rightarrow, squiggly, no head, from=5-8, to=4-8]
	\arrow["{{{{{r_2}}}}}", squiggly, from=5-8, to=5-7]
	\arrow["{{{{{{e_y\circ \beta}}}}}}"', shift right=5, shorten <=17pt, shorten >=17pt, Rightarrow, from=0, to=15]
	\arrow["{{{{{{\SIGMA^{\epsilon_1}}}}}}}"{description}, draw=none, from=1, to=2]
	\arrow["{{{{{{e_z\circ \gamma}}}}}}"{description}, shorten <=17pt, shorten >=17pt, Rightarrow, from=3, to=17]
	\arrow["{{{{{{\SIGMA^{\zeta_1}}}}}}}"{description}, draw=none, from=4, to=5]
	\arrow["{{{{{\SIGMA^{\chi_y}}}}}}"{description}, draw=none, from=6, to=7]
	\arrow["{{{{{{\SIGMA^{\chi_z}}}}}}}"{description}, draw=none, from=8, to=9]
	\arrow["{{{{{{\SIGMA^{\chi_y}}}}}}}"{description}, draw=none, from=11, to=10]
	\arrow["{{{{{{\SIGMA^{\chi_z}}}}}}}"{description}, draw=none, from=13, to=14]
	\arrow["{{{{{{\SIGMA^{\epsilon_2}}}}}}}"{description}, draw=none, from=16, to=12]
	\arrow["{{{{{{\SIGMA^{\zeta_2}}}}}}}"{description}, draw=none, from=18, to=19]
\end{tikzcd}
	\end{equation}
	Using the $\Sigma$-squares $\Sigma^{\phi_y}$ and  $\Sigma^{\chi_y}$ and Rule 4' of Proposition \ref{pro:useful_rules}, we obtain $\Sigma$-squares and an invertible 2-cell $\theta$ such that
	\[
	\begin{tikzcd}
		B & Y \\
		B & D \\
		B & T
		\arrow["{y_3}", from=1-1, to=1-2]
		\arrow[""{name=0, anchor=center, inner sep=0}, Rightarrow, no head, from=1-1, to=2-1]
		\arrow[""{name=1, anchor=center, inner sep=0}, "{{d_y}}", from=1-2, to=2-2]
		\arrow[""{name=2, anchor=center, inner sep=0}, "{{t_2e_y}}", curve={height=-24pt}, from=1-2, to=3-2]
		\arrow["d"', from=2-1, to=2-2]
		\arrow[""{name=3, anchor=center, inner sep=0}, Rightarrow, no head, from=2-1, to=3-1]
		\arrow[""{name=4, anchor=center, inner sep=0}, "{{t_1}}", from=2-2, to=3-2]
		\arrow["t"', from=3-1, to=3-2]
		\arrow["{\SIGMA^{\phi_y}}"{description}, draw=none, from=0, to=1]
		\arrow["{{\SIGMA^{\eta_1}}}"{description}, draw=none, from=3, to=4]
		\arrow["\theta", shorten >=3pt, Rightarrow, from=2-2, to=2]
		\end{tikzcd}
\; \text{\normalsize $=$}\;
\begin{tikzcd}
	B & Y \\
	B & E \\
	B & T \rlap{\,.}
	\arrow["{y_3}", from=1-1, to=1-2]
	\arrow[""{name=0, anchor=center, inner sep=0}, Rightarrow, no head, from=1-1, to=2-1]
	\arrow[""{name=1, anchor=center, inner sep=0}, "{{{{e_y}}}}", from=1-2, to=2-2]
	\arrow["e"', from=2-1, to=2-2]
	\arrow[""{name=2, anchor=center, inner sep=0}, Rightarrow, no head, from=2-1, to=3-1]
	\arrow[""{name=3, anchor=center, inner sep=0}, "{t_2}", from=2-2, to=3-2]
	\arrow["t"', from=3-1, to=3-2]
	\arrow["{\SIGMA^{\chi_y}}"{description}, draw=none, from=0, to=1]
	\arrow["{{{{\SIGMA^{\eta_2}}}}}"{description}, draw=none, from=2, to=3]
\end{tikzcd}
\]

Consequently, we obtain a common $\Sigma$-extension of $\bar{\alpha}$ and $\bar{\gamma}$. Namely, for\newline
 $\mu_i=(t_2\circ \lambda_{2i})(\theta\circ y_i)(t_1\circ \lambda_{1i})$, we get the equality

\[\begin{tikzcd}
	A && {I_1} && B & A & {I_1} & B \\
	& X & Z & X & B && Z & B \\
	&& T && B & {} & T & B \\
	& X & Z & X & B && Z & B \\
	A && D && B & A & {I_2} & B
	\arrow[""{name=0, anchor=center, inner sep=0}, "f", squiggly, from=1-1, to=1-3]
	\arrow[Rightarrow, squiggly, no head, from=1-1, to=5-1]
	\arrow["{{{{{{{x_1}}}}}}}"{description}, curve={height=6pt}, from=1-3, to=2-2]
	\arrow["{{{{{z_1}}}}}"', squiggly, from=1-3, to=2-3]
	\arrow[""{name=1, anchor=center, inner sep=0}, "{{{{{{{x_1}}}}}}}"{description}, curve={height=-6pt}, from=1-3, to=2-4]
	\arrow["{{{{{{{r_1}}}}}}}"', squiggly, from=1-5, to=1-3]
	\arrow[""{name=2, anchor=center, inner sep=0}, Rightarrow, squiggly, no head, from=1-5, to=2-5]
	\arrow[""{name=3, anchor=center, inner sep=0}, "f", squiggly, from=1-6, to=1-7]
	\arrow[Rightarrow, squiggly, no head, from=1-6, to=5-6]
	\arrow[""{name=4, anchor=center, inner sep=0}, "{{{{{z_1}}}}}"', squiggly, from=1-7, to=2-7]
	\arrow["{{{{{{r_1}}}}}}"', squiggly, from=1-8, to=1-7]
	\arrow[""{name=5, anchor=center, inner sep=0}, Rightarrow, squiggly, no head, from=1-8, to=2-8]
	\arrow["{{{{{t_1d_x}}}}}"{description}, curve={height=6pt}, from=2-2, to=3-3]
	\arrow["{{{{{\mu_1^{-1}}}}}}"', Rightarrow, from=2-3, to=2-2]
	\arrow["{{{{{t_2ez}}}}}"{description}, squiggly, from=2-3, to=3-3]
	\arrow["{{{{{\mu_1}}}}}"', Rightarrow, from=2-4, to=2-3]
	\arrow[""{name=6, anchor=center, inner sep=0}, "{{{{{t_1d_x}}}}}"{description}, curve={height=-6pt}, from=2-4, to=3-3]
	\arrow["{{{{{{{x_3}}}}}}}"', from=2-5, to=2-4]
	\arrow[""{name=7, anchor=center, inner sep=0}, Rightarrow, squiggly, no head, from=2-5, to=3-5]
	\arrow[""{name=8, anchor=center, inner sep=0}, "{{{{{t_2e_z}}}}}"', squiggly, from=2-7, to=3-7]
	\arrow["{{{{{z_3}}}}}"', from=2-8, to=2-7]
	\arrow[""{name=9, anchor=center, inner sep=0}, Rightarrow, squiggly, no head, from=2-8, to=3-8]
	\arrow["t"', squiggly, from=3-5, to=3-3]
	\arrow["{{{{{{\text{\normalsize =}}}}}}}"{description}, draw=none, from=3-5, to=3-6]
	\arrow[""{name=10, anchor=center, inner sep=0}, Rightarrow, squiggly, no head, from=3-5, to=4-5]
	\arrow["t"', from=3-8, to=3-7]
	\arrow["{{{{{t_1d_x}}}}}"{description}, curve={height=-6pt}, from=4-2, to=3-3]
	\arrow["{{{{{\mu_2}}}}}", Rightarrow, from=4-2, to=4-3]
	\arrow["{{{{{t_2e_z}}}}}"{description}, squiggly, from=4-3, to=3-3]
	\arrow[""{name=11, anchor=center, inner sep=0}, "{{{{{t_1d_x}}}}}"{description}, curve={height=6pt}, from=4-4, to=3-3]
	\arrow["{{{{{\mu_2}}}}}"', Rightarrow, from=4-4, to=4-3]
	\arrow["{{{{{{{x_3}}}}}}}"', from=4-5, to=4-4]
	\arrow[""{name=12, anchor=center, inner sep=0}, Rightarrow, squiggly, no head, from=4-5, to=5-5]
	\arrow[""{name=13, anchor=center, inner sep=0}, "{{{{{t_2e_z}}}}}", squiggly, from=4-7, to=3-7]
	\arrow[""{name=14, anchor=center, inner sep=0}, Rightarrow, squiggly, no head, from=4-8, to=3-8]
	\arrow["{{{{{z_3}}}}}"', from=4-8, to=4-7]
	\arrow[""{name=15, anchor=center, inner sep=0}, "g"', squiggly, from=5-1, to=5-3]
	\arrow["{{{{{{{x_2}}}}}}}"{description}, curve={height=-6pt}, from=5-3, to=4-2]
	\arrow["{{{{{z_2}}}}}", squiggly, from=5-3, to=4-3]
	\arrow[""{name=16, anchor=center, inner sep=0}, "{{{{{{{x_2}}}}}}}"{description}, curve={height=6pt}, from=5-3, to=4-4]
	\arrow["{{{{{{{r_2}}}}}}}", squiggly, from=5-5, to=5-3]
	\arrow[""{name=17, anchor=center, inner sep=0}, "g"', squiggly, from=5-6, to=5-7]
	\arrow[""{name=18, anchor=center, inner sep=0}, "{{{{{z_2}}}}}", squiggly, from=5-7, to=4-7]
	\arrow[""{name=19, anchor=center, inner sep=0}, Rightarrow, squiggly, no head, from=5-8, to=4-8]
	\arrow["{{{{{{r_2}}}}}}", squiggly, from=5-8, to=5-7]
	\arrow["{{{{{t_1d_x\alpha}}}}}"', shift right=5, shorten <=17pt, shorten >=17pt, Rightarrow, from=0, to=15]
	\arrow["{{{{{{{\SIGMA^{\delta_1}}}}}}}}"{description}, draw=none, from=1, to=2]
	\arrow["{{{{{t_2e_z\gamma}}}}}"{description}, shorten <=17pt, shorten >=17pt, Rightarrow, from=3, to=17]
	\arrow["{{{{{\SIGMA^{\zeta_1}}}}}}"{description}, draw=none, from=4, to=5]
	\arrow["{{{\SIGMA^{\eta_1\odot\phi_x}}}}"{description}, draw=none, from=6, to=7]
	\arrow["{{{\SIGMA^{\eta_2\odot\chi_z}}}}"{description}, draw=none, from=8, to=9]
	\arrow["{{{\SIGMA^{\eta_1\odot\phi_x}}}}"{description}, draw=none, from=11, to=10]
	\arrow["{{{\SIGMA^{\eta_2\odot\chi_z}}}}"{description}, draw=none, from=13, to=14]
	\arrow["{{{{{{{\SIGMA^{\delta_2}}}}}}}}"{description}, draw=none, from=16, to=12]
	\arrow["{{{{{\SIGMA^{\zeta_2}}}}}}"{description}, draw=none, from=18, to=19]
\end{tikzcd}\]
showing that  $\bar{\alpha}\approx\bar{\gamma}$.
\end{proof}

\begin{definition}\label{def:2-cell}  A \textbf{\em 2-cell} between $\Sigma$-cospans is an $\approx$-equivalence class of 2-morphisms between them. Given a 2-morphism $\bar{\alpha}=(\alpha, x_1,x_2,x_3, \delta_1, \delta_2)$, we use the notation
$[\bar{\alpha}]$
to indicate the $\approx$-equivalence class of $\bar{\alpha}$. Sometimes we use  $[\alpha, x_1,x_2,x_3, \delta_1, \delta_2]$ or simply $[\alpha, x_1,x_2]$.
\end{definition}

We now describe the vertical composition of 2-cells between $\Sigma$-cospans. For this we first define the vertical composition between 2-morphisms and then take the corresponding $\approx$-equivalence class.

\begin{definition}\label{data-2}
	\textbf{\em  Vertical composition}.
	Let $\bar{\alpha}=(\alpha,x_1,x_2,x_3,\delta_1,\delta_2)\colon (f,r) \Rightarrow (g,s)$ and $\bar{\beta}=(\beta,y_1,y_2,y_3,\epsilon_1,\epsilon_2)\colon (g,s) \Rightarrow (h,t)$ be  2-morphisms, as illustrated  in the diagram

	\begin{equation}\label{EqV1}\begin{tikzcd}
			A & I & B \\
			& X & B \\
			A & J & B \\
			& Y & B \\
			A & K & B \rlap{\,.}
			\arrow["f", from=1-1, to=1-2]
			\arrow[Rightarrow, no head, from=1-1, to=3-1]
			\arrow[""{name=0, anchor=center, inner sep=0}, "{{{x_1}}}"', from=1-2, to=2-2]
			\arrow["\alpha"', shorten <=15pt, shorten >=15pt, Rightarrow, from=1-2, to=3-1]
			\arrow["r"', from=1-3, to=1-2]
			\arrow[""{name=1, anchor=center, inner sep=0}, Rightarrow, no head, from=1-3, to=2-3]
			\arrow["{x_3}", from=2-3, to=2-2]
			\arrow["g"', from=3-1, to=3-2]
			\arrow[Rightarrow, no head, from=3-1, to=5-1]
			\arrow[""{name=2, anchor=center, inner sep=0}, "{{{x_2}}}", from=3-2, to=2-2]
			\arrow[""{name=3, anchor=center, inner sep=0}, "{{{y_1}}}"', from=3-2, to=4-2]
			\arrow["\beta"', shorten <=15pt, shorten >=15pt, Rightarrow, from=3-2, to=5-1]
			\arrow[""{name=4, anchor=center, inner sep=0}, Rightarrow, no head, from=3-3, to=2-3]
			\arrow["s", from=3-3, to=3-2]
			\arrow[""{name=5, anchor=center, inner sep=0}, Rightarrow, no head, from=3-3, to=4-3]
			\arrow["{y_3}", from=4-3, to=4-2]
			\arrow[""{name=6, anchor=center, inner sep=0}, Rightarrow, no head, from=4-3, to=5-3]
			\arrow["h"', from=5-1, to=5-2]
			\arrow[""{name=7, anchor=center, inner sep=0}, "{{{y_2}}}", from=5-2, to=4-2]
			\arrow["t", from=5-3, to=5-2]
			\arrow["{{\SIGMA^{\delta_1}}}"{description}, draw=none, from=0, to=1]
			\arrow["{{\SIGMA^{\delta_2}}}"{description}, draw=none, from=2, to=4]
			\arrow["{{\SIGMA^{\epsilon_1}}}"{description}, draw=none, from=3, to=5]
			\arrow["{{\SIGMA^{\epsilon_2}}}"{description}, draw=none, from=7, to=6]
		\end{tikzcd}\;
	\end{equation}
	Using the $\Sigma$-squares $\Sigma^{\delta_2}$ and $\Sigma^{\epsilon_1}$ and Rule 4', we obtain  new $\Sigma$-squares and an invertible 2-cell $\gamma$ giving the following equality between pasting diagrams:
	\begin{equation}\label{EqV2}
		\begin{tikzcd}
			B & J \\
			B & X & Y \\
			B & D
			\arrow["s", from=1-1, to=1-2]
			\arrow[""{name=0, anchor=center, inner sep=0}, Rightarrow, no head, from=1-1, to=2-1]
			\arrow[""{name=1, anchor=center, inner sep=0}, "{{x_2}}", from=1-2, to=2-2]
			\arrow["{y_1}", curve={height=-6pt}, from=1-2, to=2-3]
			\arrow["{x_3}", from=2-1, to=2-2]
			\arrow[""{name=2, anchor=center, inner sep=0}, Rightarrow, no head, from=2-1, to=3-1]
			\arrow["\gamma", Rightarrow, from=2-2, to=2-3]
			\arrow["\cong"', draw=none, from=2-2, to=2-3]
			\arrow[""{name=3, anchor=center, inner sep=0}, "{{d_x}}", from=2-2, to=3-2]
			\arrow["{d_y}", curve={height=-6pt}, from=2-3, to=3-2]
			\arrow["u"', from=3-1, to=3-2]
			\arrow["{\SIGMA^{\delta_2}}"{description}, draw=none, from=0, to=1]
			\arrow["{\SIGMA^{\phi_x}}"{description}, draw=none, from=2, to=3]
		\end{tikzcd}
		\qquad=\qquad
	\begin{tikzcd}
		B & J \\
		B & Y \\
		B & D
		\arrow["s", from=1-1, to=1-2]
		\arrow[""{name=0, anchor=center, inner sep=0}, Rightarrow, no head, from=1-1, to=2-1]
		\arrow[""{name=1, anchor=center, inner sep=0}, "{{y_1}}", from=1-2, to=2-2]
		\arrow["{{y_3}}", from=2-1, to=2-2]
		\arrow[""{name=2, anchor=center, inner sep=0}, Rightarrow, no head, from=2-1, to=3-1]
		\arrow[""{name=3, anchor=center, inner sep=0}, "{{d_y}}", from=2-2, to=3-2]
		\arrow["u"', from=3-1, to=3-2]
		\arrow["{\SIGMA^{\epsilon_1}}"{description}, draw=none, from=0, to=1]
		\arrow["{\SIGMA^{\phi_y}}"{description}, draw=none, from=2, to=3]
	\end{tikzcd}\, .
	\end{equation}
Any such data gives a vertical composition $\bar{\beta}\cdot \bar{\alpha}$ represented by the following diagram.

\begin{equation}\label{eq:vert_comp}
\begin{tikzcd}
	A && I & B \\
	&& X & B \\
	A & J & D & B \\
	&& Y & B \\
	A && K & B
	\arrow["f"{description}, from=1-1, to=1-3]
	\arrow[equals, from=1-1, to=3-1]
	\arrow[""{name=0, anchor=center, inner sep=0}, "{{{x_1}}}", from=1-3, to=2-3]
	\arrow["\alpha"', between={0.3}{0.7}, Rightarrow, from=1-3, to=3-1]
	\arrow["r"', from=1-4, to=1-3]
	\arrow[""{name=1, anchor=center, inner sep=0}, equals, from=1-4, to=2-4]
	\arrow[""{name=2, anchor=center, inner sep=0}, "{{d_x}}"', from=2-3, to=3-3]
	\arrow["{{{x_3}}}"{description}, from=2-4, to=2-3]
	\arrow[""{name=3, anchor=center, inner sep=0}, equals, from=2-4, to=3-4]
	\arrow["g", from=3-1, to=3-2]
	\arrow[equals, from=3-1, to=5-1]
	\arrow["\beta"', between={0.3}{0.7}, Rightarrow, from=3-1, to=5-3]
	\arrow[""{name=4, anchor=center, inner sep=0}, "{{x_2}}", from=3-2, to=2-3]
	\arrow[""{name=5, anchor=center, inner sep=0}, "{{y_1}}"', from=3-2, to=4-3]
	\arrow["u"', from=3-4, to=3-3]
	\arrow[""{name=6, anchor=center, inner sep=0}, equals, from=3-4, to=4-4]
	\arrow[""{name=7, anchor=center, inner sep=0}, "{{d_y}}", from=4-3, to=3-3]
	\arrow["{{{y_3}}}"{description}, from=4-4, to=4-3]
	\arrow[""{name=8, anchor=center, inner sep=0}, equals, from=4-4, to=5-4]
	\arrow["h"', from=5-1, to=5-3]
	\arrow[""{name=9, anchor=center, inner sep=0}, "{{{y_2}}}"', from=5-3, to=4-3]
	\arrow["t", from=5-4, to=5-3]
	\arrow["{{{\SIGMA^{\delta_1}}}}"{description}, draw=none, from=0, to=1]
	\arrow["{{{\SIGMA^{\phi_x}}}}"{description}, draw=none, from=2, to=3]
	\arrow["\gamma", between={0.2}{0.8}, Rightarrow, from=4, to=5]
	\arrow["{{{\SIGMA^{\phi_y}}}}"{description}, draw=none, from=7, to=6]
	\arrow["{{{\SIGMA^{\epsilon_2}}}}"{description}, draw=none, from=9, to=8]
\end{tikzcd}
\end{equation}

The corresponding vertical composition of the two 2-cells is given by
	\begin{equation*}[\bar{\beta}]\cdot [\bar{\alpha}]=[\bar{\beta}\cdot \bar{\alpha}].
	\end{equation*}
	We now show that this is well-defined.
\end{definition}

\begin{proposition}\label{pro:vertical-1}The vertical composition between 2-cells is well-defined.
\end{proposition}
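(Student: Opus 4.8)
The plan is to eliminate the two sources of ambiguity in Definition~\ref{data-2}: the auxiliary data extracted from Rule~4$'$ (the object $D$, the $1$-cells $d_x,d_y,u$ and the invertible $2$-cells $\phi_x,\phi_y,\gamma$ appearing in \eqref{EqV2}), and the representatives $\bar\alpha,\bar\beta$ of the two $2$-cells. I would handle both at once, by proving that \emph{any} two $2$-morphisms arising as in \eqref{eq:vert_comp} — from possibly different representatives, with possibly different Rule~4$'$ data — admit a common $\Sigma$-extension, hence are $\approx$-equivalent by Lemma~\ref{lem:first-rel-on-2}. Since a common $\Sigma$-extension of $\bar\alpha$ and $\bar\alpha_1$ is in particular a $\Sigma$-extension of each of them, and $\approx$ is transitive (again Lemma~\ref{lem:first-rel-on-2}), it will be enough to do this in two special cases: (i) the representatives are fixed and only the Rule~4$'$ data vary; (ii) $\bar\beta$ is fixed and $\bar\alpha$ is replaced by one of its $\Sigma$-extensions (and, by symmetry, the mirror case in which $\bar\beta$ is extended). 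Once these are known, arbitrary $\approx$-equivalent inputs are treated by first passing through a common $\Sigma$-extension on the $\bar\alpha$-side and then on the $\bar\beta$-side, invoking transitivity twice.

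The technical core, common to both cases, is an iterated use of Rule~4$'$ in the style of the transitivity argument in the proof of Lemma~\ref{lem:first-rel-on-2}. Consider case (i): suppose two applications of Rule~4$'$ to $\Sigma^{\delta_2}$ and $\Sigma^{\epsilon_1}$ produce $(D,d_x,d_y,u,\phi_x,\phi_y,\gamma)$ and $(D',d_x',d_y',u',\phi_x',\phi_y',\gamma')$, so that the two composites have middle objects $D$ and $D'$, legs $d_xx_1,d_yy_2$ resp.\ $d_x'x_1,d_y'y_2$, and $z_3$-components $u$ resp.\ $u'$. The $\Sigma$-squares $\Sigma^{\phi_x}$ and $\Sigma^{\phi_x'}$ both carry the $\Sigma$-object $x_3\colon B\to X$ on top and $\id_B$ on the left, so Rule~4$'$ applies to them and yields $1$-cells $D\to T_1\leftarrow D'$ together with $\Sigma$-squares over $u$ and $u'$ with a common bottom $1$-cell $B\to T_1$ and an invertible $2$-cell between the two composites $X\to T_1$; whiskering $\Sigma^{\phi_y},\Sigma^{\phi_y'}$ by these new maps gives two $\Sigma$-squares over $y_3$ into $T_1$ with the same bottom, and a second application of Rule~4$'$ to them, followed by a third application reconciling the two resulting legs, produces one object $E$ with $1$-cells $D\to E$ and $D'\to E$ lying over $u$ and $u'$ (with a common bottom $B\to E$), together with invertible $2$-cells identifying $(D\to E)\circ d_x$ with $(D'\to E)\circ d_x'$ and $(D\to E)\circ d_y$ with $(D'\to E)\circ d_y'$. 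Whiskering the legs $d_xx_1,d_yy_2$ (resp.\ $d_x'x_1,d_y'y_2$) by $D\to E$ (resp.\ $D'\to E$) keeps them $\Sigma$-squares by Rule~1, and whiskering the last two invertible $2$-cells by $x_1$ and $y_2$ supplies the invertible $2$-cells demanded of a $\Sigma$-extension. It then remains to verify the large pasting identity of Definition~\ref{data-1}(2): for this one feeds the pasting equations output by the three Rule~4$'$ applications into \eqref{EqV2} — which holds for both $\gamma$ and $\gamma'$ — to see that the defining $2$-cells of the two composites agree after whiskering, and, should a discrepancy survive only after restriction along $s$, a single application of Equification (Definition~\ref{def:TheCalculus}, or Rule~4b from the proof of Proposition~\ref{pro:useful_rules}) absorbs it. The resulting $2$-morphism is then a common $\Sigma$-extension of the two composites.

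For case (ii), let $\bar\alpha^\sharp$ be the $\Sigma$-extension of $\bar\alpha$ determined by a $\Sigma$-square $(\id_B,p,\psi)\colon x_3\to x_3^\sharp$ with $p\colon X\to X^\sharp$ and invertible $2$-cells $\theta_i\colon px_i\Rightarrow x_i^\sharp$. Up to $\theta_2$ and Repletion, the $\Sigma$-square $\Sigma^{\delta_2^\sharp}$ of $\bar\alpha^\sharp$ is the vertical composite of $\Sigma^{\delta_2}$ with the extension square; consequently, for any Rule~4$'$ datum $d_x^\sharp\colon X^\sharp\to D^\sharp$ computing $\bar\beta\circ\bar\alpha^\sharp$, the $1$-cell $d_x^\sharp p\colon X\to D^\sharp$ lies over $x_3$ (via the vertical composite of the corresponding $\phi$-square with the extension square), just as the datum $d_x\colon X\to D$ computing $\bar\beta\circ\bar\alpha$ lies over $x_3$ via $\Sigma^{\phi_x}$; the $Y$-legs of both composites already lie over $y_3$ since $\bar\beta$ is untouched. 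The merging construction of the previous paragraph then applies verbatim to the pairs $(d_x^\sharp p,d_y^\sharp)$ and $(d_x,d_y)$, and the $\theta_i$ are absorbed into the gluing $2$-cells, so that the resulting $2$-morphism is simultaneously a $\Sigma$-extension of $\bar\beta\circ\bar\alpha^\sharp$ and of $\bar\beta\circ\bar\alpha$. The case in which $\bar\beta$ is extended is entirely symmetric, using this time that the square $\Sigma^{\epsilon_1}$ of the extended $\bar\beta$ factors through its extension square.

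I expect the main obstacle to be purely combinatorial bookkeeping: tracking the many invertible $2$-cells (the $\phi_x,\phi_y,\gamma$ from each Rule~4$'$ invocation, the $\theta_i$ from the $\Sigma$-extensions, and the further invertible $2$-cells emitted by Rule~4$'$ when objects are amalgamated) and checking that the assembled $2$-morphism genuinely satisfies the large pasting diagram defining a $\Sigma$-extension. The only step where a calculus axiom beyond Rule~1 and Rule~4$'$ might be needed is the final reconciliation of the two defining $2$-cells — converting an equality that holds only after $\circ\,s$ into an equality of $2$-cells — which is precisely what the Equification axiom (Rule~4b) is for; isolating exactly when this step is unavoidable is the delicate part of writing the proof out in full.
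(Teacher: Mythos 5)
Your overall strategy is the same as the paper's: first prove independence of the Rule~4$'$ data, then handle $\Sigma$-extensions of the representatives and finish by transitivity of $\approx$. For the first step the paper simply applies Rule~4 of Proposition~\ref{pro:useful_rules} once to the two stacked pairs $(\Sigma^{\phi_x},\Sigma^{\phi_y})$ and $(\Sigma^{\phi'_x},\Sigma^{\phi'_y})$, obtaining $t_1\colon D\to T$, $t_2\colon D'\to T$ and invertible $\theta_1,\theta_2$ in one stroke; your three successive applications of Rule~4$'$ essentially re-derive Rule~4, so this is a difference of packaging, not of substance. Where the routes genuinely diverge is in the second step: the paper does \emph{not} merge two independently computed composites. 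Having step (1) in hand, it computes $\bar\beta\cdot\bar\alpha$ with the data $d_xw,\ d_y,\ u,\ \phi_x\odot\chi,\ \phi_y,\ \gamma\cdot d_x\theta_1$ induced by the extension of $\bar\alpha$, so that the composite with the extended representative is \emph{literally} a $\Sigma$-extension of the composite with the original one, with no further Equification. Your plan (run the merging machinery again on $(d_x^\sharp p, d_y^\sharp)$ versus $(d_x,d_y)$) is also valid, but it costs another round of Rule~4$'$ and Equification and more bookkeeping of invertible $2$-cells; the induced-data trick is exactly what step (1) buys you and is worth adopting.

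One point should not be left conditional. After merging, the two candidate common extensions differ precisely by the central square $\theta_2 y_1\cdot t_1\gamma\cdot\theta_1^{-1}x_2$ versus $t_2\gamma'$, and this square will in general \emph{not} commute; Equification is therefore always needed, not only ``should a discrepancy survive''. Its hypothesis --- that the two $2$-cells agree after pasting with the (invertible) $\Sigma$-squares and precomposing with $s$ --- is exactly where the work of the paper's part (1) lies: a chain of pasting identities combining the defining equality \eqref{EqV2} for $\gamma$ and for $\gamma'$ with the compatibilities output by Rule~4, followed by cancellation of the invertible $2$-cells. Your proposal names the right mechanism (feeding the Rule~4$'$ outputs into \eqref{EqV2}) but defers this computation as ``the delicate part''; the argument is not closed until it is written out, since without the $\circ\,s$ equality Equification cannot be invoked. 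The same caveat applies in your case (ii) if you keep the merging route there, whereas the paper's induced-data formulation avoids it entirely.
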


\begin{proof}
Consider two 2-morphisms $\bar{\alpha}=(\alpha,x_1,x_2,x_3,\delta_1,\delta_2)\colon (f,r) \Rightarrow (g,s)$ and $\bar{\beta} = (\beta,y_1,y_2,y_3,\epsilon_1,\epsilon_2)\colon \allowbreak (g,s) \Rightarrow (h,t)$ as above in Diagram~\eqref{EqV1}.

\noindent (1)
We first show that the composition does not depend on the choice of the $\Sigma$-squares and the isomorphisms $\gamma\colon d_x x_2 \Rightarrow d_y y_1$ in \cref{EqV2}.
Indeed, suppose we have two different choices of this data with $\gamma\colon d_x x_2 \Rightarrow d_y y_1$ and $\gamma'\colon d'_x x_2 \Rightarrow d'_y y_1$ and $\Sigma$-squares as in the diagrams
\[\begin{tikzcd}
	B & X && B & X \\
	B & D & {\text{and}} & B & {D'} \\
	B & Y && B & Y \rlap{\,.}
	\arrow["{x_3}", from=1-1, to=1-2]
	\arrow["u", from=2-1, to=2-2]
	\arrow[""{name=0, anchor=center, inner sep=0}, "{d_x}", from=1-2, to=2-2]
	\arrow["{x_3}", from=1-4, to=1-5]
	\arrow["{}"{name=1, anchor=center, inner sep=0}, Rightarrow, no head, from=1-4, to=2-4]
	\arrow["u'", from=2-4, to=2-5]
	\arrow[""{name=2, anchor=center, inner sep=0}, "{d'_x}", from=1-5, to=2-5]
	\arrow[""{name=3, anchor=center, inner sep=0}, "{d'_y}", tail reversed, no head, from=2-5, to=3-5]
	\arrow["{y_3}"', from=3-1, to=3-2]
	\arrow[""{name=4, anchor=center, inner sep=0}, Rightarrow, no head, from=2-1, to=3-1]
	\arrow[""{name=5, anchor=center, inner sep=0}, Rightarrow, no head, from=2-4, to=3-4]
	\arrow["{y_3}"', from=3-4, to=3-5]
	\arrow[""{name=6, anchor=center, inner sep=0}, Rightarrow, no head, from=1-1, to=2-1]
	\arrow[""{name=7, anchor=center, inner sep=0}, "{d_y}"', from=3-2, to=2-2]
	\arrow["{\SIGMA^{\phi'_x}}"{description}, draw=none, from=1, to=2]
	\arrow["{\SIGMA^{\phi'_y}}"{description}, draw=none, from=5, to=3]
	\arrow["{\SIGMA^{\phi_x}}"{description}, draw=none, from=6, to=0]
	\arrow["{\SIGMA^{\phi_y}}"{description}, draw=none, from=4, to=7]
\end{tikzcd}\]
By Rule 4 of \cref{pro:useful_rules}, we then obtain $\Sigma$-squares and invertible 2-cells $\theta_i$ (for $i=1,2$) such that
\[\begin{tikzcd}
	B & X && B & X \\
	B & D && B & {D'} \\
	B & T & {\text{\normalsize =}} & B & T \\
	B & D && B & {D'} \\
	B & Y && B & Y \rlap{\,.}
	\arrow["{x_3}", from=1-1, to=1-2]
	\arrow[""{name=0, anchor=center, inner sep=0}, Rightarrow, no head, from=1-1, to=2-1]
	\arrow[""{name=1, anchor=center, inner sep=0}, "{d_x}", from=1-2, to=2-2]
	\arrow[""{name=2, anchor=center, inner sep=0}, "{t_2d'_x}", curve={height=-24pt}, from=1-2, to=3-2]
	\arrow["{x_3}", from=1-4, to=1-5]
	\arrow[""{name=3, anchor=center, inner sep=0}, Rightarrow, no head, from=1-4, to=2-4]
	\arrow[""{name=4, anchor=center, inner sep=0}, "{d'_x}", from=1-5, to=2-5]
	\arrow["u", from=2-1, to=2-2]
	\arrow[""{name=5, anchor=center, inner sep=0}, Rightarrow, no head, from=2-1, to=3-1]
	\arrow[""{name=6, anchor=center, inner sep=0}, "{t_1}", from=2-2, to=3-2]
	\arrow["{u'}", from=2-4, to=2-5]
	\arrow[""{name=7, anchor=center, inner sep=0}, Rightarrow, no head, from=2-4, to=3-4]
	\arrow[""{name=8, anchor=center, inner sep=0}, "{t_2}", from=2-5, to=3-5]
	\arrow["v", from=3-1, to=3-2]
	\arrow[""{name=9, anchor=center, inner sep=0}, Rightarrow, no head, from=3-1, to=4-1]
	\arrow["v", from=3-4, to=3-5]
	\arrow[""{name=10, anchor=center, inner sep=0}, Rightarrow, no head, from=3-4, to=4-4]
	\arrow["u", from=4-1, to=4-2]
	\arrow[""{name=11, anchor=center, inner sep=0}, "{t_1}"', from=4-2, to=3-2]
	\arrow["{u'}", from=4-4, to=4-5]
	\arrow[""{name=12, anchor=center, inner sep=0}, "{t_2}"', from=4-5, to=3-5]
	\arrow[""{name=13, anchor=center, inner sep=0}, Rightarrow, no head, from=5-1, to=4-1]
	\arrow["{y_3}"', from=5-1, to=5-2]
	\arrow[""{name=14, anchor=center, inner sep=0}, "{t_2 d'_y}"', curve={height=24pt}, from=5-2, to=3-2]
	\arrow[""{name=15, anchor=center, inner sep=0}, "{d_y}"', from=5-2, to=4-2]
	\arrow[""{name=16, anchor=center, inner sep=0}, Rightarrow, no head, from=5-4, to=4-4]
	\arrow["{y_3}"', from=5-4, to=5-5]
	\arrow[""{name=17, anchor=center, inner sep=0}, "{d'_y}"', from=5-5, to=4-5]
	\arrow["{{\SIGMA^{\phi_x}}}"{description}, draw=none, from=0, to=1]
	\arrow["{{\SIGMA^{\phi'_x}}}"{description}, draw=none, from=3, to=4]
	\arrow["{{\SIGMA^{\psi_1}}}"{description}, draw=none, from=5, to=6]
	\arrow["{\theta_1}", shorten >=3pt, Rightarrow, from=2-2, to=2]
	\arrow["{{\SIGMA^{\psi_2}}}"{description}, draw=none, from=7, to=8]
	\arrow["{{\SIGMA^{\psi_1}}}"{description}, draw=none, from=9, to=11]
	\arrow["{{\SIGMA^{\psi_2}}}"{description}, draw=none, from=10, to=12]
	\arrow["{\theta_2}", shorten >=3pt, Rightarrow, from=4-2, to=14]
	\arrow["{{\SIGMA^{\phi_y}}}"{description}, draw=none, from=13, to=15]
	\arrow["{{\SIGMA^{\phi'_y}}}"{description}, draw=none, from=16, to=17]
\end{tikzcd}\]
These can be used to form $\Sigma$-extensions for each of the two compositions:

\[\begin{tikzcd}
	A &&& I && B & A &&& I & B \\
	&&& X && B &&&& X & B \\
	&& D & {D'} & D & B &&&& {D'} & B \\
	A & J && T && B & A & J && T & B \\
	&& D & {D'} & D & B &&&& {D'} & B \\
	&&& Y && B &&&& Y & B \\
	A &&& K && B & A &&& K & B \rlap{\,.}
	\arrow["f", from=1-1, to=1-4]
	\arrow[Rightarrow, no head, from=1-1, to=4-1]
	\arrow[""{name=0, anchor=center, inner sep=0}, "{x_1}"', from=1-4, to=2-4]
	\arrow["r"', from=1-6, to=1-4]
	\arrow[""{name=1, anchor=center, inner sep=0}, Rightarrow, no head, from=1-6, to=2-6]
	\arrow["f", from=1-7, to=1-10]
	\arrow[Rightarrow, no head, from=1-7, to=4-7]
	\arrow[""{name=2, anchor=center, inner sep=0}, "{x_1}"', from=1-10, to=2-10]
	\arrow["{t_2 d'_x \alpha}"', shorten <=37pt, shorten >=37pt, Rightarrow, from=1-10, to=4-7]
	\arrow["r"', from=1-11, to=1-10]
	\arrow[""{name=3, anchor=center, inner sep=0}, Rightarrow, no head, from=1-11, to=2-11]
	\arrow["{x_3}", from=2-4, to=2-6]
	\arrow["{d_x}"{description}, curve={height=6pt}, from=2-4, to=3-3]
	\arrow["{d'_x}", from=2-4, to=3-4]
	\arrow[""{name=4, anchor=center, inner sep=0}, "{d_x}"{description}, curve={height=-6pt}, from=2-4, to=3-5]
	\arrow[""{name=5, anchor=center, inner sep=0}, "{d'_x}"', from=2-10, to=3-10]
	\arrow["{x_3}"', from=2-11, to=2-10]
	\arrow[""{name=6, anchor=center, inner sep=0}, Rightarrow, no head, from=2-11, to=3-11]
	\arrow["{t_1}"{description}, from=3-3, to=4-4]
	\arrow["{\theta_1^{-1}}"', Rightarrow, from=3-4, to=3-3]
	\arrow["{t_2}"', from=3-4, to=4-4]
	\arrow["{\theta_1}"', Rightarrow, from=3-5, to=3-4]
	\arrow[""{name=7, anchor=center, inner sep=0}, "{t_1}"{description}, curve={height=-6pt}, from=3-5, to=4-4]
	\arrow[""{name=8, anchor=center, inner sep=0}, Rightarrow, no head, from=3-6, to=2-6]
	\arrow["u"', from=3-6, to=3-5]
	\arrow[""{name=9, anchor=center, inner sep=0}, Rightarrow, no head, from=3-6, to=4-6]
	\arrow[""{name=10, anchor=center, inner sep=0}, "{t_2}"', from=3-10, to=4-10]
	\arrow["{u'}"', from=3-11, to=3-10]
	\arrow[""{name=11, anchor=center, inner sep=0}, Rightarrow, no head, from=3-11, to=4-11]
	\arrow["g", from=4-1, to=4-2]
	\arrow[Rightarrow, no head, from=4-1, to=7-1]
	\arrow["{t_1d_y \beta}"', shorten <=37pt, shorten >=37pt, Rightarrow, from=4-1, to=7-4]
	\arrow[""{name=12, anchor=center, inner sep=0}, "{t_1 d_xx_2}"{description}, curve={height=-12pt}, from=4-2, to=4-4]
	\arrow[""{name=13, anchor=center, inner sep=0}, "{t_1 d_y y_1}"{description}, curve={height=12pt}, from=4-2, to=4-4]
	\arrow["v"', from=4-6, to=4-4]
	\arrow["{\text{ and }}"{description}, draw=none, from=4-6, to=4-7]
	\arrow[""{name=14, anchor=center, inner sep=0}, Rightarrow, no head, from=4-6, to=5-6]
	\arrow["g", from=4-7, to=4-8]
	\arrow[Rightarrow, no head, from=4-7, to=7-7]
	\arrow["{t_2 d'_y \beta}"', shorten <=37pt, shorten >=37pt, Rightarrow, from=4-7, to=7-10]
	\arrow[""{name=15, anchor=center, inner sep=0}, "{t_ 2d'_x x_2}", curve={height=-12pt}, from=4-8, to=4-10]
	\arrow[""{name=16, anchor=center, inner sep=0}, "{t_2 d'_y y_1}"', curve={height=12pt}, from=4-8, to=4-10]
	\arrow["v"', from=4-11, to=4-10]
	\arrow["{t_1}"{description}, from=5-3, to=4-4]
	\arrow["{\theta_2}"', Rightarrow, from=5-3, to=5-4]
	\arrow["{t_2}"{pos=0.4}, from=5-4, to=4-4]
	\arrow[""{name=17, anchor=center, inner sep=0}, "{t_1}"{description}, curve={height=6pt}, from=5-5, to=4-4]
	\arrow["{\theta_2}", Rightarrow, from=5-5, to=5-4]
	\arrow["u"', from=5-6, to=5-5]
	\arrow[""{name=18, anchor=center, inner sep=0}, Rightarrow, no head, from=5-6, to=6-6]
	\arrow[""{name=19, anchor=center, inner sep=0}, "{t_2}", from=5-10, to=4-10]
	\arrow[""{name=20, anchor=center, inner sep=0}, Rightarrow, no head, from=5-11, to=4-11]
	\arrow["{u'}"', from=5-11, to=5-10]
	\arrow[""{name=21, anchor=center, inner sep=0}, Rightarrow, no head, from=5-11, to=6-11]
	\arrow["{d_y}"{description}, curve={height=-6pt}, from=6-4, to=5-3]
	\arrow["{d'_y}"', from=6-4, to=5-4]
	\arrow[""{name=22, anchor=center, inner sep=0}, "{d_y}"{description}, curve={height=6pt}, from=6-4, to=5-5]
	\arrow["{y_3}"', from=6-4, to=6-6]
	\arrow[""{name=23, anchor=center, inner sep=0}, Rightarrow, no head, from=6-6, to=7-6]
	\arrow[""{name=24, anchor=center, inner sep=0}, "{d'_y}", from=6-10, to=5-10]
	\arrow["{{{y_3}}}", from=6-11, to=6-10]
	\arrow["h"', from=7-1, to=7-4]
	\arrow[""{name=25, anchor=center, inner sep=0}, "{y_2}", from=7-4, to=6-4]
	\arrow["t", from=7-6, to=7-4]
	\arrow["h"', from=7-7, to=7-10]
	\arrow[""{name=26, anchor=center, inner sep=0}, "{y_2}", from=7-10, to=6-10]
	\arrow[""{name=27, anchor=center, inner sep=0}, Rightarrow, no head, from=7-11, to=6-11]
	\arrow["t", from=7-11, to=7-10]
	\arrow["{\SIGMA^{\delta_1}}"{description}, draw=none, from=0, to=1]
	\arrow["{t_1 d_x \alpha}"', shift right=5, shorten <=38pt, shorten >=38pt, Rightarrow, from=0, to=4-1]
	\arrow["{\SIGMA^{\delta_1}}"{description}, draw=none, from=2, to=3]
	\arrow["{\SIGMA^{\phi_x}}"{description}, draw=none, from=4, to=8]
	\arrow["{\SIGMA^{\phi'_x}}"{description}, draw=none, from=6, to=5]
	\arrow["{\SIGMA^{\psi_1}}"{description}, draw=none, from=7, to=9]
	\arrow["{\SIGMA^{\psi_2}}"{description}, draw=none, from=10, to=11]
	\arrow["{t_1\gamma}"', shorten <=5pt, shorten >=5pt, Rightarrow, from=12, to=13]
	\arrow["{t_2 \gamma'}"', shorten <=3pt, shorten >=3pt, Rightarrow, from=15, to=16]
	\arrow["{\SIGMA^{\psi_1}}"{description}, draw=none, from=17, to=14]
	\arrow["{\SIGMA^{\psi_2}}"{description}, draw=none, from=20, to=19]
	\arrow["{\SIGMA^{\phi_y}}"{description}, draw=none, from=22, to=18]
	\arrow["{\SIGMA^{\phi'_y}}"{description}, draw=none, from=24, to=21]
	\arrow["{\SIGMA^{\epsilon_2}}"{description}, draw=none, from=25, to=23]
	\arrow["{\SIGMA^{\epsilon_2}}"{description}, draw=none, from=26, to=27]
\end{tikzcd}\]
For these two $\Sigma$-extensions to coincide we would need the 2-cells on the left-hand-side of each diagram to be equal.
That is, we want the outer rectangle of the following diagram to commute.

\[\begin{tikzcd}
	{t_1d_xx_1f} & {t_1d_xx_2g} & {t_1d_yy_1g} & {t_1d_yy_2h} \\
	{t_2d'_xx_1f} & {t_2d'_xx_2g} & {t_2d'_yy_1g} & {t_2d'_yy_2h}
	\arrow["{t_1d_x\alpha}", Rightarrow, from=1-1, to=1-2]
	\arrow["{t_1\gamma g}", Rightarrow, from=1-2, to=1-3]
	\arrow["{t_1d_y\beta}", Rightarrow, from=1-3, to=1-4]
	\arrow["{\theta_2y_1g}", Rightarrow, from=1-3, to=2-3]
	\arrow["{\theta_2y_2h}", Rightarrow, from=1-4, to=2-4]
	\arrow["{\theta_1^{-1}x_1f}", Rightarrow, from=2-1, to=1-1]
	\arrow["{t_2d'_x\alpha}"', Rightarrow, from=2-1, to=2-2]
	\arrow["{\theta_1^{-1}x_2g}", Rightarrow, from=2-2, to=1-2]
	\arrow["{t_2\gamma'g}"', Rightarrow, from=2-2, to=2-3]
	\arrow["{t_2d'_y\beta}"', Rightarrow, from=2-3, to=2-4]
\end{tikzcd}\]
The left and right squares commute by naturality and so it suffices to show the central square commutes.
Actually,  this square might not commute, but we can force it by passing to a further $\Sigma$-extension.
We want to force $\theta_2 y_1 \cdot t_1 \gamma \cdot \theta_1^{-1} x_2 = t_2 \gamma'$ using Equification.
Consider the diagram
\[\begin{tikzcd}
	B & J \\
	B & X & Y \\
	B & {D'} & {D'} \\
	B & T \rlap{\,,}
	\arrow["s", from=1-1, to=1-2]
	\arrow[""{name=0, anchor=center, inner sep=0}, Rightarrow, no head, from=1-1, to=2-1]
	\arrow[""{name=1, anchor=center, inner sep=0}, "{x_2}", from=1-2, to=2-2]
	\arrow["{y_1}", from=1-2, to=2-3]
	\arrow["{x_3}", from=2-1, to=2-2]
	\arrow[""{name=2, anchor=center, inner sep=0}, Rightarrow, no head, from=2-1, to=3-1]
	\arrow[""{name=3, anchor=center, inner sep=0}, "{d'_x}"', from=2-2, to=3-2]
	\arrow[""{name=4, anchor=center, inner sep=0}, "{d'_y}", from=2-3, to=3-3]
	\arrow["{u'}", from=3-1, to=3-2]
	\arrow[""{name=5, anchor=center, inner sep=0}, Rightarrow, no head, from=3-1, to=4-1]
	\arrow[""{name=6, anchor=center, inner sep=0}, "{t_2}", from=3-2, to=4-2]
	\arrow["{t_2}", from=3-3, to=4-2]
	\arrow["v"', from=4-1, to=4-2]
	\arrow["{\SIGMA^{\delta_2}}"{marking, allow upside down}, draw=none, from=0, to=1]
	\arrow["{\SIGMA^{\phi'_x}}"{marking, allow upside down}, draw=none, from=2, to=3]
	\arrow["{t_2 \gamma'}", shift left=2, shorten <=10pt, shorten >=10pt, Rightarrow, from=3, to=4]
	\arrow["{\zeta}"', shift right=2, shorten <=10pt, shorten >=10pt, Rightarrow, from=3, to=4]
	\arrow["{\SIGMA^{\psi_2}}"{marking, allow upside down}, draw=none, from=5, to=6]
\end{tikzcd}\]
where $\zeta = \theta_2 y_1 \cdot t_1 \gamma \cdot \theta_1^{-1} x_2$.
To apply Equification we must first show $t_2 \gamma' s = \zeta s$.
Note that the composite of the above squares with the map $t_2 \gamma' s$ is equal to $\SIGMA^{\psi_2 \odot \phi'_y \odot \epsilon_1}$ by the assumption on $\gamma'$.
On the other hand, the composite of the squares with $\zeta s$ is

\begin{alignat*}{2}
 &\begin{tikzcd}[ampersand replacement=\&]
	B \& J \\
	B \& X \& Y \\
	B \& {D'} \& D \& {D'} \\
	B \& T
	\arrow["s", from=1-1, to=1-2]
	\arrow[""{name=0, anchor=center, inner sep=0}, Rightarrow, no head, from=1-1, to=2-1]
	\arrow[""{name=1, anchor=center, inner sep=0}, "{x_2}", from=1-2, to=2-2]
	\arrow["{y_1}", from=1-2, to=2-3]
	\arrow["{x_3}", from=2-1, to=2-2]
	\arrow[""{name=2, anchor=center, inner sep=0}, Rightarrow, no head, from=2-1, to=3-1]
	\arrow["\gamma", shorten <=2pt, shorten >=2pt, Rightarrow, from=2-2, to=2-3]
	\arrow[""{name=3, anchor=center, inner sep=0}, "{d'_x}"', from=2-2, to=3-2]
	\arrow["{d_x}", from=2-2, to=3-3]
	\arrow["{d_y}", from=2-3, to=3-3]
	\arrow["{d'_y}", from=2-3, to=3-4]
	\arrow["{u'}", from=3-1, to=3-2]
	\arrow[""{name=4, anchor=center, inner sep=0}, Rightarrow, no head, from=3-1, to=4-1]
	\arrow["{\theta_1^{-1}}", shorten <=2pt, shorten >=2pt, Rightarrow, from=3-2, to=3-3]
	\arrow[""{name=5, anchor=center, inner sep=0}, "{t_2}"', from=3-2, to=4-2]
	\arrow["{\theta_2}", shorten <=2pt, shorten >=2pt, Rightarrow, from=3-3, to=3-4]
	\arrow["{t_1}"', from=3-3, to=4-2]
	\arrow["{t_2}", from=3-4, to=4-2]
	\arrow["v"', from=4-1, to=4-2]
	\arrow["{\SIGMA^{\delta_2}}"{marking, allow upside down}, draw=none, from=0, to=1]
	\arrow["{\SIGMA^{\phi'_x}}"{marking, allow upside down}, draw=none, from=2, to=3]
	\arrow["{\SIGMA^{\psi_2}}"{marking, allow upside down}, draw=none, from=4, to=5]
 \end{tikzcd} &\quad =& \quad
 \begin{tikzcd}[ampersand replacement=\&]
	B \& J \\
	B \& X \& Y \\
	B \& D \& {D'} \\
	B \& T
	\arrow["s", from=1-1, to=1-2]
	\arrow[""{name=0, anchor=center, inner sep=0}, Rightarrow, no head, from=1-1, to=2-1]
	\arrow[""{name=1, anchor=center, inner sep=0}, "{x_2}", from=1-2, to=2-2]
	\arrow["{y_1}", from=1-2, to=2-3]
	\arrow["{x_3}", from=2-1, to=2-2]
	\arrow[""{name=2, anchor=center, inner sep=0}, Rightarrow, no head, from=2-1, to=3-1]
	\arrow["\gamma", shorten <=2pt, shorten >=2pt, Rightarrow, from=2-2, to=2-3]
	\arrow[""{name=3, anchor=center, inner sep=0}, "{d_x}"', from=2-2, to=3-2]
	\arrow["{d_y}"', from=2-3, to=3-2]
	\arrow["{d'_y}", from=2-3, to=3-3]
	\arrow["{u'}", from=3-1, to=3-2]
	\arrow[""{name=4, anchor=center, inner sep=0}, Rightarrow, no head, from=3-1, to=4-1]
	\arrow["{\theta_2}", shorten <=2pt, shorten >=2pt, Rightarrow, from=3-2, to=3-3]
	\arrow[""{name=5, anchor=center, inner sep=0}, "{t_1}"', from=3-2, to=4-2]
	\arrow["{t_2}", from=3-3, to=4-2]
	\arrow["v"', from=4-1, to=4-2]
	\arrow["{\SIGMA^{\delta_2}}"{marking, allow upside down}, draw=none, from=0, to=1]
	\arrow["{\SIGMA^{\phi_x}}"{marking, allow upside down}, draw=none, from=2, to=3]
	\arrow["{\SIGMA^{\psi_1}}"{marking, allow upside down}, draw=none, from=4, to=5]
 \end{tikzcd} \\
= \quad
 &\begin{tikzcd}[ampersand replacement=\&]
	B \& J \\
	B \& Y \\
	B \& D \& {D'} \\
	B \& T
	\arrow["s", from=1-1, to=1-2]
	\arrow[""{name=0, anchor=center, inner sep=0}, Rightarrow, no head, from=1-1, to=2-1]
	\arrow[""{name=1, anchor=center, inner sep=0}, "{y_1}", from=1-2, to=2-2]
	\arrow["{x_3}", from=2-1, to=2-2]
	\arrow[""{name=2, anchor=center, inner sep=0}, Rightarrow, no head, from=2-1, to=3-1]
	\arrow[""{name=3, anchor=center, inner sep=0}, "{d_y}"', from=2-2, to=3-2]
	\arrow["{d'_y}", from=2-2, to=3-3]
	\arrow["{u'}", from=3-1, to=3-2]
	\arrow[""{name=4, anchor=center, inner sep=0}, Rightarrow, no head, from=3-1, to=4-1]
	\arrow["{\theta_2}", shorten <=2pt, shorten >=2pt, Rightarrow, from=3-2, to=3-3]
	\arrow[""{name=5, anchor=center, inner sep=0}, "{t_1}"', from=3-2, to=4-2]
	\arrow["{t_2}", from=3-3, to=4-2]
	\arrow["v"', from=4-1, to=4-2]
	\arrow["{\SIGMA^{\epsilon_1}}"{marking, allow upside down}, draw=none, from=0, to=1]
	\arrow["{\SIGMA^{\phi_y}}"{marking, allow upside down}, draw=none, from=2, to=3]
	\arrow["{\SIGMA^{\psi_1}}"{marking, allow upside down}, draw=none, from=4, to=5]
 \end{tikzcd}
 &\quad =& \quad
 \begin{tikzcd}[ampersand replacement=\&]
	B \& J \\
	B \& Y \\
	B \& {D'} \\
	B \& T \rlap{\,.}
	\arrow["s", from=1-1, to=1-2]
	\arrow[""{name=0, anchor=center, inner sep=0}, Rightarrow, no head, from=1-1, to=2-1]
	\arrow[""{name=1, anchor=center, inner sep=0}, "{y_1}", from=1-2, to=2-2]
	\arrow["{x_3}", from=2-1, to=2-2]
	\arrow[""{name=2, anchor=center, inner sep=0}, Rightarrow, no head, from=2-1, to=3-1]
	\arrow[""{name=3, anchor=center, inner sep=0}, "{d'_y}", from=2-2, to=3-2]
	\arrow["{u'}", from=3-1, to=3-2]
	\arrow[""{name=4, anchor=center, inner sep=0}, Rightarrow, no head, from=3-1, to=4-1]
	\arrow[""{name=5, anchor=center, inner sep=0}, "{t_2}", from=3-2, to=4-2]
	\arrow["v"', from=4-1, to=4-2]
	\arrow["{\SIGMA^{\epsilon_1}}"{marking, allow upside down}, draw=none, from=0, to=1]
	\arrow["{\SIGMA^{\phi'_y}}"{marking, allow upside down}, draw=none, from=2, to=3]
	\arrow["{\SIGMA^{\psi_2}}"{marking, allow upside down}, draw=none, from=4, to=5]
 \end{tikzcd}
\end{alignat*}
So the composites are the same, and since the 2-cells in the squares are invertible, we indeed have $t_2 \gamma' s = \zeta s$. %
So by Equification, we obtain a map $q\colon T \to Q$ and associated $\Sigma$-square that allows us to pass to a common $\Sigma$-extension of the two 2-morphisms above, as required.

\noindent (2) We now show that vertical composition of 2-morphisms respects $\approx$ --- that is, if $\overline{\alpha_1} \approx \overline{\alpha_2}$ and $\overline{\beta_1} \approx \overline{\beta_2}$ then $\overline{\beta_1} \cdot \overline{\alpha_1} \approx \overline{\beta_2} \cdot \overline{\alpha_2}$. By transitivity and symmetry, we may assume $\overline{\alpha_2}$ is a $\Sigma$-extension of $\overline{\alpha_1} = \overline{\alpha}$ and $\overline{\beta_2}$ is a $\Sigma$-extension of $\overline{\beta_1} = \overline{\beta}$ without loss of generality. Moreover, again by transitivity, we may assume $\overline{\alpha}$ and $\overline{\beta}$ are replaced in turn. We will consider the case $\overline{\alpha}$; the case of $\overline{\beta}$ is entirely analogous.

Suppose $\overline{\alpha_2}$ is the $\Sg$-extension of $\overline{\alpha}$ given by the diagram

\[\begin{tikzcd}
	A && I && B \\
	& X & {} & X & B \\
	&& {X'} && B \\
	& X & {} & X & B \\
	A && J && B
	\arrow[""{name=0, anchor=center, inner sep=0}, "f", from=1-1, to=1-3]
	\arrow[Rightarrow, no head, from=1-1, to=5-1]
	\arrow["{x_1}"{description}, curve={height=6pt}, from=1-3, to=2-2]
	\arrow[""{name=1, anchor=center, inner sep=0}, "{x_1}"{description}, curve={height=-6pt}, from=1-3, to=2-4]
	\arrow["{{{z_1}}}"{description}, from=1-3, to=3-3]
	\arrow["r"', from=1-5, to=1-3]
	\arrow[""{name=2, anchor=center, inner sep=0}, Rightarrow, no head, from=1-5, to=2-5]
	\arrow["w"{description}, curve={height=6pt}, from=2-2, to=3-3]
	\arrow["{\theta_1^{-1}}"', shorten <=3pt, shorten >=3pt, Rightarrow, from=2-3, to=2-2]
	\arrow["{\theta_1}"', shorten <=3pt, shorten >=3pt, Rightarrow, from=2-4, to=2-3]
	\arrow[""{name=3, anchor=center, inner sep=0}, "w"{description}, curve={height=-6pt}, from=2-4, to=3-3]
	\arrow["{{{{{x_3}}}}}"', from=2-5, to=2-4]
	\arrow[""{name=4, anchor=center, inner sep=0}, Rightarrow, no head, from=2-5, to=3-5]
	\arrow["{z_3}"', from=3-5, to=3-3]
	\arrow[""{name=5, anchor=center, inner sep=0}, Rightarrow, no head, from=3-5, to=4-5]
	\arrow["w"{description}, curve={height=-6pt}, from=4-2, to=3-3]
	\arrow["{\theta_2}", shorten <=3pt, shorten >=3pt, Rightarrow, from=4-2, to=4-3]
	\arrow[""{name=6, anchor=center, inner sep=0}, "w"{description}, curve={height=6pt}, from=4-4, to=3-3]
	\arrow["{\theta_2}"', shorten <=3pt, shorten >=3pt, Rightarrow, from=4-4, to=4-3]
	\arrow["{{{{{x_3}}}}}"', from=4-5, to=4-4]
	\arrow[""{name=7, anchor=center, inner sep=0}, Rightarrow, no head, from=4-5, to=5-5]
	\arrow[""{name=8, anchor=center, inner sep=0}, "g"', from=5-1, to=5-3]
	\arrow["{{{z_2}}}"{description}, from=5-3, to=3-3]
	\arrow["{x_2}"{description}, curve={height=-6pt}, from=5-3, to=4-2]
	\arrow[""{name=9, anchor=center, inner sep=0}, "{x_2}"{description}, curve={height=6pt}, from=5-3, to=4-4]
	\arrow["s", from=5-5, to=5-3]
	\arrow["{w\alpha}"', shift right=5, shorten <=34pt, shorten >=34pt, Rightarrow, from=0, to=8]
	\arrow["{\SIGMA^{\delta_1}}"{description}, draw=none, from=1, to=2]
	\arrow["{\SIGMA^{\chi}}"{description}, draw=none, from=3, to=4]
	\arrow["{\SIGMA^{\chi}}"{description}, draw=none, from=6, to=5]
	\arrow["{\SIGMA^{\delta_2}}"{description}, draw=none, from=9, to=7]
\end{tikzcd}\]
and suppose the composite $\overline{\beta} \cdot \overline{\alpha_2}$ is defined using the data $d_x\colon X' \to D, d_y\colon Y \to D, u\colon B \to D, \phi_x, \phi_y$ and $\gamma$.
That is, we have

\[\begin{tikzcd}
	B & J \\
	B & X & {z_2} & {} \\
	B & {X'} && {} \\
	B & D
	\arrow["s", from=1-1, to=1-2]
	\arrow[""{name=0, anchor=center, inner sep=0}, Rightarrow, no head, from=1-1, to=2-1]
	\arrow[""{name=1, anchor=center, inner sep=0}, "{x_2}", from=1-2, to=2-2]
	\arrow[curve={height=-6pt}, no head, from=1-2, to=2-3]
	\arrow["{x_3}", from=2-1, to=2-2]
	\arrow[""{name=2, anchor=center, inner sep=0}, Rightarrow, no head, from=2-1, to=3-1]
	\arrow["{\theta_2}", Rightarrow, from=2-2, to=2-3]
	\arrow[""{name=3, anchor=center, inner sep=0}, "w", from=2-2, to=3-2]
	\arrow[""{name=4, anchor=center, inner sep=0}, curve={height=-6pt}, from=2-3, to=3-2]
	\arrow[""{name=5, anchor=center, inner sep=0}, "\text{\normalsize $Y$}"{description}, draw=none, from=2-4, to=3-4]
	\arrow["{z_3}", from=3-1, to=3-2]
	\arrow[""{name=6, anchor=center, inner sep=0}, Rightarrow, no head, from=3-1, to=4-1]
	\arrow[""{name=7, anchor=center, inner sep=0}, "{d_x}", from=3-2, to=4-2]
	\arrow["u"', from=4-1, to=4-2]
	\arrow["{\SIGMA^{\delta_2}}"{description}, draw=none, from=0, to=1]
	\arrow["{y_1}", curve={height=-12pt}, shorten >=6pt, from=1-2, to=5]
	\arrow["{\SIGMA^{\chi}}"{description}, draw=none, from=2, to=3]
	\arrow["\gamma", shorten <=13pt, shorten >=13pt, Rightarrow, from=4, to=5]
	\arrow["{d_y}", curve={height=-12pt}, shorten <=6pt, from=5, to=4-2]
	\arrow["{\SIGMA^{\phi_x}}"{description}, draw=none, from=6, to=7]
\end{tikzcd}
\quad=\quad
\begin{tikzcd}
	B & J \\
	B & Y \\
	B & D\rlap{\,,}
	\arrow["s", from=1-1, to=1-2]
	\arrow[""{name=0, anchor=center, inner sep=0}, Rightarrow, no head, from=1-1, to=2-1]
	\arrow[""{name=1, anchor=center, inner sep=0}, "{{y_1}}", from=1-2, to=2-2]
	\arrow["{{y_3}}", dashed, from=2-1, to=2-2]
	\arrow[""{name=2, anchor=center, inner sep=0}, Rightarrow, no head, from=2-1, to=3-1]
	\arrow[""{name=3, anchor=center, inner sep=0}, "{{d_y}}", from=2-2, to=3-2]
	\arrow["u"', from=3-1, to=3-2]
	\arrow["{\SIGMA^{\epsilon_1}}"{description}, draw=none, from=0, to=1]
	\arrow["{\SIGMA^{\phi_y}}"{description}, draw=none, from=2, to=3]
\end{tikzcd}\]
yielding the composite
\[\begin{tikzcd}
	A &&& I && B \\
	&& X && X & B \\
	&& X & {X'} && B \\
	A & J && D && B \\
	&&& Y && B \\
	A && {} & K && B \rlap{\,.}
	\arrow[""{name=0, anchor=center, inner sep=0}, "f", from=1-1, to=1-4]
	\arrow[Rightarrow, no head, from=1-1, to=4-1]
	\arrow["{x_1}", curve={height=6pt}, from=1-4, to=2-3]
	\arrow[""{name=1, anchor=center, inner sep=0}, "{{x_1}}", curve={height=-6pt}, from=1-4, to=2-5]
	\arrow[""{name=2, anchor=center, inner sep=0}, "{z_1}"{description}, from=1-4, to=3-4]
	\arrow["r"', from=1-6, to=1-4]
	\arrow[""{name=3, anchor=center, inner sep=0}, Rightarrow, no head, from=1-6, to=2-6]
	\arrow["w", curve={height=6pt}, from=2-3, to=3-4]
	\arrow[""{name=4, anchor=center, inner sep=0}, "w"', curve={height=-6pt}, from=2-5, to=3-4]
	\arrow["{x_3}", from=2-6, to=2-5]
	\arrow[""{name=5, anchor=center, inner sep=0}, Rightarrow, no head, from=2-6, to=3-6]
	\arrow["w"{description}, from=3-3, to=3-4]
	\arrow[""{name=6, anchor=center, inner sep=0}, "{{d_x}}", from=3-4, to=4-4]
	\arrow["{z_3}", from=3-6, to=3-4]
	\arrow[""{name=7, anchor=center, inner sep=0}, Rightarrow, no head, from=3-6, to=4-6]
	\arrow["g", from=4-1, to=4-2]
	\arrow[Rightarrow, no head, from=4-1, to=6-1]
	\arrow["{d_y\beta}"', shorten <=17pt, shorten >=17pt, Rightarrow, from=4-1, to=6-3]
	\arrow[""{name=8, anchor=center, inner sep=0}, "{x_2}", curve={height=-12pt}, from=4-2, to=3-3]
	\arrow[""{name=9, anchor=center, inner sep=0}, "{z_2}"{description}, curve={height=6pt}, from=4-2, to=3-4]
	\arrow[""{name=10, anchor=center, inner sep=0}, "{y_1}"', from=4-2, to=5-4]
	\arrow["u"', from=4-6, to=4-4]
	\arrow[""{name=11, anchor=center, inner sep=0}, Rightarrow, no head, from=4-6, to=5-6]
	\arrow[""{name=12, anchor=center, inner sep=0}, "{{d_y}}"', from=5-4, to=4-4]
	\arrow["{{y_3}}", from=5-6, to=5-4]
	\arrow[""{name=13, anchor=center, inner sep=0}, Rightarrow, no head, from=5-6, to=6-6]
	\arrow["h"', from=6-1, to=6-4]
	\arrow[""{name=14, anchor=center, inner sep=0}, "{{y_2}}"', from=6-4, to=5-4]
	\arrow["t", from=6-6, to=6-4]
	\arrow["{d_x w \alpha}", shorten <=21pt, shorten >=21pt, Rightarrow, from=0, to=4-1]
	\arrow["{\theta_1^{-1}}"', shorten <=5pt, shorten >=5pt, Rightarrow, from=2, to=2-3]
	\arrow["{\SIGMA^{\delta_1}}"{description}, draw=none, from=1, to=3]
	\arrow["{\theta_1}"'{pos=0.4}, , shorten <=5pt, shorten >=5pt, Rightarrow, from=2-5, to=2]
	\arrow["{\SIGMA^{\chi}}"{description}, draw=none, from=4, to=5]
	\arrow["{{\SIGMA^{\phi_x}}}"{description}, draw=none, from=6, to=7]
	\arrow["{\theta_2}"{pos=0.6}, shorten <=5pt, shorten >=5pt, Rightarrow, from=8, to=9]
	\arrow["\gamma", shorten <=4pt, shorten >=4pt, Rightarrow, from=9, to=10]
	\arrow["{{\SIGMA^{\phi_y}}}"{description}, draw=none, from=12, to=11]
	\arrow["{{\SIGMA^{\epsilon_2}}}"{description}, draw=none, from=14, to=13]
\end{tikzcd}\]
We now observe that this is a $\Sigma$-extension (using $d_x \theta_1$ and $\id_{d_y y_2}$) of the composite for $\overline{\beta}\cdot\overline{\alpha}$ given by the data $d_x w, d_y, u, \phi_x \odot \chi$, $\phi_y$ and $\gamma \cdot d_x\theta_2$, and hence the two composites are equivalent.
\end{proof}

\begin{remark}\label{rem:identities}
	\textbf{\em Identity 2-cells.} The identity 2-cell on the $\Sigma$-cospan $(f,I,r)$ is $[\id_f,1_I,1_I,r,\id_r,\id_r]$. It can represented by any 2-morphism of the form $(\id_{df}, d,d,u,\delta,\delta)$.
\end{remark}

\begin{proposition}\label{pro:vertical-2}
The vertical composition between 2-cells is associative and the identity 2-cells indeed act as identities.
\end{proposition}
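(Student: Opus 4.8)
The plan is to treat the two assertions separately, in both cases using the independence of a vertical composite from the chosen Rule~4' data (the first part of the proof of Proposition~\ref{pro:vertical-1}) to compute composites with a convenient choice of that data.

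For the unit laws, the starting point is that the identities on the source and target of a $2$-morphism $\bar{\alpha}=(\alpha,x_1,x_2,x_3,\delta_1,\delta_2)\colon(f,r)\Rightarrow(g,s)$ can be described using the data of $\bar{\alpha}$ itself: by Remark~\ref{rem:identities}, $\mathrm{id}_{(f,r)}$ is represented by $[\id_{x_1f},x_1,x_1,x_3,\delta_1,\delta_1]$ (which is of the form $[\id_{df},d,d,u,\delta,\delta]$, the required $\Sigma$-square being $\Sigma^{\delta_1}$ of $\bar{\alpha}$), and $\mathrm{id}_{(g,s)}$ by $[\id_{x_2g},x_2,x_2,x_3,\delta_2,\delta_2]$. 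Using the first representative, the two $\Sigma$-squares to be reconciled in \eqref{EqV2} when forming $\bar{\alpha}\circ\mathrm{id}_{(f,r)}$ are literally the same square, namely $\Sigma^{\delta_1}$, so one may take $d_x=d_y=\id$, $u=x_3$, $\gamma=\id$; then \eqref{eq:vert_comp} returns the $2$-morphism $\bar{\alpha}$ on the nose, whence $[\bar{\alpha}]\cdot[\mathrm{id}_{(f,r)}]=[\bar{\alpha}]$. The left unit law is identical, using $\Sigma^{\delta_2}$ in place of $\Sigma^{\delta_1}$.

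For associativity, fix composable $\bar{\alpha}\colon(f,r)\Rightarrow(g,s)$, $\bar{\beta}\colon(g,s)\Rightarrow(h,t)$, $\bar{\gamma}\colon(h,t)\Rightarrow(k,u)$ with the usual data and square-objects $X,Y,Z$, and let $J,K$ be the middle objects of $(g,s),(h,t)$. The crux is to build a single object $W$ with $1$-cells $d_X\colon X\to W$, $d_Y\colon Y\to W$, $d_Z\colon Z\to W$, a $1$-cell $w\colon B\to W$, $\Sigma$-squares $w\to d_Xx_3$, $w\to d_Yy_3$, $w\to d_Zz_3$, and invertible $2$-cells $\sigma\colon d_Xx_2\Rightarrow d_Yy_1$ and $\tau\colon d_Yy_2\Rightarrow d_Zz_1$, compatible with the evident pasting equations. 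This takes two applications of Rule~4': first to the $\Sigma$-squares $\Sigma^{\delta_2}$ of $\bar{\alpha}$ and $\Sigma^{\epsilon_1}$ of $\bar{\beta}$, which share the top edge $s$, producing an amalgam $D$ of $X$ and $Y$ over $J$; then, after composing (Rule~1) $\Sigma^{\epsilon_2}$ of $\bar{\beta}$ with the $\Sigma$-square $y_3\to(\text{base of }D)$ coming out of the first step, to that composite and $\Sigma^{\zeta_1}$ of $\bar{\gamma}$, which share the top edge $t$, producing $W$ as an amalgam of $D$ and $Z$ over $K$. Composing $\Sigma$-squares via Rule~1 supplies $d_X,d_Y,d_Z,w$ and the three $\Sigma$-squares, while $\sigma,\tau$ are the (whiskered) invertible $2$-cells produced by Rule~4'.

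With $W$ at hand one computes both $(\bar{\gamma}\circ\bar{\beta})\circ\bar{\alpha}$ and $\bar{\gamma}\circ(\bar{\beta}\circ\bar{\alpha})$ using $W$ for every amalgamation, taking $\id_W$ and the obvious $\Sigma$-square on the side that is already $W$. For each of the four composition steps one checks that the chosen data satisfies \eqref{EqV2}, which reduces via Rule~1 (whiskering of $\Sigma$-squares distributes over pasting) to the two pasting equations of the applications of Rule~4'; a direct inspection of \eqref{eq:vert_comp} then shows that the two bracketings produce the identical $2$-morphism $(f,r)\Rightarrow(k,u)$, with $1$-cell parts $d_Xx_1$ and $d_Zz_2$, $B$-leg $w$, underlying $2$-cell the five-fold pasting of $d_X\alpha$, $\sigma g$, $d_Y\beta$, $\tau h$ and $d_Z$ applied to the $2$-cell of $\bar{\gamma}$ (its bracketing being irrelevant by interchange), and $\Sigma$-squares the pastings of $\Sigma^{\delta_1}$ and $\Sigma^{\zeta_2}$ with $w\to d_Xx_3$ and $w\to d_Zz_3$. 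By the independence of choices from Proposition~\ref{pro:vertical-1} this yields $([\bar{\gamma}]\cdot[\bar{\beta}])\cdot[\bar{\alpha}]=[\bar{\gamma}]\cdot([\bar{\beta}]\cdot[\bar{\alpha}])$. The main obstacle is purely organizational: as in the proof of Proposition~\ref{pro:vertical-1}, one must keep track of a sizeable web of coherence $2$-cells (the $\theta$'s, $\gamma$'s, $\phi$'s, $\chi$'s coming from Rules~4 and~4') and verify that the $\Sigma$-squares on the $B$-side fit together after both amalgamations so that \eqref{EqV2} holds for the iterated composites; amalgamating everything into the single object $W$, rather than forming fresh amalgams at each step, is what makes the two bracketings coincide exactly rather than merely up to a further $\Sigma$-extension.
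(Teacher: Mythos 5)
Your proof is correct, and for associativity it takes a genuinely different route from the paper. The paper computes $(\bar{\gamma}\cdot\bar{\beta})\cdot\bar{\alpha}$ and $\bar{\gamma}\cdot(\bar{\beta}\cdot\bar{\alpha})$ with independently chosen Rule~4' data and then reconciles them after the fact: it applies Rule~4' once more to $\Sigma^{\kappa_z}$ and $\Sigma^{\kappa_y\odot\theta_y\odot\eta_z}$ to build a common $\Sigma$-extension and, since the remaining $2$-cells only agree after composing with the invertible square data and $r_3$, it must invoke Equification to force $\upsilon z_1\cdot\widehat{e}\phi=\widehat{e}e_yq_y\omega$ on a further extension. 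You instead make one coherent global choice: amalgamate $X$, $Y$, $Z$ into a single $W$ by two applications of Rule~4' (the second applied to $\Sigma^{\epsilon_2}$ composed with the first amalgamation square, exactly as you describe), and then feed whiskered pieces of these two pasting equations into \eqref{EqV2} at each of the four composition steps. I checked that each of your four choices does satisfy \eqref{EqV2} (the identity square on $w$ is legitimate since $w$, being the bottom of a $\Sigma$-square, is an object of $\Sigma$), and that both bracketings then yield literally the same $2$-morphism, the $2$-cell parts agreeing by strict associativity of vertical composition in $\catx$ (not interchange, as you write, but that is only a naming slip). The step that licenses all of this is the independence-of-data part of Proposition~\ref{pro:vertical-1}, which you correctly invoke; what your approach buys is the elimination of the common-extension and Equification arguments, at the price of needing the freedom to choose all the amalgamation data at once, whereas the paper's argument reconciles arbitrary prior choices. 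Your treatment of the unit laws — representing the identity by $[\id_{x_1f},x_1,x_1,x_3,\delta_1,\delta_1]$ via Remark~\ref{rem:identities} so that the reconciling data in \eqref{EqV2} can be taken to be identities — is essentially the paper's computation in a slightly slicker normalization and is fine, again because composition respects $\approx$ by Proposition~\ref{pro:vertical-1}.
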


\begin{proof}
Consider the following triple composite.
\[\begin{tikzcd}
	A & {I_1} & B \\
	& X & B \\
	A & {I_2} & B \\
	& Y & B \\
	A & I_3 & B \\
	& Z & B \\
	A & {I_4} & B
	\arrow["f", from=1-1, to=1-2]
	\arrow[Rightarrow, no head, from=1-1, to=3-1]
	\arrow[""{name=0, anchor=center, inner sep=0}, "{{{{x_1}}}}"', from=1-2, to=2-2]
	\arrow["\alpha"', shorten <=15pt, shorten >=15pt, Rightarrow, from=1-2, to=3-1]
	\arrow["{r_1}"', from=1-3, to=1-2]
	\arrow[""{name=1, anchor=center, inner sep=0}, Rightarrow, no head, from=1-3, to=2-3]
	\arrow["{{x_3}}", from=2-3, to=2-2]
	\arrow["g"', from=3-1, to=3-2]
	\arrow[Rightarrow, no head, from=3-1, to=5-1]
	\arrow[""{name=2, anchor=center, inner sep=0}, "{{{{x_2}}}}", from=3-2, to=2-2]
	\arrow[""{name=3, anchor=center, inner sep=0}, "{{{{y_1}}}}"', from=3-2, to=4-2]
	\arrow["\beta"', shorten <=15pt, shorten >=15pt, Rightarrow, from=3-2, to=5-1]
	\arrow[""{name=4, anchor=center, inner sep=0}, Rightarrow, no head, from=3-3, to=2-3]
	\arrow["{r_2}", from=3-3, to=3-2]
	\arrow[""{name=5, anchor=center, inner sep=0}, Rightarrow, no head, from=3-3, to=4-3]
	\arrow["{{y_3}}", from=4-3, to=4-2]
	\arrow[""{name=6, anchor=center, inner sep=0}, Rightarrow, no head, from=4-3, to=5-3]
	\arrow["h"', from=5-1, to=5-2]
	\arrow[Rightarrow, no head, from=5-1, to=7-1]
	\arrow[""{name=7, anchor=center, inner sep=0}, "{{{{y_2}}}}", from=5-2, to=4-2]
	\arrow[""{name=8, anchor=center, inner sep=0}, "{z_1}"', from=5-2, to=6-2]
	\arrow["\gamma"', shorten <=15pt, shorten >=15pt, Rightarrow, from=5-2, to=7-1]
	\arrow["{r_3}", from=5-3, to=5-2]
	\arrow[""{name=9, anchor=center, inner sep=0}, Rightarrow, no head, from=5-3, to=6-3]
	\arrow["{z_3}", from=6-3, to=6-2]
	\arrow[""{name=10, anchor=center, inner sep=0}, Rightarrow, no head, from=6-3, to=7-3]
	\arrow["j"', from=7-1, to=7-2]
	\arrow[""{name=11, anchor=center, inner sep=0}, "{z_2}", from=7-2, to=6-2]
	\arrow["{r_4}", from=7-3, to=7-2]
	\arrow["{{{\SIGMA^{\delta_1}}}}"{description}, draw=none, from=0, to=1]
	\arrow["{{{\SIGMA^{\delta_2}}}}"{description}, draw=none, from=2, to=4]
	\arrow["{{{\SIGMA^{\epsilon_1}}}}"{description}, draw=none, from=3, to=5]
	\arrow["{{{\SIGMA^{\epsilon_2}}}}"{description}, draw=none, from=7, to=6]
	\arrow["{\SIGMA^{\zeta_1}}"{description}, draw=none, from=8, to=9]
	\arrow["{\SIGMA^{\zeta_2}}"{description}, draw=none, from=11, to=10]
\end{tikzcd}\]
We must show that the two ways of composing, $(\overline{\gamma} \cdot \overline{\beta}) \cdot \overline{\alpha}$ and $\overline{\gamma} \cdot (\overline{\beta} \cdot \overline{\alpha})$, give equivalent results. For the former of these we find $\overline{\gamma} \cdot \overline{\beta}$ is given by
\[\begin{tikzcd}
	A && {I_2} & B \\
	&& Y & B \\
	A & {I_3} & P & B \\
	&& Z & B \\
	A && {I_4} & B \rlap{\,,}
	\arrow["g", from=1-1, to=1-3]
	\arrow[Rightarrow, no head, from=1-1, to=3-1]
	\arrow[""{name=0, anchor=center, inner sep=0}, "{y_1}"', from=1-3, to=2-3]
	\arrow["\beta"', shorten <=17pt, shorten >=17pt, Rightarrow, from=1-3, to=3-1]
	\arrow["{r_2}"', from=1-4, to=1-3]
	\arrow[""{name=1, anchor=center, inner sep=0}, Rightarrow, no head, from=1-4, to=2-4]
	\arrow[""{name=2, anchor=center, inner sep=0}, "{p_y}"', from=2-3, to=3-3]
	\arrow["{y_3}"', from=2-4, to=2-3]
	\arrow[""{name=3, anchor=center, inner sep=0}, Rightarrow, no head, from=2-4, to=3-4]
	\arrow["h", from=3-1, to=3-2]
	\arrow[Rightarrow, no head, from=3-1, to=5-1]
	\arrow["\gamma"', shorten <=17pt, shorten >=17pt, Rightarrow, from=3-1, to=5-3]
	\arrow[""{name=4, anchor=center, inner sep=0}, "{y_2}", from=3-2, to=2-3]
	\arrow[""{name=5, anchor=center, inner sep=0}, "{z_1}"', from=3-2, to=4-3]
	\arrow["v"', from=3-4, to=3-3]
	\arrow[""{name=6, anchor=center, inner sep=0}, Rightarrow, no head, from=3-4, to=4-4]
	\arrow[""{name=7, anchor=center, inner sep=0}, "{p_z}", from=4-3, to=3-3]
	\arrow["{z_3}"', from=4-4, to=4-3]
	\arrow[""{name=8, anchor=center, inner sep=0}, Rightarrow, no head, from=4-4, to=5-4]
	\arrow["j"', from=5-1, to=5-3]
	\arrow[""{name=9, anchor=center, inner sep=0}, "{z_2}", from=5-3, to=4-3]
	\arrow["{r_4}", from=5-4, to=5-3]
	\arrow["{\SIGMA^{\epsilon_1}}"{description}, draw=none, from=0, to=1]
	\arrow["{\SIGMA^{\eta_y}}"{description}, draw=none, from=2, to=3]
	\arrow["\omega", shorten <=4pt, shorten >=4pt, Rightarrow, from=4, to=5]
	\arrow["{\SIGMA^{\eta_z}}"{description}, draw=none, from=7, to=6]
	\arrow["{\SIGMA^{\zeta_2}}"{description}, draw=none, from=9, to=8]
\end{tikzcd}\]
and hence $(\overline{\gamma} \cdot \overline{\beta}) \cdot \overline{\alpha}$ is given by
\begin{equation}\label{eq:gamma_beta_then_alpha}
\begin{tikzcd}
	A && {I_1} & B \\
	&& X & B \\
	A & {I_2} & Q & B \\
	& Y & P & B \\
	A & {I_3} \\
	&& Z & B \\
	A && {I_4} & B \rlap{\,,}
	\arrow["f", from=1-1, to=1-3]
	\arrow[Rightarrow, no head, from=1-1, to=3-1]
	\arrow[""{name=0, anchor=center, inner sep=0}, "{x_1}"', from=1-3, to=2-3]
	\arrow["\alpha"', shorten <=17pt, shorten >=17pt, Rightarrow, from=1-3, to=3-1]
	\arrow["{r_1}"', from=1-4, to=1-3]
	\arrow[""{name=1, anchor=center, inner sep=0}, Rightarrow, no head, from=1-4, to=2-4]
	\arrow[""{name=2, anchor=center, inner sep=0}, "{q_x}"', from=2-3, to=3-3]
	\arrow["{x_3}"', from=2-4, to=2-3]
	\arrow[""{name=3, anchor=center, inner sep=0}, Rightarrow, no head, from=2-4, to=3-4]
	\arrow["g", from=3-1, to=3-2]
	\arrow[Rightarrow, no head, from=3-1, to=5-1]
	\arrow[""{name=4, anchor=center, inner sep=0}, "{x_2}", from=3-2, to=2-3]
	\arrow["{y_1}", from=3-2, to=4-2]
	\arrow["\beta"', shorten <=11pt, shorten >=11pt, Rightarrow, from=3-2, to=5-1]
	\arrow["w"', from=3-4, to=3-3]
	\arrow[""{name=5, anchor=center, inner sep=0}, Rightarrow, no head, from=3-4, to=4-4]
	\arrow[""{name=6, anchor=center, inner sep=0}, "{p_y}", from=4-2, to=4-3]
	\arrow[""{name=7, anchor=center, inner sep=0}, "{q_y}"', from=4-3, to=3-3]
	\arrow["v"', from=4-4, to=4-3]
	\arrow[""{name=8, anchor=center, inner sep=0}, Rightarrow, no head, from=4-4, to=6-4]
	\arrow["h", from=5-1, to=5-2]
	\arrow[Rightarrow, no head, from=5-1, to=7-1]
	\arrow["\gamma"', shorten <=17pt, shorten >=17pt, Rightarrow, from=5-1, to=7-3]
	\arrow["{y_2}", from=5-2, to=4-2]
	\arrow[""{name=9, anchor=center, inner sep=0}, "{z_1}"', from=5-2, to=6-3]
	\arrow[""{name=10, anchor=center, inner sep=0}, "{p_z}"', from=6-3, to=4-3]
	\arrow["{z_3}"', from=6-4, to=6-3]
	\arrow[""{name=11, anchor=center, inner sep=0}, Rightarrow, no head, from=6-4, to=7-4]
	\arrow["j"', from=7-1, to=7-3]
	\arrow[""{name=12, anchor=center, inner sep=0}, "{z_2}", from=7-3, to=6-3]
	\arrow["{r_4}", from=7-4, to=7-3]
	\arrow["{\SIGMA^{\delta_1}}"{description}, draw=none, from=0, to=1]
	\arrow["{\SIGMA^{\theta_x}}"{description}, draw=none, from=2, to=3]
	\arrow["\psi", shorten <=10pt, shorten >=10pt, Rightarrow, from=4, to=6]
	\arrow["\omega", shorten <=10pt, shorten >=10pt, Rightarrow, from=6, to=9]
	\arrow["{\SIGMA^{\theta_y}}"{description}, draw=none, from=7, to=5]
	\arrow["{\SIGMA^{\eta_z}}"{description}, draw=none, from=10, to=8]
	\arrow["{\SIGMA^{\zeta_2}}"{description}, draw=none, from=12, to=11]
\end{tikzcd}
\end{equation}
where the data used for these compositions satisfy
\begin{equation}\label{eq:omega_equality}
\begin{tikzcd}
	B & I_3 \\
	B & Y & Z \\
	B & P
	\arrow["{r_3}", from=1-1, to=1-2]
	\arrow[""{name=0, anchor=center, inner sep=0}, Rightarrow, no head, from=1-1, to=2-1]
	\arrow[""{name=1, anchor=center, inner sep=0}, "{y_2}", from=1-2, to=2-2]
	\arrow["{z_1}", curve={height=-6pt}, from=1-2, to=2-3]
	\arrow["{y_3}", from=2-1, to=2-2]
	\arrow[""{name=2, anchor=center, inner sep=0}, Rightarrow, no head, from=2-1, to=3-1]
	\arrow["\omega", Rightarrow, from=2-2, to=2-3]
	\arrow[""{name=3, anchor=center, inner sep=0}, "{p_y}", from=2-2, to=3-2]
	\arrow["{p_z}", curve={height=-6pt}, from=2-3, to=3-2]
	\arrow["v"', from=3-1, to=3-2]
	\arrow["{\SIGMA^{\epsilon_2}}"{description}, draw=none, from=0, to=1]
	\arrow["{\SIGMA^{\eta_y}}"{description}, draw=none, from=2, to=3]
\end{tikzcd}
\quad=\quad
\begin{tikzcd}
	B & I_3 \\
	B & Z \\
	B & P
	\arrow["{r_3}", from=1-1, to=1-2]
	\arrow[""{name=0, anchor=center, inner sep=0}, Rightarrow, no head, from=1-1, to=2-1]
	\arrow[""{name=1, anchor=center, inner sep=0}, "{z_1}", from=1-2, to=2-2]
	\arrow["{z_3}", from=2-1, to=2-2]
	\arrow[""{name=2, anchor=center, inner sep=0}, Rightarrow, no head, from=2-1, to=3-1]
	\arrow[""{name=3, anchor=center, inner sep=0}, "{p_z}", from=2-2, to=3-2]
	\arrow["v"', from=3-1, to=3-2]
	\arrow["{\SIGMA^{\zeta_1}}"{description}, draw=none, from=0, to=1]
	\arrow["{\SIGMA^{\eta_z}}"{description}, draw=none, from=2, to=3]
\end{tikzcd}
\end{equation}
and
\begin{equation}\label{eq:psi_equality}
\begin{tikzcd}
	B & I_2 \\
	B & X & P \\
	B & Q
	\arrow["{r_2}", from=1-1, to=1-2]
	\arrow[""{name=0, anchor=center, inner sep=0}, Rightarrow, no head, from=1-1, to=2-1]
	\arrow[""{name=1, anchor=center, inner sep=0}, "{x_2}", from=1-2, to=2-2]
	\arrow["{p_y y_1}", curve={height=-6pt}, from=1-2, to=2-3]
	\arrow["{x_3}", from=2-1, to=2-2]
	\arrow[""{name=2, anchor=center, inner sep=0}, Rightarrow, no head, from=2-1, to=3-1]
	\arrow["\psi", Rightarrow, from=2-2, to=2-3]
	\arrow[""{name=3, anchor=center, inner sep=0}, "{q_x}", from=2-2, to=3-2]
	\arrow["{q_y}", curve={height=-6pt}, from=2-3, to=3-2]
	\arrow["w"', from=3-1, to=3-2]
	\arrow["{\SIGMA^{\delta_2}}"{description}, draw=none, from=0, to=1]
	\arrow["{\SIGMA^{\theta_x}}"{description}, draw=none, from=2, to=3]
\end{tikzcd}
\quad=\quad
\begin{tikzcd}
	B & I_2 \\
	B & Y \\
	B & P \\
	B & Q \rlap{\,.}
	\arrow["{r_2}", from=1-1, to=1-2]
	\arrow[""{name=0, anchor=center, inner sep=0}, Rightarrow, no head, from=1-1, to=2-1]
	\arrow[""{name=1, anchor=center, inner sep=0}, "{y_1}", from=1-2, to=2-2]
	\arrow["{y_3}", from=2-1, to=2-2]
	\arrow[""{name=2, anchor=center, inner sep=0}, Rightarrow, no head, from=2-1, to=3-1]
	\arrow[""{name=3, anchor=center, inner sep=0}, "{p_y}", from=2-2, to=3-2]
	\arrow["v", from=3-1, to=3-2]
	\arrow[""{name=4, anchor=center, inner sep=0}, Rightarrow, no head, from=3-1, to=4-1]
	\arrow[""{name=5, anchor=center, inner sep=0}, "{q_y}", from=3-2, to=4-2]
	\arrow["w"', from=4-1, to=4-2]
	\arrow["{\SIGMA^{\epsilon_1}}"{description}, draw=none, from=0, to=1]
	\arrow["{\SIGMA^{\eta_y}}"{description}, draw=none, from=2, to=3]
	\arrow["{\SIGMA^{\theta_y}}"{description}, draw=none, from=4, to=5]
\end{tikzcd}
\end{equation}

As for the other composition, we can form a composite of $\overline{\beta} \cdot \overline{\alpha}$ using the $\Sigma$-squares $\Sigma^{\theta_x}$ and $\Sigma^{\theta_y \odot \eta_y}$ from \cref{eq:psi_equality} above:
\[\begin{tikzcd}
	A && {I_1} & B \\
	&& X & B \\
	A & {I_2} & Q & B \\
	&& P & B \\
	&& Y & B \\
	A && {I_3} & B \rlap{\,.}
	\arrow["f", from=1-1, to=1-3]
	\arrow[Rightarrow, no head, from=1-1, to=3-1]
	\arrow[""{name=0, anchor=center, inner sep=0}, "{x_1}"', from=1-3, to=2-3]
	\arrow["\alpha"', shorten <=17pt, shorten >=17pt, Rightarrow, from=1-3, to=3-1]
	\arrow["{r_1}"', from=1-4, to=1-3]
	\arrow[""{name=1, anchor=center, inner sep=0}, Rightarrow, no head, from=1-4, to=2-4]
	\arrow[""{name=2, anchor=center, inner sep=0}, "{q_x}", from=2-3, to=3-3]
	\arrow["{x_3}"', from=2-4, to=2-3]
	\arrow[""{name=3, anchor=center, inner sep=0}, Rightarrow, no head, from=2-4, to=3-4]
	\arrow["g", from=3-1, to=3-2]
	\arrow[Rightarrow, no head, from=3-1, to=6-1]
	\arrow["\beta"', shorten <=21pt, shorten >=21pt, Rightarrow, from=3-1, to=6-3]
	\arrow[""{name=4, anchor=center, inner sep=0}, "{x_2}", from=3-2, to=2-3]
	\arrow[""{name=5, anchor=center, inner sep=0}, "{y_1}"', from=3-2, to=5-3]
	\arrow["w"', from=3-4, to=3-3]
	\arrow[""{name=6, anchor=center, inner sep=0}, Rightarrow, no head, from=3-4, to=4-4]
	\arrow[""{name=7, anchor=center, inner sep=0}, "{q_y}"', from=4-3, to=3-3]
	\arrow["v"', from=4-4, to=4-3]
	\arrow[""{name=8, anchor=center, inner sep=0}, Rightarrow, no head, from=4-4, to=5-4]
	\arrow[""{name=9, anchor=center, inner sep=0}, "{p_y}"', from=5-3, to=4-3]
	\arrow["{y_3}"', from=5-4, to=5-3]
	\arrow[""{name=10, anchor=center, inner sep=0}, Rightarrow, no head, from=5-4, to=6-4]
	\arrow["h"', from=6-1, to=6-3]
	\arrow[""{name=11, anchor=center, inner sep=0}, "{y_2}", from=6-3, to=5-3]
	\arrow["{r_3}", from=6-4, to=6-3]
	\arrow["{\SIGMA^{\delta_1}}"{description}, draw=none, from=0, to=1]
	\arrow["{\SIGMA^{\theta_x}}"{description}, draw=none, from=2, to=3]
	\arrow["\psi", shorten <=6pt, shorten >=6pt, Rightarrow, from=4, to=5]
	\arrow["{\SIGMA^{\theta_y}}"{description}, draw=none, from=7, to=6]
	\arrow["{\SIGMA^{\eta_y}}"{description}, draw=none, from=9, to=8]
	\arrow["{\SIGMA^{\epsilon_2}}"{description}, draw=none, from=11, to=10]
\end{tikzcd}\]
We then find $\overline{\gamma} \cdot (\overline{\beta} \cdot \overline{\alpha})$, using the data
\begin{equation}\label{eq:phi_equality}
\begin{tikzcd}
	B & {I_3} \\
	B & Q & Z \\
	B & E
	\arrow["{r_3}", from=1-1, to=1-2]
	\arrow[""{name=0, anchor=center, inner sep=0}, Rightarrow, no head, from=1-1, to=2-1]
	\arrow[""{name=1, anchor=center, inner sep=0}, "{q_y p_y y_2}", from=1-2, to=2-2]
	\arrow["{z_1}", curve={height=-6pt}, from=1-2, to=2-3]
	\arrow["w", from=2-1, to=2-2]
	\arrow[""{name=2, anchor=center, inner sep=0}, Rightarrow, no head, from=2-1, to=3-1]
	\arrow["\phi"', Rightarrow, from=2-2, to=2-3]
	\arrow[""{name=3, anchor=center, inner sep=0}, "{e_y}", from=2-2, to=3-2]
	\arrow["{e_z}", curve={height=-6pt}, from=2-3, to=3-2]
	\arrow["e"', from=3-1, to=3-2]
	\arrow["{\SIGMA^{(\ast)}}"{description}, draw=none, from=0, to=1]
	\arrow["{\SIGMA^{\kappa_y}}"{description}, draw=none, from=2, to=3]
\end{tikzcd}
\quad=\quad
\begin{tikzcd}
	B & {I_3} \\
	B & Z \\
	B & E
	\arrow["{r_3}", from=1-1, to=1-2]
	\arrow[""{name=0, anchor=center, inner sep=0}, Rightarrow, no head, from=1-1, to=2-1]
	\arrow[""{name=1, anchor=center, inner sep=0}, "{z_1}", from=1-2, to=2-2]
	\arrow["{z_3}", from=2-1, to=2-2]
	\arrow[""{name=2, anchor=center, inner sep=0}, Rightarrow, no head, from=2-1, to=3-1]
	\arrow[""{name=3, anchor=center, inner sep=0}, "{e_z}", from=2-2, to=3-2]
	\arrow["e"', from=3-1, to=3-2]
	\arrow["{\SIGMA^{\zeta_1}}"{description}, draw=none, from=0, to=1]
	\arrow["{\SIGMA^{\kappa_z}}"{description}, draw=none, from=2, to=3]
\end{tikzcd}
\end{equation}
(where $\Sigma^{(\ast)}$ denotes $\Sigma^{\theta_y \odot \eta_y \odot \epsilon_2}$),
to be

\begin{equation}\label{eq:gamma_then_beta_alpha}
\begin{tikzcd}
	A && {I_1} & B \\
	&& X & B \\
	A & {I_2} & Q & B \\
	& Y & E & B \\
	A & {I_3} & Z & B \\
	A && {I_4} & {B \rlap{\,.}}
	\arrow["f", from=1-1, to=1-3]
	\arrow[Rightarrow, no head, from=1-1, to=3-1]
	\arrow[""{name=0, anchor=center, inner sep=0}, "{x_1}"', from=1-3, to=2-3]
	\arrow["\alpha"', shorten <=17pt, shorten >=17pt, Rightarrow, from=1-3, to=3-1]
	\arrow["{r_1}"', from=1-4, to=1-3]
	\arrow[""{name=1, anchor=center, inner sep=0}, Rightarrow, no head, from=1-4, to=2-4]
	\arrow[""{name=2, anchor=center, inner sep=0}, "{q_x}"', from=2-3, to=3-3]
	\arrow["{x_3}"', from=2-4, to=2-3]
	\arrow[""{name=3, anchor=center, inner sep=0}, Rightarrow, no head, from=2-4, to=3-4]
	\arrow["g", from=3-1, to=3-2]
	\arrow[Rightarrow, no head, from=3-1, to=5-1]
	\arrow[""{name=4, anchor=center, inner sep=0}, "{x_2}", from=3-2, to=2-3]
	\arrow["{y_1}", from=3-2, to=4-2]
	\arrow["\beta"', shorten <=11pt, shorten >=11pt, Rightarrow, from=3-2, to=5-1]
	\arrow["w"', from=3-4, to=3-3]
	\arrow[""{name=5, anchor=center, inner sep=0}, Rightarrow, no head, from=3-4, to=4-4]
	\arrow[""{name=6, anchor=center, inner sep=0}, "{q_y p_y}"'{pos=0.4}, from=4-2, to=3-3]
	\arrow[""{name=7, anchor=center, inner sep=0}, "{e_y}"', from=3-3, to=4-3]
	\arrow["e"', from=4-4, to=4-3]
	\arrow[""{name=8, anchor=center, inner sep=0}, Rightarrow, no head, from=4-4, to=5-4]
	\arrow["h", from=5-1, to=5-2]
	\arrow[Rightarrow, no head, from=5-1, to=6-1]
	\arrow["\gamma"', shorten <=14pt, shorten >=14pt, Rightarrow, from=5-1, to=6-3]
	\arrow["{y_2}", from=5-2, to=4-2]
	\arrow[""{name=9, anchor=center, inner sep=0}, "{z_1}"', from=5-2, to=5-3]
	\arrow[""{name=10, anchor=center, inner sep=0}, "{e_z}", from=5-3, to=4-3]
	\arrow["{z_3}"', from=5-4, to=5-3]
	\arrow[""{name=11, anchor=center, inner sep=0}, Rightarrow, no head, from=5-4, to=6-4]
	\arrow["j"', from=6-1, to=6-3]
	\arrow[""{name=12, anchor=center, inner sep=0}, "{z_2}", from=6-3, to=5-3]
	\arrow["{r_4}", from=6-4, to=6-3]
	\arrow["{\SIGMA^{\delta_1}}"{description}, draw=none, from=0, to=1]
	\arrow["{\SIGMA^{\theta_x}}"{description}, draw=none, from=2, to=3]
	\arrow["\psi", shorten <=10pt, shorten >=10pt, Rightarrow, from=4, to=6]
	\arrow["\phi", shorten <=13pt, shorten >=13pt, Rightarrow, from=6, to=9]
	\arrow["{\SIGMA^{\kappa_y}}"{description}, draw=none, from=7, to=5]
	\arrow["{\SIGMA^{\kappa_z}}"{description}, draw=none, from=10, to=8]
	\arrow["{\SIGMA^{\zeta_2}}"{description}, draw=none, from=12, to=11]
\end{tikzcd}
\end{equation}
We want to show that this is an equivalent 2-morphism to the composite \eqref{eq:gamma_beta_then_alpha} above.
Using Rule 3b of \cref{pro:useful_rules} applied to $\Sigma^{\kappa_z}$ and $\Sigma^{\kappa_y \odot \theta_y \odot \eta_z}$, we obtain
\begin{equation}\label{eq:upsilon_equality}
\begin{tikzcd}
	B & Z \\
	B & E & E \\
	B & {\widehat{E}}
	\arrow["{z_3}", from=1-1, to=1-2]
	\arrow[""{name=0, anchor=center, inner sep=0}, Rightarrow, no head, from=1-1, to=2-1]
	\arrow[""{name=1, anchor=center, inner sep=0}, "{e_z}", from=1-2, to=2-2]
	\arrow["{e_y q_y p_z}", curve={height=-12pt}, from=1-2, to=2-3]
	\arrow["e", from=2-1, to=2-2]
	\arrow[""{name=2, anchor=center, inner sep=0}, Rightarrow, no head, from=2-1, to=3-1]
	\arrow["\upsilon", shorten <=3pt, shorten >=3pt, Rightarrow, from=2-2, to=2-3]
	\arrow[""{name=3, anchor=center, inner sep=0}, "{\widehat{e}}", from=2-2, to=3-2]
	\arrow["{\widehat{e}}", curve={height=-12pt}, from=2-3, to=3-2]
	\arrow["{\widehat{e}'}"', from=3-1, to=3-2]
	\arrow["{\SIGMA^{\kappa_z}}"{description}, draw=none, from=0, to=1]
	\arrow["{\SIGMA^{\lambda}}"{description}, draw=none, from=2, to=3]
\end{tikzcd}
\quad=\quad
\begin{tikzcd}[row sep=small]
	B & Z \\
	B & P \\
	B & Q \\
	B & E \\
	B & {\widehat{E}} \rlap{\,.}
	\arrow["{z_3}", from=1-1, to=1-2]
	\arrow[""{name=0, anchor=center, inner sep=0}, Rightarrow, no head, from=1-1, to=2-1]
	\arrow[""{name=1, anchor=center, inner sep=0}, "{p_z}", from=1-2, to=2-2]
	\arrow["v", from=2-1, to=2-2]
	\arrow[""{name=2, anchor=center, inner sep=0}, Rightarrow, no head, from=2-1, to=3-1]
	\arrow[""{name=3, anchor=center, inner sep=0}, "{q_y}", from=2-2, to=3-2]
	\arrow["w", from=3-1, to=3-2]
	\arrow[""{name=4, anchor=center, inner sep=0}, Rightarrow, no head, from=3-1, to=4-1]
	\arrow[""{name=5, anchor=center, inner sep=0}, "{e_y}", from=3-2, to=4-2]
	\arrow["e", from=4-1, to=4-2]
	\arrow[""{name=6, anchor=center, inner sep=0}, Rightarrow, no head, from=4-1, to=5-1]
	\arrow[""{name=7, anchor=center, inner sep=0}, "{\widehat{e}}", from=4-2, to=5-2]
	\arrow["{\widehat{e}'}"', from=5-1, to=5-2]
	\arrow["{\SIGMA^{\eta_z}}"{marking, allow upside down}, draw=none, from=0, to=1]
	\arrow["{\SIGMA^{\theta_y}}"{marking, allow upside down}, draw=none, from=2, to=3]
	\arrow["{\SIGMA^{\kappa_y}}"{marking, allow upside down}, draw=none, from=4, to=5]
	\arrow["{\SIGMA^{\lambda}}"{marking, allow upside down}, draw=none, from=6, to=7]
\end{tikzcd}
\end{equation}
Using $\widehat{e}$ and $\upsilon$ to form a $\Sigma$-extension of $\overline{\gamma} \cdot (\overline{\beta} \cdot \overline{\alpha})$ and $\widehat{e} e_y$ to form an extension of
$(\overline{\gamma} \cdot \overline{\beta}) \cdot \overline{\alpha}$, the relevant parts of the 2-morphism diagrams become, respectively,

\[\begin{tikzcd}
	A & {I_2} && X & B \\
	& Y & P & Q & B \\
	&&& E & B \\
	&&& {\widehat{E}} & B \\
	&&& E & B \\
	A & {I_3} & {} & Z & B
	\arrow[""{name=0, anchor=center, inner sep=0}, "g", from=1-1, to=1-2]
	\arrow[Rightarrow, no head, from=1-1, to=6-1]
	\arrow[""{name=1, anchor=center, inner sep=0}, "{{x_2}}", from=1-2, to=1-4]
	\arrow["{{y_1}}"', from=1-2, to=2-2]
	\arrow[""{name=2, anchor=center, inner sep=0}, "{{q_x}}"', from=1-4, to=2-4]
	\arrow["{{x_3}}"', from=1-5, to=1-4]
	\arrow[""{name=3, anchor=center, inner sep=0}, Rightarrow, no head, from=1-5, to=2-5]
	\arrow[""{name=4, anchor=center, inner sep=0}, "{{p_y}}"', from=2-2, to=2-3]
	\arrow["{{q_y}}"', from=2-3, to=2-4]
	\arrow[""{name=5, anchor=center, inner sep=0}, "{{e_y}}"', from=2-4, to=3-4]
	\arrow["w"', from=2-5, to=2-4]
	\arrow[""{name=6, anchor=center, inner sep=0}, Rightarrow, no head, from=2-5, to=3-5]
	\arrow[""{name=7, anchor=center, inner sep=0}, "{{\widehat{e}}}"', from=3-4, to=4-4]
	\arrow["e"', from=3-5, to=3-4]
	\arrow[""{name=8, anchor=center, inner sep=0}, Rightarrow, no head, from=3-5, to=4-5]
	\arrow["{{\widehat{e}'}}"', from=4-5, to=4-4]
	\arrow[""{name=9, anchor=center, inner sep=0}, Rightarrow, no head, from=4-5, to=5-5]
	\arrow[""{name=10, anchor=center, inner sep=0}, "{{\widehat{e}}}", from=5-4, to=4-4]
	\arrow["e", from=5-5, to=5-4]
	\arrow[""{name=11, anchor=center, inner sep=0}, Rightarrow, no head, from=5-5, to=6-5]
	\arrow[""{name=12, anchor=center, inner sep=0}, "h"', from=6-1, to=6-2]
	\arrow["{{y_2}}", from=6-2, to=2-2]
	\arrow[""{name=13, anchor=center, inner sep=0}, draw=none, from=6-2, to=6-3]
	\arrow["{{z_1}}"', from=6-2, to=6-4]
	\arrow[""{name=14, anchor=center, inner sep=0}, curve={height=-25pt}, from=6-4, to=4-4]
	\arrow[""{name=15, anchor=center, inner sep=0}, "{{\widehat{e}e_z}}", curve={height=-40pt}, from=6-4, to=4-4]
	\arrow[""{name=16, anchor=center, inner sep=0}, "{{e_z}}", from=6-4, to=5-4]
	\arrow["{{z_3}}", from=6-5, to=6-4]
	\arrow["\beta"', shorten <=32pt, shorten >=32pt, Rightarrow, from=0, to=12]
	\arrow["\psi"', shorten <=3pt, shorten >=3pt, Rightarrow, from=1, to=2-3]
	\arrow["{{\SIGMA^{\theta_x}}}"{description}, draw=none, from=2, to=3]
	\arrow["{{\widehat{e}\phi}}", shorten <=26pt, shorten >=26pt, Rightarrow, from=4, to=13]
	\arrow["{{\SIGMA^{\kappa_y}}}"{description}, draw=none, from=5, to=6]
	\arrow["{{\SIGMA^\lambda}}"{description}, draw=none, from=7, to=8]
	\arrow["\upsilon"'{pos=0.4,yshift=1pt}, Rightarrow, shorten <=2pt, shorten >=2pt, from=5-4, to=14]
	\arrow["{{\SIGMA^\lambda}}"{description}, draw=none, from=10, to=9]
	\arrow["\upsilon"{yshift=1pt}, Rightarrow, shorten <=2pt, shorten >=2pt, from=15, to=14]
	\arrow["{{\SIGMA^{\kappa_z}}}"{description}, draw=none, from=16, to=11]
\end{tikzcd}\]

and

\[\begin{tikzcd}[row sep=scriptsize]
	A & {I_2} & X & B \\
	&& Q & B \\
	&& E & B \\
	&& {\widehat{E}} & B \\
	&& E & B \\
	&& Q & B \\
	& Y & P & B \\
	A & {I_3} & Z & B \rlap{\,.}
	\arrow[""{name=0, anchor=center, inner sep=0}, "g", from=1-1, to=1-2]
	\arrow[Rightarrow, no head, from=1-1, to=8-1]
	\arrow[""{name=1, anchor=center, inner sep=0}, "{x_2}", from=1-2, to=1-3]
	\arrow["{y_1}"', from=1-2, to=7-2]
	\arrow[""{name=2, anchor=center, inner sep=0}, "{q_x}"', from=1-3, to=2-3]
	\arrow["{x_3}"', from=1-4, to=1-3]
	\arrow[""{name=3, anchor=center, inner sep=0}, Rightarrow, no head, from=1-4, to=2-4]
	\arrow[""{name=4, anchor=center, inner sep=0}, "{e_y}"', from=2-3, to=3-3]
	\arrow["w"', from=2-4, to=2-3]
	\arrow[""{name=5, anchor=center, inner sep=0}, Rightarrow, no head, from=2-4, to=3-4]
	\arrow[""{name=6, anchor=center, inner sep=0}, "{\widehat{e}}"', from=3-3, to=4-3]
	\arrow["e"', from=3-4, to=3-3]
	\arrow[""{name=7, anchor=center, inner sep=0}, Rightarrow, no head, from=3-4, to=4-4]
	\arrow["{\widehat{e}'}"', from=4-4, to=4-3]
	\arrow[""{name=8, anchor=center, inner sep=0}, Rightarrow, no head, from=4-4, to=5-4]
	\arrow[""{name=9, anchor=center, inner sep=0}, "{\widehat{e}}", from=5-3, to=4-3]
	\arrow["e"', from=5-4, to=5-3]
	\arrow[""{name=10, anchor=center, inner sep=0}, Rightarrow, no head, from=5-4, to=6-4]
	\arrow[""{name=11, anchor=center, inner sep=0}, "{e_y}", from=6-3, to=5-3]
	\arrow["w"', from=6-4, to=6-3]
	\arrow[""{name=12, anchor=center, inner sep=0}, Rightarrow, no head, from=6-4, to=7-4]
	\arrow[""{name=13, anchor=center, inner sep=0}, "{p_y}", from=7-2, to=7-3]
	\arrow[""{name=14, anchor=center, inner sep=0}, "{q_y}", from=7-3, to=6-3]
	\arrow["v"', from=7-4, to=7-3]
	\arrow[""{name=15, anchor=center, inner sep=0}, Rightarrow, no head, from=7-4, to=8-4]
	\arrow[""{name=16, anchor=center, inner sep=0}, "h"', from=8-1, to=8-2]
	\arrow["{y_2}", from=8-2, to=7-2]
	\arrow[""{name=17, anchor=center, inner sep=0}, "{z_1}"', from=8-2, to=8-3]
	\arrow[""{name=18, anchor=center, inner sep=0}, "{p_z}", from=8-3, to=7-3]
	\arrow["{z_3}", from=8-4, to=8-3]
	\arrow["\beta"', shorten <=45pt, shorten >=45pt, Rightarrow, from=0, to=16]
	\arrow["\widehat{e}e_y\psi"', shorten <=38pt, shorten >=38pt, Rightarrow, from=1, to=13]
	\arrow["{\SIGMA^{\theta_x}}"{description}, draw=none, from=2, to=3]
	\arrow["{\SIGMA^{\kappa_y}}"{description}, draw=none, from=4, to=5]
	\arrow["{\SIGMA^\lambda}"{description}, draw=none, from=6, to=7]
	\arrow["{\SIGMA^\lambda}"{description}, draw=none, from=9, to=8]
	\arrow["{\SIGMA^{\kappa_y}}"{description}, draw=none, from=11, to=10]
	\arrow["\omega"', shorten <=6pt, shorten >=6pt, Rightarrow, from=13, to=17]
	\arrow["{\SIGMA^{\theta_y}}"{description}, draw=none, from=14, to=12]
	\arrow["{\SIGMA^{\eta_z}}"{description}, draw=none, from=18, to=15]
\end{tikzcd}\]
Note that the $\Sigma$-squares on the right-hand side of each diagram agree by \cref{eq:upsilon_equality}.
For the remaining 2-cells, we have the composites \newline
\centerline{$\upsilon z_1 h \cdot \widehat{e}\phi h \cdot \widehat{e}e_yq_yp_y \beta \cdot \widehat{e}e_y\psi g\;\;$ and $\; \; \widehat{e}e_yq_y\omega h \cdot \widehat{e}e_yq_yp_y \beta \cdot \widehat{e}e_y\psi g$.}
 These will agree if $\upsilon z_1 \cdot \widehat{e}\phi = \widehat{e}e_yq_y\omega$.
Composing $\Sigma^{\lambda \odot \kappa_y \odot \theta_y \odot \eta_y \odot \epsilon_2}$ with $\widehat{e} \phi$ and $\upsilon z_1$ and using \cref{eq:phi_equality} and \cref{eq:upsilon_equality} obtain $\Sigma^{\lambda \odot \kappa_y \odot \theta_y \odot \eta_z \odot \zeta_1}$. Composing the same $\Sigma$-square with $\widehat{e}e_y q_y \omega$ and using \cref{eq:omega_equality} we obtain the same result. %
Since the 2-cells in the $\Sigma$-squares are invertible, this implies $\upsilon z_1 r_3 \cdot \widehat{e}\phi r_3 = \widehat{e}e_yq_y\omega r_3$ and hence we can apply Equification to obtain a $\Sg$-extension where the desired equality indeed holds. Thus, we have proved associativity.

We now show that the identity 2-cells are indeed identities with respect to vertical composition. To form the composite $\overline{\alpha} \circ \overline{\id}_{(f,I_1,r_1)}$
we use the data
\begin{equation*}
\begin{tikzcd}
	B & I_1 \\
	B & I_1 & X & {} \\
	B & X
	\arrow["r", from=1-1, to=1-2]
	\arrow[""{name=0, anchor=center, inner sep=0}, Rightarrow, no head, from=1-1, to=2-1]
	\arrow[""{name=1, anchor=center, inner sep=0}, Rightarrow, no head, from=1-2, to=2-2]
	\arrow["x_1", curve={height=-6pt}, from=1-2, to=2-3]
	\arrow["r", from=2-1, to=2-2]
	\arrow[""{name=2, anchor=center, inner sep=0}, Rightarrow, no head, from=2-1, to=3-1]
	\arrow["\id", Rightarrow, from=2-2, to=2-3]
	\arrow[""{name=3, anchor=center, inner sep=0}, "x_1", from=2-2, to=3-2]
	\arrow["\id", Rightarrow, no head, curve={height=-6pt}, from=2-3, to=3-2]
	\arrow["x_3"', from=3-1, to=3-2]
	\arrow["{\SIGMA^{\id}}"{description}, draw=none, from=0, to=1]
	\arrow["{\SIGMA^{\delta_1}}"{description}, draw=none, from=2, to=3]
\end{tikzcd}
=\qquad
\begin{tikzcd}
B & I_1 \\
B & X \\
B & X
\arrow["r", from=1-1, to=1-2]
\arrow[""{name=0, anchor=center, inner sep=0}, Rightarrow, no head, from=1-1, to=2-1]
\arrow[""{name=1, anchor=center, inner sep=0}, "x_1", from=1-2, to=2-2]
\arrow["x_3", from=2-1, to=2-2]
\arrow[""{name=2, anchor=center, inner sep=0}, Rightarrow, no head, from=2-1, to=3-1]
\arrow[""{name=3, anchor=center, inner sep=0}, Rightarrow, no head, from=2-2, to=3-2]
\arrow["x_3"', from=3-1, to=3-2]
\arrow["{\SIGMA^{\delta_1}}"{description}, draw=none, from=0, to=1]
\arrow["{\SIGMA^\id}"{description}, draw=none, from=2, to=3]
\end{tikzcd}
\end{equation*}
and easily compute the composite to be equal to $\overline{\alpha}$. The composite $\overline{\id}_{(g,I_2,r_2)} \circ \overline{\alpha}$ can be shown to also be equal to $\overline{\alpha}$ in a similar way.
\end{proof}

\subsection{\texorpdfstring{$\Sigma$}{Σ}-schemes and \texorpdfstring{$\Omega$}{Ω} 2-cells}\label{sec:sigma_schemes}

In order to define the horizontal composition and the associator, and prove the necessary properties about it, we will make use of a special kind of 2-cells between $\Sigma$-cospans that we present in this subsection. Here we define $\Sigma$-schemes, $\Sigma$-paths and $\Omega$ 2-cells, and state the properties which will have a role in the following. The proofs of the last three propositions will be provided in Appendix~\ref{sec:appendix}.

The Square, Equi-insertion and Equification axioms of a calculus of lax fractions posit the existence of certain $\Sigma$-squares without any uniqueness requirements, and so when these are used to construct $\Sigma$-cospans, the precise choice of data can lead to possibly different results. However, one would not expect these differences to be essential. $\Omega$ 2-cells are canonical isomorphisms that allow us to compare the $\Sigma$-cospans constructed using different choices of $\Sigma$-squares. $\Sigma$-schemes and $\Sigma$-paths are used to define and manipulate $\Omega$ 2-cells associated to multi-step constructions.

\subsubsection{The basic \texorpdfstring{$\Omega$}{Ω} 2-cells} Departing from $I\xleftarrow{r}B\xrightarrow{g}J$, with $r\in \Sigma$, consider two $\Sigma$-squares $\Sigma^{\alpha}$ and $\Sigma^{\alpha'}$ as below, and apply Rule 4' to obtain the equality

\begin{equation}\label{eq:basic-Omega}\begin{tikzcd}
	B & I && B & I \\
	J & {I_1} & {I_2} & J & {I_2} \\
	J & D && J & D \rlap{\,,}
	\arrow["r", from=1-1, to=1-2]
	\arrow[""{name=0, anchor=center, inner sep=0}, "g"', from=1-1, to=2-1]
	\arrow[""{name=1, anchor=center, inner sep=0}, "{{g_1}}", from=1-2, to=2-2]
	\arrow["{{g_2}}", from=1-2, to=2-3]
	\arrow["r", from=1-4, to=1-5]
	\arrow[""{name=2, anchor=center, inner sep=0}, "g"', from=1-4, to=2-4]
	\arrow[""{name=3, anchor=center, inner sep=0}, "{{g_2}}", from=1-5, to=2-5]
	\arrow["{{r_1}}", from=2-1, to=2-2]
	\arrow[""{name=4, anchor=center, inner sep=0}, Rightarrow, no head, from=2-1, to=3-1]
	\arrow["\theta", Rightarrow, from=2-2, to=2-3]
	\arrow[""{name=5, anchor=center, inner sep=0}, "{{d_1}}", from=2-2, to=3-2]
	\arrow["{{\text{\normalsize =}}}"{description}, draw=none, from=2-3, to=2-4]
	\arrow["{{d_2}}", from=2-3, to=3-2]
	\arrow["{{r_2}}", from=2-4, to=2-5]
	\arrow[""{name=6, anchor=center, inner sep=0}, Rightarrow, no head, from=2-4, to=3-4]
	\arrow[""{name=7, anchor=center, inner sep=0}, "{{d_2}}", from=2-5, to=3-5]
	\arrow["d"', from=3-1, to=3-2]
	\arrow["d"', from=3-4, to=3-5]
	\arrow["{{\SIGMA^{\alpha}}}"{description}, draw=none, from=0, to=1]
	\arrow["{{\SIGMA^{\alpha'}}}"{description}, draw=none, from=2, to=3]
	\arrow["{{\SIGMA^{\delta_1}}}"{description}, draw=none, from=4, to=5]
	\arrow["{{\SIGMA^{\delta_2}}}"{description}, draw=none, from=6, to=7]
\end{tikzcd}
\end{equation}
where $\theta$ is invertible.
    We can then form a 2-morphism as follows:
    \[\begin{tikzcd}
	I & {I_1} & J \\
	& D & J \\
	I & {I_2} & J \rlap{\,.}
	\arrow["{g_1}", from=1-1, to=1-2]
	\arrow[Rightarrow, no head, from=1-1, to=3-1]
	\arrow[""{name=0, anchor=center, inner sep=0}, "{d_1}"', from=1-2, to=2-2]
	\arrow["\theta"', shorten <=16pt, shorten >=16pt, Rightarrow, from=1-2, to=3-1]
	\arrow["{r_1}"', from=1-3, to=1-2]
	\arrow[""{name=1, anchor=center, inner sep=0}, Rightarrow, no head, from=1-3, to=2-3]
	\arrow["d"{description}, from=2-3, to=2-2]
	\arrow[""{name=2, anchor=center, inner sep=0}, Rightarrow, no head, from=2-3, to=3-3]
	\arrow["{g_2}"', from=3-1, to=3-2]
	\arrow[""{name=3, anchor=center, inner sep=0}, "{d_2}", from=3-2, to=2-2]
	\arrow["{r_2}", from=3-3, to=3-2]
	\arrow["{\SIGMA^{\delta_1}}"{description}, draw=none, from=0, to=1]
	\arrow["{\SIGMA^{\delta_2}}"{description}, draw=none, from=3, to=2]
\end{tikzcd}\]
    It is easy to see that this 2-morphism is $\approx$-independent of the data used in the application of Rule 4'. Indeed, given $e_1$, $e_2$, $e$, $\epsilon_1$, $\epsilon_2$ and $\theta'$, instead of $d_1$, $d_2$, $d$, $\delta_1$, $\delta_2$ and $\theta$, in \eqref{eq:basic-Omega}, apply Rule 4 to \[\adjustbox{scale=0.75}{\begin{tikzcd}
	{} & {} \\
	{} & {} \\
	{} & {}
	\arrow["{r_1}", from=1-1, to=1-2]
	\arrow[""{name=0, anchor=center, inner sep=0}, equals, from=1-1, to=2-1]
	\arrow[""{name=1, anchor=center, inner sep=0}, "{d_1}", from=1-2, to=2-2]
	\arrow["d"', from=2-1, to=2-2]
	\arrow[""{name=2, anchor=center, inner sep=0}, equals, from=2-1, to=3-1]
	\arrow["{r_2}"', from=3-1, to=3-2]
	\arrow[""{name=3, anchor=center, inner sep=0}, "{d_2}"', from=3-2, to=2-2]
	\arrow["{\SIGMA^{\delta_1}}"{description}, draw=none, from=0, to=1]
	\arrow["{\SIGMA^{\delta_2}}"{description}, draw=none, from=2, to=3]
\end{tikzcd}} \quad\text{and}\quad \adjustbox{scale=0.75}{\begin{tikzcd}
	{} & {} \\
	{} & {} \\
	{} & {}
	\arrow["{{r_1}}", from=1-1, to=1-2]
	\arrow[""{name=0, anchor=center, inner sep=0}, equals, from=1-1, to=2-1]
	\arrow[""{name=1, anchor=center, inner sep=0}, "{{e_1}}", from=1-2, to=2-2]
	\arrow["e"', from=2-1, to=2-2]
	\arrow[""{name=2, anchor=center, inner sep=0}, equals, from=2-1, to=3-1]
	\arrow["{{r_2}}"', from=3-1, to=3-2]
	\arrow[""{name=3, anchor=center, inner sep=0}, "{{e_2}}"', from=3-2, to=2-2]
	\arrow["{{\SIGMA^{\epsilon_1}}}"{description}, draw=none, from=0, to=1]
	\arrow["{{\SIGMA^{\epsilon_2}}}"{description}, draw=none, from=2, to=3]
\end{tikzcd}}.\] This leads to a common $\Sg$-extension of $[\theta, d_1, d_2, d, \delta_1, \delta_2]$ and $[\theta', e_1, e_2, e, \epsilon_1, \epsilon_2]$.

\begin{definition}
    We say that a 2-morphism constructed as above and the corresponding 2-cell of $\catx[\Sigma_{\ast}]$ are of \textbf{\em basic $\Omega$ type}. We denote this 2-cell by $$\Omega_{\alpha,\alpha'}\colon (g_1,r_1)\Rightarrow (g_2,r_2)$$
or simply by $\Omega$.
	For any $f\colon A\to I$ and any $s\colon B\to J$ in $\Sigma$, by composing $\Omega_{\alpha,\alpha'}$ with $f$ and $s$ on the left and on the right, respectively, we obtain the 2-cell
	$$(1_J, s)\circ \Omega_{\alpha,\alpha'} \circ (f,1_I)=[\theta \circ f, d_1,d_2,ds,\delta_1\oplus \id_s, \delta_2\oplus \id_s]\colon (g_1f,r_1s)\Rightarrow (g_2f,r_2s)\, .$$
	This 2-cell is also said to be of \textbf{\em basic $\Omega$ type}.
\end{definition}

The following lemma shows that basic $\Omega$ 2-cells behave well under composition.

\begin{lemma}\label{lem:basic-Omega} Given $\Sigma$-squares
$\; \begin{tikzcd}
	B & I \\
	J & {B_i}
	\arrow["r", from=1-1, to=1-2]
	\arrow[""{name=0, anchor=center, inner sep=0}, "g"', from=1-1, to=2-1]
	\arrow[""{name=1, anchor=center, inner sep=0}, "{g_i}", from=1-2, to=2-2]
	\arrow["{r_i}"', from=2-1, to=2-2]
	\arrow["{\SIGMA^{\alpha_i}}"{description}, draw=none, from=0, to=1]
\end{tikzcd}$ for $i=1,2,3$, we have:

\begin{enumerate}
\item[(1)] $\Omega_{\alpha_1,\alpha_2}$ is an invertible 2-cell between $\Sigma$-cospans and $\Omega_{\alpha_1,\alpha_2}^{-1}=\Omega_{\alpha_2,\alpha_1}$,
    \item[(2)]
    $\Omega_{\alpha_2, \alpha_3}\cdot \Omega_{\alpha_1,\alpha_2}=\Omega_{\alpha_1,\alpha_3}$.
    \end{enumerate}
\end{lemma}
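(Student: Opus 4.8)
The plan is to work directly with the explicit 2-morphism representatives defined just before the lemma, reducing both claims to the coherence already established for the calculus of $\Sigma$-squares (Proposition~\ref{pro:useful_rules}) and for the $\approx$-relation (Lemma~\ref{lem:first-rel-on-2}, Proposition~\ref{pro:vertical-1}). Throughout I fix the data: $I\xleftarrow{r}B\xrightarrow{g}J$ with $r\in\Sigma$, and the three $\Sigma$-squares $\Sigma^{\alpha_i}\colon r\to r_i$ given by $(g,g_i,\alpha_i)$. For each pair $(i,j)$ one obtains, via Rule~4', a "reconciling" datum $(d_1^{ij},d_2^{ij},d^{ij},\delta_1^{ij},\delta_2^{ij},\theta^{ij})$ as in \eqref{eq:basic-Omega}, and $\Omega_{\alpha_i,\alpha_j}$ is the $\approx$-class of the resulting 2-morphism; the excerpt already notes this class is independent of the choice of reconciling datum (via Rule~4), a fact I will use freely.

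For part~(1), I would first compute the vertical composite $\Omega_{\alpha_2,\alpha_1}\cdot\Omega_{\alpha_1,\alpha_2}$ using Definition~\ref{data-2}. Pick a single reconciling datum $(d_1,d_2,d,\delta_1,\delta_2,\theta)$ for the pair $(\alpha_1,\alpha_2)$; then $(d_2,d_1,d,\delta_2,\delta_1,\theta^{-1})$ is a legitimate reconciling datum for $(\alpha_2,\alpha_1)$, so I may represent $\Omega_{\alpha_2,\alpha_1}$ by that. When forming the vertical composite, the middle $\Sigma$-square from the codomain of $\Omega_{\alpha_1,\alpha_2}$ (built on $d_1$) and the one from the domain of $\Omega_{\alpha_2,\alpha_1}$ (again built on $d_1$) coincide, so one may take $d_x=d_y=\id$ and $\gamma=\id$ in \eqref{EqV2}; the resulting composite 2-morphism has underlying 2-cell $\theta^{-1}\cdot\theta=\id$ (after the naturality bookkeeping) and horizontal/vertical $\Sigma$-squares that compose, by Rule~1, to the identity square $\Sigma^{\id}$ of Remark~\ref{rem:identities}. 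Hence the composite is the identity 2-cell on $(g_1,r_1)$. By symmetry (swapping the roles of $1$ and $2$) the composite in the other order is the identity on $(g_2,r_2)$, giving $\Omega_{\alpha_1,\alpha_2}^{-1}=\Omega_{\alpha_2,\alpha_1}$; in particular $\Omega_{\alpha_1,\alpha_2}$ is invertible. The post-composition with arbitrary $f\colon A\to I$ and $s\colon C\to J$ in $\Sigma$ is functorial, so the claim persists for the whiskered versions.

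For part~(2), I would pick reconciling data $(d_1,d_2,d,\delta_1,\delta_2,\theta)$ for $(\alpha_1,\alpha_2)$ and $(e_2,e_3,e,\epsilon_2,\epsilon_3,\rho)$ for $(\alpha_2,\alpha_3)$, and form the vertical composite per Definition~\ref{data-2}: this requires reconciling the $\Sigma$-square on $d_1$ (codomain of $\Omega_{\alpha_1,\alpha_2}$, with leg landing in the object built from $r_2$) against the $\Sigma$-square on $e_2$ (domain of $\Omega_{\alpha_2,\alpha_3}$), which Rule~4' does, producing $d_x,d_y,u,\phi_x,\phi_y,\gamma$. Then I must exhibit a common $\Sigma$-extension of the composite 2-morphism with the canonical representative of $\Omega_{\alpha_1,\alpha_3}$; the natural candidate is to use $d_xd_1$ (equivalently $d_ye_2$) together with the 2-cell $(d_y\rho)\cdot(\gamma\circ\text{--})\cdot(d_x\theta)$, as a reconciling datum for the pair $(\alpha_1,\alpha_3)$ in \eqref{eq:basic-Omega} — one checks it satisfies the Rule~4' equation by pasting the three defining equations (for $\theta$, for $\rho$, and for $\gamma$) and invoking the closure rules. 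Because $\Omega_{\alpha_1,\alpha_3}$ is independent of the chosen reconciling datum, this identifies the composite with $\Omega_{\alpha_1,\alpha_3}$.

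The main obstacle I anticipate is purely diagrammatic: in part~(2) the vertical composite is presented by a multi-row pasting diagram (as in \eqref{eq:vert_comp}), and matching it to the two-row diagram defining $\Omega_{\alpha_1,\alpha_3}$ requires collapsing a stack of $\Sigma$-squares and invertible structural 2-cells using Rule~1, Rule~3 and the interchange law, then applying Equification once to kill a discrepancy in the 2-cell parts that holds only after whiskering by $r$ — exactly the pattern used in the proof of Proposition~\ref{pro:vertical-2}. None of this is conceptually hard, but it is the step where care is needed to keep the $\Sigma$-square labels and the 2-cell composites aligned. I expect the bookkeeping to be routine once the reconciling datum above is written down explicitly.
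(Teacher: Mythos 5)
Your proposal is correct, and part (1) matches what the paper dismisses as clear. For part (2), however, your route differs from the paper's in how the two reconciling data are arranged. The paper never composes two independently chosen representatives: it first fixes $(d_1,d_2,d,\theta_1)$ for $(\alpha_1,\alpha_2)$, and then applies Rule~4$'$ not to the raw pair $(\alpha_2,\alpha_3)$ but to the already-pasted square $\Sigma^{\,\cdot\,\odot\alpha_2}$ built on $d_2$ against $\Sigma^{\alpha_3}$, obtaining $(e_1,e_2,e,\theta_2)$ with $\theta_2\colon e_1d_2g_2\Rightarrow e_2g_3$. Replacing the first representative by its $\Sigma$-extension along $e_1$, the two 2-morphisms then share the \emph{same} middle $\Sigma$-square (the one on $e_1d_2$), so the vertical composite is computed with trivial data ($d_x=d_y=\id$, $\gamma=\id$) and is literally $(\theta_2\cdot(e_1\circ\theta_1),\,e_1d_1,\,e_2)$, which is by construction a reconciling datum for $(\alpha_1,\alpha_3)$. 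You instead pick the two data generically, invoke the full Definition~\ref{data-2} composition (hence an extra Rule~4$'$ step producing $d_x,d_y,u,\gamma$), and then verify that $(d_y\rho)\cdot(\gamma\circ g_2)\cdot(d_x\theta)$ together with the composite squares $\Sigma^{\phi_x\odot\delta_1}$, $\Sigma^{\phi_y\odot\epsilon_3}$ satisfies the defining equation for a reconciling datum of $(\alpha_1,\alpha_3)$; this works, using one interchange step and the three defining pasting equalities, after which the independence-of-datum remark finishes the argument. Your route is a bit longer but more robust (it needs no cleverness in choosing the second datum); the paper's adapted choice buys a shorter, computation-free identification.

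Two small remarks. First, your closing hedge about needing Equification ``to kill a discrepancy that holds only after whiskering by $r$'' is unnecessary: the pasted equality holds on the nose, exactly by the computation you sketch (whisker the $\theta$-equation by $d_x$, interchange $\gamma$ past $\alpha_2$, apply the $\gamma$-equation, then the $\rho$-equation whiskered by $d_y$), so no appeal to Equification or to the Proposition~\ref{pro:vertical-2} pattern is needed here. Second, a labelling slip: the codomain side of $\Omega_{\alpha_1,\alpha_2}$ is the square built on $d_2$ (over $r_2$), not on $d_1$; your parenthetical description makes clear you mean the right square, but the index should be corrected.
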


\begin{proof} (1) is clear.

For (2), using Rule 4', we successively consider invertible 2-cells $\theta_1$ and $\theta_2$ in $\catx$ such that

\[
\begin{tikzcd}
	B & I \\
	J & {B_1} & {B_2} \\
	J & D
	\arrow["r", from=1-1, to=1-2]
	\arrow[""{name=0, anchor=center, inner sep=0}, "g"', from=1-1, to=2-1]
	\arrow[""{name=1, anchor=center, inner sep=0}, "{g_1}", from=1-2, to=2-2]
	\arrow["{g_2}", curve={height=-6pt}, from=1-2, to=2-3]
	\arrow["{r_1}"', from=2-1, to=2-2]
	\arrow[""{name=2, anchor=center, inner sep=0}, equals, from=2-1, to=3-1]
	\arrow["{\theta_1}", Rightarrow, from=2-2, to=2-3]
	\arrow[""{name=3, anchor=center, inner sep=0}, "{d_1}", from=2-2, to=3-2]
	\arrow["{d_2}", curve={height=-6pt}, from=2-3, to=3-2]
	\arrow["d"', from=3-1, to=3-2]
	\arrow["{\SIGMA^{\alpha_1}}"{description}, draw=none, from=0, to=1]
	\arrow["\SIGMA"{description}, draw=none, from=2, to=3]
\end{tikzcd}
\hspace{1.5mm} = \hspace{1.5mm}
\begin{tikzcd}
	B & I \\
	J & {B_2} \\
	J & D
	\arrow["r", from=1-1, to=1-2]
	\arrow[""{name=0, anchor=center, inner sep=0}, "g"', from=1-1, to=2-1]
	\arrow[""{name=1, anchor=center, inner sep=0}, "{{g_2}}", from=1-2, to=2-2]
	\arrow["{r_2}"', from=2-1, to=2-2]
	\arrow[""{name=2, anchor=center, inner sep=0}, equals, from=2-1, to=3-1]
	\arrow[""{name=3, anchor=center, inner sep=0}, "{d_2}", from=2-2, to=3-2]
	\arrow["d"', from=3-1, to=3-2]
	\arrow["{{\SIGMA^{\alpha_2}}}"{description}, draw=none, from=0, to=1]
	\arrow["\SIGMA"{description}, draw=none, from=2, to=3]
\end{tikzcd}
\hspace{5mm} \text{ and } \hspace{5mm}
\begin{tikzcd}
	B & I \\
	J & {B_1} \\
	J & D & {B_3} \\
	J & E
	\arrow["r", from=1-1, to=1-2]
	\arrow[""{name=0, anchor=center, inner sep=0}, "g"', from=1-1, to=2-1]
	\arrow[""{name=1, anchor=center, inner sep=0}, "{{g_2}}", from=1-2, to=2-2]
	\arrow[""{name=2, anchor=center, inner sep=0}, "{g_3}", curve={height=-12pt}, from=1-2, to=3-3]
	\arrow["{r_2}"', from=2-1, to=2-2]
	\arrow[""{name=3, anchor=center, inner sep=0}, equals, from=2-1, to=3-1]
	\arrow[""{name=4, anchor=center, inner sep=0}, "{d_2}", from=2-2, to=3-2]
	\arrow["d"', from=3-1, to=3-2]
	\arrow[""{name=5, anchor=center, inner sep=0}, equals, from=3-1, to=4-1]
	\arrow[""{name=6, anchor=center, inner sep=0}, "{e_1}", from=3-2, to=4-2]
	\arrow["{e_2}", curve={height=-6pt}, from=3-3, to=4-2]
	\arrow["e"', from=4-1, to=4-2]
	\arrow["{{\SIGMA^{\alpha_2}}}"{description}, draw=none, from=0, to=1]
	\arrow["\SIGMA"{description}, draw=none, from=3, to=4]
	\arrow["{\theta_2}", between={0.2}{0.8}, Rightarrow, from=3-2, to=3-3]
	\arrow["\SIGMA"{description}, draw=none, from=5, to=6]
\end{tikzcd}
\hspace{1.5mm} = \hspace{1.5mm}
\begin{tikzcd}
	B & I \\
	J & {B_3} \\
	J & E
	\arrow["r", from=1-1, to=1-2]
	\arrow[""{name=0, anchor=center, inner sep=0}, "g"', from=1-1, to=2-1]
	\arrow[""{name=1, anchor=center, inner sep=0}, "{g_3}", from=1-2, to=2-2]
	\arrow["{r_3}"', from=2-1, to=2-2]
	\arrow[""{name=2, anchor=center, inner sep=0}, equals, from=2-1, to=3-1]
	\arrow[""{name=3, anchor=center, inner sep=0}, "{e_2}", from=2-2, to=3-2]
	\arrow["e"', from=3-1, to=3-2]
	\arrow["{\SIGMA^{\alpha_3}}"{description}, draw=none, from=0, to=1]
	\arrow["\SIGMA"{description}, draw=none, from=2, to=3]
\end{tikzcd}
\, .\]

We obtain the diagram
\[\begin{tikzcd}
	I & {B_1} & J \\
	& E & J \\
	I & {B_2} & J \\
	& E & J \\
	I & {B_3} & J \rlap{\,.}
	\arrow["{{g_1}}", from=1-1, to=1-2]
	\arrow[equals, from=1-1, to=3-1]
	\arrow[""{name=0, anchor=center, inner sep=0}, "{{e_1d_1}}"', from=1-2, to=2-2]
	\arrow["{{e_1\circ \theta_1}}"{pos=0.6}, shift right=4, between={0.3}{0.7}, Rightarrow, from=1-2, to=3-1]
	\arrow["{{r_1}}"', from=1-3, to=1-2]
	\arrow[""{name=1, anchor=center, inner sep=0}, equals, from=1-3, to=2-3]
	\arrow["e", from=2-3, to=2-2]
	\arrow[""{name=2, anchor=center, inner sep=0}, equals, from=2-3, to=3-3]
	\arrow["{{g_2}}"', from=3-1, to=3-2]
	\arrow[equals, from=3-1, to=5-1]
	\arrow[""{name=3, anchor=center, inner sep=0}, "{{e_1d_2}}", from=3-2, to=2-2]
	\arrow[""{name=4, anchor=center, inner sep=0}, "{{e_1d_2}}"', from=3-2, to=4-2]
	\arrow["{{\theta_2}}"', shift left=2, between={0.4}{0.8}, Rightarrow, from=3-2, to=5-1]
	\arrow["{{r_2}}", from=3-3, to=3-2]
	\arrow[""{name=5, anchor=center, inner sep=0}, equals, from=3-3, to=4-3]
	\arrow["e"', from=4-3, to=4-2]
	\arrow[""{name=6, anchor=center, inner sep=0}, equals, from=4-3, to=5-3]
	\arrow["{{g_3}}", from=5-1, to=5-2]
	\arrow[""{name=7, anchor=center, inner sep=0}, "{{e_2}}", from=5-2, to=4-2]
	\arrow["{{r_3}}"', from=5-3, to=5-2]
	\arrow["\SIGMA"{description}, draw=none, from=0, to=1]
	\arrow["\SIGMA"{description}, draw=none, from=3, to=2]
	\arrow["\SIGMA"{description}, draw=none, from=4, to=5]
	\arrow["\SIGMA"{description}, draw=none, from=7, to=6]
\end{tikzcd}\]

Vertically, this diagram is the juxtaposition of two 2-morphisms which represent $\Omega_{\alpha_1, \alpha_2}$ and  $\Omega_{\alpha_2, \alpha_3}$. Observe that $(\theta_2\cdot(e_1\circ \theta_1),\, e_1d_1, e_2)$ is a representative of the vertical composition $\Omega_{\alpha_2, \alpha_3}\cdot \Omega_{\alpha_1, \alpha_2}$, and also a representative of $\Omega_{\alpha_1, \alpha_3}$.
\end{proof}

\subsubsection{\texorpdfstring{$\Sigma$}{Σ}-schemes and \texorpdfstring{$\Sigma$}{Σ}-paths} A $\Sigma$-scheme is any diagram of the form

\[\begin{tikzcd}
	&&& \bullet & \bullet \\
	&& \bullet & \bullet \\
	& {} & {} \\
	\bullet & {} \\
	\bullet &&&& \bullet
	\arrow["{{r_1}}", from=1-4, to=1-5]
	\arrow["{{g_1}}"', from=1-4, to=2-4]
	\arrow["l", from=1-5, to=5-5]
	\arrow["{{r_2}}", from=2-3, to=2-4]
	\arrow["{{g_2}}"', from=2-3, to=3-3]
	\arrow["{{\text{\Large $\ldots$}}}"{description}, draw=none, from=3-2, to=3-3]
	\arrow["{{\text{\Large $\vdots$}}}"{description}, draw=none, from=3-2, to=4-2]
	\arrow["{{g_n}}"', from=4-1, to=5-1]
	\arrow["{{r_n}}"', from=4-2, to=4-1]
	\arrow["m", from=5-1, to=5-5]
\end{tikzcd}\]
obtained by means of vertical and horizontal composition of $\Sigma$-squares. For instance, the following three diagrams are $\Sigma$-schemes.

\begin{equation}\label{eq:(1)(2)(3)}
\begin{tikzcd}
	& B & I \\
	C & J & \bullet \\
	K && \bullet \\
	{} && {}
	\arrow["r", from=1-2, to=1-3]
	\arrow[""{name=0, anchor=center, inner sep=0}, "g"', from=1-2, to=2-2]
	\arrow[""{name=1, anchor=center, inner sep=0}, "{{g'}}", from=1-3, to=2-3]
	\arrow["s", from=2-1, to=2-2]
	\arrow[""{name=2, anchor=center, inner sep=0}, "h"', from=2-1, to=3-1]
	\arrow["{r'}", from=2-2, to=2-3]
	\arrow[""{name=3, anchor=center, inner sep=0}, "{h'}", from=2-3, to=3-3]
	\arrow["{s'}", from=3-1, to=3-3]
	\arrow[""{name=4, anchor=center, inner sep=0}, draw=none, from=3-1, to=4-1]
	\arrow[""{name=5, anchor=center, inner sep=0}, draw=none, from=3-3, to=4-3]
	\arrow["\SIGMA"{description}, draw=none, from=0, to=1]
	\arrow["\SIGMA"{description}, draw=none, from=2, to=3]
	\arrow["{{(1)}}"{description}, draw=none, from=4, to=5]
\end{tikzcd}
\qquad  \quad
\begin{tikzcd}
	& B & I \\
	C & J \\
	K & \bullet & \bullet \\
	{} && {}
	\arrow["r", from=1-2, to=1-3]
	\arrow["g"', from=1-2, to=2-2]
	\arrow[""{name=0, anchor=center, inner sep=0}, "{{g''}}", from=1-3, to=3-3]
	\arrow["s", from=2-1, to=2-2]
	\arrow[""{name=1, anchor=center, inner sep=0}, "h"', from=2-1, to=3-1]
	\arrow[""{name=2, anchor=center, inner sep=0}, "{h''}", from=2-2, to=3-2]
	\arrow["{{s''}}", from=3-1, to=3-2]
	\arrow[""{name=3, anchor=center, inner sep=0}, draw=none, from=3-1, to=4-1]
	\arrow["{{r''}}", from=3-2, to=3-3]
	\arrow[""{name=4, anchor=center, inner sep=0}, draw=none, from=3-3, to=4-3]
	\arrow["\SIGMA"{description}, draw=none, from=1, to=2]
	\arrow["\SIGMA"{description}, draw=none, from=2-2, to=0]
	\arrow["{{{(2)}}}"{description}, draw=none, from=3, to=4]
\end{tikzcd}
\qquad  \quad
\begin{tikzcd}
	& B & I \\
	C & J & \bullet \\
	K & \bullet & \bullet \\
	{} && {}
	\arrow["r", from=1-2, to=1-3]
	\arrow[""{name=0, anchor=center, inner sep=0}, "g"', from=1-2, to=2-2]
	\arrow[""{name=1, anchor=center, inner sep=0}, "{{g'}}", from=1-3, to=2-3]
	\arrow["s", from=2-1, to=2-2]
	\arrow[""{name=2, anchor=center, inner sep=0}, "h"', from=2-1, to=3-1]
	\arrow["{{r'}}", from=2-2, to=2-3]
	\arrow[""{name=3, anchor=center, inner sep=0}, "{h''}", from=2-2, to=3-2]
	\arrow[""{name=4, anchor=center, inner sep=0}, "{{h'''}}", from=2-3, to=3-3]
	\arrow["{{s''}}", from=3-1, to=3-2]
	\arrow[""{name=5, anchor=center, inner sep=0}, draw=none, from=3-1, to=4-1]
	\arrow["{{r'''}}", from=3-2, to=3-3]
	\arrow[""{name=6, anchor=center, inner sep=0}, draw=none, from=3-3, to=4-3]
	\arrow["\SIGMA"{description}, draw=none, from=0, to=1]
	\arrow["\SIGMA"{description}, draw=none, from=2, to=3]
	\arrow["\SIGMA"{description}, draw=none, from=3, to=4]
	\arrow["{{{{(3)}}}}"{description}, draw=none, from=5, to=6]
\end{tikzcd}
\end{equation}
We say that $(r_1, g_1, r_2, g_2, \dots, r_n,g_n)$ is the \textbf{\em left border} of the $\Sigma$-scheme and $(l,m)$ is the \textbf{\em right border}. A $\Sigma$-scheme with left border $(r_1,\dots, g_n)$ is said to be of \textbf{\em level $n$}. A $\Sigma$-scheme of level 1 is just a $\Sigma$-square.

Let $S$ be a $\Sigma$-scheme. Any $\Sigma$-square used in the formation of  $S$ is said to be a sub-$\Sigma$-square of $S$. Amongst all sub-$\Sigma$-squares of $S$, we are interested in those whose lower right vertex coincides with the lower right vertex of $S$ --- let us call them \textbf{\em replaceable $\Sigma$-squares} in $S$.
For instance, in (1) above,
\(
\begin{tikzcd}
	{} & {} \\
	{} & {}
	\arrow["{{r's}}", from=1-1, to=1-2]
	\arrow[""{name=0, anchor=center, inner sep=0}, "h"', from=1-1, to=2-1]
	\arrow[""{name=1, anchor=center, inner sep=0}, "{{h'}}", from=1-2, to=2-2]
	\arrow["{{s'}}"', from=2-1, to=2-2]
	\arrow["\SIGMA"{description}, shift left, draw=none, from=0, to=1]
\end{tikzcd}
\)
 is replaceable; in (2),
\(
\begin{tikzcd}
	{} & {} \\
	{} & {}
	\arrow["r", from=1-1, to=1-2]
	\arrow[""{name=0, anchor=center, inner sep=0}, "{{h''g}}"', from=1-1, to=2-1]
	\arrow[""{name=1, anchor=center, inner sep=0}, "{g''}", from=1-2, to=2-2]
	\arrow["{r''}"', from=2-1, to=2-2]
	\arrow["\SIGMA"{description}, shift left, draw=none, from=0, to=1]
\end{tikzcd}
\)
is replaceable; and in (3),
\(\begin{tikzcd}
	{} & {} \\
	{} & {}
	\arrow["{r's}", from=1-1, to=1-2]
	\arrow[""{name=0, anchor=center, inner sep=0}, "{h}"', from=1-1, to=2-1]
	\arrow[""{name=1, anchor=center, inner sep=0}, "{h'''}", from=1-2, to=2-2]
	\arrow["{r'''s''}"', from=2-1, to=2-2]
	\arrow["\SIGMA"{description}, shift left, draw=none, from=0, to=1]
\end{tikzcd}\,, \;
\)
 \(
\begin{tikzcd}
	{} & {} \\
	{} & {}
	\arrow["r", from=1-1, to=1-2]
	\arrow[""{name=0, anchor=center, inner sep=0}, "{h''g}"', from=1-1, to=2-1]
	\arrow[""{name=1, anchor=center, inner sep=0}, "{h'''g'}", from=1-2, to=2-2]
	\arrow["{r'''}"', from=2-1, to=2-2]
	\arrow["\SIGMA"{description}, shift left, draw=none, from=0, to=1]
\end{tikzcd}
 \)
 and
 \(
\begin{tikzcd}
	{} & {} \\
	{} & {}
	\arrow["r'", from=1-1, to=1-2]
	\arrow[""{name=0, anchor=center, inner sep=0}, "{h''}"', from=1-1, to=2-1]
	\arrow[""{name=1, anchor=center, inner sep=0}, "{h'''}", from=1-2, to=2-2]
	\arrow["{r'''}"', from=2-1, to=2-2]
	\arrow["\SIGMA"{description}, shift left, draw=none, from=0, to=1]
\end{tikzcd}
 \)
are all replaceable.

 A \textbf{\em $\Sigma$-step} from a $\Sigma$-scheme $S_1$ to a $\Sigma$-scheme $S_2$ with the same left border is a transformation of $S_1$ into $S_2$ which substitutes a replaceable $\Sigma$-square of $S_1$ by a $\Sigma$-square with the same left border.
We will indicate a $\Sigma$-step from $S_1$ to $S_2$ by a wavy arrow
$$S_1 \rightsquigarrow S_2\,.$$
For instance, the $\Sigma$-step from (1) to (3) which replaces
\(\;\begin{tikzcd}
	{} & {} \\
	{} & {}
	\arrow["{r's}", from=1-1, to=1-2]
	\arrow[""{name=0, anchor=center, inner sep=0}, "{h}"', from=1-1, to=2-1]
	\arrow[""{name=1, anchor=center, inner sep=0}, "{h'}", from=1-2, to=2-2]
	\arrow["{s'}"', from=2-1, to=2-2]
	\arrow["\SIGMA"{description}, draw=none, from=0, to=1]
\end{tikzcd}
\;\)
by
\(\;\begin{tikzcd}
	{} & {} \\
	{} & {}
	\arrow["{r's}", from=1-1, to=1-2]
	\arrow[""{name=0, anchor=center, inner sep=0}, "{h}"', from=1-1, to=2-1]
	\arrow[""{name=1, anchor=center, inner sep=0}, "{h'''}", from=1-2, to=2-2]
	\arrow["{r'''s''}"', from=2-1, to=2-2]
	\arrow["\SIGMA"{description}, draw=none, from=0, to=1]
\end{tikzcd}
\; \)
takes a $\Sigma$-scheme of the type
\[\adjustbox{scale=0.60}{\begin{tikzcd}
	& {} & {} \\
	{} & {} & {} \\
	{} & {} & {}
	\arrow[from=1-2, to=1-3]
	\arrow[from=1-2, to=2-2]
	\arrow[from=1-3, to=2-3]
	\arrow[""{name=0, anchor=center, inner sep=0}, from=2-1, to=2-2]
	\arrow[""{name=1, anchor=center, inner sep=0}, from=2-1, to=3-1]
	\arrow[""{name=2, anchor=center, inner sep=0}, from=2-2, to=2-3]
	\arrow[""{name=3, anchor=center, inner sep=0}, from=2-3, to=3-3]
	\arrow[no head, from=3-1, to=2-2]
	\arrow[""{name=4, anchor=center, inner sep=0}, draw=none, from=3-1, to=3-2]
	\arrow[from=3-1, to=3-3]
	\arrow[no head, from=3-2, to=2-3]
	\arrow[""{name=5, anchor=center, inner sep=0}, draw=none, from=3-2, to=3-3]
	\arrow[shorten <=4pt, shorten >=4pt, no head, from=1, to=0]
	\arrow[shorten <=7pt, shorten >=7pt, no head, from=4, to=2]
	\arrow[shorten <=4pt, shorten >=4pt, no head, from=5, to=3]
\end{tikzcd}
}\]
into another one of the same type by replacing the $\Sigma$-square corresponding to the shaded area. Observe that, as required, the two $\Sigma$-squares involved, when seen as $\Sigma$-schemes, have the same left border, namely $(r's,h)$.

As another example, consider the $\Sigma$-schemes of level 3
\begin{equation}\label{eq:(SR)}
\begin{tikzcd}
	&& {} & {} \\
	& {} & {} & {} \\
	{} & {} & {} & {} \\
	{} & {} & {} & {}
	\arrow["r", from=1-3, to=1-4]
	\arrow[""{name=0, anchor=center, inner sep=0}, "g"', from=1-3, to=2-3]
	\arrow[""{name=1, anchor=center, inner sep=0}, "{{g'}}", from=1-4, to=2-4]
	\arrow["s", from=2-2, to=2-3]
	\arrow["h", from=2-2, to=3-2]
	\arrow["{{r'}}"', from=2-3, to=2-4]
	\arrow["{{h'}}", from=2-3, to=4-3]
	\arrow["{{h''}}", from=2-4, to=4-4]
	\arrow["t", from=3-1, to=3-2]
	\arrow[""{name=2, anchor=center, inner sep=0}, "k"', from=3-1, to=4-1]
	\arrow["\SIGMA"{description}, shift left, draw=none, from=3-2, to=3-3]
	\arrow[""{name=3, anchor=center, inner sep=0}, "{k'}", from=3-2, to=4-2]
	\arrow["\SIGMA"{description}, shift left, draw=none, from=3-3, to=3-4]
	\arrow["{{t'}}"', from=4-1, to=4-2]
	\arrow["{{s'}}"', from=4-2, to=4-3]
	\arrow["{{r''}}"', from=4-3, to=4-4]
	\arrow["\SIGMA"{description}, shift left, draw=none, from=0, to=1]
	\arrow["\SIGMA"{description}, shift left, draw=none, from=2, to=3]
\end{tikzcd}
\qquad\text{and}\qquad
\begin{tikzcd}
	&& {} & {} \\
	& {} & {} & {} \\
	{} & {} & {} & {} \\
	{} & {} & {} & {}
	\arrow["r", from=1-3, to=1-4]
	\arrow[""{name=0, anchor=center, inner sep=0}, "g", from=1-3, to=2-3]
	\arrow[""{name=1, anchor=center, inner sep=0}, "{{g'}}", from=1-4, to=2-4]
	\arrow["s", from=2-2, to=2-3]
	\arrow[""{name=2, anchor=center, inner sep=0}, "h"', from=2-2, to=3-2]
	\arrow["{{r'}}"', from=2-3, to=2-4]
	\arrow[""{name=3, anchor=center, inner sep=0}, "{{{h_1}}}", from=2-3, to=3-3]
	\arrow[""{name=4, anchor=center, inner sep=0}, "{{{h_2}}}", from=2-4, to=3-4]
	\arrow["t", from=3-1, to=3-2]
	\arrow[""{name=5, anchor=center, inner sep=0}, "k", from=3-1, to=4-1]
	\arrow["{{{s_1}}}"', from=3-2, to=3-3]
	\arrow[""{name=6, anchor=center, inner sep=0}, "{{k'}}", from=3-2, to=4-2]
	\arrow["{{r_1}}"', from=3-3, to=3-4]
	\arrow[""{name=7, anchor=center, inner sep=0}, "{{k_1}}", from=3-3, to=4-3]
	\arrow[""{name=8, anchor=center, inner sep=0}, "{{k_2}}", from=3-4, to=4-4]
	\arrow["{{t'}}"', from=4-1, to=4-2]
	\arrow["{{{s_2}}}"', from=4-2, to=4-3]
	\arrow["{{r_2}}"', from=4-3, to=4-4]
	\arrow["\SIGMA"{description}, draw=none, from=0, to=1]
	\arrow["\SIGMA"{description}, draw=none, from=2, to=3]
	\arrow["\SIGMA"{description}, draw=none, from=3, to=4]
	\arrow["\SIGMA"{description}, draw=none, from=5, to=6]
	\arrow["\SIGMA"{description}, draw=none, from=6, to=7]
	\arrow["\SIGMA"{description}, draw=none, from=7, to=8]
\end{tikzcd}
\end{equation}

The $\Sigma$-step from the first of these to the second which replaces the $\Sigma$-square
\[\begin{tikzcd}
	{} & {} \\
	{} & {}
	\arrow["{r's}", from=1-1, to=1-2]
	\arrow[""{name=0, anchor=center, inner sep=0}, "{k'h}"', from=1-1, to=2-1]
	\arrow[""{name=1, anchor=center, inner sep=0}, "{h''}", from=1-2, to=2-2]
	\arrow["{r''s'}"', from=2-1, to=2-2]
	\arrow["\SIGMA"{description}, shift left, draw=none, from=0, to=1]
\end{tikzcd}
\qquad \text{by} \qquad
\begin{tikzcd}
	{} & {} \\
	{} & {}
	\arrow["{r's}", from=1-1, to=1-2]
	\arrow[""{name=0, anchor=center, inner sep=0}, "{k'h}"', from=1-1, to=2-1]
	\arrow[""{name=1, anchor=center, inner sep=0}, "{k_2h_2}", from=1-2, to=2-2]
	\arrow["{r_2s_2}"', from=2-1, to=2-2]
	\arrow["\SIGMA"{description}, shift left, draw=none, from=0, to=1]
\end{tikzcd}\]
takes a $\Sigma$-scheme of the form
\[
\adjustbox{scale=0.60}{\begin{tikzcd}
	&& {} & {} \\
	& {} & {} & {} \\
	{} & {} & {} & {} \\
	{} & {} & {} & {}
	\arrow[from=1-3, to=1-4]
	\arrow[from=1-3, to=2-3]
	\arrow[from=1-4, to=2-4]
	\arrow[""{name=0, anchor=center, inner sep=0}, from=2-2, to=2-3]
	\arrow[""{name=1, anchor=center, inner sep=0}, from=2-2, to=3-2]
	\arrow[""{name=2, anchor=center, inner sep=0}, from=2-3, to=2-4]
	\arrow[""{name=3, anchor=center, inner sep=0}, draw=none, from=2-4, to=3-4]
	\arrow[from=2-4, to=4-4]
	\arrow[from=3-1, to=3-2]
	\arrow[from=3-1, to=4-1]
	\arrow[no head, from=3-2, to=2-3]
	\arrow[""{name=4, anchor=center, inner sep=0}, from=3-2, to=4-2]
	\arrow[""{name=5, anchor=center, inner sep=0}, draw=none, from=3-4, to=4-4]
	\arrow[from=4-1, to=4-2]
	\arrow[no head, from=4-2, to=2-4]
	\arrow[""{name=6, anchor=center, inner sep=0}, draw=none, from=4-2, to=4-3]
	\arrow[from=4-2, to=4-4]
	\arrow[no head, from=4-3, to=3-4]
	\arrow[""{name=7, anchor=center, inner sep=0}, draw=none, from=4-3, to=4-4]
	\arrow[shorten <=4pt, shorten >=4pt, no head, from=1, to=0]
	\arrow[shorten <=11pt, shorten >=11pt, no head, from=4, to=2]
	\arrow[shorten >=11pt, no head, from=6, to=3]
	\arrow[shorten <=4pt, shorten >=4pt, no head, from=7, to=5]
\end{tikzcd}}
\]
into another one of the same type by again replacing the $\Sigma$-square corresponding to the shaded area.

 A \textbf{\em $\Sigma$-path} between $\Sigma$-schemes with the same left border is a finite sequence of $\Sigma$-steps (see also \cref{assumption-path}).

For $\Sigma$-schemes of level 2, we use the notations $d$ and $u$ to indicate the type of $\Sigma$-step involved: $d$ (down) means a replacement of type
\(\adjustbox{scale=0.60}{\begin{tikzcd}
	& {} & {} \\
	{} & {} & {} \\
	{} & {} & {}
	\arrow[from=1-2, to=1-3]
	\arrow[from=1-2, to=2-2]
	\arrow[from=1-3, to=2-3]
	\arrow[""{name=0, anchor=center, inner sep=0}, from=2-1, to=2-2]
	\arrow[""{name=1, anchor=center, inner sep=0}, from=2-1, to=3-1]
	\arrow[""{name=2, anchor=center, inner sep=0}, from=2-2, to=2-3]
	\arrow[""{name=3, anchor=center, inner sep=0}, from=2-3, to=3-3]
	\arrow[no head, from=3-1, to=2-2]
	\arrow[""{name=4, anchor=center, inner sep=0}, draw=none, from=3-1, to=3-2]
	\arrow[from=3-1, to=3-3]
	\arrow[no head, from=3-2, to=2-3]
	\arrow[""{name=5, anchor=center, inner sep=0}, draw=none, from=3-2, to=3-3]
	\arrow[shorten <=4pt, shorten >=4pt, no head, from=1, to=0]
	\arrow[shorten <=7pt, shorten >=7pt, no head, from=4, to=2]
	\arrow[shorten <=4pt, shorten >=4pt, no head, from=5, to=3]
\end{tikzcd}
}\)
 and $u$ (upper) means one of type
\(\adjustbox{scale=0.60}{ \begin{tikzcd}
	& {} & {} \\
	{} & {} & {} \\
	{} & {} & {}
	\arrow[""{name=0, anchor=center, inner sep=0}, from=1-2, to=1-3]
	\arrow[""{name=1, anchor=center, inner sep=0}, from=1-2, to=2-2]
	\arrow[""{name=2, anchor=center, inner sep=0}, from=1-3, to=2-3]
	\arrow[from=2-1, to=2-2]
	\arrow[from=2-1, to=3-1]
	\arrow[no head, from=2-2, to=1-3]
	\arrow[""{name=3, anchor=center, inner sep=0}, from=2-2, to=3-2]
	\arrow[""{name=4, anchor=center, inner sep=0}, from=2-3, to=3-3]
	\arrow[from=3-1, to=3-2]
	\arrow[no head, from=3-2, to=2-3]
	\arrow[""{name=5, anchor=center, inner sep=0}, from=3-2, to=3-3]
	\arrow[shorten <=4pt, shorten >=4pt, no head, from=1, to=0]
	\arrow[shorten <=7pt, shorten >=7pt, no head, from=3, to=2]
	\arrow[shorten <=4pt, shorten >=4pt, no head, from=5, to=4]
\end{tikzcd}
}\, .\)

We may for instance consider the following $\Sigma$-path from (1) to (2) of \eqref{eq:(1)(2)(3)}.
\begin{equation}\label{eq:eq-132}
\begin{tikzcd}
	{(1)} & {(3)} & {(2)} \rlap{\,.}
	\arrow["d", squiggly, from=1-1, to=1-2]
	\arrow["u", squiggly, from=1-2, to=1-3]
\end{tikzcd}
\end{equation}

\subsubsection{\texorpdfstring{$\Omega$}{Ω} 2-cells} Associated to each $\Sigma$-step from a $\Sigma$-scheme $S_1$ with right border $(l_1,m_1)$
 to another $\Sg$-scheme $S_2$  with right border $(l_2,m_2)$, we have a basic $\Omega$ 2-cell from the cospan  $(l_1,m_1)$  to the cospan $(l_2,m_2)$, which we now describe.

Assume that the $\Sg$-step replaces a $\Sg$-square $R_1$ with right border $(k_1,n_1)$  with a $\Sg$-square $R_2$ with right border $(k_2,n_2)$ such that $l_i=k_il_o$ and $m_i=n_im_0$, $i=1,2$,  as illustrated below.
\[
\adjustbox{scale=0.80}{\begin{tikzcd}
	&& {} \\
	& {} & {} \\
	{} & {} & {}
	\arrow["{{l_0}}", from=1-3, to=2-3]
	\arrow["a", from=2-2, to=2-3]
	\arrow[""{name=0, anchor=center, inner sep=0}, "b"', from=2-2, to=3-2]
	\arrow[""{name=1, anchor=center, inner sep=0}, "{{k_1}}", from=2-3, to=3-3]
	\arrow["{{m_0}}"', from=3-1, to=3-2]
	\arrow["{{n_1}}"', from=3-2, to=3-3]
	\arrow["{{R_1}}"{description}, draw=none, from=0, to=1]
\end{tikzcd}}
\hspace{2mm}\begin{tikzcd}
	{} & {}
	\arrow[squiggly, from=1-1, to=1-2]
\end{tikzcd}\hspace{1mm}
\adjustbox{scale=0.80}{\begin{tikzcd}
	&& {} \\
	& {} & {} \\
	{} & {} & {}
	\arrow["{{l_0}}", from=1-3, to=2-3]
	\arrow["a", from=2-2, to=2-3]
	\arrow[""{name=0, anchor=center, inner sep=0}, "b"', from=2-2, to=3-2]
	\arrow[""{name=1, anchor=center, inner sep=0}, "{{k_2}}", from=2-3, to=3-3]
	\arrow["{{m_0}}"', from=3-1, to=3-2]
	\arrow["{{n_2}}"', from=3-2, to=3-3]
	\arrow["{{R_2}}"{description}, draw=none, from=0, to=1]
\end{tikzcd}}
\]

Consider the basic $\Omega$ 2-cell from $(k_1,n_1)$ to $(k_2,n_2)$ determined by the passage from $R_1$ to $R_2$. Composing on the left and on the right with $l_0$ and $m_0$, respectively, we obtain a basic $\Omega$ 2-cell from $(l_1,m_1)$ to  $(l_2,m_2)$.

For instance, consider the first $\Sg$-step, of type $d$, in \eqref{eq:eq-132}. First we take the corresponding basic $\Omega$ 2-cell
\[(h',s')\Rightarrow (h''',r'''s'').\]

The part remaining unchanged in the right border of the $\Sg$-scheme is just $g'$. Then we compose with $g'$, obtaining a basic $\Omega$ 2-cell
\[\Omega_1\colon (h'g',s')\Rightarrow (h'''g',r'''s'').\]

Analogously, for the second $\Sg$-step of \eqref{eq:eq-132}, of type $u$, we consider the basic $\Omega$ 2-cell determined by the passage from
\(\; \begin{tikzcd}
	{} & {} \\
	{} & {}
	\arrow["r", from=1-1, to=1-2]
	\arrow[""{name=0, anchor=center, inner sep=0}, "{h''g}"', from=1-1, to=2-1]
	\arrow[""{name=1, anchor=center, inner sep=0}, "{h'''g'}", from=1-2, to=2-2]
	\arrow["{r'''}"', from=2-1, to=2-2]
	\arrow["\SIGMA"{description}, shift left, draw=none, from=0, to=1]
\end{tikzcd}\;\)
 to
\(\; \begin{tikzcd}
	{} & {} \\
	{} & {}
	\arrow["r", from=1-1, to=1-2]
	\arrow[""{name=0, anchor=center, inner sep=0}, "{{h''g}}"', from=1-1, to=2-1]
	\arrow[""{name=1, anchor=center, inner sep=0}, "{g''}", from=1-2, to=2-2]
	\arrow["{{r''}}"', from=2-1, to=2-2]
	\arrow["\SIGMA"{description}, shift left, draw=none, from=0, to=1]
\end{tikzcd}
  \, , \;\)
obtaining
$$(h^{\prime\prime\prime}g', r^{\prime\prime\prime})\Rightarrow (g^{\prime\prime},r^{\prime\prime})\,;$$
then, composing with $s^{\prime\prime}$, we have the resulting basic $\Omega$ 2-cell
$$\Omega_2\colon(h^{\prime\prime\prime}g^{\prime}, r^{\prime\prime\prime}s^{\prime\prime})\Rightarrow (g^{\prime\prime},r^{\prime\prime}s^{\prime\prime})\,.$$

\begin{definition}\label{def:Omega-2-cell}
An \textbf{\em $\boldsymbol\Omega$ 2-cell} is any finite vertical composition of basic $\Omega$ 2-cells determined by $\Sg$-steps between $\Sg$-schemes with the same left border, or the result of whiskering such a 2-cell with 1-cells of the form $(f,1)$ and $(1,r)$, with $r\in \Sg$,  in the obvious way.
\end{definition}
For instance, the vertical composition $\Omega_2\cdot \Omega_1$ of the two basic $\Omega$  2-cells above gives an $\Omega$ 2-cell from $(h'g',s')$ to $(g^{\prime\prime},r^{\prime\prime}s^{\prime\prime})$.

This way, each $\Sg$-path from a $\Sigma$-scheme of right border $(l_1,m_1)$ to a $\Sigma$-scheme of right border $(l_2,m_2)$ induces an $\Omega$ 2-cell from $(l_1,m_1)$ to $(l_2,m_2)$  given by the vertical composition of the basic $\Omega$ 2-cells corresponding to the $\Sg$-steps of the $\Sg$-path.
We say that two $\Sigma$-paths are \textbf{\em equivalent} if they give rise to the same $\Omega$ 2-cell.

In accordance with \cref{def:Omega-2-cell}, sometimes we will also consider $\Omega$ 2-cells of the form $(1,r) \circ \Omega_0\circ (f, 1)\colon (l_1f,m_1r)\Rightarrow (l_2f,m_2r)$, for any appropriately composable morphisms $f$ and $r$, with $r\in \Sg$,  where $\Omega_0\colon (l_1,m_1)\Rightarrow (l_2,m_2)$ is a finite vertical composition of basic $\Omega$ 2-cells determined by a $\Sg$-path between $\Sg$-schemes with the same left border.

\subsubsection{Properties of \texorpdfstring{$\Sigma$}{Σ}-paths}  The $\Sigma$-schemes of level 3 will be of special interest in what follows.

    A $\Sigma$-scheme of level 2 can be looked as a $\Sigma$-scheme of level 3 by extending it with identities to make a third row (see \cref{nota:Sigma-steps}).
Therefore, all that we are going to conclude about $\Sigma$-schemes of level 3 has obvious consequences for $\Sigma$-schemes of level 2.

\begin{remark}\label{nota:simple-Sigma} In what follows, we frequently write
\(\adjustbox{scale=0.70}{\begin{tikzcd}
	{} & {} \\
	{} & {}
	\arrow[from=1-1, to=1-2]
	\arrow[""{name=0, anchor=center, inner sep=0}, from=1-1, to=2-1]
	\arrow[""{name=1, anchor=center, inner sep=0}, from=1-2, to=2-2]
	\arrow[from=2-1, to=2-2]
	\arrow["{\SIGMA}"{description}, shift left, draw=none, from=0, to=1]
\end{tikzcd}}\)
  instead of
   \(\adjustbox{scale=0.70}{\begin{tikzcd}
	{} & {} \\
	{} & {}
	\arrow[from=1-1, to=1-2]
	\arrow[""{name=0, anchor=center, inner sep=0}, from=1-1, to=2-1]
	\arrow[""{name=1, anchor=center, inner sep=0}, from=1-2, to=2-2]
	\arrow[from=2-1, to=2-2]
	\arrow["{\SIGMA^{\alpha}}"{description}, draw=none, from=0, to=1]
\end{tikzcd}}\)
 omitting the name of the invertible 2-cell of the $\Sigma$-square. Sometimes we will also use numbered squares to refer to $\Sigma$-squares, as for instance, in the proof of Lemma \ref{lem:Sigma-steps}.
\end{remark}

\begin{notation}\label{nota:Sigma-steps} Consider the following five types of  $\Sigma$-schemes of level 3 with the same left border, that we identify  by the letters below, namely, $\mathbf{d}$ (down), $\mathbf{u}$ (upper), $\mathbf{s}$ (square), $\mathbf{d}_1$ and $\mathbf{s}_1$.
\[
\adjustbox{scale=0.50}{\begin{tikzcd}
	&& {} & {} \\
	& {} & {} \\
	{} & {} & {} & {} \\
	{} & {} & {} & {} \\
	& {} & {}
	\arrow[from=1-3, to=1-4]
	\arrow[from=1-3, to=2-3]
	\arrow[from=1-4, to=3-4]
	\arrow[from=2-2, to=2-3]
	\arrow[from=2-2, to=3-2]
	\arrow[""{name=0, anchor=center, inner sep=0}, from=3-1, to=3-2]
	\arrow[""{name=1, anchor=center, inner sep=0}, from=3-1, to=4-1]
	\arrow[""{name=2, anchor=center, inner sep=0}, from=3-2, to=3-4]
	\arrow[""{name=3, anchor=center, inner sep=0}, draw=none, from=3-3, to=3-4]
	\arrow[""{name=4, anchor=center, inner sep=0}, from=3-4, to=4-4]
	\arrow[shift left=3, draw=none, from=3-4, to=4-4]
	\arrow[no head, from=4-1, to=3-2]
	\arrow[""{name=5, anchor=center, inner sep=0}, draw=none, from=4-1, to=4-2]
	\arrow[from=4-1, to=4-4]
	\arrow[no head, from=4-2, to=3-3]
	\arrow[""{name=6, anchor=center, inner sep=0}, draw=none, from=4-2, to=4-3]
	\arrow[no head, from=4-3, to=3-4]
	\arrow[""{name=7, anchor=center, inner sep=0}, draw=none, from=4-3, to=4-4]
	\arrow["{\text{\Large $\mathbf{d}$}}", draw=none, from=5-2, to=5-3]
	\arrow[shorten <=4pt, shorten >=4pt, no head, from=1, to=0]
	\arrow[""{name=8, anchor=center, inner sep=0}, draw=none, from=3-2, to=2]
	\arrow[shorten >=0pt, Rightarrow, from=3-3, to=2]
	\arrow[shorten <=7pt, shorten >=7pt, no head, from=6, to=3]
	\arrow[shorten <=4pt, shorten >=4pt, no head, from=7, to=4]
	\arrow[shorten <=8pt, shorten >=8pt, no head, from=5, to=8]
\end{tikzcd}}
\qquad
\adjustbox{scale=0.50}{\begin{tikzcd}
	&& {} & {} \\
	& {} & {} & {} \\
	{} & {} & {} & {} \\
	{} & {} & {} & {} \\
	& {} & {}
	\arrow[""{name=0, anchor=center, inner sep=0}, from=1-3, to=1-4]
	\arrow[""{name=1, anchor=center, inner sep=0}, draw=none, from=1-3, to=2-3]
	\arrow[from=1-3, to=4-3]
	\arrow[""{name=2, anchor=center, inner sep=0}, draw=none, from=1-4, to=2-4]
	\arrow[from=1-4, to=4-4]
	\arrow[from=2-2, to=2-3]
	\arrow[from=2-2, to=3-2]
	\arrow[no head, from=2-3, to=1-4]
	\arrow[""{name=3, anchor=center, inner sep=0}, draw=none, from=2-3, to=3-3]
	\arrow[""{name=4, anchor=center, inner sep=0}, draw=none, from=2-4, to=3-4]
	\arrow[from=3-1, to=3-2]
	\arrow[from=3-1, to=4-1]
	\arrow[no head, from=3-3, to=2-4]
	\arrow[draw=none, from=3-3, to=3-4]
	\arrow[""{name=5, anchor=center, inner sep=0}, draw=none, from=3-3, to=4-3]
	\arrow[""{name=6, anchor=center, inner sep=0}, draw=none, from=3-4, to=4-4]
	\arrow[shift left=3, draw=none, from=3-4, to=4-4]
	\arrow[from=4-1, to=4-3]
	\arrow[draw=none, from=4-2, to=4-3]
	\arrow[no head, from=4-3, to=3-4]
	\arrow[""{name=7, anchor=center, inner sep=0}, from=4-3, to=4-4]
	\arrow["{\text{\Large $\mathbf{u}$}}", draw=none, from=5-2, to=5-3]
	\arrow[shorten >=4pt, no head, from=1, to=0]
	\arrow[shorten <=7pt, shorten >=7pt, no head, from=3, to=2]
	\arrow[shorten <=7pt, shorten >=7pt, no head, from=5, to=4]
	\arrow[shorten <=4pt, shorten >=4pt, no head, from=7, to=6]
\end{tikzcd}}
\qquad
\adjustbox{scale=0.50}{\begin{tikzcd}
	&& {} & {} \\
	& {} & {} & {} \\
	{} & {} & {} & {} \\
	{} & {} & {} & {} \\
	& {} & {}
	\arrow[from=1-3, to=1-4]
	\arrow[from=1-3, to=2-3]
	\arrow[from=1-4, to=2-4]
	\arrow[""{name=0, anchor=center, inner sep=0}, from=2-2, to=2-3]
	\arrow[""{name=1, anchor=center, inner sep=0}, from=2-2, to=3-2]
	\arrow[""{name=2, anchor=center, inner sep=0}, from=2-3, to=2-4]
	\arrow[""{name=3, anchor=center, inner sep=0}, draw=none, from=2-4, to=3-4]
	\arrow[from=2-4, to=4-4]
	\arrow[from=3-1, to=3-2]
	\arrow[from=3-1, to=4-1]
	\arrow[no head, from=3-2, to=2-3]
	\arrow[""{name=4, anchor=center, inner sep=0}, from=3-2, to=4-2]
	\arrow[draw=none, from=3-3, to=3-4]
	\arrow[draw=none, from=3-3, to=4-3]
	\arrow[""{name=5, anchor=center, inner sep=0}, draw=none, from=3-4, to=4-4]
	\arrow[shift left=3, draw=none, from=3-4, to=4-4]
	\arrow[from=4-1, to=4-2]
	\arrow[no head, from=4-2, to=2-4]
	\arrow[""{name=6, anchor=center, inner sep=0}, draw=none, from=4-2, to=4-3]
	\arrow[from=4-2, to=4-4]
	\arrow[no head, from=4-3, to=3-4]
	\arrow[""{name=7, anchor=center, inner sep=0}, draw=none, from=4-3, to=4-4]
	\arrow["{\text{\Large $\mathbf{s}$}}", draw=none, from=5-2, to=5-3]
	\arrow[shorten <=4pt, shorten >=4pt, no head, from=1, to=0]
	\arrow[shorten <=11pt, shorten >=11pt, no head, from=4, to=2]
	\arrow[shorten <=5pt, shorten >=11pt, no head, from=6, to=3]
	\arrow[shorten <=4pt, shorten >=4pt, no head, from=7, to=5]
\end{tikzcd}}
\qquad
\adjustbox{scale=0.50}{\begin{tikzcd}
	&& {} & {} \\
	& {} & {} & {} \\
	{} & {} & {} & {} \\
	{} & {} & {} & {} \\
	{} & {} & {} & {} \\
	& {} & {}
	\arrow[from=1-3, to=1-4]
	\arrow[from=1-3, to=2-3]
	\arrow[from=1-4, to=4-4]
	\arrow[from=2-2, to=2-3]
	\arrow[from=2-2, to=3-2]
	\arrow[draw=none, from=2-4, to=3-4]
	\arrow[from=3-1, to=3-2]
	\arrow[from=3-1, to=4-1]
	\arrow[draw=none, from=3-3, to=3-4]
	\arrow[draw=none, from=3-3, to=4-3]
	\arrow[draw=none, from=3-4, to=4-4]
	\arrow[shift left=3, draw=none, from=3-4, to=4-4]
	\arrow[""{name=0, anchor=center, inner sep=0}, draw=none, from=4-1, to=4-2]
	\arrow[from=4-1, to=4-4]
	\arrow[""{name=1, anchor=center, inner sep=0}, equals, from=4-1, to=5-1]
	\arrow[""{name=2, anchor=center, inner sep=0}, draw=none, from=4-2, to=4-3]
	\arrow[""{name=3, anchor=center, inner sep=0}, draw=none, from=4-3, to=4-4]
	\arrow[from=4-4, to=5-4]
	\arrow[no head, from=5-1, to=4-2]
	\arrow[""{name=4, anchor=center, inner sep=0}, draw=none, from=5-1, to=5-2]
	\arrow[from=5-1, to=5-4]
	\arrow[no head, from=5-2, to=4-3]
	\arrow[""{name=5, anchor=center, inner sep=0}, draw=none, from=5-2, to=5-3]
	\arrow[no head, from=5-3, to=4-4]
	\arrow[draw=none, from=5-3, to=5-4]
	\arrow["{\text{\Large $\mathbf{d}_1$}}", draw=none, from=6-2, to=6-3]
	\arrow[shorten <=4pt, shorten >=4pt, no head, from=1, to=0]
	\arrow[shorten <=7pt, shorten >=7pt, no head, from=4, to=2]
	\arrow[shorten <=7pt, shorten >=7pt, no head, from=5, to=3]
\end{tikzcd}}
\qquad
\adjustbox{scale=0.50}{\begin{tikzcd}
	&& {} & {} \\
	& {} & {} & {} \\
	& {} & {} & {} \\
	{} & {} && {} \\
	{} & {} & {} & {} \\
	& {} & {}
	\arrow[from=1-3, to=1-4]
	\arrow[from=1-3, to=2-3]
	\arrow[from=1-4, to=3-4]
	\arrow[from=2-2, to=2-3]
	\arrow[equals, from=2-2, to=3-2]
	\arrow[draw=none, from=2-4, to=3-4]
	\arrow[""{name=0, anchor=center, inner sep=0}, draw=none, from=3-2, to=3-3]
	\arrow[from=3-2, to=3-4]
	\arrow[""{name=1, anchor=center, inner sep=0}, from=3-2, to=4-2]
	\arrow[""{name=2, anchor=center, inner sep=0}, draw=none, from=3-3, to=3-4]
	\arrow[""{name=3, anchor=center, inner sep=0}, draw=none, from=3-4, to=4-4]
	\arrow[from=3-4, to=5-4]
	\arrow[from=4-1, to=4-2]
	\arrow[from=4-1, to=5-1]
	\arrow[no head, from=4-2, to=3-3]
	\arrow[""{name=4, anchor=center, inner sep=0}, from=4-2, to=5-2]
	\arrow[""{name=5, anchor=center, inner sep=0}, draw=none, from=4-4, to=5-4]
	\arrow[from=5-1, to=5-2]
	\arrow[no head, from=5-2, to=3-4]
	\arrow[""{name=6, anchor=center, inner sep=0}, draw=none, from=5-2, to=5-3]
	\arrow[from=5-2, to=5-4]
	\arrow[no head, from=5-3, to=4-4]
	\arrow[""{name=7, anchor=center, inner sep=0}, draw=none, from=5-3, to=5-4]
	\arrow["{\text{\Large $\mathbf{s}_1$}}", draw=none, from=6-2, to=6-3]
	\arrow[shorten <=4pt, shorten >=4pt, no head, from=1, to=0]
	\arrow[shorten <=11pt, shorten >=11pt, no head, from=4, to=2]
	\arrow[shorten <=11pt, shorten >=11pt, no head, from=6, to=3]
	\arrow[shorten <=4pt, shorten >=4pt, no head, from=7, to=5]
\end{tikzcd}}
\]

Between two $\Sg$-schemes of type $\bd$, which agree on the non-shaded part and on the left border of the shaded part, we may perform a $\Sg$-step by replacing just the shaded part. We then say that this $\Sg$-step is also of  type $\bd$. Analogously, we use the same terminology  for the remaining four cases.

 For $\Sigma$-steps between $\Sigma$-schemes of level 2, apart from the letters $d$ and $u$, already mentioned, we use also the letter $d_1$ for a type of the $\Sg$-step. To summarise, we use the following types.
\[
\adjustbox{scale=0.60}{\begin{tikzcd}
	& {} & {} \\
	{} & {} & {} \\
	{} & {} & {}
	\arrow[from=1-2, to=1-3]
	\arrow[from=1-2, to=2-2]
	\arrow[from=1-3, to=2-3]
	\arrow[""{name=0, anchor=center, inner sep=0}, from=2-1, to=2-2]
	\arrow[""{name=1, anchor=center, inner sep=0}, from=2-1, to=3-1]
	\arrow[""{name=2, anchor=center, inner sep=0}, from=2-2, to=2-3]
	\arrow[""{name=3, anchor=center, inner sep=0}, from=2-3, to=3-3]
	\arrow[no head, from=3-1, to=2-2]
	\arrow[""{name=4, anchor=center, inner sep=0}, draw=none, from=3-1, to=3-2]
	\arrow[from=3-1, to=3-3]
	\arrow["\text{\large $d$}"', shift right=4, draw=none, from=3-1, to=3-3]
	\arrow[no head, from=3-2, to=2-3]
	\arrow[""{name=5, anchor=center, inner sep=0}, draw=none, from=3-2, to=3-3]
	\arrow[shorten <=4pt, shorten >=4pt, no head, from=1, to=0]
	\arrow[shorten <=7pt, shorten >=7pt, no head, from=4, to=2]
	\arrow[shorten <=4pt, shorten >=4pt, no head, from=5, to=3]
\end{tikzcd}}
\qquad \qquad
\adjustbox{scale=0.60}{\begin{tikzcd}
	& {} & {} \\
	{} & {} & {} \\
	{} & {} & {}
	\arrow[""{name=0, anchor=center, inner sep=0}, from=1-2, to=1-3]
	\arrow[""{name=1, anchor=center, inner sep=0}, from=1-2, to=2-2]
	\arrow[""{name=2, anchor=center, inner sep=0}, from=1-3, to=2-3]
	\arrow[from=2-1, to=2-2]
	\arrow[from=2-1, to=3-1]
	\arrow[no head, from=2-2, to=1-3]
	\arrow[""{name=3, anchor=center, inner sep=0}, from=2-2, to=3-2]
	\arrow[""{name=4, anchor=center, inner sep=0}, from=2-3, to=3-3]
	\arrow[from=3-1, to=3-2]
	\arrow["\text{\large $u$}"', shift right=5, draw=none, from=3-1, to=3-3]
	\arrow[no head, from=3-2, to=2-3]
	\arrow[""{name=5, anchor=center, inner sep=0}, from=3-2, to=3-3]
	\arrow[shorten <=4pt, shorten >=4pt, no head, from=1, to=0]
	\arrow[shorten <=7pt, shorten >=7pt, no head, from=3, to=2]
	\arrow[shorten <=4pt, shorten >=4pt, no head, from=5, to=4]
\end{tikzcd}}
\qquad \qquad
\adjustbox{scale=0.60}{\begin{tikzcd}
	& {} & {} \\
	{} & {} & {} \\
	{} & {} & {} \\
	{} & {} & {}
	\arrow[from=1-2, to=1-3]
	\arrow[from=1-2, to=2-2]
	\arrow[from=1-3, to=2-3]
	\arrow[from=2-1, to=2-2]
	\arrow[from=2-1, to=3-1]
	\arrow[from=2-3, to=3-3]
	\arrow[""{name=0, anchor=center, inner sep=0}, from=3-1, to=3-2]
	\arrow[""{name=1, anchor=center, inner sep=0}, equals, from=3-1, to=4-1]
	\arrow[""{name=2, anchor=center, inner sep=0}, from=3-2, to=3-3]
	\arrow[""{name=3, anchor=center, inner sep=0}, from=3-3, to=4-3]
	\arrow[no head, from=4-1, to=3-2]
	\arrow[""{name=4, anchor=center, inner sep=0}, draw=none, from=4-1, to=4-2]
	\arrow[from=4-1, to=4-3]
	\arrow["\text{\large $d_1$}"', shift right=5, draw=none, from=4-1, to=4-3]
	\arrow[no head, from=4-2, to=3-3]
	\arrow[""{name=5, anchor=center, inner sep=0}, draw=none, from=4-2, to=4-3]
	\arrow[shorten <=4pt, shorten >=4pt, no head, from=1, to=0]
	\arrow[shorten <=7pt, shorten >=7pt, no head, from=4, to=2]
	\arrow[shorten <=4pt, shorten >=4pt, no head, from=5, to=3]
\end{tikzcd}}
\]
By adding identity squares
\[
\adjustbox{scale=0.60}{\begin{tikzcd}
	&& {} & {} \\
	& {} & {} & {} \\
	{} & {} & {} & {} \\
	{} & {} & {} & {}
	\arrow[from=1-3, to=1-4]
	\arrow[from=1-3, to=2-3]
	\arrow[from=1-4, to=2-4]
	\arrow[""{name=0, anchor=center, inner sep=0}, from=2-2, to=2-3]
	\arrow[""{name=1, anchor=center, inner sep=0}, from=2-2, to=3-2]
	\arrow[""{name=2, anchor=center, inner sep=0}, from=2-3, to=2-4]
	\arrow[""{name=3, anchor=center, inner sep=0}, from=2-4, to=3-4]
	\arrow[equals, from=3-1, to=3-2]
	\arrow[equals, from=3-1, to=4-1]
	\arrow[no head, from=3-2, to=2-3]
	\arrow[""{name=4, anchor=center, inner sep=0}, draw=none, from=3-2, to=3-3]
	\arrow[from=3-2, to=3-4]
	\arrow[""{name=5, anchor=center, inner sep=0}, equals, from=3-2, to=4-2]
	\arrow[no head, from=3-3, to=2-4]
	\arrow[""{name=6, anchor=center, inner sep=0}, draw=none, from=3-3, to=3-4]
	\arrow[no head, from=3-4, to=4-3]
	\arrow[""{name=7, anchor=center, inner sep=0}, equals, from=3-4, to=4-4]
	\arrow[equals, from=4-1, to=4-2]
	\arrow[no head, from=4-2, to=3-3]
	\arrow[""{name=8, anchor=center, inner sep=0}, draw=none, from=4-2, to=4-3]
	\arrow[from=4-2, to=4-4]
	\arrow[""{name=9, anchor=center, inner sep=0}, draw=none, from=4-3, to=4-4]
	\arrow[between={0.25}{0.75}, no head, from=1, to=0]
	\arrow[between={0.2}{0.8}, no head, from=4, to=2]
	\arrow[between={0.2}{0.8}, no head, from=5, to=4]
	\arrow[between={0.25}{0.75}, no head, from=6, to=3]
	\arrow[between={0.2}{0.8}, no head, from=7, to=9]
	\arrow[between={0.2}{0.8}, no head, from=8, to=6]
\end{tikzcd}}
\qquad \qquad
\adjustbox{scale=0.60}{\begin{tikzcd}
	&& {} & {} \\
	& {} & {} & {} \\
	{} & {} & {} & {} \\
	{} & {} & {} & {}
	\arrow[""{name=0, anchor=center, inner sep=0}, from=1-3, to=1-4]
	\arrow[""{name=1, anchor=center, inner sep=0}, from=1-3, to=2-3]
	\arrow[""{name=2, anchor=center, inner sep=0}, from=1-4, to=2-4]
	\arrow[from=2-2, to=2-3]
	\arrow[from=2-2, to=3-2]
	\arrow[no head, from=2-3, to=1-4]
	\arrow[""{name=3, anchor=center, inner sep=0}, from=2-3, to=3-3]
	\arrow[""{name=4, anchor=center, inner sep=0}, from=2-4, to=3-4]
	\arrow[equals, from=3-1, to=3-2]
	\arrow[equals, from=3-1, to=4-1]
	\arrow[from=3-2, to=3-3]
	\arrow[equals, from=3-2, to=4-2]
	\arrow[no head, from=3-3, to=2-4]
	\arrow[""{name=5, anchor=center, inner sep=0}, from=3-3, to=3-4]
	\arrow[""{name=6, anchor=center, inner sep=0}, equals, from=3-3, to=4-3]
	\arrow[no head, from=3-4, to=4-3]
	\arrow[""{name=7, anchor=center, inner sep=0}, equals, from=3-4, to=4-4]
	\arrow[equals, from=4-1, to=4-2]
	\arrow[from=4-2, to=4-3]
	\arrow[""{name=8, anchor=center, inner sep=0}, from=4-3, to=4-4]
	\arrow[between={0.25}{0.75}, no head, from=1, to=0]
	\arrow[between={0.2}{0.8}, no head, from=3, to=2]
	\arrow[between={0.25}{0.75}, no head, from=5, to=4]
	\arrow[between={0.2}{0.8}, no head, from=5, to=6]
	\arrow[between={0.2}{0.8}, no head, from=7, to=8]
\end{tikzcd}}
\qquad \qquad
\adjustbox{scale=0.60}{\begin{tikzcd}
	&& {} & {} \\
	& {} & {} & {} \\
	{} & {} & {} & {} \\
	{} & {} & {} & {}
	\arrow[from=1-3, to=1-4]
	\arrow[from=1-3, to=2-3]
	\arrow[from=1-4, to=2-4]
	\arrow[from=2-2, to=2-3]
	\arrow[from=2-2, to=3-2]
	\arrow[from=2-4, to=3-4]
	\arrow[""{name=0, anchor=center, inner sep=0}, equals, from=3-1, to=3-2]
	\arrow[""{name=1, anchor=center, inner sep=0}, equals, from=3-1, to=4-1]
	\arrow[""{name=2, anchor=center, inner sep=0}, from=3-2, to=3-3]
	\arrow[no head, from=3-2, to=4-1]
	\arrow[""{name=3, anchor=center, inner sep=0}, equals, from=3-2, to=4-2]
	\arrow[""{name=4, anchor=center, inner sep=0}, from=3-3, to=3-4]
	\arrow[""{name=5, anchor=center, inner sep=0}, from=3-4, to=4-4]
	\arrow[""{name=6, anchor=center, inner sep=0}, equals, from=4-1, to=4-2]
	\arrow[no head, from=4-2, to=3-3]
	\arrow[""{name=7, anchor=center, inner sep=0}, draw=none, from=4-2, to=4-3]
	\arrow[from=4-2, to=4-4]
	\arrow[no head, from=4-3, to=3-4]
	\arrow[""{name=8, anchor=center, inner sep=0}, draw=none, from=4-3, to=4-4]
	\arrow[between={0.2}{0.8}, no head, from=0, to=1]
	\arrow[between={0.25}{0.75}, no head, from=3, to=2]
	\arrow[between={0.2}{0.8}, no head, from=3, to=6]
	\arrow[between={0.2}{0.8}, no head, from=7, to=4]
	\arrow[between={0.25}{0.75}, no head, from=8, to=5]
\end{tikzcd}}
\]
we see that type $d$ is a special case of type $\bs$, type $u$ is a special case of type $\bu$ and type $d_1$ is a special case of type $\bd$.
\end{notation}

\begin{assumption}\label{assumption-path} From now on, we will assume that every $\Sg$-path between $\Sigma$-schemes of level 3 contains only $\Sg$-steps of the five types described in \cref{nota:Sigma-steps}. Analogously, for $\Sg$-schemes of level 2 we assume that the $\Sg$-paths are made only of $\Sg$-steps of the three types indicated above.
\end{assumption} %

\begin{lemma}\label{lem:Sigma-steps} We have the following properties for $\Sigma$-paths between $\Sigma$-schemes of level 3:
\begin{enumerate}[label=(\arabic*)]
	\item A $\Sigma$-path of two $\Sigma$-steps of the same type is equivalent to the $\Sigma$-path consisting of a single $\Sigma$-step of that type.
    \item A $\Sigma$-path consisting of two $\Sigma$-steps, one of type $\mathbf{s}$, the other of type $\mathbf{s}_1$, is equivalent to a $\Sigma$-step of type $\mathbf{s}$.
    \item  A $\Sigma$-path consisting of two $\Sigma$-steps, one of type $\mathbf{d}$, the other of type $\mathbf{d}_1$, is equivalent to a $\Sigma$-step of type $\mathbf{d}$.
	\item Any two possible $\Sigma$-steps between two given $\Sigma$-schemes are equivalent.  The basic $\Omega$ 2-cell corresponding to a $\Sg$-step of any type from a $\Sg$-scheme to itself is the identity 2-cell.
\end{enumerate}
\end{lemma}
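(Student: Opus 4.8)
The plan is to reduce all four assertions to the ``groupoid'' behaviour of basic $\Omega$ 2-cells already recorded in \cref{lem:basic-Omega}. Recall that a $\Sigma$-step replaces one replaceable $\Sigma$-square $R_1$ (with right border $(k_1,n_1)$) inside a level-$3$ scheme by a $\Sigma$-square $R_2$ (with right border $(k_2,n_2)$) having the \emph{same} left border, and that the basic $\Omega$ 2-cell attached to the step is obtained from the basic $\Omega$ 2-cell $\Omega_{\alpha_1,\alpha_2}\colon(k_1,n_1)\Rightarrow(k_2,n_2)$ of the passage $R_1\rightsquigarrow R_2$ by precomposing with the fixed remaining border maps $l_0$ on the left and $m_0$ on the right; by the $\approx$-independence noted just after \eqref{eq:basic-Omega} this cell depends only on the passage between the two squares. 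Since precomposing a representing $2$-morphism diagram by fixed $1$-cells $l_0,m_0$ is manifestly compatible with vertical composition of $2$-morphisms and sends identity $2$-cells to identity $2$-cells, it suffices throughout to argue with the unwhiskered cells $\Omega_{\alpha_i,\alpha_j}$ and to invoke \cref{lem:basic-Omega}(2), namely $\Omega_{\alpha_2,\alpha_3}\cdot\Omega_{\alpha_1,\alpha_2}=\Omega_{\alpha_1,\alpha_3}$, together with \cref{lem:basic-Omega}(1).

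For (1), a $\Sigma$-path $S_1\rightsquigarrow S_2\rightsquigarrow S_3$ of two steps of the same type modifies the designated replaceable square of that shape, which occupies a fixed position, through contents $R_1,R_2,R_3$; for the two steps to be legal these three squares must share a common left border, while the surrounding frame $(l_0,m_0)$ is unchanged. Hence the $\Omega$ $2$-cell of the path is $\Omega_{\alpha_2,\alpha_3}\cdot\Omega_{\alpha_1,\alpha_2}=\Omega_{\alpha_1,\alpha_3}$ precomposed with $(l_0,m_0)$, which is exactly the $\Omega$ $2$-cell of the single direct step $S_1\rightsquigarrow S_3$ of that type (this step exists, since $R_1$ and $R_3$ share a left border). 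For (2) and (3) the two steps carry different local labels, but the shapes $\mathbf{d}_1$ and $\mathbf{s}_1$ are obtained from $\mathbf{d}$ and $\mathbf{s}$ by inserting a row of identities, so the replaceable square used by the $\mathbf{d}_1$- (resp. $\mathbf{s}_1$-) step is, up to the identity row, the vertical composite in the sense of \cref{nota:comp-squares} of the replaceable square used by the $\mathbf{d}$- (resp. $\mathbf{s}$-) step with the corresponding identity $\Sigma$-square; composing with an identity square changes neither the left border nor the basic $\Omega$ $2$-cell, so after this identification the two replaceable squares appearing in the path again share a left border and \cref{lem:basic-Omega}(2) collapses the two basic $\Omega$ $2$-cells into the one of a single step of type $\mathbf{s}$ (resp. $\mathbf{d}$).

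For (4), a $\Sigma$-step from a scheme to itself replaces a replaceable square $R_1$ by $R_1$, so $\alpha_1=\alpha_2$ and its basic $\Omega$ $2$-cell is (a whisker of) $\Omega_{\alpha_1,\alpha_1}$; by \cref{lem:basic-Omega}(2) this cell is idempotent under vertical composition, and by \cref{lem:basic-Omega}(1) it is invertible, hence it is an identity $2$-cell. (Equivalently, one may take $d_1=d_2=\id$, $d=r_1$ and $\theta=\id$ in \eqref{eq:basic-Omega}, obtaining the representing $2$-morphism $[\id,1,1,r_1,\id,\id]$, which is the identity $2$-cell by \cref{rem:identities}.) For the first sentence of (4), given two single $\Sigma$-steps between the same pair $S,S'$: if they modify the same replaceable square they literally agree; otherwise both changes are supported on a common smaller replaceable square $R'$, and each step differs from the step that modifies $R'$ only by enlarging the replaceable square with neighbouring $\Sigma$-squares that both steps leave fixed. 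Using \cref{nota:comp-squares} and \cref{lem:basic-Omega} one checks that enlarging the replaceable square in this way does not change the induced basic $\Omega$ $2$-cell, so both steps induce the same $\Omega$ $2$-cell.

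The main obstacle will be purely combinatorial bookkeeping rather than any new manipulation of $2$-cells: one must keep precise track of which sub-$\Sigma$-square is replaceable inside each of the five prescribed shapes $\mathbf{d},\mathbf{u},\mathbf{s},\mathbf{d}_1,\mathbf{s}_1$, of the outer frame maps $l_0,m_0$, and of the inserted identity rows and columns, so that in (2), (3) and the ``different replaceable squares'' case of (4) the replaceable squares can be matched up with a common left border and \cref{lem:basic-Omega}(2) and (1) applied verbatim. Once this matching is done, no computation beyond the cocycle identity and invertibility already established in \cref{lem:basic-Omega} is required.
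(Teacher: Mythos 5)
Your overall strategy is the paper's: reduce everything to the composition law of \cref{lem:basic-Omega} together with the principle that the part of the scheme left fixed by a step can be absorbed without affecting the basic $\Omega$ 2-cell. Your handling of item (1) and of the second sentence of (4) (idempotent plus invertible, or the explicit choice $d_1=d_2=1$, $d=r_1$, $\theta=\id$, which by \cref{rem:identities} represents the identity) is fine. The gap is in the step you phrase as ``enlarging the replaceable square with neighbouring $\Sigma$-squares that both steps leave fixed does not change the induced basic $\Omega$ 2-cell'', which is what carries (2), (3) and the first sentence of (4). This does not follow from \cref{lem:basic-Omega}: that lemma compares basic $\Omega$ 2-cells of squares over one and the same cospan $(r,g)$, whereas the enlarged square and the small square have different left borders, so their basic $\Omega$ 2-cells relate different cospans and \cref{lem:basic-Omega}(2) never applies to the pair of them. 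What is actually needed --- and what the paper proves in each case --- is that valid Rule 4$'$ data for the passage between the two enlarged squares can be manufactured by applying Rule 4$'$ to the two small squares and then pasting the fixed square on, after which one invokes the independence of the basic $\Omega$ 2-cell from the choice of Rule 4$'$ data (the parenthetical remark following \eqref{eq:basic-Omega}). You cite that independence only to say the cell depends just on the two squares, not to justify the absorption, and your closing claim that ``no computation beyond the cocycle identity and invertibility already established in \cref{lem:basic-Omega} is required'' is therefore too strong: the absorption is exactly the computation the paper carries out (``apply Rule 4$'$ to squares 5 and 6 and then compose with square 4'', and the analogous manipulation with the squares $4$, $4'$ in case (4)).

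A second, more local, inaccuracy: in (2) and (3) the square to be absorbed is not ``the corresponding identity $\Sigma$-square''. In the forced middle configuration (for (3) this is \eqref{eq:eqSa}, and similarly in the proof of (2)) the extra square has an identity \emph{left edge} but otherwise arbitrary horizontal edges, right edge and invertible 2-cell, and pasting it on does change the left border (its top horizontal differs from that of the small square) and whiskers the right border by its right edge. So the assertion ``composing with an identity square changes neither the left border nor the basic $\Omega$ 2-cell'' does not literally apply; the invariance you need is again the Rule 4$'$-data argument above, run for this general square. With that verification supplied in each of the configurations of \cref{nota:Sigma-steps}, your outline does reproduce the paper's proof, but as written the central step is asserted rather than established.
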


\begin{proof} (1) This is clear from Lemma \ref{lem:basic-Omega}.

(2) Given a $\Sigma$-path $S_1\begin{tikzcd}
	{} & {}
	\arrow["\mathbf{s}", squiggly, from=1-1, to=1-2]
\end{tikzcd}S_3\begin{tikzcd}
	{} & {}
	\arrow["\mathbf{s}_1", squiggly, from=1-1, to=1-2]
\end{tikzcd}S_2$, we necessarily have that $S_1$ is of the form $\mathbf{s}$, $S_2$ is of the form $\mathbf{s}_1$, and $S_3$ is of both forms; that is, the $\Sigma$-path is of the following form, where we use numbers to indicate the various $\Sg$-squares involved.
\[
\adjustbox{scale=0.60}{\begin{tikzcd}
	&& {} & {} \\
	& {} & {} & {} \\
	{} & {} & 3 & {} \\
	{} & {} & {} & {}
	\arrow[from=1-3, to=1-4]
	\arrow[""{name=0, anchor=center, inner sep=0}, from=1-3, to=2-3]
	\arrow[""{name=1, anchor=center, inner sep=0}, from=1-4, to=2-4]
	\arrow[from=2-2, to=2-3]
	\arrow[from=2-2, to=3-2]
	\arrow[from=2-3, to=2-4]
	\arrow[draw=none, from=2-4, to=3-4]
	\arrow[from=2-4, to=4-4]
	\arrow[from=3-1, to=3-2]
	\arrow[""{name=2, anchor=center, inner sep=0}, from=3-1, to=4-1]
	\arrow[""{name=3, anchor=center, inner sep=0}, from=3-2, to=4-2]
	\arrow[draw=none, from=3-3, to=3-4]
	\arrow[draw=none, from=3-3, to=4-3]
	\arrow[draw=none, from=3-4, to=4-4]
	\arrow[shift left=3, draw=none, from=3-4, to=4-4]
	\arrow[from=4-1, to=4-2]
	\arrow[draw=none, from=4-2, to=4-3]
	\arrow[from=4-2, to=4-4]
	\arrow[draw=none, from=4-3, to=4-4]
	\arrow["1"{description}, draw=none, from=0, to=1]
	\arrow["2"{description}, draw=none, from=2, to=3]
\end{tikzcd}}
\begin{tikzcd}
	{} & {}
	\arrow["\mathbf{s}", squiggly, from=1-1, to=1-2]
\end{tikzcd}
\adjustbox{scale=0.60}{\begin{tikzcd}
	&& {} & {} \\
	& {} & {} & {} \\
	& {} & {} & {} \\
	{} & {} && {} \\
	{} & {} & {} & {}
	\arrow[from=1-3, to=1-4]
	\arrow[""{name=0, anchor=center, inner sep=0}, from=1-3, to=2-3]
	\arrow[""{name=1, anchor=center, inner sep=0}, from=1-4, to=2-4]
	\arrow[from=2-2, to=2-3]
	\arrow[""{name=2, anchor=center, inner sep=0}, equals, from=2-2, to=3-2]
	\arrow[from=2-3, to=2-4]
	\arrow[""{name=3, anchor=center, inner sep=0}, ""{description}, from=2-4, to=3-4]
	\arrow[draw=none, from=3-2, to=3-3]
	\arrow[from=3-2, to=3-4]
	\arrow[from=3-2, to=4-2]
	\arrow[draw=none, from=3-3, to=3-4]
	\arrow[draw=none, from=3-4, to=4-4]
	\arrow[from=3-4, to=5-4]
	\arrow[from=4-1, to=4-2]
	\arrow[""{name=4, anchor=center, inner sep=0}, from=4-1, to=5-1]
	\arrow["5"{description}, draw=none, from=4-2, to=4-4]
	\arrow[""{name=5, anchor=center, inner sep=0}, from=4-2, to=5-2]
	\arrow[draw=none, from=4-4, to=5-4]
	\arrow[from=5-1, to=5-2]
	\arrow[draw=none, from=5-2, to=5-3]
	\arrow[from=5-2, to=5-4]
	\arrow[draw=none, from=5-3, to=5-4]
	\arrow["1"{description}, draw=none, from=0, to=1]
	\arrow["4"{description}, draw=none, from=2, to=3]
	\arrow["2"{description}, draw=none, from=4, to=5]
\end{tikzcd}}
\begin{tikzcd}
	{} & {}
	\arrow["\mathbf{s}_1", squiggly, from=1-1, to=1-2]
\end{tikzcd}
\adjustbox{scale=0.60}{\begin{tikzcd}
	&& {} & {} \\
	& {} & {} & {} \\
	& {} & {} & {} \\
	{} & {} && {} \\
	{} & {} & {} & {}
	\arrow[from=1-3, to=1-4]
	\arrow[""{name=0, anchor=center, inner sep=0}, from=1-3, to=2-3]
	\arrow[""{name=1, anchor=center, inner sep=0}, from=1-4, to=2-4]
	\arrow[from=2-2, to=2-3]
	\arrow[""{name=2, anchor=center, inner sep=0}, equals, from=2-2, to=3-2]
	\arrow[from=2-3, to=2-4]
	\arrow[""{name=3, anchor=center, inner sep=0}, ""{description}, from=2-4, to=3-4]
	\arrow[draw=none, from=3-2, to=3-3]
	\arrow[from=3-2, to=3-4]
	\arrow[from=3-2, to=4-2]
	\arrow[draw=none, from=3-3, to=3-4]
	\arrow[draw=none, from=3-4, to=4-4]
	\arrow[from=3-4, to=5-4]
	\arrow[from=4-1, to=4-2]
	\arrow[""{name=4, anchor=center, inner sep=0}, from=4-1, to=5-1]
	\arrow["6"{description}, draw=none, from=4-2, to=4-4]
	\arrow[""{name=5, anchor=center, inner sep=0}, from=4-2, to=5-2]
	\arrow[draw=none, from=4-4, to=5-4]
	\arrow[from=5-1, to=5-2]
	\arrow[draw=none, from=5-2, to=5-3]
	\arrow[from=5-2, to=5-4]
	\arrow[draw=none, from=5-3, to=5-4]
	\arrow["1"{description}, draw=none, from=0, to=1]
	\arrow["4"{description}, draw=none, from=2, to=3]
	\arrow["2"{description}, draw=none, from=4, to=5]
\end{tikzcd}}
\]

But the basic $\Omega$ 2-cell corresponding to ${\bs}_1$ obtained by the application of Rule 4'(\cref{pro:useful_rules}) to the squares
$\adjustbox{scale=0.50}{\begin{tikzcd}
	{} & {} & {} \\
	{} & {} & {} \\
	{} && {} \\
	{} & {} & {}
	\arrow[from=1-1, to=1-2]
	\arrow[""{name=0, anchor=center, inner sep=0}, equals, from=1-1, to=2-1]
	\arrow[from=1-2, to=1-3]
	\arrow[""{name=1, anchor=center, inner sep=0}, from=1-3, to=2-3]
	\arrow[draw=none, from=2-1, to=2-2]
	\arrow[from=2-1, to=2-3]
	\arrow[from=2-1, to=3-1]
	\arrow[draw=none, from=2-2, to=2-3]
	\arrow[draw=none, from=2-3, to=3-3]
	\arrow[from=2-3, to=4-3]
	\arrow["5"{description}, draw=none, from=3-1, to=3-3]
	\arrow[from=3-1, to=4-1]
	\arrow[draw=none, from=3-3, to=4-3]
	\arrow[draw=none, from=4-1, to=4-2]
	\arrow[from=4-1, to=4-3]
	\arrow[draw=none, from=4-2, to=4-3]
	\arrow["4"{description}, draw=none, from=0, to=1]
\end{tikzcd}}$
and
$\adjustbox{scale=0.50}{\begin{tikzcd}
	{} & {} & {} \\
	{} & {} & {} \\
	{} && {} \\
	{} & {} & {}
	\arrow[from=1-1, to=1-2]
	\arrow[""{name=0, anchor=center, inner sep=0}, equals, from=1-1, to=2-1]
	\arrow[from=1-2, to=1-3]
	\arrow[""{name=1, anchor=center, inner sep=0}, from=1-3, to=2-3]
	\arrow[draw=none, from=2-1, to=2-2]
	\arrow[from=2-1, to=2-3]
	\arrow[from=2-1, to=3-1]
	\arrow[draw=none, from=2-2, to=2-3]
	\arrow[draw=none, from=2-3, to=3-3]
	\arrow[from=2-3, to=4-3]
	\arrow["6"{description}, draw=none, from=3-1, to=3-3]
	\arrow[from=3-1, to=4-1]
	\arrow[draw=none, from=3-3, to=4-3]
	\arrow[draw=none, from=4-1, to=4-2]
	\arrow[from=4-1, to=4-3]
	\arrow[draw=none, from=4-2, to=4-3]
	\arrow["4"{description}, draw=none, from=0, to=1]
\end{tikzcd}}$
 is the same as the one obtained by applying Rule 4' to the squares 5 and 6 and then composing with square number 4 afterwards. Consequently, under the present circumstances, $\mathbf{s}_1$ is equivalent to $\mathbf{s}$, and the entire $\Sigma$-path is equivalent to the $\Sigma$-step of type $\mathbf{s}$.

 (3) Given a $\Sg$-path of the form
\(\begin{tikzcd}
	{S_1} & {S_3} & {S_2}\,,
	\arrow["\mathbf{d}", squiggly, from=1-1, to=1-2]
	\arrow["\mathbf{d}_1", squiggly, from=1-2, to=1-3]
\end{tikzcd}\)
the $\Sg$-scheme $S_3$ has to be simultaneously of type $\bd$ and ${\bd}_1$. Thus, it has the form
\begin{equation}\label{eq:eqSa}
\adjustbox{scale=0.60}{\begin{tikzcd}
	&& {} & {} \\
	& {} & {} \\
	{} & {} && {} \\
	{} &&& {} \\
	{} &&& {}
	\arrow[from=1-3, to=1-4]
	\arrow[from=1-3, to=2-3]
	\arrow[from=1-4, to=3-4]
	\arrow[from=2-2, to=2-3]
	\arrow[from=2-2, to=3-2]
	\arrow[from=3-1, to=3-2]
	\arrow[from=3-1, to=4-1]
	\arrow[from=3-2, to=3-4]
	\arrow[from=3-4, to=4-4]
	\arrow[from=4-1, to=4-4]
	\arrow[equals, from=4-1, to=5-1]
	\arrow[from=4-4, to=5-4]
	\arrow[from=5-1, to=5-4]
\end{tikzcd}}.
\end{equation}

Arguing similarly to (2), we conclude that the second $\Sg$-step is also of type $\bd$, thus the entire $\Sg$-path is equivalent to just a $\Sg$-step of type $\bd$.

 (4) Consider, for instance, two $\Sigma$-steps between two $\Sigma$-schemes, where one is of type $\mathbf{d}$ and the other is of type $\mathbf{u}$. Then the two $\Sigma$-schemes must be simultaneously of type $\bd$ and $\bu$, say
 \[\adjustbox{scale=0.60}{\begin{tikzcd}
	&& \bullet & \bullet \\
	& \bullet & \bullet \\
	\bullet & \bullet & \bullet & \bullet \\
	\bullet && \bullet & \bullet
	\arrow[from=1-3, to=1-4]
	\arrow[from=1-3, to=2-3]
	\arrow[""{name=0, anchor=center, inner sep=0}, from=1-4, to=3-4]
	\arrow[from=2-2, to=2-3]
	\arrow[""{name=1, anchor=center, inner sep=0}, from=2-2, to=3-2]
	\arrow[""{name=2, anchor=center, inner sep=0}, from=2-3, to=3-3]
	\arrow[from=3-1, to=3-2]
	\arrow[""{name=3, anchor=center, inner sep=0}, from=3-1, to=4-1]
	\arrow[from=3-2, to=3-3]
	\arrow[from=3-3, to=3-4]
	\arrow[""{name=4, anchor=center, inner sep=0}, from=3-3, to=4-3]
	\arrow[""{name=5, anchor=center, inner sep=0}, from=3-4, to=4-4]
	\arrow[from=4-1, to=4-3]
	\arrow[from=4-3, to=4-4]
	\arrow["1"{description}, draw=none, from=1, to=2]
	\arrow["2"{description}, draw=none, from=2-3, to=0]
	\arrow["3"{description}, draw=none, from=3, to=4]
	\arrow["4"{description}, draw=none, from=4, to=5]
\end{tikzcd}}
\qquad  \text{ and }\qquad
 \adjustbox{scale=0.60}{\begin{tikzcd}
	&& \bullet & \bullet \\
	& \bullet & \bullet \\
	\bullet & \bullet & \bullet & \bullet \\
	\bullet && \bullet & \bullet
	\arrow[from=1-3, to=1-4]
	\arrow[from=1-3, to=2-3]
	\arrow[""{name=0, anchor=center, inner sep=0}, from=1-4, to=3-4]
	\arrow[from=2-2, to=2-3]
	\arrow[""{name=1, anchor=center, inner sep=0}, from=2-2, to=3-2]
	\arrow[""{name=2, anchor=center, inner sep=0}, from=2-3, to=3-3]
	\arrow[from=3-1, to=3-2]
	\arrow[""{name=3, anchor=center, inner sep=0}, from=3-1, to=4-1]
	\arrow[from=3-2, to=3-3]
	\arrow[from=3-3, to=3-4]
	\arrow[""{name=4, anchor=center, inner sep=0}, from=3-3, to=4-3]
	\arrow[""{name=5, anchor=center, inner sep=0}, from=3-4, to=4-4]
	\arrow[from=4-1, to=4-3]
	\arrow[from=4-3, to=4-4]
	\arrow["{1'}"{description}, draw=none, from=1, to=2]
	\arrow["{2'}"{description}, draw=none, from=2-3, to=0]
	\arrow["{3'}"{description}, draw=none, from=3, to=4]
	\arrow["{4'}"{description}, draw=none, from=4, to=5]
\end{tikzcd}}\, .\]

The $\Sigma$-step of type $\mathbf{d}$ determines that $1=1'$ and $2=2'$; the $\Sigma$-step of type $\mathbf{u}$ determines that $1=1'$ and $3=3'$. Applying Rule 4' to the $\Sigma$-squares $4$ and $4'$, and composing with the right side of 2 and with the bottom side of 3, we obtain simultaneously the $\Omega$ 2-cell corresponding to $\mathbf{d}$ and also to $\mathbf{u}$. Finally, note that the $\Omega$ 2-cell from a $\Sigma$-scheme to itself can always be chosen to be the identity.
For the other cases the argument is similar.
\end{proof}

As we have just seen, all $\Sigma$-paths of length 1 between two given $\Sigma$-schemes are equivalent. We show  in Proposition \ref{pro:du=ud} that this is also true for $\Sigma$-paths of length $2$. That is, every two $\Sigma$-paths of the form
$S_1\begin{tikzcd}
	{} & {}
	\arrow["i", squiggly, from=1-1, to=1-2]
\end{tikzcd}S_3\begin{tikzcd}
	{} & {}
	\arrow["j", squiggly, from=1-1, to=1-2]
\end{tikzcd}S_2$
\hskip2mm  and  \hskip2mm
  $S_1\begin{tikzcd}
	{} & {}
	\arrow["k", squiggly, from=1-1, to=1-2]
\end{tikzcd}S_4\begin{tikzcd}
	{} & {}
	\arrow["l", squiggly, from=1-1, to=1-2]
\end{tikzcd}S_2$,
where $i,j,k,l \in \{ \mathbf{d},\mathbf{u},\mathbf{s},\mathbf{d}_1,\mathbf{s}_1\}$, are equivalent. For $\Sigma$-paths of length greater than 2, see Remark \ref{rem:Sigma-paths-equiv} and Proposition \ref{pro:of-interest}.

\begin{remark}\label{rem:cycle} It is clear that if we have a $\Sg$-path of the form $\xymatrix{S_1\ar@{~>}[r]^{i_1}&S_2\ar@{~>}[r]^{i_2}&\dots S_{n-1}\ar@{~>}[r]^{i_{n-1}}&S_n}$ we also have a $\Sg$-path of the form $\xymatrix{S_n\ar@{~>}[r]^{i_{n-1}}&S_{n-1}\dots\ar@{~>}[r]^{i_2}&S_{2}\ar@{~>}[r]^{i_1}&S_1}$, and the two $\Sg$-paths correspond to $\Omega$ 2-cells which are inverse to each other. In particular, given a cycle of $\Sigma$-steps

\[\begin{tikzcd}
	& {S_1} & {S_2} & {S_3} \\
	{S_n} & {S_{n-1}} & {S_{k+1}} & {S_{k}} & {S_{k-1}}
	\arrow["{{{i_1}}}", squiggly, from=1-2, to=1-3]
	\arrow["{{i_2}}", squiggly, from=1-3, to=1-4]
	\arrow[dotted, no head, from=1-4, to=2-5]
	\arrow["{{{i_n}}}", squiggly, from=2-1, to=1-2]
	\arrow["{i_{n-1}}", squiggly, from=2-2, to=2-1]
	\arrow[dotted, no head, from=2-3, to=2-2]
	\arrow["{i_k}", squiggly, from=2-4, to=2-3]
	\arrow["{i_{k-1}}", squiggly, from=2-5, to=2-4]
\end{tikzcd}\]

the corresponding $\Sigma$-path  from $S_1$ to $S_1$ is equivalent to the identity $\Sigma$-step if, and only if, for some $k$ with $1\leq k\leq n$, the $\Sigma$-paths
\(\begin{tikzcd}
	{S_1} & {S_2} & {S_{k-1}} & {S_k}
	\arrow["{i_1}", squiggly, from=1-1, to=1-2]
	\arrow[dotted, no head, from=1-2, to=1-3]
	\arrow["{i_{k-1}}", squiggly, from=1-3, to=1-4]
\end{tikzcd}\)
and
\(\begin{tikzcd}
	{S_1} & {S_n} & {S_{k+1}} & {S_k}
	\arrow["{{i_n}}", squiggly, from=1-1, to=1-2]
	\arrow[dotted, no head, from=1-2, to=1-3]
	\arrow["{{i_{k}}}", squiggly, from=1-3, to=1-4]
\end{tikzcd}\)
 are equivalent. Moreover, the $\Sigma$-path
 $\;\begin{tikzcd}
	{S_1} & {S_2} & {S_3} & \dots & {S_n} & {S_1}
	\arrow["{i_1}", squiggly, from=1-1, to=1-2]
	\arrow["{i_2}", squiggly, from=1-2, to=1-3]
	\arrow["{i_3}", squiggly, from=1-3, to=1-4]
	\arrow[squiggly, from=1-4, to=1-5]
	\arrow["{i_n}", squiggly, from=1-5, to=1-6]
\end{tikzcd}\;$ is equivalent to the identity $\Sigma$-path if and only if its any cyclic shift of it is so, and if and only if its reverse is so.
\end{remark}

\begin{proposition}\label{pro:du=ud}
Every two $\Sigma$-paths of length 2 (as in \cref{assumption-path}) between two $\Sigma$-schemes of level 3 with the same left border are equivalent. Equivalently, any cycle as in Remark \ref{rem:cycle} of length 4 corresponds to an identity 2-cell in $\catx[\Sigma_\ast]$.
\end{proposition}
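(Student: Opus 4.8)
The plan is to reduce the problem to a finite case analysis over the five $\Sigma$-step types $\mathbf{d}, \mathbf{u}, \mathbf{s}, \mathbf{d}_1, \mathbf{s}_1$, exactly as set up in Notation~\ref{nota:Sigma-steps}, and to use the already-established Lemma~\ref{lem:Sigma-steps} together with Remark~\ref{rem:cycle} to do most of the bookkeeping. First I would observe that by Remark~\ref{rem:cycle} it suffices to show that each of the cycles of length $4$ arising from two length-$2$ $\Sigma$-paths with common endpoints corresponds to the identity $\Omega$ $2$-cell; and by Lemma~\ref{lem:Sigma-steps}(4), once we know any single $\Sigma$-step between two given $\Sigma$-schemes is the identity when the source equals the target, and that any two $\Sigma$-steps between the same pair of schemes are equivalent, the whole question becomes combinatorial: we only need to track \emph{which} replaceable sub-$\Sigma$-square is altered at each step and whether two sequences of such alterations commute. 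The key reduction is that a $\Sigma$-step only changes the $\Sigma$-square (or, in the compound types $\mathbf{d}_1,\mathbf{s}_1$, the block of two $\Sigma$-squares) sitting at the lower-right corner of the scheme, so two $\Sigma$-steps ``in different regions'' automatically commute up to an equivalence coming from the interchange law for pasting.

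Next I would organise the case analysis. Up to the symmetries already exploited in the proof of Lemma~\ref{lem:Sigma-steps}, the relevant configurations are: (i) both paths start with the same type, in which case Lemma~\ref{lem:Sigma-steps}(1)--(3) collapses the composite of the two like-typed steps and we are reduced to a shorter path or to the identity cycle; (ii) the two paths are $S_1 \rightsquigarrow S_3 \rightsquigarrow S_2$ via $(i,j)$ and $S_1 \rightsquigarrow S_4 \rightsquigarrow S_2$ via $(j',i')$ where the ``first'' step of one matches the ``second'' of the other — here the point is that the two intermediate schemes $S_3, S_4$ differ only in the order in which two independent replaceable $\Sigma$-squares were updated, and one applies Rule~4' (Proposition~\ref{pro:useful_rules}) to the two pairs of $\Sigma$-squares being changed, exactly as in the proof of Lemma~\ref{lem:Sigma-steps}(4), to produce a single common $\Sigma$-extension witnessing the equality of the two resulting $\Omega$ $2$-cells. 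The genuinely new cases are those where the two altered regions \emph{overlap}: this can only happen when one of the steps is of compound type $\mathbf{d}_1$ or $\mathbf{s}_1$ (which touch a $2\times 1$ block) and the other touches one of the two squares inside that block. For these I would again invoke the factorisation of the $\mathbf{d}_1$- and $\mathbf{s}_1$-steps through an application of Rule~4' to the two constituent squares followed by composition with a third square — precisely the decomposition used in the proof of Lemma~\ref{lem:Sigma-steps}(2),(3) — which lets one rewrite the overlapping length-$2$ path as a length-$2$ path of non-overlapping (and hence commuting) steps, plus possibly an application of Lemma~\ref{lem:Sigma-steps}(1).

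Concretely, the steps in order are: (1) state the reduction to length-$4$ cycles via Remark~\ref{rem:cycle}; (2) enumerate, up to left/right and transpose symmetry of the $\Sigma$-scheme, the finitely many shapes of a level-$3$ scheme that admit two distinct length-$2$ $\Sigma$-paths to a common scheme with the same left border — there are only a handful because the left border is fixed and the right-hand column/bottom-row data is what varies; (3) in each shape, identify the two (possibly overlapping) regions being modified; (4) when the regions are disjoint, conclude by Rule~4' and the interchange law (citing the computation in Lemma~\ref{lem:Sigma-steps}(4)); (5) when the regions overlap, use the Rule~4'-plus-composition decomposition of the compound steps to reduce to a disjoint-region situation together with Lemma~\ref{lem:Sigma-steps}(1). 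Finally (6) package the basic $\Omega$ $2$-cells into their vertical composites using Lemma~\ref{lem:basic-Omega}(2), which guarantees that rearranging the order of the basic $\Omega$'s does not change the composite $\Omega$ $2$-cell.

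The main obstacle I expect is not any single diagram but rather the sheer combinatorics of making the enumeration in step (2) genuinely exhaustive while keeping it readable: one must be careful that ``same left border'' is a real constraint that cuts the list down, and that the symmetries of a level-$3$ $\Sigma$-scheme used to reduce cases are legitimate (they are, because transposing a scheme and reflecting it left-right both send $\Sigma$-steps to $\Sigma$-steps and $\Omega$ $2$-cells to their counterparts). The overlapping-region cases are where the bookkeeping is most delicate, since there one cannot simply quote the interchange law and must instead re-derive the relevant $\Sigma$-square identities from Rule~4' and Equification — but this is exactly the kind of argument already carried out in the proof of Lemma~\ref{lem:Sigma-steps}, so it is a matter of adapting that technique rather than inventing a new one. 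I would therefore present the proof as: ``after the reduction, the cases split according to whether the modified regions are disjoint or overlapping; the disjoint cases are handled as in Lemma~\ref{lem:Sigma-steps}(4), and the overlapping cases are handled by the decompositions in Lemma~\ref{lem:Sigma-steps}(2),(3) together with part~(1)'', and then exhibit one representative overlapping case in full as the model computation.
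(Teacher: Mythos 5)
There is a genuine gap, and it sits exactly where the real work of the paper lies. Your reduction rests on the claim that two $\Sigma$-steps of different types modify ``different regions'' of the scheme and therefore commute up to an interchange-law equivalence, the only overlapping cases being those involving the compound types $\mathbf{d}_1,\mathbf{s}_1$. This is not so: by definition every replaceable $\Sigma$-square has its lower-right vertex at the lower-right vertex of the scheme, so the squares replaced by a $\mathbf{d}$-step (the bottom-row composite) and by a $\mathbf{u}$-step (the right-column composite) always share the bottom-right block, and after the first step the second step's replaceable square is built from the data the first step just produced. Consequently the two composite $\Omega$ $2$-cells for $S_1\rightsquigarrow S_3\rightsquigarrow S_2$ and $S_1\rightsquigarrow S_4\rightsquigarrow S_2$ are vertical composites of $2$-morphisms whose defining $\Sigma$-squares interlock, and their equality is not a formal consequence of interchange. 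This is precisely why the paper's proof (Proposition \ref{pro:adend}, used via Corollary \ref{cor:length2}) is long: for each mixed pair such as $(\mathbf{d},\mathbf{u})$ it writes down explicit representatives of both composites, normalises them using the auxiliary Lemma \ref{lem:use} (itself built from Rule 4', Rule 6 and Square of Proposition \ref{pro:useful_rules}), and then produces a common $\Sigma$-extension by the Equification-based criterion of Remark \ref{rem:use}. None of that machinery appears in your outline, and without it the central cases ($\mathbf{d}$ vs.\ $\mathbf{u}$, $\mathbf{d}$ vs.\ $\mathbf{s}_1$, $\mathbf{u}$ vs.\ $\mathbf{s}$, etc.) are simply asserted rather than proved.

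Two further miscitations compound the problem. Lemma \ref{lem:Sigma-steps}(4) concerns two \emph{parallel} $\Sigma$-steps between the same pair of schemes; it does not say anything about interchanging two \emph{consecutive} steps, so it cannot carry your ``disjoint regions'' case (and in any event that case is essentially empty, as explained above). Likewise Lemma \ref{lem:Sigma-steps}(2),(3) only collapse the specific composites $\mathbf{s}_1$-with-$\mathbf{s}$ and $\mathbf{d}_1$-with-$\mathbf{d}$; they do not provide a decomposition that reduces a general overlapping pair to a non-overlapping one. What your proposal gets right is the outer skeleton — the case split over $\{\mathbf{d},\mathbf{u},\mathbf{s},\mathbf{d}_1,\mathbf{s}_1\}$, the use of Lemma \ref{lem:Sigma-steps} for the same-type and $\{\mathbf{d},\mathbf{d}_1\}$, $\{\mathbf{s},\mathbf{s}_1\}$ pairs, and the packaging of basic $\Omega$ $2$-cells via Lemma \ref{lem:basic-Omega} — but the heart of the statement is the construction, for each remaining pair, of a common $\Sigma$-extension of the two composite $2$-morphisms via Rules 4', 6 and Equification, and that construction is missing.
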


\begin{proof} See Proposition \ref{pro:adend} and Corollary \ref{cor:length2} in Appendix~\ref{sec:appendix}.
\end{proof}

\begin{corollary}\label{cor:level2} For $\Sigma$-schemes $S_1$, $S_2$, $S_3$ and $S_4$ of level 2,
we have that for $i,j,k,l \in \{ d,u,{d}_1\}$, any  two $\Sigma$-paths of the form
$S_1\begin{tikzcd}
	{} & {}
	\arrow["i", squiggly, from=1-1, to=1-2]
\end{tikzcd}S_3\begin{tikzcd}
	{} & {}
	\arrow["j", squiggly, from=1-1, to=1-2]
\end{tikzcd}S_2$   and $S_1\begin{tikzcd}
	{} & {}
	\arrow["k", squiggly, from=1-1, to=1-2]
\end{tikzcd}S_4\begin{tikzcd}
	{} & {}
	\arrow["l", squiggly, from=1-1, to=1-2]
\end{tikzcd}S_2$ are equivalent.
\end{corollary}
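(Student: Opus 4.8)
The plan is to reduce this to Proposition~\ref{pro:du=ud} by means of the observation, made at the beginning of item~4 of Subsection~\ref{sec:sigma_schemes}, that a $\Sigma$-scheme of level $2$ may be regarded as a $\Sigma$-scheme of level $3$ upon adjoining a bottom row built from vertical identity $\Sigma$-squares. Write $\widehat{S}$ for the level-$3$ scheme obtained from a level-$2$ scheme $S$ in this way. This assignment leaves the right border of $S$ unchanged, and it carries schemes with a common left border to schemes with a common left border, since the adjoined row only appends a pair of identity arrows to the left border. The first thing I would record is how $\widehat{(-)}$ behaves on $\Sigma$-steps. A replaceable $\Sigma$-square of $\widehat{S}$ — one whose lower right corner lies on the adjoined row — is obtained by pasting, below a replaceable $\Sigma$-square $R$ of $S$, a horizontal composite of identity squares; since these identity squares are units for vertical composition of $\Sigma$-squares (this is precisely the double-category structure on $\Sigma$ noted after Definition~\ref{def:TheCalculus}), that pasting returns $R$. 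Hence each $\Sigma$-step $S_1\rightsquigarrow S_2$ between level-$2$ schemes gives a $\Sigma$-step $\widehat{S_1}\rightsquigarrow\widehat{S_2}$ between level-$3$ schemes which replaces the very same $\Sigma$-square by the very same $\Sigma$-square, and therefore determines the same basic $\Omega$ $2$-cell. In particular every $\Sigma$-step between level-$2$ schemes becomes, under $\widehat{(-)}$, a $\Sigma$-step between level-$3$ schemes, necessarily of one of the five types of Notation~\ref{nota:Sigma-steps} (concretely, a step of type $d$, $u$ or $d_1$ of Notation~\ref{nota:types-2} becomes one of type $\mathbf{d}$, $\mathbf{u}$ or $\mathbf{d}_1$ respectively).

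Consequently, a $\Sigma$-path of length $2$ between level-$2$ schemes, say one through $S_3$ with steps of types $i,j\in\{d,u,d_1\}$, is carried by $\widehat{(-)}$ to a $\Sigma$-path of length $2$ between $\widehat{S_1}$ and $\widehat{S_2}$ through $\widehat{S_3}$, and the associated $\Omega$ $2$-cell is unchanged. Now let $S_1\rightsquigarrow S_3\rightsquigarrow S_2$ (steps of types $i,j$) and $S_1\rightsquigarrow S_4\rightsquigarrow S_2$ (steps of types $k,l$) be two such $\Sigma$-paths as in the statement. Since $\Sigma$-steps are by definition between $\Sigma$-schemes with the same left border, all of $S_1,S_2,S_3,S_4$ share a common left border, so the images $\widehat{S_1}\rightsquigarrow\widehat{S_3}\rightsquigarrow\widehat{S_2}$ and $\widehat{S_1}\rightsquigarrow\widehat{S_4}\rightsquigarrow\widehat{S_2}$ are two $\Sigma$-paths of length $2$ between the level-$3$ schemes $\widehat{S_1}$ and $\widehat{S_2}$, which again have a common left border. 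By Proposition~\ref{pro:du=ud} these two level-$3$ paths are equivalent, that is, they give rise to one and the same $\Omega$ $2$-cell. Since $\widehat{(-)}$ preserves the $\Omega$ $2$-cells attached to steps, hence to paths, the two original level-$2$ paths give rise to the same $\Omega$ $2$-cell, and are therefore equivalent.

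The only delicate point is the claim that $\widehat{(-)}$ sends $\Sigma$-steps to $\Sigma$-steps without altering the associated basic $\Omega$ $2$-cell. This rests entirely on the unit law for vertical composition of $\Sigma$-squares: the identity squares forming the adjoined row are units, so a replaceable $\Sigma$-square of $\widehat{S}$ agrees, as a $\Sigma$-square, with a replaceable $\Sigma$-square of $S$ once the adjoined identity part is stripped off. Consequently Rule~4' of Proposition~\ref{pro:useful_rules} is applied to exactly the same input data when one builds the basic $\Omega$ $2$-cell of a step and that of its image under $\widehat{(-)}$, so the two $\Omega$ $2$-cells coincide. Everything else is a direct transcription of the level-$3$ statement to the level-$2$ setting.
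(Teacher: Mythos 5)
Your proposal is correct and is exactly the argument the paper intends: the corollary is left without an explicit proof, its justification being precisely the remark (made at the start of the ``Properties of $\Sigma$-paths'' discussion) that a level-2 $\Sigma$-scheme becomes a level-3 one by adjoining a bottom row of identity squares, whence Proposition~\ref{pro:du=ud} applies. Your additional care in checking that the adjoined identity squares are units for vertical composition, so that replaceable squares and the associated basic $\Omega$ 2-cells are unchanged under the embedding, is a faithful elaboration of that same route rather than a different one.
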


\begin{remark}\label{rem:Sigma-paths-equiv} We believe that the property stated in \cref{pro:du=ud} is true for $\Sigma$-paths of any finite length. Equivalently, any cycle as in Remark \ref{rem:cycle} of finite length  corresponds to an identity 2-cell in $\catx[\Sigma_\ast]$. Although we do not have a complete proof involving all possible $\Sg$-paths, we have a proof of the property for $\Sg$-paths that we call {\em of interest}, which we will define next. This is stated in Proposition \ref{pro:of-interest}. This result combined with Proposition \ref{pro:du=ud} applies to all $\Sg$-paths with a role in the paper.
\end{remark}

We now define \textbf{\em $\Sg$-paths of interest}.
A $\Sg$-scheme $S$ in which a $\Sg$-step of type $\mathbf{d}$ may start (or end) is necessarily of type $\bd$ (see Notation \ref{nota:Sigma-steps}), that is, of the form
\[\adjustbox{scale=0.50}{\begin{tikzcd}
	&& {} & {} \\
	& {} & {} \\
	{} & {} && {} \\
	{} &&& {}
	\arrow[from=1-3, to=1-4]
	\arrow[from=1-3, to=2-3]
	\arrow[from=1-4, to=3-4]
	\arrow[from=2-2, to=2-3]
	\arrow[from=2-2, to=3-2]
	\arrow[from=3-1, to=3-2]
	\arrow[from=3-1, to=4-1]
	\arrow[from=3-2, to=3-4]
	\arrow[from=3-4, to=4-4]
	\arrow[from=4-1, to=4-4]
\end{tikzcd}}\, .\]

Analogously for the types $\mathbf{u}$, $\mathbf{s}$, $\mathbf{d}_1$ and $\mathbf{s}_1$.
Of course, a $\Sg$-scheme may be of various types simultaneously. And a $\Sigma$-scheme of a certain type may have different configurations. In the next table we consider some special configurations which are going to be of interest and give each one a name. For $\mathbf{s}$ and $\mathbf{s}_1$ we consider just one configuration each.

\begin{definition}\label{def:interest} A $\Sg$-scheme of level 3 has a \textbf{\em configuration of interest} if it has one of the seven configurations indicated in the right-side column of the table below. If a $\Sg$-scheme has a configuration of interest we say that it is a \textbf{\em $\Sg$-scheme of interest}. A \textbf{$\Sg$-path of interest} is a $\Sg$-path consisting only of $\Sg$-schemes of interest.
\end{definition}

\pagebreak[3]

\begin{longtable}{|c|c|c|}\hline
Type&General&Configurations of interest\\ \hline \hline
$\mathbf{d}$
&
\(\adjustbox{scale=0.60}{\begin{tikzcd}
	&& {} & {} \\
	& {} & {} \\
	{} & {} && {} \\
	{} &&& {}
	\arrow[from=1-3, to=1-4]
	\arrow[from=1-3, to=2-3]
	\arrow[from=1-4, to=3-4]
	\arrow[from=2-2, to=2-3]
	\arrow[from=2-2, to=3-2]
	\arrow[from=3-1, to=3-2]
	\arrow[from=3-1, to=4-1]
	\arrow[from=3-2, to=3-4]
	\arrow[from=3-4, to=4-4]
	\arrow[from=4-1, to=4-4]
\end{tikzcd}}\)
&
\(\adjustbox{scale=0.60}{\begin{tikzcd}
	&& {} & {} \\
	& {} & {} \\
	{} & {} & {} & {} \\
	{} &&& {}
	\arrow[from=1-3, to=1-4]
	\arrow[from=1-3, to=2-3]
	\arrow[from=1-4, to=3-4]
	\arrow[from=2-2, to=2-3]
	\arrow[from=2-2, to=3-2]
	\arrow[from=2-3, to=3-3]
	\arrow[from=3-1, to=3-2]
	\arrow[from=3-1, to=4-1]
	\arrow[from=3-2, to=3-3]
	\arrow[from=3-3, to=3-4]
	\arrow[from=3-4, to=4-4]
	\arrow[from=4-1, to=4-4]
	\arrow["\textbf{\large da}"', shift right=5, draw=none, from=4-1, to=4-4]
\end{tikzcd}}\)
\hspace{4mm}
\(\adjustbox{scale=0.60}{\begin{tikzcd}
	&& {} & {} \\
	& {} & {} & {} \\
	{} & {} && {} \\
	{} &&& {}
	\arrow[from=1-3, to=1-4]
	\arrow[from=1-3, to=2-3]
	\arrow[from=1-4, to=2-4]
	\arrow[from=2-2, to=2-3]
	\arrow[from=2-2, to=3-2]
	\arrow[from=2-3, to=2-4]
	\arrow[from=2-4, to=3-4]
	\arrow[from=3-1, to=3-2]
	\arrow[from=3-1, to=4-1]
	\arrow[from=3-2, to=3-4]
	\arrow[from=3-4, to=4-4]
	\arrow[from=4-1, to=4-4]
	\arrow["\textbf{\large db}"', shift right=5, draw=none, from=4-1, to=4-4]
\end{tikzcd}}
\)
\hspace{4mm}
\(\adjustbox{scale=0.60}{\begin{tikzcd}
	&& {} & {} \\
	& {} & {} \\
	& {} & {} & {} \\
	{} & {} && {} \\
	{} &&& {}
	\arrow[from=1-3, to=1-4]
	\arrow[from=1-3, to=2-3]
	\arrow[from=1-4, to=3-4]
	\arrow[from=2-2, to=2-3]
	\arrow[equals, from=2-2, to=3-2]
	\arrow[from=2-3, to=3-3]
	\arrow[from=3-2, to=3-3]
	\arrow[from=3-2, to=4-2]
	\arrow[from=3-3, to=3-4]
	\arrow[from=3-4, to=4-4]
	\arrow[from=4-1, to=4-2]
	\arrow[from=4-1, to=5-1]
	\arrow[from=4-2, to=4-4]
	\arrow[from=4-4, to=5-4]
	\arrow[from=5-1, to=5-4]
	\arrow["{{\textbf{\large dc}}}"{description}, shift right=5, draw=none, from=5-1, to=5-4]
\end{tikzcd}}
\)
\\ \hline
$\mathbf{u}$
&
\(\adjustbox{scale=0.60}{\begin{tikzcd}
	&& {} & {} \\
	& {} & {} \\
	{} & {} \\
	{} && {} & {}
	\arrow[from=1-3, to=1-4]
	\arrow[from=1-3, to=2-3]
	\arrow[from=1-4, to=4-4]
	\arrow[from=2-2, to=2-3]
	\arrow[from=2-2, to=3-2]
	\arrow[from=2-3, to=4-3]
	\arrow[from=3-1, to=3-2]
	\arrow[from=3-1, to=4-1]
	\arrow[from=4-1, to=4-3]
	\arrow[from=4-3, to=4-4]
\end{tikzcd}}\)
&
\(\adjustbox{scale=0.60}{\begin{tikzcd}
	&& {} & {} \\
	& {} & {} \\
	{} & {} \\
	{} & {} & {} & {}
	\arrow[from=1-3, to=1-4]
	\arrow[from=1-3, to=2-3]
	\arrow[from=1-4, to=4-4]
	\arrow[from=2-2, to=2-3]
	\arrow[from=2-2, to=3-2]
	\arrow[from=2-3, to=4-3]
	\arrow[from=3-1, to=3-2]
	\arrow[from=3-1, to=4-1]
	\arrow[from=3-2, to=4-2]
	\arrow[from=4-1, to=4-2]
	\arrow["{\textbf{\large ua}}"', shift right=5, draw=none, from=4-1, to=4-4]
	\arrow[from=4-2, to=4-3]
	\arrow[from=4-3, to=4-4]
\end{tikzcd}}\)
\hspace{8mm}
\(\adjustbox{scale=0.60}{\begin{tikzcd}
	&& {} & {} \\
	& {} & {} \\
	{} & {} & {} \\
	{} && {} & {}
	\arrow[from=1-3, to=1-4]
	\arrow[from=1-3, to=2-3]
	\arrow[from=1-4, to=4-4]
	\arrow[from=2-2, to=2-3]
	\arrow[from=2-2, to=3-2]
	\arrow[from=2-3, to=3-3]
	\arrow[from=3-1, to=3-2]
	\arrow[from=3-1, to=4-1]
	\arrow[from=3-2, to=3-3]
	\arrow[from=3-3, to=4-3]
	\arrow[from=4-1, to=4-3]
	\arrow["{{\textbf{\large ub}}}"', shift right=5, draw=none, from=4-1, to=4-4]
	\arrow[from=4-3, to=4-4]
\end{tikzcd}}
\)
\\ \hline
$\mathbf{s}$
&
\(\adjustbox{scale=0.60}{\begin{tikzcd}
	&& {} & {} \\
	& {} & {} & {} \\
	{} & {} \\
	{} & {} && {}
	\arrow[from=1-3, to=1-4]
	\arrow[from=1-3, to=2-3]
	\arrow[from=1-4, to=2-4]
	\arrow[from=2-2, to=2-3]
	\arrow[from=2-2, to=3-2]
	\arrow[from=2-3, to=2-4]
	\arrow[from=2-4, to=4-4]
	\arrow[from=3-1, to=3-2]
	\arrow[from=3-1, to=4-1]
	\arrow[from=3-2, to=4-2]
	\arrow[from=4-1, to=4-2]
	\arrow[from=4-2, to=4-4]
\end{tikzcd}}\)
&
\(\adjustbox{scale=0.60}{\begin{tikzcd}
	&& {} & {} \\
	& {} & {} & {} \\
	{} & {} \\
	{} & {} && {}
	\arrow[from=1-3, to=1-4]
	\arrow[from=1-3, to=2-3]
	\arrow[from=1-4, to=2-4]
	\arrow[from=2-2, to=2-3]
	\arrow[from=2-2, to=3-2]
	\arrow[from=2-3, to=2-4]
	\arrow[from=2-4, to=4-4]
	\arrow[from=3-1, to=3-2]
	\arrow[from=3-1, to=4-1]
	\arrow[from=3-2, to=4-2]
	\arrow[from=4-1, to=4-2]
	\arrow["{\text{\large $\mathbf{s}$}}"', shift right=4, draw=none, from=4-1, to=4-4]
	\arrow[from=4-2, to=4-4]
\end{tikzcd}}\)
\\ \hline
$\mathbf{s}_1$
&
\(\adjustbox{scale=0.60}{\begin{tikzcd}
	&& {} & {} \\
	& {} & {} \\
	& {} && {} \\
	{} & {} \\
	{} & {} && {}
	\arrow[from=1-3, to=1-4]
	\arrow[from=1-3, to=2-3]
	\arrow[from=1-4, to=3-4]
	\arrow[from=2-2, to=2-3]
	\arrow[equals, from=2-2, to=3-2]
	\arrow[from=3-2, to=3-4]
	\arrow[from=3-2, to=4-2]
	\arrow[from=3-4, to=5-4]
	\arrow[from=4-1, to=4-2]
	\arrow[from=4-1, to=5-1]
	\arrow[from=4-2, to=5-2]
	\arrow[from=5-1, to=5-2]
	\arrow[from=5-2, to=5-4]
\end{tikzcd}}\)
&
\(\adjustbox{scale=0.60}{\begin{tikzcd}
	&& {} & {} \\
	& {} & {} \\
	& {} & {} & {} \\
	{} & {} \\
	{} & {} && {}
	\arrow[from=1-3, to=1-4]
	\arrow[from=1-3, to=2-3]
	\arrow[from=1-4, to=3-4]
	\arrow[from=2-2, to=2-3]
	\arrow[equals, from=2-2, to=3-2]
	\arrow[from=2-3, to=3-3]
	\arrow[from=3-2, to=3-3]
	\arrow[from=3-2, to=4-2]
	\arrow[from=3-3, to=3-4]
	\arrow[from=3-4, to=5-4]
	\arrow[from=4-1, to=4-2]
	\arrow[from=4-1, to=5-1]
	\arrow[from=4-2, to=5-2]
	\arrow[from=5-1, to=5-2]
	\arrow["{\text{\large $\mathbf{s}_1$}}"', shift right=5, draw=none, from=5-1, to=5-4] %
	\arrow[from=5-2, to=5-4]
\end{tikzcd}}\)
\\ \hline
\end{longtable}

Observe that the configuration of type $\mathbf{s}_1$ given by
\[\adjustbox{scale=0.50}{\begin{tikzcd}
	&& {} & {} \\
	& {} & {} & {} \\
	& {} && {} \\
	{} & {} \\
	{} & {} && {}
	\arrow[from=1-3, to=1-4]
	\arrow[from=1-3, to=2-3]
	\arrow[from=1-4, to=2-4]
	\arrow[from=2-2, to=2-3]
	\arrow[equals, from=2-2, to=3-2]
	\arrow[from=2-3, to=2-4]
	\arrow[from=2-4, to=3-4]
	\arrow[from=3-2, to=3-4]
	\arrow[from=3-2, to=4-2]
	\arrow[from=3-4, to=5-4]
	\arrow[from=4-1, to=4-2]
	\arrow[from=4-1, to=5-1]
	\arrow[from=4-2, to=5-2]
	\arrow[from=5-1, to=5-2]
	\arrow[from=5-2, to=5-4]
\end{tikzcd}}\]
is also of type $\mathbf{s}$, thus we do not consider it in the row of ${\bs}_1$.

\begin{remark}
Of course, in a $\Sg$-path of interest, every $\Sigma$-step is incident to two $\Sg$-schemes with a same configuration, and the name of that configuration starts with the letter representing the type of the $\Sg$-step.
\end{remark}

\begin{proposition} \label{pro:of-interest} Every two $\Sg$-paths of interest starting and ending at the same $\Sg$-schemes are equivalent.
\end{proposition}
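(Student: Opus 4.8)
The plan is to reduce the statement to a local confluence check, in the spirit of Newman's lemma but carried out in the setting of $\Sigma$-paths of interest. First I would observe that, by \Cref{rem:cycle}, it suffices to show that every cycle of $\Sigma$-steps between $\Sigma$-schemes of interest corresponds to the identity $2$-cell, and that by the factorisation described there this in turn follows once we know that any two $\Sigma$-paths of interest \emph{of length $2$} with common endpoints are equivalent --- provided we can always complete a "divergence" into a "convergence" staying inside $\Sigma$-schemes of interest. So the proof has two ingredients: (a) a confluence statement for length-$2$ divergences of interest, and (b) an induction on total length of the two paths that uses (a) together with \Cref{pro:du=ud} and \Cref{lem:Sigma-steps} to fill diagrams and shrink.

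For ingredient (a): given $S_1 \rightsquigarrow S_3$ and $S_1 \rightsquigarrow S_4$ two $\Sigma$-steps out of a $\Sigma$-scheme of interest, I would do a finite case analysis on the seven configurations of interest (the rows \textbf{da}, \textbf{db}, \textbf{dc}, \textbf{ua}, \textbf{ub}, $\mathbf{s}$, $\mathbf{s}_1$ of the table) and on which replaceable $\Sigma$-square each step modifies. When the two steps modify disjoint replaceable squares, they commute on the nose and the resulting square of $\Sigma$-steps is filled by \Cref{pro:du=ud} (which is exactly the length-$2$ statement for level-$3$ schemes). When they modify overlapping or nested regions --- these are precisely the situations encoded by the pairs of configurations that share the same letter, e.g.\ $\mathbf s$ followed by $\mathbf s_1$, or $\mathbf d$ followed by $\mathbf d_1$ --- I would invoke \Cref{lem:Sigma-steps}(2),(3),(4) to collapse the composite to a single step of the "simpler" type, and then close the diamond using \Cref{lem:Sigma-steps}(1) ("two steps of the same type collapse to one"). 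The key computational fact in each overlapping case is that applying Rule~4$'$ to a pasted pair of $\Sigma$-squares and then composing with a third is the same as composing first and then applying Rule~4$'$; this is the content of the displayed argument already used in the proof of \Cref{lem:Sigma-steps}(2), and it is the reason the $\Omega$ $2$-cells built from the two routes agree.

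For ingredient (b): with local confluence in hand, I would run the standard "tiling" argument. Given two $\Sigma$-paths of interest $P, Q\colon S \rightsquigarrow^* S'$, use induction on $|P|+|Q|$. If either is empty the statement is \Cref{lem:Sigma-steps}(4) (a step from a scheme to itself is the identity) together with the observation that a scheme admits only finitely many steps and the target is reached. Otherwise write $P = P_0 \cdot p$ and $Q = Q_0 \cdot q$ with first steps $p\colon S \rightsquigarrow S_1$ and $q\colon S \rightsquigarrow S_2$. If $p$ and $q$ are "the same step" (equivalent by \Cref{lem:Sigma-steps}(4)) then $S_1 \simeq S_2$ and we conclude by the inductive hypothesis applied to $P_0$ and $Q_0$. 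If not, apply (a) to get $S_1 \rightsquigarrow^* T \leftsquigarrow^* S_2$ through schemes of interest, together with the equality of the two $\Omega$ $2$-cells around that diamond; then splice a path of interest $R\colon T \rightsquigarrow^* S'$ and compare $P$ with $p\cdot(\text{diamond filler})\cdot R$ and that with $Q$, each comparison now being between paths whose shared prefix is strictly longer, so each falls under the inductive hypothesis. Pasting the resulting equalities of $\Omega$ $2$-cells --- which is legitimate because vertical composition of $2$-cells in $\catx[\Sigma_*]$ is associative and unital by \Cref{pro:vertical-2} --- yields $[P] = [Q]$.

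The main obstacle I anticipate is ingredient (a), specifically ensuring that the confluent completion of a divergence of interest can be chosen to pass only through $\Sigma$-schemes \emph{of interest}: the seven configurations in the table were selected precisely so that all the $\Sigma$-paths "with a role along the paper" stay inside this class, and one must check that none of the diamond-closing moves forces us outside it. This is why the case analysis is unavoidable and why the restricted statement is all that is claimed (cf.\ \Cref{rem:Sigma-paths-equiv}). A secondary, purely bookkeeping, difficulty is keeping track of the left/right composition by the unchanged parts of the right border when identifying the $\Omega$ $2$-cells of the two routes; this is handled by the functoriality of whiskering and by \Cref{lem:basic-Omega}(2), which says basic $\Omega$ $2$-cells compose the way their indexing $\Sigma$-squares do.
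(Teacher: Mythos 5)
Your ingredient (a) is plausible (it is essentially \Cref{pro:adend}/\Cref{pro:du=ud} plus \Cref{lem:Sigma-steps}), but ingredient (b) has a genuine gap: the passage from local confluence to the global statement. The system of $\Sigma$-steps is not terminating --- every basic $\Omega$ 2-cell is invertible, its inverse is again a step of the same type, and a replaceable $\Sigma$-square can be replaced by infinitely many others (Square gives existence, not uniqueness), so the claim in your base case that a scheme admits only finitely many steps is false, and a Newman-type argument is unavailable. Concretely, your induction on $|P|+|Q|$ does not go through: after closing a divergence to $S_1 \rightsquigarrow^{*} T$ and $S_2 \rightsquigarrow^{*} T$ and splicing a path $R\colon T \rightsquigarrow^{*} S'$ (whose existence you only get by reversing steps), the two new comparisons are between paths that may be strictly longer than $P$ and $Q$, and ``the shared prefix is strictly longer'' is not a well-founded measure. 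The base case is also not covered: if one path is empty the other is an arbitrary cycle, which \Cref{lem:Sigma-steps}(4) (single-step loops) does not handle. Indeed \Cref{rem:Sigma-paths-equiv} records that the authors themselves could not derive the statement for arbitrary paths from the length-$2$ case \Cref{pro:du=ud}, which is exactly the reduction your first paragraph asserts.

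The paper's proof (\Cref{pro:of-interest5} in the appendix) avoids this by a normal-form argument rather than confluence-plus-induction: for each of the seven configurations of interest it fixes a canonical $\Sigma$-path into the canonical scheme $\Can$ with the same left border; Part A checks, by a long case analysis using \Cref{pro:adend} and \Cref{lem:Sigma-steps}, that when one scheme carries two different configurations of interest the two attached canonical paths are equivalent; and the factorisation \eqref{eq:factor} shows that any $\Sigma$-step of interest of type $k$ is absorbed into the canonical path of a configuration beginning with the letter $k$. Telescoping then identifies every path of interest with (canonical path from its source to $\Can$) followed by (reverse of the canonical path from its target to $\Can$), so any two parallel paths of interest give the same $\Omega$ 2-cell. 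To salvage your plan you would need a substitute for the missing well-founded measure, which in effect means producing exactly such a normal form; the diamond lemma alone does not suffice here.
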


\begin{proof} See  \cref{thm:of-interest5} in Appendix~\ref{sec:appendix}.
\end{proof}

The following property also has a role in what follows and is proven in Appendix~\ref{sec:appendix}. It involves horizontal composition in $\catx[\Sigma_{\ast}]$, which will be defined in Subsection \ref{sec:horiz} below.

\begin{proposition}\label{pro:Omega-f} Let $A\xrightarrow{\bar{f}}B\xrightarrow{\bar{g}}C\xrightarrow{\bar{h}}D\xrightarrow{\bar{k}}E$ be $\Sg$-cospans, where $\bar{f}=(f,r)$, $\bar{g}=(g,s)$, $\bar{h}=(h,t)$ and $\bar{k}=(k,u)$.

(1) Let $\Omega\colon (l_1g,m_1u) \Rightarrow (l_2g, m_2u)$ be a basic $\Omega$ 2-cell determined by a $\Sg$-step of level 2 of the type $d$ or $u$ between two $\Sg$-schemes of left border $(s,h,t,k)$ and right border $(l_i,m_i)$, respectively. Then the 2-cell $\Omega\circ 1_{\bar{f}}$ is an $\Omega$ 2-cell corresponding to a $\Sg$-path of interest of $\Sg$-schemes of level 3 and left border $(r,g,s,h,t,k)$.

(2) Let $\Omega\colon (l_1f,m_1t) \rightarrow (l_2f,m_2t)$ be a basic $\Omega$ 2-cell determined by a $\Sg$-step of level 2 of the type $d$ or $u$ between two $\Sg$-schemes of left border $(r,g,s,h)$ and right border $(l_i,m_i)$, respectively. Then the 2-cell $1_{\bar{k}}\circ \Omega$ is an $\Omega$ 2-cell corresponding to a $\Sg$-path of interest of $\Sg$-schemes of level 3 and left border $(r,g,s,h,t,k)$.
\end{proposition}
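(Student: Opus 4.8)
The plan is to reduce each statement to an explicit description of a $\Sigma$-path of level~3 and then appeal to the combinatorial machinery already developed (Lemma~\ref{lem:Sigma-steps}, Proposition~\ref{pro:du=ud}, Proposition~\ref{pro:of-interest}). Since the horizontal composites $\Omega\circ 1_{\bar f}$ and $1_{\bar k}\circ \Omega$ are defined by first composing the relevant $\Sigma$-schemes of level~2 on one side with the $\Sigma$-squares coming from the cospan being pasted on, the first task is to unwind that definition: given the two level-2 $\Sigma$-schemes with left border $(s,h,t,k)$ that present the $\mathbf d$- or $\mathbf u$-step, I would horizontally prepend the $\Sigma$-square(s) associated with $(f,r)$ to obtain level-3 $\Sigma$-schemes with left border $(r,g,s,h,t,k)$, and check directly from the recipe in Subsection~\ref{sec:sigma_schemes}, Part~3, that the basic $\Omega$ 2-cell attached to the enlarged step agrees with $\Omega\circ 1_{\bar f}$ (respectively $1_{\bar k}\circ\Omega$) after the left/right whiskering. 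This is essentially bookkeeping: the basic $\Omega$ 2-cell depends only on the replaceable $\Sigma$-square being swapped, and prepending a fixed column does not change which square is replaceable nor the invertible 2-cell $\theta$ produced by Rule~4'.

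Next I would verify that the level-3 $\Sigma$-schemes so obtained, together with the step between them, actually fall among the \emph{configurations of interest} catalogued in the table preceding Definition~\ref{def:interest}. Here the point is that a $\mathbf d$-step (or $\mathbf u$-step) at level~2, once sitting at the bottom two rows of a level-3 scheme whose top row comes from $(f,r)$, produces one of the named configurations $\mathbf{da}$, $\mathbf{db}$, $\mathbf{dc}$ (resp.\ $\mathbf{ua}$, $\mathbf{ub}$) depending on the shape of the $(f,r)$-part and on whether the step touches the right border --- in each case one of the seven diagrams in the right-hand column. I would go through the handful of shapes that the prepended column can take (a single $\Sigma$-square, or a vertical composite of two, with or without identities, exactly as in the level-2-as-level-3 reinterpretation discussed in Part~4) and match each resulting level-3 scheme to an entry in the table. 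Once that identification is made, the $\Sigma$-path consisting of this single step is a $\Sigma$-path of interest, so it \emph{is} an $\Omega$ 2-cell in the sense required, and the statement ``$\Omega\circ 1_{\bar f}$ is an $\Omega$ 2-cell corresponding to a $\Sg$-path of interest of $\Sg$-schemes of level~3 and left border $(r,g,s,h,t,k)$'' is proved verbatim.

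For part~(2) the argument is the mirror image: instead of prepending a column on the left I would append the $\Sigma$-square(s) of $(k,u)$ at the bottom (and whisker on the right with the cospan $\bar k$), turning a level-2 $\mathbf d$- or $u$-step with left border $(r,g,s,h)$ into a level-3 step with left border $(r,g,s,h,t,k)$; the same case analysis against the table shows the result is again a configuration of interest. The only subtlety is that ``$d$'' and ``$u$'' at level~2 (Notation~\ref{nota:types-2}) are the restrictions of the level-3 types $\mathbf d$ and $\mathbf u$ (Notation~\ref{nota:Sigma-steps}) via the bottom-row-of-identities embedding, so I must make sure the appended $\bar k$-squares are slotted in so as to produce genuine $\mathbf d$/$\mathbf u$ level-3 configurations rather than some degenerate shape; this is exactly the remark at the start of Part~4 that a level-2 scheme is a level-3 scheme with an identity bottom row, applied now with the $(k,u)$-row in place of the identity row.

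The main obstacle I anticipate is purely diagrammatic rather than conceptual: one must be careful that the \emph{specific} data (the 1-cells $d_x,d_y$, the 2-cell $\gamma$, the glueing invertible 2-cells) chosen when forming the horizontal composite $\Omega\circ 1_{\bar f}$ according to the definition of horizontal composition coincide, up to a $\Sigma$-extension, with the data that the recipe for the basic $\Omega$ 2-cell of the enlarged level-3 step would produce; this is where Rule~4' and the $\approx$-independence statements (the parenthetical remark after \eqref{eq:basic-Omega}, and Lemma~\ref{lem:first-rel-on-2}) do the work, and I would invoke them to absorb any discrepancy. Everything else is a finite verification against the seven pictures in the table, so once the correspondence of the attached 2-cell is pinned down the proposition follows; accordingly I would relegate the routine diagram-chasing to Appendix~\ref{sec:appendix} and present here only the identification of configurations.
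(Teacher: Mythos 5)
There is a genuine gap, and it sits exactly where your plan declares the work to be ``bookkeeping''. The composites $\Omega\circ 1_{\bar f}$ and $1_{\bar k}\circ\Omega$ are not obtained by merely prepending (or appending) the column of $\Sigma$-squares coming from $\bar f$ (or $\bar k$) to the two level-2 schemes and taking the basic $\Omega$ 2-cell of the enlarged single step. Horizontal composition in $\catx[\Sigma_\ast]$ is the one of Definition \ref{def:horizontal}: its source and target are the \emph{composite} cospans $(l_ig,m_iu)\circ(f,r)$, which are formed with the canonical $\Sigma$-square of $r$ along $l_ig$, and the composite 2-cell is the vertical composite $\Omega_2^{-1}\cdot[\beta'\circ\alpha,1,1]\cdot\Omega_1$, where the conjugating 2-cells $\Omega_1,\Omega_2$ are themselves nontrivial $\Omega$ 2-cells (they compare the canonical-square composite with the pasted scheme) and do \emph{not} disappear when one factor is an identity 2-cell. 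In particular, the basic $\Omega$ 2-cell of your single enlarged level-3 step does not even have the same domain and codomain as $\Omega\circ 1_{\bar f}$, so the two cannot be equal, and the $\approx$-independence you invoke (which only covers different choices of the Rule 4' data for a \emph{fixed} pair of source and target cospans) cannot absorb this discrepancy.

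What the proposition actually requires, and what the paper's Appendix proof does, is to unwind Definition \ref{def:horizontal} explicitly: one finds that $\Omega\circ 1_{\bar f}$ corresponds to a $\Sigma$-path of several steps (for instance of the shape $\mathbf{d}_1$, $\mathbf{u}$, $\mathbf{d}$, $\mathbf{u}$, $\mathbf{d}_1$ in the $\mathbf{d}$-case, with analogous shapes involving $\mathbf{s}$, $\mathbf{s}_1$ in the other cases), and then one uses the path-equivalence results (Lemma \ref{lem:Sigma-steps}, Proposition \ref{pro:adend}/Corollary \ref{cor:length2}) to rewrite this path as one of the paths of interest recorded in the tables, checking along the way that every intermediate scheme carries one of the seven configurations of interest. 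So the identification-against-the-table step you propose is indeed part of the argument, but it must be applied to the multi-step path produced by the definition of horizontal composition, not to a single step; establishing and simplifying that path is the substantive content you have skipped, and it cannot be delegated to the $\approx$-independence remark after \eqref{eq:basic-Omega}.
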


\begin{proof}
  See Proposition~\ref{pro:alpha.f} and Corollary~\ref{cor:Omega-h} in Appendix~\ref{sec:appendix}.
\end{proof}

\subsection{Horizontal composition and the associator}\label{sec:horiz}

In the composition of $\Sigma$-cospans we are going to use certain chosen $\Sigma$-squares, called canonical $\Sg$-squares.

\begin{definition}\label{assum1}\textbf{\em Canonical $\Sigma$-squares.}
	Since $\Sigma$ admits a left calculus of lax fractions, by Square we know that, for each span $I\xleftarrow{r}B\xrightarrow{g}J$ with $r\in \Sigma$, there is some $\Sigma$-cospan forming a $\Sigma$-square with it. In order to define the composition, we assume a prespecified map which assigns to each such a pair $(r,g)$ a fixed $\Sigma$-square
	\[\begin{tikzcd}
		B & I \\
		J & {\dot{B}}
		\arrow["r", from=1-1, to=1-2]
		\arrow[""{name=0, anchor=center, inner sep=0}, "g"', from=1-1, to=2-1]
		\arrow[""{name=1, anchor=center, inner sep=0}, "{\dot{g}}", from=1-2, to=2-2]
		\arrow["{\dot{r}}"', from=2-1, to=2-2]
		\arrow["{\SIGMA^{\dot{\alpha}}}"{description}, draw=none, from=0, to=1]
	\end{tikzcd}\; \]
	called a \textbf{\em canonical $\Sigma$-square}.
	For $r\colon A\to B$ in $\Sigma$ and any $f\colon A\to B$, we assume that
	\[\begin{tikzcd}
		A & B \\
		A & B
		\arrow["r", from=1-1, to=1-2]
		\arrow[""{name=0, anchor=center, inner sep=0}, "{1_A}"', from=1-1, to=2-1]
		\arrow[""{name=1, anchor=center, inner sep=0}, "{1_B}", from=1-2, to=2-2]
		\arrow["r", from=2-1, to=2-2]
		\arrow["{\SIGMA^{\id}}"{description}, draw=none, from=0, to=1]
	\end{tikzcd}\qquad \text{and}\qquad \begin{tikzcd}
	A & A \\
	B & B
	\arrow["{1_A}", from=1-1, to=1-2]
	\arrow[""{name=0, anchor=center, inner sep=0}, "f"', from=1-1, to=2-1]
	\arrow[""{name=1, anchor=center, inner sep=0}, "f", from=1-2, to=2-2]
	\arrow["{1_A}", from=2-1, to=2-2]
	\arrow["{{\SIGMA^{\id}}}"{description}, draw=none, from=0, to=1]
	\end{tikzcd}\]
	are canonical $\Sigma$-squares (which exist by the Repletion axioms). Sometimes canonical $\Sigma$-squares will be indicated with just the symbol $\stackrel{\scriptscriptstyle{\bullet}}{\Sigma}$.

The \textbf{\em canonical $\Sg$-scheme} of level 3 with a given left border, which we denote by $\Can$, is given by
\[\adjustbox{scale=0.70}{\begin{tikzcd}
	&& {} & {} \\
	& {} & {} & {} \\
	{} & {} & {} & {} \\
	{} & {} & {} & {}
	\arrow[from=1-3, to=1-4]
	\arrow[""{name=0, anchor=center, inner sep=0}, from=1-3, to=2-3]
	\arrow[""{name=1, anchor=center, inner sep=0}, from=1-4, to=2-4]
	\arrow[from=2-2, to=2-3]
	\arrow[""{name=2, anchor=center, inner sep=0}, from=2-2, to=3-2]
	\arrow[from=2-3, to=2-4]
	\arrow[""{name=3, anchor=center, inner sep=0}, from=2-3, to=3-3]
	\arrow[""{name=4, anchor=center, inner sep=0}, from=2-4, to=3-4]
	\arrow[from=3-1, to=3-2]
	\arrow[""{name=5, anchor=center, inner sep=0}, from=3-1, to=4-1]
	\arrow[from=3-2, to=3-3]
	\arrow[""{name=6, anchor=center, inner sep=0}, from=3-2, to=4-2]
	\arrow[from=3-3, to=3-4]
	\arrow[""{name=7, anchor=center, inner sep=0}, from=3-3, to=4-3]
	\arrow[""{name=8, anchor=center, inner sep=0}, from=3-4, to=4-4]
	\arrow[from=4-1, to=4-2]
	\arrow[from=4-2, to=4-3]
	\arrow[from=4-3, to=4-4]
	\arrow["\SIGMAc"{description}, shift left, draw=none, from=0, to=1]
	\arrow["\SIGMAc"{description}, shift left, draw=none, from=2, to=3]
	\arrow["\SIGMAc"{description}, shift left, draw=none, from=3, to=4]
	\arrow["\SIGMAc"{description}, shift left, draw=none, from=5, to=6]
	\arrow["\SIGMAc"{description}, shift left, draw=none, from=6, to=7]
	\arrow["\SIGMAc"{description}, shift left, draw=none, from=7, to=8]
\end{tikzcd}}\]
and likewise for $\Sg$-schemes of level 2.
\end{definition}
\begin{definition}\label{def:comp-cospans} \textbf{\em (Horizontal) composition of $\Sigma$-cospans}. Given $\Sigma$-cospans $(f,I,r)\colon A\to B$ and $(g,J,s)\colon B\to C$, their composition is the $\Sigma$-cospan $(\dot{g}f, \dot{B}, \dot{r}s)\colon A\to C$, obtained by taking the canonical $\Sg$-square of $r$ and $g$:
\[\begin{tikzcd}
		A & I & B \\
		& \dot{B} & J & C\,.
		\arrow["f", from=1-1, to=1-2]
		\arrow["r"', from=1-3, to=1-2]
		\arrow[""{name=0, anchor=center, inner sep=0}, "g", from=1-3, to=2-3]
		\arrow["{\dot{r}}", from=2-3, to=2-2]
		\arrow[""{name=1, anchor=center, inner sep=0}, "{\dot{g}}"', from=1-2, to=2-2]
		\arrow["s", from=2-4, to=2-3]
		\arrow["{\SIGMA^{\dot{\alpha}}}"{description}, draw=none, from=1, to=0]
	\end{tikzcd}\]
\end{definition}

\begin{definition}\label{def:horizontal} \textbf{\em Horizontal composition of 2-cells.} Given two horizontally composable 2-morphisms as in the diagram
\begin{equation}\label{eq:(A11)}
	\begin{tikzcd}
	A & {I_1} & B & {J_1} & C \\
	& X & B & Y & C \\
	A & {I_2} & B & {J_2} & C \rlap{\,,}
	\arrow["{f_1}", from=1-1, to=1-2]
	\arrow[Rightarrow, no head, from=1-1, to=3-1]
	\arrow[""{name=0, anchor=center, inner sep=0}, "{x_1}", from=1-2, to=2-2]
	\arrow["\alpha"', shorten <=11pt, shorten >=11pt, Rightarrow, from=1-2, to=3-1]
	\arrow["{r_1}"', from=1-3, to=1-2]
	\arrow["{g_1}", from=1-3, to=1-4]
	\arrow[""{name=1, anchor=center, inner sep=0}, Rightarrow, no head, from=1-3, to=2-3]
	\arrow[""{name=2, anchor=center, inner sep=0}, "{y_1}", from=1-4, to=2-4]
	\arrow["\beta"', shorten <=11pt, shorten >=11pt, Rightarrow, from=1-4, to=3-3]
	\arrow["{s_1}"', from=1-5, to=1-4]
	\arrow[""{name=3, anchor=center, inner sep=0}, Rightarrow, no head, from=1-5, to=2-5]
	\arrow["{x_3}", from=2-3, to=2-2]
	\arrow[""{name=4, anchor=center, inner sep=0}, Rightarrow, no head, from=2-3, to=3-3]
	\arrow["{y_3}"', from=2-5, to=2-4]
	\arrow[""{name=5, anchor=center, inner sep=0}, Rightarrow, no head, from=2-5, to=3-5]
	\arrow["{f_2}"', from=3-1, to=3-2]
	\arrow[""{name=6, anchor=center, inner sep=0}, "{x_2}"', from=3-2, to=2-2]
	\arrow["{r_2}", from=3-3, to=3-2]
	\arrow["{g_2}"', from=3-3, to=3-4]
	\arrow[""{name=7, anchor=center, inner sep=0}, "{y_2}"', from=3-4, to=2-4]
	\arrow["{s_2}", from=3-5, to=3-4]
	\arrow["{\SIGMA^{\delta_1}}"{description}, draw=none, from=0, to=1]
	\arrow["{\SIGMA^{\epsilon_1}}"{description}, draw=none, from=2, to=3]
	\arrow["{\SIGMA^{\delta_2}}"{description}, draw=none, from=6, to=4]
	\arrow["{\SIGMA^{\epsilon_2}}"{description}, draw=none, from=7, to=5]
\end{tikzcd}
\end{equation}
use Rule 6 (\cref{pro:useful_rules}) to obtain the following equality of pasting diagrams:
\begin{equation}\label{eq:(A12)}
	\begin{tikzcd}
		B & X &&& B & X \\
		{J_1} &&& {J_1} & {J_2} \\
		Y & V &&& Y & V
		\arrow["{{x_3}}", from=1-1, to=1-2]
		\arrow["{{g_1}}"', from=1-1, to=2-1]
		\arrow[""{name=0, anchor=center, inner sep=0}, "{{y'_1}}", from=1-2, to=3-2]
		\arrow[""{name=1, anchor=center, inner sep=0}, "{{y'_2}}"{pos=0.6}, curve={height=-40pt}, from=1-2, to=3-2]
		\arrow["{{x_3}}", from=1-5, to=1-6]
		\arrow["{{g_1}}"', curve={height=6pt}, from=1-5, to=2-4]
		\arrow["{{g_2}}", from=1-5, to=2-5]
		\arrow[""{name=2, anchor=center, inner sep=0}, "{{y'_2}}", from=1-6, to=3-6]
		\arrow["{{y_1}}"', from=2-1, to=3-1]
		\arrow["\beta", shorten <=4pt, shorten >=4pt, Rightarrow, from=2-4, to=2-5]
		\arrow["{{y_1}}"', curve={height=6pt}, from=2-4, to=3-5]
		\arrow["{{y_2}}", from=2-5, to=3-5]
		\arrow["v", from=3-1, to=3-2]
		\arrow["v", from=3-5, to=3-6]
		\arrow["{{\text{\normalsize =}}}"{description, pos=0.6}, draw=none, from=1, to=2-4]
		\arrow["{{\beta'}}"{pos=0.6}, shorten <=10pt, shorten >=6pt, Rightarrow, from=0, to=1]
		\arrow["{{\SIGMA^{\xi_1}}}"{description}, draw=none, from=2-1, to=0]
		\arrow["{{\SIGMA^{\xi_2}}}"{description}, draw=none, from=2-5, to=2]
	\end{tikzcd}
	\end{equation}
	We can now form the following 2-morphism:
	\begin{equation}\label{eq:(A13)}
	\begin{tikzcd}
		A & {I_1} & X & V & Y & C \\
		&&& V & Y & C \\
		A & {I_2} & X & V & Y & C
		\arrow["{f_1}", from=1-1, to=1-2]
		\arrow[Rightarrow, no head, from=1-1, to=3-1]
		\arrow["{{x_1}}", from=1-2, to=1-3]
		\arrow["{{y_1'}}", from=1-3, to=1-4]
		\arrow["\alpha"', shorten <=11pt, shorten >=11pt, Rightarrow, from=1-3, to=3-1]
		\arrow[Rightarrow, no head, from=1-3, to=3-3]
		\arrow["v", tail reversed, no head, from=1-4, to=1-5]
		\arrow[""{name=0, anchor=center, inner sep=0}, Rightarrow, no head, from=1-4, to=2-4]
		\arrow["{{\beta'}}"', shorten <=11pt, shorten >=8pt, Rightarrow, from=1-4, to=3-3]
		\arrow["{y_3}", tail reversed, no head, from=1-5, to=1-6]
		\arrow[""{name=1, anchor=center, inner sep=0}, Rightarrow, no head, from=1-6, to=2-6]
		\arrow["v", from=2-5, to=2-4]
		\arrow["{y_3}", from=2-6, to=2-5]
		\arrow[""{name=2, anchor=center, inner sep=0}, Rightarrow, no head, from=2-6, to=3-6]
		\arrow["{f_2}"', from=3-1, to=3-2]
		\arrow["{{x_2}}"', from=3-2, to=3-3]
		\arrow["{{y'_2}}"', from=3-3, to=3-4]
		\arrow[""{name=3, anchor=center, inner sep=0}, Rightarrow, no head, from=3-4, to=2-4]
		\arrow["v", from=3-5, to=3-4]
		\arrow["{y_3}", from=3-6, to=3-5]
		\arrow["{\SIGMA^{\id}}"{description}, draw=none, from=0, to=1]
		\arrow["{\SIGMA^{\id}}"{description}, draw=none, from=3, to=2]
	\end{tikzcd}
\end{equation}
Consider the following $\Sg$-path between $\Sg$-schemes of level 2, where $d$ and $u$ refer to the type of the $\Sg$-steps (see Notation \ref{nota:Sigma-steps}):
\begin{equation}\label{eq:Omega-i}
  \begin{tikzcd}
	& B & {I_i} &&& B & {I_i} &&& B & {I_i} \\
	C & {J_i} & {\dot{B}_i} && C & {J_i} & {\dot{B}_i} && C & {J_i} \\
	C & {J_i} & {\dot{B}_i} && C & Y & {\tilde{J}_i} && C & Y & V
	\arrow["{{r_i}}", from=1-2, to=1-3]
	\arrow[""{name=0, anchor=center, inner sep=0}, "{{g_i}}"', from=1-2, to=2-2]
	\arrow[""{name=1, anchor=center, inner sep=0}, "{{\dot{g}_i}}", from=1-3, to=2-3]
	\arrow["{{r_i}}", from=1-6, to=1-7]
	\arrow[""{name=2, anchor=center, inner sep=0}, "{{g_i}}"', from=1-6, to=2-6]
	\arrow[""{name=3, anchor=center, inner sep=0}, "{{\dot{g}_i}}", from=1-7, to=2-7]
	\arrow["{{r_i}}", from=1-10, to=1-11]
	\arrow["{{g_i}}"', from=1-10, to=2-10]
	\arrow[""{name=4, anchor=center, inner sep=0}, "{{{y'_ix_i}}}", from=1-11, to=3-11]
	\arrow["{{s_i}}", from=2-1, to=2-2]
	\arrow[""{name=5, anchor=center, inner sep=0}, equals, from=2-1, to=3-1]
	\arrow["{{\dot{r}_i}}"', from=2-2, to=2-3]
	\arrow[""{name=6, anchor=center, inner sep=0}, equals, from=2-2, to=3-2]
	\arrow["d"{description}, between={0.2}{0.9}, squiggly, from=2-3, to=2-5]
	\arrow[""{name=7, anchor=center, inner sep=0}, equals, from=2-3, to=3-3]
	\arrow["{{s_i}}", from=2-5, to=2-6]
	\arrow[""{name=8, anchor=center, inner sep=0}, equals, from=2-5, to=3-5]
	\arrow["{{\dot{r}_i}}"', from=2-6, to=2-7]
	\arrow[""{name=9, anchor=center, inner sep=0}, "{{y_i}}", from=2-6, to=3-6]
	\arrow["u"{description}, between={0.2}{0.8}, squiggly, from=2-7, to=2-9]
	\arrow[""{name=10, anchor=center, inner sep=0}, "{{\tilde{y}_i}}", from=2-7, to=3-7]
	\arrow["{{s_i}}", from=2-9, to=2-10]
	\arrow[""{name=11, anchor=center, inner sep=0}, equals, from=2-9, to=3-9]
	\arrow[""{name=12, anchor=center, inner sep=0}, "{{y_i}}", from=2-10, to=3-10]
	\arrow["{{s_i}}"', from=3-1, to=3-2]
	\arrow["{{\dot{r}_i}}"', from=3-2, to=3-3]
	\arrow["{{y_3}}"', from=3-5, to=3-6]
	\arrow["{{\tilde{r}_i}}"', from=3-6, to=3-7]
	\arrow["{{y_3}}"', from=3-9, to=3-10]
	\arrow["v"', from=3-10, to=3-11]
	\arrow["{{\SIGMAc}}"{description}, draw=none, from=0, to=1]
	\arrow["{{\SIGMAc}}"{description}, draw=none, from=2, to=3]
	\arrow["{{{\SIGMAcid}}}"{description}, draw=none, from=5, to=6]
	\arrow["{{{\SIGMAcid}}}"{description}, draw=none, from=6, to=7]
	\arrow["{{{\SIGMA^{\epsilon_i}}}}"{description}, draw=none, from=8, to=9]
	\arrow["{{\SIGMAcnamed{\tilde{\alpha}_i}}}"{description}, draw=none, from=9, to=10]
	\arrow["{{\SIGMA^{\epsilon_i}}}"{description}, draw=none, from=11, to=12]
	\arrow["{{\SIGMA^{\xi_i\odot\delta_i}}}"{description}, draw=none, from=2-10, to=4]
\end{tikzcd}\, .
\end{equation} %

Let
$\Omega_i\colon (g_i,s_i)\circ (f_i,r_i)=(\dot{g}_if_i,\dot{r}_is_i) \implies (y'_ix_if_i,vy_3)$
be the $\Omega$ 2-cell determined by this $\Sg$-path.
We now have three vertically composable 2-cells
\[\begin{tikzcd}
	{(g_1,s_1)\circ (f_1,r_1)} & {(y'_1x_1f_1,vy_3)} && {(y'_2x_2f_2,vy_3)} & {(g_2,s_2)\circ (f_2,r_2)}
	\arrow["{\Omega_1}", Rightarrow, from=1-1, to=1-2]
	\arrow["{{[\beta'\circ \alpha, 1_V,1_V]}}", Rightarrow, from=1-2, to=1-4]
	\arrow["{\Omega_2^{-1}}", Rightarrow, from=1-4, to=1-5]
\end{tikzcd}.\]
The horizontal composition $[\beta,y_1,y_2]\circ [\alpha,x_1,x_2]$ is given by this vertical composition, i.e.
$$\Omega_2^{-1}\cdot [\beta'\circ \alpha, 1_V,1_V]\cdot \Omega_1\, .$$
\end{definition}

In \cref{pro:horizontal} we will show that the horizontal composition is well-defined --- that is, \cref{def:horizontal} does not depend on the choice of the $\Sg$-squares and $\beta'$ in \eqref{eq:(A12)}, and it respects the $\approx$-relation.
In \cref{pro:comp}, we will prove that horizontal composition is functorial.

\begin{definition}\label{def:assoc}\textbf{\em Associator.} Suppose we are given three composable $\Sigma$-cospans:
$$A\xrightarrow{\bar{f}}B\xrightarrow{\bar{g}}C\xrightarrow{\bar{h}}D$$
with $\bar{f}=(f,I,r)$, $\bar{g}=(g,J,s)$ and $\bar{h}=(h,K,t)$.
Consider the $\Sigma$-schemes of level 2 as below, where all $\Sigma$-squares are canonical and the middle $\Sigma$-scheme is the canonical one.
\[\begin{tikzcd}
	& \bullet & \bullet \\
	\bullet & \bullet \\
	\bullet & \bullet & \bullet
	\arrow["r", from=1-2, to=1-3]
	\arrow["g"', from=1-2, to=2-2]
	\arrow[""{name=0, anchor=center, inner sep=0}, from=1-3, to=3-3]
	\arrow["s", from=2-1, to=2-2]
	\arrow[""{name=1, anchor=center, inner sep=0}, "h"', from=2-1, to=3-1]
	\arrow[""{name=2, anchor=center, inner sep=0}, from=2-2, to=3-2]
	\arrow[from=3-1, to=3-2]
	\arrow["{(1)}"', shift right=5, draw=none, from=3-1, to=3-3]
	\arrow[from=3-2, to=3-3]
	\arrow["\SIGMAc"{description}, draw=none, from=1, to=2]
	\arrow["\SIGMAc"{description}, draw=none, from=2-2, to=0]
\end{tikzcd}
\hspace{15mm}
\begin{tikzcd}
	& \bullet & \bullet \\
	\bullet & \bullet & \bullet \\
	\bullet & \bullet & \bullet
	\arrow["r", from=1-2, to=1-3]
	\arrow[""{name=0, anchor=center, inner sep=0}, "g"', from=1-2, to=2-2]
	\arrow[""{name=1, anchor=center, inner sep=0}, from=1-3, to=2-3]
	\arrow["s", from=2-1, to=2-2]
	\arrow[""{name=2, anchor=center, inner sep=0}, "h"', from=2-1, to=3-1]
	\arrow[from=2-2, to=2-3]
	\arrow[""{name=3, anchor=center, inner sep=0}, from=2-2, to=3-2]
	\arrow[""{name=4, anchor=center, inner sep=0}, from=2-3, to=3-3]
	\arrow[from=3-1, to=3-2]
	\arrow["{(2)}"', shift right=5, draw=none, from=3-1, to=3-3]
	\arrow[from=3-2, to=3-3]
	\arrow["\SIGMAc"{description}, draw=none, from=0, to=1]
	\arrow["\SIGMAc"{description}, draw=none, from=2, to=3]
	\arrow["\SIGMAc"{description}, draw=none, from=3, to=4]
\end{tikzcd}
\hspace{15mm}
\begin{tikzcd}
	& \bullet & \bullet \\
	\bullet & \bullet & \bullet \\
	\bullet && \bullet
	\arrow["r", from=1-2, to=1-3]
	\arrow[""{name=0, anchor=center, inner sep=0}, "g"', from=1-2, to=2-2]
	\arrow[""{name=1, anchor=center, inner sep=0}, from=1-3, to=2-3]
	\arrow["s", from=2-1, to=2-2]
	\arrow[""{name=2, anchor=center, inner sep=0}, "h"', from=2-1, to=3-1]
	\arrow[from=2-2, to=2-3]
	\arrow[""{name=3, anchor=center, inner sep=0}, from=2-3, to=3-3]
	\arrow["{{(3)}}"', shift right=5, draw=none, from=3-1, to=3-3]
	\arrow[from=3-1, to=3-3]
	\arrow["\SIGMAc"{description}, draw=none, from=0, to=1]
	\arrow["\SIGMAc"{description}, draw=none, from=2, to=3]
\end{tikzcd}\]
The right borders of the $\Sigma$-schemes  (1) and (3), after performing the composition with $f$ on the left and with $t$ on the right, give the compositions $\left(\left(h,t\right)\circ\left(g,s\right)\right)\circ\left(f,r\right)$ and $\left(h,t\right)\circ\left(\left(g,s\right)\circ\left(f,r\right)\right)$, respectively.

Each component of the associator
$$\Assoc_{\bar{h}, \bar{g}, \bar{f}}\colon \left(\left(h,t\right)\circ\left(g,s\right)\right)\circ\left(f,r\right) \Rightarrow \left(h,t\right)\circ\left(\left(g,s\right)\circ\left(f,r\right)\right)$$
is the $\Omega$ 2-cell corresponding to the $\Sigma$-path $\xymatrix{(1)\ar@{~>}[r]^{u}&(2) \ar@{~>}[r]^{d}&(3)}$.
\end{definition}
In Proposition \ref{pro:natural-iso} we will prove that the associator, as defined above, is indeed a natural isomorphism from $(-\circ -)\circ -$ to $-\circ(-\circ -)$.

\begin{definition}\label{def:uni}\textbf{\em Identities and unitors.}
The identity 1-cell on an object $A$ is the cospan $A\xrightarrow{\id}A\xleftarrow{\id}A$.
From \cref{def:comp-cospans} and the canonical identity $\Sigma$-squares from \cref{assum1}, we see the identity 1-cells act strictly as identities for the horizontal composition. Thus, we define the \textbf{\em left  unitors} and the \textbf{\em right  unitors} to simply be given by identity 2-cells.
\end{definition}

\begin{proposition}\label{pro:horizontal} The horizontal composition is well-defined.
\end{proposition}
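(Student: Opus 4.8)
The plan is to establish two independence statements, following the template of the proof of Proposition~\ref{pro:vertical-1}: first, that the $\approx$-class of the composite $2$-morphism produced in Definition~\ref{def:horizontal} does not depend on the choice of $w$ (equivalently $v$), of the $\Sigma$-squares $\Sigma^{\xi_1},\Sigma^{\xi_2}$ and of $\beta'$ supplied by Rule~6 in \eqref{eq:(A12)}; and second, that it is unchanged when $[\alpha,x_1,x_2]$ or $[\beta,y_1,y_2]$ is represented by an $\approx$-equivalent $2$-morphism.

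For the first point I would argue as follows. Suppose a second application of Rule~6 produces data $(v'\colon Y\to V',\xi_1',\xi_2',\beta'')$. Since $v$ and $v'$ have the same domain $Y$, applying Rule~4 to the pair $\Sigma^{\xi_1},\Sigma^{\xi_1'}$ (this is precisely the situation of Rule~4' after symmetric unfolding) yields $1$-cells $V\xrightarrow{e}\widehat V\xleftarrow{e'}V'$ forming $\Sigma$-squares with $v$ and $v'$ and an invertible $2$-cell $\tau\colon ev\Rightarrow e'v'$. Using $\Sigma^{\xi_2},\Sigma^{\xi_2'}$ together with the two pasting equalities of \eqref{eq:(A12)}, one checks that $e\circ\beta'$ and $e'\circ\beta''$ agree after precomposition with the $\Sigma$-morphism along the bottom of those squares; Equification then supplies a further $\Sigma$-square over $\widehat V$ along which $e\circ\beta'$ and $e'\circ\beta''$ become literally equal — the same manoeuvre as in part~(1) of the proof of Proposition~\ref{pro:vertical-1}. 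It then remains to check that the $\Omega$ $2$-cells $\Omega_1,\Omega_2$ of \eqref{eq:Omega-i} transform compatibly: the $\Sigma$-path of type $d$ followed by $u$ defining $\Omega_i$ terminates at a $\Sigma$-scheme determined by the Rule~6 data, the passage from the $V$-scheme to the $\widehat V$-scheme is again realised by $\Sigma$-steps, and concatenating these and invoking Propositions~\ref{pro:du=ud} and \ref{pro:of-interest} gives $\Omega_i^{\widehat V}=\Omega_i^{\mathrm{tr}}\cdot\Omega_i^{V}$; moreover the basic $\Omega$ $2$-cell attached to such a $\Sigma$-step is $\approx$-independent of the Rule~4' data, as noted right after \eqref{eq:basic-Omega}. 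Combining this with the naturality of the middle $2$-cell $[\beta'\circ\alpha,1_V,1_V]$ along $\Omega_i^{\mathrm{tr}}$ yields $\Omega_2^{-1}\cdot[\beta'\circ\alpha,1_V,1_V]\cdot\Omega_1=(\Omega_2^{\widehat V})^{-1}\cdot[\,\overline\beta\circ\alpha,1_{\widehat V},1_{\widehat V}\,]\cdot\Omega_1^{\widehat V}$, where $\overline\beta$ is the common value of $e\beta'$ and $e'\beta''$, and the identical identity for the primed data; hence the two candidate composites coincide.

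For the second point, transitivity and symmetry of $\approx$ reduce me to the case where $[\alpha,x_1,x_2]$ is replaced by a single $\Sigma$-extension, the case of $[\beta,y_1,y_2]$ being entirely analogous. The only datum of $\bar\alpha$ that feeds into Rule~6 in \eqref{eq:(A12)} is the $\Sigma$-morphism $x_3$, and passing to a $\Sigma$-extension merely replaces it by a $\Sigma$-square-equivalent post-composite; consequently the Rule~6 output, the $2$-morphism \eqref{eq:(A13)} and the $\Sigma$-path \eqref{eq:Omega-i} for the extended $\bar\alpha$ can all be chosen to be $\Sigma$-extensions of the corresponding objects for $\bar\alpha$, so that each $\Omega_i$ and the middle $2$-cell get replaced by $\approx$-equal $2$-cells and the vertical composite is unchanged. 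This mirrors part~(2) of the proof of Proposition~\ref{pro:vertical-1}.

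I expect the main obstacle to be the first point: unlike in the vertical-composition case, $\Omega_1$ and $\Omega_2$ are themselves assembled from $\Sigma$-paths that depend on the Rule~6 data, so one cannot conclude by a single Equification. The argument has to interleave the $\Sigma$-scheme calculus of Subsection~\ref{sec:sigma_schemes} — equivalence of $\Sigma$-paths (Propositions~\ref{pro:du=ud} and \ref{pro:of-interest}) and $\approx$-independence of basic $\Omega$ $2$-cells — with the Equification step forcing $\beta'$ and $\beta''$ to agree, and one must verify that the $\Sigma$-square produced by Equification can be absorbed into the $\Sigma$-scheme transitions without disturbing the identities above.
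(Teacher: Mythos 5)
Your first point reproduces the paper's own argument for independence of the Rule~6 data: Rule~4 applied to the two choices, Equification to force the two candidate $2$-cells to become equal over a common extension, and the observation that the passage to the common extension is itself a basic $\Omega$ $2$-cell coming from a $\Sigma$-step of type $u$, which, appended to the path \eqref{eq:Omega-i}, is absorbed so that the $\Omega$ $2$-cells of the two choices differ exactly by the comparison $2$-cells produced by Equification (you invoke Propositions~\ref{pro:du=ud} and~\ref{pro:of-interest}, but Lemma~\ref{lem:Sigma-steps}(1) already suffices at this stage, as in the paper). Up to presentational details this is the paper's part~(1), and it is sound.

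The genuine gap is in your second point. For the extension of $\bar\alpha$ your construction runs in the wrong direction: from Rule~6 data for $x_3$ you cannot produce Rule~6 data for the extended leg $u=d\,x_3$ by ``extending'' it, since the new $\Sigma$-squares require $1$-cells out of the extension's codomain $D$, not out of $X$. The workable move (the paper's) is the reverse: take arbitrary Rule~6 data $(y''_i,\xi'_i,\beta'')$ for $u$, compose with the extension square $\Sigma^{\phi}$ to obtain data $y'_i=y''_i d$, $\xi_i=\xi'_i\odot\phi$, $\beta'=\beta''\circ d$ for $x_3$, and then appeal to the independence just proved in part~(1). Relatedly, it is not true that ``each $\Omega_i$ and the middle $2$-cell get replaced by $\approx$-equal $2$-cells'': $\hat\Omega_i$ differs from $\Omega_i$ by a non-trivial basic $\Omega$ $2$-cell $\tilde\Omega_i=[y''_i\circ\theta_i\circ f_i,1,1]$ (one has $\tilde\Omega_i\cdot\hat\Omega_i=\Omega_i$), and the middle $2$-cell changes by the whiskered $\theta_i$'s; only the triple composite is preserved, via this cancellation together with $\hat\alpha=(\theta_2\circ f_2)\cdot(d\circ\alpha)\cdot(\theta_1^{-1}\circ f_1)$. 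More seriously, your claim that the $\beta$-case is ``entirely analogous'' fails: a $\Sigma$-extension of $\bar\beta$ alters $y_1$, $y_2$ and $\beta$ itself, which are direct inputs of the Rule~6 application in \eqref{eq:(A12)}, so your ``only $x_3$ feeds into Rule~6'' observation does not apply there. The paper's case~(2b) has to manufacture new Rule~6 data using Square and Horizontal Repletion, and then compare the resulting $\Omega$ $2$-cells through an equivalence of $\Sigma$-paths of shapes $d,u,d_1$ and $d,u$ (see \eqref{eq:diag-d}), where a step of the type $d_1$ appears that never occurs in the $\alpha$-case; this part of the argument is missing from your proposal and is not recoverable from the $\alpha$-case template.
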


\begin{proof}
(1) First we observe that, up to the $\approx$-relation, the horizontal composition does not depend on the choice of the $\Sigma$-squares and the 2-cell $\beta'$ of \eqref{eq:(A12)}.  Indeed, suppose that in addition to the data of \eqref{eq:(A12)} we have the equality of pasting diagrams
\begin{equation}\label{eq:(A14)}
	\begin{tikzcd}
	B & X &&& B & X \\
	{J_1} &&& {J_1} & {J_2} \\
	Y & {\bar{V}} &&& Y & {\bar{V}}
	\arrow["{{{x_3}}}", from=1-1, to=1-2]
	\arrow["{{{g_1}}}"', from=1-1, to=2-1]
	\arrow[""{name=0, anchor=center, inner sep=0}, "{{\bar{y}_1}}", from=1-2, to=3-2]
	\arrow[""{name=1, anchor=center, inner sep=0}, "{{\bar{y}_2}}", curve={height=-40pt}, from=1-2, to=3-2]
	\arrow["{{{x_3}}}", from=1-5, to=1-6]
	\arrow["{{{g_1}}}"', curve={height=6pt}, from=1-5, to=2-4]
	\arrow["{{{g_2}}}", from=1-5, to=2-5]
	\arrow[""{name=2, anchor=center, inner sep=0}, "{{\bar{y}_2}}", from=1-6, to=3-6]
	\arrow["{{{y_1}}}"', from=2-1, to=3-1]
	\arrow["\beta", shorten <=4pt, shorten >=4pt, Rightarrow, from=2-4, to=2-5]
	\arrow["{{{y_1}}}"', curve={height=6pt}, from=2-4, to=3-5]
	\arrow["{{{y_2}}}", from=2-5, to=3-5]
	\arrow["{{\bar{v}}}", from=3-1, to=3-2]
	\arrow["{{\bar{v}}}", from=3-5, to=3-6]
	\arrow["{{{\text{\normalsize =}}}}"{description, pos=0.7}, draw=none, from=1, to=2-4]
	\arrow["{{\bar{\beta}}}", shorten <=12pt, shorten >=6pt, Rightarrow, from=0, to=1]
	\arrow["{{\SIGMA^{\bar{\xi}_1}}}"{description}, draw=none, from=2-1, to=0]
	\arrow["{{\SIGMA^{\bar{\xi}_2}}}"{description}, draw=none, from=2-5, to=2]
	\end{tikzcd}
\end{equation}
leading to the 2-morphism $(\bar{\beta}\circ \alpha, 1_{\bar{V}}, 1_{\bar{V}})\colon (\bar{y}_1x_1f_1,\bar{v}y_3)\Rightarrow (\bar{y}_2x_2f_2,\bar{v}y_3)$.
Using Rule 4 of Proposition \ref{pro:useful_rules}, there are morphisms
$\begin{tikzcd}
	Q & T & {\bar{Q}}
	\arrow["p", from=1-1, to=1-2]
	\arrow["{\bar{p}}"', from=1-3, to=1-2]
\end{tikzcd}$, $\Sigma$-squares $\Sigma^{\chi}$ and $\Sigma^{\bar{\chi}}$, and  invertible 2-cells $\theta_i$, such that
\begin{equation}\label{eq:(A15)}
\begin{tikzcd}
	B & X && B & X \\
	Y & {\bar{V}} & V & Y & V & {} \\
	Y & T && Y & T \\
	Y & {\bar{V}} & V & Y & V \\
	B & X && B & X
	\arrow["{{{{x_3}}}}", from=1-1, to=1-2]
	\arrow[""{name=0, anchor=center, inner sep=0}, "{{{{y_1g_1}}}}"', from=1-1, to=2-1]
	\arrow[""{name=1, anchor=center, inner sep=0}, "{{\bar{y}_1}}", from=1-2, to=2-2]
	\arrow["{y_1'}", curve={height=-6pt}, from=1-2, to=2-3]
	\arrow["{{{{x_3}}}}", from=1-4, to=1-5]
	\arrow[""{name=2, anchor=center, inner sep=0}, "{{{{y_1g_1}}}}"', from=1-4, to=2-4]
	\arrow[""{name=3, anchor=center, inner sep=0}, "{y'_1}", from=1-5, to=2-5]
	\arrow["{\bar{v}}"', dotted, from=2-1, to=2-2]
	\arrow[""{name=4, anchor=center, inner sep=0}, equals, from=2-1, to=3-1]
	\arrow["{{\theta_1}}", Rightarrow, from=2-2, to=2-3]
	\arrow[""{name=5, anchor=center, inner sep=0}, "{\bar{p}}", from=2-2, to=3-2]
	\arrow["p", curve={height=-6pt}, from=2-3, to=3-2]
	\arrow["v"', dotted, from=2-4, to=2-5]
	\arrow[""{name=6, anchor=center, inner sep=0}, equals, from=2-4, to=3-4]
	\arrow[""{name=7, anchor=center, inner sep=0}, "p", from=2-5, to=3-5]
	\arrow["t", from=3-1, to=3-2]
	\arrow[""{name=8, anchor=center, inner sep=0}, equals, from=3-1, to=4-1]
	\arrow["{{{\text{\normalsize =}}}}"{description, pos=0.725}, draw=none, from=3-2, to=3-4]
	\arrow["t", from=3-4, to=3-5]
	\arrow[""{name=9, anchor=center, inner sep=0}, equals, from=3-4, to=4-4]
	\arrow["{\bar{v}}", dotted, from=4-1, to=4-2]
	\arrow[""{name=10, anchor=center, inner sep=0}, "{\bar{p}}"', from=4-2, to=3-2]
	\arrow["{{\theta_2}}", Rightarrow, from=4-2, to=4-3]
	\arrow["p"', curve={height=6pt}, from=4-3, to=3-2]
	\arrow["v", dotted, from=4-4, to=4-5]
	\arrow[""{name=11, anchor=center, inner sep=0}, "p"', from=4-5, to=3-5]
	\arrow[""{name=12, anchor=center, inner sep=0}, "{{{{y_2g_2}}}}", from=5-1, to=4-1]
	\arrow["{{{{x_3}}}}"', from=5-1, to=5-2]
	\arrow[""{name=13, anchor=center, inner sep=0}, "{\bar{y}_2}"', from=5-2, to=4-2]
	\arrow["{y'_2}"', curve={height=6pt}, from=5-2, to=4-3]
	\arrow[""{name=14, anchor=center, inner sep=0}, "{{{{y_2g_2}}}}", from=5-4, to=4-4]
	\arrow["{{{{x_3}}}}"', from=5-4, to=5-5]
	\arrow[""{name=15, anchor=center, inner sep=0}, "{y'_2}"', from=5-5, to=4-5]
	\arrow["{{{{\SIGMA^{\bar{\xi}_1}}}}}"{description}, draw=none, from=0, to=1]
	\arrow["{{{{\SIGMA^{{\xi}_1}}}}}"{description}, draw=none, from=2, to=3]
	\arrow["{{{{\SIGMA^{\bar{\chi}}}}}}"{description}, draw=none, from=5, to=4]
	\arrow["{{{{\SIGMA^{{\chi}}}}}}"{description}, draw=none, from=6, to=7]
	\arrow["{{{{\SIGMA^{\bar{\chi}}}}}}"{description}, draw=none, from=8, to=10]
	\arrow["{{{{\SIGMA^{{\chi}}}}}}"{description}, draw=none, from=9, to=11]
	\arrow["{{{{\SIGMA^{\bar{\xi}_2}}}}}"{marking, allow upside down}, shift right, draw=none, from=12, to=13]
	\arrow["{{{{\SIGMA^{{\xi}_2}}}}}"{marking, allow upside down}, shift right, draw=none, from=14, to=15]
\end{tikzcd}
\end{equation}
Using the equalities \eqref{eq:(A12)}, \eqref{eq:(A14)} and \eqref{eq:(A15)}, we see that the pasting of the $\Sigma$-square
$$\begin{tikzcd}
	B & X \\
	Y & T
	\arrow["{x_3}", from=1-1, to=1-2]
	\arrow[""{name=0, anchor=center, inner sep=0}, "{y_1g_1}"', from=1-1, to=2-1]
	\arrow[""{name=1, anchor=center, inner sep=0}, "{\bar{p}\bar{y}_1}", from=1-2, to=2-2]
	\arrow["t", from=2-1, to=2-2]
	\arrow["\SIGMA^{\bar{\chi}\odot \bar{\xi}_1}"{description}, draw=none, from=0, to=1]
\end{tikzcd} $$
with $\theta_2^{-1}\cdot(p\circ \beta')\cdot \theta_1$ is equal to its pasting with $\bar{p}\circ \bar{\beta}$.
	Using Equification on these, we obtain a $\Sigma$-square
	\[\begin{tikzcd}
	Y & T \\
	Y & Q
	\arrow["t", from=1-1, to=1-2]
	\arrow[""{name=0, anchor=center, inner sep=0}, Rightarrow, no head, from=1-1, to=2-1]
	\arrow[""{name=1, anchor=center, inner sep=0}, "q", from=1-2, to=2-2]
	\arrow["u"', from=2-1, to=2-2]
	\arrow["{\SIGMA^{\psi}}"{description}, draw=none, from=0, to=1]
	\end{tikzcd}\]
	such that  $q\circ \big( \theta_2^{-1}\cdot(p\circ \beta')\cdot \theta_1\big) =q\circ \bar{p}\circ \bar{\beta}$.
	Thus, the 2-cell $[\bar{\beta}\circ \alpha, 1_{\bar{V}}, 1_{\bar{V}}]$ is the vertical composite of the three composable 2-cells represented in the following diagram.

\[\begin{tikzcd}
	A & {I_1} & X & {\bar{V}} & Y & C \\
	&&& Q & Y & C \\
	A & {I_1} & X & V & Y & C \\
	&&& V & Y & C \\
	A & {I_2} & X & V & Y & C \\
	&&& Q & Y & C \\
	A & {I_2} & X & {\bar{V}} & Y & C
	\arrow["{{{f_1}}}", from=1-1, to=1-2]
	\arrow[equals, from=1-1, to=3-1]
	\arrow["{{{x_1}}}", from=1-2, to=1-3]
	\arrow["{{{\bar{y}_1}}}", from=1-3, to=1-4]
	\arrow[equals, from=1-3, to=3-3]
	\arrow[""{name=0, anchor=center, inner sep=0}, "{{{q\bar{p}}}}"', from=1-4, to=2-4]
	\arrow["{{{q\theta_1}}}"{description}, between={0.3}{0.8}, Rightarrow, from=1-4, to=3-3]
	\arrow["{{{\bar{v}}}}"', from=1-5, to=1-4]
	\arrow[""{name=1, anchor=center, inner sep=0}, equals, from=1-5, to=2-5]
	\arrow["{{{y_3}}}"', from=1-6, to=1-5]
	\arrow[""{name=2, anchor=center, inner sep=0}, equals, from=1-6, to=2-6]
	\arrow["u"', from=2-5, to=2-4]
	\arrow[""{name=3, anchor=center, inner sep=0}, equals, from=2-5, to=3-5]
	\arrow["{{{y_3}}}"', from=2-6, to=2-5]
	\arrow[""{name=4, anchor=center, inner sep=0}, equals, from=2-6, to=3-6]
	\arrow["{{{{f_1}}}}", from=3-1, to=3-2]
	\arrow[equals, from=3-1, to=5-1]
	\arrow["{{{{{x_1}}}}}", from=3-2, to=3-3]
	\arrow["{{{{{y_1'}}}}}", from=3-3, to=3-4]
	\arrow["\alpha"', between={0.2}{0.8}, Rightarrow, from=3-3, to=5-1]
	\arrow[equals, from=3-3, to=5-3]
	\arrow[""{name=5, anchor=center, inner sep=0}, "qp", from=3-4, to=2-4]
	\arrow["v", tail reversed, no head, from=3-4, to=3-5]
	\arrow[""{name=6, anchor=center, inner sep=0}, equals, from=3-4, to=4-4]
	\arrow["{{{{{\beta'}}}}}"', between={0.3}{0.8}, Rightarrow, from=3-4, to=5-3]
	\arrow["{{{{y_3}}}}", tail reversed, no head, from=3-5, to=3-6]
	\arrow[""{name=7, anchor=center, inner sep=0}, equals, from=3-6, to=4-6]
	\arrow["v", from=4-5, to=4-4]
	\arrow["{{{{y_3}}}}", from=4-6, to=4-5]
	\arrow[""{name=8, anchor=center, inner sep=0}, equals, from=4-6, to=5-6]
	\arrow["{{{{f_2}}}}"', from=5-1, to=5-2]
	\arrow[equals, from=5-1, to=7-1]
	\arrow["{{{{{x_2}}}}}"', from=5-2, to=5-3]
	\arrow["{{{{{y'_2}}}}}"', from=5-3, to=5-4]
	\arrow[equals, from=5-3, to=7-3]
	\arrow[""{name=9, anchor=center, inner sep=0}, equals, from=5-4, to=4-4]
	\arrow[""{name=10, anchor=center, inner sep=0}, "qp"', from=5-4, to=6-4]
	\arrow["{{q\theta_2^{-1}}}"{description}, between={0.3}{0.8}, Rightarrow, from=5-4, to=7-3]
	\arrow["v", from=5-5, to=5-4]
	\arrow[""{name=11, anchor=center, inner sep=0}, equals, from=5-5, to=6-5]
	\arrow["{{{{y_3}}}}", from=5-6, to=5-5]
	\arrow[""{name=12, anchor=center, inner sep=0}, equals, from=5-6, to=6-6]
	\arrow["u", from=6-5, to=6-4]
	\arrow[""{name=13, anchor=center, inner sep=0}, equals, from=6-5, to=7-5]
	\arrow["{{{y_3}}}", from=6-6, to=6-5]
	\arrow[""{name=14, anchor=center, inner sep=0}, equals, from=6-6, to=7-6]
	\arrow["{{{f_2}}}"', from=7-1, to=7-2]
	\arrow["{{{x_2}}}"', from=7-2, to=7-3]
	\arrow["{{{\bar{y}_2}}}"', from=7-3, to=7-4]
	\arrow[""{name=15, anchor=center, inner sep=0}, "{{{q\bar{p}}}}", from=7-4, to=6-4]
	\arrow["{{{\bar{v}}}}", from=7-5, to=7-4]
	\arrow["{{{y_3}}}", from=7-6, to=7-5]
	\arrow["{{\SIGMA^{\psi\odot\bar{\chi}}}}"{description}, draw=none, from=0, to=1]
	\arrow["{{{\SIGMA^{\id}}}}"{description}, draw=none, from=1, to=2]
	\arrow["{{{\SIGMA^{\id}}}}"{description}, draw=none, from=3, to=4]
	\arrow["{{\SIGMA^{\psi\odot\chi}}}"{description}, draw=none, from=5, to=3]
	\arrow["{{{{\SIGMA^{\id}}}}}"{description}, draw=none, from=6, to=7]
	\arrow["{{{{\SIGMA^{\id}}}}}"{description}, draw=none, from=9, to=8]
	\arrow["{{\SIGMA^{\psi\odot\chi}}}"{description}, draw=none, from=10, to=11]
	\arrow["{{{\SIGMA^{\id}}}}"{description}, draw=none, from=11, to=12]
	\arrow["{{{\SIGMA^{\id}}}}"{description}, draw=none, from=13, to=14]
	\arrow["{{\SIGMA^{\psi\odot\bar{\chi}}}}"{description}, draw=none, from=15, to=13]
\end{tikzcd}\]
The top and the bottom 2-morphisms of this diagram are of $\Omega$ type. More precisely, we have the $\Omega$ 2-cells
\[\begin{tikzcd}
	{\tilde{\Omega}_i=[q\theta_ix_if_i,qp, q\bar{p}]\colon (\bar{y}_ix_if_i,\bar{v}y_3)} & {(y^{\prime}_ix_if_i,vy_3)}
	\arrow[Rightarrow, from=1-1, to=1-2]
\end{tikzcd}\]
whose inverses $\tilde{\Omega}_i^{-1}$ correspond to the $\Sg$-step
\[\begin{tikzcd}
	& B & {I_i} &&& B & {I_i} \\
	C & {J_i} & {} && C & {J_i} \\
	C & Y & V && C & Y & {\bar{V}}
	\arrow["{{{{{{r_i}}}}}}", from=1-2, to=1-3]
	\arrow["{{{{{{g_i}}}}}}"', from=1-2, to=2-2]
	\arrow[""{name=0, anchor=center, inner sep=0}, "{{y'_ix_i}}", from=1-3, to=3-3]
	\arrow["{{r_i}}", from=1-6, to=1-7]
	\arrow["{{g_i}}"', from=1-6, to=2-6]
	\arrow[""{name=1, anchor=center, inner sep=0}, "{{\bar{y}_ix_i}}", from=1-7, to=3-7]
	\arrow["{{{{{{s_i}}}}}}", from=2-1, to=2-2]
	\arrow[""{name=2, anchor=center, inner sep=0}, equals, from=2-1, to=3-1]
	\arrow[""{name=3, anchor=center, inner sep=0}, "{{{{{{y_i}}}}}}", from=2-2, to=3-2]
	\arrow["u"{pos=0.65}, between={0.4}{0.9}, squiggly, from=2-3, to=2-5]
	\arrow["{{s_i}}", from=2-5, to=2-6]
	\arrow[""{name=4, anchor=center, inner sep=0}, equals, from=2-5, to=3-5]
	\arrow[""{name=5, anchor=center, inner sep=0}, "{{y_i}}", from=2-6, to=3-6]
	\arrow["{{{{{{y_3}}}}}}"', from=3-1, to=3-2]
	\arrow["v"', from=3-2, to=3-3]
	\arrow["{{y_3}}"', from=3-5, to=3-6]
	\arrow["{{\bar{v}}}"', from=3-6, to=3-7]
	\arrow["{{\SIGMA^{\epsilon_i}}}"{description}, draw=none, from=2, to=3]
	\arrow["{{\SIGMA^{\xi_i\odot\delta_i}}}"{description}, draw=none, from=2-2, to=0]
	\arrow["{{\SIGMA^{\epsilon_i}}}"{description}, draw=none, from=4, to=5]
	\arrow["{{\SIGMA^{\bar{\xi}_i\odot\delta_i}}}"{description}, draw=none, from=2-6, to=1]
\end{tikzcd}\, .\]
Adding this $\Sg$-step to the end of the $\Sg$-path \eqref{eq:Omega-i}, and applying \cref{lem:Sigma-steps}(1), we obtain a $\Sg$-path of the form
$\begin{tikzcd}
	{\bullet} & {\bullet} & {\bullet}
	\arrow["d", squiggly, from=1-1, to=1-2]
	\arrow["u", squiggly, from=1-2, to=1-3]
\end{tikzcd}$
which determines, for $i=1,2$, the $\Omega$ 2-cells	
	\[\begin{tikzcd}
	{\bar{\Omega}_i\colon (g_i,s_i)\circ (f_i,r_i)} & {(\bar{y}_ix_if_i,\bar{v}y_3)}
	\arrow[Rightarrow, from=1-1, to=1-2]
	\end{tikzcd}\]
required in Definition~\ref{def:horizontal}.
Hence, $\bar{\Omega}_i = \tilde{\Omega}_i^{-1} \cdot \Omega_i$.
Now since $[\bar{\beta}\circ \alpha, 1_{\bar{V}}, 1_{\bar{V}}] = \tilde{\Omega}_2^{-1} \cdot [\beta'\circ \alpha, 1_{V}, 1_{V}] \cdot \tilde{\Omega}_1$, we can conclude that
$$\begin{array}{rl}
\bar{\Omega}_2^{-1}\cdot [\bar{\beta}\circ \alpha,1_{\bar{V}}, 1_{\bar{V}}]\cdot \bar{\Omega}_1
&=\Omega_2^{-1}\cdot [\beta'\circ \alpha,1_V,1_V]\cdot \Omega_1,
\end{array}$$
as desired.

\noindent (2) Secondly, we show that this composition respects the $\approx$-relation. That is, given $(\hat{\alpha},\hat{x}_1,\hat{x}_2)\approx (\alpha,x_1,x_2)\colon (f_1,r_1)\Rightarrow (f_2,r_2)\colon A\to B$ and $(\hat{\beta},\hat{y}_1,\hat{y}_2)\approx (\beta,y_1,y_2)\colon (g_1,s_1)\Rightarrow (g_2,s_2)\colon B\to C$, we have $(\hat{\beta},\hat{y}_1,\hat{y}_2)\circ (\hat{\alpha},\hat{x}_1,\hat{x}_2)\approx (\beta,y_1,y_2)\circ (\alpha,x_1,x_2)$.
By transitivity it is enough to consider the following two special cases:
\begin{enumerate}
	\item[(2a)] \ $(\hat{\alpha},\hat{x}_1,\hat{x}_2)$ is a $\Sigma$-extension of  $(\alpha,x_1,x_2)$ and $(\beta,y_1,y_2)=(\hat{\beta},\hat{y}_1,\hat{y}_2)$;
	\item[(2b)] \ $(\hat{\beta},\hat{y}_1,\hat{y}_2)$ is a $\Sigma$-extension of $(\beta,y_1,y_2)$ and $(\hat{\alpha},\hat{x}_1,\hat{x}_2)=(\alpha,x_1,x_2)$.
\end{enumerate}

\noindent (2a) In the first case, let $(\hat{\alpha},\hat{x}_1,\hat{x}_2)$ be a $\Sigma$-extension of $(\alpha,x_1,x_2)$ by means of the following equality:
\[\begin{tikzcd}
	A && {I_1} && B \\
	& X && X & B & A & {I_1} & B \\
	&& D && B & {} & D & B \\
	& X && X & B & A & {I_2} & B \\
	A && {I_2} && B \rlap{\,.}
	\arrow[""{name=0, anchor=center, inner sep=0}, "{f_1}", squiggly, from=1-1, to=1-3]
	\arrow[Rightarrow, squiggly, no head, from=1-1, to=5-1]
	\arrow["{{{{{x_1}}}}}"{description}, curve={height=6pt}, from=1-3, to=2-2]
	\arrow[""{name=1, anchor=center, inner sep=0}, "{{{{{x_1}}}}}"{description}, curve={height=-6pt}, from=1-3, to=2-4]
	\arrow[""{name=2, anchor=center, inner sep=0}, "{{\hat{x}_1}}"{description}, squiggly, from=1-3, to=3-3]
	\arrow["{{{{{r_1}}}}}"', squiggly, from=1-5, to=1-3]
	\arrow[""{name=3, anchor=center, inner sep=0}, Rightarrow, squiggly, no head, from=1-5, to=2-5]
	\arrow["d"{description}, curve={height=6pt}, from=2-2, to=3-3]
	\arrow[""{name=4, anchor=center, inner sep=0}, "d"{description}, curve={height=-6pt}, from=2-4, to=3-3]
	\arrow["{{{{{x_3}}}}}"', from=2-5, to=2-4]
	\arrow[""{name=5, anchor=center, inner sep=0}, Rightarrow, squiggly, no head, from=2-5, to=3-5]
	\arrow["{f_1}", squiggly, from=2-6, to=2-7]
	\arrow[Rightarrow, squiggly, no head, from=2-6, to=4-6]
	\arrow[""{name=6, anchor=center, inner sep=0}, "{\hat{x}_1}"', squiggly, from=2-7, to=3-7]
	\arrow["{\hat{\alpha}}"{description}, shorten <=11pt, shorten >=11pt, Rightarrow, from=2-7, to=4-6]
	\arrow["{r_1}"', squiggly, from=2-8, to=2-7]
	\arrow[""{name=7, anchor=center, inner sep=0}, Rightarrow, squiggly, no head, from=2-8, to=3-8]
	\arrow["u"', squiggly, from=3-5, to=3-3]
	\arrow["{\text{\normalsize =}}"{description}, draw=none, from=3-5, to=3-6]
	\arrow[""{name=8, anchor=center, inner sep=0}, Rightarrow, squiggly, no head, from=3-5, to=4-5]
	\arrow["u"', squiggly, from=3-8, to=3-7]
	\arrow[""{name=9, anchor=center, inner sep=0}, Rightarrow, squiggly, no head, from=3-8, to=4-8]
	\arrow["d"{description}, curve={height=-6pt}, from=4-2, to=3-3]
	\arrow[""{name=10, anchor=center, inner sep=0}, "d"{description}, curve={height=6pt}, from=4-4, to=3-3]
	\arrow["{{{{{x_3}}}}}"', from=4-5, to=4-4]
	\arrow[""{name=11, anchor=center, inner sep=0}, Rightarrow, squiggly, no head, from=4-5, to=5-5]
	\arrow["{f_2}"', squiggly, from=4-6, to=4-7]
	\arrow[""{name=12, anchor=center, inner sep=0}, "{\hat{x}_2}", squiggly, from=4-7, to=3-7]
	\arrow["{r_2}", squiggly, from=4-8, to=4-7]
	\arrow[""{name=13, anchor=center, inner sep=0}, "{{f_2}}"', squiggly, from=5-1, to=5-3]
	\arrow[""{name=14, anchor=center, inner sep=0}, "{{\hat{x}_2}}"{description}, squiggly, from=5-3, to=3-3]
	\arrow["{{{{{x_2}}}}}"{description}, curve={height=-6pt}, from=5-3, to=4-2]
	\arrow[""{name=15, anchor=center, inner sep=0}, "{{{{{x_2}}}}}"{description}, curve={height=6pt}, from=5-3, to=4-4]
	\arrow["{{{{{r_2}}}}}", squiggly, from=5-5, to=5-3]
	\arrow["{{d\circ \alpha}}"', shift right=5, shorten <=17pt, shorten >=17pt, Rightarrow, from=0, to=13]
	\arrow["{{\theta_1^{-1}}}"', shorten <=7pt, Rightarrow, from=2, to=2-2]
	\arrow["{{{{{\SIGMA^{\delta_1}}}}}}"{description}, draw=none, from=1, to=3]
	\arrow["{{\theta_1}}"', shorten >=5pt, Rightarrow, from=2-4, to=2]
	\arrow["{\SIGMA^{\phi}}"{description}, draw=none, from=4, to=5]
	\arrow["{\SIGMA^{\hat{\delta}_1}}"{description}, draw=none, from=6, to=7]
	\arrow["{{\theta_2}}", shorten >=7pt, Rightarrow, from=4-2, to=14]
	\arrow["{{\theta_2}}"', shorten >=7pt, Rightarrow, from=4-4, to=14]
	\arrow["{\SIGMA^{\phi}}"{description}, draw=none, from=10, to=8]
	\arrow["{\SIGMA^{\hat{\delta}_2}}"{description}, draw=none, from=12, to=9]
	\arrow["{{{{{\SIGMA^{\delta_2}}}}}}"{description}, draw=none, from=15, to=11]
\end{tikzcd}\]

In order to get $(\beta,y_1,y_2)\circ (\hat{\alpha},\hat{x}_1,\hat{x}_2)$, we apply Rule 6:
\[\begin{tikzcd}
	B & D &&& B & D \\
	{J_1} &&& {J_1} & {J_2} \\
	Y & V &&& Y & V \rlap{\,.}
	\arrow["u", from=1-1, to=1-2]
	\arrow["{{{g_1}}}"', from=1-1, to=2-1]
	\arrow[""{name=0, anchor=center, inner sep=0}, "{{y''_1}}", from=1-2, to=3-2]
	\arrow[""{name=1, anchor=center, inner sep=0}, "{{y''_2}}", curve={height=-40pt}, from=1-2, to=3-2]
	\arrow["u", from=1-5, to=1-6]
	\arrow["{{{g_1}}}"', curve={height=6pt}, from=1-5, to=2-4]
	\arrow["{{{g_2}}}", from=1-5, to=2-5]
	\arrow[""{name=2, anchor=center, inner sep=0}, "{{y''_2}}", from=1-6, to=3-6]
	\arrow["{{{y_1}}}"', from=2-1, to=3-1]
	\arrow["\beta", shorten <=4pt, shorten >=4pt, Rightarrow, from=2-4, to=2-5]
	\arrow["{{{y_1}}}"', curve={height=6pt}, from=2-4, to=3-5]
	\arrow["{{{y_2}}}", from=2-5, to=3-5]
	\arrow["v", from=3-1, to=3-2]
	\arrow["v", from=3-5, to=3-6]
	\arrow["{\text{\normalsize =}}"{description, pos=0.7}, draw=none, from=1, to=2-4]
	\arrow["{\beta^{\prime\prime}}"{pos=0.6}, shorten <=10pt, shorten >=4pt, Rightarrow, from=0, to=1]
	\arrow["{{{\SIGMA^{\xi'_1}}}}"{description}, draw=none, from=2-1, to=0]
	\arrow["{{{\SIGMA^{\xi'_2}}}}"{description}, draw=none, from=2-5, to=2]
\end{tikzcd}\]
By considering
$$y'_i=y^{\prime\prime}_id, \;\xi_i=\xi^{\prime}_i\odot\phi \text{ (for $i=1,2$)},  \; \text{and}\;  \beta'=\beta^{\prime\prime}\circ d$$ we obtain the data to define $[\beta,y_1,y_2]\circ [\alpha, x_1,x_2]$, as in \eqref{eq:(A12)}.
Let
$$\begin{array}{l}\Omega_i\colon (g_i,s_i)\circ (f_i,r_i)\Rightarrow(y'_ix_if_i, vy_3)\text{ \hskip2mm and}\\
	\hat{\Omega}_i\colon (g_i,s_i)\circ (f_i,r_i)\Rightarrow(y^{\prime\prime}_i\hat{x}_if_i, vy_3)
\end{array}$$
be the $\Omega$ 2-cells obtained as in Definition \ref{def:horizontal} to arrive at $[\beta,y_1,y_2]\circ [\alpha, x_1,x_2]$
 and $[\beta,y_1,y_2]\circ [\hat{\alpha}, \hat{x}_1,\hat{x}_2]$, respectively.
 In particular, $\hat{\Omega}_i$ is the $\Omega$ 2-cell corresponding to the $\Sg$-path
\begin{equation}\label{eq:hat-Omega_i}
\begin{tikzcd}
	& {} & {} &&& {} & {} &&& {} & {} \\
	{} & {} & {} && {} & {} & {} && {} & {} & {} \\
	{} & {} & {} && {} & {} & {} && {} & {} & {}
	\arrow["{{r_i}}", from=1-2, to=1-3]
	\arrow[""{name=0, anchor=center, inner sep=0}, "{{g_i}}"', from=1-2, to=2-2]
	\arrow[""{name=1, anchor=center, inner sep=0}, from=1-3, to=2-3]
	\arrow["{{r_i}}", from=1-6, to=1-7]
	\arrow[""{name=2, anchor=center, inner sep=0}, "{{g_i}}"', from=1-6, to=2-6]
	\arrow[""{name=3, anchor=center, inner sep=0}, from=1-7, to=2-7]
	\arrow["{{r_i}}", from=1-10, to=1-11]
	\arrow["{{g_i}}"', from=1-10, to=2-10]
	\arrow["{{y''_i\hat{x}_i}}", from=1-11, to=3-11]
	\arrow["{{s_i}}", from=2-1, to=2-2]
	\arrow[""{name=4, anchor=center, inner sep=0}, equals, from=2-1, to=3-1]
	\arrow[from=2-2, to=2-3]
	\arrow[""{name=5, anchor=center, inner sep=0}, equals, from=2-2, to=3-2]
	\arrow["d", squiggly, between={0.2}{0.8}, from=2-3, to=2-5]
	\arrow[""{name=6, anchor=center, inner sep=0}, equals, from=2-3, to=3-3]
	\arrow["{{s_i}}", from=2-5, to=2-6]
	\arrow[""{name=7, anchor=center, inner sep=0}, equals, from=2-5, to=3-5]
	\arrow[from=2-6, to=2-7]
	\arrow[""{name=8, anchor=center, inner sep=0}, "{{y_i}}"', from=2-6, to=3-6]
	\arrow["u", squiggly, between={0.2}{0.8}, from=2-7, to=2-9]
	\arrow[""{name=9, anchor=center, inner sep=0}, from=2-7, to=3-7]
	\arrow["{{s_i}}", from=2-9, to=2-10]
	\arrow[""{name=10, anchor=center, inner sep=0}, equals, from=2-9, to=3-9]
	\arrow["\SIGMA"{description}, draw=none, from=2-10, to=2-11]
	\arrow[""{name=11, anchor=center, inner sep=0}, "{{y_i}}", from=2-10, to=3-10]
	\arrow["{{s_i}}", from=3-1, to=3-2]
	\arrow[from=3-2, to=3-3]
	\arrow["{{y_3}}"', from=3-5, to=3-6]
	\arrow[from=3-6, to=3-7]
	\arrow["{{y_3}}"', from=3-9, to=3-10]
	\arrow["v"', from=3-10, to=3-11]
	\arrow["\dS"{description}, draw=none, from=0, to=1]
	\arrow["\dS"{description}, draw=none, from=2, to=3]
	\arrow["{{\SIGMA^{\id}}}"{description}, draw=none, from=4, to=5]
	\arrow["{{\SIGMA^{\id}}}"{description}, draw=none, from=5, to=6]
	\arrow["\SIGMA"{description}, draw=none, from=7, to=8]
	\arrow["\dS"{description}, draw=none, from=8, to=9]
	\arrow["\SIGMA"{description}, draw=none, from=10, to=11]
\end{tikzcd}\; .
\end{equation}
Observe that $\tilde{\Omega}_i=[y''_i\circ \theta_i^{-1}\circ f_i,1,1]\colon (y''_i\hat{x}_if_i,vy_3)\Rightarrow (y'_ix_if_i,vy_3)$ is a basic $\Omega$ 2-cell produced by a $\Sg$-step of type $u$. Hence, using Lemma \ref{lem:Sigma-steps}(1), we see that $\tilde{\Omega}_i\cdot \hat{\Omega}_i=\Omega_i$.

Consider the following diagram, which represents the composition $[\beta,y_1,y_2]\circ [\hat{\alpha},\hat{x}_1,\hat{x}_2]$.

\[\begin{tikzcd}
	A & {I_1} &&& {\dot{B}_1} & {J_1} & C \\
	\\
	A & {I_1} & X & D & V & Y & C \\
	&&&& V & Y & C \\
	A & {I_2} & X & D & V & Y & C \\
	\\
	A & {I_2} &&& {\dot{B}_2} & {J_2} & C
	\arrow["{{f_1}}"', from=1-1, to=1-2]
	\arrow[Rightarrow, no head, from=1-1, to=3-1]
	\arrow["{{\dot{g}_1}}"', from=1-2, to=1-5]
	\arrow["{{\hat{\Omega}_1}}", shorten <=6pt, shorten >=6pt, Rightarrow, from=1-5, to=3-5]
	\arrow["{{\dot{r}_1}}", from=1-6, to=1-5]
	\arrow["{{s_1}}", from=1-7, to=1-6]
	\arrow[Rightarrow, no head, from=1-7, to=3-7]
	\arrow["{{{f_1}}}", from=3-1, to=3-2]
	\arrow[Rightarrow, no head, from=3-1, to=5-1]
	\arrow["{{{x_1}}}", from=3-2, to=3-3]
	\arrow[""{name=0, anchor=center, inner sep=0}, "{{\hat{x}_1}}", curve={height=-30pt}, from=3-2, to=3-4]
	\arrow["d", from=3-3, to=3-4]
	\arrow["{{{y^{\prime\prime}_1}}}", from=3-4, to=3-5]
	\arrow["{{d\circ \alpha}}"', shorten <=25pt, shorten >=25pt, Rightarrow, from=3-4, to=5-1]
	\arrow[Rightarrow, no head, from=3-4, to=5-4]
	\arrow[""{name=1, anchor=center, inner sep=0}, Rightarrow, no head, from=3-5, to=4-5]
	\arrow["{{\beta^{\prime\prime}}}"', Rightarrow, from=3-5, to=5-4]
	\arrow["v"', from=3-6, to=3-5]
	\arrow["{{{y_3}}}"', from=3-7, to=3-6]
	\arrow[""{name=2, anchor=center, inner sep=0}, Rightarrow, no head, from=3-7, to=4-7]
	\arrow[""{name=3, anchor=center, inner sep=0}, Rightarrow, no head, from=4-5, to=5-5]
	\arrow["v"', from=4-6, to=4-5]
	\arrow["{{{y_3}}}"', from=4-7, to=4-6]
	\arrow[""{name=4, anchor=center, inner sep=0}, Rightarrow, no head, from=4-7, to=5-7]
	\arrow["{{{f_2}}}"', from=5-1, to=5-2]
	\arrow[Rightarrow, no head, from=5-1, to=7-1]
	\arrow["{{{x_2}}}"', from=5-2, to=5-3]
	\arrow[""{name=5, anchor=center, inner sep=0}, "{{\hat{x}_2}}"', curve={height=30pt}, from=5-2, to=5-4]
	\arrow["d"', from=5-3, to=5-4]
	\arrow["{{{y^{\prime\prime}_2}}}"', from=5-4, to=5-5]
	\arrow["{\hat{\Omega}^{-1}_2}", shorten <=6pt, shorten >=6pt, Rightarrow, from=5-5, to=7-5]
	\arrow["v", from=5-6, to=5-5]
	\arrow["{{{y_3}}}", from=5-7, to=5-6]
	\arrow[Rightarrow, no head, from=5-7, to=7-7]
	\arrow["{{f_2}}", from=7-1, to=7-2]
	\arrow["{{\dot{g}_2}}", from=7-2, to=7-5]
	\arrow["{{\dot{r}_2}}"', from=7-6, to=7-5]
	\arrow["{{s_2}}"', from=7-7, to=7-6]
	\arrow["{{\theta_1^{-1}}}", shorten <=3pt, Rightarrow, from=0, to=3-3]
	\arrow["{{{\SIGMA^{\id}}}}"{description}, draw=none, from=1, to=2]
	\arrow["{{{\SIGMA^{\id}}}}"{description}, draw=none, from=3, to=4]
	\arrow["{{\theta_2}}", shorten >=3pt, Rightarrow, from=5-3, to=5]
\end{tikzcd}\]
We obtain
\begin{align*}
[(\beta,y_1,y_2)\circ (\hat{\alpha},\hat{x}_1,\hat{x}_2)]&=\hat{\Omega}_2^{-1}\cdot[\beta^{\prime\prime}\circ\hat{\alpha},1_V,1_V]\cdot \hat{\Omega}_1          \\
&=\hat{\Omega}_2^{-1}\cdot[\beta^{\prime\prime}\circ\big((\theta_2\circ f_2)\cdot (d\circ \alpha)\cdot (\theta_1^{-1}\circ f_1)\big),1_V,1_V])\cdot \hat{\Omega}_1          \\
&=\hat{\Omega}_2^{-1}\cdot\tilde{\Omega}^{-1}_2\cdot [\beta'\circ \alpha,1_V,1_V]\cdot \tilde{\Omega}_1\cdot \hat{\Omega}_1       \\
&=
\Omega_2^{-1}\cdot [\beta'\circ \alpha,1_V,1_V]\cdot \Omega_1\\
&=[(\beta,y_1,y_2)\circ (\alpha,x_1,x_2)],
\end{align*}
as desired.

\noindent (2b) For the second case, let $(\hat{\beta},\hat{y}_1, \hat{y}_2)$ be a $\Sigma$-extension of $(\beta,y_1,y_2)$ through the following equalities, where the 2-cells $\theta_i$ are invertible:

\begin{equation}\label{eq:D1}
\begin{tikzcd}
	C & {J_1} \\
	C & Y && C & {J_1} \\
	C & {\hat{Y}} && C & {\hat{Y}} \\
	C & Y && C & {J_2} \\
	C & {J_2}
	\arrow["{{{s_1}}}", from=1-1, to=1-2]
	\arrow[""{name=0, anchor=center, inner sep=0}, equals, from=1-1, to=2-1]
	\arrow[""{name=1, anchor=center, inner sep=0}, "{{{y_1}}}", from=1-2, to=2-2]
	\arrow[""{name=2, anchor=center, inner sep=0}, "{{{\hat{y}_1}}}", curve={height=-24pt}, from=1-2, to=3-2]
	\arrow["{{{y_3}}}"', from=2-1, to=2-2]
	\arrow[""{name=3, anchor=center, inner sep=0}, equals, from=2-1, to=3-1]
	\arrow[""{name=4, anchor=center, inner sep=0}, "d", from=2-2, to=3-2]
	\arrow["{{{s_1}}}", from=2-4, to=2-5]
	\arrow[""{name=5, anchor=center, inner sep=0}, equals, from=2-4, to=3-4]
	\arrow[""{name=6, anchor=center, inner sep=0}, "{{{\hat{y}_1}}}", from=2-5, to=3-5]
	\arrow["{{\hat{y}_3}}"{description}, from=3-1, to=3-2]
	\arrow[""{name=7, anchor=center, inner sep=0}, equals, from=3-1, to=4-1]
	\arrow["{{{\text{\normalsize =}}}}"{description, pos=0.7}, draw=none, from=3-2, to=3-4]
	\arrow["{{{\hat{y}_3}}}", from=3-4, to=3-5]
	\arrow[""{name=8, anchor=center, inner sep=0}, equals, from=3-4, to=4-4]
	\arrow["{{{y_3}}}"', from=4-1, to=4-2]
	\arrow[""{name=9, anchor=center, inner sep=0}, equals, from=4-1, to=5-1]
	\arrow[""{name=10, anchor=center, inner sep=0}, "d"', from=4-2, to=3-2]
	\arrow["{{{s_2}}}"', from=4-4, to=4-5]
	\arrow[""{name=11, anchor=center, inner sep=0}, "{{{\hat{y}_2}}}"', from=4-5, to=3-5]
	\arrow["{{{s_2}}}"', from=5-1, to=5-2]
	\arrow[""{name=12, anchor=center, inner sep=0}, "{{{\hat{y}_2}}}"', curve={height=24pt}, from=5-2, to=3-2]
	\arrow[""{name=13, anchor=center, inner sep=0}, "{{{y_2}}}"', from=5-2, to=4-2]
	\arrow["{{{\SIGMA^{\epsilon_1}}}}"{description}, draw=none, from=0, to=1]
	\arrow["{{\SIGMA^{\lambda}}}"{description}, draw=none, from=3, to=4]
	\arrow["{{{\theta_1}}}"'{pos=0.4}, shorten >=3pt, Rightarrow, from=2-2, to=2]
	\arrow["{{{\SIGMA^{\hat{\epsilon}_1}}}}"{description}, draw=none, from=5, to=6]
	\arrow["{{\SIGMA^{\lambda}}}"{description}, draw=none, from=7, to=10]
	\arrow["{{{\SIGMA^{\hat{\epsilon}_2}}}}"{description}, draw=none, from=8, to=11]
	\arrow["{{{\SIGMA^{\epsilon_2}}}}"{description}, draw=none, from=9, to=13]
	\arrow["{{{\theta_2}}}"'{pos=0.4}, shorten >=3pt, Rightarrow, from=4-2, to=12]
\end{tikzcd}\, ; \ \
\hat{\beta}=(\theta_2\circ g_2)\cdot (d\circ \beta)\cdot (\theta_1^{-1}\circ g_1).
\end{equation}

Consider the $\Sigma$-squares $\Sigma^{\xi_i}$ of Equation~\eqref{eq:(A12)} of the definition of horizontal composition for $(\alpha,x_1,x_2)$ and $(\beta,y_1,y_2)$, and use Square and the 2-cells $\theta_i$ to get
\[\begin{tikzcd}
& B & X \\
{J_i} & Y & V \\
& {\hat{Y}} & Q
\arrow["{x_3}", from=1-2, to=1-3]
\arrow["{g_i}"', curve={height=6pt}, from=1-2, to=2-1]
\arrow[""{name=0, anchor=center, inner sep=0}, "{y_ig_i}"', from=1-2, to=2-2]
\arrow[""{name=1, anchor=center, inner sep=0}, "{y'_i}", from=1-3, to=2-3]
\arrow["{\theta_i^{-1}\circ g_i}", Rightarrow, from=2-1, to=2-2]
\arrow["{\hat{y}_i}"', curve={height=6pt}, from=2-1, to=3-2]
\arrow["v"', from=2-2, to=2-3]
\arrow[""{name=2, anchor=center, inner sep=0}, "d"', from=2-2, to=3-2]
\arrow[""{name=3, anchor=center, inner sep=0}, "{d'}", from=2-3, to=3-3]
\arrow["u"', from=3-2, to=3-3]
\arrow["{\SIGMA^{\xi_i}}"{description}, draw=none, from=0, to=1]
\arrow["{\SIGMA^{\mu}}"{description}, draw=none, from=2, to=3]
\end{tikzcd}\, .\]
By Horizontal Repletion, we have the $\Sigma$-square
\[\begin{tikzcd}
	B & B \\
	{\hat{Y}} & {\hat{Y}}
	\arrow[equals, from=1-1, to=1-2]
	\arrow[""{name=0, anchor=center, inner sep=0}, "{\hat{y}_ig_i}"', from=1-1, to=2-1]
	\arrow[""{name=1, anchor=center, inner sep=0}, "{dy_ig_i}", from=1-2, to=2-2]
	\arrow[equals, from=2-1, to=2-2]
	\arrow["{\SIGMA^{\theta_i^{-1}g_i}}"{description}, draw=none, from=0, to=1]
\end{tikzcd}\]
Put
$$\nu_i=(\theta_i^{-1}g_i)\oplus (\mu\odot \xi_i),\;
y^{\prime\prime}_i=d'y'_i\;  \text{and}\;  \beta^{\prime\prime}=d'\circ \beta'.$$
A simple calculation shows that
\[\begin{tikzcd}
	B & X && B & X \\
	{\hat{Y}} & Q && {\hat{Y}} & Q
	\arrow["{{{{{{x_3}}}}}}", from=1-1, to=1-2]
	\arrow[""{name=0, anchor=center, inner sep=0}, "{{{{\hat{y}_2g_2}}}}"{description}, from=1-1, to=2-1]
	\arrow[""{name=1, anchor=center, inner sep=0}, "{{{{\hat{y}_1g_1}}}}"', curve={height=30pt}, from=1-1, to=2-1]
	\arrow[""{name=2, anchor=center, inner sep=0}, "{{{y^{\prime\prime}_2}}}", from=1-2, to=2-2]
	\arrow["{{{{x_3}}}}", from=1-4, to=1-5]
	\arrow[""{name=3, anchor=center, inner sep=0}, "{{{{\hat{y}_1g_1}}}}"', from=1-4, to=2-4]
	\arrow[""{name=4, anchor=center, inner sep=0}, "{{{y^{\prime\prime}_1}}}"{description}, from=1-5, to=2-5]
	\arrow[""{name=5, anchor=center, inner sep=0}, "{{{y^{\prime\prime}_2}}}", curve={height=-30pt}, from=1-5, to=2-5]
	\arrow["u"', from=2-1, to=2-2]
	\arrow["u"', from=2-4, to=2-5]
	\arrow["{{{{\hat{\beta}}}}}"'{pos=0.2}, shift left, between={0.3}{0.7}, Rightarrow, from=1, to=0]
	\arrow["{{{{\SIGMA^{\nu_2}}}}}"{description}, draw=none, from=0, to=2]
	\arrow["{{\text{\normalsize =}}}"{description, pos=0.4}, draw=none, from=2, to=3]
	\arrow["{{{{\SIGMA^{\nu_1}}}}}"{description}, draw=none, from=3, to=4]
	\arrow["{{{\beta^{\prime\prime}}}}", shift right, between={0.3}{0.7}, Rightarrow, from=4, to=5]
\end{tikzcd}\, .\]
Therefore, the $\Sigma$-squares $\Sigma^{\nu_i}$ and the 2-cell $\beta^{\prime\prime}$ satisfy the conditions corresponding to \eqref{eq:(A12)} in the process of obtaining  the horizontal composition of $(\alpha, x_1,x_2)$ with $(\hat{\beta},\hat{x}_1, \hat{x}_2)$. In this way, we get the 2-morphism in the middle of  diagram \eqref{eq:D5} below. This diagram illustrates the desired $\approx$-equivalence. Indeed, as in Definition \ref{def:horizontal}, consider
\[\begin{tikzcd}
	{\Omega_i\colon (g_i,s_i)\circ (f_i,r_i)} & {(y'_ix_if_i,vy_3)}
	\arrow[Rightarrow, from=1-1, to=1-2]
\end{tikzcd}\]
and
\[\begin{tikzcd}
	{\hat{\Omega}_i\colon (g_i,s_i)\circ (f_i,r_i)} & {(d'y'_ix_if_i,u\hat{y}_3)}
	\arrow[Rightarrow, from=1-1, to=1-2]
\end{tikzcd} \]
corresponding to the compositions $[\beta,y_1,y_2]\circ [\alpha,x_1,x_2]$ and $[\hat{\beta},\hat{y}_1,\hat{y}_2]\circ [\alpha,x_1,x_2]$, respectively.  Now let
	\[\begin{tikzcd}
	{\tilde{\Omega}_i\colon (y'_ix_if_i,vy_3)} & {(d'y'_ix_if_i,u\hat{y}_3)}
	\arrow[Rightarrow, from=1-1, to=1-2]
\end{tikzcd}\]
be the basic $\Omega$ 2-cell corresponding to the following  $\Sg$-step of type $d_1$ (see Notation \ref{nota:Sigma-steps}):
\[\begin{tikzcd}
	& {} & {} && {} & {} & {} \\
	{} & {} & {} && {} & {} & {} \\
	{} & {} & {} && {} & {} & {} \\
	{} & {} & {} && {} & {} & {}
	\arrow["{{{{r_i}}}}", from=1-2, to=1-3]
	\arrow["{{{{g_i}}}}"', from=1-2, to=2-2]
	\arrow[draw=none, from=1-3, to=2-3]
	\arrow["{{{{y'_ix_i}}}}", from=1-3, to=3-3]
	\arrow[draw=none, from=1-5, to=2-5]
	\arrow["{{{{r_i}}}}", from=1-6, to=1-7]
	\arrow["{{{{g_i}}}}"', from=1-6, to=2-6]
	\arrow["{{{{y'_ix_i}}}}", from=1-7, to=3-7]
	\arrow["{{{{s_i}}}}", from=2-1, to=2-2]
	\arrow[""{name=0, anchor=center, inner sep=0}, equals, from=2-1, to=3-1]
	\arrow["\SIGMA"{description}, draw=none, from=2-2, to=2-3]
	\arrow[""{name=1, anchor=center, inner sep=0}, "{{{{y_i}}}}", from=2-2, to=3-2]
	\arrow["{{{d_1}}}", between={0.3}{0.8}, squiggly, from=2-3, to=2-5]
	\arrow["{{{{s_i}}}}", from=2-5, to=2-6]
	\arrow[""{name=2, anchor=center, inner sep=0}, equals, from=2-5, to=3-5]
	\arrow["\SIGMA"{description}, draw=none, from=2-6, to=2-7]
	\arrow[""{name=3, anchor=center, inner sep=0}, "{{{{y_i}}}}", from=2-6, to=3-6]
	\arrow["{{y_3}}"', from=3-1, to=3-2]
	\arrow[""{name=4, anchor=center, inner sep=0}, equals, from=3-1, to=4-1]
	\arrow["v"', from=3-2, to=3-3]
	\arrow[""{name=5, anchor=center, inner sep=0}, equals, from=3-2, to=4-2]
	\arrow[""{name=6, anchor=center, inner sep=0}, equals, from=3-3, to=4-3]
	\arrow["{{{{y_3}}}}"', from=3-5, to=3-6]
	\arrow[""{name=7, anchor=center, inner sep=0}, equals, from=3-5, to=4-5]
	\arrow["v"', from=3-6, to=3-7]
	\arrow[""{name=8, anchor=center, inner sep=0}, "d", from=3-6, to=4-6]
	\arrow[""{name=9, anchor=center, inner sep=0}, "{{d'}}", from=3-7, to=4-7]
	\arrow["{{y_3}}"', from=4-1, to=4-2]
	\arrow["v"', from=4-2, to=4-3]
	\arrow["{{{{\hat{y}_3}}}}"', from=4-5, to=4-6]
	\arrow["u"', from=4-6, to=4-7]
	\arrow["\SIGMA"{description}, shift left, draw=none, from=0, to=1]
	\arrow["\SIGMA"{description}, shift left, draw=none, from=2, to=3]
	\arrow["{\SIGMA^\id}"{marking, allow upside down}, draw=none, from=4, to=5]
	\arrow["{\SIGMA^\id}"{marking, allow upside down}, draw=none, from=5, to=6]
	\arrow["\SIGMA"{description}, draw=none, from=7, to=8]
	\arrow["\SIGMA"{description}, draw=none, from=8, to=9]
\end{tikzcd}\, .\]

Recall that $\Omega_i$ is given by the $\Sg$-step \eqref{eq:Omega-i} as in Definition \ref{def:horizontal}. Comparing $\tilde{\Omega}_i\circ \Omega_i$ and $\hat{\Omega}_i$, which have the same domain and codomain, we see that, on one hand, $\tilde{\Omega}_i\circ \Omega_i$ corresponds to a $\Sg$-path of the form $\begin{tikzcd}
	\bullet & \bullet & \bullet & \bullet
	\arrow["d", squiggly, from=1-1, to=1-2]
	\arrow["u", squiggly, from=1-2, to=1-3]
	\arrow["{d_1}", squiggly, from=1-3, to=1-4]
\end{tikzcd}$, as in the top line of Diagram~\eqref{eq:diag-d} below, and, on the other hand, $\hat{\Omega}_i$ is obtained by a $\Sg$-path of the form \begin{tikzcd}
	\bullet & \bullet & \bullet
	\arrow["d", squiggly, from=1-1, to=1-2]
	\arrow["u", squiggly, from=1-2, to=1-3]
\end{tikzcd}, as in the bottom of the same diagram. Moreover, both $\Sg$-paths start  with the same $\Sg$-step \begin{tikzcd}
	\bullet & \bullet
	\arrow["d", squiggly, from=1-1, to=1-2]
\end{tikzcd}.
\begin{equation}\label{eq:diag-d}
\begin{tikzcd}
	\bullet & \bullet & \bullet & \bullet
	\arrow["d", squiggly, from=1-1, to=1-2]
	\arrow["u", squiggly, from=1-2, to=1-3]
	\arrow[""{name=0, anchor=center, inner sep=0}, "u"', curve={height=18pt}, squiggly, from=1-2, to=1-4]
	\arrow["{{{d_1}}}", squiggly, from=1-3, to=1-4]
	\arrow["\equiv"{description, pos=0.3}, draw=none, from=1-3, to=0]
\end{tikzcd}
\end{equation}
By Lemma \ref{lem:Sigma-steps} (see also \cref{rem:cycle}), the two $\Sg$-paths are  equivalent --- that is, $\tilde{\Omega}_i\circ \Omega_i=\hat{\Omega}_i$. Now consider
\begin{equation}\label{eq:D5}
	\begin{tikzcd}
	& A &&&&&&&& C \\
	{} & A &&&& V && Y && C \\
	{} & A &&&& Q && {\hat{Y}} && C \\
	&&&&& Q &&&& C \\
	{} & A &&&& Q && {\hat{Y}} && C \\
	{} & A &&&& V && Y && C \\
	& A &&&&&&&& C
	\arrow[""{name=0, anchor=center, inner sep=0}, "{{{{{{{(g_1,s_1)\circ (f_1,r_1)}}}}}}}", from=1-2, to=1-10]
	\arrow[equals, from=1-2, to=2-2]
	\arrow[equals, from=1-10, to=2-10]
	\arrow["{{{\tilde{\Omega}_1:}}}", draw=none, from=2-1, to=3-1]
	\arrow["{{{{{{{y'_1x_1f_1}}}}}}}", from=2-2, to=2-6]
	\arrow[equals, from=2-2, to=3-2]
	\arrow[""{name=1, anchor=center, inner sep=0}, "{{{{{{d'}}}}}}"', from=2-6, to=3-6]
	\arrow["v"', from=2-8, to=2-6]
	\arrow[""{name=2, anchor=center, inner sep=0}, "d"', from=2-8, to=3-8]
	\arrow["{{{{y_3}}}}"', from=2-10, to=2-8]
	\arrow[""{name=3, anchor=center, inner sep=0}, equals, from=2-10, to=3-10]
	\arrow[""{name=4, anchor=center, inner sep=0}, "{{{{d'y'_1x_1f_1}}}}"{description}, from=3-2, to=3-6]
	\arrow[equals, from=3-2, to=5-2]
	\arrow[""{name=5, anchor=center, inner sep=0}, equals, from=3-6, to=4-6]
	\arrow["u"{description}, from=3-8, to=3-6]
	\arrow["{{{{{{\hat{y}_3}}}}}}"{description}, from=3-10, to=3-8]
	\arrow[""{name=6, anchor=center, inner sep=0}, Rightarrow, from=3-10, to=4-10]
	\arrow[""{name=7, anchor=center, inner sep=0}, equals, from=4-6, to=5-6]
	\arrow["{{{{u\hat{y}_3}}}}"{description}, from=4-10, to=4-6]
	\arrow[""{name=8, anchor=center, inner sep=0}, equals, from=4-10, to=5-10]
	\arrow[""{name=9, anchor=center, inner sep=0}, "{{{{d'y'_2x_2f_2}}}}"{description}, from=5-2, to=5-6]
	\arrow[equals, from=5-2, to=6-2]
	\arrow["u"{description}, from=5-8, to=5-6]
	\arrow["{{{{\hat{y}_3}}}}"{description}, from=5-10, to=5-8]
	\arrow[""{name=10, anchor=center, inner sep=0}, equals, from=5-10, to=6-10]
	\arrow["{{{\tilde{\Omega}^{-1}_2:}}}"', draw=none, from=6-1, to=5-1]
	\arrow["{{{{y'_2x_2f_2}}}}"', from=6-2, to=6-6]
	\arrow[equals, from=6-2, to=7-2]
	\arrow[""{name=11, anchor=center, inner sep=0}, "{{{{d'}}}}", from=6-6, to=5-6]
	\arrow[""{name=12, anchor=center, inner sep=0}, "d", from=6-8, to=5-8]
	\arrow["v", from=6-8, to=6-6]
	\arrow["{{{{y_3}}}}", from=6-10, to=6-8]
	\arrow[equals, from=6-10, to=7-10]
	\arrow[""{name=13, anchor=center, inner sep=0}, "{{{{(g_2,s_2)\circ (f_2,r_2)}}}}"', from=7-2, to=7-10]
	\arrow["{{{{{{\Omega_1}}}}}}", between={0.2}{1}, Rightarrow, from=0, to=2-6]
	\arrow["{{{{\SIGMA^{\mu}}}}}"{description}, draw=none, from=1, to=2]
	\arrow["{{{{\SIGMA^{\lambda}}}}}"{description}, draw=none, from=2, to=3]
	\arrow["{{{{(d'\circ \beta')\circ \alpha}}}}", shift right, between={0.3}{0.7}, Rightarrow, from=4, to=9]
	\arrow["{{{{\SIGMA^{\id}}}}}"{description}, draw=none, from=5, to=6]
	\arrow["{{{{\SIGMA^{\id}}}}}"{description}, draw=none, from=7, to=8]
	\arrow["{{{{\SIGMA^{\mu}}}}}"{description}, draw=none, from=11, to=12]
	\arrow["{{{{\Omega^{-1}_2}}}}", between={0}{0.8}, Rightarrow, from=6-6, to=13]
	\arrow["{{{{\SIGMA^{\lambda}}}}}"{description}, draw=none, from=12, to=10]
\end{tikzcd}\; .
\end{equation}

We find
\[\begin{array}{ll}
	[(\beta,y_1,y_2)\circ (\alpha,x_1,x_2)]=\Omega^{-1}_2\cdot [\beta'\circ \alpha,1_V,1_V] \cdot \Omega_1 & \text{\ (see \cref{def:horizontal})}\\
=\Omega^{-1}_2\cdot [(d'\circ \beta')\circ \alpha,d',d'] \cdot  \Omega_1 & \text{}\\
	=\Omega^{-1}_2\cdot (\tilde{\Omega}^{-1}_2\cdot [(d'\circ \beta')\circ \alpha,1_Q,1_Q] \cdot \tilde{\Omega}_1)\cdot \Omega_1 & \text{}\\
	=(\Omega^{-1}_2\cdot \tilde{\Omega}^{-1}_2)\cdot [(d'\circ \beta')\circ \alpha,1_Q,1_Q] \cdot (\tilde{\Omega}_1\cdot \Omega_1) & \text{}\\
	=\hat{\Omega}^{-1}_2\cdot [(d'\circ \beta')\circ \alpha,1_Q,1_Q] \cdot \hat{\Omega}_1 & \text{}\\
	=[(\hat{\beta},\hat{y}_1,\hat{y}_2)\circ (\alpha,x_1,x_2)],
\end{array}\]
as required.
\end{proof}

The following remark will be useful in the proof of the next proposition.
\begin{lemma}\label{rem:two-2-cells}
Assume we are given the data
\[\begin{tikzcd}
	J && B & I
	\arrow[""{name=0, anchor=center, inner sep=0}, "{h_1}"', curve={height=30pt}, from=1-3, to=1-1]
	\arrow[""{name=1, anchor=center, inner sep=0}, "{h_3}", curve={height=-30pt}, from=1-3, to=1-1]
	\arrow[""{name=2, anchor=center, inner sep=0}, "{h_2}"{pos=0.7}, from=1-3, to=1-1]
	\arrow["r", from=1-3, to=1-4]
	\arrow["{\gamma_1}"', shorten <=4pt, shorten >=4pt, Rightarrow, from=0, to=2]
	\arrow["{\gamma_2}"', shorten <=4pt, shorten >=4pt, Rightarrow, from=2, to=1]
\end{tikzcd}\]
with
$r\in \Sigma$.
Then there are
\[\begin{tikzcd}
	B & I & {} &&& I && {} \\
	J & V & {} &&& V && {}
	\arrow["r", from=1-1, to=1-2]
	\arrow[""{name=0, anchor=center, inner sep=0}, "{h_i}"', from=1-1, to=2-1]
	\arrow[""{name=1, anchor=center, inner sep=0}, "{h^{\prime}_i}", from=1-2, to=2-2]
	\arrow[""{name=2, anchor=center, inner sep=0}, "{\ (i=1,2,3)}"{description}, draw=none, from=1-3, to=2-3]
	\arrow[""{name=3, anchor=center, inner sep=0}, "{h^{\prime}_i}"', curve={height=18pt}, from=1-6, to=2-6]
	\arrow[""{name=4, anchor=center, inner sep=0}, "{h^{\prime}_{i+1}}", curve={height=-18pt}, from=1-6, to=2-6]
	\arrow[""{name=5, anchor=center, inner sep=0}, draw=none, from=1-8, to=2-8]
	\arrow["v"', from=2-1, to=2-2]
	\arrow["{\SIGMA^{\xi_i}}"{description}, draw=none, from=0, to=1]
	\arrow["{\text{\normalsize and}}"{description}, draw=none, from=2, to=3]
	\arrow["{\gamma'_{i}}", shorten <=7pt, shorten >=7pt, Rightarrow, from=3, to=4]
	\arrow["{\ (i=1,2)}"{description, pos=0.3}, draw=none, from=4, to=5]
\end{tikzcd}\]
such that
\begin{equation}\label{eq:R1}
\begin{tikzcd}
	B & I && B & I && {} \\
	J & V && J & V && {}
	\arrow["r", from=1-1, to=1-2]
	\arrow[""{name=0, anchor=center, inner sep=0}, "{{{{{{{h_{i+1}}}}}}}}"{description}, curve={height=6pt}, from=1-1, to=2-1]
	\arrow[""{name=1, anchor=center, inner sep=0}, "{{{{{{{{h_i}}}}}}}}"', curve={height=40pt}, from=1-1, to=2-1]
	\arrow[""{name=2, anchor=center, inner sep=0}, "{{{{{{{{h^{\prime}_{i+1}}}}}}}}}", from=1-2, to=2-2]
	\arrow["r", from=1-4, to=1-5]
	\arrow[""{name=3, anchor=center, inner sep=0}, "{{{{{{{{h_i}}}}}}}}"', from=1-4, to=2-4]
	\arrow[""{name=4, anchor=center, inner sep=0}, "{{{{{{{{h^{\prime}_i}}}}}}}}", from=1-5, to=2-5]
	\arrow[""{name=5, anchor=center, inner sep=0}, "{{{{{{{{h^{\prime}_{i+1}}}}}}}}}", curve={height=-40pt}, from=1-5, to=2-5]
	\arrow[""{name=6, anchor=center, inner sep=0}, draw=none, from=1-7, to=2-7]
	\arrow["v"', from=2-1, to=2-2]
	\arrow["v"', from=2-4, to=2-5]
	\arrow["{{{{\gamma_i}}}}", between={0.2}{0.6}, Rightarrow, from=1, to=0]
	\arrow["{{{{\SIGMA^{\xi_{i+1}}}}}}"{description, pos=0.6}, draw=none, from=0, to=2]
	\arrow["{{{{{{{{\text{\normalsize =}}}}}}}}}"{description}, draw=none, from=2, to=3]
	\arrow["{{{{{{{{\SIGMA^{\xi_{i}}}}}}}}}}"{description}, draw=none, from=3, to=4]
	\arrow["{{{{{{{{\gamma^{\prime}_i}}}}}}}}", between={0.4}{0.8}, Rightarrow, from=4, to=5]
	\arrow["{{{{{{{{(i=1,2)}}}}}}}}"{description, pos=1}, draw=none, from=5, to=6]
\end{tikzcd}
\end{equation}
\end{lemma}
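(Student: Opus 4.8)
The plan is to apply the \textbf{Square} and \textbf{Equi-insertion} axioms (together with \textbf{Composition}) to the span $J \xleftarrow{h_i} B \xrightarrow{r} I$, building the $\Sigma$-squares $\Sigma^{\xi_i}$ one at a time and then using the $2$-cells $\gamma_i$ to transport them. First I would apply Square to each span $J \xleftarrow{h_i} B \xrightarrow{r} I$ (recalling $r \in \Sigma$), obtaining $\Sigma$-squares of the form
\[\begin{tikzcd}
	B & I \\
	J & {V_i}
	\arrow["r", from=1-1, to=1-2]
	\arrow[""{name=0, anchor=center, inner sep=0}, "{h_i}"', from=1-1, to=2-1]
	\arrow[""{name=1, anchor=center, inner sep=0}, "{k_i}", from=1-2, to=2-2]
	\arrow["{v_i}"', from=2-1, to=2-2]
	\arrow["{\SIGMA}"{description}, draw=none, from=0, to=1]
\end{tikzcd}\]
for $i=1,2,3$. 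These have different codomains, so the next step is to merge them. Here I would use Rule 4 (or rather its symmetric variant Rule 4') of Proposition~\ref{pro:useful_rules} twice, or alternatively Rule 5, to produce a single $\Sigma$-square for each $h_i$ all landing in a common object $V$, i.e. $\Sigma$-squares $\Sigma^{\xi_i}\colon r \to v$ with the \emph{same} bottom $1$-cell $v\colon J \to V$ and vertical part $h_i, h_i'$. This is exactly the kind of ``common refinement'' that Rule 4 provides, applied to the three spans $J \xleftarrow{\id} J \xrightarrow{v_i} V_i$; closing up once more if necessary with Composition.

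Once the $\Sigma^{\xi_i}$ are in hand, the $2$-cells $\gamma_i'$ are produced by \textbf{Equi-insertion}. Precisely, given the $2$-cell $\gamma_i\colon h_{i+1} \Rightarrow h_i$ (reading $\gamma_1\colon h_1\Rightarrow h_2$ as $\gamma_1^{-1}$ if orientations require, but note the statement asks for $\gamma_i'\colon h_i' \Rightarrow h_{i+1}'$ so I track orientations carefully), I would feed the span $J \xleftarrow{h_i} B \xrightarrow{r} I$, the $\Sigma$-square $\Sigma^{\xi_{i+1}}$ (whose horizontal part is $r$), and the $2$-cell $h_{i+1}'\circ r \Rightarrow$ something built from $\gamma_i$ and $\xi_i,\xi_{i+1}$ into Equi-insertion. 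Equi-insertion then hands back a $1$-cell $d\colon V \to V'$, a $\Sigma$-square with bottom $dv$, and a $2$-cell $\alpha'$ with $d\alpha = \alpha' r$. After composing $\Sigma^{\xi_i}$ with this $d$ (using Composition) to replace $V$ by $V'$, and doing this successively for $i=1,2$, one obtains a common $V$ on which all three $\Sigma^{\xi_i}$ live and for which the $2$-cells $\gamma_i'\colon h_i' \Rightarrow h_{i+1}'$ exist and satisfy precisely the pasting equation \eqref{eq:R1}: the equation $d\alpha = \alpha' r$ from Equi-insertion is exactly the statement that the two pasting diagrams in \eqref{eq:R1} agree when whiskered by $r$, which, combined with the fact that the vertical $1$-cells of the $\Sigma$-squares are fixed, gives the equality on the nose.

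The main obstacle I anticipate is \textbf{bookkeeping}: there are three $\Sigma$-squares to align to a common codomain and two $2$-cells $\gamma_i'$ to construct, and each application of Equi-insertion or Rule 4 changes the object $V$, so I must perform the constructions in the right order (first align all three $\xi_i$, \emph{then} introduce $\gamma_1'$, \emph{then} $\gamma_2'$, each time further enlarging $V$ and re-composing the previously obtained data with the new $1$-cell via Composition) and verify that no previously established equation is destroyed by a later enlargement. A secondary subtlety is that Equi-insertion only gives the desired equation after whiskering by $r$; to upgrade to the genuine equality \eqref{eq:R1} one either checks that the relevant $2$-cells are already equal (since whiskering by $r$ need not be faithful in general) --- so more likely one must instead invoke \textbf{Equification} as well, applied to the two composites in \eqref{eq:R1}, to obtain a final $1$-cell $e$ forcing the equality, and then absorb $e$ into $V$ one last time. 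Modulo these routine but delicate iterations, the statement follows directly from the axioms Identity, Composition, Square, Equi-insertion and Equification, with no genuinely new idea required beyond those already used in the proof of Proposition~\ref{pro:useful_rules}.
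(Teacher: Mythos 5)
Your outline is correct, but it is organized differently from the paper's argument. The paper does not go back to the bare axioms: it observes that Rule~6 of Proposition~\ref{pro:useful_rules} is precisely the one-2-cell case of the statement, applies it once to $\gamma_1$ (squares for $h_1,h_2$ over a common bottom $v_1$, with a transported 2-cell $\tilde\gamma_1$) and once to $\gamma_2$ (squares for $h_2,h_3$ over $v_2$, with $\hat\gamma_2$), and then glues the two outputs by a single application of Rule~4' to the two $\Sigma$-squares sharing the left border $(r,h_2)$; the invertible 2-cell $\rho$ produced by Rule~4' is what corrects $\gamma'_1=\rho\cdot(w_1\circ\tilde\gamma_1)$ and assembles the $\xi_i$. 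You instead align first and transport afterwards: Square three times, merge to a common codomain, then Equi-insertion twice, re-composing the previously built data at each enlargement. This works --- the later Equi-insertion cannot destroy the earlier equation, since whiskering with the new 1-cell preserves it --- and it avoids the correcting isomorphism $\rho$, at the cost of essentially re-proving Rule~6 inline. Two points in your write-up need repair, though neither is fatal. First, Rule~4' (or Rule~5) does not apply directly to your three squares, since their left verticals $h_1,h_2,h_3$ differ and Rule~4' requires a common left border; the merge should instead be done, e.g., by applying Rule~4' to the Identity-axiom squares of the bottoms $v_i\in\Sigma$ (common left border $(1_J,1_J)$), which yields $\Sigma$-squares $(1_J,w_i)\colon v_i\to u$ into a common $u$, and then composing in $\Sigma$ --- this is exactly why the paper arranges matters so that the only gluing ever needed is between squares sharing the border $(r,h_2)$. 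Second, your closing appeal to Equification is unnecessary: the equality \eqref{eq:R1} only ever involves $\gamma'_i$ whiskered by $r$, so the Equi-insertion output $d\alpha=\alpha' r$ is already literally the required pasting equality; this is also the reason the paper's Rule~6 needs no Equification.
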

\begin{proof}
Rule 6 of Proposition \ref{pro:useful_rules} tells us that this is true for just one 2-cell $\gamma_1$. Thus, we have equalities of the form
\[\begin{tikzcd}
	B & I & B & I &&&& B & I & B & I \\
	J & {V_1} & J & {V_1} &&&& J & {V_2} & J & {V_2}
	\arrow["r", from=1-1, to=1-2]
	\arrow[""{name=0, anchor=center, inner sep=0}, "{{{{h_2}}}}"{description}, from=1-1, to=2-1]
	\arrow[""{name=1, anchor=center, inner sep=0}, "{{{{h_1}}}}"', curve={height=30pt}, from=1-1, to=2-1]
	\arrow[""{name=2, anchor=center, inner sep=0}, "{{{{\tilde{h}_2}}}}", from=1-2, to=2-2]
	\arrow["r", from=1-3, to=1-4]
	\arrow[""{name=3, anchor=center, inner sep=0}, "{{{{h_1}}}}"', from=1-3, to=2-3]
	\arrow[""{name=4, anchor=center, inner sep=0}, "{{{{\tilde{h}_1}}}}"{description}, from=1-4, to=2-4]
	\arrow[""{name=5, anchor=center, inner sep=0}, "{{{{\tilde{h}_2}}}}", curve={height=-30pt}, from=1-4, to=2-4]
	\arrow["r", from=1-8, to=1-9]
	\arrow[""{name=6, anchor=center, inner sep=0}, "{{{{h_3}}}}"{description}, from=1-8, to=2-8]
	\arrow[""{name=7, anchor=center, inner sep=0}, "{{{{h_2}}}}"', curve={height=30pt}, from=1-8, to=2-8]
	\arrow[""{name=8, anchor=center, inner sep=0}, "{{{{\hat{h}_3}}}}", from=1-9, to=2-9]
	\arrow["r", from=1-10, to=1-11]
	\arrow[""{name=9, anchor=center, inner sep=0}, "{{{{h_2}}}}"', from=1-10, to=2-10]
	\arrow[""{name=10, anchor=center, inner sep=0}, "{{{{\hat{h}_3}}}}", curve={height=-30pt}, from=1-11, to=2-11]
	\arrow[""{name=11, anchor=center, inner sep=0}, "{{{{\hat{h}_2}}}}"{description}, from=1-11, to=2-11]
	\arrow["{{{{v_1}}}}"', from=2-1, to=2-2]
	\arrow["{{{{v_1}}}}"', from=2-3, to=2-4]
	\arrow["{{{{v_2}}}}"', from=2-8, to=2-9]
	\arrow["{{{{v_2}}}}"', from=2-10, to=2-11]
	\arrow["{{{{\gamma_1}}}}", between={0.3}{0.7}, Rightarrow, from=1, to=0]
	\arrow["{{{{\SIGMA^{\sigma_2}}}}}"{description}, draw=none, from=0, to=2]
	\arrow["{{{{\text{\normalsize =}}}}}"{description}, draw=none, from=2, to=3]
	\arrow["{{{{\SIGMA^{\sigma_1}}}}}"{description}, draw=none, from=3, to=4]
	\arrow["{{{{\tilde{\gamma}_1}}}}", between={0.3}{0.7}, Rightarrow, from=4, to=5]
	\arrow["{{{{\text{\normalsize and}}}}}"{description}, draw=none, from=5, to=7]
	\arrow["{{{{\gamma_2}}}}", between={0.3}{0.7}, Rightarrow, from=7, to=6]
	\arrow["{{{{\SIGMA^{\mu_3}}}}}"{description}, draw=none, from=6, to=8]
	\arrow["{{{{\text{\normalsize =}}}}}"{description}, draw=none, from=8, to=9]
	\arrow["{{{{\SIGMA^{\mu_2}}}}}"{description}, draw=none, from=9, to=11]
	\arrow["{{{{\hat{\gamma}_2}}}}", between={0.3}{0.7}, Rightarrow, from=11, to=10]
\end{tikzcd}\, .\]
Using Rule 4', we obtain the equality
\[\begin{tikzcd}
	B & I && B & I \\
	J & {V_1} & {V_2} & J & {V_2} \\
	J & V && J & V
	\arrow["r", from=1-1, to=1-2]
	\arrow[""{name=0, anchor=center, inner sep=0}, "{h_2}"', from=1-1, to=2-1]
	\arrow[""{name=1, anchor=center, inner sep=0}, "{\tilde{h}_2}", from=1-2, to=2-2]
	\arrow["{\hat{h}_2}", curve={height=-12pt}, from=1-2, to=2-3]
	\arrow["r", from=1-4, to=1-5]
	\arrow[""{name=2, anchor=center, inner sep=0}, "{h_2}"', from=1-4, to=2-4]
	\arrow[""{name=3, anchor=center, inner sep=0}, "{\hat{h}_2}", from=1-5, to=2-5]
	\arrow["{v_1}"', from=2-1, to=2-2]
	\arrow[""{name=4, anchor=center, inner sep=0}, Rightarrow, no head, from=2-1, to=3-1]
	\arrow["\rho", Rightarrow, from=2-2, to=2-3]
	\arrow[""{name=5, anchor=center, inner sep=0}, "{w_1}", from=2-2, to=3-2]
	\arrow["{\text{\Large =}}"{description}, draw=none, from=2-3, to=2-4]
	\arrow["{w_2}", curve={height=-12pt}, from=2-3, to=3-2]
	\arrow["{v_2}"', from=2-4, to=2-5]
	\arrow[""{name=6, anchor=center, inner sep=0}, Rightarrow, no head, from=2-4, to=3-4]
	\arrow[""{name=7, anchor=center, inner sep=0}, "{w_2}", from=2-5, to=3-5]
	\arrow["v"', from=3-1, to=3-2]
	\arrow["v"', from=3-4, to=3-5]
	\arrow["{\SIGMA^{\sigma_2}}"{description}, draw=none, from=0, to=1]
	\arrow["{\SIGMA^{\mu_2}}"{description}, draw=none, from=2, to=3]
	\arrow["{\SIGMA^{\omega_1}}"{description}, draw=none, from=4, to=5]
	\arrow["{\SIGMA^{\omega_2}}"{description}, draw=none, from=6, to=7]
\end{tikzcd}\]
where $\rho$ is invertible.
Then put:

$\begin{array}{l}
h^{\prime}_1 = w_1\tilde{h}_1;\; h^{\prime}_2 = w_1\tilde{h}_2;\; h^{\prime}_3 = w_2\hat{h}_3;\\
 \gamma^{\prime}_1=w_1\tilde{\gamma}_1;\; \gamma^{\prime}_2= (w_2\hat{\gamma}_2)\cdot \rho; \\
 \Sigma^{\xi_1}=\Sigma^{\omega_1\odot \sigma_1};\; \Sigma^{\xi_2}=\Sigma^{\omega_1\odot \sigma_2};\; \Sigma^{\xi_3}=\Sigma^{\omega_2\odot \mu_3}.\;
\end{array}$

It is easy to see that these satisfy the desired equalities.
\end{proof}

\begin{proposition}\label{pro:comp}
For each triple of objects $A,B,C$, horizontal composition gives a functor
from $\catx[\Sigma_\ast](A,B)\times \catx[\Sigma_\ast](B,C)$ to $\catx[\Sigma_\ast](A,C)$.
\end{proposition}

\begin{proof} The equality $\id_g\circ \id_f=\id_{g\circ f}$ is clear: observe that  in the application of \eqref{eq:(A12)}, for $\beta=\id_g$, we may put $\beta'=\id_{\dot{g}}$ using the canonical $\Sigma$-square of $r$ and $g$.

	Now, given 1-cells $\bar{f}_i=(f_i,r_i)$ and $\bar{g}_i=(g_i,s_i)$, and 2-cells
\[\begin{tikzcd}
	A && B && C
	\arrow[""{name=0, anchor=center, inner sep=0}, "{{\bar{f}_1}}", curve={height=-24pt}, from=1-1, to=1-3]
	\arrow[""{name=1, anchor=center, inner sep=0}, "{\bar{f}_3}"', curve={height=24pt}, from=1-1, to=1-3]
	\arrow[""{name=2, anchor=center, inner sep=0}, "{\bar{f}_2}"{description}, from=1-1, to=1-3]
	\arrow[""{name=3, anchor=center, inner sep=0}, "{\bar{g}_1}", curve={height=-24pt}, from=1-3, to=1-5]
	\arrow[""{name=4, anchor=center, inner sep=0}, "{\bar{g}_3}"{description}, curve={height=24pt}, from=1-3, to=1-5]
	\arrow[""{name=5, anchor=center, inner sep=0}, "{\bar{g}_2}"{description}, from=1-3, to=1-5]
	\arrow["{{\bar{\alpha}_1}}", shorten <=3pt, shorten >=3pt, Rightarrow, from=0, to=2]
	\arrow["{{\bar{\alpha}_2}}", shorten <=3pt, shorten >=3pt, Rightarrow, from=2, to=1]
	\arrow["{{\bar{\beta}_1}}", shorten <=3pt, shorten >=3pt, Rightarrow, from=3, to=5]
	\arrow["{{\bar{\beta}_2}}", shorten <=3pt, shorten >=3pt, Rightarrow, from=5, to=4]
\end{tikzcd}\]
	we want to show that
	\begin{equation}\label{eq:interchange}(\bar{\beta}_2\cdot \bar{\beta}_1)\circ (\bar{\alpha}_2\cdot \bar{\alpha}_1)=(\bar{\beta}_2\circ \bar{\alpha}_2) \cdot (\bar{\beta}_1\circ \bar{\alpha}_1)\,.
 \end{equation}

First we show that, for $A\xrightarrow{\bar{f}}B\xrightarrow{\bar{g}}C$,  we always have
\begin{equation}\label{eq:W1}(\bar{g}\circ \bar{\alpha}_2)\cdot (\bar{g}\circ \bar{\alpha}_1)=\bar{g}\circ (\bar{\alpha}_2\cdot \bar{\alpha}_1) \; \text{ and }\; (\bar{\beta}_2\circ \bar{f})\cdot (\bar{\beta}_1\circ \bar{f})=(\bar{\beta}_2\cdot \bar{\beta}_1)\circ \bar{f}\, .\end{equation}
After that, to obtain \eqref{eq:interchange}, it is enough to prove the `whiskering law' that given two 2-cells $\bar{\alpha}$ and $\bar{\beta}$ of the form
\[\begin{tikzcd}
	A && B && C
	\arrow[""{name=0, anchor=center, inner sep=0}, "{\bar{f}_1}", curve={height=-12pt}, from=1-1, to=1-3]
	\arrow[""{name=1, anchor=center, inner sep=0}, "{\bar{f}_2}"', curve={height=12pt}, from=1-1, to=1-3]
	\arrow[""{name=2, anchor=center, inner sep=0}, "{\bar{g}_1}", curve={height=-12pt}, from=1-3, to=1-5]
	\arrow[""{name=3, anchor=center, inner sep=0}, "{\bar{g}_2}"', curve={height=12pt}, from=1-3, to=1-5]
	\arrow["{\bar{\alpha}}", shorten <=3pt, shorten >=3pt, Rightarrow, from=0, to=1]
	\arrow["{\bar{\beta}}", shorten <=3pt, shorten >=3pt, Rightarrow, from=2, to=3]
\end{tikzcd}\]
 the equalities
 \begin{equation}\label{eq:W2}(\bar{\beta}\circ \bar{f}_2)\cdot (\bar{g}_1 \circ \bar{\alpha})=\bar{\beta}\circ \bar{\alpha}=(\bar{g}_2\circ \bar{\alpha})\cdot (\bar{\beta} \circ \bar{f}_1)
 \end{equation}
 hold.

\noindent (1) In order to show the first equality of \eqref{eq:W1}, suppose we have $\bar{f}_i=(f_i,I_i,r_i)\colon A\to B$, $i=1,2,3$, $\bar{\alpha}_i=[\alpha_i,x_{i1}, x_{i2}, x_{i3},\delta_{i1},\delta_{i2}]\colon \bar{f}_i\Rightarrow \bar{f}_{i+1}$, and $\bar{g}=(g,J,s)$.

 Consider the following data.
 \begin{equation}\label{eq:new}
 \begin{tikzcd}
	B & {I_2} && B & {I_2} \\
	B & {X_1} & {X_2} & B & {X_2} \\
	B & D && B & D
	\arrow["{r_2}", from=1-1, to=1-2]
	\arrow[""{name=0, anchor=center, inner sep=0}, Rightarrow, no head, from=1-1, to=2-1]
	\arrow[""{name=1, anchor=center, inner sep=0}, "{x_{12}}", from=1-2, to=2-2]
	\arrow["{x_{21}}", curve={height=-6pt}, from=1-2, to=2-3]
	\arrow["{r_2}", from=1-4, to=1-5]
	\arrow[""{name=2, anchor=center, inner sep=0}, Rightarrow, no head, from=1-4, to=2-4]
	\arrow[""{name=3, anchor=center, inner sep=0}, "{x_{21}}", from=1-5, to=2-5]
	\arrow["{x_{13}}", from=2-1, to=2-2]
	\arrow[""{name=4, anchor=center, inner sep=0}, Rightarrow, no head, from=2-1, to=3-1]
	\arrow["\theta", Rightarrow, from=2-2, to=2-3]
	\arrow[""{name=5, anchor=center, inner sep=0}, "{d_1}", from=2-2, to=3-2]
	\arrow["{\text{\Large =}}"{description}, draw=none, from=2-3, to=2-4]
	\arrow["{d_2}", curve={height=-6pt}, from=2-3, to=3-2]
	\arrow["{x_{23}}", from=2-4, to=2-5]
	\arrow[""{name=6, anchor=center, inner sep=0}, Rightarrow, no head, from=2-4, to=3-4]
	\arrow[""{name=7, anchor=center, inner sep=0}, "{d_2}", from=2-5, to=3-5]
	\arrow["d", from=3-1, to=3-2]
	\arrow["d", from=3-4, to=3-5]
	\arrow["{\SIGMA^{\delta_{12}}}"{description}, draw=none, from=0, to=1]
	\arrow["{\SIGMA^{\delta_{21}}}"{description}, draw=none, from=2, to=3]
	\arrow["{\SIGMA^{\delta_{1}}}"{description}, draw=none, from=4, to=5]
	\arrow["{\SIGMA^{\delta_{2}}}"{description}, draw=none, from=6, to=7]
\end{tikzcd}\quad ; \qquad \;
\begin{tikzcd}
	B & D \\
	J & {D'}
	\arrow["d", from=1-1, to=1-2]
	\arrow[""{name=0, anchor=center, inner sep=0}, "g"', from=1-1, to=2-1]
	\arrow[""{name=1, anchor=center, inner sep=0}, "{g'}", from=1-2, to=2-2]
	\arrow["{d'}"', from=2-1, to=2-2]
	\arrow["{\SIGMA^{\sigma}}"{description}, draw=none, from=0, to=1]
\end{tikzcd}\; .
\end{equation}
Then
$\bar{\alpha}_2\cdot \bar{\alpha}_1=[(d_2\circ \alpha_2)\cdot(\theta\circ f_2)\cdot (d_1\circ \alpha_1),\, d_1x_{11},\, d_2x_{22},\, d,\, \delta_1\odot\delta_{11},\, \delta_2\odot\delta_{22}]\, ,$
and $\bar{g}\circ (\bar{\alpha}_2\cdot \bar{\alpha}_1)$ is represented by the diagram
\begin{equation}\label{eq:F1}
\begin{tikzcd}
	A &&&&&&& C \\
	A && {I_1} & {X_1} & D & {D'} & J & C \\
	A && {I_2} & {X_1} & D \\
	A &&&& D & {D'} && C \\
	A && {I_2} & {X_2} & D \\
	A && {I_3} & {X_2} & D & {D'} & J & C \\
	A &&&&&&& C \\
	\\
	&&&& {}
	\arrow[""{name=0, anchor=center, inner sep=0}, "{{\bar{g}\circ \bar{f}_1}}"{description}, from=1-1, to=1-8]
	\arrow[Rightarrow, no head, from=1-1, to=2-1]
	\arrow[Rightarrow, no head, from=1-8, to=2-8]
	\arrow["{{{{f_1}}}}", from=2-1, to=2-3]
	\arrow[Rightarrow, no head, from=2-1, to=3-1]
	\arrow["{{{x_{11}}}}", from=2-3, to=2-4]
	\arrow[""{name=1, anchor=center, inner sep=0}, "{{{d_1}}}"', from=2-4, to=2-5]
	\arrow[Rightarrow, no head, from=2-4, to=3-4]
	\arrow["{{{g'}}}", from=2-5, to=2-6]
	\arrow[Rightarrow, no head, from=2-5, to=3-5]
	\arrow[""{name=2, anchor=center, inner sep=0}, Rightarrow, no head, from=2-6, to=4-6]
	\arrow["{{{d'}}}"', from=2-7, to=2-6]
	\arrow["s"', from=2-8, to=2-7]
	\arrow[""{name=3, anchor=center, inner sep=0}, Rightarrow, no head, from=2-8, to=4-8]
	\arrow[""{name=4, anchor=center, inner sep=0}, "{f_2}"', from=3-1, to=3-3]
	\arrow[Rightarrow, no head, from=3-1, to=4-1]
	\arrow["{x_{12}}"', from=3-3, to=3-4]
	\arrow[Rightarrow, no head, from=3-3, to=5-3]
	\arrow[""{name=5, anchor=center, inner sep=0}, "{d_1}"', from=3-4, to=3-5]
	\arrow[Rightarrow, no head, from=3-5, to=4-5]
	\arrow[Rightarrow, no head, from=4-1, to=5-1]
	\arrow[Rightarrow, no head, from=4-5, to=5-5]
	\arrow[""{name=6, anchor=center, inner sep=0}, Rightarrow, no head, from=4-6, to=6-6]
	\arrow["{d's}"', from=4-8, to=4-6]
	\arrow[""{name=7, anchor=center, inner sep=0}, Rightarrow, no head, from=4-8, to=6-8]
	\arrow["{f_2}", from=5-1, to=5-3]
	\arrow[Rightarrow, no head, from=5-1, to=6-1]
	\arrow[""{name=8, anchor=center, inner sep=0}, "{x_{21}}", from=5-3, to=5-4]
	\arrow["{d_2}", from=5-4, to=5-5]
	\arrow[Rightarrow, no head, from=5-4, to=6-4]
	\arrow[Rightarrow, no head, from=5-5, to=6-5]
	\arrow[""{name=9, anchor=center, inner sep=0}, "{{{{f_3}}}}"', from=6-1, to=6-3]
	\arrow[Rightarrow, no head, from=6-1, to=7-1]
	\arrow["{{{x_{22}}}}"', from=6-3, to=6-4]
	\arrow[""{name=10, anchor=center, inner sep=0}, "{{{d_2}}}", from=6-4, to=6-5]
	\arrow["{{{g'}}}"', from=6-5, to=6-6]
	\arrow["{{{d'}}}", from=6-7, to=6-6]
	\arrow["s", from=6-8, to=6-7]
	\arrow[Rightarrow, no head, from=6-8, to=7-8]
	\arrow[""{name=11, anchor=center, inner sep=0}, "{{\bar{g}\circ \bar{f}_3}}"', from=7-1, to=7-8]
	\arrow["{{\Omega_1}}", shorten <=4pt, shorten >=4pt, Rightarrow, from=0, to=1]
	\arrow["{\alpha_1}"', shorten <=11pt, shorten >=11pt, Rightarrow, from=2-4, to=4]
	\arrow["{\SIGMA^{\id}}"{description}, draw=none, from=2, to=3]
	\arrow["\theta"', shorten <=20pt, shorten >=15pt, Rightarrow, from=5, to=8]
	\arrow["{\SIGMA^{\id}}"{description}, draw=none, from=6, to=7]
	\arrow["{\alpha_2}"', shorten <=10pt, shorten >=10pt, Rightarrow, from=8, to=9]
	\arrow["{{\bar{\Omega}_2^{-1}}}", shorten <=4pt, shorten >=4pt, Rightarrow, from=10, to=11]
\end{tikzcd}
\end{equation}
 where $\Omega_i\colon \bar{g}\circ \bar{f}_i\Rightarrow (g'd_1x_{1i}f_i,d's)$ is the $\Omega$ 2-cell corresponding just to the $\Sg$-path
 \[\begin{tikzcd}
	& {} & {} \\
	{} & {} & {}
	\arrow["{r_i}", from=1-2, to=1-3]
	\arrow[""{name=0, anchor=center, inner sep=0}, "g"', from=1-2, to=2-2]
	\arrow[""{name=1, anchor=center, inner sep=0}, "{\dot{g}_i}", from=1-3, to=2-3]
	\arrow["s", from=2-1, to=2-2]
	\arrow["{\dot{r}_i}"', from=2-2, to=2-3]
	\arrow["{\SIGMAc}"{description}, draw=none, from=0, to=1]
\end{tikzcd}
 \xymatrix{\ar@{~>}[r]^{u}&}
 \begin{tikzcd}
	& {} & {} \\
	{} & {} & {}
	\arrow["{{r_i}}", from=1-2, to=1-3]
	\arrow[""{name=0, anchor=center, inner sep=0}, "g"', from=1-2, to=2-2]
	\arrow[""{name=1, anchor=center, inner sep=0}, "{g'd_1x_{1i}}", from=1-3, to=2-3]
	\arrow["s", from=2-1, to=2-2]
	\arrow["{d'}"', from=2-2, to=2-3]
	\arrow["\SIGMA"{description}, draw=none, from=0, to=1]
\end{tikzcd}\]
 and, similarly, $\bar{\Omega}_i\colon \bar{g}\circ \bar{f}_{i+1}\Rightarrow (g'd_2x_{2i}f_{i+1},d's)$ corresponds to
 \[\begin{tikzcd}
	& {} & {} \\
	{} & {} & {}
	\arrow["{r_{i+1}}", from=1-2, to=1-3]
	\arrow[""{name=0, anchor=center, inner sep=0}, "g"', from=1-2, to=2-2]
	\arrow[""{name=1, anchor=center, inner sep=0}, "{\dot{g}_{i+1}}", from=1-3, to=2-3]
	\arrow["s", from=2-1, to=2-2]
	\arrow["{\dot{r}_{i+1}}"', from=2-2, to=2-3]
	\arrow["{{\SIGMAc}}"{description}, draw=none, from=0, to=1]
\end{tikzcd}
 \xymatrix{\ar@{~>}[r]^{u}&}
 \begin{tikzcd}
	& {} & {} \\
	{} & {} & {}
	\arrow["{r_{i+1}}", from=1-2, to=1-3]
	\arrow[""{name=0, anchor=center, inner sep=0}, "g"', from=1-2, to=2-2]
	\arrow[""{name=1, anchor=center, inner sep=0}, "{g'd_2x_{2i}}", from=1-3, to=2-3]
	\arrow["s", from=2-1, to=2-2]
	\arrow["{{d'}}"', from=2-2, to=2-3]
	\arrow["\SIGMA"{description}, draw=none, from=0, to=1]
\end{tikzcd}\, .\]
Using \eqref{eq:new} again, the horizontal composition $\bar{g}\circ \bar{\alpha}_1$ is represented by

\begin{equation}\label{eq:F2}
\begin{tikzcd}
	A &&&&&&& C \\
	A && {I_1} & {X_1} & D & {D'} & J & C \\
	A && {I_2} & {X_1} & D & {D'} && C \\
	A &&&&&&& C
	\arrow[""{name=0, anchor=center, inner sep=0}, "{{{\bar{g}\circ \bar{f}_1}}}"{description}, from=1-1, to=1-8]
	\arrow[Rightarrow, no head, from=1-1, to=2-1]
	\arrow[Rightarrow, no head, from=1-8, to=2-8]
	\arrow["{{{{{{{f_1}}}}}}}", from=2-1, to=2-3]
	\arrow[Rightarrow, no head, from=2-1, to=3-1]
	\arrow["{{{{{{x_{11}}}}}}}", from=2-3, to=2-4]
	\arrow[""{name=1, anchor=center, inner sep=0}, "{{{{{{d_1}}}}}}"', from=2-4, to=2-5]
	\arrow[Rightarrow, no head, from=2-4, to=3-4]
	\arrow[""{name=2, anchor=center, inner sep=0}, "{{{{{{g'}}}}}}", from=2-5, to=2-6]
	\arrow[""{name=3, anchor=center, inner sep=0}, Rightarrow, from=2-6, to=3-6]
	\arrow["{{{{{{d'}}}}}}"', from=2-7, to=2-6]
	\arrow["s"', from=2-8, to=2-7]
	\arrow[""{name=4, anchor=center, inner sep=0}, Rightarrow, no head, from=2-8, to=3-8]
	\arrow[""{name=5, anchor=center, inner sep=0}, "{{{{f_2}}}}"', from=3-1, to=3-3]
	\arrow[Rightarrow, no head, from=3-1, to=4-1]
	\arrow["{{{{x_{12}}}}}"', from=3-3, to=3-4]
	\arrow[""{name=6, anchor=center, inner sep=0}, "{{{{d_1}}}}", from=3-4, to=3-5]
	\arrow["{{{g'}}}"', from=3-5, to=3-6]
	\arrow["{{{d's}}}", from=3-8, to=3-6]
	\arrow[Rightarrow, no head, from=3-8, to=4-8]
	\arrow[""{name=7, anchor=center, inner sep=0}, "{{{\bar{g}\circ \bar{f}_2}}}"{description}, from=4-1, to=4-8]
	\arrow["{{{{{\Omega_1}}}}}", shorten <=4pt, shorten >=4pt, Rightarrow, from=0, to=1]
	\arrow["{{{{\alpha_1}}}}"', shorten <=11pt, shorten >=11pt, Rightarrow, from=2-4, to=5]
	\arrow["{{{\SIGMA^{\id}}}}"{description}, draw=none, from=3, to=4]
	\arrow["{{{\Omega^{-1}_2}}}", shorten <=4pt, shorten >=4pt, Rightarrow, from=6, to=7]
\end{tikzcd}
\end{equation}
An analogous diagram represents the composition $\bar{g}\circ \bar{\alpha}_2$. Moreover the 2-cell $[g'\circ \theta\circ f_2, 1_{D'},1_{D'}]$ is just the $\Omega$ 2-cell corresponding to the $\Sg$-path
 \[\begin{tikzcd}
	& {} & {} \\
	{} & {} & {}
	\arrow["{r_2}", from=1-2, to=1-3]
	\arrow[""{name=0, anchor=center, inner sep=0}, "g"', from=1-2, to=2-2]
	\arrow[""{name=1, anchor=center, inner sep=0}, "{g'd_1x_{12}}", from=1-3, to=2-3]
	\arrow["s", from=2-1, to=2-2]
	\arrow["{d'}"', from=2-2, to=2-3]
	\arrow["\SIGMA"{description}, draw=none, from=0, to=1]
\end{tikzcd}
 \xymatrix{\ar@{~>}[r]^{u}&}
 \begin{tikzcd}
	& {} & {} \\
	{} & {} & {}
	\arrow["{r_2}", from=1-2, to=1-3]
	\arrow[""{name=0, anchor=center, inner sep=0}, "g"', from=1-2, to=2-2]
	\arrow[""{name=1, anchor=center, inner sep=0}, "{g'd_2x_{21}}", from=1-3, to=2-3]
	\arrow["s", from=2-1, to=2-2]
	\arrow["{{{d'}}}"', from=2-2, to=2-3]
	\arrow["\SIGMA"{description}, draw=none, from=0, to=1]
\end{tikzcd}\]
Then, by Lemma \ref{lem:Sigma-steps}, $[g'\circ \theta\circ f_2,\, 1_{D'},1_{D'}]\circ \Omega_2=\bar{\Omega}_1$.

Thus, we have
$$\begin{array}{rl}
\bar{g}\circ (\bar{\alpha}_2 \cdot \bar{\alpha}_1)\hspace{-2mm}
&=\bar{\Omega}^{-1}_2\cdot [g'\circ \big((d_2\circ \alpha_2)\cdot(\theta\circ f_2)\cdot (d_1\circ \alpha_1)\big),1_{D'},1_{D'}]\cdot \Omega_1 \\
&=\bar{\Omega}^{-1}_2\cdot [g'd_2\circ  \alpha_2,1_{D'},1_{D'}]\cdot [g'\circ \theta\circ f_2,1_{D'},1_{D'}]\cdot [g'd_1\circ \alpha_1,1_{D'},1_{D'}]\cdot \Omega_1 \\
&=\bar{\Omega}^{-1}_2\cdot [g'd_2\circ  \alpha_2,1_{D'},1_{D'}]\cdot \big([g'\circ \theta\circ f_2,1_{D'},1_{D'}]\cdot \Omega_2\big) \cdot\big(\Omega^{-1}_2\cdot [g'd_1\circ \alpha_1,1_{D'},1_{D'}]\cdot \Omega_1\big) \\
&=\big(\bar{\Omega}^{-1}_2\cdot [g'd_2\circ  \alpha_2,1_{D'},1_{D'}]\cdot \bar{\Omega}_1\big) \cdot\big(\Omega^{-1}_2\cdot [g'd_1\circ \alpha_1,1_{D'},1_{D'}]\cdot \Omega_1\big) \\
&=\big(\bar{g}\circ \bar{\alpha}_2\big)\cdot \big(\bar{g}\circ \bar{\alpha}_1\big)
\end{array}$$

\noindent
(2) In order to show the second equality of \eqref{eq:W1}, suppose that we have $\bar{f}=(f,I,r)\colon A\to B$, $\bar{g}_i=(g_i,J_i,s_i)\colon B\to C$ and $\bar{\beta}_i=[\beta_i, y_{i1}, y_{i2}, y_{i3}, \epsilon_{i1},\epsilon_{i2}]\colon \bar{g}_i\Rightarrow \bar{g}_{i+1}$.
Use Rule 4' to form the equality
\[\begin{tikzcd}
	C & {J_2} && C & {J_2} \\
	C & {Y_1} & {Y_2} & C & {Y_2} \\
	C & D && C & D
	\arrow["{s_2}", from=1-1, to=1-2]
	\arrow[""{name=0, anchor=center, inner sep=0}, Rightarrow, no head, from=1-1, to=2-1]
	\arrow[""{name=1, anchor=center, inner sep=0}, "{y_{12}}", from=1-2, to=2-2]
	\arrow["{y_{21}}", curve={height=-6pt}, from=1-2, to=2-3]
	\arrow["{s_2}", from=1-4, to=1-5]
	\arrow[""{name=2, anchor=center, inner sep=0}, Rightarrow, no head, from=1-4, to=2-4]
	\arrow[""{name=3, anchor=center, inner sep=0}, "{y_{21}}", from=1-5, to=2-5]
	\arrow["{y_{13}}", from=2-1, to=2-2]
	\arrow[""{name=4, anchor=center, inner sep=0}, Rightarrow, no head, from=2-1, to=3-1]
	\arrow["\theta", Rightarrow, from=2-2, to=2-3]
	\arrow[""{name=5, anchor=center, inner sep=0}, "{d_1}", from=2-2, to=3-2]
	\arrow["{\text{\normalsize =}}"{description}, draw=none, from=2-3, to=2-4]
	\arrow["{d_2}", curve={height=-6pt}, from=2-3, to=3-2]
	\arrow["{y_{23}}", from=2-4, to=2-5]
	\arrow[""{name=6, anchor=center, inner sep=0}, Rightarrow, no head, from=2-4, to=3-4]
	\arrow[""{name=7, anchor=center, inner sep=0}, "{d_2}", from=2-5, to=3-5]
	\arrow["d"', from=3-1, to=3-2]
	\arrow["d"', from=3-4, to=3-5]
	\arrow["{\SIGMA^{\epsilon_{12}}}"{description}, draw=none, from=0, to=1]
	\arrow["{\SIGMA^{\epsilon_{21}}}"{description}, draw=none, from=2, to=3]
	\arrow["{\SIGMA^{\delta_{1}}}"{description}, draw=none, from=4, to=5]
	\arrow["{\SIGMA^{\delta_{2}}}"{description}, draw=none, from=6, to=7]
\end{tikzcd}\, .\]
We can use this data to obtain $\Sigma$-extensions of $(\beta_1, y_{11}, y_{12})$ and $(\beta_2, y_{21}, y_{22})$; more precisely, we have that
$$\bar{\beta}_1=[(\theta\circ g_2)\cdot (d_1\circ \beta_1),d_1y_{11},d_2y_{21},d, \delta_1\odot \epsilon_{11},\delta_2\odot \epsilon_{21} ]\; \text{ and } \; \bar{\beta}_2=[d_2\circ\beta_2, d_2y_{21}, d_2y_{22},d, \delta_2\odot \epsilon_{21}, \delta_2\odot \epsilon_{22}]\,.$$
We form $\bar{\beta}_i\circ \bar{f}$, for $i=1,2$, using these last $\approx$-representatives. Now apply the result of \cref{rem:two-2-cells}, with $\gamma_1=(\theta\circ g_2)\cdot (d_1\circ \beta_1)$ and $\gamma_2=d_2\circ \beta_2$,
\[\begin{tikzcd}
	& B \\
	{J_1} & {J_2} & {J_3} \\
	& D
	\arrow["{g_1}"', from=1-2, to=2-1]
	\arrow["{g_2}", from=1-2, to=2-2]
	\arrow["{g_3}", from=1-2, to=2-3]
	\arrow["{\gamma_1}"{pos=0.6}, Rightarrow, from=2-1, to=2-2]
	\arrow["{d_1 y_{11}}"', from=2-1, to=3-2]
	\arrow["{\gamma_2}", Rightarrow, from=2-2, to=2-3]
	\arrow["{d_2 y_{21}}"'{pos=0.2}, from=2-2, to=3-2]
	\arrow["{d_2 y_{22}}", from=2-3, to=3-2]
\end{tikzcd}\]
to obtain a situation as in \cref{eq:R1}, with $h_1=d_1y_{11}g_1$, $h_2=d_2y_{21}g_2$ and $h_3=d_2y_{22}g_3$.

Then, we may represent the 2-cells $(\bar{\beta}_i\circ \bar{f})$ of $\catx[\Sigma_{\ast}]$ by the following diagram, where  $\Omega_i$ and $\Omega_{i+1}$ are the due 2-cells of type $\Omega$, according to \cref{def:horizontal}.
\[\begin{tikzcd}
	A &&&& C \\
	A & I & V & D & C \\
	A & I & V & D & C \\
	A &&&& C
	\arrow[""{name=0, anchor=center, inner sep=0}, "{\bar{g}_i\circ \bar{f}}", from=1-1, to=1-5]
	\arrow[Rightarrow, no head, from=1-1, to=2-1]
	\arrow[Rightarrow, no head, from=1-5, to=2-5]
	\arrow[""{name=1, anchor=center, inner sep=0}, "f", from=2-1, to=2-2]
	\arrow[Rightarrow, no head, from=2-1, to=3-1]
	\arrow[""{name=2, anchor=center, inner sep=0}, "{h'_i}", from=2-2, to=2-3]
	\arrow[Rightarrow, no head, from=2-2, to=3-2]
	\arrow[Rightarrow, no head, from=2-3, to=3-3]
	\arrow["v"', from=2-4, to=2-3]
	\arrow["d"', from=2-5, to=2-4]
	\arrow[Rightarrow, no head, from=2-5, to=3-5]
	\arrow[""{name=3, anchor=center, inner sep=0}, "f"', from=3-1, to=3-2]
	\arrow[Rightarrow, no head, from=3-1, to=4-1]
	\arrow[""{name=4, anchor=center, inner sep=0}, "{h'_{i+1}}"', from=3-2, to=3-3]
	\arrow["v", from=3-4, to=3-3]
	\arrow["d", from=3-5, to=3-4]
	\arrow[Rightarrow, no head, from=3-5, to=4-5]
	\arrow[""{name=5, anchor=center, inner sep=0}, "{\bar{g}_{i+1}\circ \bar{f}}"', from=4-1, to=4-5]
	\arrow["{\Omega_i}", shorten <=3pt, Rightarrow, from=0, to=2-3]
	\arrow["{\gamma'_i}", shorten <=4pt, shorten >=4pt, Rightarrow, from=2, to=4]
	\arrow["{\Omega_{i+1}^{-1}}", shorten >=3pt, Rightarrow, from=3-3, to=5]
\end{tikzcd}\]
Observing that $\bar{\beta}_2\cdot \bar{\beta}_1=[\gamma_2\cdot\gamma_1, d_1y_{11}, d_2y_{22}]$, and that we have
\[\begin{tikzcd}
	B && I & B && I \\
	D && V & D && V
	\arrow["r", from=1-1, to=1-3]
	\arrow[""{name=0, anchor=center, inner sep=0}, from=1-1, to=2-1]
	\arrow[""{name=1, anchor=center, inner sep=0}, curve={height=-30pt}, from=1-1, to=2-1]
	\arrow[""{name=2, anchor=center, inner sep=0}, curve={height=30pt}, from=1-1, to=2-1]
	\arrow[""{name=3, anchor=center, inner sep=0}, "{{{h'_3}}}", from=1-3, to=2-3]
	\arrow["r", from=1-4, to=1-6]
	\arrow[""{name=4, anchor=center, inner sep=0}, from=1-4, to=2-4]
	\arrow[""{name=5, anchor=center, inner sep=0}, from=1-6, to=2-6]
	\arrow[""{name=6, anchor=center, inner sep=0}, curve={height=-30pt}, from=1-6, to=2-6]
	\arrow[""{name=7, anchor=center, inner sep=0}, curve={height=30pt}, from=1-6, to=2-6]
	\arrow["v"', from=2-1, to=2-3]
	\arrow["v"', from=2-4, to=2-6]
	\arrow["{{{\gamma_1}}}", between={0.2}{0.8}, Rightarrow, from=2, to=0]
	\arrow["{{{\gamma_2}}}", between={0.2}{0.8}, Rightarrow, from=0, to=1]
	\arrow["{{{\SIGMA^{\xi_3}}}}"{description, pos=0.4}, draw=none, from=1, to=3]
	\arrow["{{{\text{\normalsize =}}}}"{description, pos=0.6}, draw=none, from=3, to=4]
	\arrow["{{{\SIGMA^{\xi_1}}}}"{description, pos=0.6}, draw=none, from=4, to=7]
	\arrow["{{{\gamma'_1}}}", shift right, between={0.2}{0.8}, Rightarrow, from=7, to=5]
	\arrow["{{{\gamma'_2}}}", shift right, between={0.2}{0.8}, Rightarrow, from=5, to=6]
\end{tikzcd}\]
and so we obtain:
$$\begin{array}{rl}(\bar{\beta}_2\circ \bar{f})\cdot (\bar{\beta}_1\circ \bar{f})&=\big(\Omega_3^{-1}\cdot[\gamma'_2\circ f,1_V,1_V]\cdot\Omega_2\big)\cdot\big(\Omega_2^{-1}\cdot[\gamma'_1\circ f,1_V,1_V]\cdot \Omega_1\big)\\
&=\Omega_3^{-1}\cdot[\gamma'_2\circ f,1_V,1_V]\cdot[\gamma'_1\circ f,1_V,1_V]\cdot \Omega_1\\
&=\Omega_3^{-1}\cdot[(\gamma'_2\cdot \gamma'_1)\circ f,1_V,1_V]\cdot \Omega_1\\
&=(\bar{\beta}_2\cdot\bar{\beta}_1)\circ \bar{f}.
\end{array}$$

\noindent (3)  Concerning \eqref{eq:W2},
let us consider two 2-cells $\overline{\alpha}=[\alpha, x_1, x_2]$ and $\overline{\beta}=[\beta, y_1, y_2]$  represented in the following diagram.
\begin{equation}\label{eq:circ1}
\begin{tikzcd}
	A & {I_1} & B & {J_1} & C \\
	& X & B & Y & C \\
	A & {I_2} & B & {J_2} & C
	\arrow["{{f_1}}", from=1-1, to=1-2]
	\arrow[equals, from=1-1, to=3-1]
	\arrow[""{name=0, anchor=center, inner sep=0}, "{{x_1}}", from=1-2, to=2-2]
	\arrow["\alpha"', shorten <=11pt, shorten >=11pt, Rightarrow, from=1-2, to=3-1]
	\arrow["{{r_1}}"', from=1-3, to=1-2]
	\arrow["{{g_1}}", from=1-3, to=1-4]
	\arrow[""{name=1, anchor=center, inner sep=0}, equals, from=1-3, to=2-3]
	\arrow[""{name=2, anchor=center, inner sep=0}, "{{y_1}}"', from=1-4, to=2-4]
	\arrow["\beta"', shorten <=11pt, shorten >=11pt, Rightarrow, from=1-4, to=3-3]
	\arrow["{{s_1}}"', from=1-5, to=1-4]
	\arrow[""{name=3, anchor=center, inner sep=0}, equals, from=1-5, to=2-5]
	\arrow["{x_3}", from=2-3, to=2-2]
	\arrow[""{name=4, anchor=center, inner sep=0}, equals, from=2-3, to=3-3]
	\arrow["{y_3}", from=2-5, to=2-4]
	\arrow[""{name=5, anchor=center, inner sep=0}, equals, from=2-5, to=3-5]
	\arrow["{{f_2}}"', from=3-1, to=3-2]
	\arrow[""{name=6, anchor=center, inner sep=0}, "{{x_2}}"', from=3-2, to=2-2]
	\arrow["{{r_2}}", from=3-3, to=3-2]
	\arrow["{{g_2}}"', from=3-3, to=3-4]
	\arrow[""{name=7, anchor=center, inner sep=0}, "{{y_2}}", from=3-4, to=2-4]
	\arrow["{{s_2}}", from=3-5, to=3-4]
	\arrow["{\SIGMA^{\delta_1}}"{description}, draw=none, from=1, to=0]
	\arrow["{\SIGMA^{\epsilon_1}}"{description}, draw=none, from=3, to=2]
	\arrow["{\SIGMA^{\delta_2}}"{description}, draw=none, from=4, to=6]
	\arrow["{\SIGMA^{\epsilon_2}}"{description}, draw=none, from=5, to=7]
\end{tikzcd}
\end{equation}
We want to show that
\begin{equation}\label{eq:circ0}(\bar{\beta}\circ \bar{f}_2)\cdot(\bar{g}_1\circ \bar{\alpha})= \bar{\beta}\circ \bar{\alpha}\qquad \text{and}\qquad (\bar{g}_2\circ \bar{\alpha})\cdot(\bar{\beta}\circ \bar{f}_1)= \bar{\beta}\circ \bar{\alpha}.
\end{equation}

Consider the following data, used in the composition $\bar{\beta}\circ \bar{\alpha}$ (see \eqref{eq:(A12)}).
\begin{equation}\label{eq:circ2}
\begin{tikzcd}
	B & X &&& B & X \\
	{J_1} &&& {J_1} & {J_2} \\
	Y & V &&& Y & V
	\arrow["{{{x_3}}}", from=1-1, to=1-2]
	\arrow["{{{g_1}}}"', from=1-1, to=2-1]
	\arrow[""{name=0, anchor=center, inner sep=0}, "{{{y'_1}}}", from=1-2, to=3-2]
	\arrow[""{name=1, anchor=center, inner sep=0}, "{{{y'_2}}}"{pos=0.6}, curve={height=-30pt}, from=1-2, to=3-2]
	\arrow["{{{x_3}}}", from=1-5, to=1-6]
	\arrow["{{{g_1}}}"', curve={height=6pt}, from=1-5, to=2-4]
	\arrow["{{{g_2}}}", from=1-5, to=2-5]
	\arrow[""{name=2, anchor=center, inner sep=0}, "{{{y'_2}}}", from=1-6, to=3-6]
	\arrow["{{{y_1}}}"', from=2-1, to=3-1]
	\arrow["\beta", Rightarrow, from=2-4, to=2-5]
	\arrow["{{{y_1}}}"', curve={height=6pt}, from=2-4, to=3-5]
	\arrow["{{{y_2}}}", from=2-5, to=3-5]
	\arrow["v", from=3-1, to=3-2]
	\arrow["v", from=3-5, to=3-6]
	\arrow["{{{\text{\normalsize =}}}}"{description, pos=0.6}, draw=none, from=1, to=2-4]
	\arrow["{{{\beta'}}}"{pos=0.6}, shorten <=9pt, shorten >=3pt, Rightarrow, from=0, to=1]
	\arrow["{{{\SIGMA^{\xi_1}}}}"{description}, draw=none, from=2-1, to=0]
	\arrow["{{{\SIGMA^{\xi_2}}}}"{description}, draw=none, from=2-5, to=2]
\end{tikzcd}
\end{equation}

We prove the first equality of \eqref{eq:circ0}. First observe that $\bar{\beta} \circ \bar{f}_2=\Omega_2^{-1}\cdot[\beta'\circ x_2f_2, 1,1]\cdot \Omega'$, where
$\Omega_i\colon \bar{g}_i\circ \bar{f}_i \Rightarrow (y'_ix_if_i,vy_3)$ and $\Omega'\colon \bar{g}_1\circ \bar{f}_2\Rightarrow (y'_1x_2f_2,vy_3)$ are the due 2-cells of type $\Omega$, see Diagram~\eqref{eq:circ3} below.

\begin{equation}\label{eq:circ3}
\begin{tikzcd}
	A &&&&& C \\
	A & {I_2} & X & V & Y & C \\
	A & {I_2} & {V_0} & V & Y & C \\
	A &&&&& C
	\arrow[""{name=0, anchor=center, inner sep=0}, "{{{\bar{g}_1\circ \bar{f}_2}}}"{description}, from=1-1, to=1-6]
	\arrow[equals, from=1-1, to=2-1]
	\arrow[equals, from=1-6, to=2-6]
	\arrow["{{f_2}}", from=2-1, to=2-2]
	\arrow[equals, from=2-1, to=3-1]
	\arrow["{{x_2}}", from=2-2, to=2-3]
	\arrow[equals, from=2-2, to=3-2]
	\arrow[""{name=1, anchor=center, inner sep=0}, "{{y'_1}}", from=2-3, to=2-4]
	\arrow[equals, from=2-3, to=3-3]
	\arrow["{{{{\beta'}}}}", between={0.2}{0.8}, Rightarrow, from=2-4, to=3-3]
	\arrow[""{name=2, anchor=center, inner sep=0}, equals, from=2-4, to=3-4]
	\arrow["v"', from=2-5, to=2-4]
	\arrow[""{name=3, anchor=center, inner sep=0}, equals, from=2-5, to=3-5]
	\arrow["{{y_3}}"', from=2-6, to=2-5]
	\arrow["{{f_2}}"', from=3-1, to=3-2]
	\arrow[equals, from=3-1, to=4-1]
	\arrow["{{x_2}}"', from=3-2, to=3-3]
	\arrow[""{name=4, anchor=center, inner sep=0}, "{{{{y'_2}}}}"', from=3-3, to=3-4]
	\arrow["v", from=3-5, to=3-4]
	\arrow[""{name=5, anchor=center, inner sep=0}, equals, from=3-6, to=2-6]
	\arrow["{{y_3}}", from=3-6, to=3-5]
	\arrow[equals, from=3-6, to=4-6]
	\arrow[""{name=6, anchor=center, inner sep=0}, "{{{\bar{g}_2\circ \bar{f}_2}}}"{description}, from=4-1, to=4-6]
	\arrow["{{\Omega'}}"{pos=0.4}, between={0.2}{0.6}, Rightarrow, from=0, to=1]
	\arrow["{{{{\SIGMA^{\id}}}}}"{description}, draw=none, from=2, to=3]
	\arrow["{{{{\SIGMA^{\id}}}}}"{description}, draw=none, from=3, to=5]
	\arrow["{{{\Omega_2^{-1}}}}"{pos=0.6}, shift left, between={0.4}{0.8}, Rightarrow, from=4, to=6]
\end{tikzcd}
\end{equation}
	Moreover, taking into account that $(\id_{g_1}, 1_{J_1},1_{J_1},s_1,\id,\id)\approx (\id_{y_1g_1},y_1,y_1,y_3,\epsilon_1,\epsilon_1)$,  the composition $\bar{g}_1\circ \bar{\alpha}$ may be represented by
\begin{equation}\label{eq:circ4}
\begin{tikzcd}
	A &&&&& C \\
	A & {I_1} & X & V & Y & C \\
	A & {I_2} & X & V & Y & C \\
	A &&&&& C
	\arrow[""{name=0, anchor=center, inner sep=0}, "{{{{\bar{g}_1\circ \bar{f}_1}}}}"{description}, from=1-1, to=1-6]
	\arrow[equals, from=1-1, to=2-1]
	\arrow[equals, from=1-6, to=2-6]
	\arrow["{{{{f_1}}}}", from=2-1, to=2-2]
	\arrow[equals, from=2-1, to=3-1]
	\arrow["{{{{x_1}}}}", from=2-2, to=2-3]
	\arrow[""{name=1, anchor=center, inner sep=0}, "{{{{{y'_1}}}}}", from=2-3, to=2-4]
	\arrow["\alpha"', between={0.3}{0.7}, Rightarrow, from=2-3, to=3-1]
	\arrow[equals, from=2-3, to=3-3]
	\arrow[""{name=2, anchor=center, inner sep=0}, equals, from=2-4, to=3-4]
	\arrow["v"', from=2-5, to=2-4]
	\arrow[""{name=3, anchor=center, inner sep=0}, equals, from=2-5, to=3-5]
	\arrow["{{{{y_3}}}}"', from=2-6, to=2-5]
	\arrow[""{name=4, anchor=center, inner sep=0}, equals, from=2-6, to=3-6]
	\arrow["{{{{f_2}}}}"', from=3-1, to=3-2]
	\arrow[equals, from=3-1, to=4-1]
	\arrow["{{{{x_2}}}}"', from=3-2, to=3-3]
	\arrow[""{name=5, anchor=center, inner sep=0}, "{{{{{y'_1}}}}}"', from=3-3, to=3-4]
	\arrow["v", from=3-5, to=3-4]
	\arrow["{{{{y_3}}}}", from=3-6, to=3-5]
	\arrow[equals, from=3-6, to=4-6]
	\arrow[""{name=6, anchor=center, inner sep=0}, "{{{{\bar{g}_1\circ \bar{f}_2}}}}"{description}, from=4-1, to=4-6]
	\arrow["{{{\Omega_1}}}"{pos=0.4}, between={0.2}{0.6}, Rightarrow, from=0, to=1]
	\arrow["{{{{\SIGMA^{\id}}}}}"{description}, draw=none, from=3, to=2]
	\arrow["{{{{\SIGMA^{\id}}}}}"{description}, draw=none, from=4, to=3]
	\arrow["{{{\Omega'^{-1}}}}"{pos=0.6}, between={0.5}{0.8}, Rightarrow, from=5, to=6]
\end{tikzcd}
\; .
\end{equation}
Composing vertically \eqref{eq:circ4} with \eqref{eq:circ3}, we immediately see that the resulting 2-morphism is indeed a representative  of the composition $\bar{\beta}\circ \bar{\alpha}$, that is,
$(\bar{\beta}\circ \bar{f}_2)\cdot (\bar{g}_1\circ \bar{\alpha})\approx\bar{\beta}\circ \bar{\alpha}$.

The proof of the second  equality of \eqref{eq:circ0} is analogous.
\end{proof}

\begin{proposition}\label{pro:natural-iso} The isomorphisms of Definition \ref{def:assoc} indeed form a natural transformation from $(-\circ -)\circ -$   to $-\circ(-\circ -)$.
\end{proposition}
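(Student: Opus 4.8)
The plan is to verify the two conditions that make the family $\{\Assoc_{\bar f,\bar g,\bar h}\}$ a natural transformation: (i) each component is a well-defined 2-cell of $\catx[\Sigma_\ast]$, in particular an isomorphism; and (ii) the naturality squares commute, i.e.\ for any triple of 2-cells $\bar\alpha\colon\bar f\Rightarrow\bar f'$, $\bar\beta\colon\bar g\Rightarrow\bar g'$, $\bar\gamma\colon\bar h\Rightarrow\bar h'$ we have
\[
\Assoc_{\bar f',\bar g',\bar h'}\cdot\bigl((\bar\gamma\circ\bar\beta)\circ\bar\alpha\bigr)
=\bigl(\bar\gamma\circ(\bar\beta\circ\bar\alpha)\bigr)\cdot\Assoc_{\bar f,\bar g,\bar h}.
\]
For (i), observe that $\Assoc_{\bar f,\bar g,\bar h}$ is by Definition~\ref{def:assoc} the $\Omega$ 2-cell of the $\Sigma$-path $(1)\rightsquigarrow(2)\rightsquigarrow(3)$ between $\Sigma$-schemes of level $2$ obtained from the canonical $\Sigma$-squares and then whiskered by $f$ on the left and $t$ on the right. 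By Lemma~\ref{lem:basic-Omega}(1), basic $\Omega$ 2-cells are invertible with $\Omega_{\alpha,\alpha'}^{-1}=\Omega_{\alpha',\alpha}$, and a vertical composite of invertibles is invertible; whiskering by $\Sigma$-cospans preserves invertibility since it is functorial on 2-cells (Proposition~\ref{pro:comp}). So each component is an isomorphism. Well-definedness is automatic: the $\Omega$ 2-cell attached to a $\Sigma$-path depends only on the endpoints by Proposition~\ref{pro:du=ud} (the two $\Sigma$-steps used have the fixed types $\mathbf u$ then $\mathbf d$, and any other path of length $2$ between $(1)$ and $(3)$ is equivalent), and the canonical $\Sigma$-squares are prefixed by Definition~\ref{assum1}.

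For (ii), the strategy is to reduce the general naturality square to the special case of whiskerings, exactly as in the proof of Proposition~\ref{pro:comp}. Using the identities \eqref{eq:W1} and the whiskering laws \eqref{eq:W2} already established there, every 2-cell $(\bar\gamma\circ\bar\beta)\circ\bar\alpha$ decomposes into a vertical composite of 2-cells each of which is a whiskering of a single $\bar\alpha$, $\bar\beta$ or $\bar\gamma$ by identities, and likewise for $\bar\gamma\circ(\bar\beta\circ\bar\alpha)$. Thus it suffices to check naturality separately in each of the three variables, i.e.\ to prove
\[
\Assoc\cdot\bigl((1_{\bar h}\circ 1_{\bar g})\circ\bar\alpha\bigr)=\bigl(1_{\bar h}\circ(1_{\bar g}\circ\bar\alpha)\bigr)\cdot\Assoc
\]
and the two analogous equations with $\bar\alpha$ replaced by $\bar\beta$ (middle variable) and $\bar\gamma$ (last variable), where $1_{(-)}$ denotes identity 2-cells. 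Each of these is an equality of $\Omega$ 2-cells: on both sides we have an $\Omega$ 2-cell corresponding to a $\Sigma$-path between $\Sigma$-schemes of level $3$ with the same left border (the left border being the concatenation of the left borders of $\bar f$ or $\bar f'$, $\bar g$, $\bar h$), and by Proposition~\ref{pro:of-interest} it is enough to show that both paths are $\Sigma$-paths of interest with the same source and target.

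The key technical input is Proposition~\ref{pro:Omega-f}: whiskering a basic $\Omega$ 2-cell of a level-$2$ $\Sigma$-scheme by a $\Sigma$-cospan on either side yields an $\Omega$ 2-cell of a $\Sigma$-path of interest of level-$3$ schemes. Combined with the description of the associator as the $\mathbf u$-then-$\mathbf d$ path on canonical schemes, this lets me rewrite both sides of each single-variable naturality equation as explicit $\Sigma$-paths of interest between the same pair of canonical-type level-$3$ $\Sigma$-schemes; Proposition~\ref{pro:of-interest} then forces them equal. Concretely: for naturality in $\bar\alpha$ one compares, on the source side, the path that first applies $\bar\alpha$ (a $d_1$-type modification on the leftmost column, cf.\ the construction of $\comp_{ABC}$ on 2-cells) and then the associator path, against, on the target side, the associator path followed by applying $\bar\alpha$; both rearrange the same collection of $\Sigma$-squares and one tracks through the equivalences of Lemma~\ref{lem:Sigma-steps} and Corollary~\ref{cor:level2}. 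The main obstacle I anticipate is bookkeeping: writing down the (fairly large) level-$3$ $\Sigma$-schemes and $\Sigma$-paths on each side precisely enough to invoke Proposition~\ref{pro:of-interest}, and checking that the intermediate schemes all have configurations of interest so that the proposition applies — the conceptual content is small, but the diagrams are sizeable, so I would organise the proof around the three variables and lean heavily on Propositions~\ref{pro:du=ud}, \ref{pro:of-interest} and~\ref{pro:Omega-f} rather than manipulating 2-cells by hand.
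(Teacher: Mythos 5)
Your preliminary reduction is fine: by functoriality of the composition functors (\cref{pro:comp}) the general naturality square does decompose into three single-variable squares, and your remarks on invertibility and well-definedness of the components are correct. The gap is in how you propose to prove those single-variable squares. You assert that ``on both sides we have an $\Omega$ 2-cell corresponding to a $\Sigma$-path \dots'' and then invoke \cref{pro:of-interest}; but when $\bar\alpha$ (or $\bar\beta$, $\bar\gamma$) is an \emph{arbitrary} 2-cell, the composites $\Assoc\cdot\bigl((1_{\bar h}\circ 1_{\bar g})\circ\bar\alpha\bigr)$ and $\bigl(1_{\bar h}\circ(1_{\bar g}\circ\bar\alpha)\bigr)\cdot\Assoc$ are \emph{not} $\Omega$ 2-cells: they contain the non-invertible, non-$\Sigma$-square datum $\alpha$, to which the coherence machinery for $\Sigma$-paths (\cref{pro:du=ud}, \cref{pro:of-interest}) simply does not apply. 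So \cref{pro:of-interest} cannot be used to ``force the two paths equal'' here, and ``both rearrange the same collection of $\Sigma$-squares'' does not address the part of the 2-cell that is not made of $\Sigma$-squares.

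What is actually needed — and what the paper's proof does — is to unfold both horizontal composites explicitly via \cref{def:horizontal}, making the auxiliary Rule~6 choices coherently (the 2-cells $\beta'$, $\gamma'$, $\gamma''$ and the squares $\Sigma^{\xi_i}$, $\Sigma^{\mu_i}$, $\Sigma^{\mu'_i}$), so that each side of the naturality square is exhibited as a conjugate of the \emph{same} central non-$\Omega$ 2-cell $[\gamma''\circ\beta'\circ\alpha,1_{V_2},1_{V_2}]$ by composites of $\Omega$ 2-cells ($\hat\Omega_i=\Omega_{3i}\cdot(\Omega_{2i}\circ\id_{\bar f_i})$ and $\tilde\Omega_i=\Omega_{1i}\cdot(\id_{\bar h_i}\circ\Omega_i)$). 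Naturality then reduces to the identity $\Assoc_{\bar f_i,\bar g_i,\bar h_i}=\tilde\Omega_i^{-1}\cdot\hat\Omega_i$, which \emph{is} an equality of $\Omega$ 2-cells, and only at this point do \cref{pro:Omega-f} (to see that the whiskered pieces $\Omega_{2i}\circ\id_{\bar f}$ and $\id_{\bar h}\circ\Omega_i$ are given by $\Sigma$-paths of interest) and \cref{pro:of-interest} enter. Your plan cites the right coherence results but applies them one step too early, skipping the interchange-law bookkeeping that isolates the arbitrary 2-cell; that bookkeeping is the substantive content of the proof and cannot be replaced by the coherence of $\Omega$ 2-cells. (Incidentally, the paper does not pass through the single-variable reduction at all: handling $\bar\alpha,\bar\beta,\bar\gamma$ simultaneously is what makes the single central 2-cell appear on both sides, so your reduction, while valid, would triple rather than simplify this work.)
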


\begin{proof}We want to show that, for 2-cells
\[\begin{tikzcd}
	A & B & C & D
	\arrow[""{name=0, anchor=center, inner sep=0}, "{\bar{f}_1}", curve={height=-18pt}, from=1-1, to=1-2]
	\arrow[""{name=1, anchor=center, inner sep=0}, "{\bar{f}_2}"', curve={height=18pt}, from=1-1, to=1-2]
	\arrow[""{name=2, anchor=center, inner sep=0}, "{\bar{g}_1}", curve={height=-18pt}, from=1-2, to=1-3]
	\arrow[""{name=3, anchor=center, inner sep=0}, "{\bar{g}_2}"', curve={height=18pt}, from=1-2, to=1-3]
	\arrow[""{name=4, anchor=center, inner sep=0}, "{\bar{h}_2}"', curve={height=18pt}, from=1-3, to=1-4]
	\arrow[""{name=5, anchor=center, inner sep=0}, "{\bar{h}_1}", curve={height=-18pt}, from=1-3, to=1-4]
	\arrow["{\bar{\alpha}}", shorten <=5pt, shorten >=5pt, Rightarrow, from=0, to=1]
	\arrow["{\bar{\beta}}", shorten <=5pt, shorten >=5pt, Rightarrow, from=2, to=3]
	\arrow["{\bar{\gamma}}", shorten <=5pt, shorten >=5pt, Rightarrow, from=5, to=4]
\end{tikzcd}\]
in $\catx[\Sigma_{\ast}]$ the following diagram
\begin{equation}\label{eq:nat-trans}
\begin{tikzcd}
	{(\bar{h}_1\circ \bar{g}_1)\circ \bar{f}_1} &&& {\bar{h}_1\circ (\bar{g}_1\circ \bar{f}_1)} \\
	{(\bar{h}_2\circ \bar{g}_2)\circ \bar{f}_2} &&& {\bar{h}_2\circ (\bar{g}_2\circ \bar{f}_2)}
	\arrow["{{{\Assoc_{\bar{h}_1,\bar{g}_1,\bar{f}_1}}}}", Rightarrow, from=1-1, to=1-4]
	\arrow["{{(\bar{\gamma}\circ \bar{\beta})\circ \bar{\alpha}}}"', Rightarrow, from=1-1, to=2-1]
	\arrow["{{\bar{\gamma}\circ(\bar{\beta}\circ \bar{\alpha})}}", Rightarrow, from=1-4, to=2-4]
	\arrow["{{{\Assoc_{\bar{h}_2,\bar{g}_2,\bar{f}_2}}}}", Rightarrow, from=2-1, to=2-4]
\end{tikzcd}
\end{equation}
is commutative in $\catx[\Sigma_*](A,D)$.

We are given
\[\begin{tikzcd}
	A & {I_1} & B & {J_1} & C & {K_1} & D \\
	& X & B & Y & C & Z & D \\
	A & {I_2} & B & {J_2} & C & {K_2} & D \rlap{\,.}
	\arrow["{{f_1}}", from=1-1, to=1-2]
	\arrow[Rightarrow, no head, from=1-1, to=3-1]
	\arrow[""{name=0, anchor=center, inner sep=0}, "{{x_1}}", from=1-2, to=2-2]
	\arrow["\alpha"', shorten <=11pt, shorten >=11pt, Rightarrow, from=1-2, to=3-1]
	\arrow["{{r_1}}"', from=1-3, to=1-2]
	\arrow["{{g_1}}", from=1-3, to=1-4]
	\arrow[""{name=1, anchor=center, inner sep=0}, Rightarrow, no head, from=1-3, to=2-3]
	\arrow[""{name=2, anchor=center, inner sep=0}, "{{y_1}}", from=1-4, to=2-4]
	\arrow["\beta"', shorten <=11pt, shorten >=11pt, Rightarrow, from=1-4, to=3-3]
	\arrow["{{s_1}}"', from=1-5, to=1-4]
	\arrow["{h_1}", from=1-5, to=1-6]
	\arrow[""{name=3, anchor=center, inner sep=0}, Rightarrow, no head, from=1-5, to=2-5]
	\arrow[""{name=4, anchor=center, inner sep=0}, "{z_1}"', from=1-6, to=2-6]
	\arrow["\gamma"', shorten <=11pt, shorten >=11pt, Rightarrow, from=1-6, to=3-5]
	\arrow["{t_1}"', from=1-7, to=1-6]
	\arrow[""{name=5, anchor=center, inner sep=0}, Rightarrow, no head, from=1-7, to=2-7]
	\arrow["{{x_3}}", from=2-3, to=2-2]
	\arrow[""{name=6, anchor=center, inner sep=0}, Rightarrow, no head, from=2-3, to=3-3]
	\arrow["{{y_3}}"', from=2-5, to=2-4]
	\arrow[""{name=7, anchor=center, inner sep=0}, Rightarrow, no head, from=2-5, to=3-5]
	\arrow["{z_3}", from=2-7, to=2-6]
	\arrow[""{name=8, anchor=center, inner sep=0}, Rightarrow, no head, from=2-7, to=3-7]
	\arrow["{{f_2}}"', from=3-1, to=3-2]
	\arrow[""{name=9, anchor=center, inner sep=0}, "{{x_2}}"', from=3-2, to=2-2]
	\arrow["{{r_2}}", from=3-3, to=3-2]
	\arrow["{{g_2}}"', from=3-3, to=3-4]
	\arrow[""{name=10, anchor=center, inner sep=0}, "{{y_2}}"', from=3-4, to=2-4]
	\arrow["{{s_2}}", from=3-5, to=3-4]
	\arrow["{h_2}"', from=3-5, to=3-6]
	\arrow[""{name=11, anchor=center, inner sep=0}, "{z_2}", from=3-6, to=2-6]
	\arrow["{t_2}", from=3-7, to=3-6]
	\arrow["{{\SIGMA^{\delta_1}}}"{description}, draw=none, from=0, to=1]
	\arrow["{{\SIGMA^{\epsilon_1}}}"{description}, draw=none, from=2, to=3]
	\arrow["{\SIGMA^{\zeta_1}}"{description}, draw=none, from=4, to=5]
	\arrow["{{\SIGMA^{\delta_2}}}"{description}, draw=none, from=9, to=6]
	\arrow["{{\SIGMA^{\epsilon_2}}}"{description}, draw=none, from=10, to=7]
	\arrow["{\SIGMA^{\zeta_2}}"{description}, draw=none, from=11, to=8]
\end{tikzcd}\]

(a) \emph{Formation of $\bar{\beta}\circ \bar{\alpha}$:} \\ We apply Rule 6 to obtain
\begin{equation}\label{eq:Ass1}\begin{tikzcd}
	B & X &&& B & X \\
	{J_1} &&& {J_1} & {J_2} \\
	Y & V &&& Y & V
	\arrow["{{x_3}}", from=1-1, to=1-2]
	\arrow["{{g_1}}"', from=1-1, to=2-1]
	\arrow[""{name=0, anchor=center, inner sep=0}, "{{y'_1}}", from=1-2, to=3-2]
	\arrow[""{name=1, anchor=center, inner sep=0}, "{{y'_2}}"{pos=0.6}, curve={height=-40pt}, from=1-2, to=3-2]
	\arrow["{{x_3}}", from=1-5, to=1-6]
	\arrow["{{g_1}}"', curve={height=6pt}, from=1-5, to=2-4]
	\arrow["{{g_2}}", from=1-5, to=2-5]
	\arrow[""{name=2, anchor=center, inner sep=0}, "{{y'_2}}", from=1-6, to=3-6]
	\arrow["{{y_1}}"', from=2-1, to=3-1]
	\arrow["\beta", Rightarrow, from=2-4, to=2-5]
	\arrow["{{y_1}}"', curve={height=6pt}, from=2-4, to=3-5]
	\arrow["{{y_2}}", from=2-5, to=3-5]
	\arrow["v", from=3-1, to=3-2]
	\arrow["v", from=3-5, to=3-6]
	\arrow["{{\text{\normalsize =}}}"{description, pos=0.6}, draw=none, from=1, to=2-4]
	\arrow["{{\beta'}}"{pos=0.6}, shorten <=12pt, shorten >=6pt, Rightarrow, from=0, to=1]
	\arrow["{{\SIGMA^{\xi_1}}}"{description}, draw=none, from=2-1, to=0]
	\arrow["{{\SIGMA^{\xi_2}}}"{description}, draw=none, from=2-5, to=2]
\end{tikzcd}
\end{equation}
and get the following 2-morphism
\begin{tikzcd}
	A & {I_1} & X & V & Y & C \\
	&&& V & Y & C \\
	A & {I_2} & X & V & Y & C
	\arrow["{f_1}", from=1-1, to=1-2]
	\arrow[Rightarrow, no head, from=1-1, to=3-1]
	\arrow["{{x_1}}", from=1-2, to=1-3]
	\arrow["{{y_1'}}", from=1-3, to=1-4]
	\arrow["\alpha"', shorten <=11pt, shorten >=11pt, Rightarrow, from=1-3, to=3-1]
	\arrow[Rightarrow, no head, from=1-3, to=3-3]
	\arrow["v", tail reversed, no head, from=1-4, to=1-5]
	\arrow[""{name=0, anchor=center, inner sep=0}, Rightarrow, no head, from=1-4, to=2-4]
	\arrow["{{\beta'}}"', shorten <=11pt, shorten >=8pt, Rightarrow, from=1-4, to=3-3]
	\arrow["{y_3}", tail reversed, no head, from=1-5, to=1-6]
	\arrow[""{name=1, anchor=center, inner sep=0}, Rightarrow, no head, from=1-6, to=2-6]
	\arrow["v", from=2-5, to=2-4]
	\arrow["{y_3}", from=2-6, to=2-5]
	\arrow[""{name=2, anchor=center, inner sep=0}, Rightarrow, no head, from=2-6, to=3-6]
	\arrow["{f_2}"', from=3-1, to=3-2]
	\arrow["{{x_2}}"', from=3-2, to=3-3]
	\arrow["{{y'_2}}"', from=3-3, to=3-4]
	\arrow[""{name=3, anchor=center, inner sep=0}, Rightarrow, no head, from=3-4, to=2-4]
	\arrow["v", from=3-5, to=3-4]
	\arrow["{y_3}", from=3-6, to=3-5]
	\arrow["{\SIGMA^{\id}}"{description}, draw=none, from=0, to=1]
	\arrow["{\SIGMA^{\id}}"{description}, draw=none, from=3, to=2]
\end{tikzcd}\, . \,
Now, consider the $\Sigma$-path
\begin{equation}\label{eq:S-sch1}
(\Omega_i) \quad
\adjustbox{scale=0.90}{\begin{tikzcd}
	& B & {I_i} \\
	C & {J_i} & {\dot{B}_i} \\
	C & {J_i} & {\dot{B}_i}
	\arrow["{{r_i}}", from=1-2, to=1-3]
	\arrow[""{name=0, anchor=center, inner sep=0}, "{{g_i}}"', from=1-2, to=2-2]
	\arrow[""{name=1, anchor=center, inner sep=0}, "{{\dot{g}_i}}", from=1-3, to=2-3]
	\arrow["{{s_i}}", from=2-1, to=2-2]
	\arrow[""{name=2, anchor=center, inner sep=0}, equals, from=2-1, to=3-1]
	\arrow["{{\dot{r}_i}}", from=2-2, to=2-3]
	\arrow[""{name=3, anchor=center, inner sep=0}, equals, from=2-2, to=3-2]
	\arrow[""{name=4, anchor=center, inner sep=0}, equals, from=2-3, to=3-3]
	\arrow["{s_i}", from=3-1, to=3-2]
	\arrow["{{\dot{r}_i}}", from=3-2, to=3-3]
	\arrow["{{\SIGMA^{\dot{\alpha}_i}}}"{description}, draw=none, from=0, to=1]
	\arrow["{\SIGMA^{\id}}"{description}, draw=none, from=2, to=3]
	\arrow["{\SIGMA^{\id}}"{description}, draw=none, from=3, to=4]
\end{tikzcd}}
\begin{tikzcd}
	{} & {}
	\arrow["d", squiggly, from=1-1, to=1-2]
\end{tikzcd}
\adjustbox{scale=0.90}{\begin{tikzcd}
	& B & {I_i} \\
	C & {J_i} & {\dot{B}_i} \\
	C & Y & {}
	\arrow["{{{r_i}}}", from=1-2, to=1-3]
	\arrow[""{name=0, anchor=center, inner sep=0}, "{{{g_i}}}"', from=1-2, to=2-2]
	\arrow[""{name=1, anchor=center, inner sep=0}, "{{{\dot{g}_i}}}", from=1-3, to=2-3]
	\arrow["{{{s_i}}}", from=2-1, to=2-2]
	\arrow[""{name=2, anchor=center, inner sep=0}, equals, from=2-1, to=3-1]
	\arrow["{{{\dot{r}_i}}}"', from=2-2, to=2-3]
	\arrow[""{name=3, anchor=center, inner sep=0}, "{{y_i}}"', from=2-2, to=3-2]
	\arrow[""{name=4, anchor=center, inner sep=0}, from=2-3, to=3-3]
	\arrow[from=3-1, to=3-2]
	\arrow[from=3-2, to=3-3]
	\arrow["{{{\SIGMA^{\dot{\alpha}_i}}}}"{description}, draw=none, from=0, to=1]
	\arrow["{{\SIGMA^{\epsilon_i}}}"{description}, draw=none, from=2, to=3]
	\arrow["{{\SIGMAc}}"{description}, draw=none, from=3, to=4]
\end{tikzcd}}
\begin{tikzcd}
	{} & {}
	\arrow["u", squiggly, from=1-1, to=1-2]
\end{tikzcd}
\adjustbox{scale=0.90}{\begin{tikzcd}
	& B & {I_i} \\
	C & {J_i} \\
	C & Y & V
	\arrow["{{r_i}}", from=1-2, to=1-3]
	\arrow["{{g_i}}", from=1-2, to=2-2]
	\arrow[""{name=0, anchor=center, inner sep=0}, "{{y'_ix_i}}", from=1-3, to=3-3]
	\arrow["{{s_i}}", from=2-1, to=2-2]
	\arrow[""{name=1, anchor=center, inner sep=0}, equals, from=2-1, to=3-1]
	\arrow[""{name=2, anchor=center, inner sep=0}, "{{y_i}}", from=2-2, to=3-2]
	\arrow["{{y_3}}"', from=3-1, to=3-2]
	\arrow["v"', from=3-2, to=3-3]
	\arrow["{{\SIGMA^{\epsilon_i}}}"{description}, draw=none, from=1, to=2]
	\arrow["{\SIGMA^{\xi_i\odot \delta_i}}"{description}, draw=none, from=2-2, to=0]
\end{tikzcd}}
\end{equation}
and let
$$\Omega_i\colon (\dot{g}_if_i,\dot{r}_is_i)\Rightarrow (y'_ix_if_i,vy_3)\qquad (i=1,2)$$
be the corresponding   $\Omega$ 2-cell.
Thus, according to \cref{def:horizontal}, $\bar{\beta}\circ \bar{\alpha}$ is given by
$$\bar{g}_1\circ \bar{f}_1=(\dot{g}_1f_1,\dot{r}_1s_1)\xRightarrow{\Omega_1}(y'_1x_1f_1,vy_3)\xRightarrow{[\beta'\circ \alpha, 1_V,1_V]}(y'_2x_2f_2,vy_3)\xRightarrow{\Omega_2^{-1}}(\dot{g}_2f_2,\dot{r}_2s_2)=\bar{g}_2\circ \bar{f}_2\; .$$

(b) \emph{Formation of $\bar{\gamma}\circ(\bar{\beta}\circ \bar{\alpha})$  and $(\bar{\gamma}\circ \bar{\beta})\circ \bar{\alpha}$:}

First use Rule 6 to find
\begin{equation}\label{eq:Ass2}\begin{tikzcd}
	C && Y & C && Y \\
	Z && {V_1} & Z && {V_1}
	\arrow["{{{y_3}}}", from=1-1, to=1-3]
	\arrow[""{name=0, anchor=center, inner sep=0}, "{{z_1h_1}}"', curve={height=30pt}, from=1-1, to=2-1]
	\arrow[""{name=1, anchor=center, inner sep=0}, "{{z_2h_2}}"{description}, curve={height=-30pt}, from=1-1, to=2-1]
	\arrow[""{name=2, anchor=center, inner sep=0}, "{{z'_2}}", from=1-3, to=2-3]
	\arrow["{{{y_3}}}", from=1-4, to=1-6]
	\arrow[""{name=3, anchor=center, inner sep=0}, "{{z_1h_1}}"{description}, from=1-4, to=2-4]
	\arrow[""{name=4, anchor=center, inner sep=0}, "{{{z'_1}}}", curve={height=30pt}, from=1-6, to=2-6]
	\arrow[""{name=5, anchor=center, inner sep=0}, "{{{z'_2}}}", curve={height=-30pt}, from=1-6, to=2-6]
	\arrow["{{{v_1}}}"', from=2-1, to=2-3]
	\arrow["{{{v_1}}}"{description}, from=2-4, to=2-6]
	\arrow["\gamma"{description}, between={0.2}{0.8}, Rightarrow, from=0, to=1]
	\arrow["{{{\SIGMA^{\mu_2}}}}"{description, pos=0.5}, draw=none, from=1, to=2]
	\arrow["{{{\text{\normalsize =}}}}"{description}, draw=none, from=2, to=3]
	\arrow["{{{\SIGMA^{\mu_1}}}}"{description, pos=0.6}, draw=none, from=3, to=4]
	\arrow["{{{\gamma'}}}"{description}, between={0.2}{0.8}, Rightarrow, from=4, to=5]
\end{tikzcd}
\end{equation}
and, subsequently, also
\begin{equation}\label{eq:Ass3}\begin{tikzcd}
	Y && V & Y && V \\
	{V_1} && {V_2} & {V_1} && {V_2}
	\arrow["v", from=1-1, to=1-3]
	\arrow[""{name=0, anchor=center, inner sep=0}, "{{{z'_1}}}"', curve={height=30pt}, from=1-1, to=2-1]
	\arrow[""{name=1, anchor=center, inner sep=0}, "{{{z'_2}}}", curve={height=-30pt}, from=1-1, to=2-1]
	\arrow[""{name=2, anchor=center, inner sep=0}, "{{{{z''_2}}}}", from=1-3, to=2-3]
	\arrow["v", from=1-4, to=1-6]
	\arrow[""{name=3, anchor=center, inner sep=0}, "{{{z'_1}}}", from=1-4, to=2-4]
	\arrow[""{name=4, anchor=center, inner sep=0}, "{{{{z''_1}}}}", curve={height=30pt}, from=1-6, to=2-6]
	\arrow[""{name=5, anchor=center, inner sep=0}, "{{{{z''_2}}}}", curve={height=-30pt}, from=1-6, to=2-6]
	\arrow["{{{{v_2}}}}"', from=2-1, to=2-3]
	\arrow["{{{{v_2}}}}"', from=2-4, to=2-6]
	\arrow["{{\gamma'}}"{description}, shorten <=12pt, shorten >=12pt, Rightarrow, from=0, to=1]
	\arrow["{\SIGMA^{\mu'_2}}"{description}, draw=none, from=1, to=2]
	\arrow["{{{{\text{\normalsize =}}}}}"{description}, draw=none, from=2, to=3]
	\arrow["{\SIGMA^{\mu'_1}}"{description}, draw=none, from=3, to=4]
	\arrow["{{{{\gamma''}}}}"{description}, shorten <=12pt, shorten >=10pt, Rightarrow, from=4, to=5]
\end{tikzcd}\; .
\end{equation}
These data will be used in both compositions.

(b1) Observe that, using the interchange law, already proven to be valid, we have that
$$\bar{\gamma}\circ (\bar{\beta}\circ \bar{\alpha})=(\id_{\bar{h}_2}\circ \Omega_2^{-1})\cdot (\bar{\gamma}\circ [\beta'\circ \alpha,1,1])\cdot (\id_{\bar{h}_1}\circ \Omega_1)\, .$$
Concerning $\bar{\gamma}\circ [\beta'\circ \alpha,1,1]$, from \eqref{eq:Ass2} and \eqref{eq:Ass3}, we have that
$$\bar{\gamma}\circ [\beta'\circ \alpha,1,1]=\Omega_{12}^{-1} \cdot [\gamma''\circ \beta'\circ \alpha, 1_{V_2}, 1_{V_2}]\cdot \Omega_{11}$$
where
\begin{equation}\label{eq:Omega1i}\Omega_{1i}\colon \bar{h}_i\circ (y'_ix_if_i,vy_3)=(\dot{h}_{1i}y'_ix_if_i,\dot{v}t_1)\Rightarrow (z''_iy'_1x_1f_1,v_2v_1z_3)\end{equation}
are the obvious $\Omega$ 2-cells, in accordance with Definition \ref{def:horizontal}.
Hence,
\begin{equation}
\label{eq:c(ba)}\bar{\gamma}\circ (\bar{\beta}\circ \bar{\alpha})=(\id_{\bar{h}_2}\circ \Omega_2^{-1})\cdot \Omega_{12}^{-1} \cdot [\gamma''\circ \beta'\circ \alpha, 1_{V_2}, 1_{V_2}]\cdot \Omega_{11}\cdot (\id_{\bar{h}_1}\circ \Omega_1)\,.
\end{equation}

Note that, combining the formation of $(y'_ix_i,vy_3)$ with the definition of composition of $\Sigma$-cospans, we see that the 2-cell $\Omega_{1i}$ is determined by the following $\Sigma$-path between $\Sigma$-schemes of level 3 (where $\dot{\Sigma}$ indicates a canonical $\Sigma$-square) and we use $\Sigma$-squares from \cref{eq:Ass2} and \cref{eq:Ass3}.
\begin{equation}\label{eq:S-sch2}
(\Omega_{1i}) \quad
\adjustbox{scale=0.70}{\begin{tikzcd}
	&& {B_i} & {I_i} \\
	& C & J \\
	& C & Y & V \\
	D & {K_i} && {} \\
	D & Z && {}
	\arrow["{{{r_i}}}", from=1-3, to=1-4]
	\arrow["{{{g_i}}}"', from=1-3, to=2-3]
	\arrow[""{name=0, anchor=center, inner sep=0}, "{{{y'_ix_i}}}", from=1-4, to=3-4]
	\arrow["{{{s_i}}}", from=2-2, to=2-3]
	\arrow[""{name=1, anchor=center, inner sep=0}, equals, from=2-2, to=3-2]
	\arrow[""{name=2, anchor=center, inner sep=0}, "{{{y_i}}}", from=2-3, to=3-3]
	\arrow["{{{y_3}}}", from=3-2, to=3-3]
	\arrow[""{name=3, anchor=center, inner sep=0}, "{{{h_i}}}"', from=3-2, to=4-2]
	\arrow["v", from=3-3, to=3-4]
	\arrow[""{name=4, anchor=center, inner sep=0}, "{{{\dot{h}_{1i}}}}", from=3-4, to=4-4]
	\arrow["{{{t_i}}}", from=4-1, to=4-2]
	\arrow[""{name=5, anchor=center, inner sep=0}, equals, from=4-1, to=5-1]
	\arrow["{{{\dot{v}_i}}}", from=4-2, to=4-4]
	\arrow[""{name=6, anchor=center, inner sep=0}, equals, from=4-2, to=5-2]
	\arrow[""{name=7, anchor=center, inner sep=0}, equals, from=4-4, to=5-4]
	\arrow["{{t_i}}", from=5-1, to=5-2]
	\arrow["{{{\dot{v}_i}}}", from=5-2, to=5-4]
	\arrow["\SIGMA"{description}, draw=none, from=1, to=2]
	\arrow["\SIGMA"{description}, draw=none, from=2-3, to=0]
	\arrow["{\SIGMAc}"{description}, shift left, draw=none, from=3, to=4]
	\arrow["{\SIGMA^{\id}}"{description}, draw=none, from=5, to=6]
	\arrow["{\SIGMA^{\id}}"{description}, shift left, draw=none, from=6, to=7]
\end{tikzcd}}
\hspace*{-1mm}\begin{tikzcd}
	{} & {}
	\arrow["\mathbf{d}", squiggly, from=1-1, to=1-2]
\end{tikzcd} \hspace*{-5mm}
\adjustbox{scale=0.70}{\begin{tikzcd}
	&& {B_i} & {I_i} \\
	& C & J \\
	& C & Y & V \\
	D & {K_i} && {} \\
	D & Z && {}
	\arrow["{r_i}", from=1-3, to=1-4]
	\arrow["{g_i}"', from=1-3, to=2-3]
	\arrow[""{name=0, anchor=center, inner sep=0}, "{y'_ix_i}", from=1-4, to=3-4]
	\arrow["{s_i}", from=2-2, to=2-3]
	\arrow[""{name=1, anchor=center, inner sep=0}, equals, from=2-2, to=3-2]
	\arrow[""{name=2, anchor=center, inner sep=0}, "{y_i}", from=2-3, to=3-3]
	\arrow["{y_3}", from=3-2, to=3-3]
	\arrow[""{name=3, anchor=center, inner sep=0}, "{h_i}"', from=3-2, to=4-2]
	\arrow["v", from=3-3, to=3-4]
	\arrow[""{name=4, anchor=center, inner sep=0}, "{\dot{h}_{1i}}", from=3-4, to=4-4]
	\arrow["{t_i}", from=4-1, to=4-2]
	\arrow[""{name=5, anchor=center, inner sep=0}, equals, from=4-1, to=5-1]
	\arrow["{\dot{v}_i}", from=4-2, to=4-4]
	\arrow[""{name=6, anchor=center, inner sep=0}, "{z_i}", from=4-2, to=5-2]
	\arrow[""{name=7, anchor=center, inner sep=0}, from=4-4, to=5-4]
	\arrow["{z_3}"', from=5-1, to=5-2]
	\arrow[from=5-2, to=5-4]
	\arrow["\SIGMA"{description}, draw=none, from=1, to=2]
	\arrow["\SIGMA"{description}, draw=none, from=2-3, to=0]
	\arrow["{\SIGMAc}"{description}, shift left, draw=none, from=3, to=4]
	\arrow["\SIGMA"{description}, draw=none, from=5, to=6]
	\arrow["\SIGMAc"{description}, draw=none, from=6, to=7]
\end{tikzcd}}
\hspace*{-1mm}\begin{tikzcd}
	{} & {}
	\arrow["\mathbf{s}_1", squiggly, from=1-1, to=1-2]
\end{tikzcd}\hspace*{-5mm}
\adjustbox{scale=0.70}{\begin{tikzcd}
	&& {B_i} & {I_i} \\
	& C & J \\
	& C & Y & V \\
	D & {K_i} \\
	D & Z & {V_1} & {V_2}
	\arrow["{{r_i}}", from=1-3, to=1-4]
	\arrow["{{g_i}}"', from=1-3, to=2-3]
	\arrow[""{name=0, anchor=center, inner sep=0}, "{{y'_ix_i}}", from=1-4, to=3-4]
	\arrow["{{s_i}}", from=2-2, to=2-3]
	\arrow[""{name=1, anchor=center, inner sep=0}, equals, from=2-2, to=3-2]
	\arrow[""{name=2, anchor=center, inner sep=0}, "{{y_i}}", from=2-3, to=3-3]
	\arrow["{{y_3}}", from=3-2, to=3-3]
	\arrow["{{h_i}}"', from=3-2, to=4-2]
	\arrow["v", from=3-3, to=3-4]
	\arrow[""{name=3, anchor=center, inner sep=0}, "{z^{\prime}_i}", from=3-3, to=5-3]
	\arrow[""{name=4, anchor=center, inner sep=0}, "{z^{\prime\prime}_i}", from=3-4, to=5-4]
	\arrow["{{t_i}}", from=4-1, to=4-2]
	\arrow[""{name=5, anchor=center, inner sep=0}, equals, from=4-1, to=5-1]
	\arrow[""{name=6, anchor=center, inner sep=0}, "{{z_i}}", from=4-2, to=5-2]
	\arrow["{{z_3}}"', from=5-1, to=5-2]
	\arrow["{v_1}"', from=5-2, to=5-3]
	\arrow["{v_2}"', from=5-3, to=5-4]
	\arrow["\SIGMA"{description}, draw=none, from=1, to=2]
	\arrow["\SIGMA"{description}, draw=none, from=2-3, to=0]
	\arrow["{\SIGMA^{\mu'_i}}"{description}, draw=none, from=3, to=4]
	\arrow["\SIGMA"{description}, draw=none, from=5, to=6]
	\arrow["{\SIGMA^{\mu_i}}"{description}, draw=none, from=4-2, to=3]
\end{tikzcd}}\, .
\end{equation}

(b2) Concerning $\bar{\gamma}\circ \bar{\beta}$, let
$$\Omega_{2i}\colon \bar{h}_i\circ \bar{g}_i=(\dot{h}_ig_i,\dot{s}_it_i)\Rightarrow (z'_iy_ig_i, v_1z_3)$$
be the 2-cell of $\Omega$ type obtained via the following  $\Sigma$-path between $\Sigma$-schemes of level 2:
\begin{equation}\label{eq:S-sch3}
(\Omega_{2i}) \quad
\adjustbox{scale=0.90}{\begin{tikzcd}
	& {C} & {J_i} \\
	D & {K_i} & {\dot{C}_i} \\
	D & {K_i} & {\dot{C}_i}
	\arrow["{{s_i}}", from=1-2, to=1-3]
	\arrow[""{name=0, anchor=center, inner sep=0}, "{{h_i}}"', from=1-2, to=2-2]
	\arrow[""{name=1, anchor=center, inner sep=0}, "{\dot{h}_i}", from=1-3, to=2-3]
	\arrow["{{t_i}}", from=2-1, to=2-2]
	\arrow[""{name=2, anchor=center, inner sep=0}, equals, from=2-1, to=3-1]
	\arrow["{\dot{s}_i}", from=2-2, to=2-3]
	\arrow[""{name=3, anchor=center, inner sep=0}, equals, from=2-2, to=3-2]
	\arrow[""{name=4, anchor=center, inner sep=0}, equals, from=2-3, to=3-3]
	\arrow["{t_i}", from=3-1, to=3-2]
	\arrow["{\dot{s}_i}", from=3-2, to=3-3]
	\arrow["{\SIGMAc}"{description}, draw=none, from=0, to=1]
	\arrow["{\SIGMA^{\id}}"{description}, draw=none, from=2, to=3]
	\arrow["{\SIGMA^{\id}}"{description}, draw=none, from=3, to=4]
\end{tikzcd}}
\hspace*{-1mm}\begin{tikzcd}
	{} & {}
	\arrow["d", squiggly, from=1-1, to=1-2]
\end{tikzcd}\hspace*{-2mm}
\adjustbox{scale=0.90}{\begin{tikzcd}
	& {C} & {J_i} \\
	D & {K_i} & {\dot{C}_i} \\
	D & {K_i} & {}
	\arrow["{{{s_i}}}", from=1-2, to=1-3]
	\arrow[""{name=0, anchor=center, inner sep=0}, "{{{h_i}}}"', from=1-2, to=2-2]
	\arrow[""{name=1, anchor=center, inner sep=0}, "{{\dot{h}_i}}", from=1-3, to=2-3]
	\arrow["{{{t_i}}}", from=2-1, to=2-2]
	\arrow[""{name=2, anchor=center, inner sep=0}, equals, from=2-1, to=3-1]
	\arrow["{{\dot{s}_i}}", from=2-2, to=2-3]
	\arrow[""{name=3, anchor=center, inner sep=0}, "{z_i}", from=2-2, to=3-2]
	\arrow[""{name=4, anchor=center, inner sep=0}, from=2-3, to=3-3]
	\arrow["{z_3}", from=3-1, to=3-2]
	\arrow[from=3-2, to=3-3]
	\arrow["{{\SIGMAc}}"{description}, draw=none, from=0, to=1]
	\arrow["\SIGMA"{description}, draw=none, from=2, to=3]
	\arrow["\SIGMA"{description}, draw=none, from=3, to=4]
\end{tikzcd}}
\hspace*{-1mm}\begin{tikzcd}
	{} & {}
	\arrow["u", squiggly, from=1-1, to=1-2]
\end{tikzcd}\hspace*{-5mm}
\adjustbox{scale=0.90}{\begin{tikzcd}
	& C & {J_i} \\
	& C & Y \\
	D & {K_i} \\
	D & Z & {V_1}
	\arrow["{{{s_i}}}", from=1-2, to=1-3]
	\arrow[""{name=0, anchor=center, inner sep=0}, equals, from=1-2, to=2-2]
	\arrow[""{name=1, anchor=center, inner sep=0}, "{{{y_i}}}", from=1-3, to=2-3]
	\arrow["{{{y_3}}}", from=2-2, to=2-3]
	\arrow["{{{h_i}}}"', from=2-2, to=3-2]
	\arrow[""{name=2, anchor=center, inner sep=0}, "{{z'_i}}", from=2-3, to=4-3]
	\arrow["{{{t_i}}}", from=3-1, to=3-2]
	\arrow[""{name=3, anchor=center, inner sep=0}, equals, from=3-1, to=4-1]
	\arrow[""{name=4, anchor=center, inner sep=0}, "{{{z_i}}}", from=3-2, to=4-2]
	\arrow["{{{z_3}}}"', from=4-1, to=4-2]
	\arrow["{{{v_1}}}"', from=4-2, to=4-3]
	\arrow["{{{\SIGMA^{\epsilon_i}}}}"{description}, draw=none, from=0, to=1]
	\arrow["{{{\SIGMA^{\zeta_i}}}}"{description}, draw=none, from=3, to=4]
	\arrow["{{{\SIGMA^{\mu_i}}}}"{description}, draw=none, from=3-2, to=2]
\end{tikzcd}}\, .
\end{equation}
Then
$\bar{\gamma}\circ \bar{\beta}=\Omega_{22}^{-1}\cdot [\gamma'\circ \beta, 1_{V_1},1_{V_1}]\cdot \Omega_{21}$
where $[\gamma'\circ \beta, 1_{V_1},1_{V_1}]$ is represented by
\[\begin{tikzcd}
	B & {J_1} & Y & {V_1} & Z & D \\
	B & {J_2} & Y & {V_1} & Z & D
	\arrow["{{{g_1}}}", from=1-1, to=1-2]
	\arrow[equals, from=1-1, to=2-1]
	\arrow["{{{y_1}}}", from=1-2, to=1-3]
	\arrow["\beta", between={0}{0.8}, Rightarrow, from=1-2, to=2-2]
	\arrow[""{name=0, anchor=center, inner sep=0}, "{{{z'_1}}}", from=1-3, to=1-4]
	\arrow[equals, from=1-3, to=2-3]
	\arrow[""{name=1, anchor=center, inner sep=0}, equals, from=1-4, to=2-4]
	\arrow["{{{v_1}}}"', from=1-5, to=1-4]
	\arrow[""{name=2, anchor=center, inner sep=0}, equals, from=1-5, to=2-5]
	\arrow["{{{z_3}}}"', from=1-6, to=1-5]
	\arrow[""{name=3, anchor=center, inner sep=0}, equals, from=1-6, to=2-6]
	\arrow["{{{g_2}}}", from=2-1, to=2-2]
	\arrow["{{{y_2}}}", from=2-2, to=2-3]
	\arrow[""{name=4, anchor=center, inner sep=0}, "{{{z'_2}}}", from=2-3, to=2-4]
	\arrow["{{{v_1}}}"', from=2-5, to=2-4]
	\arrow["{{{z_3}}}"', from=2-6, to=2-5]
	\arrow["{{{\gamma'}}}", between={0.2}{0.6}, Rightarrow, from=0, to=4]
	\arrow["\SIGMA"{description}, draw=none, from=1, to=2]
	\arrow["\SIGMA"{description}, draw=none, from=2, to=3]
\end{tikzcd}\, .\]
Again using the interchange law, we have that
\begin{equation}\label{eq:eqx}
(\bar{\gamma}\circ \bar{\beta})\circ \bar{\alpha}=(\Omega_{22}^{-1}\circ \id_{\bar{f}_2})\cdot ([\gamma'\circ \beta,1_{V_1},1_{V_1}]\circ \bar{\alpha})\cdot (\Omega_{21}\circ \id_{\bar{f}_1}).
\end{equation}
In order to obtain $[\gamma'\circ \beta,1_{V_1},1_{V_1}]\circ \bar{\alpha}$, we will use \cref{eq:Ass1} and \cref{eq:Ass2}, or more precisely, the fact that we have
\[\begin{tikzcd}
	B & X &&& B & X \\
	{J_1} &&& {J_1} & {J_2} \\
	Y & V &&& Y & V \\
	{V_1} & {V_2} &&& {V_1} & {V_2}
	\arrow["{{{{{{x_3}}}}}}", from=1-1, to=1-2]
	\arrow["{{{{{{g_1}}}}}}"', from=1-1, to=2-1]
	\arrow[""{name=0, anchor=center, inner sep=0}, "{{{{{{y'_1}}}}}}", from=1-2, to=3-2]
	\arrow[""{name=1, anchor=center, inner sep=0}, "{{{{{{y'_2}}}}}}"{pos=0.6}, curve={height=-40pt}, from=1-2, to=3-2]
	\arrow["{{{{{{x_3}}}}}}", from=1-5, to=1-6]
	\arrow["{{{{{{g_1}}}}}}"', curve={height=6pt}, from=1-5, to=2-4]
	\arrow["{{{{{{g_2}}}}}}", from=1-5, to=2-5]
	\arrow[""{name=2, anchor=center, inner sep=0}, "{{{{{{y'_2}}}}}}", from=1-6, to=3-6]
	\arrow["{{{{{{y_1}}}}}}"', from=2-1, to=3-1]
	\arrow["\beta", Rightarrow, from=2-4, to=2-5]
	\arrow["{{{{{{y_1}}}}}}"', curve={height=6pt}, from=2-4, to=3-5]
	\arrow["{{{{{{y_2}}}}}}", from=2-5, to=3-5]
	\arrow["v", from=3-1, to=3-2]
	\arrow[""{name=3, anchor=center, inner sep=0}, "{{{{z'_1}}}}"', from=3-1, to=4-1]
	\arrow[""{name=4, anchor=center, inner sep=0}, "{{{{z''_1}}}}", from=3-2, to=4-2]
	\arrow[""{name=5, anchor=center, inner sep=0}, "{{z''_2}}", curve={height=-40pt}, from=3-2, to=4-2]
	\arrow["v", from=3-5, to=3-6]
	\arrow[""{name=6, anchor=center, inner sep=0}, "{{{{z'_2}}}}", from=3-5, to=4-5]
	\arrow[""{name=7, anchor=center, inner sep=0}, "{{{{z'_1}}}}"', curve={height=30pt}, from=3-5, to=4-5]
	\arrow[""{name=8, anchor=center, inner sep=0}, "{{{{z''_2}}}}", from=3-6, to=4-6]
	\arrow["{{{{v_2}}}}", from=4-1, to=4-2]
	\arrow["{{{{v_2}}}}", from=4-5, to=4-6]
	\arrow["{{{{{{\beta'}}}}}}"{pos=0.6}, between={0.4}{0.9}, Rightarrow, from=0, to=1]
	\arrow["{{{{{{\SIGMA^{\xi_1}}}}}}}"{description}, draw=none, from=2-1, to=0]
	\arrow["{{{{{{\SIGMA^{\xi_2}}}}}}}"{description}, draw=none, from=2-5, to=2]
	\arrow["{{{{\SIGMA^{\mu'_1}}}}}"{description}, draw=none, from=3, to=4]
	\arrow["{{{\text{\normalsize =}}}}"{description, pos=0.6}, draw=none, from=5, to=2-4]
	\arrow["{{{{\gamma''}}}}", shift right, between={0.3}{0.8}, Rightarrow, from=4, to=5]
	\arrow["{{{{\gamma'}}}}", between={0.2}{0.8}, Rightarrow, from=7, to=6]
	\arrow["{{{{\SIGMA^{\mu'_2}}}}}"{description}, draw=none, from=6, to=8]
\end{tikzcd}\, .\]
Indeed, we again obtain the 2-morphism $(\gamma''\circ \beta'\circ \alpha, 1_{V_2}, 1_{V_2})$, already appearing in \cref{eq:c(ba)}. More precisely, we have that
$$[\gamma'\circ \beta,1_{V_1},1_{V_1}]\circ \bar{\alpha}=\Omega_{32}^{-1}\cdot [\gamma''\circ \beta'\circ \alpha, 1_{V_2}, 1_{V_2}]\cdot \Omega_{31}\, ,$$
where
$$\Omega_{3i}\colon (z'_iy_ig_i,v_1z_3)\circ \bar{f}_i=(\dot{z}_if_i,\dot{r}_{3i}v_1z_3) \Rightarrow (z''_iy'_ix_if_i,v_2v_1z_3)$$
is the basic $\Omega$ 2-cell determined by the $\Sigma$-path
\begin{equation*}
 \begin{tikzcd}
	& B & {I_i} \\
	D & {V_1} & {\dot{B}_{3i}} \\
	D & {V_1} & {\dot{B}_{3i}}
	\arrow["{{{{{r_i}}}}}", from=1-2, to=1-3]
	\arrow[""{name=0, anchor=center, inner sep=0}, "{{z'_iy_ig_i}}"', from=1-2, to=2-2]
	\arrow[""{name=1, anchor=center, inner sep=0}, "{{\dot{z}_i}}", from=1-3, to=2-3]
	\arrow["{{v_1z_3}}", from=2-1, to=2-2]
	\arrow[""{name=2, anchor=center, inner sep=0}, equals, from=2-1, to=3-1]
	\arrow["{\dot{r}_{3i}}", from=2-2, to=2-3]
	\arrow[""{name=3, anchor=center, inner sep=0}, equals, from=2-2, to=3-2]
	\arrow[""{name=4, anchor=center, inner sep=0}, equals, from=2-3, to=3-3]
	\arrow["{v_1z_3}", from=3-1, to=3-2]
	\arrow["{\dot{r}_{3i}}", from=3-2, to=3-3]
	\arrow["\SIGMAc"{description}, draw=none, from=0, to=1]
	\arrow["\SIGMAc"{description}, draw=none, from=2, to=3]
	\arrow["\SIGMAc"{description}, draw=none, from=3, to=4]
\end{tikzcd}
\hspace*{-1mm}
\begin{tikzcd}
	{} & {}
	\arrow["d(=\id)", squiggly, from=1-1, to=1-2]
\end{tikzcd}\hspace*{-2mm}
\begin{tikzcd}
	& B & {I_i} \\
	D & {V_1} & {\dot{B}_{3i}} \\
	D & {V_1} & {\dot{B}_{3i}}
	\arrow["{{{{{r_i}}}}}", from=1-2, to=1-3]
	\arrow[""{name=0, anchor=center, inner sep=0}, "{{z'_iy_ig_i}}"', from=1-2, to=2-2]
	\arrow[""{name=1, anchor=center, inner sep=0}, "{{\dot{z}_i}}", from=1-3, to=2-3]
	\arrow["{{v_1z_3}}", from=2-1, to=2-2]
	\arrow[""{name=2, anchor=center, inner sep=0}, equals, from=2-1, to=3-1]
	\arrow["{\dot{r}_{3i}}", from=2-2, to=2-3]
	\arrow[""{name=3, anchor=center, inner sep=0}, equals, from=2-2, to=3-2]
	\arrow[""{name=4, anchor=center, inner sep=0}, equals, from=2-3, to=3-3]
	\arrow["{v_1z_3}", from=3-1, to=3-2]
	\arrow["{\dot{r}_{3i}}", from=3-2, to=3-3]
	\arrow["\SIGMAc"{description}, draw=none, from=0, to=1]
	\arrow["\SIGMAc"{description}, draw=none, from=2, to=3]
	\arrow["\SIGMAc"{description}, draw=none, from=3, to=4]
\end{tikzcd}
\hspace*{-1mm}
\begin{tikzcd}
	{} & {}
	\arrow["u", squiggly, from=1-1, to=1-2]
\end{tikzcd}\hspace*{-2mm}
\begin{tikzcd}
	& B & {I_i} \\
	D & {V_1} \\
	D & {V_1} & {V_2}
	\arrow["{{{{{r_i}}}}}", from=1-2, to=1-3]
	\arrow["{{z'_iy_ig_i}}"', from=1-2, to=2-2]
	\arrow[""{name=0, anchor=center, inner sep=0}, "{{z''_iy'_ix_i}}", from=1-3, to=3-3]
	\arrow["{{v_1z_3}}", from=2-1, to=2-2]
	\arrow[""{name=1, anchor=center, inner sep=0}, equals, from=2-1, to=3-1]
	\arrow[""{name=2, anchor=center, inner sep=0}, equals, from=2-2, to=3-2]
	\arrow["{{v_1z_3}}"', from=3-1, to=3-2]
	\arrow["{v_2}"', from=3-2, to=3-3]
	\arrow["\SIGMAc"{description}, draw=none, from=1, to=2]
	\arrow["\SIGMA"{description, pos=0.3}, draw=none, from=2-2, to=0]
\end{tikzcd}\, ,
\end{equation*}
and thus, also determined just by the $\Sigma$-step below between $\Sg$-schemes of level 3:
\begin{equation}\label{eq:S-sch4}
  (\Omega_{3i}) \quad
 \begin{tikzcd}
	&& B & {I_i} \\
	& C & {J_i} \\
	& C & Y \\
	D & Z \\
	D & Z & {V_1} & {\dot{B}_{3i}}
	\arrow["{{{r_i}}}", from=1-3, to=1-4]
	\arrow["{{{{g_i}}}}"', from=1-3, to=2-3]
	\arrow[""{name=0, anchor=center, inner sep=0}, "{\dot{z}_i}", from=1-4, to=5-4]
	\arrow["{{s_i}}", from=2-2, to=2-3]
	\arrow[""{name=1, anchor=center, inner sep=0}, equals, from=2-2, to=3-2]
	\arrow[""{name=2, anchor=center, inner sep=0}, "{{{{y_i}}}}"', from=2-3, to=3-3]
	\arrow["{{y_3}}", from=3-2, to=3-3]
	\arrow["{h_i}"', from=3-2, to=4-2]
	\arrow[""{name=3, anchor=center, inner sep=0}, "{z'_i}", from=3-3, to=5-3]
	\arrow["{{t_i}}", from=4-1, to=4-2]
	\arrow[""{name=4, anchor=center, inner sep=0}, equals, from=4-1, to=5-1]
	\arrow[""{name=5, anchor=center, inner sep=0}, "{z_i}"', from=4-2, to=5-2]
	\arrow["{z_3}", from=5-1, to=5-2]
	\arrow["{v_1}", from=5-2, to=5-3]
	\arrow["{\dot{r}_{3i}}", from=5-3, to=5-4]
	\arrow["\SIGMA"{description}, draw=none, from=1, to=2]
	\arrow["{\SIGMAc}"{description}, draw=none, from=3-3, to=0]
	\arrow["\SIGMA"{description}, draw=none, from=4, to=5]
	\arrow["\SIGMA"{description}, draw=none, from=4-2, to=3]
\end{tikzcd}
\begin{tikzcd}
	{} & {}
	\arrow["\mathbf{u}", squiggly, from=1-1, to=1-2]
\end{tikzcd}\hspace*{-4mm}
\begin{tikzcd}
	&& B & {I_i} \\
	& C & {J_i} \\
	& C & Y & V \\
	D & Z \\
	D & Z & {V_1} & {V_2} %
	\arrow["{{{{r_i}}}}", from=1-3, to=1-4]
	\arrow["{{{{{g_i}}}}}"', from=1-3, to=2-3]
	\arrow[""{name=0, anchor=center, inner sep=0}, "{y'_ix_i}", from=1-4, to=3-4]
	\arrow["{{{s_i}}}", from=2-2, to=2-3]
	\arrow[""{name=1, anchor=center, inner sep=0}, equals, from=2-2, to=3-2]
	\arrow[""{name=2, anchor=center, inner sep=0}, "{{{{{y_i}}}}}"', from=2-3, to=3-3]
	\arrow["{{{y_3}}}", from=3-2, to=3-3]
	\arrow["{{h_i}}"', from=3-2, to=4-2]
	\arrow["v", from=3-3, to=3-4]
	\arrow[""{name=3, anchor=center, inner sep=0}, "{{z'_i}}", from=3-3, to=5-3]
	\arrow[""{name=4, anchor=center, inner sep=0}, "{z^{\prime\prime}_i}", from=3-4, to=5-4]
	\arrow["{t_i}", from=4-1, to=4-2]
	\arrow[""{name=5, anchor=center, inner sep=0}, equals, from=4-1, to=5-1]
	\arrow[""{name=6, anchor=center, inner sep=0}, "{{z_i}}"', from=4-2, to=5-2]
	\arrow["{{z_3}}", from=5-1, to=5-2]
	\arrow["{{v_1}}", from=5-2, to=5-3]
	\arrow["{v_2}", from=5-3, to=5-4]
	\arrow["\SIGMA"{description}, draw=none, from=1, to=2]
	\arrow["\SIGMA"{description}, draw=none, from=2-3, to=0]
	\arrow["\SIGMA"{description}, draw=none, from=3, to=4]
	\arrow["\SIGMA"{description}, draw=none, from=5, to=6]
	\arrow["\SIGMA"{description}, draw=none, from=4-2, to=3]
\end{tikzcd}\, .
\end{equation}
Hence, using \eqref{eq:eqx}, we have
\begin{equation}\label{eq:(cb)a}(\bar{\gamma}\circ \bar{\beta})\circ \bar{\alpha}=
(\Omega_{22}^{-1}\circ \id_{\bar{f}_2})\cdot \Omega_{32}^{-1}\cdot [\gamma''\circ \beta'\circ \alpha, 1_{V_2}, 1_{V_2}]\cdot \Omega_{31}\cdot (\Omega_{21}\circ \id_{\bar{f}_1})
\,.\end{equation}
Put
$$\hat{\Omega}_i=\Omega_{3i}\cdot (\Omega_{2i}\circ \id_{\bar{f}_i})$$
and
$$\tilde{\Omega}_i=\Omega_{1i}\cdot (\id_{\bar{h}_i}\circ \Omega_i)\, .$$
Using \cref{eq:c(ba)} and \cref{eq:(cb)a}, we see that, in order to prove \eqref{eq:nat-trans}, we just need to show that $\Assoc_{\bar{h}_2,\bar{g}_2,\bar{f}_2}\cdot \hat{\Omega}_2^{-1}=\tilde{\Omega}_2^{-1}$ and $\hat{\Omega}_1=\tilde{\Omega}_1\cdot \Assoc_{\bar{h}_1,\bar{g}_1,\bar{f}_1}$. That is, we need to prove that
\begin{equation}\label{eq:equality}\Assoc_{\bar{h}_i,\bar{g}_i,\bar{f}_i}=\tilde{\Omega}_i^{-1}\cdot \hat{\Omega}_i \; \; \; \; \; (i=1,2).\end{equation}
We know that the first member of this equality is an $\Omega$ 2-cell corresponding to a $\Sg$-path between $\Sg$-schemes of level 2, and clearly it can be seen as corresponding to a $\Sg$-path of interest between $\Sg$-schemes of level 3 with left border $(r_i,g_i,s_i,h_i,t_i,1)$.  The 2-cells $\Omega_{1i}$ and $\Omega_{3i}$ correspond to $\Sg$-paths of interest, namely \eqref{eq:S-sch2} and \eqref{eq:S-sch4}. The 2-cells $\Omega_i$ and $\Omega_{2i}$ correspond to $\Sg$-steps between $\Sg$-schemes of level 2, each one made of $\Sg$-steps of type $d$ and $u$, see \eqref{eq:S-sch1} and \eqref{eq:S-sch3}. Then by \cref{pro:alpha.f} and \cref{cor:Omega-h},
$\Omega_{2i}\circ \id_{\bar{f}}$ and $\id_{\bar{h}}\circ \Omega_i$ are $\Omega$ 2-cells corresponding to $\Sg$-paths of interest. Since both members of \eqref{eq:equality} correspond to $\Sg$-paths of interest between $\Sg$-schemes of left border $(r_i,g_i,s_i,h_i,t_i,1)$, they coincide.
\end{proof}

\begin{proposition}The isomorphisms of Definition \ref{def:assoc} fulfil the Pentagon Axiom.
\end{proposition}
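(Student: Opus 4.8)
The plan is to reduce the Pentagon Axiom to a statement about $\Sigma$-paths between $\Sigma$-schemes, exactly as was done for naturality of the associator in Proposition \ref{pro:natural-iso}. Recall that each associator $\Assoc_{\bar f,\bar g,\bar h}$ is, by Definition \ref{def:assoc}, the $\Omega$ 2-cell corresponding to the $\Sigma$-path $\xymatrix{(1)\ar@{~>}[r]^{\mathbf u}&(2)\ar@{~>}[r]^{\mathbf d}&(3)}$ between the three canonical $\Sigma$-schemes of level $2$ built from the left border $(s,h,t,k)$ (composed with $f$ on the left and $u$ on the right). So first I would fix four composable $\Sigma$-cospans $A\xrightarrow{\bar f}B\xrightarrow{\bar g}C\xrightarrow{\bar h}D\xrightarrow{\bar k}E$ and write out the two legs of the pentagon as composites of $\Omega$ 2-cells; the five edges of the pentagon are
\[
((\bar k\bar h)\bar g)\bar f,\quad (\bar k(\bar h\bar g))\bar f,\quad \bar k((\bar h\bar g)\bar f),\quad \bar k(\bar h(\bar g\bar f)),\quad (\bar k\bar h)(\bar g\bar f),
\]
and each edge of the pentagon is either an associator or is obtained from an associator by whiskering with an identity 1-cell (i.e. by a map of the form $\Omega\circ\id_{\bar f}$ or $\id_{\bar k}\circ\Omega$). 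By Proposition \ref{pro:Omega-f} (together with Proposition \ref{pro:alpha.f}/Corollary \ref{cor:Omega-h} referenced there), every such whiskered associator is again an $\Omega$ 2-cell corresponding to a $\Sigma$-path \emph{of interest} between $\Sigma$-schemes of level $3$ with a common left border --- here the left border is $(r,g,s,h,t,k)$ arising from $\bar f,\bar g,\bar h,\bar k$.

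The key step is then to observe that both legs of the pentagon, being vertical composites of $\Omega$ 2-cells for $\Sigma$-paths of interest sharing the left border $(r,g,s,h,t,k)$ and sharing common source and target $\Sigma$-schemes (the fully canonical scheme at the start and the appropriate canonical scheme at the end, both determined by the left border), are $\Omega$ 2-cells associated to two $\Sigma$-paths of interest between the same pair of $\Sigma$-schemes. By Proposition \ref{pro:of-interest}, any two $\Sigma$-paths of interest starting and ending at the same $\Sigma$-schemes are equivalent, hence give rise to the same $\Omega$ 2-cell. Therefore the two legs are equal, which is precisely the Pentagon Axiom. Concretely I would: (i) identify, for each of the five pentagon edges, the explicit $\Sigma$-path (of level $2$ or, after whiskering, of level $3$) defining it, recording the intermediate canonical $\Sigma$-schemes; (ii) use Proposition \ref{pro:Omega-f} to rewrite the two whiskered edges as level-$3$ $\Sigma$-paths of interest; (iii) concatenate the paths along each leg and check that the concatenation is again a $\Sigma$-path of interest (each $\Sigma$-step is of one of the admissible types $\mathbf d,\mathbf u,\mathbf s,\mathbf d_1,\mathbf s_1$ between consecutive $\Sigma$-schemes of interest, using Lemma \ref{lem:Sigma-steps}(1) to splice composable steps of the same type where necessary); (iv) invoke Proposition \ref{pro:of-interest} to conclude.

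I expect the main obstacle to be bookkeeping: one must carefully track the seven canonical $\Sigma$-schemes of level $3$ that appear as vertices of the two concatenated paths and verify that each is genuinely a $\Sigma$-scheme \emph{of interest} in the sense of Definition \ref{def:interest}, rather than some level-$3$ scheme outside the seven admissible configurations. In particular, whiskering an associator on the right by $\id_{\bar k}$ produces schemes obtained by appending a row of canonical squares at the bottom, and whiskering on the left by $\id_{\bar f}$ appends a column; one has to check these operations land among the configurations $\mathbf{da},\mathbf{db},\mathbf{dc},\mathbf{ua},\mathbf{ub},\mathbf s,\mathbf s_1$ --- this is exactly the content of Proposition \ref{pro:Omega-f}, so the obstacle is really just invoking it with the correct orientation and matching up borders. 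A secondary subtlety is that along one leg a $\Sigma$-step of type $\mathbf u$ may be immediately followed by another of type $\mathbf u$ (or $\mathbf d$ by $\mathbf d$), in which case Lemma \ref{lem:Sigma-steps}(1) is needed to see that this two-step path is equivalent to (the appropriate single) step, keeping the path within the ``of interest'' regime; but no new ideas beyond the already-established machinery are required. Once the bookkeeping is in place, the pentagon follows formally from Proposition \ref{pro:of-interest} in the same way naturality followed from it in Proposition \ref{pro:natural-iso}.
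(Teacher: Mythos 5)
Your proposal is correct and follows essentially the same route as the paper: both legs of the pentagon are expressed as $\Omega$ 2-cells of $\Sigma$-paths of interest with common left border $(r,g,s,h,t,k)$ (using Proposition \ref{pro:Omega-f}/\ref{pro:alpha.f} for the two whiskered edges), and the conclusion then follows from Proposition \ref{pro:of-interest}.
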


\begin{proof}
Let
$\xymatrix{A   \ar[r]^{\bar{f}}    &   B\ar[r]^{\bar{g}}&C\ar[r]^{\bar{h}}&D\ar[r]^{\bar{k}}&E}$
be composable 1-cells in $\catx[\Sigma_*]$, where $\bar{f}=(f,r)$, $\bar{g}=(g,s)$, $\bar{h}=(h,t)$ and $\bar{k}=(k,u)$. We want to prove the commutativity of the diagram
\[\begin{tikzcd}
	&& {(\bar{k}\bar{h})(\bar{g}\bar{f})} \\
	{((\bar{k}\bar{h})\bar{g})\bar{f}} &&&& {\bar{k}(\bar{h}(\bar{g}\bar{f}))} \\
	& {(\bar{k}(\bar{h}\bar{g}))\bar{f}} && {\bar{k}((\bar{h}\bar{g})\bar{f})}
	\arrow["{\Assoc_{\bar{k},\bar{h},\bar{g}\bar{f}}}"{pos=0.6}, from=1-3, to=2-5]
	\arrow["{\Assoc_{\bar{k}\bar{h},\bar{g},\bar{f}}}", from=2-1, to=1-3]
	\arrow["{\Assoc_{\bar{k},\bar{h},\bar{g}}\circ 1_{\bar{f}}}"'{pos=0.1}, from=2-1, to=3-2]
	\arrow["{{\Assoc_{\bar{k}, \bar{h}\bar{g}, \bar{f}}}}"', from=3-2, to=3-4]
	\arrow["{{1_{\bar{k}}\circ\Assoc_{\bar{h},\bar{g},\bar{f}}}}"'{pos=1}, from=3-4, to=2-5]
\end{tikzcd}\]
in $\catx[\Sg_*](A,E)$.

Concerning the top of the pentagon diagram,  using the definitions of composition of $\Sg$-cospans and of associator, we see that
$\Assoc_{\bar{k}\bar{h},\bar{g},\bar{f}}$  is an  $\Omega$  2-cell corresponding to a $\Sg$-path of interest of the form
\[
\adjustbox{scale=0.70}{\begin{tikzcd}
	&& {} & {} \\
	& {} & {} \\
	{} & {} \\
	{} & {} & {} & {}
	\arrow["r", from=1-3, to=1-4]
	\arrow["g"', from=1-3, to=2-3]
	\arrow[""{name=0, anchor=center, inner sep=0}, draw=none, from=1-3, to=4-3]
	\arrow[""{name=1, anchor=center, inner sep=0}, from=1-4, to=4-4]
	\arrow["s", from=2-2, to=2-3]
	\arrow["h"', from=2-2, to=3-2]
	\arrow[""{name=2, anchor=center, inner sep=0}, from=2-3, to=4-3]
	\arrow["t", from=3-1, to=3-2]
	\arrow[""{name=3, anchor=center, inner sep=0}, "k"', from=3-1, to=4-1]
	\arrow[""{name=4, anchor=center, inner sep=0}, from=3-2, to=4-2]
	\arrow[from=4-1, to=4-2]
	\arrow[from=4-2, to=4-3]
	\arrow[from=4-3, to=4-4]
	\arrow["\bullet"{description}, draw=none, from=0, to=1]
	\arrow["\bullet"{description}, draw=none, from=3, to=4]
	\arrow["\bullet"{description}, draw=none, from=3-2, to=2]
\end{tikzcd}}
\xymatrix{\ar@{~>}[r]^{\mathbf{u}}&}
\hspace*{-3mm}
\adjustbox{scale=0.70}{\begin{tikzcd}
	&& {} & {} \\
	& {} & {} & {} \\
	{} & {} \\
	{} & {} & {} & {}
	\arrow["r", from=1-3, to=1-4]
	\arrow[""{name=0, anchor=center, inner sep=0}, "g"', from=1-3, to=2-3]
	\arrow[""{name=1, anchor=center, inner sep=0}, from=1-4, to=2-4]
	\arrow["s", from=2-2, to=2-3]
	\arrow["h"', from=2-2, to=3-2]
	\arrow[from=2-3, to=2-4]
	\arrow[""{name=2, anchor=center, inner sep=0}, from=2-3, to=4-3]
	\arrow[""{name=3, anchor=center, inner sep=0}, from=2-4, to=4-4]
	\arrow["t", from=3-1, to=3-2]
	\arrow[""{name=4, anchor=center, inner sep=0}, "k"', from=3-1, to=4-1]
	\arrow[""{name=5, anchor=center, inner sep=0}, from=3-2, to=4-2]
	\arrow[from=4-1, to=4-2]
	\arrow[from=4-2, to=4-3]
	\arrow[from=4-3, to=4-4]
	\arrow["\bullet"{description}, draw=none, from=0, to=1]
	\arrow["\bullet"{description}, draw=none, from=2, to=3]
	\arrow["\bullet"{description}, draw=none, from=4, to=5]
	\arrow["\bullet"{description}, draw=none, from=3-2, to=2]
\end{tikzcd}}
\xymatrix{\ar@{~>}[r]^{\mathbf{s}}&}
\hspace*{-3mm}
\adjustbox{scale=0.70}{\begin{tikzcd}
	&& {} & {} \\
	& {} & {} & {} \\
	{} & {} \\
	{} & {} && {}
	\arrow["r", from=1-3, to=1-4]
	\arrow[""{name=0, anchor=center, inner sep=0}, "g"', from=1-3, to=2-3]
	\arrow[""{name=1, anchor=center, inner sep=0}, from=1-4, to=2-4]
	\arrow["s", from=2-2, to=2-3]
	\arrow["h"', from=2-2, to=3-2]
	\arrow[from=2-3, to=2-4]
	\arrow[""{name=2, anchor=center, inner sep=0}, from=2-4, to=4-4]
	\arrow["t", from=3-1, to=3-2]
	\arrow[""{name=3, anchor=center, inner sep=0}, "k"', from=3-1, to=4-1]
	\arrow[""{name=4, anchor=center, inner sep=0}, from=3-2, to=4-2]
	\arrow[from=4-1, to=4-2]
	\arrow[from=4-2, to=4-4]
	\arrow["\bullet"{description}, draw=none, from=0, to=1]
	\arrow["\bullet"{description}, draw=none, from=3, to=4]
	\arrow["\bullet"{description}, draw=none, from=3-2, to=2]
\end{tikzcd}}
\]
where the bullet inside a square means that it is the canonical $\Sigma$-square.
Analogously, $\Assoc_{\bar{k},\bar{h},\bar{g}\bar{f}}$ is an $\Omega$ 2-cell corresponding  to a $\Sg$-path of interest of the form
\[
\adjustbox{scale=0.70}{\begin{tikzcd}
	&& {} & {} \\
	& {} & {} & {} \\
	{} & {} \\
	{} & {} && {}
	\arrow["r", from=1-3, to=1-4]
	\arrow[""{name=0, anchor=center, inner sep=0}, "g"', from=1-3, to=2-3]
	\arrow[""{name=1, anchor=center, inner sep=0}, from=1-4, to=2-4]
	\arrow["s", from=2-2, to=2-3]
	\arrow["h"', from=2-2, to=3-2]
	\arrow[from=2-3, to=2-4]
	\arrow[""{name=2, anchor=center, inner sep=0}, from=2-4, to=4-4]
	\arrow["t", from=3-1, to=3-2]
	\arrow[""{name=3, anchor=center, inner sep=0}, "k"', from=3-1, to=4-1]
	\arrow[""{name=4, anchor=center, inner sep=0}, from=3-2, to=4-2]
	\arrow[from=4-1, to=4-2]
	\arrow[from=4-2, to=4-4]
	\arrow["\bullet"{description}, draw=none, from=0, to=1]
	\arrow["\bullet"{description}, draw=none, from=3, to=4]
	\arrow["\bullet"{description}, draw=none, from=3-2, to=2]
\end{tikzcd}}
\xymatrix{\ar@{~>}[r]^{\mathbf{s}}&}
\hspace*{-3mm}
\adjustbox{scale=0.70}{\begin{tikzcd}
	&& {} & {} \\
	& {} & {} & {} \\
	{} & {} && {} \\
	{} & {} && {}
	\arrow["r", from=1-3, to=1-4]
	\arrow[""{name=0, anchor=center, inner sep=0}, "g"', from=1-3, to=2-3]
	\arrow[""{name=1, anchor=center, inner sep=0}, from=1-4, to=2-4]
	\arrow["s", from=2-2, to=2-3]
	\arrow[""{name=2, anchor=center, inner sep=0}, "h"', from=2-2, to=3-2]
	\arrow[from=2-3, to=2-4]
	\arrow[""{name=3, anchor=center, inner sep=0}, from=2-4, to=3-4]
	\arrow["t", from=3-1, to=3-2]
	\arrow[""{name=4, anchor=center, inner sep=0}, "k"', from=3-1, to=4-1]
	\arrow[from=3-2, to=3-4]
	\arrow[""{name=5, anchor=center, inner sep=0}, from=3-2, to=4-2]
	\arrow[""{name=6, anchor=center, inner sep=0}, from=3-4, to=4-4]
	\arrow[from=4-1, to=4-2]
	\arrow[from=4-2, to=4-4]
	\arrow["\bullet"{description}, draw=none, from=0, to=1]
	\arrow["\bullet"{description}, draw=none, from=2, to=3]
	\arrow["\bullet"{description}, draw=none, from=4, to=5]
	\arrow["\bullet"{description}, draw=none, from=5, to=6]
\end{tikzcd}}
\xymatrix{\ar@{~>}[r]^{\mathbf{d}}&}
\hspace*{-3mm}
\adjustbox{scale=0.70}{\begin{tikzcd}
	&& {} & {} \\
	& {} & {} & {} \\
	{} & {} && {} \\
	{} &&& {}
	\arrow["r", from=1-3, to=1-4]
	\arrow[""{name=0, anchor=center, inner sep=0}, "g"', from=1-3, to=2-3]
	\arrow[""{name=1, anchor=center, inner sep=0}, from=1-4, to=2-4]
	\arrow["s", from=2-2, to=2-3]
	\arrow[""{name=2, anchor=center, inner sep=0}, "h"', from=2-2, to=3-2]
	\arrow[from=2-3, to=2-4]
	\arrow[""{name=3, anchor=center, inner sep=0}, from=2-4, to=3-4]
	\arrow["t", from=3-1, to=3-2]
	\arrow[""{name=4, anchor=center, inner sep=0}, "k"', from=3-1, to=4-1]
	\arrow[from=3-2, to=3-4]
	\arrow[""{name=5, anchor=center, inner sep=0}, from=3-4, to=4-4]
	\arrow[from=4-1, to=4-4]
	\arrow["\bullet"{description}, draw=none, from=0, to=1]
	\arrow["\bullet"{description}, draw=none, from=2, to=3]
	\arrow["\bullet"', draw=none, from=4, to=5]
\end{tikzcd}}
\, .\]
The middle map in the bottom line of the pentagon, $\Assoc_{\bar{k}, \bar{h}\bar{g}, \bar{f}}$, is the $\Omega$ 2-cell obtained via the $\Sg$-path of interest
\begin{equation}\label{eq:bot-pent-2}
\adjustbox{scale=0.70}{\begin{tikzcd}
	&& {} & {} \\
	& {} & {} \\
	{} & {} & {} \\
	{} && {} & {} \\
	{} &&& {}
	\arrow["r", from=1-3, to=1-4]
	\arrow["g"', from=1-3, to=2-3]
	\arrow[""{name=0, anchor=center, inner sep=0}, from=1-4, to=4-4]
	\arrow["s", from=2-2, to=2-3]
	\arrow[""{name=1, anchor=center, inner sep=0}, "h"', from=2-2, to=3-2]
	\arrow[""{name=2, anchor=center, inner sep=0}, from=2-3, to=3-3]
	\arrow["t", from=3-1, to=3-2]
	\arrow[""{name=3, anchor=center, inner sep=0}, "k"', from=3-1, to=4-1]
	\arrow[from=3-2, to=3-3]
	\arrow[""{name=4, anchor=center, inner sep=0}, from=3-3, to=4-3]
	\arrow[from=4-1, to=4-3]
	\arrow[from=4-3, to=4-4]
	\arrow["{S_6}", draw=none, from=5-1, to=5-4]
	\arrow["\bullet"{description}, draw=none, from=1, to=2]
	\arrow["\bullet"{description}, draw=none, from=2, to=0]
	\arrow["\bullet"{description}, draw=none, from=3, to=4]
\end{tikzcd}}
\xymatrix{\ar@{~>}[r]^{\mathbf{u}}&}
\hspace*{-3mm}
\adjustbox{scale=0.70}{\begin{tikzcd}
	&& {} & {} \\
	& {} & {} \\
	{} & {} & {} & {} \\
	{} && {} & {} \\
	{} &&& {}
	\arrow["r", from=1-3, to=1-4]
	\arrow["g"', from=1-3, to=2-3]
	\arrow[""{name=0, anchor=center, inner sep=0}, from=1-4, to=3-4]
	\arrow["s", from=2-2, to=2-3]
	\arrow[""{name=1, anchor=center, inner sep=0}, "h"', from=2-2, to=3-2]
	\arrow[""{name=2, anchor=center, inner sep=0}, from=2-3, to=3-3]
	\arrow["t", from=3-1, to=3-2]
	\arrow[""{name=3, anchor=center, inner sep=0}, "k"', from=3-1, to=4-1]
	\arrow[from=3-2, to=3-3]
	\arrow[from=3-3, to=3-4]
	\arrow[""{name=4, anchor=center, inner sep=0}, from=3-3, to=4-3]
	\arrow[""{name=5, anchor=center, inner sep=0}, from=3-4, to=4-4]
	\arrow[from=4-1, to=4-3]
	\arrow[from=4-3, to=4-4]
	\arrow["{S_7}", draw=none, from=5-1, to=5-4]
	\arrow["\bullet"{description}, draw=none, from=1, to=2]
	\arrow["\bullet"{description}, draw=none, from=2-3, to=0]
	\arrow["\bullet"{description}, draw=none, from=3, to=4]
	\arrow["\bullet"{description}, draw=none, from=4, to=5]
\end{tikzcd}}
\xymatrix{\ar@{~>}[r]^{\mathbf{d}}&}
\hspace*{-3mm}
\adjustbox{scale=0.70}{\begin{tikzcd}
	&& {} & {} \\
	& {} & {} \\
	{} & {} & {} & {} \\
	{} &&& {} \\
	{} &&& {}
	\arrow["r", from=1-3, to=1-4]
	\arrow["g"', from=1-3, to=2-3]
	\arrow[""{name=0, anchor=center, inner sep=0}, from=1-4, to=3-4]
	\arrow["s", from=2-2, to=2-3]
	\arrow[""{name=1, anchor=center, inner sep=0}, "h"', from=2-2, to=3-2]
	\arrow[""{name=2, anchor=center, inner sep=0}, from=2-3, to=3-3]
	\arrow["t", from=3-1, to=3-2]
	\arrow[""{name=3, anchor=center, inner sep=0}, "k"', from=3-1, to=4-1]
	\arrow[from=3-2, to=3-3]
	\arrow[from=3-3, to=3-4]
	\arrow[""{name=4, anchor=center, inner sep=0}, from=3-4, to=4-4]
	\arrow[from=4-1, to=4-4]
	\arrow["{S_8}", draw=none, from=5-1, to=5-4]
	\arrow["\bullet"{description}, draw=none, from=1, to=2]
	\arrow["\bullet"{description}, draw=none, from=2-3, to=0]
	\arrow["\bullet"{description}, draw=none, from=3, to=4]
\end{tikzcd}}\, .
\end{equation}

The 2-cells $\Assoc_{\bar{k},\bar{h},\bar{g}}$ and $\Assoc_{\bar{h},\bar{g},\bar{f}}$ correspond, respectively, to the $\Sg$-paths of level 2
\[
\adjustbox{scale=0.70}{\begin{tikzcd}
	& {} & {} \\
	{} & {} \\
	{} & {} & {}
	\arrow["s", from=1-2, to=1-3]
	\arrow["h"', from=1-2, to=2-2]
	\arrow[""{name=0, anchor=center, inner sep=0}, from=1-3, to=3-3]
	\arrow["t", from=2-1, to=2-2]
	\arrow[""{name=1, anchor=center, inner sep=0}, "k"', from=2-1, to=3-1]
	\arrow[""{name=2, anchor=center, inner sep=0}, from=2-2, to=3-2]
	\arrow[from=3-1, to=3-2]
	\arrow[from=3-2, to=3-3]
	\arrow["\bullet"{description}, draw=none, from=1, to=2]
	\arrow["\bullet"{description}, draw=none, from=2-2, to=0]
\end{tikzcd}}
\xymatrix{\ar@{~>}[r]^{u}&}
\hspace*{-3mm}
\adjustbox{scale=0.70}{\begin{tikzcd}
	& {} & {} \\
	{} & {} & {} \\
	{} & {} & {}
	\arrow["s", from=1-2, to=1-3]
	\arrow[""{name=0, anchor=center, inner sep=0}, "h"', from=1-2, to=2-2]
	\arrow[""{name=1, anchor=center, inner sep=0}, from=1-3, to=2-3]
	\arrow["t", from=2-1, to=2-2]
	\arrow[""{name=2, anchor=center, inner sep=0}, "k"', from=2-1, to=3-1]
	\arrow[from=2-2, to=2-3]
	\arrow[""{name=3, anchor=center, inner sep=0}, from=2-2, to=3-2]
	\arrow[""{name=4, anchor=center, inner sep=0}, from=2-3, to=3-3]
	\arrow[from=3-1, to=3-2]
	\arrow[from=3-2, to=3-3]
	\arrow["\bullet"{description}, draw=none, from=0, to=1]
	\arrow["\bullet"{description}, draw=none, from=2, to=3]
	\arrow["\bullet"{description}, draw=none, from=3, to=4]
\end{tikzcd}}
\xymatrix{\ar@{~>}[r]^{d}&}
\hspace*{-3mm}
\adjustbox{scale=0.70}{\begin{tikzcd}
	& {} & {} \\
	{} & {} & {} \\
	{} && {}
	\arrow["s", from=1-2, to=1-3]
	\arrow[""{name=0, anchor=center, inner sep=0}, "h"', from=1-2, to=2-2]
	\arrow[""{name=1, anchor=center, inner sep=0}, from=1-3, to=2-3]
	\arrow["t", from=2-1, to=2-2]
	\arrow[""{name=2, anchor=center, inner sep=0}, "k"', from=2-1, to=3-1]
	\arrow[from=2-2, to=2-3]
	\arrow[""{name=3, anchor=center, inner sep=0}, from=2-3, to=3-3]
	\arrow[from=3-1, to=3-3]
	\arrow["\bullet"{description}, draw=none, from=0, to=1]
	\arrow["\bullet"{description}, draw=none, from=2, to=3]
\end{tikzcd}}
\]
and
\[
\adjustbox{scale=0.70}{\begin{tikzcd}
	& {} & {} \\
	{} & {} & {} \\
	{} & {} & {}
	\arrow["r", from=1-2, to=1-3]
	\arrow["g"', from=1-2, to=2-2]
	\arrow[from=1-3, to=3-3]
	\arrow["s", from=2-1, to=2-2]
	\arrow[""{name=0, anchor=center, inner sep=0}, "h"', from=2-1, to=3-1]
	\arrow["\bullet"{description}, draw=none, from=2-2, to=2-3]
	\arrow[""{name=1, anchor=center, inner sep=0}, from=2-2, to=3-2]
	\arrow[from=3-1, to=3-2]
	\arrow[from=3-2, to=3-3]
	\arrow["\bullet"{description}, draw=none, from=0, to=1]
\end{tikzcd}}
\xymatrix{\ar@{~>}[r]^{u}&}
\hspace*{-3mm}
\adjustbox{scale=0.70}{\begin{tikzcd}
	& {} & {} \\
	{} & {} & {} \\
	{} & {} & {}
	\arrow["r", from=1-2, to=1-3]
	\arrow[""{name=0, anchor=center, inner sep=0}, "g"', from=1-2, to=2-2]
	\arrow[""{name=1, anchor=center, inner sep=0}, from=1-3, to=2-3]
	\arrow["s", from=2-1, to=2-2]
	\arrow[""{name=2, anchor=center, inner sep=0}, "h"', from=2-1, to=3-1]
	\arrow[from=2-2, to=2-3]
	\arrow[""{name=3, anchor=center, inner sep=0}, from=2-2, to=3-2]
	\arrow[""{name=4, anchor=center, inner sep=0}, from=2-3, to=3-3]
	\arrow[from=3-1, to=3-2]
	\arrow[from=3-2, to=3-3]
	\arrow["\bullet"{description}, draw=none, from=0, to=1]
	\arrow["\bullet"{description}, draw=none, from=2, to=3]
	\arrow["\bullet"{description}, draw=none, from=3, to=4]
\end{tikzcd}}
\xymatrix{\ar@{~>}[r]^{d}&}
\hspace*{-3mm}
\adjustbox{scale=0.70}{\begin{tikzcd}
	& {} & {} \\
	{} & {} & {} \\
	{} && {}
	\arrow["r", from=1-2, to=1-3]
	\arrow[""{name=0, anchor=center, inner sep=0}, "g"', from=1-2, to=2-2]
	\arrow[""{name=1, anchor=center, inner sep=0}, from=1-3, to=2-3]
	\arrow["s", from=2-1, to=2-2]
	\arrow[""{name=2, anchor=center, inner sep=0}, "h"', from=2-1, to=3-1]
	\arrow[from=2-2, to=2-3]
	\arrow[""{name=3, anchor=center, inner sep=0}, from=2-3, to=3-3]
	\arrow[from=3-1, to=3-3]
	\arrow["\bullet"{description}, draw=none, from=0, to=1]
	\arrow["\bullet"{description}, draw=none, from=2, to=3]
\end{tikzcd}}
\]

Hence, by Proposition \ref{pro:alpha.f}, $\Assoc_{\bar{k},\bar{h},\bar{g}}\circ 1_{\bar{f}}$ and $1_{\bar{k}}\circ\Assoc_{\bar{h},\bar{g},\bar{f}}$  are $\Omega$ 2-cells corresponding to $\Sg$-paths of interest between $\Sg$-schemes of left border $(r,g,s,t,k)$.

Since the top and the bottom of the pentagon correspond to two $\Sg$-paths of interest with the same starting and ending, those $\Sg$-paths are equivalent --- that is, the pentagon is commutative in the category $\catx[\Sigma_{\ast}](A,E)$.
\end{proof}

Finally, it is easy to see that for composable 1-cells $A\xrightarrow{\bar{f}}B\xrightarrow{\bar{g}}C$, the associator component $\Assoc_{\bar{g}, 1_B, \bar{f}}$ is an identity 2-cell. Since the unitors are also just identities, the coherence for the unitors trivially holds. This completes the proof of the main result of the present section:

\begin{theorem}\label{thm:main3} As defined in this section, $\catx[\Sigma_{\ast}]$ is indeed a bicategory.
\end{theorem}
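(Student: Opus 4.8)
The plan is to assemble the statement ``$\catx[\Sigma_\ast]$ is a bicategory'' as a corollary of everything established in this section. Recall that the data of a bicategory consists of: objects, hom-categories, a composition functor for each triple of objects, an associator natural isomorphism, left and right unitor natural isomorphisms, together with coherence axioms (the pentagon and the triangle identity). I would proceed by simply checking off each of these pieces against the results already proved.

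First I would recall the data. The objects of $\catx[\Sigma_\ast]$ are those of $\catx$; the hom-category $\catx[\Sigma_\ast](A,B)$ has $\Sigma$-cospans as objects (Definition~\ref{data-0}) and $\approx$-equivalence classes of 2-morphisms as arrows (Notation~\ref{nota:2-cell}), with $\approx$ being an equivalence relation by Lemma~\ref{lem:first-rel-on-2}. Vertical composition of 2-cells is given in Definition~\ref{data-2}, is well-defined by Proposition~\ref{pro:vertical-1}, and is associative with identities acting as identities by Proposition~\ref{pro:vertical-2}; hence each $\catx[\Sigma_\ast](A,B)$ is a genuine category. Horizontal composition of $\Sigma$-cospans and of 2-cells is given in Definitions~\ref{def:comp-cospans} and~\ref{def:horizontal}, is well-defined by Proposition~\ref{pro:horizontal}, and assembles into a functor $\comp_{ABC}\colon \catx[\Sigma_\ast](A,B)\times\catx[\Sigma_\ast](B,C)\to\catx[\Sigma_\ast](A,C)$ by Proposition~\ref{pro:comp} (which in particular establishes the interchange law). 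The associator is defined componentwise in Definition~\ref{def:assoc} and shown to be a natural isomorphism by Proposition~\ref{pro:natural-iso}. The unitors are identities by Definition~\ref{def:uni}, using the Identity axiom of the calculus and the behaviour of the canonical $\Sigma$-squares for $1_A$.

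Next I would verify the coherence axioms. The pentagon axiom is exactly the content of the Proposition immediately preceding the theorem statement, proved by reducing both composites to $\Sigma$-paths of interest between $\Sigma$-schemes with the same left border and invoking Proposition~\ref{pro:of-interest}. For the triangle identity one must check that the associator $\Assoc_{\bar f, \id_B, \bar g}$ composed appropriately with the (identity) unitors is the identity 2-cell; since the unitors are identities and the canonical $\Sigma$-squares involving $1_B$ are themselves identity squares (Definition~\ref{assum1}), the $\Sigma$-path defining $\Assoc_{\bar f,\id_B,\bar g}$ is a cycle of trivial $\Sigma$-steps, hence corresponds to the identity $\Omega$ 2-cell by Lemma~\ref{lem:Sigma-steps}(4); I would spell this out as a short computation. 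Putting all these verified items together gives the theorem.

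The step I expect to be the main obstacle is, strictly speaking, already behind us: the real work is the web of well-definedness and coherence results (Propositions~\ref{pro:vertical-1}, \ref{pro:horizontal}, \ref{pro:comp}, \ref{pro:natural-iso}, and the pentagon proposition), all resting on the $\Sigma$-path machinery of Section~\ref{sec:sigma_schemes}. Within the proof of the theorem itself, the only point requiring genuine care is the triangle identity, since it was not stated as a separate proposition; I would make sure that the interaction of the canonical $\Sigma$-square choices for identity 1-cells with the definition of the associator really does collapse to a trivial cycle, so that Lemma~\ref{lem:Sigma-steps}(4) applies. Beyond that, the proof is bookkeeping: name the data, cite the relevant result for each bicategory axiom, and note that the unitors being strict trivialises the unit coherence.
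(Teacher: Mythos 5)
Your proposal matches the paper's own argument: the theorem is proved exactly by assembling the preceding results (Propositions~\ref{pro:vertical-1}, \ref{pro:vertical-2}, \ref{pro:horizontal}, \ref{pro:comp}, \ref{pro:natural-iso} and the pentagon proposition), with the paper concluding simply that the strict unitors finish the proof. Your extra care about the triangle identity is the only addition — the paper leaves it implicit in Definition~\ref{def:uni}, where the choice of canonical $\Sigma$-squares for identity 1-cells makes the relevant associator component trivially the identity, just as you argue.
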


\section{The universal property}\label{sec:universal}

We want to show that $\catx[\Sigma_*]$ is the universal bicategory which turns $\Sigma$-morphisms into laris and $\Sigma$-squares to Beck--Chevalley squares (abbreviate as {\em BC squares}).
Let us start by showing that $\catx[\Sigma_*]$ at least does do this.

\begin{definition}\label{def:Psigma}
 We define a pseudofunctor $P_\Sigma\colon \catx \to \catx[\Sigma_*]$ such that:
 \begin{itemize}
  \item on objects, $P_\Sigma(X) = X$,
  \item on morphisms, $P_\Sigma(f) = (f,1)$,
  \item on 2-morphisms, $P_\Sigma(\alpha) = [\alpha, 1, 1, 1, \id, \id]$,
  \item the unitors $\iota^P_X\colon 1_{P_\Sigma(X)} \to P_\Sigma(1_X)$ are identities,
  \item the compositors $\gamma^P_{g,f} \colon P_\Sigma(g) \circ P_\Sigma(f) \to P_\Sigma(gf)$ are also identities.
 \end{itemize}
\end{definition}

\begin{lemma}
 $P_\Sigma$ as defined above is indeed a pseudofunctor.
\end{lemma}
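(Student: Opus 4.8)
The plan is to verify the three pseudofunctor axioms directly from the definitions, exploiting the strict unitality of $P_\Sigma$ so that all coherence conditions reduce to checking equalities of $\approx$-equivalence classes of 2-morphisms. Since $P_\Sigma$ is strictly unitary (the unitors $\iota^P_X$ and compositors $\gamma^P_{g,f}$ are identities), the unit coherence axioms of a pseudofunctor become trivial, and we only need to check that $P_\Sigma$ is functorial on 2-morphisms (preservation of identity 2-cells and of vertical composition), that it is functorial enough on 1-morphisms to be compatible with the (identity) compositors, and that the single remaining associativity-coherence square holds.

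First I would check that $P_\Sigma$ preserves identity 2-cells: $P_\Sigma(\id_f) = [\id_f, 1, 1, 1, \id, \id]$, which is precisely the identity 2-cell on $(f,1)$ as described in Remark~\ref{rem:identities}. Next I would check preservation of vertical composition: given $\alpha\colon f \Rightarrow g$ and $\beta\colon g \Rightarrow h$ in $\catx$, I must show $[\beta\alpha, 1,1,1,\id,\id] = [\beta,1,1,1,\id,\id] \cdot [\alpha,1,1,1,\id,\id]$ in $\catx[\Sigma_*]$. Unwinding Definition~\ref{data-2}, the vertical composite of the two 2-morphisms $(\alpha,1,1,\ldots)$ and $(\beta,1,1,\ldots)$ uses Rule~4' applied to two identity-type $\Sigma$-squares; one may simply take $d_x = d_y = 1$, $u = r = 1$ and $\gamma = \id$, after which the composite 2-morphism in \eqref{eq:vert_comp} collapses to $(\beta\alpha,1,1,1,\id,\id)$. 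So this reduces to a straightforward inspection, using that the identity 1-cell is a $\Sigma$-object (Identity axiom) so all the requisite $\Sigma$-squares are available.

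Then I would check that $P_\Sigma$ interacts correctly with horizontal composition: for composable $f\colon A\to B$, $g\colon B\to C$ in $\catx$, we have $P_\Sigma(g)\circ P_\Sigma(f) = (g,1)\circ(f,1)$, which by Definition~\ref{def:comp-cospans} is $(\dot g f, \dot B, \dot r \cdot 1)$ where the canonical $\Sigma$-square of $(1,g)$ is used; but by the stipulation in Definition~\ref{assum1} the canonical $\Sigma$-square of an identity $r=1$ and arbitrary $g$ is the identity-type square with $\dot B = C$, $\dot g = g$, $\dot r = 1$, so this composite is exactly $(gf, 1) = P_\Sigma(gf)$. This justifies taking $\gamma^P_{g,f}$ to be the identity. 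One must similarly verify that horizontal composition of $P_\Sigma(\alpha)$ and $P_\Sigma(\beta)$ gives $P_\Sigma(\beta\circ\alpha)$ — unwinding Definition~\ref{def:horizontal}, the $\Omega$ 2-cells $\Omega_i$ attached to $\Sigma$-paths built from canonical (identity) $\Sigma$-squares are identity 2-cells by Lemma~\ref{lem:Sigma-steps}(4), and Rule~6 in \eqref{eq:(A12)} can be satisfied with $\beta' = \beta$ and identity data, so the horizontal composite collapses to $P_\Sigma(\beta\circ\alpha)$.

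Finally, for the associativity coherence, since both the associator $\Assoc$ in $\catx[\Sigma_*]$ and all compositors $\gamma^P$ are strict-ish (the compositors identities, the associator an $\Omega$ 2-cell via $(1)\rightsquigarrow(2)\rightsquigarrow(3)$), I would observe that for 1-cells in the image of $P_\Sigma$ all the canonical $\Sigma$-squares involved in forming $\Assoc_{P_\Sigma f, P_\Sigma g, P_\Sigma h}$ are identity-type squares, so the corresponding $\Sigma$-steps are $\Sigma$-steps from a $\Sigma$-scheme to itself; by Lemma~\ref{lem:Sigma-steps}(4) the associated $\Omega$ 2-cell is the identity. Hence the pseudofunctor associativity axiom reduces to an equality of identity 2-cells. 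The main obstacle I anticipate is not conceptual difficulty but bookkeeping: carefully unwinding the definitions of vertical and horizontal composition (Definitions~\ref{data-2} and~\ref{def:horizontal}) to confirm that, when all input data are identities or canonical identity-type $\Sigma$-squares, every auxiliary object, 1-cell and 2-cell produced by Rule~4', Rule~6 and the $\Omega$-construction can be chosen trivially — so that the composites literally equal the expected 2-morphisms on the nose (not merely up to $\approx$, though $\approx$ would suffice). The cleanest way to organize this is to record once and for all that a $\Sigma$-scheme all of whose $\Sigma$-squares are canonical identity-type squares has identity right border and that $\Sigma$-steps between such schemes give identity $\Omega$ 2-cells, and then feed this into each axiom check.
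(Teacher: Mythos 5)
Your proposal is correct and follows essentially the same route as the paper: check identity 2-cells, collapse the vertical and horizontal composition constructions by choosing identity-type $\Sigma$-squares and $\beta'=\beta$, note that the canonical square of an identity makes the compositor an honest identity, and reduce the associativity coherence to the observation that the relevant $\Omega$ 2-cell (coming from a $\Sigma$-step of identity-type squares) has equal domain and codomain and is the identity, with the unit conditions trivial by strict unitality. No substantive differences.
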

\begin{proof}
 It is clear that $P_\Sigma$ preserves identity 2-cells. For vertical composition of 2-cells, note that to compute the composite $P_\Sigma(\beta) \cdot P_\Sigma(\alpha)$ as in \cref{data-2} we may take the $\Sigma$-squares $\phi_x, \phi_y$ to be vertical identity squares and the isomorphism $\gamma$ to be an identity 2-cell. Then it is easy to see that $P_\Sigma(\beta) \cdot P_\Sigma(\alpha) = P_\Sigma(\beta\cdot \alpha)$ and so $P_\Sigma$ preserves vertical composition.

 It is also clear that $P_\Sigma$ preserves identity 1-cells and so the unitors are well-defined.
 For morphisms $g\colon A \to B$ and $f\colon B \to C$ in $\catx$, the composite $P_\Sigma(f) \circ P_\Sigma(g)$ is found using the canonical $\Sigma$-square
 \[\begin{tikzcd} %
    A & B & B \\
    & C & C & C,
    \arrow["f", from=1-1, to=1-2]
    \arrow["1_B"', from=1-3, to=1-2]
    \arrow[""{name=0, anchor=center, inner sep=0}, "g", from=1-3, to=2-3]
    \arrow["1_C", from=2-3, to=2-2]
    \arrow[""{name=1, anchor=center, inner sep=0}, "g"', from=1-2, to=2-2]
    \arrow["1_C", from=2-4, to=2-3]
    \arrow["{\SIGMA^{\id}}"{description}, draw=none, from=1, to=0]
 \end{tikzcd}\]
 and so give $P_\Sigma(gf)$ ensuring the compositors are well-defined.

 The naturality condition for the compositors requires that $P_\Sigma(\beta) \circ P_\Sigma(\alpha) = P_\Sigma(\beta \circ \alpha)$.
 Computing the left-hand side as in \cref{def:horizontal} we may take the necessary $\Sigma$-squares in \cref{eq:(A12)} to be $\Sigma^\id$ and $\beta' = \beta$. The desired equality follows.

 Finally, we must show the coherence conditions. Coherence for the compositor (the `associativity' condition) reduces to requiring that the $\Omega$ 2-cell
 $$\mathrm{\Assoc}_{P_\Sigma(f),P_\Sigma(g),P_\Sigma(h)}\colon (P_\Sigma(h)\circ P_\Sigma(g))\circ P_\Sigma(f) \Rightarrow P_\Sigma(h)\circ (P_\Sigma(g)\circ P_\Sigma(f))$$
  is the identity. This is immediate, since, by replacing the morphisms $r$ and $s$ with identities in the diagrams (1), (2) and (3) of \cref{def:assoc}, the domain and codomain of the $\Omega$ 2-cell which corresponds to the $\Sigma$-path $\begin{tikzcd}
 	{(1)} & {(2)} & {(3)}
 	\arrow["\bu", squiggly, from=1-1, to=1-2]
 	\arrow["\bd", squiggly, from=1-2, to=1-3]
 \end{tikzcd}$ conincide and thus this $\Omega$ 2-cell is clearly the identity.
 The left and right unit conditions are automatic since all the maps in the diagram are identities. %
\end{proof}

\begin{proposition}\label{prop:P_gives_laris}
 The pseudofunctor $P_\Sigma$ sends 1-cells in $\Sigma$ to laris and $\Sigma$-squares to Beck--Chevalley squares.\footnote{Here we refer to the standard action of a pseudofunctor on a pasting diagram, which involves inserting compositors (and unitors) as appropriate.}
\end{proposition}
\begin{proof}
 Consider $s\colon A \to B$ in $\Sigma$. We first show $P_\Sigma(s) = (s,1_B)$ is a lari. We claim its right adjoint is $(1_B,s)$.
 Note that $(1_B,s) \circ (s,1_B) = (s,s)$. Put
  $$\bar{\eta}=[\id_s, s, 1_B, s, \id, \id].$$
  This is indeed the basic $\Omega$ 2-cell given by the $\Sigma$-path
  \(\adjustbox{scale=0.80}{\begin{tikzcd}
	{} & {} \\
	{} & {}
	\arrow["1", from=1-1, to=1-2]
	\arrow[""{name=0, anchor=center, inner sep=0}, "1"', from=1-1, to=2-1]
	\arrow[""{name=1, anchor=center, inner sep=0}, "1", from=1-2, to=2-2]
	\arrow["1"', from=2-1, to=2-2]
	\arrow["\SIGMAc"{description}, shift left=2, draw=none, from=0, to=1]
\end{tikzcd}}\)
$\begin{tikzcd}
	{} & {}
	\arrow[squiggly, from=1-1, to=1-2]
\end{tikzcd}$
\(\adjustbox{scale=0.80}{\begin{tikzcd}
	{} & {} \\
	{} & {}
	\arrow["1", from=1-1, to=1-2]
	\arrow[""{name=0, anchor=center, inner sep=0}, "1"', from=1-1, to=2-1]
	\arrow[""{name=1, anchor=center, inner sep=0}, "s", from=1-2, to=2-2]
	\arrow["s"', from=2-1, to=2-2]
	\arrow["\SIGMA"{description}, shift left=2, draw=none, from=0, to=1]
\end{tikzcd}}\),
 where the last $\Sg$-square is derived from Identity.
 In particular, note that $\bar{\eta}$ is invertible.

 On the other hand, we find $(s,1_B)\circ (1_B,s)$ using
 \[\begin{tikzcd}
    B & B & A \\
    & \dot{A} & B & B,
    \arrow["1_B", from=1-1, to=1-2]
    \arrow["s"', from=1-3, to=1-2]
    \arrow[""{name=0, anchor=center, inner sep=0}, "s", from=1-3, to=2-3]
    \arrow["\dot{s}_2", from=2-3, to=2-2]
    \arrow[""{name=1, anchor=center, inner sep=0}, "\dot{s}_1"', from=1-2, to=2-2]
    \arrow["1_B", from=2-4, to=2-3]
    \arrow["{\SIGMA^{\dot{\alpha}}}"{description}, draw=none, from=1, to=0]
 \end{tikzcd}\]
 to give $(\dot{s}_1,\dot{s}_2)$. Applying Equi-insertion to the above $\Sigma$-square and the 2-cell $\dot{\alpha}^{-1}\colon \dot{s}_1 s \Rightarrow \dot{s}_2 s$ we obtain a morphism $q\colon \dot{A} \to Q$ and a 2-cell $\epsilon\colon q\dot{s}_1 \Rightarrow q\dot{s}_2$ such that
 \begin{equation}\label{eq:epsilon1}q\dot{\alpha}^{-1} = \epsilon s\qquad \text{ and } \qquad
 \begin{tikzcd}
    B & \dot{A} \\
    B & Q
    \arrow["\dot{s}_2", from=1-1, to=1-2]
    \arrow[""{name=0, anchor=center, inner sep=0}, "q", from=1-2, to=2-2]
    \arrow["q\dot{s}_2"', from=2-1, to=2-2]
    \arrow[""{name=1, anchor=center, inner sep=0}, Rightarrow, no head, from=1-1, to=2-1]
    \arrow["{\SIGMA^{\id}}"{description}, draw=none, from=1, to=0]
 \end{tikzcd}\,.
 \end{equation}
 We can now form a 2-cell $\overline{\epsilon}\colon (\dot{s_1},\dot{s}_2) \Rightarrow (\id_B, \id_B)$ represented by the diagram
 \begin{equation}\label{eq:epsilon2}
 \begin{tikzcd}
	B & \dot{A} & B \\
	& Q & B \\
	B & B & B
	\arrow[Rightarrow, no head, from=1-1, to=3-1]
	\arrow[""{name=0, anchor=center, inner sep=0}, Rightarrow, no head, from=1-3, to=2-3]
	\arrow[""{name=1, anchor=center, inner sep=0}, Rightarrow, no head, from=2-3, to=3-3]
	\arrow[""{name=2, anchor=center, inner sep=0}, "q"', from=1-2, to=2-2]
	\arrow["q\dot{s_2}"', from=2-3, to=2-2]
	\arrow["{1_B}", from=3-3, to=3-2]
	\arrow["\dot{s_2}"', from=1-3, to=1-2]
	\arrow["\dot{s_1}", from=1-1, to=1-2]
	\arrow["{1_B}"', from=3-1, to=3-2]
	\arrow["\epsilon"', shorten <=11pt, shorten >=11pt, Rightarrow, from=1-2, to=3-1]
	\arrow[""{name=3, anchor=center, inner sep=0}, "q\dot{s_2}", from=3-2, to=2-2]
	\arrow["\SIGMA^{\id}"{description}, draw=none, from=2, to=0]
	\arrow["\SIGMA^{\id}"{description}, draw=none, from=3, to=1]
 \end{tikzcd}
 \end{equation}
 where the lower $\Sg$-square is given by Identity.

 Using the notations $\bar{s}=(s,1)$ and $s_*=(1,s)$, we now want to show the triangle identites
 \begin{equation}\label{eq:triangles}\id_{s_*} = (s_*\circ \bar{\epsilon}) \cdot (\bar{\eta}\circ s_*) \qquad \text{ and } \qquad \id_{\bar{s}} = (\bar{\epsilon}\circ \bar{s}) \cdot (\bar{s}\circ \bar{\eta})\, .
 \end{equation}
 Since $\bar{\eta}$ is an $\Omega$ 2-cell, so are $\bar{\eta}\circ s_*$ and $\bar{s}\circ \bar{\eta}$ (see \cref{pro:Omega-f}).

  Moreover, we easily see by \cref{def:horizontal} that $\bar{\eta}\circ s_*$ reduces to the $\Omega$ 2-cell $\Omega_1=[\id_{\dot{s}_1},\dot{s}_1, 1_{\dot{A}},\dot{s}_2s, \dot{\alpha},\id]$ determined by the $\Sg$-path
 \[\begin{tikzcd}
	& {} & {} \\
	{} & {} & {} \\
	{} & {} & {}
	\arrow["s", from=1-2, to=1-3]
	\arrow[""{name=0, anchor=center, inner sep=0}, equals, from=1-2, to=2-2]
	\arrow[""{name=1, anchor=center, inner sep=0}, equals, from=1-3, to=2-3]
	\arrow[equals, from=2-1, to=2-2]
	\arrow[""{name=2, anchor=center, inner sep=0}, equals, from=2-1, to=3-1]
	\arrow["s", from=2-2, to=2-3]
	\arrow[""{name=3, anchor=center, inner sep=0}, equals, from=2-2, to=3-2]
	\arrow[""{name=4, anchor=center, inner sep=0}, equals, from=2-3, to=3-3]
	\arrow[equals, from=3-1, to=3-2]
	\arrow["s", from=3-2, to=3-3]
	\arrow["{{\dot{\Sigma}}}"{description}, shift left, draw=none, from=0, to=1]
	\arrow["{{\dot{\Sigma}}}"{description}, shift left, draw=none, from=2, to=3]
	\arrow["{{\dot{\Sigma}}}"{description}, shift left, draw=none, from=3, to=4]
\end{tikzcd}
\begin{tikzcd}
	{} & {}
	\arrow[squiggly, from=1-1, to=1-2]
\end{tikzcd}
\begin{tikzcd}
	& {} & {} \\
	{} & {} & {} \\
	{} & {} & {}
	\arrow["s", from=1-2, to=1-3]
	\arrow[""{name=0, anchor=center, inner sep=0}, equals, from=1-2, to=2-2]
	\arrow[""{name=1, anchor=center, inner sep=0}, equals, from=1-3, to=2-3]
	\arrow[equals, from=2-1, to=2-2]
	\arrow[""{name=2, anchor=center, inner sep=0}, equals, from=2-1, to=3-1]
	\arrow["s"', from=2-2, to=2-3]
	\arrow[""{name=3, anchor=center, inner sep=0}, "s", from=2-2, to=3-2]
	\arrow[""{name=4, anchor=center, inner sep=0}, "{{{{\dot{s}_1}}}}", from=2-3, to=3-3]
	\arrow["s"', from=3-1, to=3-2]
	\arrow["{{{{\dot{s}_2}}}}"', from=3-2, to=3-3]
	\arrow["{{\dot{\Sigma}}}"{description}, shift left, draw=none, from=0, to=1]
	\arrow["{{\Sigma^{\id}}}"{description}, shift left, draw=none, from=2, to=3]
	\arrow["{{\dot{\Sigma}}}"{description}, shift left, draw=none, from=3, to=4]
\end{tikzcd}
\]
  or equivalently, by the $\Sg$-path
  \[\begin{tikzcd}
	{} & {} \\
	{} & {}
	\arrow["s", from=1-1, to=1-2]
	\arrow[""{name=0, anchor=center, inner sep=0}, equals, from=1-1, to=2-1]
	\arrow[""{name=1, anchor=center, inner sep=0}, equals, from=1-2, to=2-2]
	\arrow["s"', from=2-1, to=2-2]
	\arrow["{\Sigma^{\dot{\alpha}}}"{description}, shift left, draw=none, from=0, to=1]
\end{tikzcd}
 \begin{tikzcd}
	{} & {}
	\arrow[squiggly, from=1-1, to=1-2]
\end{tikzcd}
\begin{tikzcd}
	{} & {} & {} \\
	{} & {} & {} \\
	{} & {} & {}
	\arrow[equals, from=1-1, to=1-2]
	\arrow[""{name=0, anchor=center, inner sep=0}, equals, from=1-1, to=2-1]
	\arrow["s", from=1-2, to=1-3]
	\arrow[""{name=1, anchor=center, inner sep=0}, equals, from=1-2, to=2-2]
	\arrow[""{name=2, anchor=center, inner sep=0}, equals, from=1-3, to=2-3]
	\arrow[equals, from=2-1, to=2-2]
	\arrow[""{name=3, anchor=center, inner sep=0}, equals, from=2-1, to=3-1]
	\arrow["s", from=2-2, to=2-3]
	\arrow[""{name=4, anchor=center, inner sep=0}, "s"{description}, from=2-2, to=3-2]
	\arrow[""{name=5, anchor=center, inner sep=0}, "{{\dot{s}_1}}", from=2-3, to=3-3]
	\arrow["s"', from=3-1, to=3-2]
	\arrow["{{\dot{s}_2}}"', from=3-2, to=3-3]
	\arrow["{\dot{\Sigma}}"{description}, shift left, draw=none, from=0, to=1]
	\arrow["{\dot{\Sigma}}"{description}, shift left, draw=none, from=1, to=2]
	\arrow["{\Sigma^{\id}}"{description}, shift left, draw=none, from=3, to=4]
	\arrow["{\Sigma^{\dot{\alpha}}}"{description}, shift left, draw=none, from=4, to=5]
\end{tikzcd}\, .\]
 For obtaining $(1,s) \circ \bar{\epsilon}$ we use again \cref{def:horizontal} and the equality
  \begin{equation*}
 \begin{tikzcd}
	B & Q \\
	B \\
	B & Q
	\arrow["{q\dot{s}_2}", from=1-1, to=1-2]
	\arrow[equals, from=1-1, to=2-1]
	\arrow[""{name=0, anchor=center, inner sep=0}, "{1_B}", curve={height=-24pt}, from=1-2, to=3-2]
	\arrow[""{name=1, anchor=center, inner sep=0}, equals, from=1-2, to=3-2]
	\arrow[equals, from=2-1, to=3-1]
	\arrow["{q\dot{s}_2}"', from=3-1, to=3-2]
	\arrow["\id"{pos=0.5}, shorten <=5pt, shorten >=2pt, Rightarrow, from=1, to=0]
	\arrow["{\SIGMA^{\id}}"{description}, draw=none, from=2-1, to=1]
 \end{tikzcd}
 =
 \begin{tikzcd}
	B & Q \\
	B \\
	B & Q\rlap{.}
	\arrow["{q\dot{s}_2}", from=1-1, to=1-2]
	\arrow[equals, from=1-1, to=2-1]
	\arrow[""{name=0, anchor=center, inner sep=0}, "{1_B}"', curve={height=24pt}, from=1-1, to=3-1]
	\arrow[""{name=1, anchor=center, inner sep=0}, equals, from=1-2, to=3-2]
	\arrow[equals, from=2-1, to=3-1]
	\arrow["{q\dot{s}_2}"', from=3-1, to=3-2]
	\arrow["\id"{pos=0.2}, shorten <=2pt, Rightarrow, from=0, to=2-1]
	\arrow["{\SIGMA^{\id}}"{description}, draw=none, from=2-1, to=1]
 \end{tikzcd}
 \end{equation*}
 Inspecting the involved $\Omega$ 2-cells lead us to the following representative of the composition.
 \begin{equation}\label{eq:(1,s)epsilon}
 \begin{tikzcd}
	B & {\dot{A}} & {\dot{A}} && B & A \\
	B & {\dot{A}} & Q && B & A \\
	&& Q &&& A \\
	B & {\dot{A}} & Q & {\dot{A}} & B & A \\
	B && B && B & A
	\arrow["{\dot{s}_1}", from=1-1, to=1-2]
	\arrow[equals, from=1-1, to=2-1]
	\arrow[equals, from=1-2, to=1-3]
	\arrow[equals, from=1-2, to=2-2]
	\arrow[""{name=0, anchor=center, inner sep=0}, "q", from=1-3, to=2-3]
	\arrow["{\dot{s}_2}"', from=1-5, to=1-3]
	\arrow[""{name=1, anchor=center, inner sep=0}, equals, from=1-5, to=2-5]
	\arrow["s"', from=1-6, to=1-5]
	\arrow[""{name=2, anchor=center, inner sep=0}, equals, from=1-6, to=2-6]
	\arrow["{\dot{s}_1}", from=2-1, to=2-2]
	\arrow[equals, from=2-1, to=4-1]
	\arrow["q", from=2-2, to=2-3]
	\arrow[""{name=3, anchor=center, inner sep=0}, equals, from=2-3, to=3-3]
	\arrow["\epsilon"', between={0.3}{0.7}, Rightarrow, from=2-3, to=4-1]
	\arrow["{q\dot{s}_2}"', from=2-5, to=2-3]
	\arrow["s"', from=2-6, to=2-5]
	\arrow[""{name=4, anchor=center, inner sep=0}, equals, from=2-6, to=3-6]
	\arrow["{{q\dot{s}_2 s}}"', from=3-6, to=3-3]
	\arrow[""{name=5, anchor=center, inner sep=0}, equals, from=3-6, to=4-6]
	\arrow["{{\dot{s}_2}}"', from=4-1, to=4-2]
	\arrow[equals, from=4-1, to=5-1]
	\arrow["q"', from=4-2, to=4-3]
	\arrow[""{name=6, anchor=center, inner sep=0}, equals, from=4-3, to=3-3]
	\arrow["{{q}}", from=4-4, to=4-3]
	\arrow["{{\dot{s}_2}}", from=4-5, to=4-4]
	\arrow[""{name=7, anchor=center, inner sep=0}, equals, from=4-5, to=5-5]
	\arrow["s"', from=4-6, to=4-5]
	\arrow[""{name=8, anchor=center, inner sep=0}, equals, from=4-6, to=5-6]
	\arrow[equals, from=5-1, to=5-3]
	\arrow[""{name=9, anchor=center, inner sep=0}, "{q\dot{s}_2}"', from=5-3, to=4-3]
	\arrow[equals, from=5-3, to=5-5]
	\arrow["s"', from=5-6, to=5-5]
	\arrow["{\SIGMA^{\id}}"{marking, allow upside down}, draw=none, from=0, to=1]
	\arrow["{\SIGMA^{\id}}"{marking, allow upside down}, draw=none, from=1, to=2]
	\arrow["{{\SIGMA^{\id}}}"{description}, draw=none, from=3, to=4]
	\arrow["{{\SIGMA^{\id}}}"{description}, draw=none, from=6, to=5]
	\arrow["{\SIGMA^{\id}}"{description}, draw=none, from=7, to=8]
	\arrow["{\SIGMA^{\id}}"{description}, draw=none, from=9, to=7]
\end{tikzcd}
\end{equation}
This is clearly the $\Omega$ 2-cell given by the  $\Sg$-path
\(\;\begin{tikzcd}
	{} & {} & {} \\
	{} & {} & {}
	\arrow[equals, from=1-1, to=1-2]
	\arrow[""{name=0, anchor=center, inner sep=0}, equals, from=1-1, to=2-1]
	\arrow["s", from=1-2, to=1-3]
	\arrow[""{name=1, anchor=center, inner sep=0}, "s", from=1-2, to=2-2]
	\arrow[""{name=2, anchor=center, inner sep=0}, "{{\dot{s}_1}}", from=1-3, to=2-3]
	\arrow["s"', from=2-1, to=2-2]
	\arrow["{{\dot{s}_2}}"', from=2-2, to=2-3]
	\arrow["{{\SIGMA^{\id}}}"{description}, shift left, draw=none, from=0, to=1]
	\arrow["{{\SIGMA^{\dot{\alpha}}}}"{description}, shift right, draw=none, from=2, to=1]
\end{tikzcd}
\begin{tikzcd}
	{} & {}
	\arrow[squiggly, from=1-1, to=1-2]
\end{tikzcd}
\begin{tikzcd}
	{} & {} \\
	{} & {}
	\arrow["s", from=1-1, to=1-2]
	\arrow[""{name=0, anchor=center, inner sep=0}, equals, from=1-1, to=2-1]
	\arrow[""{name=1, anchor=center, inner sep=0}, equals, from=1-2, to=2-2]
	\arrow["s"', from=2-1, to=2-2]
	\arrow["{{\SIGMA^{\id}}}"{description}, shift right, draw=none, from=1, to=0]
\end{tikzcd}\, ,\)
as shown by the following equality which follows from \eqref{eq:epsilon1}:
\[\begin{tikzcd}
	{} & {} & {} \\
	{} & {} & {} & {} \\
	{} & {} & {}
	\arrow[equals, from=1-1, to=1-2]
	\arrow[""{name=0, anchor=center, inner sep=0}, equals, from=1-1, to=2-1]
	\arrow["s", from=1-2, to=1-3]
	\arrow[""{name=1, anchor=center, inner sep=0}, "s"', from=1-2, to=2-2]
	\arrow[""{name=2, anchor=center, inner sep=0}, "{{{\dot{s}_1}}}", from=1-3, to=2-3]
	\arrow[curve={height=-12pt}, equals, from=1-3, to=2-4]
	\arrow["s"{description}, from=2-1, to=2-2]
	\arrow[""{name=3, anchor=center, inner sep=0}, equals, from=2-1, to=3-1]
	\arrow["{\dot{s}_2}"{description}, from=2-2, to=2-3]
	\arrow[""{name=4, anchor=center, inner sep=0}, equals, from=2-2, to=3-2]
	\arrow["\epsilon", Rightarrow, from=2-3, to=2-4]
	\arrow[""{name=5, anchor=center, inner sep=0}, "q", from=2-3, to=3-3]
	\arrow["{{{q\dot{s}_2}}}", curve={height=-12pt}, from=2-4, to=3-3]
	\arrow["s"', from=3-1, to=3-2]
	\arrow["{q\dot{s}_2}"', from=3-2, to=3-3]
	\arrow["{\SIGMA^{\id}}"{description}, shift left, between={0.2}{0.8}, Rightarrow, from=0, to=1]
	\arrow["{{{\SIGMA^{\dot{\alpha}}}}}"{description}, shift left, draw=none, from=1, to=2]
	\arrow["{\SIGMA^{\id}}"{description}, shift left, between={0.2}{0.8}, Rightarrow, from=3, to=4]
	\arrow["{\SIGMA^{\id}}"{description}, shift left, draw=none, from=4, to=5]
\end{tikzcd}
\qquad =\qquad
\begin{tikzcd}
	{} & {} \\
	{} & {} \\
	{} & {} \\
	{} & {}
	\arrow["s", from=1-1, to=1-2]
	\arrow[""{name=0, anchor=center, inner sep=0}, equals, from=1-1, to=2-1]
	\arrow[""{name=1, anchor=center, inner sep=0}, equals, from=1-2, to=2-2]
	\arrow["s"{description}, from=2-1, to=2-2]
	\arrow[""{name=2, anchor=center, inner sep=0}, equals, from=2-1, to=3-1]
	\arrow[""{name=3, anchor=center, inner sep=0}, "{\dot{s}_2}", from=2-2, to=3-2]
	\arrow["{\dot{s}_2s}"{description}, from=3-1, to=3-2]
	\arrow[""{name=4, anchor=center, inner sep=0}, equals, from=3-1, to=4-1]
	\arrow[""{name=5, anchor=center, inner sep=0}, "q", from=3-2, to=4-2]
	\arrow["{q\dot{s}_2s}"', from=4-1, to=4-2]
	\arrow["{{{\SIGMA^{\id}}}}"{description}, shift left, draw=none, from=0, to=1]
	\arrow["{\SIGMA^{\id}}"{description}, shift left, draw=none, from=2, to=3]
	\arrow["{\SIGMA^{\id}}"{description}, shift left, draw=none, from=4, to=5]
\end{tikzcd}\, .\]

Thus, the composition $(s_* \circ \bar{\epsilon})\cdot (\bar{\eta}\circ s_*)$, being a vertical composition of $\Omega$ 2-cells corresponding to $\Sg$-paths between $\Sg$-schemes of the same left border $(s, 1_A)$, is an $\Omega$ 2-cell, and so it is the identity.

Concerning the other triangle equality, first observe that, analogously to the case $\bar{\eta}\circ s_*$, it is easy to see that $\bar{s}\circ \bar{\eta}$ reduces to the $\Omega$ 2-cell corresponding to the $\Sg$-path
\[\begin{tikzcd}
	{} & {} \\
	{} & {}
	\arrow[equals, from=1-1, to=1-2]
	\arrow[""{name=0, anchor=center, inner sep=0}, "s"', from=1-1, to=2-1]
	\arrow[""{name=1, anchor=center, inner sep=0}, "s", from=1-2, to=2-2]
	\arrow[equals, from=2-1, to=2-2]
	\arrow["{{\Sigma^{\id}}}"{description}, shift left, draw=none, from=0, to=1]
\end{tikzcd}
\begin{tikzcd}
	{} & {}
	\arrow[squiggly, from=1-1, to=1-2]
\end{tikzcd}
\begin{tikzcd}
	{} & {} \\
	{} & {}
	\arrow[equals, from=1-1, to=1-2]
	\arrow[""{name=0, anchor=center, inner sep=0}, "s"', from=1-1, to=2-1]
	\arrow[""{name=1, anchor=center, inner sep=0}, "{{\dot{s}_1s}}", from=1-2, to=2-2]
	\arrow["{{\dot{s}_2}}"', from=2-1, to=2-2]
	\arrow["{{\Sigma^{\dot{\alpha}}}}"{description}, shift left, draw=none, from=0, to=1]
\end{tikzcd}\, .\]
Calculating  $\bar{\epsilon} \circ (s,1)$, according to \cref{def:horizontal}, it is straightforward to verify that it has the following representative.

\[\begin{tikzcd}
	A & B & {\dot{A}} && B \\
	A & B & Q & {\dot{A}} & B \\
	&& Q && B \\
	A & B & Q & {\dot{A}} & B \\
	A & B & B && B
	\arrow["s", from=1-1, to=1-2]
	\arrow[equals, from=1-1, to=2-1]
	\arrow["{\dot{s}_1}", from=1-2, to=1-3]
	\arrow[equals, from=1-2, to=2-2]
	\arrow[draw=none, from=1-3, to=2-1]
	\arrow[""{name=0, anchor=center, inner sep=0}, "q", from=1-3, to=2-3]
	\arrow["{\dot{s}_2}"', from=1-5, to=1-3]
	\arrow[""{name=1, anchor=center, inner sep=0}, equals, from=1-5, to=2-5]
	\arrow["s", from=2-1, to=2-2]
	\arrow[equals, from=2-1, to=4-1]
	\arrow["{{q {\dot{s}_1}}}", from=2-2, to=2-3]
	\arrow[equals, from=2-2, to=4-2]
	\arrow[""{name=2, anchor=center, inner sep=0}, equals, from=2-3, to=3-3]
	\arrow["\epsilon"', between={0.3}{0.7}, Rightarrow, from=2-3, to=4-2]
	\arrow["q"', from=2-4, to=2-3]
	\arrow["{{{\dot{s}_2}}}"', from=2-5, to=2-4]
	\arrow[""{name=3, anchor=center, inner sep=0}, equals, from=2-5, to=3-5]
	\arrow["{{q\dot{s}_2}}"', from=3-5, to=3-3]
	\arrow[""{name=4, anchor=center, inner sep=0}, equals, from=3-5, to=4-5]
	\arrow["s"', from=4-1, to=4-2]
	\arrow[equals, from=4-1, to=5-1]
	\arrow["{{q {\dot{s}_2}}}"', from=4-2, to=4-3]
	\arrow[equals, from=4-2, to=5-2]
	\arrow[""{name=5, anchor=center, inner sep=0}, equals, from=4-3, to=3-3]
	\arrow["{{{q}}}", from=4-4, to=4-3]
	\arrow["{{{\dot{s}_2}}}", from=4-5, to=4-4]
	\arrow[""{name=6, anchor=center, inner sep=0}, equals, from=4-5, to=5-5]
	\arrow["s"', from=5-1, to=5-2]
	\arrow[equals, from=5-2, to=5-3]
	\arrow[""{name=7, anchor=center, inner sep=0}, "{q {\dot{s}_2}}"{description}, from=5-3, to=4-3]
	\arrow[equals, from=5-5, to=5-3]
	\arrow["{\SIGMA^{\id}}"{description}, draw=none, from=0, to=1]
	\arrow["{{{\SIGMA^{\id}}}}"{description}, draw=none, from=2, to=3]
	\arrow["{\SIGMA^{\id}}"{description}, draw=none, from=5, to=4]
	\arrow["{\SIGMA^{\id}}"{description}, draw=none, from=7, to=6]
\end{tikzcd}\]
This gives a $\Omega$ 2-cell corresponding to a $\Sg$-path of left border $(1_A,s)$, as shown  by the equality
 \begin{equation*}
 \begin{tikzcd}
	A & A \\
	A & A \\
	B & Q & Q \\
	B & Q
	\arrow[equals, from=1-1, to=1-2]
	\arrow[""{name=0, anchor=center, inner sep=0}, equals, from=1-1, to=2-1]
	\arrow[""{name=1, anchor=center, inner sep=0}, "s", from=1-2, to=2-2]
	\arrow["{{q\dot{s}_2s}}", curve={height=-12pt}, from=1-2, to=3-3]
	\arrow["s", from=2-1, to=2-2]
	\arrow[""{name=2, anchor=center, inner sep=0}, "s"', from=2-1, to=3-1]
	\arrow[""{name=3, anchor=center, inner sep=0}, "{{\dot{s}_1}}", from=2-2, to=3-2]
	\arrow["{\dot{s}_2}", from=3-1, to=3-2]
	\arrow[""{name=4, anchor=center, inner sep=0}, equals, from=3-1, to=4-1]
	\arrow["{{\epsilon s}}", shift left=3, Rightarrow, from=3-2, to=3-3]
	\arrow[""{name=5, anchor=center, inner sep=0}, "q", from=3-2, to=4-2]
	\arrow[curve={height=-12pt}, equals, from=3-3, to=4-2]
	\arrow["{{{q \dot{s}_2}}}"', from=4-1, to=4-2]
	\arrow["{{\SIGMA^{\id}}}"{description}, draw=none, from=0, to=1]
	\arrow["{{\SIGMA^{\dot{\alpha}}}}"{marking, allow upside down}, shift right, draw=none, from=2, to=3]
	\arrow["{{\SIGMA^{\id}}}"{marking, allow upside down}, shift right, draw=none, from=4, to=5]
\end{tikzcd}
 =\;
 \begin{tikzcd}
	A & A \\
	B & B \\
	B & Q
	\arrow[equals, from=1-1, to=1-2]
	\arrow[""{name=0, anchor=center, inner sep=0}, "s"', from=1-1, to=2-1]
	\arrow[""{name=1, anchor=center, inner sep=0}, "s", from=1-2, to=2-2]
	\arrow[equals, from=2-1, to=2-2]
	\arrow[""{name=2, anchor=center, inner sep=0}, equals, from=2-1, to=3-1]
	\arrow[""{name=3, anchor=center, inner sep=0}, "{{q\dot{s}_2}}", from=2-2, to=3-2]
	\arrow["{{{q \dot{s}_2}}}"', from=3-1, to=3-2]
	\arrow["{{{\SIGMA^\id}}}"{marking, allow upside down}, shift right, draw=none, from=0, to=1]
	\arrow["{{\SIGMA^{\id}}}"{marking, allow upside down}, shift right, draw=none, from=2, to=3]
\end{tikzcd}\, ,
 \end{equation*}
 which follows from the first equality of  \eqref{eq:epsilon1}. Consequently, the composition $(\bar{\epsilon} \circ \bar{s})\cdot (\bar{s}\circ \bar{\eta})$ is an $\Omega$ 2-cell corresponding to a $\Sigma$-path of $\Sigma$-schemes with left border $(1_A,s)$ starting and ending in the same $\Sigma$-scheme. Thus, it is an identity.

 In conclusion, the triangle identities hold and $P_\Sigma$ indeed sends 1-cells in $\Sigma$ to laris.

 We now show $P_\Sigma$ sends $\Sigma$-squares to Beck--Chevalley squares.
 Suppose we have a $\Sigma$-square
 \[\begin{tikzcd}
	A & B \\
	C & D\rlap{.}
	\arrow["s", from=1-1, to=1-2]
	\arrow[""{name=0, anchor=center, inner sep=0}, "f"', from=1-1, to=2-1]
	\arrow[""{name=1, anchor=center, inner sep=0}, "g", from=1-2, to=2-2]
	\arrow["t"', from=2-1, to=2-2]
	\arrow["{\SIGMA^\delta}"{marking, allow upside down}, shift right, draw=none, from=0, to=1]
 \end{tikzcd}\]
 Then applying $P_\Sigma$ we have $P_\Sigma(\delta)\colon (t,1) \circ (f,1) \to (g,1) \circ (s,1)$. %
 The mate of this is a 2-cell $(f,1) \circ (1,s) \to (1,t) \circ (1,g)$ given by the composite:
 \begin{align*}
  & ((g,t) \circ \epsilon^s) \cdot \Assoc_{(g,t),(s,1),(1,s)} \cdot (\Assoc_{(1,t),(g,1),(s,1)}^{-1} \circ (1,s)) \\
  & \ {} \cdot ( ((1,t) \circ P_\Sigma(\delta)) \circ (1,s) ) \cdot (\Assoc_{(1,t),(t,1),(f,1)} \circ (1,s)) \cdot ((\eta^t \circ (f,1)) \circ (1,s)).
 \end{align*}
 The Beck--Chevalley condition says that this composite is an isomorphism. All of the factors except $(g,t) \circ \epsilon^s$ are always isomorphisms. We now show that $(g,t) \circ \epsilon^s$ is also an isomorphism.

 From $\Sigma^\delta$ and the canonical $\Sigma$-square
 \[\begin{tikzcd}
	A & B \\
	C & \ddot{A}
	\arrow["s", from=1-1, to=1-2]
	\arrow[""{name=0, anchor=center, inner sep=0}, "f"', from=1-1, to=2-1]
	\arrow[""{name=1, anchor=center, inner sep=0}, "{\dot{f}}", from=1-2, to=2-2]
	\arrow["{\dot{s}}"', from=2-1, to=2-2]
	\arrow["{\SIGMA^{\ddot{\alpha}}}"{marking, allow upside down}, shift right, draw=none, from=0, to=1]
 \end{tikzcd}\]
 we have an $\Omega$ 2-cell between $(g,t)$ and $(\dot{f},\dot{s})$. Note that $(\dot{f},\dot{s})$ is the composite $(f,1) \circ (1,s)$. Hence $(g,t) \cong (f,1) \circ (1,s)$ and it suffices to show $(f,1) \circ ((1,s) \circ \overline{\epsilon})$ is an isomorphism. But $(1,s) \circ \overline{\epsilon}$ is an isomorphism by one of the triangle identities and so we are done.
\end{proof}

Before we state the formal universal property it will useful to know the following fact about pseudonatural transformations.
\begin{lemma}\label{lem:transformation_at_adjoint_statement}
 Let $H_1, H_2\colon \cata \to \catc$ be pseudofunctors and let $\upsilon\colon H_1 \to H_2$ be a pseudonatural transformation. Suppose $r\colon B \to I$ is a morphism in $\cata$ that has a right adjoint $r_*\colon I \to B$ (with unit $\eta$ and counit $\epsilon$). Then the 2-morphism $\upsilon_{r_*}\colon H_2(r_*) \circ \upsilon_I \to \upsilon_B \circ H_1(r_*)$ in the pseudonaturality square for $r_*$ is the inverse of the mate of 2-morphism $\upsilon_r$ in the pseudonaturality square for $r$.
\end{lemma}
\begin{proof}
 See \cref{lem:transformation_at_adjoint}.
\end{proof}

\begin{assumption}
 For simplicity, in the following we consider only bicategories and pseudofunctors that are strictly unitary --- that is, whose unitors are identities. The bicategory $\catx[\Sigma_*]$ and the pseudofunctor $P_\Sigma$ are strictly unitary. There is no loss of generality since every bicategory is biequivalent to a strictly unitary one and every pseudofunctor is pseudonaturally equivalent to a strictly unitary one.
\end{assumption}

\begin{theorem}\label{thm:universal_prop}
 Let $\catx$ be a 2-category admitting a  calculus of left lax fractions for $\Sigma$. The pseudofunctor $P_\Sigma\colon \catx \to \catx[\Sigma_*]$ is the universal (strictly unital) pseudofunctor %
 that satisfies the conditions of \cref{prop:P_gives_laris}.
 More precisely, we have
 \begin{enumerate}[label=\normalfont(\alph*)]
  \item If $F\colon \catx \to \catc$ is a pseudofunctor sending $\Sigma$-morphisms to laris and $\Sigma$-squares to BC squares, then there is a pseudofunctor $H\colon \catx[\Sigma_*] \to \catc$ such that $F \simeq H \circ P_\Sigma$.
  \item If $H,H'\colon \catx[\Sigma_*] \to \catc$ are pseudofunctors and $\xi\colon H \circ P_\Sigma \to H' \circ P_\Sigma$ is a pseudonatural transformation for which the pseudonaturality squares for $r\colon B \to I$ in $\catx$ are BC squares whenever $r \in \Sigma$, then there is a pseudonatural transformation $\upsilon\colon H \to H'$ such that $\xi \cong \upsilon \circ P_\Sigma$.
  \item If $H,H'\colon \catx[\Sigma_*] \to \catc$ are pseudofunctors, $\upsilon, \upsilon' \colon H \to H'$ are pseudonatural transformations, and $\aleph \colon \upsilon \circ P_\Sigma \to \upsilon' \circ P_\Sigma$ is a modification, then there is a unique modification $\beth\colon \upsilon \to \upsilon'$ such that $\aleph = \beth \circ P_\Sigma$.
 \end{enumerate}
\end{theorem}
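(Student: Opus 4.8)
The plan is to prove parts (a)--(c) by constructing the relevant data on $\catx[\Sigma_*]$ explicitly, using the lari structures on the $F(r)$ and the Beck--Chevalley mates, and then verifying the pseudofunctor, pseudonaturality and modification axioms by a ``calculus of mates'' running parallel to the calculus of $\Sigma$-squares used to build $\catx[\Sigma_*]$.

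For part (a), I would define $H$ as follows. On objects, $H(A)=F(A)$. Since $F$ sends each $\Sigma$-object $r$ to a lari, choose a right adjoint $(Fr)_*$ once and for all, and set $H(f,I,r)=(Fr)_*\circ Ff$. Given a 2-morphism $(\alpha,x_1,x_2,x_3,\delta_1,\delta_2)\colon (f,r)\Rightarrow(g,s)$, its two $\Sigma$-squares are sent by $F$ to Beck--Chevalley squares, so the mates $\overline{F\delta_1}\colon (Fr)_*\Rightarrow (Fx_3)_*Fx_1$ and $\overline{F\delta_2}\colon (Fs)_*\Rightarrow (Fx_3)_*Fx_2$ are invertible; let $H$ of the 2-morphism be
\[
(Fr)_*Ff\ \xRightarrow{\overline{F\delta_1}Ff}\ (Fx_3)_*Fx_1Ff\ \xRightarrow{(Fx_3)_*F\alpha}\ (Fx_3)_*Fx_2Fg\ \xRightarrow{(\overline{F\delta_2})^{-1}Fg}\ (Fs)_*Fg,
\]
with the pseudofunctoriality constraints of $F$ inserted silently. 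The compositor $H(g,s)\circ H(f,r)\cong H((g,s)\circ(f,r))$ comes from the Beck--Chevalley condition on $F$ of the canonical $\Sigma$-square (giving $Fg\,(Fr)_*\cong(F\dot r)_*\,F\dot g$) together with the pseudofunctoriality isomorphisms $(F(\dot r s))_*\cong(Fs)_*(F\dot r)_*$ and $F(\dot g f)\cong F\dot g\,Ff$; the unitors are the evident isomorphisms $F(1)_*F(1)\cong 1$. Finally $H\circ P_\Sigma\simeq F$ because $P_\Sigma f=(f,1)$ and $F(1_X)\cong 1_{FX}$ is a lari whose chosen right adjoint may be taken to be $1_{FX}$, so $H(P_\Sigma f)\cong Ff$ pseudonaturally in $f$.

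The main obstacle is to show that $H$ is a well-defined pseudofunctor. The essential point is that the displayed formula is $\approx$-invariant and respects vertical composition of 2-morphisms. For $\approx$-invariance, by symmetry and transitivity it suffices to check invariance under passing to a single $\Sigma$-extension; expanding the definitions and using the naturality of mates, the functoriality of the mate construction under horizontal and vertical pasting of squares, and the fact that $F$ of a pasting composite of $\Sigma$-squares is again Beck--Chevalley, one matches the two composites. Preservation of vertical composition is argued the same way, now running Rule~4' of Proposition~\ref{pro:useful_rules} on the image side exactly as in Definition~\ref{data-2}; preservation of identity 2-cells uses Remark~\ref{rem:identities}. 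For the coherence axioms of $H$, the key observation is that $H$ sends every $\Omega$ 2-cell to an isomorphism of $\catc$ canonically assembled from the pseudofunctoriality constraints of $F$, the triangle identities of the adjunctions, and the Beck--Chevalley mates; consequently $H$ of the associator of Definition~\ref{def:assoc} is the expected coherence cell, and the associativity pentagon and the unit axioms for $H$ follow from coherence for bicategories applied to $\catc$. This bookkeeping, together with invoking the coherence results for $\Omega$ 2-cells (Propositions~\ref{pro:du=ud} and~\ref{pro:of-interest}) to identify $H$ on $\Omega$ 2-cells, is the most laborious step.

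For part (b), I would use the decomposition $(f,I,r)=r_*\circ P_\Sigma f$ valid in $\catx[\Sigma_*]$, where $r_*=(1_I,r)$ is the right adjoint of $P_\Sigma r$ exhibited in the proof of Proposition~\ref{prop:P_gives_laris}. Since $P_\Sigma$ is the identity on objects, put $\upsilon_A=\xi_A$; since $H$ and $H'$ preserve the adjunction $P_\Sigma r\dashv r_*$, let $\upsilon_{r_*}$ be the mate of the naturality cell $\xi_{P_\Sigma r}$ with respect to $H(P_\Sigma r)\dashv H(r_*)$ and $H'(P_\Sigma r)\dashv H'(r_*)$, which is invertible precisely because the naturality square for $r$ is assumed Beck--Chevalley; then define $\upsilon_{(f,r)}$ by pasting $\upsilon_{r_*}$ with $\xi_{P_\Sigma f}$. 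The pseudonaturality axioms for $\upsilon$ reduce, via the same mate calculus, to those of $\xi$ together with the pseudofunctoriality data of $H$ and $H'$, tested on $\Sigma$-extensions; and $\xi\cong\upsilon\circ P_\Sigma$ follows because $1_*\cong\id$ forces $\upsilon_{P_\Sigma f}=\upsilon_{(f,1)}\cong\xi_{P_\Sigma f}$. For part (c), since $P_\Sigma$ is a bijection on objects the family $(\aleph_A)$ already constitutes the only possible candidate $\beth_A:=\aleph_A$ for a modification $\upsilon\to\upsilon'$; its modification axiom must then be verified for every 1-cell of $\catx[\Sigma_*]$, and it holds for the generators $P_\Sigma f$ (by hypothesis on $\aleph$) and for the $r_*$ (apply the functorial mate construction to the axiom for $P_\Sigma r$), and it is stable under horizontal composition, so it holds for all $\Sigma$-cospans; uniqueness is immediate from $\beth_A=\aleph_A$. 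Together, (a)--(c) say that restriction along $P_\Sigma$ is a biequivalence onto the sub-bicategory of pseudofunctors sending $\Sigma$-morphisms to laris and $\Sigma$-squares to Beck--Chevalley squares.
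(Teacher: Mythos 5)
Your proposal is correct and follows essentially the same route as the paper: the same construction $H(A)=F(A)$, $H(f,r)=(Fr)_*Ff$, the same action on 2-cells via the mates of the Beck--Chevalley squares $F\delta_1$, $F\delta_2$, the same compositor assembled from the mate of the canonical $\Sigma$-square and the constraints of $F$, the same $\upsilon_X=\xi_X$ with $\upsilon_{(f,r)}$ pasted from $\xi_f$ and the (inverse) mate of $\xi_r$, and $\beth_X=\aleph_X$ for part (c). The verifications you sketch (notably $\approx$-invariance, which the paper checks in detail with string diagrams, and the coherence/pseudonaturality axioms, which the paper explicitly omits) are exactly the ones the paper's proof addresses or defers.
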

\begin{remark}
 We also remark that if $H\colon \catx[\Sigma_*] \to \catc$ is a pseudofunctor, then $H \circ P_\Sigma$ is a pseudofunctor which sends $\Sigma$-morphisms to laris and $\Sigma$-squares to BC squares (since being a lari or a BC square is preserved by pseudofunctors).
 Moroever, if $H,H'\colon \catx[\Sigma_*] \to \catc$ are pseudofunctors and $\upsilon\colon H \to H'$ is a pseudonatural transformation, then the pseudonaturality squares at $r \in \Sigma$ for $\upsilon \circ P_\Sigma$ are BC squares by \cref{lem:transformation_at_adjoint_statement}.
\end{remark}

We give the full proof of \cref{thm:universal_prop} in Appendix~\ref{sec:appendixB} using string diagrams. Here we provide a proof sketch for those who want to avoid string diagrams or simply want the general idea.

\begin{proof}[Proof of \cref{thm:universal_prop}]
 (a) Let $\catc$ be a bicategory and let $F\colon \catx \to \catc$ be a pseudofunctor that sends 1-cells in $\Sigma$ to laris in $\catc$ and $\Sigma$-squares to BC squares.
 We may assume the unitors of $\catc$ are identities and that $F$ is strictly unitary.
 For each lari $f$ of $\catc$ we choose a right adjoint $f_*$ (and associated unit/counits $\eta^f$ and $\epsilon^f$) such that $(1_X)_* = 1_X$ for every identity 1-cell (and $\eta^f = \epsilon^f = \id$ in this case).
 There is a pseudofunctor $H\colon \catx[\Sigma_*] \to \catc$ defined as follows.
 \begin{itemize}
  \item On objects, $H(X) = F(X)$,
  \item On morphisms, $H( (f,r) ) = (F r)_* (F f)$,
  \item On 2-morphisms, $H\left(\begin{tikzcd}
	A & I & B \\
	& X & B \\
	A & J & B
	\arrow["f", from=1-1, to=1-2]
	\arrow[equals, from=1-1, to=3-1]
	\arrow[""{name=0, anchor=center, inner sep=0}, "{x_1}"', from=1-2, to=2-2]
	\arrow["\alpha"', shorten <=15pt, shorten >=15pt, Rightarrow, from=1-2, to=3-1]
	\arrow["r"', from=1-3, to=1-2]
	\arrow[""{name=1, anchor=center, inner sep=0}, equals, from=1-3, to=2-3]
	\arrow["{x_3}"', from=2-3, to=2-2]
	\arrow["g"', from=3-1, to=3-2]
	\arrow[""{name=2, anchor=center, inner sep=0}, "{x_2}", from=3-2, to=2-2]
	\arrow[""{name=3, anchor=center, inner sep=0}, equals, from=3-3, to=2-3]
	\arrow["s", from=3-3, to=3-2]
	\arrow["{\SIGMA^{\delta_1}}"{marking, allow upside down}, draw=none, from=0, to=1]
	\arrow["{\SIGMA^{\delta_2}}"{marking, allow upside down}, draw=none, from=2, to=3]
  \end{tikzcd}\right) = \!\!\!
  \begin{tikzcd}
	{F A} & {F I} & {F B} \\
	& {F X} & {F B} \\
	{F A} & {F J} & {F B}
	\arrow[""{name=0, anchor=center, inner sep=0}, "Ff", from=1-1, to=1-2]
	\arrow[""{name=1, anchor=center, inner sep=0}, "\text{\tiny $F(x_1 f)$}"'{yshift=2pt,xshift=2pt}, from=1-1, to=2-2, gray]
	\arrow[equals, from=1-1, to=3-1]
	\arrow[""{name=2, anchor=center, inner sep=0}, "{F x_1}"{pos=0.3,xshift=-1pt}, from=1-2, to=2-2]
	\arrow["(Fr)_*", to=1-3, from=1-2]
	\arrow[""{name=3, anchor=center, inner sep=0}, equals, from=1-3, to=2-3]
	\arrow["F(\delta_1)_*"{xshift=-4pt}, shorten <=6pt, shorten >=6pt, Rightarrow, from=1-3, to=2-2]
	\arrow["F(x_3)_*"'{pos=0.4, yshift=1pt}, to=2-3, from=2-2]
	\arrow["F(\delta_2)_*^{-1}"{xshift=-5pt}, shorten <=6pt, shorten >=6pt, Rightarrow, from=2-2, to=3-3, yshift=-3pt,xshift=-1pt]
	\arrow[""{name=4, anchor=center, inner sep=0}, "\text{\tiny $F(x_2 g)$}"{yshift=-2pt,xshift=2pt}, from=3-1, to=2-2, gray]
	\arrow[""{name=5, anchor=center, inner sep=0}, "{F g}"', from=3-1, to=3-2]
	\arrow[""{name=6, anchor=center, inner sep=0}, "{F x_2}"'{pos=0.3,xshift=-1pt}, from=3-2, to=2-2]
	\arrow[""{name=7, anchor=center, inner sep=0}, equals, from=3-3, to=2-3]
	\arrow["(Fs)_*"', to=3-3, from=3-2]
	\arrow["{F(\alpha)}"', shorten <=22pt, shorten >=22pt, Rightarrow, from=0, to=5]
	\arrow["\text{\tiny $\gamma^F_{x_1,f}$}"'{xshift=2pt}, shorten >=2pt, Rightarrow, from=1-2, to=1, xshift=4pt, yshift=-2pt, gray]
	\arrow["\text{\tiny $\gamma^F_{x_2,g}{}^{\!\!\!\!-1}$}"'{yshift=-2pt,xshift=3pt}, shorten <=2pt, Rightarrow, from=4, to=3-2, xshift=4pt,yshift=3pt, gray]
  \end{tikzcd}$, \\
  where $F(\delta_1)_*$ and $F(\delta_2)_*$ denote the mate of $F(\delta_1)$ and $F(\delta_2)$, respectively.
  \item The unitors $\iota^H_X\colon 1_{H(X)} \to H(1_X)$ are identites,
  \item The compositors $\gamma^H_{(g,s),(f,r)} \colon H( (g,s) ) \circ H( (f,r) ) \to H( (g,s) \circ (f,r) )$ are given by the composite
  {\tiny\begin{align*}
   & {( F(s)_* F(g) ) \circ ( F(r)_* F(f) )}
    \xrightarrow{\alpha^{\catc\; -1}_{F(s)_*F(g),F(r)_*,F(f)}}
    {((F(s)_* F(g)) \circ F(r)_*) \circ F(f)}
    \xrightarrow{\alpha^\catc_{F(s)_*,F(g),F(r)_*} \circ F(f)} \\
   & {(F(s)_* \circ (F(g) F(r)_*)) \circ F(f)}
    \xrightarrow{(F(s)_* \circ F(\dot{\beta})_*) \circ F(f)}
    {(F(s)_* \circ (F(\dot{r})_* F(\dot{g}))) \circ F(f)}
    \xrightarrow{\alpha^{\catc\; -1}_{F(s)_*,F(\dot{r}),F(\dot{g})}\circ F(f)} \\
   & {((F(s)_* F(\dot{r})_*) \circ F(\dot{g}))) \circ F(f)}
    \xrightarrow{\alpha^{\catc}_{F(s)_*F(\dot{r}),F(\dot{g}),F(f)}}
    {(F(s)_* F(\dot{r})_*) \circ (F(\dot{g}) F(f))}
    \xrightarrow{(F(s)_* F(\dot{r})_*) \circ \gamma^F_{\dot{g},f}} \\
   & {(F(s)_* F(\dot{r})_*) \circ F(\dot{g} f)}
    \xrightarrow{\sigma \circ F(\dot{g} f)}
    {(F(\dot{r}) F(s))_* \circ F(\dot{g} f)}
    \xrightarrow{(\gamma^{F\; -1}_{\dot{r},s})_* \circ F(\dot{g} f)}
    {F(\dot{r} s)_* F(\dot{g} f)}
  \end{align*}}
  where $\dot{g}$, $\dot{r}$ and $\dot{\beta}$ are given by the composite $(g,s) \circ (f,r) = (\dot{g}f, \dot{r}s)$ from

  \begin{equation}\label{eq:compositor_beta_dot}
   \begin{tikzcd}
	A & I & B \\
	& {\dot{B}} & J & C\rlap{,}
	\arrow["f", from=1-1, to=1-2]
	\arrow[""{name=0, anchor=center, inner sep=0}, "{\dot{g}}"', from=1-2, to=2-2]
	\arrow["r"', from=1-3, to=1-2]
	\arrow[""{name=1, anchor=center, inner sep=0}, "g", from=1-3, to=2-3]
	\arrow["{\dot{r}}", from=2-3, to=2-2]
	\arrow["s", from=2-4, to=2-3]
	\arrow["{\SIGMA^{\dot{\beta}}}"{marking, allow upside down}, draw=none, from=0, to=1]
   \end{tikzcd}
  \end{equation}
  and where $\alpha^\catc_{\bullet,\bullet,\bullet}$ denotes the associator for $\catc$, $\gamma^F_{\bullet,\bullet}$ denotes the compositor for $F$, $F(\dot{\beta})_*$ is the mate of $F(\Sigma^{\dot{\beta}})$, $\sigma\colon F(s)_* F(\dot{r})_* \to (F(\dot{r}) F(s))_*$ is the canonical isomorphism given by composition of adjoints and $(\gamma^{F\; -1}_{\dot{r},s})_*\colon (F(\dot{r}) F(s))_* \to F(\dot{r} s)_*$ is the mate of the inverse of the compositor.
 \end{itemize}

 We show that $H$ is indeed a pseudofunctor.

 (a1) First we must show that $H$ is well-defined on 2-morphisms.
 Let us consider a $\Sigma$-extension of the 2-morphism $(\alpha, x_1, x_2, x_3, \delta_1 , \delta_2)$:
 \[\begin{tikzcd}
	A && I && B \\
	& X & {} & X & B \\
	&& D && B \\
	& X & {} & X & B \\
	A && J && B\rlap{.}
	\arrow[""{name=0, anchor=center, inner sep=0}, "f", from=1-1, to=1-3]
	\arrow[Rightarrow, no head, from=1-1, to=5-1]
	\arrow["{{{{{x_1}}}}}"{description}, curve={height=6pt}, from=1-3, to=2-2]
	\arrow[""{name=1, anchor=center, inner sep=0}, "{{{{x_1}}}}"{description}, curve={height=-6pt}, from=1-3, to=2-4]
	\arrow["{{{z_1}}}"{description}, from=1-3, to=3-3]
	\arrow["{{{r}}}"', from=1-5, to=1-3]
	\arrow[""{name=2, anchor=center, inner sep=0}, Rightarrow, no head, from=1-5, to=2-5]
	\arrow["{d}"{description}, curve={height=6pt}, from=2-2, to=3-3]
	\arrow["{{{{{\theta_1^{-1}}}}}}"', Rightarrow, from=2-3, to=2-2]
	\arrow["{{{{{\theta_1}}}}}"', Rightarrow, from=2-4, to=2-3]
	\arrow[""{name=3, anchor=center, inner sep=0}, "{d}"{description}, curve={height=-6pt}, from=2-4, to=3-3]
	\arrow["{{{{{x_3}}}}}"', from=2-5, to=2-4]
	\arrow[""{name=4, anchor=center, inner sep=0}, Rightarrow, no head, from=2-5, to=3-5]
	\arrow["z_3"', from=3-5, to=3-3] %
	\arrow[""{name=5, anchor=center, inner sep=0}, Rightarrow, no head, from=3-5, to=4-5]
	\arrow["{d}"{description}, curve={height=-6pt}, from=4-2, to=3-3]
	\arrow["{{{{{\theta_2}}}}}", Rightarrow, from=4-2, to=4-3]
	\arrow[""{name=6, anchor=center, inner sep=0}, "{d}"{description}, curve={height=6pt}, from=4-4, to=3-3]
	\arrow["{{{{{\theta_2}}}}}"', Rightarrow, from=4-4, to=4-3]
	\arrow["{{{{{x_3}}}}}"', from=4-5, to=4-4]
	\arrow[""{name=7, anchor=center, inner sep=0}, Rightarrow, no head, from=4-5, to=5-5]
	\arrow[""{name=8, anchor=center, inner sep=0}, "g"', from=5-1, to=5-3]
	\arrow["{{{z_2}}}"{description}, from=5-3, to=3-3]
	\arrow["{{{{{x_2}}}}}"{description}, curve={height=-6pt}, from=5-3, to=4-2]
	\arrow[""{name=9, anchor=center, inner sep=0}, "{{{{x_2}}}}"{description}, curve={height=6pt}, from=5-3, to=4-4]
	\arrow["{{{s}}}", from=5-5, to=5-3]
	\arrow["{d\circ\alpha}"', shift right=5, shorten <=26pt, shorten >=26pt, Rightarrow, from=0, to=8]
	\arrow["{{{{{\SIGMA^{\delta_1}}}}}}"{description}, draw=none, from=1, to=2]
	\arrow["{{{{{\SIGMA^{\psi}}}}}}"{description}, draw=none, from=3, to=4]
	\arrow["{{{{\SIGMA^{\psi}}}}}"{description}, draw=none, from=6, to=5]
	\arrow["{{{{{\SIGMA^{\delta_2}}}}}}"{description}, draw=none, from=9, to=7]
 \end{tikzcd}\]
 Applying $H$ to $(\alpha, x_1, x_2, x_3, \delta_1 , \delta_2)$ gives
 \[\begin{tikzcd}
	{F A} & {F I} & {F B} \\
	& {F X} & {F B} \\
	{F A} & {F J} & {F B}\rlap{,}
	\arrow[""{name=0, anchor=center, inner sep=0}, "Ff", from=1-1, to=1-2]
	\arrow[""{name=1, anchor=center, inner sep=0}, "\text{\tiny $F(x_1 f)$}"'{yshift=2pt,xshift=2pt}, from=1-1, to=2-2, gray]
	\arrow[equals, from=1-1, to=3-1]
	\arrow[""{name=2, anchor=center, inner sep=0}, "{F x_1}"{pos=0.3,xshift=-1pt}, from=1-2, to=2-2]
	\arrow["(Fr)_*", to=1-3, from=1-2]
	\arrow[""{name=3, anchor=center, inner sep=0}, equals, from=1-3, to=2-3]
	\arrow["F(\delta_1)_*"{xshift=-4pt}, shorten <=6pt, shorten >=6pt, Rightarrow, from=1-3, to=2-2]
	\arrow["F(x_3)_*"'{pos=0.4, yshift=1pt}, to=2-3, from=2-2]
	\arrow["F(\delta_2)_*^{-1}"{xshift=-5pt}, shorten <=6pt, shorten >=6pt, Rightarrow, from=2-2, to=3-3, yshift=-3pt,xshift=-1pt]
	\arrow[""{name=4, anchor=center, inner sep=0}, "\text{\tiny $F(x_2 g)$}"{yshift=-2pt,xshift=2pt}, from=3-1, to=2-2, gray]
	\arrow[""{name=5, anchor=center, inner sep=0}, "{F g}"', from=3-1, to=3-2]
	\arrow[""{name=6, anchor=center, inner sep=0}, "{F x_2}"'{pos=0.3,xshift=-1pt}, from=3-2, to=2-2]
	\arrow[""{name=7, anchor=center, inner sep=0}, equals, from=3-3, to=2-3]
	\arrow["(Fs)_*"', to=3-3, from=3-2]
	\arrow["{F(\alpha)}"', shorten <=22pt, shorten >=22pt, Rightarrow, from=0, to=5]
	\arrow["\text{\tiny $\gamma^F_{x_1,f}$}"'{xshift=2pt}, shorten >=2pt, Rightarrow, from=1-2, to=1, xshift=4pt, yshift=-2pt, gray]
	\arrow["\text{\tiny $\gamma^F_{x_2,g}{}^{\!\!\!\!-1}$}"'{yshift=-2pt,xshift=3pt}, shorten <=2pt, Rightarrow, from=4, to=3-2, xshift=4pt,yshift=3pt, gray]
  \end{tikzcd}\]
  while applying $H$ to the above $\Sigma$-extension gives
  \[\begin{tikzcd}[sep=large]
	{F A} & {F I} & {F B} \\
	& {F X} & {F B} \\
	{F A} & {F J} & {F B}\rlap{.}
	\arrow[""{name=0, anchor=center, inner sep=0}, "Ff", from=1-1, to=1-2]
	\arrow[""{name=1, anchor=center, inner sep=0}, "\text{\tiny $F(z_1 f)$}"'{yshift=2pt,xshift=2pt}, from=1-1, to=2-2, gray]
	\arrow[equals, from=1-1, to=3-1]
	\arrow[""{name=2, anchor=center, inner sep=0}, "{F z_1}"{pos=0.2,xshift=-1pt}, from=1-2, to=2-2]
	\arrow["(Fr)_*", to=1-3, from=1-2]
	\arrow[""{name=3, anchor=center, inner sep=0}, equals, from=1-3, to=2-3]
	\arrow["\text{\tiny $\substack{F(\theta_1r \cdot d\delta_1 \\ {} \cdot \psi)_*}$}"{pos=0.5,xshift=-2pt}, shorten <=10pt, shorten >=10pt, Rightarrow, from=1-3, to=2-2, yshift=5pt,xshift=-5pt]
	\arrow["F(z_3)_*"'{pos=0.4, yshift=1pt}, to=2-3, from=2-2]
	\arrow["\text{\tiny $\substack{F(\theta_2r \cdot d\delta_2 \\ {} \cdot \psi)_*^{-1}}$}"{pos=0.6, xshift=-5pt}, shorten <=10pt, shorten >=10pt, Rightarrow, from=2-2, to=3-3, yshift=-5pt,xshift=-5pt]
	\arrow[""{name=4, anchor=center, inner sep=0}, "\text{\tiny $F(z_2 g)$}"{yshift=-2pt,xshift=2pt}, from=3-1, to=2-2, gray]
	\arrow[""{name=5, anchor=center, inner sep=0}, "{F g}"', from=3-1, to=3-2]
	\arrow[""{name=6, anchor=center, inner sep=0}, "{F z_2}"'{pos=0.2,xshift=-1pt}, from=3-2, to=2-2]
	\arrow[""{name=7, anchor=center, inner sep=0}, equals, from=3-3, to=2-3]
	\arrow["(Fs)_*"', to=3-3, from=3-2]
	\arrow["\text{\tiny $\substack{F(\theta_2g \cdot d\alpha \\ {} \cdot \theta_1^{-1}f)}$}"', shorten <=30pt, shorten >=30pt, Rightarrow, from=0, to=5, xshift=5pt]
	\arrow["\text{\tiny $\gamma^F_{z_1,f}$}"'{xshift=2pt}, shorten >=2pt, Rightarrow, from=1-2, to=1, xshift=4pt, yshift=-2pt, gray]
	\arrow["\text{\tiny $\gamma^F_{z_2,g}{}^{\!\!\!\!-1}$}"'{yshift=-2pt,xshift=3pt}, shorten <=2pt, Rightarrow, from=4, to=3-2, xshift=4pt,yshift=3pt, gray]
  \end{tikzcd}\]
  We must show these are equal. In \cref{sec:appendixB} we prove this using string diagrams. Here we provide an alternative proof.
Consider the $\Sigma$-square $S_1$  and the square $F(S_1)$ in $\catc$  given by
\[S_1:=\begin{tikzcd}
	B && I \\
	B & X \\
	B && D
	\arrow["r", from=1-1, to=1-3]
	\arrow[""{name=0, anchor=center, inner sep=0}, equals, from=1-1, to=2-1]
	\arrow[""{name=1, anchor=center, inner sep=0}, "{x_1}"', from=1-3, to=2-2]
	\arrow[""{name=2, anchor=center, inner sep=0}, "{z_1}", from=1-3, to=3-3]
	\arrow["{x_3}"', from=2-1, to=2-2]
	\arrow[""{name=3, anchor=center, inner sep=0}, equals, from=2-1, to=3-1]
	\arrow[""{name=4, anchor=center, inner sep=0}, "d"', from=2-2, to=3-3]
	\arrow["{z_3}"', from=3-1, to=3-3]
	\arrow["{\SIGMA^{\delta_1}}"{description}, draw=none, from=0, to=1]
	\arrow["{\SIGMA^{\psi}}"{description}, draw=none, from=3, to=4]
	\arrow["{\theta_1}", between={0.1}{0.8}, Rightarrow, from=2-2, to=2]
\end{tikzcd}
\text{ \hspace{2mm} and\hspace{2mm}  }
F(S_1):=
\begin{tikzcd}
	FB && FI \\
	\\
	FB && FD
	\arrow["Fr", from=1-1, to=1-3]
	\arrow[equals, from=1-1, to=3-1]
	\arrow[""{name=0, anchor=center, inner sep=0}, "{{F(z_1r)}}"{description}, from=1-1, to=3-3]
	\arrow["{{Fz_1}}", from=1-3, to=3-3]
	\arrow["{{Fz_3}}"', from=3-1, to=3-3]
	\arrow["{{\gamma^{-1}}}", between={0.3}{0.8}, Rightarrow, from=0, to=1-3]
	\arrow["{{F\epsilon_1}}", between={0.2}{0.7}, Rightarrow, from=3-1, to=0]
\end{tikzcd}\]
where $\gamma$ refers to $\gamma^F_{z_1,r}$ and $\epsilon_1=(\psi\odot \delta_1)\oplus \theta_1=(z_3\xRightarrow{\psi}dx_3\xRightarrow{d\circ \delta_1} dx_1r \xRightarrow{\theta_1r}z_1r)$.
Now consider the pasting diagram
\[\bar{S}_1:=
\begin{tikzcd}
	FB && FI \\
	FB && FX \\
	FB && FD
	\arrow["Fr", from=1-1, to=1-3]
	\arrow[equals, from=1-1, to=2-1]
	\arrow[""{name=0, anchor=center, inner sep=0}, from=1-1, to=2-3]
	\arrow["{{{{{{{Fx_1}}}}}}}", from=1-3, to=2-3]
	\arrow[""{name=1, anchor=center, inner sep=0}, shift left=2, curve={height=-18pt}, from=1-3, to=3-3]
	\arrow[""{name=2, anchor=center, inner sep=0}, "{{{{{{{Fz_1}}}}}}}", shift left=5, curve={height=-30pt}, from=1-3, to=3-3]
	\arrow["{{{{{{{Fx_3}}}}}}}"', from=2-1, to=2-3]
	\arrow[equals, from=2-1, to=3-1]
	\arrow[""{name=3, anchor=center, inner sep=0}, from=2-1, to=3-3]
	\arrow["Fd", from=2-3, to=3-3]
	\arrow["{{{{{{{{Fz_3}}}}}}}}"', from=3-1, to=3-3]
	\arrow["{{{{{{{\gamma^{-1}}}}}}}}"'{pos=0.7}, between={0.2}{0.8}, Rightarrow, from=0, to=1-3]
	\arrow["{F\theta_1}", between={0.2}{0.8}, Rightarrow, from=1, to=2]
	\arrow["{{{{{{{\gamma^{-1}}}}}}}}"'{pos=0.7}, between={0.2}{0.8}, Rightarrow, from=3, to=2-3]
	\arrow["{{{{{{{F\delta_1}}}}}}}", between={0.2}{0.8}, Rightarrow, from=2-1, to=0]
	\arrow["\gamma"{pos=0.3}, between={0}{0.8}, Rightarrow, from=2-3, to=1]
	\arrow["{{{{{{{F\psi}}}}}}}", between={0.2}{0.8}, Rightarrow, from=3-1, to=3]
\end{tikzcd}\]
where $\gamma$ denotes in each occurrence the obvious component of the compositor of $F$, and the passage from $Fd\circ (Fx_1\circ Fr)$ to $(Fd\circ Fx_1)\circ Fr)$ is made by the inverse of the associator $\alpha^{\catc}_{Fd,Fx_1, Fr}$.
 We claim that indeed $FS_1=\bar{S}_1$. This is shown by the following diagram, where \textcircled{\scriptsize 1} and \textcircled{\scriptsize 2} are given by the naturality of $\gamma=\gamma^F$ and the `associativity' coherence condition of $F$, respectively.
\[\adjustbox{scale=0.90}{\begin{tikzcd}
	& {Fd\circ Fx_3} & {Fd\circ F(x_1r)} && {Fd\circ(Fx_1\circ Fr)} & {(Fd\circ Fx_1)\circ Fr} \\
	{Fz_3} & {F(dx_3)} & {F(d(x_1r))} & {F((dx_1)r)} & {F(dx_1)\circ Fr} & {F(dx_1)\circ Fr} \\
	&& {F(z_1\circ r)} &&& {Fz_1\circ Fr}
	\arrow["{{{{Fd\circ F\delta_1}}}}"{pos=0.4}, from=1-2, to=1-3]
	\arrow["{{{{{{{Fd\circ \gamma^{-1}}}}}}}}", from=1-3, to=1-5]
	\arrow["{{{{(\alpha^{\catc})^{-1}}}}}", from=1-5, to=1-6]
	\arrow["{{{{\gamma\circ Fr}}}}", from=1-6, to=2-6]
	\arrow["{{{F\psi}}}"', from=2-1, to=2-2]
	\arrow["{{{F\epsilon_1}}}"', curve={height=18pt}, from=2-1, to=3-3]
	\arrow[""{name=0, anchor=center, inner sep=0}, "{{{\gamma^{-1}}}}", from=2-2, to=1-2]
	\arrow["{{{F(d\circ \delta)}}}"', from=2-2, to=2-3]
	\arrow[""{name=1, anchor=center, inner sep=0}, "{{{{{{{\gamma^{-1}}}}}}}}"', from=2-3, to=1-3]
	\arrow[equals, from=2-3, to=2-4]
	\arrow[""{name=2, anchor=center, inner sep=0}, "{{{{{{{F(\theta_1r)}}}}}}}", from=2-3, to=3-3]
	\arrow["{{{{{{{\gamma^{-1}}}}}}}}", from=2-4, to=2-5]
	\arrow[""{name=3, anchor=center, inner sep=0}, "{{{{\gamma^{-1}\circ Fr}}}}"{pos=0.2}, from=2-5, to=1-6]
	\arrow[equals, from=2-5, to=2-6]
	\arrow[""{name=4, anchor=center, inner sep=0}, "{{{{F\theta_1\circ Fr}}}}", from=2-6, to=3-6]
	\arrow["{{{{{{{\gamma^{-1}_{z_1,r}}}}}}}}"', from=3-3, to=3-6]
	\arrow["{\text{\textcircled{\scriptsize 1}}}"{description}, draw=none, from=0, to=1]
	\arrow["{{\text{\textcircled{ \scriptsize 2}}}}"{description}, draw=none, from=1, to=3]
	\arrow["{\text{\textcircled{\scriptsize 1}}}"{description}, draw=none, from=2, to=4]
\end{tikzcd}}\]
Put
\[(F\delta_1)_*= \text{mate of } \begin{tikzcd}
	FB & FI \\
	FB & FD
	\arrow["Fr", from=1-1, to=1-2]
	\arrow[equals, from=1-1, to=2-1]
	\arrow[""{name=0, anchor=center, inner sep=0}, from=1-1, to=2-2]
	\arrow["{{Fx_1}}", from=1-2, to=2-2]
	\arrow["{{Fx_3}}"', from=2-1, to=2-2]
	\arrow["{{\gamma^{-1}}}"{pos=0.6}, shift right, between={0.2}{1}, Rightarrow, from=0, to=1-2]
	\arrow["{{F\delta_1}}"{pos=0.6}, shift right=2, between={0}{0.8}, Rightarrow, from=2-1, to=0]
\end{tikzcd}
\text{\hspace{5mm}and \hspace{5mm}}
(F\psi)_*= \text{mate of } \begin{tikzcd}
	FB & FX \\
	FB & FD
	\arrow["{{Fx_3}}", from=1-1, to=1-2]
	\arrow[equals, from=1-1, to=2-1]
	\arrow[""{name=0, anchor=center, inner sep=0}, from=1-1, to=2-2]
	\arrow["Fd", from=1-2, to=2-2]
	\arrow["{{Fz_3}}"', from=2-1, to=2-2]
	\arrow["{{{\gamma^{-1}}}}"{pos=0.6}, shift right, between={0.2}{1}, Rightarrow, from=0, to=1-2]
	\arrow["{{F\psi}}"{pos=0.6}, shift right=2, between={0}{0.8}, Rightarrow, from=2-1, to=0]
\end{tikzcd}\, .\]
The mate of the vertical composition of two Beck-Chevalley squares is clearly equal to the composition of the respective mates (see \cref{rem:lari}); and analogously for the horizontal composition of BC squares. Thus,
the mate of $\bar{S}_1$ is given by
\[(\bar{S}_1)_*=
\begin{tikzcd}
	FI & FI & FI & FB \\
	&& FX & FB \\
	FD & FD & FD & FB
	\arrow[equals, from=1-1, to=1-2]
	\arrow[""{name=0, anchor=center, inner sep=0}, "{Fz_1}"', from=1-1, to=3-1]
	\arrow[equals, from=1-2, to=1-3]
	\arrow[""{name=1, anchor=center, inner sep=0}, from=1-2, to=3-2]
	\arrow["{{{(Fr)_*}}}", from=1-3, to=1-4]
	\arrow["{{{Fx_1}}}"', from=1-3, to=2-3]
	\arrow["{{{(F\delta_1)_*}}}"', shift left=2, Rightarrow, from=1-4, to=2-3]
	\arrow[equals, from=1-4, to=2-4]
	\arrow["{{{(Fx_3)_*}}}"', from=2-3, to=2-4]
	\arrow["Fd"', from=2-3, to=3-3]
	\arrow["{{{(F\psi)_*}}}"{pos=0.6}, shift right, between={0.3}{1}, Rightarrow, from=2-4, to=3-3]
	\arrow[between={0}{0.9}, equals, from=2-4, to=3-4]
	\arrow[equals, from=3-1, to=3-2]
	\arrow[equals, from=3-2, to=3-3]
	\arrow["{{{(Fz_3)_*}}}"', from=3-3, to=3-4]
	\arrow["{F\theta_1}"', between={0.2}{0.8}, Rightarrow, from=1, to=0]
	\arrow["\gamma"', between={0}{0.8}, Rightarrow, from=2-3, to=1]
\end{tikzcd}
\]

What we did for $S_1$ and $\bar{S}_1$ can be done analogously for $S_2$ and $\bar{S}_2$, obtaining the equality $F(S_2)=\bar{S}_2$.

Thus, $H([\mu, z_1, z_2, z_3, \epsilon_1, \epsilon_2])$, where $\mu=(\theta_2\circ g)\cdot (d\circ \alpha)\cdot (\theta_1^{-1}\circ f)$ and $\epsilon_i=(\psi\odot \delta_i)\oplus \theta_i$, is given by the following pasting diagram:
\[\begin{tikzcd}
	FA &&& FI & FI & FI && FB \\
	&&&&& FX && FB \\
	&&& FD & FD & FD && FB \\
	&&&&& FX && FB \\
	FA &&& FJ & FJ & FJ && FB
	\arrow["Ff", from=1-1, to=1-4]
	\arrow[""{name=0, anchor=center, inner sep=0}, "{{F(z_1f)}}"{description}, from=1-1, to=3-4]
	\arrow[equals, from=1-1, to=5-1]
	\arrow[equals, from=1-4, to=1-5]
	\arrow[""{name=1, anchor=center, inner sep=0}, "{{Fz_1}}"{description}, from=1-4, to=3-4]
	\arrow[equals, from=1-5, to=1-6]
	\arrow[""{name=2, anchor=center, inner sep=0}, from=1-5, to=3-5]
	\arrow["{{{(Fr)_*}}}", from=1-6, to=1-8]
	\arrow["{{{Fx_1}}}", from=1-6, to=2-6]
	\arrow["{{{(F\delta_1)_*}}}"{description}, between={0.1}{1}, Rightarrow, from=1-8, to=2-6]
	\arrow[equals, from=1-8, to=2-8]
	\arrow["{{{(Fx_3)_*}}}", from=2-6, to=2-8]
	\arrow["Fd"', from=2-6, to=3-6]
	\arrow["{{{(F\psi)_*}}}"{description}, between={0.1}{0.9}, Rightarrow, from=2-8, to=3-6]
	\arrow[equals, from=2-8, to=3-8]
	\arrow[equals, from=3-4, to=3-5]
	\arrow[equals, from=3-5, to=3-6]
	\arrow["{{{(Fz_3)_*}}}", from=3-6, to=3-8]
	\arrow["{{{(F\psi)_*^{-1}}}}"{description}, between={0.1}{0.9}, Rightarrow, from=3-6, to=4-8]
	\arrow[equals, from=3-8, to=4-8]
	\arrow["Fd", from=4-6, to=3-6]
	\arrow["{{{(Fx_3)_*}}}"', from=4-6, to=4-8]
	\arrow["{{{(F\delta_2)_*^{-1}}}}"{description}, between={0.1}{0.9}, Rightarrow, from=4-6, to=5-8]
	\arrow[equals, from=4-8, to=5-8]
	\arrow[""{name=3, anchor=center, inner sep=0}, "{{F(z_2g)}}"{description}, from=5-1, to=3-4]
	\arrow["Fg"', from=5-1, to=5-4]
	\arrow[""{name=4, anchor=center, inner sep=0}, "{{Fz_2}}"{description}, from=5-4, to=3-4]
	\arrow[equals, from=5-4, to=5-5]
	\arrow[""{name=5, anchor=center, inner sep=0}, from=5-5, to=3-5]
	\arrow[equals, from=5-5, to=5-6]
	\arrow["{{{Fx_2}}}"', from=5-6, to=4-6]
	\arrow["{{{(Fs)_*}}}"', from=5-6, to=5-8]
	\arrow["{{F\mu}}"', shift right=5, between={0.2}{0.8}, Rightarrow, from=0, to=3]
	\arrow["\gamma"', between={0.2}{0.7}, Rightarrow, from=1-4, to=0]
	\arrow["{{F\theta_1}}"', between={0.2}{0.8}, Rightarrow, from=2, to=1]
	\arrow["\gamma"', between={0}{0.8}, Rightarrow, from=2-6, to=2]
	\arrow["{{\gamma^{-1}}}"', between={0.3}{0.8}, Rightarrow, from=3, to=5-4]
	\arrow["{{F\theta_2^{-1}}}", between={0.2}{0.8}, Rightarrow, from=4, to=5]
	\arrow["{{\gamma^{-1}}}", between={0.2}{1}, Rightarrow, from=5, to=4-6]
\end{tikzcd}\]
The pasting diagram representing $H([\alpha, x_1,x_2,x_3,\delta_1,\delta_2])$, by inserting in it the identity 2-cell
$$(Fx_3)_*\xRightarrow{(F\psi)_*}(Fz_3)_*Fd\xRightarrow{(F\psi)_*^{-1}}(Fx_3)_*\, ,$$
is equal to the pasting diagram
\[\begin{tikzcd}
	FA && FI && FB \\
	&& FX && FB \\
	&& FD && FB \\
	&& FX && FB \\
	FA && FJ && FB
	\arrow["Ff", from=1-1, to=1-3]
	\arrow[equals, from=1-1, to=5-1]
	\arrow["{{(Fr)_*}}", from=1-3, to=1-5]
	\arrow["{{Fx_1}}", from=1-3, to=2-3]
	\arrow["{Fd\circ F\alpha}"', between={0.3}{0.7}, Rightarrow, from=1-3, to=5-1]
	\arrow["{{(F\delta_1)_*}}"{description}, between={0.1}{1}, Rightarrow, from=1-5, to=2-3]
	\arrow[equals, from=1-5, to=2-5]
	\arrow["{{(Fx_3)_*}}", from=2-3, to=2-5]
	\arrow["Fd"', from=2-3, to=3-3]
	\arrow["{{(F\psi)_*}}"{description}, between={0.1}{0.9}, Rightarrow, from=2-5, to=3-3]
	\arrow[equals, from=2-5, to=3-5]
	\arrow["{{(Fz_3)_*}}", from=3-3, to=3-5]
	\arrow["{{(F\psi)_*^{-1}}}"{description}, between={0.1}{0.9}, Rightarrow, from=3-3, to=4-5]
	\arrow[equals, from=3-5, to=4-5]
	\arrow["Fd", from=4-3, to=3-3]
	\arrow["{{(Fx_3)_*}}"', from=4-3, to=4-5]
	\arrow["{{(F\delta_2)_*^{-1}}}"{description}, between={0.1}{0.9}, Rightarrow, from=4-3, to=5-5]
	\arrow[equals, from=4-5, to=5-5]
	\arrow["Fg", from=5-1, to=5-3]
	\arrow["{{Fx_2}}"', from=5-3, to=4-3]
	\arrow["{{(Fs)_*}}"', from=5-3, to=5-5]
\end{tikzcd}\; .\]
We see that the right part of the above two diagrams coincide. Concerning the left part of those diagrams, using the naturality of $\gamma^F$ and the associativity coherence condition of $F$,
we see that, after cancelling $F\theta_i$ with $F\theta_i^{-1}$, they are also equal, thus yielding the desired result. This is shown by the following diagram, where $\alpha^{\catc}$ refers to the associator of $\catc$.
\[\begin{tikzcd}
	{(Fd\circ Fx_1)\circ Ff} && {F(dx_1)\circ Ff} && {Fz_1\circ Ff} \\
	{Fd\circ(Fx_1\circ Ff)} && {F((dx_1)f)} && {F(z_1f)} \\
	{Fd\circ F(x_1f)} && {F(d(x_1f))} & {F((dx_1)f)} \\
	{Fd\circ F(x_2g)} && {F(d(x_2g))} & {F((dx_2)g)} \\
	{Fd\circ (Fx_2\circ Fg)} && {F((dx_2)g)} && {F(z_2f)} \\
	{(Fd\circ Fx_2)\circ Fg} && {F(dx_2)\circ Fg} && {Fz_2\circ Fg}
	\arrow["{{{{\gamma_{d,x_1}\circ Ff}}}}", from=1-1, to=1-3]
	\arrow["{\alpha^{\catc}_{Fd,Fx_1,Ff}}"', from=1-1, to=2-1]
	\arrow["{{F\theta_1\circ Ff}}", from=1-3, to=1-5]
	\arrow["{{\gamma_{dx_1,f}}}", from=1-3, to=2-3]
	\arrow["{{\gamma_{z_1,f}}}", from=1-5, to=2-5]
	\arrow["{{{{Fd\circ\gamma_{x_1,f}}}}}"', from=2-1, to=3-1]
	\arrow["{{F(\theta_1\circ f)}}"', from=2-3, to=2-5]
	\arrow[equals, from=2-3, to=3-3]
	\arrow["{{F(\theta_1^{-1}f)}}", from=2-5, to=3-4]
	\arrow["{{F\mu}}", from=2-5, to=5-5]
	\arrow["{{\gamma_{d,x_1f}}}", from=3-1, to=3-3]
	\arrow["{{{{Fd\circ F\alpha}}}}"', from=3-1, to=4-1]
	\arrow[equals, from=3-3, to=3-4]
	\arrow["{{F(d\circ\alpha)}}", from=3-3, to=4-3]
	\arrow["{{\gamma_{d,x_2g}}}"', from=4-1, to=4-3]
	\arrow["{{Fd\circ \gamma^{-1}}}"', from=4-1, to=5-1]
	\arrow[equals, from=4-3, to=4-4]
	\arrow[equals, from=4-3, to=5-3]
	\arrow["{{F(\theta_2 g)}}", from=4-4, to=5-5]
	\arrow["{(\alpha^{\catc})^{-1}_{Fd,Fx_2,Fg}}"', from=5-1, to=6-1]
	\arrow["{{\gamma^{-1}_{dx_2,g}}}", from=5-3, to=6-3]
	\arrow["{{F(\theta_2^{-1}g)}}", from=5-5, to=5-3]
	\arrow["{{\gamma^{-1}_{z_2,f}}}", from=5-5, to=6-5]
	\arrow["{{\gamma_{d,x_2}^{-1}\circ Fg}}", from=6-3, to=6-1]
	\arrow["{{F\theta_2^{-1}\circ Fg}}", from=6-5, to=6-3]
\end{tikzcd}\]

  (a2) It is easy to see $H$ sends identity 2-cells to identity 2-cells by direct computation. %
  We now show that $H$ preserves vertical composition of 2-cells. Consider 2-morphisms $\bar{\alpha} = (\alpha, x_1, x_2, x_3, \delta_1, \delta_2)\colon (f,r) \Rightarrow (g,s)$ and $\bar{\beta} = (\beta, y_1, y_2, y_3, \epsilon_1, \epsilon_2)\colon (g,s) \Rightarrow (h,t)$. By applying Rule 4' to $\Sigma^{\delta_2}$ and $\Sigma^{\epsilon_1}$ we obtain
  \begin{equation*}
   \begin{tikzcd}
    B & J \\
    B & X & Y & {} \\
    B & D
    \arrow["s", from=1-1, to=1-2]
    \arrow[""{name=0, anchor=center, inner sep=0}, Rightarrow, no head, from=1-1, to=2-1]
    \arrow[""{name=1, anchor=center, inner sep=0}, "{{x_2}}", from=1-2, to=2-2]
    \arrow["{y_1}", curve={height=-6pt}, from=1-2, to=2-3]
    \arrow["{x_3}", from=2-1, to=2-2]
    \arrow[""{name=2, anchor=center, inner sep=0}, Rightarrow, no head, from=2-1, to=3-1]
    \arrow["\gamma", Rightarrow, from=2-2, to=2-3]
    \arrow["\cong"', draw=none, from=2-2, to=2-3]
    \arrow[""{name=3, anchor=center, inner sep=0}, "{{d_x}}", from=2-2, to=3-2]
    \arrow["{d_y}", curve={height=-6pt}, from=2-3, to=3-2]
    \arrow["u"', from=3-1, to=3-2]
    \arrow["{\SIGMA^{\delta_2}}"{description}, draw=none, from=0, to=1]
    \arrow["{\SIGMA^{\phi_x}}"{description}, draw=none, from=2, to=3]
   \end{tikzcd}
   =\qquad
   \begin{tikzcd}
    B & J \\
    B & Y \\
    B & D \rlap{\,.}
    \arrow["s", from=1-1, to=1-2]
    \arrow[""{name=0, anchor=center, inner sep=0}, Rightarrow, no head, from=1-1, to=2-1]
    \arrow[""{name=1, anchor=center, inner sep=0}, "{{y_1}}", from=1-2, to=2-2]
    \arrow["{{y_3}}", dashed, from=2-1, to=2-2]
    \arrow[""{name=2, anchor=center, inner sep=0}, Rightarrow, no head, from=2-1, to=3-1]
    \arrow[""{name=3, anchor=center, inner sep=0}, "{{d_y}}", from=2-2, to=3-2]
    \arrow["u"', from=3-1, to=3-2]
    \arrow["{\SIGMA^{\epsilon_1}}"{description}, draw=none, from=0, to=1]
    \arrow["{\SIGMA^{\phi_y}}"{description}, draw=none, from=2, to=3]
   \end{tikzcd}
  \end{equation*}
  We may use $\Sigma^{\phi_x}$ and $\gamma$ to obtain a $\Sigma$-extension of $\bar{\alpha}$ and $\Sigma^{\phi_y}$ to obtain a $\Sigma$-extension of $\bar{\beta}$, which agree on their abutting $\Sigma$-squares. By well-definedness of $H$ we may replace $\bar{\alpha}$ and $\bar{\beta}$ by these. Thus, we may assume without loss of generality that $\Sigma^{\delta_2} = \Sigma^{\epsilon_1}$. Such 2-morphisms compose in a particularly simple way (since the necessary $\Sigma$-squares can be chosen to be identities). In particular, we may simply compose the $\alpha$ and $\beta$ and remove the matching $\Sigma$-squares. It is now easy to see that the repeated parts also cancel after applying $H$, since the mate and the inverse of the mate of the same morphism end up adjacent to each other and cancel. Thus, $H$ preserves vertical composition.

(a3) Now we show naturality of the compositors $\gamma^H$.
   Given horizontally composable 2-morphisms $\bar{\alpha}=(\alpha, x_1,x_2,x_3, \delta_1, \delta_2)\colon (f_1,r_1) \to (f_2,r_2)$ and $\bar{\beta}=(\beta, y_1,y_2,y_3, \epsilon_1, \epsilon_2)\colon (g_1,s_1) \to (g_2,s_2)$, we must prove
  \begin{equation}\label{eq:naturality_of_gamma}
   \gamma^H_{(g_2,s_2),(f_2,r_2)} \cdot (H(\bar{\beta})\circ H(\bar{\alpha})) = H(\bar{\beta} \circ \bar{\alpha}) \cdot \gamma^H_{(g_1,s_1),(f_1,r_1)}.
  \end{equation}
  Verifying this equality requires some technical calculations which are provided in \cref{sec:appendixB}.

  (a4) It is easy to see that the unit coherence condition for the compositor holds. We claim that also the associativity condition is satisfied.
  Suppose we have the triple composite
  $\bar{h}\circ \bar{g}\circ \bar{f}\colon W\to Z$ with $\bar{h}=(h,t)$, $\bar{g}=(g,s)$ and $\bar{f}=(f,r)$.  Our aim is to show the commutativity of the following diagram in $\catc(W,Z)$:
  \[\begin{tikzcd}
	{(H\bar{h}\circ H\bar{g})\circ H\bar{f}} && {H\bar{h}\circ (H\bar{g}\circ H\bar{f})} && {H\bar{h}\circ H(\bar{g}\circ \bar{f})} \\
	{ H(\bar{h}\circ \bar{g})\circ H\bar{f}} && {H((\bar{h}\circ \bar{g})\circ \bar{f})} && {H(\bar{h}\circ (\bar{g}\circ \bar{f})}
	\arrow["{\alpha^{\catc}_{H\bar{h},H\bar{g},H\bar{f}}}", from=1-1, to=1-3]
	\arrow["{\gamma^H_{\bar{h},\bar{g}}\circ 1_{H\bar{f}}}"', from=1-1, to=2-1]
	\arrow["{1_{H\bar{h}}\circ \gamma^H_{\bar{g},\bar{f}}}", from=1-3, to=1-5]
	\arrow["{\gamma^H_{\bar{h},\bar{g}\bar{f}}}", from=1-5, to=2-5]
	\arrow["{\gamma^H_{\bar{h}\bar{g},\bar{f}}}"', from=2-1, to=2-3]
	\arrow["{H\Assoc_{\bar{h},\bar{g},\bar{f}}}"', from=2-3, to=2-5]
\end{tikzcd}\]
The detailed calculations are again quite technical and can be found in \cref{sec:appendixB}.

With (a1)-(a4) we have shown that $H$ is a pseudofunctor.
  It is then easy to see that $F = H \circ P_\Sigma$. %
  The hardest part is checking that the compositor of $H \circ P_\Sigma$ agrees with that of $F$. This holds since the 2-cell $\dot{\beta}$ in the canonical $\Sigma$-square in \cref{eq:compositor_beta_dot} for $\gamma^H_{(g,1),(f,1)}$ %
  is chosen to be the identity and due to strict unitarity of the bicategories involved.

 (b) We now show the `2-dimensional' universality condition. Assume $H, H'\colon \catx[\Sigma_*] \to \catc$ are strictly unitary pseudofunctors and $\xi\colon H \circ P_\Sigma \to H' \circ P_\Sigma$ is a pseudonatural transformation for which the pseudonaturality squares involving $\xi_r$ are BC squares whenever $r \in \Sigma$. We define $\upsilon\colon H \to H'$ by
  \begin{itemize}
   \item $\upsilon_X = \xi_X$ for objects $X \in \catx$,
   \item $\upsilon_{(f,r)}$ for $(f,r) \in \catx[\Sigma_*]$ is given by the composite
   \begin{equation}\label{eq:upsilon}
   \begin{tikzcd}
	{H(A)} & {H(I)} & {H(B)} \\
	{H'(A)} & {H'(I)} & {H'(B)}
	\arrow["{H(f)}", from=1-1, to=1-2]
	\arrow["{\xi_A}"', from=1-1, to=2-1]
	\arrow["{H(r)_*}", from=1-2, to=1-3]
	\arrow["{\xi_I}", from=1-2, to=2-2]
	\arrow["{\xi_B}", from=1-3, to=2-3]
	\arrow["{\xi_f}"', shorten <=4pt, shorten >=4pt, Rightarrow, from=2-1, to=1-2]
	\arrow["{H'(f)}"', from=2-1, to=2-2]
	\arrow["{(\xi_r)_*^{-1}}"', shift left, shorten <=4pt, shorten >=4pt, Rightarrow, from=2-2, to=1-3]
	\arrow["{H'(r)_*}"', from=2-2, to=2-3]
   \end{tikzcd}
   \end{equation}
   where we write $H(f)$ for $H(f,1) = H(P_\Sigma(f))$ and
   where $(\xi_r)_*^{-1}$ denotes the inverse of the mate of $\xi_r$. Note that this inverse exists by the BC assumption.
   We also note that, by \cref{lem:transformation_at_adjoint} of \cref{sec:appendixB}, the right part of \cref{eq:upsilon} is essentially the mate of the square given by $\xi_{r}$. %
  \end{itemize}

  Assuming that $\upsilon$ is a pseudonatural transformation, by expanding the definition it is  easy to see that $\xi = \upsilon \circ P_\Sigma$.

  We prove the pseudonaturality of $\upsilon$ in \cref{sec:appendixB}.

(c) Finally, we show the `3-dimensional' universality condition. For data as in the statement of the theorem we simply set $\beth_X = \aleph_X$. It is clear that, being a modification, it is  the only modification $\beth$ such that $\aleph = \beth \circ P_\Sigma$.

 Let us show that $\beth$ is indeed a modification. The necessary naturality condition on $\beth$ holds for 1-morphisms of the form $(f,1)$ by same condition on $\aleph$. It then suffices to show the condition for 1-morphisms of the form $(1,r)$, since every morphism of $\catx[\Sigma_*]$ is a composite of these two kinds. This is easy using string diagrams as shown in \cref{sec:appendixB}.
\end{proof}

As a consequence of the previous theorem we recover the result that if $\catx$ has a class of morphisms $\Sigma$ admitting a full calculus of left lax fractions, as described in \cref{def:Full-Sigma}, then $\catx[\Sigma_*]$ is the universal bicategory which turns morphisms of $\Sigma$ into equivalences.

\begin{corollary}\label{cor:universal_prop}
 Let $\Sigma$ be a class of morphisms of the 2-category $\catx$ admitting a full calculus of left lax fractions.  The pseudofunctor $P_\Sigma\colon \catx \to \catx[\Sigma_*]$ is the universal (strictly unital) pseudofunctor %
 between those taking morphisms of $\Sigma$ to equivalences.
 More precisely, we have
 \begin{enumerate}[label=\normalfont\alph*)]
  \item If $F\colon \catx \to \catc$ is a pseudofunctor sending morphisms of $\Sigma$ to equivalences then there is a pseudofunctor $H\colon \catx[\Sigma_*] \to \catc$ such that $F \simeq H \circ P_\Sigma$.
  \item If $H,H'\colon \catx[\Sigma_*] \to \catc$ are pseudofunctors and $\xi\colon H \circ P_\Sigma \to H' \circ P_\Sigma$ is a pseudonatural transformation, then there is a pseudonatural transformation $\upsilon\colon H \to H'$ such that $\xi \cong \upsilon \circ P_\Sigma$.
  \item If $H,H'\colon \catx[\Sigma_*] \to \catc$ are pseudofunctors, $\upsilon, \upsilon' \colon H \to H'$ are pseudonatural transformations, and $\aleph \colon \upsilon \circ P_\Sigma \to \upsilon' \circ P_\Sigma$ is a modification, then there is a unique modification $\beth\colon \upsilon \to \upsilon'$ such that $\aleph = \beth \circ P_\Sigma$.
 \end{enumerate}
 \end{corollary}

 \begin{proof}
 First observe that in a bicategory every pseudocommutative square whose horizontal morphisms are equivalences is a Beck-Chevalley square.

 Let $\Sigma$ be a class of morphisms of the 2-category $\catx$ which, seen as a full subcategory of $\catx^{\to\cong}$, admits a calculus of left lax fractions,   and let $F\colon \catx \to \catc$ be a (strictly unital) pseudofunctor. The corollary immediately follows from the equivalence of the following two conditions:
 \begin{enumerate}[label=(\roman*)]
  \item $F$ takes morphisms in $\Sigma$ into equivalences;
  \item $F$ takes morphisms in $\Sigma$ into laris and pseudocommutative squares whose horizontal morphisms are in $\Sigma$ into Beck-Chevalley squares.
 \end{enumerate}

 That (i) implies (ii) is clear. For the converse, assume (ii) and take $r\in \Sigma$. By hypothesis, the commutative square
 \[\begin{tikzcd}
	A & B \\
	B & B
	\arrow["r", from=1-1, to=1-2]
	\arrow["r"', from=1-1, to=2-1]
	\arrow["{1_B}", from=1-2, to=2-2]
	\arrow["{1_B}"', from=2-1, to=2-2]
\end{tikzcd}\]
 corresponds to a morphism of the subcategory $\Sigma$ of $\catx^{\to\cong}$. Consequently, the commutative square in the middle of the diagram

 \[\begin{tikzcd}
	& FB \\
	FA & FB \\
	FB & FB \\
	FB
	\arrow["{(Fr)_*}"', from=1-2, to=2-1]
	\arrow[""{name=0, anchor=center, inner sep=0}, equals, from=1-2, to=2-2]
	\arrow["Fr"', from=2-1, to=2-2]
	\arrow[""{name=1, anchor=center, inner sep=0}, "Fr"', from=2-1, to=3-1]
	\arrow[""{name=2, anchor=center, inner sep=0}, "{F1_B}", from=2-2, to=3-2]
	\arrow["{1_{FB}}"', from=3-1, to=3-2]
	\arrow[""{name=3, anchor=center, inner sep=0}, between={0.2}{1}, equals, from=3-1, to=4-1]
	\arrow[""{name=4, anchor=center, inner sep=0}, "{1_{FB}}", from=3-2, to=4-1]
	\arrow["{\epsilon^{Fr}}"{description, pos=0.4}, between={0}{0.8}, Rightarrow, from=2-1, to=0]
	\arrow[between={0.4}{0.6}, equals, from=1, to=2]
	\arrow[between={0.3}{0.7}, equals, from=3, to=4]
\end{tikzcd}\]
 is a Beck-Chevalley square in $\catc$. But this just means that the counit $\epsilon^{Fr}$ is invertible. Since $Fr$ is a lari, the corresponding unit is also invertible, hence $Fr$ is an equivalence.
 \end{proof}

\appendix

\section{Appendix: On \texorpdfstring{$\Sigma$}{Σ}-paths and \texorpdfstring{$\Omega$}{Ω} 2-cells}\label{sec:appendix}

This appendix completes Subsection~\ref{sec:sigma_schemes} by providing the proofs of \cref{pro:du=ud} (here \cref{cor:length2}), \cref{pro:Omega-f} (here included in \cref{pro:alpha.f}), and \cref{pro:of-interest} (here \cref{thm:of-interest5}).

\Cref{lem:use} and \Cref{rem:use} will have a role in the proof of \cref{pro:adend}, which states a fundamental property needed for the rest of this section.

\begin{lemma}\label{lem:use} Given $\begin{tikzcd}
	{} & {} & {} \\
	{} & {} & {}
	\arrow["u", from=1-1, to=1-2]
	\arrow[""{name=0, anchor=center, inner sep=0}, "f"', from=1-1, to=2-1]
	\arrow["v", from=1-2, to=1-3]
	\arrow[""{name=1, anchor=center, inner sep=0}, "{{{{f_i}}}}", from=1-2, to=2-2]
	\arrow[""{name=2, anchor=center, inner sep=0}, "{{{{f'_i}}}}", from=1-3, to=2-3]
	\arrow["{u_i}"', from=2-1, to=2-2]
	\arrow["{v_i}"', from=2-2, to=2-3]
	\arrow["\SIGMA"{description}, draw=none, from=0, to=1]
	\arrow["\SIGMA"{description}, draw=none, from=1, to=2]
\end{tikzcd}$, for $i=1,2$, there are $\Sigma$-squares and invertible 2-cells $\theta$ and $\theta'$ such that
\[
(1)\quad  \begin{tikzcd}
	{} & {} && {} & {} \\
	{} & {} & {} & {} & {} \\
	{} & {} && {} & {}
	\arrow["u", from=1-1, to=1-2]
	\arrow[""{name=0, anchor=center, inner sep=0}, "f"', from=1-1, to=2-1]
	\arrow[""{name=1, anchor=center, inner sep=0}, "{{{{{f_1}}}}}", from=1-2, to=2-2]
	\arrow["{{f_2}}", curve={height=-12pt}, from=1-2, to=2-3]
	\arrow["u", from=1-4, to=1-5]
	\arrow[""{name=2, anchor=center, inner sep=0}, "f"', from=1-4, to=2-4]
	\arrow[""{name=3, anchor=center, inner sep=0}, "{{{{f_2}}}}", from=1-5, to=2-5]
	\arrow["{{{{{u_1}}}}}"', from=2-1, to=2-2]
	\arrow[""{name=4, anchor=center, inner sep=0}, equals, from=2-1, to=3-1]
	\arrow["\theta"{description}, Rightarrow, from=2-2, to=2-3]
	\arrow[""{name=5, anchor=center, inner sep=0}, "{{{{d_1}}}}", from=2-2, to=3-2]
	\arrow["\text{\normalsize $=$}"{description}, draw=none, from=2-3, to=2-4]
	\arrow["{{d_2}}", curve={height=-12pt}, from=2-3, to=3-2]
	\arrow["{{{{u_2}}}}"', from=2-4, to=2-5]
	\arrow[""{name=6, anchor=center, inner sep=0}, equals, from=2-4, to=3-4]
	\arrow[""{name=7, anchor=center, inner sep=0}, "{{d_2}}", from=2-5, to=3-5]
	\arrow["d"', from=3-1, to=3-2]
	\arrow["d"', from=3-4, to=3-5]
	\arrow["\SIGMA"{description}, draw=none, from=0, to=1]
	\arrow["\SIGMA"{description}, draw=none, from=2, to=3]
	\arrow["\SIGMA"{description}, draw=none, from=4, to=5]
	\arrow["\SIGMA"{description}, draw=none, from=6, to=7]
\end{tikzcd}
\; , \quad \quad
(2) \quad
\begin{tikzcd}
	{} & {} &&& {} & {} \\
	{} & {} & {} & {} & {} & {} \\
	{} & {} &&& {} & {}
	\arrow["v", from=1-1, to=1-2]
	\arrow[""{name=0, anchor=center, inner sep=0}, "{{{f_1}}}"', from=1-1, to=2-1]
	\arrow[""{name=1, anchor=center, inner sep=0}, "{{{f'_1}}}", from=1-2, to=2-2]
	\arrow["{{{f'_2}}}", curve={height=-12pt}, from=1-2, to=2-3]
	\arrow["v", from=1-5, to=1-6]
	\arrow["{{{f_1}}}"', curve={height=12pt}, from=1-5, to=2-4]
	\arrow[""{name=2, anchor=center, inner sep=0}, "{{{{{f_2}}}}}"', from=1-5, to=2-5]
	\arrow[""{name=3, anchor=center, inner sep=0}, "{{{f'_2}}}", from=1-6, to=2-6]
	\arrow["{{{v_1}}}"', from=2-1, to=2-2]
	\arrow[""{name=4, anchor=center, inner sep=0}, "{{{d_1}}}"', from=2-1, to=3-1]
	\arrow["{{{\theta'}}}"{description}, Rightarrow, from=2-2, to=2-3]
	\arrow[""{name=5, anchor=center, inner sep=0}, "{{{e_1}}}", from=2-2, to=3-2]
	\arrow["{\text{\normalsize $=$}}"{description}, draw=none, from=2-3, to=2-4]
	\arrow["{{{e_2}}}", curve={height=-12pt}, from=2-3, to=3-2]
	\arrow["\theta", Rightarrow, from=2-4, to=2-5]
	\arrow["{{{d_1}}}"', curve={height=12pt}, from=2-4, to=3-5]
	\arrow["{{{v_2}}}"', from=2-5, to=2-6]
	\arrow[""{name=6, anchor=center, inner sep=0}, "{{{{{d_2}}}}}"', from=2-5, to=3-5]
	\arrow[""{name=7, anchor=center, inner sep=0}, "{{{e_2}}}", from=2-6, to=3-6]
	\arrow["e"', from=3-1, to=3-2]
	\arrow["e"', from=3-5, to=3-6]
	\arrow["\SIGMA"{description}, draw=none, from=0, to=1]
	\arrow["\SIGMA"{description}, draw=none, from=2, to=3]
	\arrow["\SIGMA"{description}, draw=none, from=4, to=5]
	\arrow["\SIGMA"{description}, draw=none, from=6, to=7]
\end{tikzcd}
\]
and, consequently, also
\[
(3) \quad
\begin{tikzcd}
	{} & {} & {} \\
	{} & {} & {} & {} \\
	{} & {} & {}
	\arrow["u", from=1-1, to=1-2]
	\arrow[""{name=0, anchor=center, inner sep=0}, "f"', from=1-1, to=2-1]
	\arrow["v", from=1-2, to=1-3]
	\arrow[""{name=1, anchor=center, inner sep=0}, "{{f_1}}", from=1-2, to=2-2]
	\arrow[""{name=2, anchor=center, inner sep=0}, "{{f'_1}}", from=1-3, to=2-3]
	\arrow["{{f'_2}}", curve={height=-12pt}, from=1-3, to=2-4]
	\arrow["{u_1}"', from=2-1, to=2-2]
	\arrow[""{name=3, anchor=center, inner sep=0}, equals, from=2-1, to=3-1]
	\arrow["{{v_1}}"', from=2-2, to=2-3]
	\arrow[""{name=4, anchor=center, inner sep=0}, "{{d_1}}", from=2-2, to=3-2]
	\arrow["{{\theta'}}"{description}, Rightarrow, from=2-3, to=2-4]
	\arrow[""{name=5, anchor=center, inner sep=0}, "{{e_1}}", from=2-3, to=3-3]
	\arrow["{{e_2}}", curve={height=-12pt}, from=2-4, to=3-3]
	\arrow["d"', from=3-1, to=3-2]
	\arrow["e"', from=3-2, to=3-3]
	\arrow["\SIGMA"{description}, draw=none, from=0, to=1]
	\arrow["\SIGMA"{description}, draw=none, from=1, to=2]
	\arrow["\SIGMA"{description}, draw=none, from=3, to=4]
	\arrow["\SIGMA"{description}, draw=none, from=4, to=5]
\end{tikzcd}
\quad \text{\normalsize $=$} \quad
\begin{tikzcd}
	{} & {} & {} \\
	{} & {} & {} \\
	{} & {} & {}
	\arrow["u", from=1-1, to=1-2]
	\arrow[""{name=0, anchor=center, inner sep=0}, "f"', from=1-1, to=2-1]
	\arrow["v", from=1-2, to=1-3]
	\arrow[""{name=1, anchor=center, inner sep=0}, "{f_2}", from=1-2, to=2-2]
	\arrow[""{name=2, anchor=center, inner sep=0}, "{f'_2}", from=1-3, to=2-3]
	\arrow["{{u_2}}"', from=2-1, to=2-2]
	\arrow[""{name=3, anchor=center, inner sep=0}, equals, from=2-1, to=3-1]
	\arrow["{{{v_2}}}"', from=2-2, to=2-3]
	\arrow[""{name=4, anchor=center, inner sep=0}, "{d_2}", from=2-2, to=3-2]
	\arrow[""{name=5, anchor=center, inner sep=0}, "{e_2}", from=2-3, to=3-3]
	\arrow["d"', from=3-1, to=3-2]
	\arrow["e"', from=3-2, to=3-3]
	\arrow["\SIGMA"{description}, draw=none, from=0, to=1]
	\arrow["\SIGMA"{description}, draw=none, from=1, to=2]
	\arrow["\SIGMA"{description}, draw=none, from=3, to=4]
	\arrow["\SIGMA"{description}, draw=none, from=4, to=5]
\end{tikzcd}
\, . \]
\end{lemma}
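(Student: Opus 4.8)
The plan is to get (1) as a direct instance of Rule 4' of Proposition~\ref{pro:useful_rules}, to get (2) by a second application of Rule 4' after using the 2-cell $\theta$ produced in (1) to bring the two relevant $\Sigma$-squares to a \emph{literally} common left border, and then to obtain (3) for free by pasting the equalities (1) and (2) side by side. Throughout one uses that every horizontal arrow that occurs (namely $u,v,u_i,v_i,f,f_i,f'_i$ and their composites) lies in $\Sigma$, being part of a $\Sigma$-square, so the axioms Square, Horizontal Repletion, Composition and the derived Rule 1, Rule 4' all apply.

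For (1): restrict each of the two given $\Sigma$-schemes to its left-hand $\Sigma$-square, with top $u$, left leg $f$, right leg $f_i$ and bottom $u_i$. For $i=1,2$ these two $\Sigma$-squares have the same left border $(u,f)$, so Rule 4' applies and yields $d_1,d_2$ with common target $D$, a $1$-cell $d$, the $\Sigma$-squares with top $u_i$, identity left leg, right leg $d_i$ and bottom $d$, and an invertible $\theta\colon d_1f_1\Rightarrow d_2f_2$ realising exactly the pasting equality (1).

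For (2): vertically compose the right-hand $\Sigma$-square of scheme $i$ (top $v$, legs $f_i,f'_i$, bottom $v_i$) with the $\Sigma$-square obtained from the Square axiom applied to the span $D\xleftarrow{d_i}\bullet\xrightarrow{v_i}\bullet$ (here $v_i\in\Sigma$), say the square with top $v_i$, left leg $d_i$, right leg $\star_i$ and bottom $\star'_i$. The vertical composite is a $\Sigma$-square with top $v$, left leg $d_if_i$, right leg $\star_if'_i$ and bottom $\star'_i$. Applying Horizontal Repletion to $\theta$ gives the $\Sigma$-square with top $1$, legs $d_1f_1$ and $d_2f_2$, bottom $1$; composing it horizontally with the $i=2$ composite (Composition) replaces its left leg $d_2f_2$ by $d_1f_1$, so that the two resulting $\Sigma$-squares now share the left border $(v,d_1f_1)$. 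Rule 4' applied to this pair produces $\widehat e_1,\widehat e_2$ with common target $E$, a $1$-cell $\widehat e\colon D\to E$, $\Sigma$-squares with top $\star'_i$, identity left leg, right leg $\widehat e_i$ and bottom $\widehat e$, and an invertible $\gamma\colon \widehat e_1(\star_1f'_1)\Rightarrow\widehat e_2(\star_2f'_2)$. Setting $e:=\widehat e$, $e_i:=\widehat e_i\star_i$ and $\theta':=\gamma$, the required $\Sigma$-square $[v_i,d_i,e_i,e]$ is the vertical composite of the Square-axiom square (right leg $\star_i$) with the Rule 4' output square (identity left leg), and substituting these explicit composites into the equality furnished by Rule 4' produces exactly (2). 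I expect this step to be the main obstacle: the real work is the bookkeeping needed to see that the unwound output of Rule 4' is the diagram (2) on the nose, with $\theta$ sitting in precisely the place it occupies there; should an on-the-nose equality not come out directly, one passes to a further $\Sigma$-square via Equification, exactly as in the proof of Proposition~\ref{pro:vertical-1}, which is harmless since only existence is asserted.

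Finally (3): paste the equality (1), read along the left column $d_1,d_2,d$, to the left of the equality (2), read along that same column; by Rule 1 all the resulting composites are $\Sigma$-squares. The left-hand side of this horizontal pasting is visibly the left-hand side of (3) and the right-hand side is the right-hand side of (3), so no further argument is needed.
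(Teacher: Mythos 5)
Your argument is correct, and for the main part (2) it takes a genuinely different route from the paper. Part (1) is the same in both: it is literally Rule 4' applied to the two left-hand squares with common border $(u,f)$, and both you and the paper treat (3) as a formal pasting of (1) and (2). For (2), the paper first applies Rule 6 to the 2-cell $\theta$ and the $\Sigma$-arrow $v$ (producing $q,q_1,q_2,\psi$), then for each $i$ uses Square followed by Rule 4' to compare the vertical composite square with the Rule 6 square, and finally applies Rule 4' once more at the bottom; the data are assembled as $e_i=s_ir_{i1}d'_i$ and $\theta'=(s_2\circ\phi_2^{-1})\cdot(\rho\circ\psi)\cdot(s_1\circ\phi_1)$. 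You instead avoid Rule 6 altogether: after the same preliminary use of Square (giving the squares with legs $d_i$ and $\star_i$), you absorb $\theta$ into a $\Sigma$-square via Horizontal Repletion, use Composition to bring the two composite squares to the literally common left border $(v,d_1f_1)$, and then apply Rule 4' a single time, obtaining the simpler formulas $e_i=\widehat e_i\star_i$ and $\theta'=\gamma$. This is a legitimate and arguably more economical derivation; the price is the whiskering verification you flag, which in fact closes on the nose: unwinding the Rule 4' equality and using the interchange law $(\widehat e_2\star'_2\circ\theta)\cdot(\mu_2\circ d_1f_1)=(\mu_2\circ d_2f_2)\cdot(\widehat e\circ\theta)$ gives exactly the pasting equality (2), so no further move is needed.

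Two small cautions. Your fallback, ``pass to a further $\Sigma$-square via Equification if the equality does not come out on the nose,'' would not be straightforwardly available: Equification requires knowing beforehand that the two 2-cells agree after precomposition with an arrow of $\Sigma$, which is precisely what would be in doubt; fortunately the equality does hold strictly, so the hedge is moot. Also, your opening remark that $f,f_i,f'_i$ are horizontal arrows lying in $\Sigma$ is not right (they are the vertical legs and need not belong to $\Sigma$), but nothing in your construction uses this, so it is harmless.
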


\begin{proof} (1) This is just Rule 4' of Proposition \ref{pro:useful_rules}.

(2) By Rule 6 of Proposition \ref{pro:useful_rules}, we obtain
\[
\begin{tikzcd}
	{} & {} \\
	{} \\
	{} & {}
	\arrow["v", from=1-1, to=1-2]
	\arrow["{f_i}"', from=1-1, to=2-1]
	\arrow[""{name=0, anchor=center, inner sep=0}, "{q_i}", from=1-2, to=3-2]
	\arrow["{d_i}"', from=2-1, to=3-1]
	\arrow["q"', from=3-1, to=3-2]
	\arrow["\SIGMA"{description}, draw=none, from=2-1, to=0]
\end{tikzcd}
\quad (i=1,2) \quad \text{and} \quad
\psi\colon q_1\Rightarrow q_2
\]
such that
\[
\begin{tikzcd}
	{} & {} \\
	{} \\
	D & Q
	\arrow["v", from=1-1, to=1-2]
	\arrow["{f_1}"', from=1-1, to=2-1]
	\arrow[""{name=0, anchor=center, inner sep=0}, "{q_1}", from=1-2, to=3-2]
	\arrow[""{name=1, anchor=center, inner sep=0}, "{q_2}", shift left=3, curve={height=-30pt}, from=1-2, to=3-2]
	\arrow["{d_1}"', from=2-1, to=3-1]
	\arrow["q"', from=3-1, to=3-2]
	\arrow["\psi"{pos=0.6}, shorten <=11pt, shorten >=7pt, Rightarrow, from=0, to=1]
	\arrow["\SIGMA"{description}, draw=none, from=2-1, to=0]
\end{tikzcd}
\quad = \quad
\begin{tikzcd}
	& {} & {} \\
	{} & {} \\
	& D & Q
	\arrow["v", from=1-2, to=1-3]
	\arrow["{f_1}"', curve={height=12pt}, from=1-2, to=2-1]
	\arrow["{f_2}"', from=1-2, to=2-2]
	\arrow[""{name=0, anchor=center, inner sep=0}, "{q_2}", from=1-3, to=3-3]
	\arrow["\theta", Rightarrow, from=2-1, to=2-2]
	\arrow["{d_1}"', curve={height=12pt}, from=2-1, to=3-2]
	\arrow["{d_2}"', from=2-2, to=3-2]
	\arrow["q"', from=3-2, to=3-3]
	\arrow["\SIGMA"{description}, draw=none, from=2-2, to=0]
\end{tikzcd}
\; .\]
On the other hand, for each $i=1,2$, by applying Square followed by  Rule 4', we get
\[
\begin{tikzcd}
	{} & {} \\
	{} & {} \\
	{} & {D_i} & Q \\
	D & {R_i}
	\arrow["v", from=1-1, to=1-2]
	\arrow[""{name=0, anchor=center, inner sep=0}, "{f_i}"', from=1-1, to=2-1]
	\arrow[""{name=1, anchor=center, inner sep=0}, "{{{f'_i}}}", from=1-2, to=2-2]
	\arrow["{q_i}", curve={height=-12pt}, from=1-2, to=3-3]
	\arrow["{{{v_i}}}"', from=2-1, to=2-2]
	\arrow[""{name=2, anchor=center, inner sep=0}, "{d_i}"', from=2-1, to=3-1]
	\arrow[""{name=3, anchor=center, inner sep=0}, "{d'_i}", from=2-2, to=3-2]
	\arrow["{w_i}"', from=3-1, to=3-2]
	\arrow[""{name=4, anchor=center, inner sep=0}, equals, from=3-1, to=4-1]
	\arrow[""{name=5, anchor=center, inner sep=0}, "{r_{i1}}", from=3-2, to=4-2]
	\arrow["{r_{i2}}", curve={height=-6pt}, from=3-3, to=4-2]
	\arrow["{r_i}"', from=4-1, to=4-2]
	\arrow["\SIGMA"{description}, draw=none, from=0, to=1]
	\arrow["\SIGMA"{description}, draw=none, from=2, to=3]
	\arrow["{\phi_i}"{pos=0.6}, shorten <=12pt, Rightarrow, from=3, to=3-3]
	\arrow["\SIGMA"{description}, draw=none, from=4, to=5]
\end{tikzcd}
\quad = \quad
\begin{tikzcd}
	{} & {} \\
	{} \\
	{} & Q \\
	D & {R_i}
	\arrow["v", from=1-1, to=1-2]
	\arrow["{{f_i}}"', from=1-1, to=2-1]
	\arrow[""{name=0, anchor=center, inner sep=0}, "{q_i}", from=1-2, to=3-2]
	\arrow["{{d_i}}"', from=2-1, to=3-1]
	\arrow["{{q}}"', from=3-1, to=3-2]
	\arrow[""{name=1, anchor=center, inner sep=0}, equals, from=3-1, to=4-1]
	\arrow[""{name=2, anchor=center, inner sep=0}, "{r_{i2}}", from=3-2, to=4-2]
	\arrow["{{r_i}}"', from=4-1, to=4-2]
	\arrow["\SIGMA"{description}, draw=none, from=2-1, to=0]
	\arrow["\SIGMA"{description}, draw=none, from=1, to=2]
\end{tikzcd}
\; .\]
Using again Rule 4',
 we obtain
 \[
 \begin{tikzcd}
	D & Q \\
	D & {R_1} & {R_2} \\
	D & E
	\arrow["q", from=1-1, to=1-2]
	\arrow[""{name=0, anchor=center, inner sep=0}, equals, from=1-1, to=2-1]
	\arrow[""{name=1, anchor=center, inner sep=0}, "{{r_{12}}}", from=1-2, to=2-2]
	\arrow["{r_{22}}", curve={height=-6pt}, from=1-2, to=2-3]
	\arrow["{{{r_1}}}"', from=2-1, to=2-2]
	\arrow[""{name=2, anchor=center, inner sep=0}, equals, from=2-1, to=3-1]
	\arrow["\rho", Rightarrow, from=2-2, to=2-3]
	\arrow[""{name=3, anchor=center, inner sep=0}, "{s_1}", from=2-2, to=3-2]
	\arrow["{s_2}", curve={height=-6pt}, from=2-3, to=3-2]
	\arrow["e"', from=3-1, to=3-2]
	\arrow["\SIGMA"{description}, draw=none, from=0, to=1]
	\arrow["\SIGMA"{description}, draw=none, from=2, to=3]
\end{tikzcd}
\quad = \quad
\begin{tikzcd}
	D & Q \\
	D & {R_2} \\
	D & E
	\arrow["q", from=1-1, to=1-2]
	\arrow[""{name=0, anchor=center, inner sep=0}, equals, from=1-1, to=2-1]
	\arrow[""{name=1, anchor=center, inner sep=0}, "{r_{22}}", from=1-2, to=2-2]
	\arrow["{r_2}"', from=2-1, to=2-2]
	\arrow[""{name=2, anchor=center, inner sep=0}, equals, from=2-1, to=3-1]
	\arrow[""{name=3, anchor=center, inner sep=0}, "{s_2}", from=2-2, to=3-2]
	\arrow["e"', from=3-1, to=3-2]
	\arrow["\SIGMA"{description}, draw=none, from=0, to=1]
	\arrow["\SIGMA"{description}, draw=none, from=2, to=3]
\end{tikzcd}
\]
with $\rho$ invertible. Thus we can form the following pasting diagram:
\[\begin{tikzcd}
	{} & {} \\
	{} & {} &&& {} \\
	{} & {D_1} && Q & {D_2} \\
	D & {R_1} && {R_2} \\
	D & E
	\arrow["v", from=1-1, to=1-2]
	\arrow[""{name=0, anchor=center, inner sep=0}, "{{{{f_1}}}}"', from=1-1, to=2-1]
	\arrow[""{name=1, anchor=center, inner sep=0}, "{{{{f'_1}}}}"', from=1-2, to=2-2]
	\arrow["{{{f'_2}}}", from=1-2, to=2-5]
	\arrow[""{name=2, anchor=center, inner sep=0}, "{{{q_1}}}"{description}, curve={height=12pt}, from=1-2, to=3-4]
	\arrow[""{name=3, anchor=center, inner sep=0}, "{{{q_2}}}"{description}, curve={height=-12pt}, from=1-2, to=3-4]
	\arrow["{{{{v_1}}}}"', from=2-1, to=2-2]
	\arrow[""{name=4, anchor=center, inner sep=0}, "{{{{d_1}}}}"', from=2-1, to=3-1]
	\arrow[""{name=5, anchor=center, inner sep=0}, "{{{{d'_1}}}}", from=2-2, to=3-2]
	\arrow["{{{d'_2}}}", from=2-5, to=3-5]
	\arrow["{{{{w_1}}}}"', from=3-1, to=3-2]
	\arrow[""{name=6, anchor=center, inner sep=0}, equals, from=3-1, to=4-1]
	\arrow[""{name=7, anchor=center, inner sep=0}, "{{{{r_{11}}}}}", from=3-2, to=4-2]
	\arrow["{{{\phi_2^{-1}}}}", between={0.1}{0.8}, Rightarrow, from=3-4, to=2-5]
	\arrow["{{{r_{12}}}}"', from=3-4, to=4-2]
	\arrow["{{{r_{22}}}}", from=3-4, to=4-4]
	\arrow["{{{r_{21}}}}", from=3-5, to=4-4]
	\arrow["{{{{r_1}}}}"', from=4-1, to=4-2]
	\arrow[""{name=8, anchor=center, inner sep=0}, equals, from=4-1, to=5-1]
	\arrow["\rho"', between={0.2}{0.7}, Rightarrow, from=4-2, to=4-4]
	\arrow[""{name=9, anchor=center, inner sep=0}, "{{{{s_1}}}}", from=4-2, to=5-2]
	\arrow["{{{{s_2}}}}", from=4-4, to=5-2]
	\arrow["e"', from=5-1, to=5-2]
	\arrow["\SIGMA"{description}, draw=none, from=0, to=1]
	\arrow["\psi"', between={0.2}{0.8}, Rightarrow, from=2, to=3]
	\arrow["\SIGMA"{description}, draw=none, from=4, to=5]
	\arrow["\SIGMA"{description}, draw=none, from=6, to=7]
	\arrow["{{{\phi_1}}}"', shift right=3, between={0}{0.8}, Rightarrow, from=3-2, to=2]
	\arrow["\SIGMA"{description}, draw=none, from=8, to=9]
\end{tikzcd}\]
Put $e_i=s_ir_{i1}d'_i$ and $\theta'=(s_2\circ \phi_2^{-1})\cdot (\rho \circ \psi)\cdot (s_1\circ \phi_1)$.
It is easy to see that in this way we obtain the desired situation (2).
\end{proof}

\begin{remark}\label{rem:use} Given two 2-morphisms
\begin{equation}\label{eq:eq-ab}
\begin{tikzcd}
	{} & {} & {} \\
	& {} & {} \\
	{} & {} & {}
	\arrow["f", from=1-1, to=1-2]
	\arrow[equals, from=1-1, to=3-1]
	\arrow[""{name=0, anchor=center, inner sep=0}, "{{{x_1}}}"', from=1-2, to=2-2]
	\arrow["\alpha"', between={0.3}{0.7}, Rightarrow, from=1-2, to=3-1]
	\arrow["r"', from=1-3, to=1-2]
	\arrow[""{name=1, anchor=center, inner sep=0}, equals, from=1-3, to=2-3]
	\arrow["{{{x_3}}}", from=2-3, to=2-2]
	\arrow[""{name=2, anchor=center, inner sep=0}, equals, from=2-3, to=3-3]
	\arrow["g"', from=3-1, to=3-2]
	\arrow[""{name=3, anchor=center, inner sep=0}, "{{{x_2}}}", from=3-2, to=2-2]
	\arrow["s", from=3-3, to=3-2]
	\arrow["\SIGMA"{description}, draw=none, from=0, to=1]
	\arrow["\SIGMA"{description}, draw=none, from=3, to=2]
\end{tikzcd}
\qquad \text{ and } \qquad
\begin{tikzcd}
	{} & {} & {} \\
	& {} & {} \\
	{} & {} & {}
	\arrow["f", from=1-1, to=1-2]
	\arrow[equals, from=1-1, to=3-1]
	\arrow[""{name=0, anchor=center, inner sep=0}, "{{{y_1}}}"', from=1-2, to=2-2]
	\arrow["\beta"', between={0.3}{0.7}, Rightarrow, from=1-2, to=3-1]
	\arrow["r"', from=1-3, to=1-2]
	\arrow[""{name=1, anchor=center, inner sep=0}, equals, from=1-3, to=2-3]
	\arrow["{{{y_3}}}", from=2-3, to=2-2]
	\arrow[""{name=2, anchor=center, inner sep=0}, equals, from=2-3, to=3-3]
	\arrow["g"', from=3-1, to=3-2]
	\arrow[""{name=3, anchor=center, inner sep=0}, "{{{y_2}}}", from=3-2, to=2-2]
	\arrow["s", from=3-3, to=3-2]
	\arrow["\SIGMA"{description}, draw=none, from=0, to=1]
	\arrow["\SIGMA"{description}, draw=none, from=3, to=2]
\end{tikzcd}
\end{equation}
apply Rule 4 to obtain $\mu_1$ and $\mu_2$ invertible such that
\[
\begin{tikzcd}
	{} & {} \\
	{} & {} & {} \\
	{} & {} \\
	{} & {} & {} \\
	{} & {}
	\arrow["r", from=1-1, to=1-2]
	\arrow[""{name=0, anchor=center, inner sep=0}, equals, from=1-1, to=2-1]
	\arrow[""{name=1, anchor=center, inner sep=0}, "{{{{{{x_1}}}}}}", from=1-2, to=2-2]
	\arrow["{{{{{{y_1}}}}}}", curve={height=-6pt}, from=1-2, to=2-3]
	\arrow["{{{{{{x_3}}}}}}"', from=2-1, to=2-2]
	\arrow[""{name=2, anchor=center, inner sep=0}, equals, from=2-1, to=3-1]
	\arrow["{{{{\mu_1}}}}", between={0}{0.9}, Rightarrow, from=2-2, to=2-3]
	\arrow[""{name=3, anchor=center, inner sep=0}, "{{{t_1}}}"{pos=0.4}, from=2-2, to=3-2]
	\arrow["{{{{{{t_2}}}}}}"{pos=0.4}, curve={height=-6pt}, from=2-3, to=3-2]
	\arrow["t"', from=3-1, to=3-2]
	\arrow[""{name=4, anchor=center, inner sep=0}, equals, from=3-1, to=4-1]
	\arrow["{{{{{{x_3}}}}}}"', from=4-1, to=4-2]
	\arrow[""{name=5, anchor=center, inner sep=0}, equals, from=4-1, to=5-1]
	\arrow[""{name=6, anchor=center, inner sep=0}, "{{{t_1}}}"', from=4-2, to=3-2]
	\arrow["{{{{\mu_2}}}}", between={0}{0.9}, Rightarrow, from=4-2, to=4-3]
	\arrow["{{{{{{t_2}}}}}}"'{pos=0.4}, curve={height=6pt}, from=4-3, to=3-2]
	\arrow["s"', from=5-1, to=5-2]
	\arrow[""{name=7, anchor=center, inner sep=0}, "{{{{{{x_2}}}}}}"', from=5-2, to=4-2]
	\arrow["{{{{{{y_2}}}}}}"', curve={height=6pt}, from=5-2, to=4-3]
	\arrow["\SIGMA"{description}, draw=none, from=0, to=1]
	\arrow["\SIGMA"{description}, draw=none, from=2, to=3]
	\arrow["\SIGMA"{description}, draw=none, from=4, to=6]
	\arrow["\SIGMA"{description}, draw=none, from=5, to=7]
\end{tikzcd}
\hspace{5mm} = \hspace{5mm}
\begin{tikzcd}
	{} & {} \\
	{} & {} \\
	{} & {} \\
	{} & {} \\
	{} & {}
	\arrow["r", from=1-1, to=1-2]
	\arrow[""{name=0, anchor=center, inner sep=0}, equals, from=1-1, to=2-1]
	\arrow[""{name=1, anchor=center, inner sep=0}, "{{y_1}}", from=1-2, to=2-2]
	\arrow["{{y_3}}"', from=2-1, to=2-2]
	\arrow[""{name=2, anchor=center, inner sep=0}, equals, from=2-1, to=3-1]
	\arrow[""{name=3, anchor=center, inner sep=0}, "{{t_2}}", from=2-2, to=3-2]
	\arrow["t"', from=3-1, to=3-2]
	\arrow[""{name=4, anchor=center, inner sep=0}, equals, from=3-1, to=4-1]
	\arrow["{{y_3}}"', from=4-1, to=4-2]
	\arrow[""{name=5, anchor=center, inner sep=0}, equals, from=4-1, to=5-1]
	\arrow[""{name=6, anchor=center, inner sep=0}, "{{t_2}}"', from=4-2, to=3-2]
	\arrow["s"', from=5-1, to=5-2]
	\arrow[""{name=7, anchor=center, inner sep=0}, "{{y_2}}"', from=5-2, to=4-2]
	\arrow["\SIGMA"{description}, draw=none, from=0, to=1]
	\arrow["\SIGMA"{description}, draw=none, from=2, to=3]
	\arrow["\SIGMA"{description}, draw=none, from=4, to=6]
	\arrow["\SIGMA"{description}, draw=none, from=5, to=7]
\end{tikzcd}\, .
\]
Assume that there is a $\Sg$-square
\begin{equation}\label{eq:sp-Square}\begin{tikzcd}
	{} & {} \\
	{} & {}
	\arrow["u", from=1-1, to=1-2]
	\arrow[""{name=0, anchor=center, inner sep=0}, "h"', from=1-1, to=2-1]
	\arrow[""{name=1, anchor=center, inner sep=0}, "f", from=1-2, to=2-2]
	\arrow["r"', from=2-1, to=2-2]
	\arrow["\SIGMA"{description}, shift left, draw=none, from=0, to=1]
\end{tikzcd}
\end{equation}
such that
\begin{equation*}
\begin{tikzcd}
	{} & {} & {} \\
	{} & {} \\
	{} & {} & {} & {} \\
	{} & {}
	\arrow["u", from=1-1, to=1-2]
	\arrow[""{name=0, anchor=center, inner sep=0}, "h"', from=1-1, to=2-1]
	\arrow["g", from=1-2, to=1-3]
	\arrow[""{name=1, anchor=center, inner sep=0}, "f", from=1-2, to=2-2]
	\arrow[""{name=2, anchor=center, inner sep=0}, "{{{{{{x_2}}}}}}"{description}, from=1-3, to=3-3]
	\arrow["{{{{y_2}}}}"{description}, from=1-3, to=3-4]
	\arrow["r"{description}, from=2-1, to=2-2]
	\arrow[""{name=3, anchor=center, inner sep=0}, equals, from=2-1, to=3-1]
	\arrow[""{name=4, anchor=center, inner sep=0}, "{{{{{{y_1}}}}}}", from=2-2, to=3-2]
	\arrow["{{{{x_1}}}}", between={0}{0.9}, from=2-2, to=3-3]
	\arrow["{{{y_3}}}"{description}, from=3-1, to=3-2]
	\arrow[""{name=5, anchor=center, inner sep=0}, equals, from=3-1, to=4-1]
	\arrow["{{\mu_1^{-1}}}", shift right, between={0.1}{0.8}, Rightarrow, from=3-2, to=3-3]
	\arrow[""{name=6, anchor=center, inner sep=0}, "{{{{{{t_2}}}}}}"{description}, from=3-2, to=4-2]
	\arrow["{{{{\mu_2}}}}", between={0.1}{0.9}, Rightarrow, from=3-3, to=3-4]
	\arrow["{{{t_1}}}"{description, pos=0.45}, between={0}{0.8}, from=3-3, to=4-2]
	\arrow["{{{t_2}}}"{description}, from=3-4, to=4-2]
	\arrow["t"', from=4-1, to=4-2]
	\arrow["\SIGMA"{description}, draw=none, from=0, to=1]
	\arrow["\alpha", between={0.3}{0.8}, Rightarrow, from=1, to=2]
	\arrow["\SIGMA"{description}, draw=none, from=3, to=4]
	\arrow["\SIGMA"{description}, draw=none, from=5, to=6]
\end{tikzcd}
\hspace{5mm} =  \hspace{5mm}
\begin{tikzcd}
	{} & {} \\
	{} & {} & {} \\
	{} & {} \\
	{} & {       }
	\arrow["u", from=1-1, to=1-2]
	\arrow[""{name=0, anchor=center, inner sep=0}, "h"', from=1-1, to=2-1]
	\arrow[""{name=1, anchor=center, inner sep=0}, "f", from=1-2, to=2-2]
	\arrow["g", from=1-2, to=2-3]
	\arrow["r"{description}, from=2-1, to=2-2]
	\arrow[""{name=2, anchor=center, inner sep=0}, equals, from=2-1, to=3-1]
	\arrow["\beta", between={0.1}{0.9}, Rightarrow, from=2-2, to=2-3]
	\arrow[""{name=3, anchor=center, inner sep=0}, "{{{{y_1}}}}", from=2-2, to=3-2]
	\arrow["{{{{{y_2}}}}}", from=2-3, to=3-2]
	\arrow["{{{y_3}}}"{description}, from=3-1, to=3-2]
	\arrow[""{name=4, anchor=center, inner sep=0}, equals, from=3-1, to=4-1]
	\arrow[""{name=5, anchor=center, inner sep=0}, "{{{{{{t_2}}}}}}", from=3-2, to=4-2]
	\arrow["t"', from=4-1, to=4-2]
	\arrow["\SIGMA"{description}, draw=none, from=0, to=1]
	\arrow["\SIGMA"{description}, draw=none, from=2, to=3]
	\arrow["\SIGMA"{description}, draw=none, from=4, to=5]
\end{tikzcd}\,.
\end{equation*}
Then the 2-morphisms of \eqref{eq:eq-ab} are $\approx$-equivalent.

Indeed, applying Equification to the $\Sg$-square $\begin{tikzcd}
	{} & {} \\
	{} & {}
	\arrow["u", from=1-1, to=1-2]
	\arrow[""{name=0, anchor=center, inner sep=0}, "h"', from=1-1, to=2-1]
	\arrow[""{name=1, anchor=center, inner sep=0}, from=1-2, to=2-2]
	\arrow["t"', from=2-1, to=2-2]
	\arrow["\SIGMA"{description}, shift left, draw=none, from=0, to=1]
\end{tikzcd}$ together with the 2-cells $(\mu_2\circ g)\cdot (t_1\circ \alpha) \cdot (\mu_1^{-1}\circ f)$ and $t_2\circ \beta$, we obtain a 2-morphism which is a $\Sg$-extension of both the two 2-morphisms of \eqref{eq:eq-ab}.
\end{remark}

\begin{proposition}\label{pro:adend} For $\Sg$-schemes $S_1$, $S_2$, $S_3$ and $S_4$ of level 3 with common left border, every two $\Sg$-paths of the form
$S_1\begin{tikzcd}
	{} & {}
	\arrow["i", squiggly, from=1-1, to=1-2]
\end{tikzcd}S_3\begin{tikzcd}
	{} & {}
	\arrow["j", squiggly, from=1-1, to=1-2]
\end{tikzcd}S_2$
  and
  $S_1\begin{tikzcd}
	{} & {}
	\arrow["j", squiggly, from=1-1, to=1-2]
\end{tikzcd}S_4\begin{tikzcd}
	{} & {}
	\arrow["i", squiggly, from=1-1, to=1-2]
\end{tikzcd}S_2$,
where $i,j \in \{ \mathbf{d},\mathbf{u},\mathbf{s},\mathbf{d}_1,\mathbf{s}_1\}$, are equivalent.
\end{proposition}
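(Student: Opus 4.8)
The plan is to prove the proposition by a careful case analysis on the pair $(i,j)$ of $\Sg$-step types, reducing each case to a ``local'' statement about the replaceable $\Sg$-squares involved. The key observation is that a $\Sg$-step only changes a single replaceable $\Sg$-square of the scheme, so two $\Sg$-steps of types $i$ and $j$ touch at most two regions of the scheme; the crux is whether these two regions overlap. When the regions are disjoint (which is the generic situation, e.g. $i=\mathbf d$ touching the bottom-left $2\times 2$ block and $j=\mathbf u$ touching the right column), the two $\Sg$-steps commute on the nose: performing them in either order involves applying Rule~4' (or Square plus Rule~4') to genuinely separate pieces of data, and the resulting $\Omega$ $2$-cells are literally built from the same basic $\Omega$ $2$-cells composed in the two orders, which agree because vertical composition of $\Omega$ $2$-cells is associative and the two basic pieces act on disjoint parts. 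This is exactly the style of argument already used in Lemma~\ref{lem:Sigma-steps}(4), so I would first isolate and record that ``disjoint support'' principle as a sub-lemma and dispatch all the non-overlapping cases with it.

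The substantive cases are those where the two $\Sg$-steps both involve the lower-right corner square, so that the second step must be applied to the square produced by the first. Here the two orders genuinely produce different-looking intermediate schemes $S_3$ and $S_4$, and I would show that both composite $\Sg$-paths factor through a common refinement. Concretely, the two composite $\Omega$ $2$-cells are each given by some vertical composite of basic $\Omega$ $2$-cells arising from applications of Rule~4' or Rule~4; the strategy is to apply Rule~4 once more to the two competing outputs to produce a third $\Sg$-square dominating both, obtaining a common $\Sg$-extension of the two resulting $2$-morphisms. This is precisely what Lemma~\ref{lem:use} and Remark~\ref{rem:use} are designed to deliver: Lemma~\ref{lem:use} lets one ``push'' a pair of stacked $\Sg$-squares through a third so that two given refinements become comparable, and Remark~\ref{rem:use} gives a clean criterion (via one application of Equification) for two $2$-morphisms of the relevant shape to be $\approx$-equivalent. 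I would therefore reformulate each overlapping case so that its hypotheses match the setup of Remark~\ref{rem:use}, verify the equality of pasting diagrams that Remark~\ref{rem:use} requires (this is where the diagram-chasing lives), and conclude.

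In carrying this out I would organise the cases by which types can even appear consecutively: by Notation~\ref{nota:Sigma-steps}, a scheme admitting a $\Sg$-step of type $\mathbf d$ is of type $\mathbf d$, one admitting $\mathbf u$ is of type $\mathbf u$, etc., so for a length-$2$ path $S_1\rightsquigarrow S_3\rightsquigarrow S_2$ the scheme $S_3$ must simultaneously be of type $i$ and of type $j$. This severely restricts the geometry of $S_3$ (as in the proof of Lemma~\ref{lem:Sigma-steps}, e.g.\ the explicit form \eqref{eq:eqSa} for a scheme simultaneously of types $\mathbf d$ and $\mathbf d_1$), and in many mixed cases forces $i=j$ or reduces to Lemma~\ref{lem:Sigma-steps}(1)--(3) outright. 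After eliminating those, the remaining genuinely two-sided cases — essentially $\{\mathbf d,\mathbf u\}$, $\{\mathbf d,\mathbf s\}$, $\{\mathbf u,\mathbf s\}$, and the cases involving $\mathbf d_1,\mathbf s_1$ paired with $\mathbf d$ or $\mathbf s$ — are handled by the common-refinement argument above. Finally, the equivalence of the two length-$2$ paths is restated, via Remark~\ref{rem:cycle}, as the statement that the length-$4$ cycle $S_1\rightsquigarrow S_3\rightsquigarrow S_2\rightsquigarrow S_4\rightsquigarrow S_1$ is the identity $2$-cell in $\catx[\Sigma_*]$.

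The main obstacle I anticipate is the bookkeeping in the genuinely overlapping cases: writing down explicitly the two competing stacks of $\Sg$-squares coming from the two orders, finding the right triple application of Rule~4 to dominate both, and then checking the pasting-diagram equality demanded by Remark~\ref{rem:use}. This is not conceptually hard — it is the same mechanism as in Lemma~\ref{lem:Sigma-steps} and in the vertical-composition well-definedness proof (Proposition~\ref{pro:vertical-1}) — but it is diagram-heavy, and one must be careful that the invertible $2$-cells ($\theta$, $\theta'$, $\rho$, and the various $\mu_i$) are threaded consistently so that the final Equification step applies. I would defer the fully-drawn diagrams to the appendix (as the excerpt does) and in the main argument only indicate which rule is invoked at each juncture.
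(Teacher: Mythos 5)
Your substantive machinery is the paper's: the cases $i=j$, $\{\mathbf{d},\mathbf{d}_1\}$ and $\{\mathbf{s},\mathbf{s}_1\}$ are dispatched by Lemma~\ref{lem:Sigma-steps}, the forced geometry of the intermediate schemes is used to constrain which pairs can occur, and the remaining mixed pairs are settled by building a common refinement with Rule~4/4$'$, Lemma~\ref{lem:use} and the Equification criterion of Remark~\ref{rem:use}. The genuine gap is the organising dichotomy you place in front of this. There are no ``disjoint support'' cases: by definition every replaceable $\Sigma$-square has the same lower-right vertex as the whole scheme, so the squares rewritten by two consecutive $\Sg$-steps always overlap there. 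In particular your paradigm example $(\mathbf{d},\mathbf{u})$ is the opposite of a disjoint case: in the $4\times 4$ configuration forced by the two types, the $\mathbf{d}$-step and the $\mathbf{u}$-step both rewrite the bottom-right square (in the notation of the paper's proof, along (P1) the $\mathbf{d}$-step replaces $k_2,r_2$ by $k'_2,r'_2$ and the $\mathbf{u}$-step then replaces these by $k'_3,r'_3$, while along (P2) the $\mathbf{u}$-step first produces $k_3,r_4$, which the $\mathbf{d}$-step turns into $k'_3,r'_3$). The two orders therefore do \emph{not} yield ``the same basic $\Omega$ 2-cells composed in the two orders''; the intermediate data are genuinely different, and this is exactly the main case of the paper's proof, which consumes most of the argument via Lemma~\ref{lem:use}, Remark~\ref{rem:use} and a final application of Equification. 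Your write-up is also internally inconsistent on this point, since $\{\mathbf{d},\mathbf{u}\}$ reappears two paragraphs later among the ``genuinely two-sided'' cases.

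If you delete the disjoint-support sub-lemma and route every mixed pair --- $(\mathbf{d},\mathbf{u})$, $(\mathbf{d},\mathbf{s})$, $(\mathbf{d},\mathbf{s}_1)$, $(\mathbf{u},\mathbf{s})$, $(\mathbf{u},\mathbf{s}_1)$, $(\mathbf{u},\mathbf{d}_1)$, $(\mathbf{s},\mathbf{d}_1)$, $(\mathbf{d}_1,\mathbf{s}_1)$ --- through the common-refinement argument, your plan coincides with the paper's proof. What the unchanged portions of the schemes buy you in the easier mixed cases (as the paper exploits for $\mathbf{u}$/$\mathbf{s}_1$ and $\mathbf{d}_1$/$\mathbf{s}_1$) is not commutation on the nose but only a reduction of the part of the diagram to which Lemma~\ref{lem:use} and Remark~\ref{rem:use} must be applied; the final $\approx$-equivalence still requires the Rule~4/Equification step in every case.
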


\begin{proof}
The cases $i=j$, $\{i,j\}=\{\bd, {\bd}_1\}$, and  $\{i,j\}=\{\bs, {\bs}_1\}$ follow from Lemma~\ref{lem:Sigma-steps}.

\noindent
\underline{Case $i=\mathbf{d}$ and $j=\mathbf{u}$}. We have
\(\begin{tikzcd}
	{S_1} & {S_3} & {S_4}
	\arrow["\mathbf{d}", squiggly, from=1-1, to=1-2]
	\arrow["\mathbf{u}", squiggly, from=1-2, to=1-3]
\end{tikzcd}\) and \(\begin{tikzcd}
	{S_1} & {S_4} & {S_2}
	\arrow["\mathbf{u}", squiggly, from=1-1, to=1-2]
	\arrow["\mathbf{d}", squiggly, from=1-2, to=1-3]
\end{tikzcd}\).
The combination of the $\Sigma$-steps of type $\mathbf{d}$ with the ones of type $\mathbf{u}$ oblige the four $\Sigma$-schemes to have the configuration
\[\adjustbox{scale=0.60}{\begin{tikzcd}
	&& {} & {} \\
	& {} & {} & {} \\
	{} & {} & {} & {} \\
	{} & {} & {} & {}
	\arrow[from=1-3, to=1-4]
	\arrow[from=1-3, to=2-3]
	\arrow[from=1-4, to=3-4]
	\arrow[from=2-2, to=2-3]
	\arrow[from=2-2, to=3-2]
	\arrow[from=2-3, to=3-3]
	\arrow[from=3-1, to=3-2]
	\arrow[from=3-1, to=4-1]
	\arrow[from=3-2, to=3-3]
	\arrow[from=3-3, to=3-4]
	\arrow[from=3-3, to=4-3]
	\arrow[from=3-4, to=4-4]
	\arrow[from=4-1, to=4-3]
	\arrow[from=4-3, to=4-4]
\end{tikzcd}}\, .\]
Hence, our $\Sg$-paths look as follows.
\[
\text{\normalsize (P1)} \quad
\adjustbox{scale=0.70}{\begin{tikzcd}
	&& {} & {} \\
	& {} & {} \\
	{} & {} & {} & {} \\
	{} && {} & {}
	\arrow["r", from=1-3, to=1-4]
	\arrow["g"', from=1-3, to=2-3]
	\arrow[""{name=0, anchor=center, inner sep=0}, "{{h_2}}", from=1-4, to=3-4]
	\arrow["s", from=2-2, to=2-3]
	\arrow[""{name=1, anchor=center, inner sep=0}, "h"', from=2-2, to=3-2]
	\arrow[""{name=2, anchor=center, inner sep=0}, "{{h_1}}", from=2-3, to=3-3]
	\arrow["t", from=3-1, to=3-2]
	\arrow[""{name=3, anchor=center, inner sep=0}, "k"', from=3-1, to=4-1]
	\arrow["{{s_1}}"', from=3-2, to=3-3]
	\arrow["{{r_1}}", from=3-3, to=3-4]
	\arrow[""{name=4, anchor=center, inner sep=0}, "{{k_1}}"', from=3-3, to=4-3]
	\arrow[""{name=5, anchor=center, inner sep=0}, "{{k_2}}", from=3-4, to=4-4]
	\arrow["{{s_2}}"', from=4-1, to=4-3]
	\arrow["{{r_2}}"', from=4-3, to=4-4]
	\arrow["\SIGMA"{description}, draw=none, from=1, to=2]
	\arrow["\SIGMA"{description}, draw=none, from=2-3, to=0]
	\arrow["\SIGMA"{description}, draw=none, from=3, to=4]
	\arrow["\SIGMA"{description}, draw=none, from=4, to=5]
\end{tikzcd}}
\begin{tikzcd}
	{} & {}
	\arrow["\bd", squiggly, from=1-1, to=1-2]
\end{tikzcd}
\hspace*{-3mm}
\adjustbox{scale=0.70}{\begin{tikzcd}
	&& {} & {} \\
	& {} & {} \\
	{} & {} & {} & {} \\
	{} && {} & {}
	\arrow["r", from=1-3, to=1-4]
	\arrow["g"', from=1-3, to=2-3]
	\arrow[""{name=0, anchor=center, inner sep=0}, "{{h_2}}", from=1-4, to=3-4]
	\arrow["s", from=2-2, to=2-3]
	\arrow[""{name=1, anchor=center, inner sep=0}, "h"', from=2-2, to=3-2]
	\arrow[""{name=2, anchor=center, inner sep=0}, "{{h_1}}", from=2-3, to=3-3]
	\arrow["t", from=3-1, to=3-2]
	\arrow[""{name=3, anchor=center, inner sep=0}, "k"', from=3-1, to=4-1]
	\arrow["{{s_1}}"', from=3-2, to=3-3]
	\arrow["{{r_1}}", from=3-3, to=3-4]
	\arrow[""{name=4, anchor=center, inner sep=0}, "{k'_1}"', from=3-3, to=4-3]
	\arrow[""{name=5, anchor=center, inner sep=0}, "{k'_2}", from=3-4, to=4-4]
	\arrow["{{s'_2}}"', from=4-1, to=4-3]
	\arrow["{{r'_2}}"', from=4-3, to=4-4]
	\arrow["\SIGMA"{description}, draw=none, from=1, to=2]
	\arrow["\SIGMA"{description}, draw=none, from=2-3, to=0]
	\arrow["\SIGMA"{description}, draw=none, from=3, to=4]
	\arrow["\SIGMA"{description}, draw=none, from=4, to=5]
\end{tikzcd}}
\begin{tikzcd}
	{} & {}
	\arrow["\bu", squiggly, from=1-1, to=1-2]
\end{tikzcd}
\hspace*{-3mm}
\adjustbox{scale=0.70}{\begin{tikzcd}
	&& {} & {} \\
	& {} & {} \\
	{} & {} & {} & {} \\
	{} && {} & {}
	\arrow["r", from=1-3, to=1-4]
	\arrow["g"', from=1-3, to=2-3]
	\arrow[""{name=0, anchor=center, inner sep=0}, "{h_3}", from=1-4, to=3-4]
	\arrow["s", from=2-2, to=2-3]
	\arrow[""{name=1, anchor=center, inner sep=0}, "h"', from=2-2, to=3-2]
	\arrow[""{name=2, anchor=center, inner sep=0}, "{{{h_1}}}", from=2-3, to=3-3]
	\arrow["t", from=3-1, to=3-2]
	\arrow[""{name=3, anchor=center, inner sep=0}, "k"', from=3-1, to=4-1]
	\arrow["{s_1}"', from=3-2, to=3-3]
	\arrow["{r_3}", from=3-3, to=3-4]
	\arrow[""{name=4, anchor=center, inner sep=0}, "{{k'_1}}"', from=3-3, to=4-3]
	\arrow[""{name=5, anchor=center, inner sep=0}, "{k'_3}", from=3-4, to=4-4]
	\arrow["{{{s'_2}}}"', from=4-1, to=4-3]
	\arrow["{r'_3}"', from=4-3, to=4-4]
	\arrow["\SIGMA"{description}, draw=none, from=1, to=2]
	\arrow["\SIGMA"{description}, draw=none, from=2-3, to=0]
	\arrow["\SIGMA"{description}, draw=none, from=3, to=4]
	\arrow["\SIGMA"{description}, draw=none, from=4, to=5]
\end{tikzcd}}
\]

\[
\text{\normalsize (P2)} \quad
\adjustbox{scale=0.70}{\begin{tikzcd}
	&& {} & {} \\
	& {} & {} \\
	{} & {} & {} & {} \\
	{} && {} & {}
	\arrow["r", from=1-3, to=1-4]
	\arrow["g"', from=1-3, to=2-3]
	\arrow[""{name=0, anchor=center, inner sep=0}, "{h_2}", from=1-4, to=3-4]
	\arrow["s", from=2-2, to=2-3]
	\arrow[""{name=1, anchor=center, inner sep=0}, "h"', from=2-2, to=3-2]
	\arrow[""{name=2, anchor=center, inner sep=0}, "{h_1}", from=2-3, to=3-3]
	\arrow["t", from=3-1, to=3-2]
	\arrow[""{name=3, anchor=center, inner sep=0}, "k"', from=3-1, to=4-1]
	\arrow["{s_1}"', from=3-2, to=3-3]
	\arrow["{r_1}", from=3-3, to=3-4]
	\arrow[""{name=4, anchor=center, inner sep=0}, "{k_1}"', from=3-3, to=4-3]
	\arrow[""{name=5, anchor=center, inner sep=0}, "{k_2}", from=3-4, to=4-4]
	\arrow["{s_2}"', from=4-1, to=4-3]
	\arrow["{r_2}"', from=4-3, to=4-4]
	\arrow["\SIGMA"{description}, draw=none, from=1, to=2]
	\arrow["\SIGMA"{description}, draw=none, from=2-3, to=0]
	\arrow["\SIGMA"{description}, draw=none, from=3, to=4]
	\arrow["\SIGMA"{description}, draw=none, from=4, to=5]
\end{tikzcd}}
\begin{tikzcd}
	{} & {}
	\arrow["\bu", squiggly, from=1-1, to=1-2]
\end{tikzcd}
\hspace*{-3mm}
\adjustbox{scale=0.70}{\begin{tikzcd}
	&& {} & {} \\
	& {} & {} \\
	{} & {} & {} & {} \\
	{} && {} & {}
	\arrow["r", from=1-3, to=1-4]
	\arrow["g"', from=1-3, to=2-3]
	\arrow[""{name=0, anchor=center, inner sep=0}, "{h_3}", from=1-4, to=3-4]
	\arrow["s", from=2-2, to=2-3]
	\arrow[""{name=1, anchor=center, inner sep=0}, "h"', from=2-2, to=3-2]
	\arrow[""{name=2, anchor=center, inner sep=0}, "{{{h_1}}}", from=2-3, to=3-3]
	\arrow["t", from=3-1, to=3-2]
	\arrow[""{name=3, anchor=center, inner sep=0}, "k"', from=3-1, to=4-1]
	\arrow["{s_1}"', from=3-2, to=3-3]
	\arrow["{r_3}", from=3-3, to=3-4]
	\arrow[""{name=4, anchor=center, inner sep=0}, "{{k_1}}"', from=3-3, to=4-3]
	\arrow[""{name=5, anchor=center, inner sep=0}, "{k_3}", from=3-4, to=4-4]
	\arrow["{s_2}"', from=4-1, to=4-3]
	\arrow["{r_4}"', from=4-3, to=4-4]
	\arrow["\SIGMA"{description}, draw=none, from=1, to=2]
	\arrow["\SIGMA"{description}, draw=none, from=2-3, to=0]
	\arrow["\SIGMA"{description}, draw=none, from=3, to=4]
	\arrow["\SIGMA"{description}, draw=none, from=4, to=5]
\end{tikzcd}}
\begin{tikzcd}
	{} & {}
	\arrow["\bd", squiggly, from=1-1, to=1-2]
\end{tikzcd}
\hspace*{-3mm}
\adjustbox{scale=0.70}{\begin{tikzcd}
	&& {} & {} \\
	& {} & {} \\
	{} & {} & {} & {} \\
	{} && {} & {}
	\arrow["r", from=1-3, to=1-4]
	\arrow["g"', from=1-3, to=2-3]
	\arrow[""{name=0, anchor=center, inner sep=0}, "{h_3}", from=1-4, to=3-4]
	\arrow["s", from=2-2, to=2-3]
	\arrow[""{name=1, anchor=center, inner sep=0}, "h"', from=2-2, to=3-2]
	\arrow[""{name=2, anchor=center, inner sep=0}, "{{{h_1}}}", from=2-3, to=3-3]
	\arrow["t", from=3-1, to=3-2]
	\arrow[""{name=3, anchor=center, inner sep=0}, "k"', from=3-1, to=4-1]
	\arrow["{s_1}"', from=3-2, to=3-3]
	\arrow["{r_3}", from=3-3, to=3-4]
	\arrow[""{name=4, anchor=center, inner sep=0}, "{{k'_1}}"', from=3-3, to=4-3]
	\arrow[""{name=5, anchor=center, inner sep=0}, "{k'_3}", from=3-4, to=4-4]
	\arrow["{{{s'_2}}}"', from=4-1, to=4-3]
	\arrow["{r'_3}"', from=4-3, to=4-4]
	\arrow["\SIGMA"{description}, draw=none, from=1, to=2]
	\arrow["\SIGMA"{description}, draw=none, from=2-3, to=0]
	\arrow["\SIGMA"{description}, draw=none, from=3, to=4]
	\arrow["\SIGMA"{description}, draw=none, from=4, to=5]
\end{tikzcd}}
\]
We want to show that they give rise to the same $\Omega$ 2-cell.

\noindent \textbf{A.} We describe a 2-morphism representing the $\Omega$ 2-cell corresponding to (P1).

Observe that, using Lemma \ref{lem:use}, we obtain $\Sg$-squares and invertible 2-cells $\theta$ and $\theta'$ such that
\begin{equation}\label{eq:eq-A1}
 \hspace{-10mm} \begin{tikzcd}
	{} & {} & {} \\
	{} && {} & {} \\
	{} && {}
	\arrow["t", from=1-1, to=1-2]
	\arrow[""{name=0, anchor=center, inner sep=0}, "k"', from=1-1, to=2-1]
	\arrow["{{{{{{s_1}}}}}}", from=1-2, to=1-3]
	\arrow[""{name=1, anchor=center, inner sep=0}, "{{{k_1}}}"', from=1-3, to=2-3]
	\arrow["{{k'_1}}", curve={height=-6pt}, from=1-3, to=2-4]
	\arrow["{{{{s_2}}}}"', from=2-1, to=2-3]
	\arrow[""{name=2, anchor=center, inner sep=0}, equals, from=2-1, to=3-1]
	\arrow["\theta", between={0.1}{0.9}, Rightarrow, from=2-3, to=2-4]
	\arrow[""{name=3, anchor=center, inner sep=0}, "{{{a_1}}}"', from=2-3, to=3-3]
	\arrow["{{a_2}}", curve={height=-6pt}, from=2-4, to=3-3]
	\arrow["a"', from=3-1, to=3-3]
	\arrow["\SIGMA"{description}, draw=none, from=0, to=1]
	\arrow["\SIGMA"{description}, draw=none, from=2, to=3]
\end{tikzcd}
 =
\begin{tikzcd}
	{} & {} & {} \\
	{} && {} \\
	{} && {}
	\arrow["t", from=1-1, to=1-2]
	\arrow[""{name=0, anchor=center, inner sep=0}, "k"', from=1-1, to=2-1]
	\arrow["{{{{{{s_1}}}}}}", from=1-2, to=1-3]
	\arrow[""{name=1, anchor=center, inner sep=0}, "{{k'_1}}", from=1-3, to=2-3]
	\arrow["{s'_2}"{description}, from=2-1, to=2-3]
	\arrow[""{name=2, anchor=center, inner sep=0}, equals, from=2-1, to=3-1]
	\arrow[""{name=3, anchor=center, inner sep=0}, "{{a_2}}", from=2-3, to=3-3]
	\arrow["a"', from=3-1, to=3-3]
	\arrow["\SIGMA"{description}, draw=none, from=0, to=1]
	\arrow["\SIGMA"{description}, draw=none, from=2, to=3]
\end{tikzcd}
\  \text{and} \
\begin{tikzcd}
	{} & {} \\
	{} & {} & {} \\
	A & B
	\arrow["{r_1}", from=1-1, to=1-2]
	\arrow[""{name=0, anchor=center, inner sep=0}, "{k_1}"', from=1-1, to=2-1]
	\arrow[""{name=1, anchor=center, inner sep=0}, "{k_2}", from=1-2, to=2-2]
	\arrow["{k'_2}", curve={height=-6pt}, from=1-2, to=2-3]
	\arrow["{r_2}"', from=2-1, to=2-2]
	\arrow[""{name=2, anchor=center, inner sep=0}, "{a_1}"', from=2-1, to=3-1]
	\arrow["{\theta'}", between={0.1}{0.9}, Rightarrow, from=2-2, to=2-3]
	\arrow[""{name=3, anchor=center, inner sep=0}, "{b_1}", from=2-2, to=3-2]
	\arrow["{b_2}", curve={height=-6pt}, from=2-3, to=3-2]
	\arrow["b"', from=3-1, to=3-2]
	\arrow["\SIGMA"{description}, draw=none, from=0, to=1]
	\arrow["\SIGMA"{description}, draw=none, from=2, to=3]
\end{tikzcd}
 =
\begin{tikzcd}
	& {} & {} \\
	{} & {} & {} \\
	& A & B
	\arrow["{{r_1}}", from=1-2, to=1-3]
	\arrow["{{k_1}}"', curve={height=6pt}, from=1-2, to=2-1]
	\arrow[""{name=0, anchor=center, inner sep=0}, "{{k'_1}}"', from=1-2, to=2-2]
	\arrow[""{name=1, anchor=center, inner sep=0}, "{{k'_2}}", from=1-3, to=2-3]
	\arrow["\theta", Rightarrow, from=2-1, to=2-2]
	\arrow["{{a_1}}"', curve={height=6pt}, from=2-1, to=3-2]
	\arrow["{{r'_2}}"{description}, from=2-2, to=2-3]
	\arrow[""{name=2, anchor=center, inner sep=0}, "{{a_2}}"', from=2-2, to=3-2]
	\arrow[""{name=3, anchor=center, inner sep=0}, "{{b_2}}", from=2-3, to=3-3]
	\arrow["b"', from=3-2, to=3-3]
	\arrow["\SIGMA"{description}, draw=none, from=0, to=1]
	\arrow["\SIGMA"{description}, draw=none, from=2, to=3]
\end{tikzcd}
\, .\end{equation}
Thus, as in Lemma \ref{lem:use}, we have
\begin{equation}\label{eq:eq-A1a}
\begin{tikzcd}
	{} & {} & {} & {} \\
	{} && {} & {} & {} \\
	{} && {} & {}
	\arrow["t", from=1-1, to=1-2]
	\arrow[""{name=0, anchor=center, inner sep=0}, "k"', from=1-1, to=2-1]
	\arrow["{{{{s_1}}}}", from=1-2, to=1-3]
	\arrow["{{{{r_1}}}}", from=1-3, to=1-4]
	\arrow[""{name=1, anchor=center, inner sep=0}, "{{{{k_1}}}}"', from=1-3, to=2-3]
	\arrow[""{name=2, anchor=center, inner sep=0}, "{{{{k_2}}}}", from=1-4, to=2-4]
	\arrow["{{k'_2}}", curve={height=-6pt}, from=1-4, to=2-5]
	\arrow["{{{{s_2}}}}"', from=2-1, to=2-3]
	\arrow[""{name=3, anchor=center, inner sep=0}, equals, from=2-1, to=3-1]
	\arrow["{{{{r_2}}}}"', from=2-3, to=2-4]
	\arrow[""{name=4, anchor=center, inner sep=0}, "{a_1}"', from=2-3, to=3-3]
	\arrow["{{\theta'}}", between={0.1}{0.9}, Rightarrow, from=2-4, to=2-5]
	\arrow[""{name=5, anchor=center, inner sep=0}, "{{b_1}}", from=2-4, to=3-4]
	\arrow["{b_2}", curve={height=-6pt}, from=2-5, to=3-4]
	\arrow["a"', from=3-1, to=3-3]
	\arrow["b"', from=3-3, to=3-4]
	\arrow["\SIGMA"{description}, draw=none, from=0, to=1]
	\arrow["\SIGMA"{description}, draw=none, from=1, to=2]
	\arrow["\SIGMA"{description}, draw=none, from=3, to=4]
	\arrow["\SIGMA"{description}, draw=none, from=4, to=5]
\end{tikzcd}
\quad = \quad
\begin{tikzcd}
	{} & {} & {} & {} \\
	{} && {} & {} \\
	{} && {} & {}
	\arrow["t", from=1-1, to=1-2]
	\arrow[""{name=0, anchor=center, inner sep=0}, "k"', from=1-1, to=2-1]
	\arrow["{{{{s_1}}}}", from=1-2, to=1-3]
	\arrow["{{{{r_1}}}}", from=1-3, to=1-4]
	\arrow[""{name=1, anchor=center, inner sep=0}, "{k'_1}"', from=1-3, to=2-3]
	\arrow[""{name=2, anchor=center, inner sep=0}, "{{k'_2}}", from=1-4, to=2-4]
	\arrow["{{s'_2}}"{description}, from=2-1, to=2-3]
	\arrow[""{name=3, anchor=center, inner sep=0}, equals, from=2-1, to=3-1]
	\arrow["{{r'_2}}"{description}, from=2-3, to=2-4]
	\arrow[""{name=4, anchor=center, inner sep=0}, "{a_2}", from=2-3, to=3-3]
	\arrow[""{name=5, anchor=center, inner sep=0}, "{{b_2}}", from=2-4, to=3-4]
	\arrow["a"', from=3-1, to=3-3]
	\arrow["b"', from=3-3, to=3-4]
	\arrow["\SIGMA"{description}, draw=none, from=0, to=1]
	\arrow["\SIGMA"{description}, draw=none, from=1, to=2]
	\arrow["\SIGMA"{description}, draw=none, from=3, to=4]
	\arrow["\SIGMA"{description}, draw=none, from=4, to=5]
\end{tikzcd}
\, .\end{equation}
Moreover, by Rule 4', we have $\Sigma$-squares and an invertible 2-cell $\phi$ such that
\begin{equation}\label{eq:eq-A2}
\begin{tikzcd}
	{} & {} \\
	{} && {} \\
	{} & {} \\
	{} & {} & {} \\
	{} & C
	\arrow["r", from=1-1, to=1-2]
	\arrow["g"', from=1-1, to=2-1]
	\arrow["{{{h_3}}}", from=1-2, to=2-3]
	\arrow[""{name=0, anchor=center, inner sep=0}, "{{{h_2}}}", from=1-2, to=3-2]
	\arrow["{{h_1}}"', from=2-1, to=3-1]
	\arrow[""{name=1, anchor=center, inner sep=0}, "{{{k'_3}}}", from=2-3, to=4-3]
	\arrow["{{{r_1}}}", from=3-1, to=3-2]
	\arrow[""{name=2, anchor=center, inner sep=0}, "{{{k'_1}}}"', from=3-1, to=4-1]
	\arrow[""{name=3, anchor=center, inner sep=0}, "{{{k'_2}}}", from=3-2, to=4-2]
	\arrow["{{{r'_2}}}"{description}, from=4-1, to=4-2]
	\arrow[""{name=4, anchor=center, inner sep=0}, equals, from=4-1, to=5-1]
	\arrow[""{name=5, anchor=center, inner sep=0}, "{{{c_1}}}", from=4-2, to=5-2]
	\arrow["{{{c_2}}}", from=4-3, to=5-2]
	\arrow["c"', from=5-1, to=5-2]
	\arrow["\SIGMA"{description}, draw=none, from=2-1, to=0]
	\arrow["\SIGMA"{description}, draw=none, from=2, to=3]
	\arrow["\phi"{pos=0.35}, between={0.1}{0.8}, Rightarrow, from=3-2, to=1]
	\arrow["\SIGMA"{description}, draw=none, from=4, to=5]
\end{tikzcd}
\quad = \quad
\begin{tikzcd}
	{} & {} \\
	{} \\
	{} & {} \\
	{} & {} \\
	{} & C
	\arrow["r", from=1-1, to=1-2]
	\arrow["g"', from=1-1, to=2-1]
	\arrow[""{name=0, anchor=center, inner sep=0}, "{{h_3}}", from=1-2, to=3-2]
	\arrow["{h_1}"', from=2-1, to=3-1]
	\arrow["{r_3}", from=3-1, to=3-2]
	\arrow[""{name=1, anchor=center, inner sep=0}, "{{{k'_1}}}"', from=3-1, to=4-1]
	\arrow[""{name=2, anchor=center, inner sep=0}, "{{k'_3}}", from=3-2, to=4-2]
	\arrow["{{r'_3}}"', from=4-1, to=4-2]
	\arrow[""{name=3, anchor=center, inner sep=0}, equals, from=4-1, to=5-1]
	\arrow[""{name=4, anchor=center, inner sep=0}, "{{c_2}}", from=4-2, to=5-2]
	\arrow["c"', from=5-1, to=5-2]
	\arrow["\SIGMA"{description}, draw=none, from=2-1, to=0]
	\arrow["\SIGMA"{description}, draw=none, from=1, to=2]
	\arrow["\SIGMA"{description}, draw=none, from=3, to=4]
\end{tikzcd}
\, .\end{equation}
Hence, \eqref{eq:eq-A1a} and \eqref{eq:eq-A2} give rise to the following vertical juxtaposition of the two basic $\Omega$ 2-morphisms corresponding to (P1):
\begin{equation}\label{eq:eq[1]}
 \begin{tikzcd}
	{} & {} & {} & {} & {} \\
	&& B & A & {} \\
	{} & {} & {} & {} & {} \\
	&& C & {} & {} \\
	{} & {} & {} & {} & {}
	\arrow["{{{h_2}}}", from=1-1, to=1-2]
	\arrow[""{name=0, anchor=center, inner sep=0}, equals, from=1-1, to=3-1]
	\arrow["{{{k_2}}}", from=1-2, to=1-3]
	\arrow[""{name=1, anchor=center, inner sep=0}, equals, from=1-2, to=3-2]
	\arrow[""{name=2, anchor=center, inner sep=0}, "{{{b_1}}}"', from=1-3, to=2-3]
	\arrow["{{{\theta'}}}"', between={0.3}{0.7}, Rightarrow, from=1-3, to=3-2]
	\arrow["{{{r_2}}}"', from=1-4, to=1-3]
	\arrow[""{name=3, anchor=center, inner sep=0}, "{{{a_1}}}", from=1-4, to=2-4]
	\arrow["{{{s_2}}}"', from=1-5, to=1-4]
	\arrow[""{name=4, anchor=center, inner sep=0}, equals, from=1-5, to=2-5]
	\arrow["b", from=2-4, to=2-3]
	\arrow["a", from=2-5, to=2-4]
	\arrow[""{name=5, anchor=center, inner sep=0}, equals, from=2-5, to=3-5]
	\arrow["{{{h_2}}}"', from=3-1, to=3-2]
	\arrow[equals, from=3-1, to=5-1]
	\arrow[""{name=6, anchor=center, inner sep=0}, "{{{k'_2}}}"', from=3-2, to=3-3]
	\arrow[""{name=7, anchor=center, inner sep=0}, "{{{b_2}}}", from=3-3, to=2-3]
	\arrow[""{name=8, anchor=center, inner sep=0}, "{{{c_1}}}"', from=3-3, to=4-3]
	\arrow[""{name=9, anchor=center, inner sep=0}, "{{{a_2}}}"', from=3-4, to=2-4]
	\arrow["{{r'_2}}"{description}, from=3-4, to=3-3]
	\arrow[""{name=10, anchor=center, inner sep=0}, equals, from=3-4, to=4-4]
	\arrow["{{{s'_2}}}", from=3-5, to=3-4]
	\arrow[""{name=11, anchor=center, inner sep=0}, equals, from=3-5, to=4-5]
	\arrow["c", from=4-4, to=4-3]
	\arrow[""{name=12, anchor=center, inner sep=0}, equals, from=4-4, to=5-4]
	\arrow["{{s'_2}}"{description}, from=4-5, to=4-4]
	\arrow[""{name=13, anchor=center, inner sep=0}, equals, from=4-5, to=5-5]
	\arrow[""{name=14, anchor=center, inner sep=0}, "{{{h_3}}}"', from=5-1, to=5-2]
	\arrow["{{{k'_3}}}"', from=5-2, to=5-3]
	\arrow[""{name=15, anchor=center, inner sep=0}, "{{{c_2}}}", from=5-3, to=4-3]
	\arrow["{{{r'_3}}}", from=5-4, to=5-3]
	\arrow["{{{s'_2}}}", from=5-5, to=5-4]
	\arrow[between={0.4}{0.6}, equals, from=0, to=1]
	\arrow["\SIGMA"{description}, draw=none, from=2, to=3]
	\arrow["\SIGMA"{description}, draw=none, from=3, to=4]
	\arrow["\phi"', between={0.3}{0.7}, Rightarrow, from=6, to=14]
	\arrow["\SIGMA"{description}, draw=none, from=7, to=9]
	\arrow["\SIGMA"{description}, draw=none, from=8, to=10]
	\arrow["\SIGMA"{description}, draw=none, from=9, to=5]
	\arrow["\SIGMA"{description}, draw=none, from=10, to=11]
	\arrow["\SIGMA"{description}, draw=none, from=12, to=13]
	\arrow["\SIGMA"{description}, draw=none, from=15, to=12]
\end{tikzcd}
\; .\end{equation}
We want to obtain the vertical composition of these two 2-morphisms. The approach we now take will also be useful later. First, we use Square on $\begin{tikzcd}
	A & \bullet & C
	\arrow["{a_2}"', from=1-2, to=1-1]
	\arrow["c", from=1-2, to=1-3]
\end{tikzcd}$ and compose the obtained $\Sigma$-square with the top $\Sigma$-square of the right-hand of equality \eqref{eq:eq-A3} below. Then we apply Rule 4' to the top $\Sigma$-square of the left-hand side of the equality and this composition. This leads to $\Sg$-squares and an invertible 2-cell $\psi$ such that
\begin{equation}\label{eq:eq-A3}
 \begin{tikzcd}
	{} & {} \\
	A & B & {} \\
	A & {\hat{C}}
	\arrow["{{r'_2}}", from=1-1, to=1-2]
	\arrow[""{name=0, anchor=center, inner sep=0}, "{{a_2}}"', from=1-1, to=2-1]
	\arrow[""{name=1, anchor=center, inner sep=0}, "{{b_2}}", from=1-2, to=2-2]
	\arrow["{{c_1}}", curve={height=-12pt}, from=1-2, to=2-3]
	\arrow["b"', from=2-1, to=2-2]
	\arrow[""{name=2, anchor=center, inner sep=0}, equals, from=2-1, to=3-1]
	\arrow["\psi", Rightarrow, from=2-2, to=2-3]
	\arrow[""{name=3, anchor=center, inner sep=0}, "{{\hat{a}_1}}", from=2-2, to=3-2]
	\arrow["{\hat{a}_2}", curve={height=-12pt}, from=2-3, to=3-2]
	\arrow["{{\hat{c}}}"', from=3-1, to=3-2]
	\arrow["\SIGMA"{description}, draw=none, from=0, to=1]
	\arrow["\SIGMA"{description}, draw=none, from=2, to=3]
\end{tikzcd}
\quad = \quad
\adjustbox{scale=0.8}{
\begin{tikzcd}
	{} & {} \\
	{} & C \\
	A & {\hat{C}} \\
	A & {\hat{C}}
	\arrow["{{r'_2}}", from=1-1, to=1-2]
	\arrow[""{name=0, anchor=center, inner sep=0}, equals, from=1-1, to=2-1]
	\arrow[""{name=1, anchor=center, inner sep=0}, "{{c_1}}", from=1-2, to=2-2]
	\arrow["c"', from=2-1, to=2-2]
	\arrow[""{name=2, anchor=center, inner sep=0}, "{{a_2}}"', from=2-1, to=3-1]
	\arrow[""{name=3, anchor=center, inner sep=0}, from=2-2, to=3-2]
	\arrow[""{name=4, anchor=center, inner sep=0}, "{{\hat{a}_2}}", curve={height=-18pt}, from=2-2, to=4-2]
	\arrow[from=3-1, to=3-2]
	\arrow[""{name=5, anchor=center, inner sep=0}, equals, from=3-1, to=4-1]
	\arrow[""{name=6, anchor=center, inner sep=0}, from=3-2, to=4-2]
	\arrow["{\hat{c}}"', from=4-1, to=4-2]
	\arrow["\SIGMA"{description}, draw=none, from=0, to=1]
	\arrow["\SIGMAc"{description}, draw=none, from=2, to=3]
	\arrow["\SIGMA"{description}, draw=none, from=5, to=6]
	\arrow["{=}"{description, pos=0.3}, draw=none, from=3-2, to=4]
\end{tikzcd}}
\, .
\end{equation}
By composing both sides of this equation on the left with
\[\begin{tikzcd}
	{} & {} \\
	{} & {} \\
	{} & {}
	\arrow["{{s'_2}}", from=1-1, to=1-2]
	\arrow[""{name=0, anchor=center, inner sep=0}, equals, from=1-1, to=2-1]
	\arrow[""{name=1, anchor=center, inner sep=0}, "{{a_2}}", from=1-2, to=2-2]
	\arrow["a"{description}, from=2-1, to=2-2]
	\arrow[""{name=2, anchor=center, inner sep=0}, equals, from=2-1, to=3-1]
	\arrow[""{name=3, anchor=center, inner sep=0}, equals, from=2-2, to=3-2]
	\arrow["a"', from=3-1, to=3-2]
	\arrow["\SIGMA"{description}, shift left, draw=none, from=0, to=1]
	\arrow["\SIGMA"{description}, draw=none, from=2, to=3]
\end{tikzcd}\qquad \text{and}\qquad \,
\begin{tikzcd}
	{} & {} \\
	{} & {} \\
	{} & {}
	\arrow["{{{{{s'_2}}}}}", from=1-1, to=1-2]
	\arrow[""{name=0, anchor=center, inner sep=0}, equals, from=1-1, to=2-1]
	\arrow[""{name=1, anchor=center, inner sep=0}, equals, from=1-2, to=2-2]
	\arrow["{{{s'_2}}}"{description}, from=2-1, to=2-2]
	\arrow[""{name=2, anchor=center, inner sep=0}, equals, from=2-1, to=3-1]
	\arrow[""{name=3, anchor=center, inner sep=0}, "{{{a_2}}}", from=2-2, to=3-2]
	\arrow["a"', from=3-1, to=3-2]
	\arrow["\SIGMA"{description}, shift left, draw=none, from=0, to=1]
	\arrow["\SIGMA"{description}, draw=none, from=2, to=3]
\end{tikzcd}\, ,
\]
respectively, we obtain an equality which allows us to represent the vertical composition of the 2-morphisms of \eqref{eq:eq[1]}  by the 2-morphism
\begin{equation}\label{eq:eq[[1]]}
 \begin{tikzcd}
	{} & {} && {} & {} & {} \\
	&&& B & A & {} \\
	{} & {} & {} & {\hat{C}} & {} & {} \\
	&&& C & {} & {} \\
	{} & {} && {} & {} & {}
	\arrow["{{{h_2}}}", from=1-1, to=1-2]
	\arrow[equals, from=1-1, to=3-1]
	\arrow[""{name=0, anchor=center, inner sep=0}, "{{{k_2}}}", from=1-2, to=1-4]
	\arrow[equals, from=1-2, to=3-2]
	\arrow[""{name=1, anchor=center, inner sep=0}, "{{{{b_1}}}}"', from=1-4, to=2-4]
	\arrow["{{{{r_2}}}}"', from=1-5, to=1-4]
	\arrow[""{name=2, anchor=center, inner sep=0}, "{{{{a_1}}}}", from=1-5, to=2-5]
	\arrow["{{{{s_2}}}}"', from=1-6, to=1-5]
	\arrow[""{name=3, anchor=center, inner sep=0}, equals, from=1-6, to=2-6]
	\arrow[""{name=4, anchor=center, inner sep=0}, "{{{\hat{a}_1}}}"', from=2-4, to=3-4]
	\arrow["b", from=2-5, to=2-4]
	\arrow["a", from=2-6, to=2-5]
	\arrow[""{name=5, anchor=center, inner sep=0}, equals, from=2-6, to=3-6]
	\arrow["{{{h_2}}}", from=3-1, to=3-2]
	\arrow[equals, from=3-1, to=5-1]
	\arrow["{{{k'_2}}}", from=3-2, to=3-3]
	\arrow[""{name=6, anchor=center, inner sep=0}, "{{{b_2}}}", from=3-3, to=2-4]
	\arrow[""{name=7, anchor=center, inner sep=0}, "{{{c_1}}}"', from=3-3, to=4-4]
	\arrow[""{name=8, anchor=center, inner sep=0}, equals, from=3-5, to=2-5]
	\arrow["{{{\hat{c}}}}", from=3-5, to=3-4]
	\arrow["a", from=3-6, to=3-5]
	\arrow[""{name=9, anchor=center, inner sep=0}, equals, from=3-6, to=4-6]
	\arrow[""{name=10, anchor=center, inner sep=0}, "{{{\hat{a}_2}}}", from=4-4, to=3-4]
	\arrow[""{name=11, anchor=center, inner sep=0}, "{{{a_2}}}"', from=4-5, to=3-5]
	\arrow["c", from=4-5, to=4-4]
	\arrow[""{name=12, anchor=center, inner sep=0}, equals, from=4-5, to=5-5]
	\arrow["{s'_2}"{description}, from=4-6, to=4-5]
	\arrow[""{name=13, anchor=center, inner sep=0}, equals, from=4-6, to=5-6]
	\arrow["{{{h_3}}}"', from=5-1, to=5-2]
	\arrow[""{name=14, anchor=center, inner sep=0}, "{{{k'_3}}}"', from=5-2, to=5-4]
	\arrow[""{name=15, anchor=center, inner sep=0}, "{{{{c_2}}}}", from=5-4, to=4-4]
	\arrow["{{{{r'_3}}}}", from=5-5, to=5-4]
	\arrow["{{{{s'_2}}}}", from=5-6, to=5-5]
	\arrow["{{{\theta'}}}"', between={0.2}{0.7}, Rightarrow, from=0, to=3-3]
	\arrow["\SIGMA"{description}, draw=none, from=1, to=2]
	\arrow["\SIGMA"{description}, draw=none, from=2, to=3]
	\arrow["\SIGMA"{description}, draw=none, from=4, to=8]
	\arrow["\phi"', between={0.1}{0.8}, Rightarrow, from=3-2, to=14]
	\arrow["\psi", between={0.2}{0.8}, Rightarrow, from=6, to=7]
	\arrow["\SIGMA"{description}, draw=none, from=8, to=5]
	\arrow["\SIGMA"{description}, draw=none, from=10, to=11]
	\arrow["\SIGMA"{description}, draw=none, from=11, to=9]
	\arrow["\SIGMA"{description}, draw=none, from=12, to=13]
	\arrow["\SIGMA"{description}, draw=none, from=15, to=12]
\end{tikzcd}
\end{equation}

\noindent \textbf{B.} In order to form the vertical composition corresponding to (P2), observe that, for the first $\Sg$-step, we have a basic $\Omega$ 2-cell determined by the following data:
\begin{equation}\label{eq:BB}
\begin{tikzcd}
	{} & {} \\
	{} & {} \\
	{} & {} & {} \\
	{} & {} & {} \\
	{} & D
	\arrow["r", from=1-1, to=1-2]
	\arrow["g"', from=1-1, to=2-1]
	\arrow["{{h_2}}", from=1-2, to=3-2]
	\arrow["{{h_3}}", from=1-2, to=3-3]
	\arrow["\SIGMA"{description}, draw=none, from=2-1, to=2-2]
	\arrow["{{h_1}}"', from=2-1, to=3-1]
	\arrow["{{r_1}}", from=3-1, to=3-2]
	\arrow[""{name=0, anchor=center, inner sep=0}, "{{k_1}}"', from=3-1, to=4-1]
	\arrow["\alpha", between={0.1}{0.9}, Rightarrow, from=3-2, to=3-3]
	\arrow[""{name=1, anchor=center, inner sep=0}, "{{k_2}}", from=3-2, to=4-2]
	\arrow["{{k_3}}", from=3-3, to=4-3]
	\arrow["{{r_2}}"', from=4-1, to=4-2]
	\arrow[""{name=2, anchor=center, inner sep=0}, equals, from=4-1, to=5-1]
	\arrow[""{name=3, anchor=center, inner sep=0}, "{{d_1}}", from=4-2, to=5-2]
	\arrow["{{d_2}}", from=4-3, to=5-2]
	\arrow["d"', from=5-1, to=5-2]
	\arrow["\SIGMA"{description}, draw=none, from=0, to=1]
	\arrow["\SIGMA"{description}, draw=none, from=2, to=3]
\end{tikzcd}
\quad = \quad
\begin{tikzcd}
	{} & {} \\
	{} & {} \\
	{} & {} \\
	{} & {} \\
	{} & D
	\arrow["r", from=1-1, to=1-2]
	\arrow["g"', from=1-1, to=2-1]
	\arrow["{h_3}", from=1-2, to=3-2]
	\arrow["\SIGMA"{description}, draw=none, from=2-1, to=2-2]
	\arrow["{{h_1}}"', from=2-1, to=3-1]
	\arrow["{r_3}", from=3-1, to=3-2]
	\arrow[""{name=0, anchor=center, inner sep=0}, "{{k_1}}"', from=3-1, to=4-1]
	\arrow[""{name=1, anchor=center, inner sep=0}, "{k_3}", from=3-2, to=4-2]
	\arrow["{r_4}"', from=4-1, to=4-2]
	\arrow[""{name=2, anchor=center, inner sep=0}, equals, from=4-1, to=5-1]
	\arrow[""{name=3, anchor=center, inner sep=0}, "{d_2}", from=4-2, to=5-2]
	\arrow["d"', from=5-1, to=5-2]
	\arrow["\SIGMA"{description}, draw=none, from=0, to=1]
	\arrow["\SIGMA"{description}, draw=none, from=2, to=3]
\end{tikzcd}
\, .
\end{equation}
For the second $\Sg$-step, use Lemma \ref{lem:use} to first obtain $\theta$ exactly as in the first equality in \eqref{eq:eq-A1}, and then the equality

\begin{equation}\label{eq:eq-B2}
\begin{tikzcd}
	& {} && {} \\
	{} && {} & {} \\
	& A && B
	\arrow["{{{r_3}}}", from=1-2, to=1-4]
	\arrow["{{{k_1}}}"', curve={height=6pt}, from=1-2, to=2-1]
	\arrow[""{name=0, anchor=center, inner sep=0}, "{{{k'_1}}}"{description}, curve={height=-6pt}, from=1-2, to=2-3]
	\arrow[""{name=1, anchor=center, inner sep=0}, "{{{k'_3}}}", from=1-4, to=2-4]
	\arrow["\theta", between={0.2}{0.8}, Rightarrow, from=2-1, to=2-3]
	\arrow["{{{a_1}}}"', curve={height=6pt}, from=2-1, to=3-2]
	\arrow["{{{r'_3}}}"', from=2-3, to=2-4]
	\arrow[""{name=2, anchor=center, inner sep=0}, "{a_2}"{description}, curve={height=-6pt}, from=2-3, to=3-2]
	\arrow[""{name=3, anchor=center, inner sep=0}, "{{{b_4}}}", from=2-4, to=3-4]
	\arrow["b"', from=3-2, to=3-4]
	\arrow["\SIGMA"{description}, draw=none, from=0, to=1]
	\arrow["\SIGMA"{description}, draw=none, from=2, to=3]
\end{tikzcd}
\quad = \quad
\begin{tikzcd}
	{} && {} \\
	{} & {} && {} \\
	A && B
	\arrow["{{{r_3}}}", from=1-1, to=1-3]
	\arrow[""{name=0, anchor=center, inner sep=0}, "{{k_1}}"', from=1-1, to=2-1]
	\arrow[""{name=1, anchor=center, inner sep=0}, "{{k_3}}", curve={height=6pt}, from=1-3, to=2-2]
	\arrow["{{k'_3}}", curve={height=-6pt}, from=1-3, to=2-4]
	\arrow["{{r_4}}"', from=2-1, to=2-2]
	\arrow[""{name=2, anchor=center, inner sep=0}, "{{a_1}}"', from=2-1, to=3-1]
	\arrow["{{\theta''}}", shorten <=14pt, shorten >=14pt, Rightarrow, from=2-2, to=2-4]
	\arrow[""{name=3, anchor=center, inner sep=0}, "{{b_3}}", curve={height=6pt}, from=2-2, to=3-3]
	\arrow["{{b_4}}", curve={height=-6pt}, from=2-4, to=3-3]
	\arrow["b"', from=3-1, to=3-3]
	\arrow["\SIGMA"{description}, draw=none, from=0, to=1]
	\arrow["\SIGMA"{description}, draw=none, from=2, to=3]
\end{tikzcd}
\, .
\end{equation}
Observe that, without loss of generality, we may indeed use the same morphism $A\xrightarrow{b}B$ in \eqref{eq:eq-B2} as in \eqref{eq:eq-A1a}: if in \eqref{eq:eq-B2} we have $A\xrightarrow{b'}B'$, instead of $A\xrightarrow{b}B$, just apply Square of $b$ and $b'$, giving
$\adjustbox{scale=0.70}{\begin{tikzcd}
	{} & {} \\
	{} & {}
	\arrow["b", from=1-1, to=1-2]
	\arrow[""{name=0, anchor=center, inner sep=0}, "{b'}"', from=1-1, to=2-1]
	\arrow[""{name=1, anchor=center, inner sep=0}, "{n'}", from=1-2, to=2-2]
	\arrow["n"', from=2-1, to=2-2]
	\arrow["\SIGMA"{description}, draw=none, from=0, to=1]
\end{tikzcd}}$,
then apply Rule 2b of Proposition \ref{pro:useful_rules} and consider the new $b$ to be $nb'$.

Thus, as described in Lemma \ref{lem:use}, we have that
\[
\begin{tikzcd}
	{} & {} & {} & {} \\
	{} && {} & {} & {} \\
	{} && A & B
	\arrow["t", from=1-1, to=1-2]
	\arrow[""{name=0, anchor=center, inner sep=0}, "k"', from=1-1, to=2-1]
	\arrow["{{s_1}}", from=1-2, to=1-3]
	\arrow["{{r_3}}", from=1-3, to=1-4]
	\arrow[""{name=1, anchor=center, inner sep=0}, "{{k_1}}"', from=1-3, to=2-3]
	\arrow[""{name=2, anchor=center, inner sep=0}, "{{k_3}}", from=1-4, to=2-4]
	\arrow["{{k'_3}}", curve={height=-6pt}, from=1-4, to=2-5]
	\arrow["{{s_2}}"', from=2-1, to=2-3]
	\arrow[""{name=3, anchor=center, inner sep=0}, equals, from=2-1, to=3-1]
	\arrow["{{r_4}}"', from=2-3, to=2-4]
	\arrow[""{name=4, anchor=center, inner sep=0}, "{{a_1}}"', from=2-3, to=3-3]
	\arrow["{{\theta''}}", Rightarrow, from=2-4, to=2-5]
	\arrow[""{name=5, anchor=center, inner sep=0}, "{{b_3}}", from=2-4, to=3-4]
	\arrow["{{b_4}}", curve={height=-6pt}, from=2-5, to=3-4]
	\arrow["a"', from=3-1, to=3-3]
	\arrow["b"', from=3-3, to=3-4]
	\arrow["\SIGMA"{description}, draw=none, from=0, to=1]
	\arrow["\SIGMA"{description}, draw=none, from=1, to=2]
	\arrow["\SIGMA"{description}, draw=none, from=3, to=4]
	\arrow["\SIGMA"{description}, draw=none, from=4, to=5]
\end{tikzcd}
\quad = \quad
\begin{tikzcd}
	{} & {} & {} & {} \\
	{} && {} & {} \\
	{} && A & B
	\arrow["t", from=1-1, to=1-2]
	\arrow[""{name=0, anchor=center, inner sep=0}, "k"', from=1-1, to=2-1]
	\arrow["{{{{s_1}}}}", from=1-2, to=1-3]
	\arrow["{{{{r_3}}}}", from=1-3, to=1-4]
	\arrow[""{name=1, anchor=center, inner sep=0}, "{{k'_1}}"', from=1-3, to=2-3]
	\arrow[""{name=2, anchor=center, inner sep=0}, "{{k'_3}}", from=1-4, to=2-4]
	\arrow["{{s'_2}}"', from=2-1, to=2-3]
	\arrow[""{name=3, anchor=center, inner sep=0}, equals, from=2-1, to=3-1]
	\arrow["{{r'_3}}"{description}, from=2-3, to=2-4]
	\arrow[""{name=4, anchor=center, inner sep=0}, "{{a_2}}"', from=2-3, to=3-3]
	\arrow[""{name=5, anchor=center, inner sep=0}, "{{b_4}}", from=2-4, to=3-4]
	\arrow["a"', from=3-1, to=3-3]
	\arrow["b"', from=3-3, to=3-4]
	\arrow["\SIGMA"{description}, draw=none, from=0, to=1]
	\arrow["\SIGMA"{description}, draw=none, from=1, to=2]
	\arrow["\SIGMA"{description}, draw=none, from=3, to=4]
	\arrow["\SIGMA"{description}, draw=none, from=4, to=5]
\end{tikzcd}
\, . \]
Combining the two $\Sg$-steps of (P2), we obtain:
\begin{equation}\label{eq:eq[2]}
  \begin{tikzcd}
	{} & {} & {} & {} & {} \\
	&& D & {} & {} \\
	{} & {} & {} & {} & {} \\
	&& B & A & {} \\
	{} & {} & {} & {} & {}
	\arrow["{h_2}", from=1-1, to=1-2]
	\arrow[equals, from=1-1, to=3-1]
	\arrow["{k_2}", from=1-2, to=1-3]
	\arrow[""{name=0, anchor=center, inner sep=0}, "{d_1}"', from=1-3, to=2-3]
	\arrow["\alpha"', shorten <=17pt, shorten >=17pt, Rightarrow, from=1-3, to=3-1]
	\arrow["{r_2}"', from=1-4, to=1-3]
	\arrow[""{name=1, anchor=center, inner sep=0}, equals, from=1-4, to=2-4]
	\arrow["{s_2}"', from=1-5, to=1-4]
	\arrow[""{name=2, anchor=center, inner sep=0}, equals, from=1-5, to=2-5]
	\arrow["d", from=2-4, to=2-3]
	\arrow[""{name=3, anchor=center, inner sep=0}, equals, from=2-4, to=3-4]
	\arrow["{s_2}", from=2-5, to=2-4]
	\arrow[""{name=4, anchor=center, inner sep=0}, equals, from=2-5, to=3-5]
	\arrow["{h_3}", from=3-1, to=3-2]
	\arrow[""{name=5, anchor=center, inner sep=0}, equals, from=3-1, to=5-1]
	\arrow["{k_3}", from=3-2, to=3-3]
	\arrow[""{name=6, anchor=center, inner sep=0}, equals, from=3-2, to=5-2]
	\arrow[""{name=7, anchor=center, inner sep=0}, "{d_2}", from=3-3, to=2-3]
	\arrow[""{name=8, anchor=center, inner sep=0}, "{b_3}"', from=3-3, to=4-3]
	\arrow["{\theta''}"', shorten <=11pt, shorten >=8pt, Rightarrow, from=3-3, to=5-2]
	\arrow["{r_4}", from=3-4, to=3-3]
	\arrow[""{name=9, anchor=center, inner sep=0}, "{a_1}", from=3-4, to=4-4]
	\arrow["{s_2}", from=3-5, to=3-4]
	\arrow[""{name=10, anchor=center, inner sep=0}, equals, from=3-5, to=4-5]
	\arrow["b", from=4-4, to=4-3]
	\arrow["a", from=4-5, to=4-4]
	\arrow[""{name=11, anchor=center, inner sep=0}, equals, from=4-5, to=5-5]
	\arrow["{h_3}"', from=5-1, to=5-2]
	\arrow["{k'_3}"', from=5-2, to=5-3]
	\arrow[""{name=12, anchor=center, inner sep=0}, "{b_4}", from=5-3, to=4-3]
	\arrow[""{name=13, anchor=center, inner sep=0}, "{a_2}"', from=5-4, to=4-4]
	\arrow["{r'_3}", from=5-4, to=5-3]
	\arrow["{s'_2}", from=5-5, to=5-4]
	\arrow["\SIGMA"{description}, draw=none, from=0, to=1]
	\arrow["\SIGMA"{description}, draw=none, from=1, to=2]
	\arrow["\SIGMA"{description}, draw=none, from=3, to=4]
	\arrow[shorten <=16pt, shorten >=10pt, equals, from=5, to=6]
	\arrow["\SIGMA"{description}, draw=none, from=7, to=3]
	\arrow["\SIGMA"{description}, draw=none, from=8, to=9]
	\arrow["\SIGMA"{description}, draw=none, from=9, to=10]
	\arrow["\SIGMA"{description}, draw=none, from=12, to=13]
	\arrow["\SIGMA"{description}, draw=none, from=13, to=11]
\end{tikzcd}
\end{equation}
Now, we want to obtain the vertical composition of the two 2-morphisms of \eqref{eq:eq[2]}.
For that, first apply Square to $d$ and $a_1$, followed by  Rule 4'. We obtain the equality of the part drawn in solid lines in Diagram \eqref{eq:eq-das-sol} below.
\begin{equation}\label{eq:eq-das-sol}
\begin{tikzcd}
	{} & {} & {} \\
	{} & {} & D & {} \\
	{} & A & {} \\
	{} & A & {\tilde{B}}
	\arrow["{{{{s_2}}}}", dotted, from=1-1, to=1-2]
	\arrow[""{name=0, anchor=center, inner sep=0}, equals, dotted, from=1-1, to=2-1]
	\arrow["{{{{{r_4}}}}}", from=1-2, to=1-3]
	\arrow[""{name=1, anchor=center, inner sep=0}, equals, from=1-2, to=2-2]
	\arrow[""{name=2, anchor=center, inner sep=0}, "{{{{{d_2}}}}}", from=1-3, to=2-3]
	\arrow["{{{{{b_3}}}}}", curve={height=-12pt}, from=1-3, to=2-4]
	\arrow["{{{{s_2}}}}"', dotted, from=2-1, to=2-2]
	\arrow[""{name=3, anchor=center, inner sep=0}, equals, dotted, from=2-1, to=3-1]
	\arrow["d"', from=2-2, to=2-3]
	\arrow[""{name=4, anchor=center, inner sep=0}, "{{{{{a_1}}}}}"', from=2-2, to=3-2]
	\arrow["\delta", shift right, Rightarrow, from=2-3, to=2-4]
	\arrow[""{name=5, anchor=center, inner sep=0}, from=2-3, to=3-3]
	\arrow[""{name=6, anchor=center, inner sep=0}, "{{{{{\tilde{b}_1}}}}}", curve={height=-18pt}, from=2-3, to=4-3]
	\arrow["{{{{{\tilde{b}_2}}}}}", curve={height=-18pt}, from=2-4, to=4-3]
	\arrow["a"', dotted, from=3-1, to=3-2]
	\arrow[""{name=7, anchor=center, inner sep=0}, equals, dotted, from=3-1, to=4-1]
	\arrow[from=3-2, to=3-3]
	\arrow[""{name=8, anchor=center, inner sep=0}, equals, from=3-2, to=4-2]
	\arrow[""{name=9, anchor=center, inner sep=0}, from=3-3, to=4-3]
	\arrow["a"', dotted, from=4-1, to=4-2]
	\arrow["{{{{{\tilde{b}}}}}}"', from=4-2, to=4-3]
	\arrow["\SIGMA"{description}, draw=none, from=0, to=1]
	\arrow["\SIGMA"{description}, draw=none, from=1, to=2]
	\arrow["\SIGMA"{description}, draw=none, from=3, to=4]
	\arrow["\SIGMA"{description}, draw=none, from=4, to=5]
	\arrow["\SIGMA"{description}, draw=none, from=7, to=8]
	\arrow["\SIGMA"{description}, draw=none, from=8, to=9]
	\arrow["{=}"{description, pos=0.2}, draw=none, from=3-3, to=6]
\end{tikzcd}
\quad = \quad
\begin{tikzcd}
	{} & {} & {} \\
	{} & {} & {} \\
	{} & {} & {\tilde{B}}
	\arrow["{{{s_2}}}", dotted, from=1-1, to=1-2]
	\arrow[""{name=0, anchor=center, inner sep=0}, equals, dotted, from=1-1, to=2-1]
	\arrow["{{{{r_4}}}}", from=1-2, to=1-3]
	\arrow[""{name=1, anchor=center, inner sep=0}, "{a_1}"{description}, from=1-2, to=2-2]
	\arrow[""{name=2, anchor=center, inner sep=0}, "{{{{b_3}}}}", from=1-3, to=2-3]
	\arrow["a"', dotted, from=2-1, to=2-2]
	\arrow[""{name=3, anchor=center, inner sep=0}, equals, dotted, from=2-1, to=3-1]
	\arrow["b"', from=2-2, to=2-3]
	\arrow[""{name=4, anchor=center, inner sep=0}, equals, from=2-2, to=3-2]
	\arrow[""{name=5, anchor=center, inner sep=0}, "{{{{\tilde{b}_2}}}}", from=2-3, to=3-3]
	\arrow["a"', dotted, from=3-1, to=3-2]
	\arrow["{{{{{\tilde{b}}}}}}"', from=3-2, to=3-3]
	\arrow["\SIGMA"{description}, draw=none, from=0, to=1]
	\arrow["\SIGMA"{description}, draw=none, from=1, to=2]
	\arrow["\SIGMA"{description}, draw=none, from=3, to=4]
	\arrow["\SIGMA"{description}, draw=none, from=4, to=5]
\end{tikzcd}
\, .
\end{equation}
Composing with the dotted squares, the resulting equality may be used to obtain  the vertical composition of the two 2-morphisms of \eqref{eq:eq[2]}:
\begin{equation}\label{eq:eq[[2]]}
 \begin{tikzcd}
	{} && {} & {} & {} & {} \\
	&&& D & {} & {} \\
	{} & {} & {} & {\tilde{B}} & A & {} \\
	&&& B & A & {} \\
	{} & {} && {} & {} & {}
	\arrow["{{{{h_2}}}}", from=1-1, to=1-3]
	\arrow[equals, from=1-1, to=3-1]
	\arrow["{{{{{k_2}}}}}", from=1-3, to=1-4]
	\arrow["\alpha"', between={0.2}{0.7}, Rightarrow, from=1-3, to=3-2]
	\arrow[""{name=0, anchor=center, inner sep=0}, "{{{{{d_1}}}}}"', from=1-4, to=2-4]
	\arrow["{{{{{r_2}}}}}"', from=1-5, to=1-4]
	\arrow[""{name=1, anchor=center, inner sep=0}, equals, from=1-5, to=2-5]
	\arrow["{{{{{s_2}}}}}"', from=1-6, to=1-5]
	\arrow[""{name=2, anchor=center, inner sep=0}, equals, from=1-6, to=2-6]
	\arrow[""{name=3, anchor=center, inner sep=0}, "{{{\tilde{b}_1}}}"', from=2-4, to=3-4]
	\arrow["d", from=2-5, to=2-4]
	\arrow[""{name=4, anchor=center, inner sep=0}, "{{{{a_1}}}}", from=2-5, to=3-5]
	\arrow["{{{{{s_2}}}}}", from=2-6, to=2-5]
	\arrow[""{name=5, anchor=center, inner sep=0}, equals, from=2-6, to=3-6]
	\arrow["{{{{h_3}}}}", from=3-1, to=3-2]
	\arrow[""{name=6, anchor=center, inner sep=0}, equals, from=3-1, to=5-1]
	\arrow["{{{{k_3}}}}", from=3-2, to=3-3]
	\arrow[""{name=7, anchor=center, inner sep=0}, equals, from=3-2, to=5-2]
	\arrow["{{{d_2}}}"{description}, curve={height=-6pt}, from=3-3, to=2-4]
	\arrow[""{name=8, anchor=center, inner sep=0}, "{{{b_3}}}"{description}, curve={height=6pt}, from=3-3, to=4-4]
	\arrow["{\tilde{b}}"{description}, from=3-5, to=3-4]
	\arrow[""{name=9, anchor=center, inner sep=0}, equals, from=3-5, to=4-5]
	\arrow["a", from=3-6, to=3-5]
	\arrow[""{name=10, anchor=center, inner sep=0}, equals, from=3-6, to=4-6]
	\arrow[""{name=11, anchor=center, inner sep=0}, "{{{\tilde{b}_2}}}", from=4-4, to=3-4]
	\arrow["b", from=4-5, to=4-4]
	\arrow["a", from=4-6, to=4-5]
	\arrow[""{name=12, anchor=center, inner sep=0}, equals, from=4-6, to=5-6]
	\arrow["{{{{h_3}}}}"', from=5-1, to=5-2]
	\arrow["{{{k'_3}}}"', from=5-2, to=5-4]
	\arrow[""{name=13, anchor=center, inner sep=0}, "{{{{{b_4}}}}}", from=5-4, to=4-4]
	\arrow[""{name=14, anchor=center, inner sep=0}, "{{{{{a_2}}}}}"', from=5-5, to=4-5]
	\arrow["{{{{{r'_3}}}}}", from=5-5, to=5-4]
	\arrow["{{{{{s'_2}}}}}", from=5-6, to=5-5]
	\arrow["\SIGMA"{description}, draw=none, from=0, to=1]
	\arrow["\SIGMA"{description}, draw=none, from=1, to=2]
	\arrow["\SIGMA"{description}, draw=none, from=3, to=4]
	\arrow["\delta"', between={0.4}{0.7}, Rightarrow, from=2-4, to=8]
	\arrow["\SIGMA"{description}, draw=none, from=4, to=5]
	\arrow["{{\theta''}}", between={0.2}{0.8}, Rightarrow, from=8, to=5-2]
	\arrow["\SIGMA"{description}, draw=none, from=9, to=10]
	\arrow["\SIGMA"{description}, draw=none, from=11, to=9]
	\arrow["\SIGMA"{description}, draw=none, from=13, to=14]
	\arrow["\SIGMA"{description}, draw=none, from=14, to=12]
\end{tikzcd}
\end{equation}
\noindent \textbf{C.} Now, in order to compare the 2-morphism \eqref{eq:eq[[1]]}
with the  2-morphism \eqref{eq:eq[[2]]}, observe first that the column of $\Sg$-squares on the right-hand side is equal in both of them. Thus, in the following we ignore that column and consider only  the remaining parts of the two 2-morphisms. For concluding the $\approx$-equivalence between them, we use the property stated in Remark \ref{rem:use}. Accordingly, apply Rule 4 to obtain
\begin{equation}\label{eq:CC}
\begin{tikzcd}
	{} & {} \\
	A & B \\
	A & {\hat{C}} & Q \\
	A & T \\
	A & {\hat{C}} & {\tilde{B}} \\
	{} & C \\
	{} & {}
	\arrow["{{{r_2}}}", from=1-1, to=1-2]
	\arrow[""{name=0, anchor=center, inner sep=0}, "{{{a_1}}}"', from=1-1, to=2-1]
	\arrow[""{name=1, anchor=center, inner sep=0}, "{{{b_1}}}", from=1-2, to=2-2]
	\arrow["{{{\tilde{b}_1d_1}}}", curve={height=-12pt}, from=1-2, to=3-3]
	\arrow["b"', from=2-1, to=2-2]
	\arrow[""{name=2, anchor=center, inner sep=0}, equals, from=2-1, to=3-1]
	\arrow[""{name=3, anchor=center, inner sep=0}, "{{{\hat{a}_1}}}", from=2-2, to=3-2]
	\arrow["{{{\hat{c}}}}"', from=3-1, to=3-2]
	\arrow[""{name=4, anchor=center, inner sep=0}, equals, from=3-1, to=4-1]
	\arrow["{{{\mu_1}}}"{pos=0.6}, shift left=3, between={0.3}{1}, Rightarrow, from=3-2, to=3-3]
	\arrow[""{name=5, anchor=center, inner sep=0}, "{{{t_1}}}", from=3-2, to=4-2]
	\arrow["{{{t_2}}}", curve={height=-6pt}, from=3-3, to=4-2]
	\arrow["t"', from=4-1, to=4-2]
	\arrow[""{name=6, anchor=center, inner sep=0}, equals, from=4-1, to=5-1]
	\arrow["{{{\hat{c}}}}"', from=5-1, to=5-2]
	\arrow[""{name=7, anchor=center, inner sep=0}, "{{{t_1}}}"', from=5-2, to=4-2]
	\arrow["{{{\mu_2}}}"{pos=0.6}, shift right=4, between={0.3}{1}, Rightarrow, from=5-2, to=5-3]
	\arrow["{{{t_2}}}"', curve={height=6pt}, from=5-3, to=4-2]
	\arrow[""{name=8, anchor=center, inner sep=0}, "{{{a_2}}}", from=6-1, to=5-1]
	\arrow["c"', from=6-1, to=6-2]
	\arrow[""{name=9, anchor=center, inner sep=0}, "{{{\hat{a}_2}}}"', from=6-2, to=5-2]
	\arrow[""{name=10, anchor=center, inner sep=0}, equals, from=7-1, to=6-1]
	\arrow["{{{r'_3}}}"', from=7-1, to=7-2]
	\arrow["{{{\tilde{b}_2b_4}}}"', curve={height=12pt}, from=7-2, to=5-3]
	\arrow[""{name=11, anchor=center, inner sep=0}, "{{{c_2}}}"', from=7-2, to=6-2]
	\arrow["\SIGMA"{description}, draw=none, from=0, to=1]
	\arrow["\SIGMA"{description}, draw=none, from=2, to=3]
	\arrow["\SIGMA"{description}, draw=none, from=4, to=5]
	\arrow["\SIGMA"{description}, draw=none, from=6, to=7]
	\arrow["\SIGMA"{description}, draw=none, from=8, to=9]
	\arrow["\SIGMA"{description}, draw=none, from=10, to=11]
\end{tikzcd}
\quad = \quad
\begin{tikzcd}
	{} & {} \\
	{} & B \\
	A & {\tilde{B}} \\
	A & T \\
	A & {\tilde{B}} \\
	A & B \\
	{} & {}
	\arrow["{{{r_2}}}", from=1-1, to=1-2]
	\arrow[""{name=0, anchor=center, inner sep=0}, equals, from=1-1, to=2-1]
	\arrow[""{name=1, anchor=center, inner sep=0}, "{{d_1}}", from=1-2, to=2-2]
	\arrow["d"', from=2-1, to=2-2]
	\arrow[""{name=2, anchor=center, inner sep=0}, "{{a_1}}"', from=2-1, to=3-1]
	\arrow[""{name=3, anchor=center, inner sep=0}, "{{\tilde{b}_1}}", from=2-2, to=3-2]
	\arrow["{{\tilde{b}}}"{description}, from=3-1, to=3-2]
	\arrow[""{name=4, anchor=center, inner sep=0}, equals, from=3-1, to=4-1]
	\arrow[""{name=5, anchor=center, inner sep=0}, "{{t_2}}", from=3-2, to=4-2]
	\arrow["t"', from=4-1, to=4-2]
	\arrow[""{name=6, anchor=center, inner sep=0}, equals, from=4-1, to=5-1]
	\arrow["{{\tilde{b}}}"{description}, from=5-1, to=5-2]
	\arrow[""{name=7, anchor=center, inner sep=0}, "{{t_2}}"', from=5-2, to=4-2]
	\arrow[""{name=8, anchor=center, inner sep=0}, equals, from=6-1, to=5-1]
	\arrow["b"', from=6-1, to=6-2]
	\arrow[""{name=9, anchor=center, inner sep=0}, "{{\tilde{b}_2}}"', from=6-2, to=5-2]
	\arrow[""{name=10, anchor=center, inner sep=0}, "{{a_2}}", from=7-1, to=6-1]
	\arrow["{{{r'_3}}}"', from=7-1, to=7-2]
	\arrow[""{name=11, anchor=center, inner sep=0}, "{{b_4}}"', from=7-2, to=6-2]
	\arrow["\SIGMA"{description}, draw=none, from=0, to=1]
	\arrow["\SIGMA"{description}, draw=none, from=2, to=3]
	\arrow["\SIGMA"{description}, draw=none, from=4, to=5]
	\arrow["\SIGMA"{description}, draw=none, from=6, to=7]
	\arrow["\SIGMA"{description}, draw=none, from=8, to=9]
	\arrow["\SIGMA"{description}, draw=none, from=10, to=11]
\end{tikzcd}
\, .
\end{equation}
To name the left-hand parts of \eqref{eq:eq[[1]]} and \eqref{eq:eq[[2]]}, put
$$\begin{array}{l}
\Lambda_1=(\hat{a}_2\circ \phi)\cdot (\psi\circ k'_2h_2)\cdot(\hat{a}_1\circ \theta'\circ h_2)\;\colon\; \hat{a}_1b_1k_2h_2\Rightarrow \hat{a}_2c_2k'_3h_3 \,;\\
\Lambda_2=(\tilde{b}_2\circ \theta''\circ h_3)\cdot (\delta\circ k_3h_3)\cdot(\tilde{b}_1\circ \alpha)\;\colon\; \tilde{b}_1d_1k_2h_2\Rightarrow \tilde{b}_2b_4k'_3h_3.
\end{array}$$
Now using \eqref{eq:CC}, \eqref{eq:eq-A1}, \eqref{eq:eq-A3}, \eqref{eq:eq-A2}, and \eqref{eq:CC} again, in sequence, we obtain the first equality below, and using \eqref{eq:BB}, \eqref{eq:eq-das-sol} and \eqref{eq:eq-B2} we obtain the second equality.
\[
\begin{tikzcd}
	{} & {} \\
	{} &&& {} \\
	{} & {} && {} \\
	{} & {} && {} & {} \\
	{} & D & B \\
	{} & {\tilde{B}} & {\hat{C}} && {} \\
	{} & T
	\arrow["r", from=1-1, to=1-2]
	\arrow["g"', from=1-1, to=2-1]
	\arrow["{{h_3}}", curve={height=-12pt}, from=1-2, to=2-4]
	\arrow[""{name=0, anchor=center, inner sep=0}, "{{h_2}}", from=1-2, to=3-2]
	\arrow["{{h_1}}"', from=2-1, to=3-1]
	\arrow["{{k'_3}}", from=2-4, to=3-4]
	\arrow["{{r_1}}"', from=3-1, to=3-2]
	\arrow[""{name=1, anchor=center, inner sep=0}, "{{k_1}}"', from=3-1, to=4-1]
	\arrow["{{\Lambda_1}}", between={0.2}{0.8}, Rightarrow, from=3-2, to=3-4]
	\arrow[""{name=2, anchor=center, inner sep=0}, "{{k_2}}", from=3-2, to=4-2]
	\arrow["{{c_2}}", from=3-4, to=4-4]
	\arrow["{{b_4}}", curve={height=-12pt}, from=3-4, to=4-5]
	\arrow["{{r_2}}"', from=4-1, to=4-2]
	\arrow[""{name=3, anchor=center, inner sep=0}, equals, from=4-1, to=5-1]
	\arrow[""{name=4, anchor=center, inner sep=0}, "{{d_1}}", from=4-2, to=5-2]
	\arrow["{b_1}", curve={height=-12pt}, from=4-2, to=5-3]
	\arrow[""{name=5, anchor=center, inner sep=0}, "{{\hat{a}_2}}"'{pos=0.4}, curve={height=-12pt}, from=4-4, to=6-3]
	\arrow[""{name=6, anchor=center, inner sep=0}, "{{\tilde{b}_2}}", from=4-5, to=6-5]
	\arrow["d"', from=5-1, to=5-2]
	\arrow[""{name=7, anchor=center, inner sep=0}, "{{a_1}}"', from=5-1, to=6-1]
	\arrow[""{name=8, anchor=center, inner sep=0}, "{{\tilde{b}_1}}", from=5-2, to=6-2]
	\arrow[""{name=9, anchor=center, inner sep=0}, "{{\hat{a}_1}}", from=5-3, to=6-3]
	\arrow["{{\tilde{b}}}"', from=6-1, to=6-2]
	\arrow[""{name=10, anchor=center, inner sep=0}, "{{t_2}}", from=6-2, to=7-2]
	\arrow["{{t_1}}"', curve={height=-12pt}, from=6-3, to=7-2]
	\arrow["{{t_2}}", curve={height=-18pt}, from=6-5, to=7-2]
	\arrow[""{name=11, anchor=center, inner sep=0}, equals, from=7-1, to=6-1]
	\arrow["t"', from=7-1, to=7-2]
	\arrow["\SIGMA"{description}, draw=none, from=2-1, to=0]
	\arrow["\SIGMA"{description}, draw=none, from=1, to=2]
	\arrow["\SIGMA"{description}, draw=none, from=3, to=4]
	\arrow["{{\mu_2}}"{pos=0.6}, between={0.3}{0.8}, Rightarrow, from=5, to=6]
	\arrow["\SIGMA"{description}, draw=none, from=7, to=8]
	\arrow["{{\mu_1^{-1}}}"{pos=0.6}, between={0.3}{0.8}, Rightarrow, from=8, to=9]
	\arrow["\SIGMA"{description}, draw=none, from=11, to=10]
\end{tikzcd}
\quad = \quad
\begin{tikzcd}
	& {} & {} \\
	& {} \\
	& {} & {} \\
	{} & {} & {} \\
	& {} & D \\
	& {} & {\tilde{B}} \\
	& {} & T
	\arrow["r", from=1-2, to=1-3]
	\arrow["g"', from=1-2, to=2-2]
	\arrow[""{name=0, anchor=center, inner sep=0}, "{{h_3}}", from=1-3, to=3-3]
	\arrow["{{{{{h_1}}}}}"', from=2-2, to=3-2]
	\arrow["{{{{r_2}}}}"', from=3-2, to=3-3]
	\arrow["{{{k_1}}}"', curve={height=6pt}, from=3-2, to=4-1]
	\arrow[""{name=1, anchor=center, inner sep=0}, "{{{k'_1}}}"', from=3-2, to=4-2]
	\arrow[""{name=2, anchor=center, inner sep=0}, "{{{k'_3}}}", from=3-3, to=4-3]
	\arrow["\theta", between={0.1}{0.9}, Rightarrow, from=4-1, to=4-2]
	\arrow["{{{a_1}}}"', curve={height=6pt}, from=4-1, to=5-2]
	\arrow["{{{{r'_3}}}}"', from=4-2, to=4-3]
	\arrow[""{name=3, anchor=center, inner sep=0}, "{{{a_2}}}"', from=4-2, to=5-2]
	\arrow[""{name=4, anchor=center, inner sep=0}, "{{{{b_4}}}}", from=4-3, to=5-3]
	\arrow["b"', from=5-2, to=5-3]
	\arrow[""{name=5, anchor=center, inner sep=0}, equals, from=5-2, to=6-2]
	\arrow[""{name=6, anchor=center, inner sep=0}, "{{{{\tilde{b}_2}}}}", from=5-3, to=6-3]
	\arrow["{{{{{\tilde{b}}}}}}"', from=6-2, to=6-3]
	\arrow[""{name=7, anchor=center, inner sep=0}, "{{{{{t_2}}}}}", from=6-3, to=7-3]
	\arrow[""{name=8, anchor=center, inner sep=0}, equals, from=7-2, to=6-2]
	\arrow["t"', from=7-2, to=7-3]
	\arrow["\SIGMA"{description}, draw=none, from=2-2, to=0]
	\arrow["\SIGMA"{description}, draw=none, from=1, to=2]
	\arrow["\SIGMA"{description}, draw=none, from=3, to=4]
	\arrow["\SIGMA"{description}, draw=none, from=5, to=6]
	\arrow["\SIGMA"{description}, draw=none, from=8, to=7]
\end{tikzcd}
\quad = \quad
\begin{tikzcd}
	{} & {} \\
	{} \\
	{} & {} & {} \\
	{} & {} & {} \\
	{} & D & {} \\
	{} & {\tilde{B}} \\
	{} & T
	\arrow["r", from=1-1, to=1-2]
	\arrow["g"', from=1-1, to=2-1]
	\arrow[""{name=0, anchor=center, inner sep=0}, "{h_2}", from=1-2, to=3-2]
	\arrow["{h_3}", curve={height=-12pt}, from=1-2, to=3-3]
	\arrow["{{{h_1}}}"', from=2-1, to=3-1]
	\arrow["{r_1}"', from=3-1, to=3-2]
	\arrow[""{name=1, anchor=center, inner sep=0}, "{{{k_1}}}"', from=3-1, to=4-1]
	\arrow[""{name=2, anchor=center, inner sep=0}, "{k_2}", from=3-2, to=4-2]
	\arrow[""{name=3, anchor=center, inner sep=0}, "{k'_3}", from=3-3, to=4-3]
	\arrow["{r_2}"', from=4-1, to=4-2]
	\arrow[""{name=4, anchor=center, inner sep=0}, equals, from=4-1, to=5-1]
	\arrow[""{name=5, anchor=center, inner sep=0}, "{d_1}", from=4-2, to=5-2]
	\arrow["{b_4}", from=4-3, to=5-3]
	\arrow["d"', from=5-1, to=5-2]
	\arrow[""{name=6, anchor=center, inner sep=0}, "{a_1}"', from=5-1, to=6-1]
	\arrow[""{name=7, anchor=center, inner sep=0}, "{\tilde{b}_1}", from=5-2, to=6-2]
	\arrow["{\tilde{b}_2}", curve={height=-6pt}, from=5-3, to=6-2]
	\arrow["{{{\tilde{b}}}}"', from=6-1, to=6-2]
	\arrow[""{name=8, anchor=center, inner sep=0}, "{{{t_2}}}", from=6-2, to=7-2]
	\arrow[""{name=9, anchor=center, inner sep=0}, equals, from=7-1, to=6-1]
	\arrow["t"', from=7-1, to=7-2]
	\arrow["\SIGMA"{description}, draw=none, from=2-1, to=0]
	\arrow["\SIGMA"{description}, draw=none, from=1, to=2]
	\arrow["{\Lambda_2}"{pos=0.6}, shorten <=13pt, shorten >=6pt, Rightarrow, from=2, to=3]
	\arrow["\SIGMA"{description}, draw=none, from=4, to=5]
	\arrow["\SIGMA"{description}, draw=none, from=6, to=7]
	\arrow["\SIGMA"{description}, draw=none, from=9, to=8]
\end{tikzcd}
\, .\]
Consequently, by Remark \ref{rem:use},
 taking the $\Sg$-square
 \(\begin{tikzcd}
	{} & {} \\
	{} & {}
	\arrow["r", from=1-1, to=1-2]
	\arrow[""{name=0, anchor=center, inner sep=0}, "{k_1h_1g}"', from=1-1, to=2-1]
	\arrow[""{name=1, anchor=center, inner sep=0}, "{k_2h_2}", from=1-2, to=2-2]
	\arrow["{r_2}"', from=2-1, to=2-2]
	\arrow["\SIGMA"{description}, draw=none, from=0, to=1]
\end{tikzcd}\)
 to play the role of \eqref{eq:sp-Square}, we conclude that the 2-morphisms represented in \eqref{eq:eq[[1]]}  and \eqref{eq:eq[[2]]} are $\approx$-equivalent.

\noindent \underline{Case $i=\mathbf{d}$ and $j=\mathbf{s}_1$ (or $j=\mathbf{s}$)}. We consider $j=\mathbf{s}_1$, as for $j=\mathbf{s}$ the procedure is the same.

We have $\Sg$-paths of the form
\(\begin{tikzcd}
	{S_1} & {S_3} & {S_2}
	\arrow["\mathbf{d}", squiggly, from=1-1, to=1-2]
	\arrow["\mathbf{s}_1", squiggly, from=1-2, to=1-3]
\end{tikzcd}\)
 and
 \(\begin{tikzcd}
	{S_1} & {S_4} & {S_2}
	\arrow["\mathbf{s}_1", squiggly, from=1-1, to=1-2]
	\arrow["\mathbf{d}", squiggly, from=1-2, to=1-3]
\end{tikzcd}\).
Note that the $\Sg$-steps
\(\begin{tikzcd}
	{S_1} & {S_3}
	\arrow["\mathbf{d}", squiggly, from=1-1, to=1-2]
\end{tikzcd}\)
and
 \(\begin{tikzcd}
	{S_4} & {S_2}
	\arrow["\mathbf{d}", squiggly, from=1-1, to=1-2]
\end{tikzcd}\)
determine that the four $\Sg$-schemes are of the form
\(
\adjustbox{scale=0.60}{\begin{tikzcd}
	&& {} & {} \\
	& {} & {} \\
	{} & {} && {} \\
	{} &&& {}
	\arrow[from=1-3, to=1-4]
	\arrow[from=1-3, to=2-3]
	\arrow[from=1-4, to=3-4]
	\arrow[from=2-2, to=2-3]
	\arrow[from=2-2, to=3-2]
	\arrow[from=3-1, to=3-2]
	\arrow[from=3-1, to=4-1]
	\arrow[from=3-2, to=3-4]
	\arrow[from=3-4, to=4-4]
	\arrow[from=4-1, to=4-4]
\end{tikzcd}}
\, .\)
The combination of this  with the existence of the $\Sg$-steps
\(\begin{tikzcd}
	{S_3} & {S_2}
	\arrow["\mathbf{s}_1", squiggly, from=1-1, to=1-2]
\end{tikzcd}\)
and
 \(\begin{tikzcd}
	{S_1} & {S_4}
	\arrow["\mathbf{s}_1", squiggly, from=1-1, to=1-2]
\end{tikzcd}\)
implies that all $\Sg$-schemes are of the form
\[
\adjustbox{scale=0.60}{\begin{tikzcd}
	&& {} & {} \\
	& {} & {} \\
	& {} && {} \\
	{} & {} && {} \\
	{} & {} && {}
	\arrow["r", from=1-3, to=1-4]
	\arrow["g"', from=1-3, to=2-3]
	\arrow["d", from=1-4, to=3-4]
	\arrow["s", from=2-2, to=2-3]
	\arrow[equals, from=2-2, to=3-2]
	\arrow["c", from=3-2, to=3-4]
	\arrow["h"', from=3-2, to=4-2]
	\arrow[from=3-4, to=4-4]
	\arrow["t", from=4-1, to=4-2]
	\arrow["k"', from=4-1, to=5-1]
	\arrow["e", from=4-2, to=4-4]
	\arrow[from=4-2, to=5-2]
	\arrow[from=4-4, to=5-4]
	\arrow[from=5-1, to=5-2]
	\arrow[from=5-2, to=5-4]
\end{tikzcd}}
\, . \]
Observe that along the two $\Sg$-paths,
\(
\adjustbox{scale=0.60}{\begin{tikzcd}
	& {} & {} \\
	{} & {} \\
	{} && {}
	\arrow["r", from=1-2, to=1-3]
	\arrow["g"', from=1-2, to=2-2]
	\arrow["d", from=1-3, to=3-3]
	\arrow["s", from=2-1, to=2-2]
	\arrow[equals, from=2-1, to=3-1]
	\arrow["c", from=3-1, to=3-3]
\end{tikzcd}}
\)
remains unchanged.
Thus, to obtain the basic $\Omega$ 2-cells, and the subsequent two $\Omega$ 2-cells corresponding to the two $\Sg$-paths, we just need to work with the part
\(
\adjustbox{scale=0.60}{\begin{tikzcd}
	& {} && {} \\
	{} & {} && {} \\
	{} & {} && {}
	\arrow["c", from=1-2, to=1-4]
	\arrow["h"', from=1-2, to=2-2]
	\arrow[from=1-4, to=2-4]
	\arrow["t", from=2-1, to=2-2]
	\arrow["k"', from=2-1, to=3-1]
	\arrow["e", from=2-2, to=2-4]
	\arrow[from=2-2, to=3-2]
	\arrow[from=2-4, to=3-4]
	\arrow[from=3-1, to=3-2]
	\arrow[from=3-2, to=3-4]
\end{tikzcd}}
\) .
 Now the procedure is analogous to the previous case: we make use of Lemma \ref{lem:use}  applied to the parts of the form
 \(
 \adjustbox{scale=0.60}{\begin{tikzcd}
	{} & {} && {} \\
	{} & {} && {}
	\arrow["t", from=1-1, to=1-2]
	\arrow["k"', from=1-1, to=2-1]
	\arrow["e", from=1-2, to=1-4]
	\arrow[from=1-2, to=2-2]
	\arrow[from=1-4, to=2-4]
	\arrow[from=2-1, to=2-2]
	\arrow[from=2-2, to=2-4]
\end{tikzcd}}
 \)
  in the passages of type $\mathbf{d}$ and, at the end, we use Remark \ref{lem:use} to conclude that the two $\Sg$-paths give rise to $\approx$-equivalent 2-morphisms.

\noindent \underline{Case $i=\mathbf{u}$ and $j=\mathbf{s}_1$ (or $j=\mathbf{s}$)}. As before, we consider $j=s_1$.

We have $\Sg$-paths of the form
\(\begin{tikzcd}
	{S_1} & {S_3} & {S_2}
	\arrow["\mathbf{u}", squiggly, from=1-1, to=1-2]
	\arrow["\mathbf{s}_1", squiggly, from=1-2, to=1-3]
\end{tikzcd}\)
 and
 \(\begin{tikzcd}
	{S_1} & {S_4} & {S_2}
	\arrow["\mathbf{s}_1", squiggly, from=1-1, to=1-2]
	\arrow["\mathbf{u}", squiggly, from=1-2, to=1-3]
\end{tikzcd}\).
Thus, the four $\Sg$-schemes are of the form
\[\adjustbox{scale=0.60}{\begin{tikzcd}
	&& {} & {} \\
	& {} & {} \\
	& {} & {} & {} \\
	{} & {} \\
	{} & {} & {} & {}
	\arrow["r", from=1-3, to=1-4]
	\arrow["g"', from=1-3, to=2-3]
	\arrow[from=1-4, to=3-4]
	\arrow["s", from=2-2, to=2-3]
	\arrow[equals, from=2-2, to=3-2]
	\arrow[from=2-3, to=3-3]
	\arrow[from=3-2, to=3-3]
	\arrow["h"', from=3-2, to=4-2]
	\arrow[from=3-3, to=3-4]
	\arrow[from=3-3, to=5-3]
	\arrow[from=3-4, to=5-4]
	\arrow["t", from=4-1, to=4-2]
	\arrow["k"', from=4-1, to=5-1]
	\arrow["{k'}", from=4-2, to=5-2]
	\arrow[from=5-1, to=5-2]
	\arrow[from=5-2, to=5-3]
	\arrow[from=5-3, to=5-4]
\end{tikzcd}}\, .\]
In each $\Sg$-step above, the $\Sg$-squares
\(
\begin{tikzcd}
	{} & {} \\
	{} & {}
	\arrow["s", from=1-1, to=1-2]
	\arrow[""{name=0, anchor=center, inner sep=0}, equals, from=1-1, to=2-1]
	\arrow[""{name=1, anchor=center, inner sep=0}, from=1-2, to=2-2]
	\arrow[from=2-1, to=2-2]
	\arrow["\SIGMA"{description}, draw=none, from=0, to=1]
\end{tikzcd}
\)
and
\(
\begin{tikzcd}
	{} & {} \\
	{} & {}
	\arrow["t", from=1-1, to=1-2]
	\arrow[""{name=0, anchor=center, inner sep=0}, "k"', from=1-1, to=2-1]
	\arrow[""{name=1, anchor=center, inner sep=0}, "{k'}", from=1-2, to=2-2]
	\arrow["{t'}", from=2-1, to=2-2]
	\arrow["\SIGMA"{description}, draw=none, from=0, to=1]
\end{tikzcd}\,
\)
remain unchanged, so they do not interfere with the $\Sg$-steps. To obtain the desired result we act as for $i=\bd$ and $j=\bu$: Apply Lemma \ref{lem:use} to the parts of the $\Sg$-schemes given by
\(
\adjustbox{scale=0.60}{\begin{tikzcd}
	{} & {} & {} \\
	{} \\
	{} & {} & {}
	\arrow[from=1-1, to=1-2]
	\arrow["h"', from=1-1, to=2-1]
	\arrow[from=1-2, to=1-3]
	\arrow[from=1-2, to=3-2]
	\arrow[from=1-3, to=3-3]
	\arrow["{{k'}}", from=2-1, to=3-1]
	\arrow[from=3-1, to=3-2]
	\arrow[from=3-2, to=3-3]
\end{tikzcd}}
\)
in the passages of type $\mathbf{s}_1$, and, at the end, use Remark \ref{rem:use}.

\noindent \underline{Case $i=\mathbf{u}$ and $j=\mathbf{d}_1$}. This is similar to $i=\mathbf{u}$ and $j=\mathbf{d}$.

\noindent \underline{Case $i=\mathbf{s}$ and $j=\mathbf{d}_1$}. This is similar to $i=\mathbf{d}$ and $j=\mathbf{s}$.

\noindent \underline{Case $i=\mathbf{d_1}$ and $j=\mathbf{s}_1$}. We easily see that all the $\Sg$-schemes have the configuration
\[
\adjustbox{scale=0.60}{\begin{tikzcd}
	&& {} & {} \\
	& {} & {} \\
	& {} && {} \\
	{} & {} \\
	{} & {} && {} \\
	{} & {} && {}
	\arrow["r", from=1-3, to=1-4]
	\arrow["g"', from=1-3, to=2-3]
	\arrow["c", from=1-4, to=3-4]
	\arrow["s", from=2-2, to=2-3]
	\arrow[equals, from=2-2, to=3-2]
	\arrow["d"', from=3-2, to=3-4]
	\arrow["h", from=3-2, to=4-2]
	\arrow[from=3-4, to=5-4]
	\arrow["t", from=4-1, to=4-2]
	\arrow["k"', from=4-1, to=5-1]
	\arrow["{{k'}}", from=4-2, to=5-2]
	\arrow["{{t'}}"', from=5-1, to=5-2]
	\arrow[equals, from=5-1, to=6-1]
	\arrow["{d'}", from=5-2, to=5-4]
	\arrow[from=5-2, to=6-2]
	\arrow[from=5-4, to=6-4]
	\arrow[from=6-1, to=6-2]
	\arrow[from=6-2, to=6-4]
\end{tikzcd}}
\]
where the parts
\(
\begin{tikzcd}
& {} & {} \\
	{} & {} \\
	{} && {}
	\arrow["r", from=1-2, to=1-3]
	\arrow["g"', from=1-2, to=2-2]
	\arrow["c", from=1-3, to=3-3]
	\arrow["s", from=2-1, to=2-2]
	\arrow[equals, from=2-1, to=3-1]
	\arrow["d"', from=3-1, to=3-3]
\end{tikzcd}
\)
and
\(
\begin{tikzcd}
{} & {} \\
	{} & {}
	\arrow["t", from=1-1, to=1-2]
	\arrow["k"', from=1-1, to=2-1]
	\arrow["{{k'}}", from=1-2, to=2-2]
	\arrow["{{t'}}"', from=2-1, to=2-2]
\end{tikzcd}
\)
are unchanged. We proceed as for the first case, by applying Lemma \ref{lem:use} to the part
\(
\begin{tikzcd}
	{} & {} && {} \\
	{} & {} && {}
	\arrow["{{{t'}}}"', from=1-1, to=1-2]
	\arrow[equals, from=1-1, to=2-1]
	\arrow["{d'}"', from=1-2, to=1-4]
	\arrow[from=1-2, to=2-2]
	\arrow[from=1-4, to=2-4]
	\arrow[from=2-1, to=2-2]
	\arrow[from=2-2, to=2-4]
\end{tikzcd}
\)
 when obtaining the basic $\Omega$ 2-cells corresponding to the passages ${\bd}_1$ and using Remark \ref{rem:use} at the end.
\end{proof}

\begin{corollary}\label{cor:length2} Every two $\Sigma$-paths of length 2 or less (as in \cref{assumption-path}) between $\Sg$-schemes of level 3 are equivalent. Equivalently, every cycle of 4 or less $\Sg$-steps between $\Sg$-schemes of level 3 is equivalent to an identity $\Sg$-step.
\end{corollary}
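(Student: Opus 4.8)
The plan is to deduce Corollary~\ref{cor:length2} directly from Proposition~\ref{pro:adend} together with Lemma~\ref{lem:Sigma-steps}. The statement has two parts: equivalence of all $\Sigma$-paths of length $\le 2$ with the same endpoints, and the equivalent formulation in terms of cycles of length $\le 4$. I would first dispose of the trivial cases and then reduce the length-$2$ case to the "commuting square" situation handled in Proposition~\ref{pro:adend}.

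First I would handle $\Sigma$-paths of length $0$ and $1$. A length-$0$ path is the identity $\Sigma$-step, and two length-$1$ paths with the same endpoints are equivalent by Lemma~\ref{lem:Sigma-steps}(4); moreover, by the same lemma, a $\Sigma$-step from a $\Sigma$-scheme to itself is the identity $2$-cell. Next, for two $\Sigma$-paths of length exactly $2$, say $S_1 \rightsquigarrow S_3 \rightsquigarrow S_2$ of types $(i,j)$ and $S_1 \rightsquigarrow S_4 \rightsquigarrow S_2$ of types $(k,l)$, the key observation is that each of $i,j,k,l$ is determined up to its letter in $\{\mathbf{d},\mathbf{u},\mathbf{s},\mathbf{d}_1,\mathbf{s}_1\}$, and the two paths share $S_1$ and $S_2$. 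If $\{i,j\}=\{k,l\}$ as multisets, then either the paths already coincide or they differ by swapping the order of the two steps; in the swapped case the two paths have the form $S_1 \xrightarrow{i} S_3 \xrightarrow{j} S_2$ and $S_1 \xrightarrow{j} S_4 \xrightarrow{i} S_2$, which is exactly the situation of Proposition~\ref{pro:adend}, so they are equivalent. If $\{i,j\} \ne \{k,l\}$, I would argue that this forces the paths to in fact be composed of repeated or combinable steps: by Lemma~\ref{lem:Sigma-steps}(1)--(3), a length-$2$ path of types $(i,i)$, or of types $\{\mathbf{s},\mathbf{s}_1\}$, or of types $\{\mathbf{d},\mathbf{d}_1\}$, collapses to a length-$1$ path; combining such a collapse with Lemma~\ref{lem:Sigma-steps}(4) reduces the comparison to the length-$\le 1$ case already treated.

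The step I expect to be the main obstacle is the bookkeeping in the case-distinction on $\{i,j\}$ versus $\{k,l\}$: one must check that every pair of length-$2$ $\Sigma$-paths with the same endpoints genuinely falls into one of the patterns covered by Proposition~\ref{pro:adend} or by the collapse lemmas of Lemma~\ref{lem:Sigma-steps}, and in particular that a $\Sigma$-scheme admitting two $\Sigma$-steps of a given pair of types has a configuration constraining the intermediate $\Sigma$-schemes $S_3,S_4$ enough to make the relevant equality of $\Omega$ $2$-cells applicable. This is essentially the same combinatorial analysis of configurations that appears inside the proof of Proposition~\ref{pro:adend}, so I would cite that proof's case breakdown rather than redo it.

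Finally, I would translate to the cycle formulation. Given a cycle $S_1 \rightsquigarrow \cdots \rightsquigarrow S_1$ of $n \le 4$ $\Sigma$-steps, split it (as in Remark~\ref{rem:cycle}, with $k$ chosen so that both arcs have length $\le 2$) into two $\Sigma$-paths from $S_1$ to some $S_k$ of length $\le 2$ each; these are equivalent by the length-$\le 2$ result just proved, so by Remark~\ref{rem:cycle} the full cycle is equivalent to the identity $\Sigma$-step. Conversely, the equivalence of two length-$\le 2$ paths with common endpoints is exactly the assertion that the cycle obtained by running one forwards and the other backwards (total length $\le 4$) corresponds to the identity. This gives both halves of the corollary.
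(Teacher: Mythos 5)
There is a genuine gap in the second paragraph, at the case $\{i,j\}\neq\{k,l\}$. You claim that when the two length-$2$ paths have different type multisets, Lemma~\ref{lem:Sigma-steps}(1)--(3) forces one of them to collapse to a shorter path, reducing to the length-$\le 1$ case. But those collapse rules only apply to a path whose two steps are of the same type, or of types $\{\mathbf{s},\mathbf{s}_1\}$, or of types $\{\mathbf{d},\mathbf{d}_1\}$. Nothing collapses a path of types $(\mathbf{d},\mathbf{u})$, $(\mathbf{s},\mathbf{d})$, $(\mathbf{u},\mathbf{s})$, $(\mathbf{u},\mathbf{s}_1)$, etc., and such mixed pairs genuinely occur with the same endpoints: for instance a $(\mathbf{d},\mathbf{u})$-path and an $(\mathbf{s},\mathbf{d})$-path between the same two $\Sg$-schemes, which is exactly the cycle $\mathbf{duds}$ treated in the paper, or the cycles $\mathbf{dud}_1\mathbf{u}$, $\mathbf{udus}$ and $\mathbf{u}\mathbf{s}_1\mathbf{us}$. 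Likewise your proposal never compares a length-$1$ path with a length-$2$ path (cycles of length $3$, i.e.\ $\mathbf{dus}$, $\mathbf{dus}_1$, $\mathbf{d}_1\mathbf{us}$, $\mathbf{d}_1\mathbf{us}_1$), which is part of the statement. Proposition~\ref{pro:adend} only covers the transposed pattern $(i,j)$ versus $(j,i)$, so citing ``the same combinatorial analysis that appears inside the proof of Proposition~\ref{pro:adend}'' does not close these cases: the paper's proof of the corollary consists precisely of new, nontrivial arguments for them (identifying which $\Sg$-squares of the schemes must coincide and showing, e.g., that in those circumstances the $\mathbf{d}$- and $\mathbf{u}$-steps produce the same basic $\Omega$ $2$-cell as an $\mathbf{s}$-step, so the whole cycle reduces to a single step and hence to the identity), together with the reductions you do mention. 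So the overall strategy (reduce to cycles via Remark~\ref{rem:cycle}, dispose of repeated or $\{\mathbf{d},\mathbf{d}_1\}$, $\{\mathbf{s},\mathbf{s}_1\}$ adjacencies by Lemma~\ref{lem:Sigma-steps}, invoke Proposition~\ref{pro:adend} for transposed pairs) matches the paper's opening moves, but the heart of the proof --- the explicit treatment of the irreducible length-$3$ and length-$4$ cycles of mixed types --- is missing from your argument and cannot be obtained by citation.

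A smaller inaccuracy: in the case $\{i,j\}=\{k,l\}$ you assert that the paths ``already coincide'' when the ordered types agree. They need not, since the intermediate schemes $S_3$ and $S_4$ (and the replaced $\Sg$-squares) can differ even for the same type sequence. This case is still true, but it is handled in the paper through the cycle formulation (rotate the cycle $i,j,j,i$ so that the two repeated steps become adjacent and apply Lemma~\ref{lem:Sigma-steps}(1) and (4)), not by the coincidence you claim.
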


\begin{proof}  This was already  seen in Lemma \ref{lem:Sigma-steps} for cycles of length 1 or 2.
Concerning cycles of length 3, it suffices to consider the cases where we have three consecutive $\Sg$-steps all of different types. Indeed, in the case we have two repeated types, then since a $\Sg$-path of two $\Sg$-steps of the same type are equivalent to a $\Sg$-path of just one $\Sg$-step of that type, we again fall under the case of length 1 or 2.

It remains to consider all cycles of length 3 or 4 not yet encompassed by Proposition \ref{pro:adend}. Indeed, as we are going to see, all of them are obtained via  Lemma \ref{lem:Sigma-steps} and that proposition.

 \underline{Cycles of length 3}. If the cycle  contains the $\Sg$-steps $\bs$ and ${\bs}_1$, or the $\Sg$-steps $\bd$ and ${\bd}_1$, then the result  follows immediately from Lemma \ref{lem:Sigma-steps}. Since every $\Sg$-step may be seen as an undirected edge (because basic $\Omega$ 2-cells are invertible and the inverse corresponds to a $\Sg$-step of the same type, just reversed), the only cases to study are $\mathbf{dus}$, $\mathbf{dus}_1$, $\mathbf{d}_1\mathbf{us}$ and $\mathbf{d}_1\mathbf{us}_1$.  We analyse  $\mathbf{dus}$, the remaining cases are similar.

  \underline{Case $\mathbf{dus}$}. Let us consider
  \[\begin{tikzcd}
	{S_1} & {S_2} & {S_3} & {S_1}
	\arrow["\bd", squiggly, from=1-1, to=1-2]
	\arrow["\bu", squiggly, from=1-2, to=1-3]
	\arrow["\bs", squiggly, from=1-3, to=1-4]
\end{tikzcd}\, .\]
Observe that, taking into account the type of the $\Sigma$-steps, we conclude successively as follows, where the necessary equal $\Sigma$-squares will be indicated with the same number:

$\bullet$ $S_1$ is simultaneously of the forms $\bd$ and $\bs$, hence
\(
S_1=\adjustbox{scale=0.60}{\begin{tikzcd}
	&& {} & {} \\
	& {} & {} & {} \\
	{} & {} & {} & {} \\
	{} & {} & {} & {}
	\arrow[from=1-3, to=1-4]
	\arrow[from=1-3, to=2-3]
	\arrow[from=1-4, to=2-4]
	\arrow[from=2-2, to=2-3]
	\arrow[from=2-2, to=3-2]
	\arrow[from=2-3, to=2-4]
	\arrow[from=2-4, to=3-4]
	\arrow[from=3-1, to=3-2]
	\arrow[from=3-1, to=4-1]
	\arrow[from=3-2, to=3-4]
	\arrow[from=3-2, to=4-2]
	\arrow[from=3-4, to=4-4]
	\arrow[from=4-1, to=4-2]
	\arrow[from=4-2, to=4-4]
\end{tikzcd}}
\);

$\bullet$ $S_2$ is simultaneously of the forms $\bd$ and $\bu$, hence
\(S_2=\adjustbox{scale=0.60}{
\begin{tikzcd}
	&& {} & {} \\
	& {} & {} & {} \\
	{} & {} & {} & {} \\
	{} && {} & {}
	\arrow[from=1-3, to=1-4]
	\arrow[from=1-3, to=2-3]
	\arrow[from=1-4, to=3-4]
	\arrow[from=2-2, to=2-3]
	\arrow[from=2-2, to=3-2]
	\arrow[from=2-3, to=3-3]
	\arrow[from=3-1, to=3-2]
	\arrow[from=3-1, to=4-1]
	\arrow[from=3-2, to=3-3]
	\arrow[from=3-3, to=3-4]
	\arrow[from=3-3, to=4-3]
	\arrow[from=3-4, to=4-4]
	\arrow[from=4-1, to=4-3]
	\arrow[from=4-3, to=4-4]
\end{tikzcd}
}
\);

$\bullet$ $S_3$ is simultaneously of the forms $\bu$ and $\bs$, hence
\(S_3=\adjustbox{scale=0.60}{
\begin{tikzcd}
	&& {} & {} \\
	& {} & {} & {} \\
	{} & {} \\
	{} & {} & {} & {}
	\arrow[from=1-3, to=1-4]
	\arrow[from=1-3, to=2-3]
	\arrow[from=1-4, to=2-4]
	\arrow[from=2-2, to=2-3]
	\arrow[from=2-2, to=3-2]
	\arrow[from=2-3, to=2-4]
	\arrow[from=2-3, to=4-3]
	\arrow[from=2-4, to=4-4]
	\arrow[from=3-1, to=3-2]
	\arrow[from=3-1, to=4-1]
	\arrow[from=3-2, to=4-2]
	\arrow[from=4-1, to=4-2]
	\arrow[from=4-2, to=4-3]
	\arrow[from=4-3, to=4-4]
\end{tikzcd}
}
\);

$\bullet$ $\bd\colon  S_1\rightsquigarrow S_2$ obliges us to have
\(S_1=
\adjustbox{scale=0.60}{\begin{tikzcd}
	&& {} & {} \\
	& {} & {} & {} \\
	{} & {} & {} & {} \\
	{} & {} & {} & {}
	\arrow[from=1-3, to=1-4]
	\arrow[""{name=0, anchor=center, inner sep=0}, from=1-3, to=2-3]
	\arrow[""{name=1, anchor=center, inner sep=0}, from=1-4, to=2-4]
	\arrow[from=2-2, to=2-3]
	\arrow[""{name=2, anchor=center, inner sep=0}, from=2-2, to=3-2]
	\arrow[from=2-3, to=2-4]
	\arrow[""{name=3, anchor=center, inner sep=0}, from=2-3, to=3-3]
	\arrow[""{name=4, anchor=center, inner sep=0}, from=2-4, to=3-4]
	\arrow[from=3-1, to=3-2]
	\arrow[""{name=5, anchor=center, inner sep=0}, from=3-1, to=4-1]
	\arrow[from=3-2, to=3-3]
	\arrow[""{name=6, anchor=center, inner sep=0}, from=3-2, to=4-2]
	\arrow[from=3-3, to=3-4]
	\arrow[""{name=7, anchor=center, inner sep=0}, from=3-4, to=4-4]
	\arrow[from=4-1, to=4-2]
	\arrow[from=4-2, to=4-4]
	\arrow["1"{description}, draw=none, from=0, to=1]
	\arrow["2"{description}, draw=none, from=2, to=3]
	\arrow["3"{description}, draw=none, from=3, to=4]
	\arrow["4"{description}, draw=none, from=5, to=6]
	\arrow["5"{description}, draw=none, from=6, to=7]
\end{tikzcd}}\quad
\)
and
\( \quad
S_2=
\adjustbox{scale=0.60}{\begin{tikzcd}
	&& {} & {} \\
	& {} & {} & {} \\
	{} & {} & {} & {} \\
	{} && {} & {}
	\arrow[from=1-3, to=1-4]
	\arrow[""{name=0, anchor=center, inner sep=0}, from=1-3, to=2-3]
	\arrow[""{name=1, anchor=center, inner sep=0}, from=1-4, to=2-4]
	\arrow[from=2-2, to=2-3]
	\arrow[""{name=2, anchor=center, inner sep=0}, from=2-2, to=3-2]
	\arrow[from=2-3, to=2-4]
	\arrow[""{name=3, anchor=center, inner sep=0}, from=2-3, to=3-3]
	\arrow[""{name=4, anchor=center, inner sep=0}, from=2-4, to=3-4]
	\arrow[from=3-1, to=3-2]
	\arrow[""{name=5, anchor=center, inner sep=0}, from=3-1, to=4-1]
	\arrow[from=3-2, to=3-3]
	\arrow[from=3-3, to=3-4]
	\arrow[""{name=6, anchor=center, inner sep=0}, from=3-3, to=4-3]
	\arrow[""{name=7, anchor=center, inner sep=0}, from=3-4, to=4-4]
	\arrow[from=4-1, to=4-3]
	\arrow[from=4-3, to=4-4]
	\arrow["1"{description}, draw=none, from=0, to=1]
	\arrow["2"{description}, draw=none, from=2, to=3]
	\arrow["3"{description}, draw=none, from=3, to=4]
	\arrow["6"{description}, draw=none, from=5, to=6]
	\arrow["8"{description}, draw=none, from=6, to=7]
\end{tikzcd}}
\);

$\bullet$ $\bu\colon S_2\rightsquigarrow S_3$ obliges us to have
\(S_3=
\adjustbox{scale=0.60}{\begin{tikzcd}
	&& {} & {} \\
	& {} & {} & {} \\
	{} & {} & {} & {} \\
	{} & {} & {} & {}
	\arrow[from=1-3, to=1-4]
	\arrow[from=1-3, to=2-3]
	\arrow[from=1-4, to=2-4]
	\arrow[from=2-2, to=2-3]
	\arrow[""{name=0, anchor=center, inner sep=0}, from=2-2, to=3-2]
	\arrow[from=2-3, to=2-4]
	\arrow[""{name=1, anchor=center, inner sep=0}, from=2-3, to=3-3]
	\arrow[""{name=2, anchor=center, inner sep=0}, from=2-4, to=3-4]
	\arrow[from=3-1, to=3-2]
	\arrow[""{name=3, anchor=center, inner sep=0}, from=3-1, to=4-1]
	\arrow[from=3-2, to=3-3]
	\arrow[""{name=4, anchor=center, inner sep=0}, from=3-2, to=4-2]
	\arrow[from=3-3, to=3-4]
	\arrow[""{name=5, anchor=center, inner sep=0}, from=3-3, to=4-3]
	\arrow[""{name=6, anchor=center, inner sep=0}, from=3-4, to=4-4]
	\arrow[from=4-1, to=4-2]
	\arrow[from=4-2, to=4-3]
	\arrow[from=4-3, to=4-4]
	\arrow["2"{description}, draw=none, from=0, to=1]
	\arrow["9"{description}, draw=none, from=1, to=2]
	\arrow["4"{description}, draw=none, from=3, to=4]
	\arrow["7"{description}, draw=none, from=4, to=5]
	\arrow["10"{description}, draw=none, from=5, to=6]
\end{tikzcd}}\)
and
\( \quad
S_2=
\adjustbox{scale=0.60}{\begin{tikzcd}
	&& {} & {} \\
	& {} & {} & {} \\
	{} & {} & {} & {} \\
	{} & {} & {} & {}
	\arrow[from=1-3, to=1-4]
	\arrow[""{name=0, anchor=center, inner sep=0}, from=1-3, to=2-3]
	\arrow[""{name=1, anchor=center, inner sep=0}, from=1-4, to=2-4]
	\arrow[from=2-2, to=2-3]
	\arrow[""{name=2, anchor=center, inner sep=0}, from=2-2, to=3-2]
	\arrow[from=2-3, to=2-4]
	\arrow[""{name=3, anchor=center, inner sep=0}, from=2-3, to=3-3]
	\arrow[""{name=4, anchor=center, inner sep=0}, from=2-4, to=3-4]
	\arrow[from=3-1, to=3-2]
	\arrow[""{name=5, anchor=center, inner sep=0}, from=3-1, to=4-1]
	\arrow[from=3-2, to=3-3]
	\arrow[""{name=6, anchor=center, inner sep=0}, from=3-2, to=4-2]
	\arrow[from=3-3, to=3-4]
	\arrow[""{name=7, anchor=center, inner sep=0}, from=3-3, to=4-3]
	\arrow[""{name=8, anchor=center, inner sep=0}, from=3-4, to=4-4]
	\arrow[from=4-1, to=4-2]
	\arrow[from=4-2, to=4-3]
	\arrow[from=4-3, to=4-4]
	\arrow["1"{description}, draw=none, from=0, to=1]
	\arrow["2"{description}, draw=none, from=2, to=3]
	\arrow["3"{description}, draw=none, from=3, to=4]
	\arrow["4"{description}, draw=none, from=5, to=6]
	\arrow["7"{description}, draw=none, from=6, to=7]
	\arrow["8"{description}, draw=none, from=7, to=8]
\end{tikzcd}}
\);

$\bullet$ $\bs\colon S_3\rightsquigarrow S_1$ requires
\(S_3=
\adjustbox{scale=0.60}{\begin{tikzcd}
	&& {} & {} \\
	& {} & {} & {} \\
	{} & {} & {} & {} \\
	{} & {} & {} & {}
	\arrow[from=1-3, to=1-4]
	\arrow[""{name=0, anchor=center, inner sep=0}, from=1-3, to=2-3]
	\arrow[""{name=1, anchor=center, inner sep=0}, from=1-4, to=2-4]
	\arrow[from=2-2, to=2-3]
	\arrow[""{name=2, anchor=center, inner sep=0}, from=2-2, to=3-2]
	\arrow[from=2-3, to=2-4]
	\arrow[""{name=3, anchor=center, inner sep=0}, from=2-3, to=3-3]
	\arrow[""{name=4, anchor=center, inner sep=0}, from=2-4, to=3-4]
	\arrow[from=3-1, to=3-2]
	\arrow[""{name=5, anchor=center, inner sep=0}, from=3-1, to=4-1]
	\arrow[from=3-2, to=3-3]
	\arrow[""{name=6, anchor=center, inner sep=0}, from=3-2, to=4-2]
	\arrow[from=3-3, to=3-4]
	\arrow[""{name=7, anchor=center, inner sep=0}, from=3-3, to=4-3]
	\arrow[""{name=8, anchor=center, inner sep=0}, from=3-4, to=4-4]
	\arrow[from=4-1, to=4-2]
	\arrow[from=4-2, to=4-3]
	\arrow[from=4-3, to=4-4]
	\arrow["1"{description}, draw=none, from=0, to=1]
	\arrow["2"{description}, draw=none, from=2, to=3]
	\arrow["9"{description}, draw=none, from=3, to=4]
	\arrow["4"{description}, draw=none, from=5, to=6]
	\arrow["7"{description}, draw=none, from=6, to=7]
	\arrow["10"{description}, draw=none, from=7, to=8]
\end{tikzcd}} \quad
\, .\)
 \newline
 Consequently, we have:
\[
\adjustbox{scale=0.60}{\begin{tikzcd}
	&& {} & {} \\
	& {} & {} & {} \\
	{} & {} & {} & {} \\
	{} & {} & {} & {}
	\arrow[from=1-3, to=1-4]
	\arrow[""{name=0, anchor=center, inner sep=0}, from=1-3, to=2-3]
	\arrow[""{name=1, anchor=center, inner sep=0}, from=1-4, to=2-4]
	\arrow[from=2-2, to=2-3]
	\arrow[""{name=2, anchor=center, inner sep=0}, from=2-2, to=3-2]
	\arrow[from=2-3, to=2-4]
	\arrow[""{name=3, anchor=center, inner sep=0}, from=2-3, to=3-3]
	\arrow[""{name=4, anchor=center, inner sep=0}, from=2-4, to=3-4]
	\arrow[from=3-1, to=3-2]
	\arrow[""{name=5, anchor=center, inner sep=0}, from=3-1, to=4-1]
	\arrow[from=3-2, to=3-3]
	\arrow[""{name=6, anchor=center, inner sep=0}, from=3-2, to=4-2]
	\arrow[from=3-3, to=3-4]
	\arrow[""{name=7, anchor=center, inner sep=0}, from=3-4, to=4-4]
	\arrow[from=4-1, to=4-2]
	\arrow[from=4-2, to=4-4]
	\arrow["1"{description}, draw=none, from=0, to=1]
	\arrow["2"{description}, draw=none, from=2, to=3]
	\arrow["3"{description}, draw=none, from=3, to=4]
	\arrow["4"{description}, draw=none, from=5, to=6]
	\arrow["5"{description}, draw=none, from=6, to=7]
\end{tikzcd}}
\begin{tikzcd}
	{} & {}
	\arrow["\bd", squiggly, from=1-1, to=1-2]
\end{tikzcd}
\adjustbox{scale=0.60}{\begin{tikzcd}
	&& {} & {} \\
	& {} & {} & {} \\
	{} & {} & {} & {} \\
	{} & {} & {} & {}
	\arrow[from=1-3, to=1-4]
	\arrow[""{name=0, anchor=center, inner sep=0}, from=1-3, to=2-3]
	\arrow[""{name=1, anchor=center, inner sep=0}, from=1-4, to=2-4]
	\arrow[from=2-2, to=2-3]
	\arrow[""{name=2, anchor=center, inner sep=0}, from=2-2, to=3-2]
	\arrow[from=2-3, to=2-4]
	\arrow[""{name=3, anchor=center, inner sep=0}, from=2-3, to=3-3]
	\arrow[""{name=4, anchor=center, inner sep=0}, from=2-4, to=3-4]
	\arrow[from=3-1, to=3-2]
	\arrow[""{name=5, anchor=center, inner sep=0}, from=3-1, to=4-1]
	\arrow[from=3-2, to=3-3]
	\arrow[""{name=6, anchor=center, inner sep=0}, from=3-2, to=4-2]
	\arrow[from=3-3, to=3-4]
	\arrow[""{name=7, anchor=center, inner sep=0}, from=3-3, to=4-3]
	\arrow[""{name=8, anchor=center, inner sep=0}, from=3-4, to=4-4]
	\arrow[from=4-1, to=4-2]
	\arrow[from=4-2, to=4-3]
	\arrow[from=4-3, to=4-4]
	\arrow["1"{description}, draw=none, from=0, to=1]
	\arrow["2"{description}, draw=none, from=2, to=3]
	\arrow["3"{description}, draw=none, from=3, to=4]
	\arrow["4"{description}, draw=none, from=5, to=6]
	\arrow["7"{description}, draw=none, from=6, to=7]
	\arrow["8"{description}, draw=none, from=7, to=8]
\end{tikzcd}}
\begin{tikzcd}
	{} & {}
	\arrow["\bu", squiggly, from=1-1, to=1-2]
\end{tikzcd}
\adjustbox{scale=0.60}{\begin{tikzcd}
	&& {} & {} \\
	& {} & {} & {} \\
	{} & {} & {} & {} \\
	{} & {} & {} & {}
	\arrow[from=1-3, to=1-4]
	\arrow[""{name=0, anchor=center, inner sep=0}, from=1-3, to=2-3]
	\arrow[""{name=1, anchor=center, inner sep=0}, from=1-4, to=2-4]
	\arrow[from=2-2, to=2-3]
	\arrow[""{name=2, anchor=center, inner sep=0}, from=2-2, to=3-2]
	\arrow[from=2-3, to=2-4]
	\arrow[""{name=3, anchor=center, inner sep=0}, from=2-3, to=3-3]
	\arrow[""{name=4, anchor=center, inner sep=0}, from=2-4, to=3-4]
	\arrow[from=3-1, to=3-2]
	\arrow[""{name=5, anchor=center, inner sep=0}, from=3-1, to=4-1]
	\arrow[from=3-2, to=3-3]
	\arrow[""{name=6, anchor=center, inner sep=0}, from=3-2, to=4-2]
	\arrow[from=3-3, to=3-4]
	\arrow[""{name=7, anchor=center, inner sep=0}, from=3-3, to=4-3]
	\arrow[""{name=8, anchor=center, inner sep=0}, from=3-4, to=4-4]
	\arrow[from=4-1, to=4-2]
	\arrow[from=4-2, to=4-3]
	\arrow[from=4-3, to=4-4]
	\arrow["1"{description}, draw=none, from=0, to=1]
	\arrow["2"{description}, draw=none, from=2, to=3]
	\arrow["9"{description}, draw=none, from=3, to=4]
	\arrow["4"{description}, draw=none, from=5, to=6]
	\arrow["7"{description}, draw=none, from=6, to=7]
	\arrow["10"{description}, draw=none, from=7, to=8]
\end{tikzcd}}
\begin{tikzcd}
	{} & {}
	\arrow["\bs", squiggly, from=1-1, to=1-2]
\end{tikzcd}
\adjustbox{scale=0.60}{\begin{tikzcd}
	&& {} & {} \\
	& {} & {} & {} \\
	{} & {} & {} & {} \\
	{} & {} & {} & {}
	\arrow[from=1-3, to=1-4]
	\arrow[""{name=0, anchor=center, inner sep=0}, from=1-3, to=2-3]
	\arrow[""{name=1, anchor=center, inner sep=0}, from=1-4, to=2-4]
	\arrow[from=2-2, to=2-3]
	\arrow[""{name=2, anchor=center, inner sep=0}, from=2-2, to=3-2]
	\arrow[from=2-3, to=2-4]
	\arrow[""{name=3, anchor=center, inner sep=0}, from=2-3, to=3-3]
	\arrow[""{name=4, anchor=center, inner sep=0}, from=2-4, to=3-4]
	\arrow[from=3-1, to=3-2]
	\arrow[""{name=5, anchor=center, inner sep=0}, from=3-1, to=4-1]
	\arrow[from=3-2, to=3-3]
	\arrow[""{name=6, anchor=center, inner sep=0}, from=3-2, to=4-2]
	\arrow[from=3-3, to=3-4]
	\arrow[""{name=7, anchor=center, inner sep=0}, from=3-4, to=4-4]
	\arrow[from=4-1, to=4-2]
	\arrow[from=4-2, to=4-4]
	\arrow["1"{description}, draw=none, from=0, to=1]
	\arrow["2"{description}, draw=none, from=2, to=3]
	\arrow["3"{description}, draw=none, from=3, to=4]
	\arrow["4"{description}, draw=none, from=5, to=6]
	\arrow["5"{description}, draw=none, from=6, to=7]
\end{tikzcd}}
\]
Observe that the basic $\Omega$ 2-cell corresponding to $\mathbf{d}$ may be obtained by applying Rule 4' just to the $\Sigma$-squares 5 and $7\oplus 8$ and composing with the $\Sigma$-square 4. Then, it coincides with the basic $\Omega$ 2-cell obtained by the $\Sigma$-step of type $\mathbf{s}$. Analogously, the $\Omega$ 2-cell corresponding to $\mathbf{u}$ may be obtained by applying Rule 4' to the $\Sigma$-squares $8\odot 3$  and $10\odot 9$   and composing with the $\Sigma$-square 1. Thus, this $\Omega$ 2-cell also coincides with the one obtained by a $\Sigma$-step of type $\mathbf{s}$. Therefore, we conclude that the $\Sigma$-path is equivalent to a $\Sigma$-path consisting of a single $\Sigma$-step of type $\mathbf{s}$, and hence it is just the identity.

 \underline{Cycles of length 4}. We analyse the cases which are not reducible to a cycle of length $\leq 3$ in an obvious way and which are not encompassed by  \cref{pro:adend}. Indeed, between all possible $\Sigma$-cycles $\; \begin{tikzcd}
	{S_1} & {S_2} & {S_3} & {S_4} & {S_1}
	\arrow[squiggly, from=1-1, to=1-2]
	\arrow[squiggly, from=1-2, to=1-3]
	\arrow[squiggly, from=1-3, to=1-4]
	\arrow[squiggly, from=1-4, to=1-5]
\end{tikzcd}$, taking into account \cref{lem:Sigma-steps}, which reduces some of them to a cycle of length $\leq 3$, \cref{pro:adend} and also \cref{rem:cycle}, we conclude that there are just 23 cases to be analysed\footnote{Namely:  $\mathbf{dsds_1}$, $\mathbf{dsdu}$, $\mathbf{dsd_1s}$, $\mathbf{dsd_1s_1}$, $\mathbf{dsd_1u}$, $\mathbf{dsus}$, $\mathbf{dsus_1}$, $\mathbf{ds_1du}$, $\mathbf{ds_1d_1s_1}$, $\mathbf{ds_1d_1u}$, $\mathbf{ds_1us_1}$, $\mathbf{dud_1u}$, $\mathbf{dusu}$, $\mathbf{dus_1u}$, $\mathbf{ud_1sd_1}$, $\mathbf{ud_1s_1d_1}$,
$\mathbf{ud_1us}$,
$\mathbf{ud_1us_1}$,
$\mathbf{usd_1s}$,
$\mathbf{usd_1s_1}$,
 $\mathbf{usus_1}$,
  $\mathbf{us_1d_1s_1}$ and
    $\mathbf{sd_1s_1d_1}$.}. We present the proof for some of these cases. The remaining proofs use analogous arguments.

 \underline{Case $\mathbf{dsds_1}$}.  Arguing in a way similar to case $\mathbf{dus}$, we see that the $\Sigma$-schemes of a $\Sigma$-path  of $\Sigma$-steps $\; \begin{tikzcd}
	{S_1} & {S_2} & {S_3} & {S_4} & {S_1}
	\arrow["\mathbf{d}", squiggly, from=1-1, to=1-2]
	\arrow["\mathbf{s}", squiggly, from=1-2, to=1-3]
	\arrow["\mathbf{d}", squiggly, from=1-3, to=1-4]
	\arrow["{\mathbf{s}_1}", squiggly, from=1-4, to=1-5]
\end{tikzcd}\;$ must be of the form
\[
S_1=\hspace{-1.5mm}\adjustbox{scale=0.60}{\begin{tikzcd}
	&& {} & {} \\
	& {} & {} & {} \\
	& {} && {} \\
	{} & {} && {} \\
	{} & {} && {}
	\arrow[from=1-3, to=1-4]
	\arrow[""{name=0, anchor=center, inner sep=0}, from=1-3, to=2-3]
	\arrow[""{name=1, anchor=center, inner sep=0}, from=1-4, to=2-4]
	\arrow[from=2-2, to=2-3]
	\arrow[""{name=2, anchor=center, inner sep=0}, equals, from=2-2, to=3-2]
	\arrow[from=2-3, to=2-4]
	\arrow[""{name=3, anchor=center, inner sep=0}, from=2-4, to=3-4]
	\arrow[from=3-2, to=3-4]
	\arrow[""{name=4, anchor=center, inner sep=0}, from=3-2, to=4-2]
	\arrow[""{name=5, anchor=center, inner sep=0}, from=3-4, to=4-4]
	\arrow[from=4-1, to=4-2]
	\arrow[""{name=6, anchor=center, inner sep=0}, from=4-1, to=5-1]
	\arrow[from=4-2, to=4-4]
	\arrow[""{name=7, anchor=center, inner sep=0}, from=4-2, to=5-2]
	\arrow[""{name=8, anchor=center, inner sep=0}, from=4-4, to=5-4]
	\arrow[from=5-1, to=5-2]
	\arrow[from=5-2, to=5-4]
	\arrow["1"{description}, draw=none, from=0, to=1]
	\arrow["2"{description}, draw=none, from=2, to=3]
	\arrow["3"{marking, allow upside down}, draw=none, from=4, to=5]
	\arrow["4"{description}, draw=none, from=6, to=7]
	\arrow["5"{description}, draw=none, from=7, to=8]
\end{tikzcd}},
\quad%
S_2=\hspace{-1.5mm}\adjustbox{scale=0.60}{\begin{tikzcd}
	&& {} & {} \\
	& {} & {} & {} \\
	& {} && {} \\
	{} & {} && {} \\
	{} & {} && {}
	\arrow[from=1-3, to=1-4]
	\arrow[""{name=0, anchor=center, inner sep=0}, from=1-3, to=2-3]
	\arrow[""{name=1, anchor=center, inner sep=0}, from=1-4, to=2-4]
	\arrow[from=2-2, to=2-3]
	\arrow[""{name=2, anchor=center, inner sep=0}, equals, from=2-2, to=3-2]
	\arrow[from=2-3, to=2-4]
	\arrow[""{name=3, anchor=center, inner sep=0}, from=2-4, to=3-4]
	\arrow[from=3-2, to=3-4]
	\arrow[""{name=4, anchor=center, inner sep=0}, from=3-2, to=4-2]
	\arrow[""{name=5, anchor=center, inner sep=0}, from=3-4, to=4-4]
	\arrow[from=4-1, to=4-2]
	\arrow[""{name=6, anchor=center, inner sep=0}, from=4-1, to=5-1]
	\arrow[from=4-2, to=4-4]
	\arrow[""{name=7, anchor=center, inner sep=0}, from=4-2, to=5-2]
	\arrow[""{name=8, anchor=center, inner sep=0}, from=4-4, to=5-4]
	\arrow[from=5-1, to=5-2]
	\arrow[from=5-2, to=5-4]
	\arrow["1"{marking, allow upside down}, draw=none, from=0, to=1]
	\arrow["2"{marking, allow upside down}, draw=none, from=2, to=3]
	\arrow["3"{marking, allow upside down}, draw=none, from=4, to=5]
	\arrow["6"{description}, draw=none, from=6, to=7]
	\arrow["7"{marking, allow upside down}, draw=none, from=7, to=8]
\end{tikzcd}},
\quad
S_3=\hspace{-1.5mm}
\adjustbox{scale=0.60}{\begin{tikzcd}
	&& {} & {} \\
	& {} & {} & {} \\
	& {} && {} \\
	{} & {} && {} \\
	{} & {} && {}
	\arrow[from=1-3, to=1-4]
	\arrow[""{name=0, anchor=center, inner sep=0}, from=1-3, to=2-3]
	\arrow[""{name=1, anchor=center, inner sep=0}, from=1-4, to=2-4]
	\arrow[from=2-2, to=2-3]
	\arrow[""{name=2, anchor=center, inner sep=0}, equals, from=2-2, to=3-2]
	\arrow[from=2-3, to=2-4]
	\arrow[""{name=3, anchor=center, inner sep=0}, from=2-4, to=3-4]
	\arrow[from=3-2, to=3-4]
	\arrow[""{name=4, anchor=center, inner sep=0}, from=3-2, to=4-2]
	\arrow[""{name=5, anchor=center, inner sep=0}, from=3-4, to=4-4]
	\arrow[from=4-1, to=4-2]
	\arrow[""{name=6, anchor=center, inner sep=0}, from=4-1, to=5-1]
	\arrow[from=4-2, to=4-4]
	\arrow[""{name=7, anchor=center, inner sep=0}, from=4-2, to=5-2]
	\arrow[""{name=8, anchor=center, inner sep=0}, from=4-4, to=5-4]
	\arrow[from=5-1, to=5-2]
	\arrow[from=5-2, to=5-4]
	\arrow["1"{marking, allow upside down}, draw=none, from=0, to=1]
	\arrow["2"{marking, allow upside down}, draw=none, from=2, to=3]
	\arrow["8"{marking, allow upside down}, draw=none, from=4, to=5]
	\arrow["6"{description}, draw=none, from=6, to=7]
	\arrow["9"{marking, allow upside down}, draw=none, from=7, to=8]
\end{tikzcd}}
\quad%
\text{and}
\quad%
S_4=\hspace{-1.5mm}
\adjustbox{scale=0.60}{\begin{tikzcd}
	&& {} & {} \\
	& {} & {} & {} \\
	& {} && {} \\
	{} & {} && {} \\
	{} & {} && {}
	\arrow[from=1-3, to=1-4]
	\arrow[""{name=0, anchor=center, inner sep=0}, from=1-3, to=2-3]
	\arrow[""{name=1, anchor=center, inner sep=0}, from=1-4, to=2-4]
	\arrow[from=2-2, to=2-3]
	\arrow[""{name=2, anchor=center, inner sep=0}, equals, from=2-2, to=3-2]
	\arrow[from=2-3, to=2-4]
	\arrow[""{name=3, anchor=center, inner sep=0}, from=2-4, to=3-4]
	\arrow[from=3-2, to=3-4]
	\arrow[""{name=4, anchor=center, inner sep=0}, from=3-2, to=4-2]
	\arrow[""{name=5, anchor=center, inner sep=0}, from=3-4, to=4-4]
	\arrow[from=4-1, to=4-2]
	\arrow[""{name=6, anchor=center, inner sep=0}, from=4-1, to=5-1]
	\arrow[from=4-2, to=4-4]
	\arrow[""{name=7, anchor=center, inner sep=0}, from=4-2, to=5-2]
	\arrow[""{name=8, anchor=center, inner sep=0}, from=4-4, to=5-4]
	\arrow[from=5-1, to=5-2]
	\arrow[from=5-2, to=5-4]
	\arrow["1"{marking, allow upside down}, draw=none, from=0, to=1]
	\arrow["2"{marking, allow upside down}, draw=none, from=2, to=3]
	\arrow["8"{marking, allow upside down}, draw=none, from=4, to=5]
	\arrow["4"{description}, draw=none, from=6, to=7]
	\arrow["10"{marking, allow upside down}, draw=none, from=7, to=8]
\end{tikzcd}}.
\]
The $\Sigma$-step $\; \begin{tikzcd}
	{S_4} & {S_1}
	\arrow["{\mathbf{s}_1}", squiggly, from=1-1, to=1-2]
\end{tikzcd}\;$ gives rise to a basic $\Omega$ 2-cell by applying Rule 4' to the $\Sigma$-squares $10\odot 8$ and $5\odot 3$. But this clearly coincides with the basic $\Omega$ 2-cell obtained by applying Rule 4' to the $\Sigma$-squares $10\odot 8\odot2$ and $5\odot 3\odot2$. Thus, this $\Sigma$-step of type $\mathbf{s}_1$ is  also a $\Sigma$-step of type $\mathbf{s}$. Therefore, the whole $\Sigma$-path is equivalent to a $\Sigma$-path given by a sequence $\mathbf{dsds}$, hence it is equivalent to the identity $\Sigma$-path, by \cref{pro:adend}.

  \underline{Case $\mathbf{duds}$}. Arguing in a way similar to case $\mathbf{dus}$, we see that the $\Sigma$-schemes of a cycle of $\Sigma$-steps
  $\begin{tikzcd}
	{S_1} & {S_2} & {S_3} & {S_4} & {S_1}
	\arrow["\bd", squiggly, from=1-1, to=1-2]
	\arrow["\bu", squiggly, from=1-2, to=1-3]
	\arrow["\bd", squiggly, from=1-3, to=1-4]
	\arrow["\bs", squiggly, from=1-4, to=1-5]
\end{tikzcd}$
 must be of the form
 represented by the outside $\Sigma$-path of the diagram below, where $S_1$ is the $\Sg$-scheme placed in the top right.
 The $\Sigma$-squares 11, 12 and 13 are obtained by Square.
 The equivalences of $\Sigma$-paths and $\Sigma$-steps indicated by the symbol $\equiv$ are easily seen --- namely, we use the fact that the juxtaposition of $\Sigma$-steps of the same type is equivalent to a $\Sigma$-step of that type, and  the case $i=\bd$ and $j=\bu$ studied in Proposition \ref{pro:adend}. Consequently, we see that the $\Omega$ 2-cell corresponding to the $\Sigma$-cycle is the identity.

  \[
 \adjustbox{scale=0.70}{%
  \begin{tikzcd}
	&& {} & {} &&&&& {} & {} &&&&& {} & {} \\
	& {} & {} & {} &&&& {} & {} & {} &&&& {} & {} & {} \\
	{} & {} & {} & {} &&& {} & {} & {} & {} &&& {} & {} & {} & {} \\
	{} & {} & {} & {} &&& {} & {} & {} & {} &&& {} & {} & {} & {} \\
	&&&& {} & {} \\
	&&& {} & {} & {} \\
	&& {} & {} & {} & {} &&&& {} & {} \\
	&& {} & {} & {} & {} &&& {} & {} & {} &&& {} \\
	&&&&&&& {} & {} & {} & {} \\
	&&&&&&& {} & {} & {} & {} \\
	\\
	&&&&& {} & {} \\
	&&&& {} & {} & {} \\
	&&& {} & {} & {} & {} \\
	&&& {} & {} & {} & {}
	\arrow[from=1-3, to=1-4]
	\arrow[""{name=0, anchor=center, inner sep=0}, from=1-3, to=2-3]
	\arrow[""{name=1, anchor=center, inner sep=0}, from=1-4, to=2-4]
	\arrow[from=1-9, to=1-10]
	\arrow[""{name=2, anchor=center, inner sep=0}, from=1-9, to=2-9]
	\arrow[""{name=3, anchor=center, inner sep=0}, from=1-10, to=2-10]
	\arrow[from=1-15, to=1-16]
	\arrow[""{name=4, anchor=center, inner sep=0}, from=1-15, to=2-15]
	\arrow[""{name=5, anchor=center, inner sep=0}, from=1-16, to=2-16]
	\arrow[from=2-2, to=2-3]
	\arrow[""{name=6, anchor=center, inner sep=0}, from=2-2, to=3-2]
	\arrow[from=2-3, to=2-4]
	\arrow[""{name=7, anchor=center, inner sep=0}, from=2-3, to=3-3]
	\arrow[""{name=8, anchor=center, inner sep=0}, from=2-4, to=3-4]
	\arrow[from=2-8, to=2-9]
	\arrow[""{name=9, anchor=center, inner sep=0}, from=2-8, to=3-8]
	\arrow[from=2-9, to=2-10]
	\arrow[""{name=10, anchor=center, inner sep=0}, from=2-9, to=3-9]
	\arrow[""{name=11, anchor=center, inner sep=0}, from=2-10, to=3-10]
	\arrow[from=2-14, to=2-15]
	\arrow[""{name=12, anchor=center, inner sep=0}, from=2-14, to=3-14]
	\arrow[from=2-15, to=2-16]
	\arrow[""{name=13, anchor=center, inner sep=0}, from=2-15, to=3-15]
	\arrow[""{name=14, anchor=center, inner sep=0}, from=2-16, to=3-16]
	\arrow[from=3-1, to=3-2]
	\arrow[""{name=15, anchor=center, inner sep=0}, from=3-1, to=4-1]
	\arrow[from=3-2, to=3-3]
	\arrow[from=3-3, to=3-4]
	\arrow[""{name=16, anchor=center, inner sep=0}, from=3-3, to=4-3]
	\arrow[""{name=17, anchor=center, inner sep=0}, from=3-4, to=4-4]
	\arrow[from=3-7, to=3-8]
	\arrow[""{name=18, anchor=center, inner sep=0}, from=3-7, to=4-7]
	\arrow[from=3-8, to=3-9]
	\arrow[""{name=19, anchor=center, inner sep=0}, from=3-8, to=4-8]
	\arrow[from=3-9, to=3-10]
	\arrow[""{name=20, anchor=center, inner sep=0}, from=3-10, to=4-10]
	\arrow[from=3-13, to=3-14]
	\arrow[""{name=21, anchor=center, inner sep=0}, from=3-13, to=4-13]
	\arrow[from=3-14, to=3-15]
	\arrow[""{name=22, anchor=center, inner sep=0}, from=3-14, to=4-14]
	\arrow[from=3-15, to=3-16]
	\arrow[""{name=23, anchor=center, inner sep=0}, from=3-16, to=4-16]
	\arrow[from=4-1, to=4-2]
	\arrow[from=4-2, to=4-3]
	\arrow["\bd"', between={0.2}{0.8}, squiggly, from=4-2, to=6-4]
	\arrow[from=4-3, to=4-4]
	\arrow[from=4-7, to=4-8]
	\arrow[from=4-8, to=4-10]
	\arrow[from=4-13, to=4-14]
	\arrow[from=4-14, to=4-16]
	\arrow["\bd", curve={height=-24pt}, between={0.1}{0.9}, squiggly, from=4-16, to=15-7]
	\arrow[""{name=24, anchor=center, inner sep=0}, from=5-5, to=5-6]
	\arrow[""{name=25, anchor=center, inner sep=0}, from=5-5, to=6-5]
	\arrow[""{name=26, anchor=center, inner sep=0}, from=5-6, to=6-6]
	\arrow[from=6-4, to=6-5]
	\arrow[""{name=27, anchor=center, inner sep=0}, from=6-4, to=7-4]
	\arrow[from=6-5, to=6-6]
	\arrow[""{name=28, anchor=center, inner sep=0}, from=6-5, to=7-5]
	\arrow[""{name=29, anchor=center, inner sep=0}, from=6-6, to=7-6]
	\arrow[from=7-3, to=7-4]
	\arrow[""{name=30, anchor=center, inner sep=0}, from=7-3, to=8-3]
	\arrow[from=7-4, to=7-5]
	\arrow[""{name=31, anchor=center, inner sep=0}, from=7-4, to=8-4]
	\arrow[from=7-5, to=7-6]
	\arrow[""{name=32, anchor=center, inner sep=0}, from=7-5, to=8-5]
	\arrow[""{name=33, anchor=center, inner sep=0}, "{{\bd\equiv \bs}}"', between={0.2}{0.8}, squiggly, from=7-6, to=4-9]
	\arrow[""{name=34, anchor=center, inner sep=0}, from=7-6, to=8-6]
	\arrow[from=7-10, to=7-11]
	\arrow[""{name=35, anchor=center, inner sep=0}, from=7-10, to=8-10]
	\arrow[""{name=36, anchor=center, inner sep=0}, "{{\bd\equiv \bs}}"{description}, between={0.2}{0.8}, squiggly, from=7-11, to=4-14]
	\arrow[""{name=37, anchor=center, inner sep=0}, from=7-11, to=8-11]
	\arrow["{{{{{{\equiv}}}}}}"{description}, draw=none, from=7-11, to=8-14]
	\arrow[from=8-3, to=8-4]
	\arrow[from=8-4, to=8-5]
	\arrow[from=8-5, to=8-6]
	\arrow["{{\bu\equiv \bs}}"', between={0.2}{0.8}, squiggly, from=8-6, to=10-8]
	\arrow[from=8-9, to=8-10]
	\arrow[""{name=38, anchor=center, inner sep=0}, from=8-9, to=9-9]
	\arrow[from=8-10, to=8-11]
	\arrow[""{name=39, anchor=center, inner sep=0}, from=8-10, to=9-10]
	\arrow[""{name=40, anchor=center, inner sep=0}, from=8-11, to=9-11]
	\arrow[from=9-8, to=9-9]
	\arrow[""{name=41, anchor=center, inner sep=0}, from=9-8, to=10-8]
	\arrow[from=9-9, to=9-10]
	\arrow[""{name=42, anchor=center, inner sep=0}, from=9-9, to=10-9]
	\arrow[from=9-10, to=9-11]
	\arrow[""{name=43, anchor=center, inner sep=0}, from=9-10, to=10-10]
	\arrow[""{name=44, anchor=center, inner sep=0}, from=9-11, to=10-11]
	\arrow[from=10-8, to=10-9]
	\arrow[from=10-9, to=10-10]
	\arrow["\bd"', between={0.2}{0.8}, squiggly, from=10-9, to=12-7]
	\arrow[from=10-10, to=10-11]
	\arrow[from=12-6, to=12-7]
	\arrow[""{name=45, anchor=center, inner sep=0}, from=12-6, to=13-6]
	\arrow[""{name=46, anchor=center, inner sep=0}, from=12-7, to=13-7]
	\arrow[from=13-5, to=13-6]
	\arrow[""{name=47, anchor=center, inner sep=0}, from=13-5, to=14-5]
	\arrow[from=13-6, to=13-7]
	\arrow[""{name=48, anchor=center, inner sep=0}, from=13-6, to=14-6]
	\arrow[""{name=49, anchor=center, inner sep=0}, from=13-7, to=14-7]
	\arrow[""{name=50, anchor=center, inner sep=0}, "\bu", curve={height=-24pt}, between={0.1}{0.9}, squiggly, from=14-4, to=4-1]
	\arrow[from=14-4, to=14-5]
	\arrow[""{name=51, anchor=center, inner sep=0}, from=14-4, to=15-4]
	\arrow[from=14-5, to=14-6]
	\arrow[from=14-6, to=14-7]
	\arrow[""{name=52, anchor=center, inner sep=0}, from=14-6, to=15-6]
	\arrow[""{name=53, anchor=center, inner sep=0}, from=14-7, to=15-7]
	\arrow[from=15-4, to=15-6]
	\arrow[from=15-6, to=15-7]
	\arrow["1"{description}, draw=none, from=0, to=1]
	\arrow["1"{description}, draw=none, from=2, to=3]
	\arrow["1"{description}, draw=none, from=4, to=5]
	\arrow["2"{description}, draw=none, from=6, to=7]
	\arrow["3"{description}, draw=none, from=7, to=8]
	\arrow[""{name=54, anchor=center, inner sep=0}, "\bd", between={0.2}{0.8}, squiggly, from=8, to=9]
	\arrow["2"{description}, draw=none, from=9, to=10]
	\arrow["3"{description}, draw=none, from=10, to=11]
	\arrow["\bs", between={0.2}{0.8}, squiggly, from=11, to=12]
	\arrow["2"{description}, draw=none, from=12, to=13]
	\arrow["7"{description}, draw=none, from=13, to=14]
	\arrow["4"{description}, draw=none, from=15, to=16]
	\arrow["5"{description}, draw=none, from=16, to=17]
	\arrow["6"{description}, draw=none, from=18, to=19]
	\arrow["10"{description}, draw=none, from=19, to=20]
	\arrow["6"{description}, draw=none, from=21, to=22]
	\arrow["9"{description}, draw=none, from=22, to=23]
	\arrow["1"{description}, draw=none, from=25, to=26]
	\arrow["2"{description}, draw=none, from=27, to=28]
	\arrow["3"{description}, draw=none, from=28, to=29]
	\arrow["6"{description}, draw=none, from=30, to=31]
	\arrow["11"{description}, draw=none, from=31, to=32]
	\arrow["12"{description}, draw=none, from=32, to=34]
	\arrow["\equiv"{description}, draw=none, from=33, to=36]
	\arrow["1"{description}, draw=none, from=35, to=37]
	\arrow["2"{description}, draw=none, from=38, to=39]
	\arrow["7"{description}, draw=none, from=39, to=40]
	\arrow["6"{description}, draw=none, from=41, to=42]
	\arrow["11"{description}, draw=none, from=42, to=43]
	\arrow["13"{description}, draw=none, from=43, to=44]
	\arrow["1"{description}, draw=none, from=45, to=46]
	\arrow["2"{description}, draw=none, from=47, to=48]
	\arrow["7"{description}, draw=none, from=48, to=49]
	\arrow["{{{{{{\equiv}}}}}}"{description, pos=0.4}, shift right=5, draw=none, from=50, to=8-5]
	\arrow["4"{description}, draw=none, from=51, to=52]
	\arrow["8"{description}, draw=none, from=52, to=53]
	\arrow["{{{{{{\equiv}}}}}}"{description}, draw=none, from=54, to=24]
\end{tikzcd}
}%
\]

 \underline{Case $\mathbf{dud}_1\mathbf{u}$}. Arguing in a similar way as above we see that our cycle is of the form
 \begin{equation}\label{eq:dud1u}
 \xymatrix{S_1\ar@{~>}[r]^{\mathbf{d}}&S_2\ar@{~>}[r]^{\mathbf{u}}&S_3\ar@{~>}[r]^{\mathbf{d}_1}&S_4\ar@{~>}[r]^{\mathbf{u}}&S_1}
 \end{equation}
 with
\[
S_1=
\adjustbox{scale=0.60}{\begin{tikzcd}
	&& {} & {} \\
	& {} & {} \\
	{} & {} & {} & {} \\
	{} && {} \\
	{} && {} & {}
	\arrow[from=1-3, to=1-4]
	\arrow[from=1-3, to=2-3]
	\arrow[""{name=0, anchor=center, inner sep=0}, from=1-4, to=3-4]
	\arrow[from=2-2, to=2-3]
	\arrow[""{name=1, anchor=center, inner sep=0}, from=2-2, to=3-2]
	\arrow[""{name=2, anchor=center, inner sep=0}, from=2-3, to=3-3]
	\arrow[from=3-1, to=3-2]
	\arrow[""{name=3, anchor=center, inner sep=0}, from=3-1, to=4-1]
	\arrow[from=3-2, to=3-3]
	\arrow[from=3-3, to=3-4]
	\arrow[""{name=4, anchor=center, inner sep=0}, from=3-3, to=4-3]
	\arrow[""{name=5, anchor=center, inner sep=0}, from=3-4, to=5-4]
	\arrow[from=4-1, to=4-3]
	\arrow[""{name=6, anchor=center, inner sep=0}, equals, from=4-1, to=5-1]
	\arrow[""{name=7, anchor=center, inner sep=0}, from=4-3, to=5-3]
	\arrow[from=5-1, to=5-3]
	\arrow[from=5-3, to=5-4]
	\arrow["1"{description}, draw=none, from=1, to=2]
	\arrow["2"{description}, draw=none, from=2-3, to=0]
	\arrow["3"{description}, draw=none, from=3, to=4]
	\arrow["4"{description}, draw=none, from=6, to=7]
	\arrow["5"{description}, draw=none, from=4-3, to=5]
\end{tikzcd}},
\quad
S_2=
\adjustbox{scale=0.60}{\begin{tikzcd}
	&& {} & {} \\
	& {} & {} \\
	{} & {} & {} & {} \\
	{} && {} & {} \\
	{} && {} & {}
	\arrow[from=1-3, to=1-4]
	\arrow[from=1-3, to=2-3]
	\arrow[""{name=0, anchor=center, inner sep=0}, from=1-4, to=3-4]
	\arrow[from=2-2, to=2-3]
	\arrow[""{name=1, anchor=center, inner sep=0}, from=2-2, to=3-2]
	\arrow[""{name=2, anchor=center, inner sep=0}, from=2-3, to=3-3]
	\arrow[from=3-1, to=3-2]
	\arrow[""{name=3, anchor=center, inner sep=0}, from=3-1, to=4-1]
	\arrow[from=3-2, to=3-3]
	\arrow[from=3-3, to=3-4]
	\arrow[""{name=4, anchor=center, inner sep=0}, from=3-3, to=4-3]
	\arrow[from=3-4, to=5-4]
	\arrow[from=4-1, to=4-3]
	\arrow[""{name=5, anchor=center, inner sep=0}, equals, from=4-1, to=5-1]
	\arrow["7"{description}, draw=none, from=4-3, to=4-4]
	\arrow[""{name=6, anchor=center, inner sep=0}, from=4-3, to=5-3]
	\arrow[from=5-1, to=5-3]
	\arrow[from=5-3, to=5-4]
	\arrow["1"{description}, draw=none, from=1, to=2]
	\arrow["2"{description}, draw=none, from=2-3, to=0]
	\arrow["3"{description}, draw=none, from=3, to=4]
	\arrow["8"{description}, draw=none, from=5, to=6]
\end{tikzcd}},
\quad
S_3=
\adjustbox{scale=0.60}{\begin{tikzcd}
	&& {} & {} \\
	& {} & {} \\
	{} & {} & {} \\
	{} && {} & {} \\
	{} && {} & {}
	\arrow[from=1-3, to=1-4]
	\arrow[from=1-3, to=2-3]
	\arrow[""{name=0, anchor=center, inner sep=0}, from=1-4, to=4-4]
	\arrow[from=2-2, to=2-3]
	\arrow[""{name=1, anchor=center, inner sep=0}, from=2-2, to=3-2]
	\arrow[""{name=2, anchor=center, inner sep=0}, from=2-3, to=3-3]
	\arrow[from=3-1, to=3-2]
	\arrow[""{name=3, anchor=center, inner sep=0}, from=3-1, to=4-1]
	\arrow[from=3-2, to=3-3]
	\arrow[""{name=4, anchor=center, inner sep=0}, from=3-3, to=4-3]
	\arrow[from=4-1, to=4-3]
	\arrow[""{name=5, anchor=center, inner sep=0}, equals, from=4-1, to=5-1]
	\arrow[from=4-3, to=4-4]
	\arrow[""{name=6, anchor=center, inner sep=0}, from=4-3, to=5-3]
	\arrow[""{name=7, anchor=center, inner sep=0}, from=4-4, to=5-4]
	\arrow[from=5-1, to=5-3]
	\arrow[from=5-3, to=5-4]
	\arrow["1"{description}, draw=none, from=1, to=2]
	\arrow["6"{description}, draw=none, from=2, to=0]
	\arrow["3"{description}, draw=none, from=3, to=4]
	\arrow["8"{description}, draw=none, from=5, to=6]
	\arrow["9"{description}, draw=none, from=6, to=7]
\end{tikzcd}}
\quad
\text{and}
\quad
S_4=
\adjustbox{scale=0.60}{
\begin{tikzcd}
	&& {} & {} \\
	& {} & {} \\
	{} & {} & {} \\
	{} && {} & {} \\
	{} && {} & {}
	\arrow[from=1-3, to=1-4]
	\arrow[from=1-3, to=2-3]
	\arrow[""{name=0, anchor=center, inner sep=0}, from=1-4, to=4-4]
	\arrow[from=2-2, to=2-3]
	\arrow[""{name=1, anchor=center, inner sep=0}, from=2-2, to=3-2]
	\arrow[""{name=2, anchor=center, inner sep=0}, from=2-3, to=3-3]
	\arrow[from=3-1, to=3-2]
	\arrow[""{name=3, anchor=center, inner sep=0}, from=3-1, to=4-1]
	\arrow[from=3-2, to=3-3]
	\arrow[""{name=4, anchor=center, inner sep=0}, from=3-3, to=4-3]
	\arrow[from=4-1, to=4-3]
	\arrow[""{name=5, anchor=center, inner sep=0}, equals, from=4-1, to=5-1]
	\arrow[from=4-3, to=4-4]
	\arrow[""{name=6, anchor=center, inner sep=0}, from=4-3, to=5-3]
	\arrow[""{name=7, anchor=center, inner sep=0}, from=4-4, to=5-4]
	\arrow[from=5-1, to=5-3]
	\arrow[from=5-3, to=5-4]
	\arrow["1"{description}, draw=none, from=1, to=2]
	\arrow["6"{description}, draw=none, from=2, to=0]
	\arrow["3"{description}, draw=none, from=3, to=4]
	\arrow["4"{description}, draw=none, from=5, to=6]
	\arrow["10"{description}, draw=none, from=6, to=7]
\end{tikzcd}}.
\]
Let
\[
R_1=
\adjustbox{scale=0.60}{\begin{tikzcd}
	&& {} & {} \\
	& {} & {} \\
	{} & {} & {} & {} \\
	{} && {} & {} \\
	{} && {} & {}
	\arrow[from=1-3, to=1-4]
	\arrow[from=1-3, to=2-3]
	\arrow[""{name=0, anchor=center, inner sep=0}, from=1-4, to=3-4]
	\arrow[from=2-2, to=2-3]
	\arrow[""{name=1, anchor=center, inner sep=0}, from=2-2, to=3-2]
	\arrow[""{name=2, anchor=center, inner sep=0}, from=2-3, to=3-3]
	\arrow[from=3-1, to=3-2]
	\arrow[""{name=3, anchor=center, inner sep=0}, from=3-1, to=4-1]
	\arrow[from=3-2, to=3-3]
	\arrow[from=3-3, to=3-4]
	\arrow[""{name=4, anchor=center, inner sep=0}, from=3-3, to=4-3]
	\arrow[""{name=5, anchor=center, inner sep=0}, from=3-4, to=4-4]
	\arrow[from=4-1, to=4-3]
	\arrow[""{name=6, anchor=center, inner sep=0}, equals, from=4-1, to=5-1]
	\arrow[from=4-3, to=4-4]
	\arrow[""{name=7, anchor=center, inner sep=0}, from=4-3, to=5-3]
	\arrow[""{name=8, anchor=center, inner sep=0}, from=4-4, to=5-4]
	\arrow[from=5-1, to=5-3]
	\arrow[from=5-3, to=5-4]
	\arrow["1"{description}, draw=none, from=1, to=2]
	\arrow["2"{description}, draw=none, from=2-3, to=0]
	\arrow["3"{description}, draw=none, from=3, to=4]
	\arrow["11"{description}, draw=none, from=4, to=5]
	\arrow["8"{description}, draw=none, from=6, to=7]
	\arrow["12"{description}, draw=none, from=7, to=8]
\end{tikzcd}} \quad
\text{and}
\quad
R_2=
\adjustbox{scale=0.60}{\begin{tikzcd}
	&& {} & {} \\
	& {} & {} \\
	{} & {} & {} & {} \\
	{} && {} & {} \\
	{} && {} & {}
	\arrow[from=1-3, to=1-4]
	\arrow[from=1-3, to=2-3]
	\arrow[""{name=0, anchor=center, inner sep=0}, from=1-4, to=3-4]
	\arrow[from=2-2, to=2-3]
	\arrow[""{name=1, anchor=center, inner sep=0}, from=2-2, to=3-2]
	\arrow[""{name=2, anchor=center, inner sep=0}, from=2-3, to=3-3]
	\arrow[from=3-1, to=3-2]
	\arrow[""{name=3, anchor=center, inner sep=0}, from=3-1, to=4-1]
	\arrow[from=3-2, to=3-3]
	\arrow[from=3-3, to=3-4]
	\arrow[""{name=4, anchor=center, inner sep=0}, from=3-3, to=4-3]
	\arrow[""{name=5, anchor=center, inner sep=0}, from=3-4, to=4-4]
	\arrow[from=4-1, to=4-3]
	\arrow[""{name=6, anchor=center, inner sep=0}, equals, from=4-1, to=5-1]
	\arrow[from=4-3, to=4-4]
	\arrow[""{name=7, anchor=center, inner sep=0}, from=4-3, to=5-3]
	\arrow[""{name=8, anchor=center, inner sep=0}, from=4-4, to=5-4]
	\arrow[from=5-1, to=5-3]
	\arrow[from=5-3, to=5-4]
	\arrow["1"{description}, draw=none, from=1, to=2]
	\arrow["2"{description}, draw=none, from=2-3, to=0]
	\arrow["3"{description}, draw=none, from=3, to=4]
	\arrow["11"{description}, draw=none, from=4, to=5]
	\arrow["4"{description}, draw=none, from=6, to=7]
	\arrow["13"{description}, draw=none, from=7, to=8]
\end{tikzcd}}\, .
\]
Then, using  \cref{lem:Sigma-steps} and \cref{pro:adend}, we see the indicated equivalences of $\Sg$-paths:
\[\begin{tikzcd}
	&&& {S_1} \\
	{S_2} && {R_1} && {R_2} && {S_4} \\
	&&& {S_3}
	\arrow["\bu", between={0}{0.9}, squiggly, from=1-4, to=2-7]
	\arrow["\bd", squiggly, from=2-1, to=1-4]
	\arrow["\bu"{description}, squiggly, from=2-1, to=2-3]
	\arrow[""{name=0, anchor=center, inner sep=0}, "\bu"', squiggly, from=2-1, to=3-4]
	\arrow[""{name=1, anchor=center, inner sep=0}, "{{{\bd\equiv {\bd}_1}}}", squiggly, from=2-3, to=2-5]
	\arrow["\bu"{description}, between={0}{0.9}, squiggly, from=2-5, to=2-7]
	\arrow[""{name=2, anchor=center, inner sep=0}, "{{{{\bd}_1}}}"', between={0}{0.9}, squiggly, from=3-4, to=2-7]
	\arrow["\equiv"{description, pos=0.3}, draw=none, from=1-4, to=1]
	\arrow["\equiv"{description}, draw=none, from=0, to=2]
\end{tikzcd}\]
showing that \eqref{eq:dud1u} corresponds to an identity 2-cell.

  \underline{Case $\mathbf{udus}$}. Analogously, we show this case by noting that the cycle $\mathbf{udus}$ must be as described by the outside part of the diagram below and observing that the indicated equivalences between $\Sigma$-paths hold.
  \[
  \adjustbox{scale=0.70}{\begin{tikzcd}
	&& {} & {} &&&&& {} & {} &&&&& {} & {} \\
	& {} & {} &&&&& {} & {} & {} &&&& {} & {} & {} \\
	{} & {} & {} & {} &&& {} & {} & {} &&&& {} & {} & {} \\
	{} & {} & {} & {} &&& {} & {} & {} & {} &&& {} & {} & {} & {} \\
	&&&& {} & {} \\
	&&& {} & {} & {} \\
	&& {} & {} & {} & {} &&&& {} & {} \\
	&& {} & {} & {} & {} &&& {} & {} & {} &&& {} \\
	&&&&&&& {} & {} & {} & {} \\
	&&&&&&& {} & {} & {} & {} \\
	\\
	&&&&& {} & {} \\
	&&&& {} & {} \\
	&&& {} & {} & {} & {} \\
	&&& {} & {} & {} & {}
	\arrow[from=1-3, to=1-4]
	\arrow[from=1-3, to=2-3]
	\arrow[""{name=0, anchor=center, inner sep=0}, from=1-4, to=3-4]
	\arrow[from=1-9, to=1-10]
	\arrow[""{name=1, anchor=center, inner sep=0}, from=1-9, to=2-9]
	\arrow[""{name=2, anchor=center, inner sep=0}, from=1-10, to=2-10]
	\arrow[from=1-15, to=1-16]
	\arrow[""{name=3, anchor=center, inner sep=0}, from=1-15, to=2-15]
	\arrow[""{name=4, anchor=center, inner sep=0}, from=1-16, to=2-16]
	\arrow[from=2-2, to=2-3]
	\arrow[""{name=5, anchor=center, inner sep=0}, from=2-2, to=3-2]
	\arrow[""{name=6, anchor=center, inner sep=0}, from=2-3, to=3-3]
	\arrow[from=2-8, to=2-9]
	\arrow[""{name=7, anchor=center, inner sep=0}, from=2-8, to=3-8]
	\arrow[from=2-9, to=2-10]
	\arrow[""{name=8, anchor=center, inner sep=0}, from=2-9, to=3-9]
	\arrow[""{name=9, anchor=center, inner sep=0}, from=2-10, to=4-10]
	\arrow[from=2-14, to=2-15]
	\arrow[""{name=10, anchor=center, inner sep=0}, from=2-14, to=3-14]
	\arrow[from=2-15, to=2-16]
	\arrow[""{name=11, anchor=center, inner sep=0}, from=2-15, to=3-15]
	\arrow[""{name=12, anchor=center, inner sep=0}, from=2-16, to=4-16]
	\arrow[from=3-1, to=3-2]
	\arrow[""{name=13, anchor=center, inner sep=0}, from=3-1, to=4-1]
	\arrow[from=3-2, to=3-3]
	\arrow[""{name=14, anchor=center, inner sep=0}, from=3-2, to=4-2]
	\arrow[from=3-3, to=3-4]
	\arrow[""{name=15, anchor=center, inner sep=0}, from=3-3, to=4-3]
	\arrow[""{name=16, anchor=center, inner sep=0}, from=3-4, to=4-4]
	\arrow[from=3-7, to=3-8]
	\arrow[""{name=17, anchor=center, inner sep=0}, from=3-7, to=4-7]
	\arrow[from=3-8, to=3-9]
	\arrow[""{name=18, anchor=center, inner sep=0}, from=3-8, to=4-8]
	\arrow[""{name=19, anchor=center, inner sep=0}, from=3-9, to=4-9]
	\arrow[from=3-13, to=3-14]
	\arrow[""{name=20, anchor=center, inner sep=0}, from=3-13, to=4-13]
	\arrow[from=3-14, to=3-15]
	\arrow[""{name=21, anchor=center, inner sep=0}, from=3-14, to=4-14]
	\arrow[""{name=22, anchor=center, inner sep=0}, from=3-15, to=4-15]
	\arrow[from=4-1, to=4-2]
	\arrow[from=4-2, to=4-3]
	\arrow["\bu"', between={0.2}{0.8}, squiggly, from=4-2, to=6-4]
	\arrow[from=4-3, to=4-4]
	\arrow[from=4-7, to=4-8]
	\arrow[from=4-8, to=4-9]
	\arrow[from=4-9, to=4-10]
	\arrow[from=4-13, to=4-14]
	\arrow[from=4-14, to=4-15]
	\arrow[from=4-15, to=4-16]
	\arrow["\bu", curve={height=-30pt}, between={0.1}{0.9}, squiggly, from=4-16, to=15-7]
	\arrow[""{name=23, anchor=center, inner sep=0}, from=5-5, to=5-6]
	\arrow[""{name=24, anchor=center, inner sep=0}, from=5-5, to=6-5]
	\arrow[""{name=25, anchor=center, inner sep=0}, from=5-6, to=6-6]
	\arrow[from=6-4, to=6-5]
	\arrow[""{name=26, anchor=center, inner sep=0}, from=6-4, to=7-4]
	\arrow[from=6-5, to=6-6]
	\arrow[""{name=27, anchor=center, inner sep=0}, from=6-5, to=7-5]
	\arrow[""{name=28, anchor=center, inner sep=0}, from=6-6, to=7-6]
	\arrow[from=7-3, to=7-4]
	\arrow[""{name=29, anchor=center, inner sep=0}, from=7-3, to=8-3]
	\arrow[from=7-4, to=7-5]
	\arrow[""{name=30, anchor=center, inner sep=0}, from=7-4, to=8-4]
	\arrow[from=7-5, to=7-6]
	\arrow[""{name=31, anchor=center, inner sep=0}, from=7-5, to=8-5]
	\arrow[""{name=32, anchor=center, inner sep=0}, "{{{\bu\equiv \bs}}}"', between={0.2}{0.8}, squiggly, from=7-6, to=4-9]
	\arrow[""{name=33, anchor=center, inner sep=0}, from=7-6, to=8-6]
	\arrow[from=7-10, to=7-11]
	\arrow[""{name=34, anchor=center, inner sep=0}, from=7-10, to=8-10]
	\arrow[""{name=35, anchor=center, inner sep=0}, "{{{\bu\equiv \bs}}}"{description}, between={0.1}{0.8}, squiggly, from=7-11, to=4-14]
	\arrow[""{name=36, anchor=center, inner sep=0}, from=7-11, to=8-11]
	\arrow["{{{\equiv}}}"', draw=none, from=7-11, to=8-14]
	\arrow[from=8-3, to=8-4]
	\arrow[from=8-4, to=8-5]
	\arrow[from=8-5, to=8-6]
	\arrow["{{{\bd\equiv \bs}}}"', between={0.2}{0.8}, squiggly, from=8-6, to=10-8]
	\arrow[from=8-9, to=8-10]
	\arrow[""{name=37, anchor=center, inner sep=0}, from=8-9, to=9-9]
	\arrow[from=8-10, to=8-11]
	\arrow[""{name=38, anchor=center, inner sep=0}, from=8-10, to=9-10]
	\arrow[""{name=39, anchor=center, inner sep=0}, from=8-11, to=9-11]
	\arrow[from=9-8, to=9-9]
	\arrow[""{name=40, anchor=center, inner sep=0}, from=9-8, to=10-8]
	\arrow[from=9-9, to=9-10]
	\arrow[""{name=41, anchor=center, inner sep=0}, from=9-9, to=10-9]
	\arrow[from=9-10, to=9-11]
	\arrow[""{name=42, anchor=center, inner sep=0}, from=9-10, to=10-10]
	\arrow[""{name=43, anchor=center, inner sep=0}, from=9-11, to=10-11]
	\arrow[from=10-8, to=10-9]
	\arrow[from=10-9, to=10-10]
	\arrow["\bu"', between={0.2}{0.8}, squiggly, from=10-9, to=12-7]
	\arrow[from=10-10, to=10-11]
	\arrow[from=12-6, to=12-7]
	\arrow[from=12-6, to=13-6]
	\arrow[""{name=44, anchor=center, inner sep=0}, from=12-7, to=14-7]
	\arrow[from=13-5, to=13-6]
	\arrow[""{name=45, anchor=center, inner sep=0}, from=13-5, to=14-5]
	\arrow[""{name=46, anchor=center, inner sep=0}, from=13-6, to=14-6]
	\arrow[""{name=47, anchor=center, inner sep=0}, "\bd", curve={height=-24pt}, between={0.1}{0.9}, squiggly, from=14-4, to=4-1]
	\arrow[from=14-4, to=14-5]
	\arrow[""{name=48, anchor=center, inner sep=0}, from=14-4, to=15-4]
	\arrow[from=14-5, to=14-6]
	\arrow[""{name=49, anchor=center, inner sep=0}, from=14-5, to=15-5]
	\arrow[from=14-6, to=14-7]
	\arrow[""{name=50, anchor=center, inner sep=0}, from=14-6, to=15-6]
	\arrow[""{name=51, anchor=center, inner sep=0}, from=14-7, to=15-7]
	\arrow[from=15-4, to=15-5]
	\arrow[from=15-5, to=15-6]
	\arrow[from=15-6, to=15-7]
	\arrow[""{name=52, anchor=center, inner sep=0}, "\bu", between={0.2}{0.8}, squiggly, from=0, to=2-8]
	\arrow["10"{description}, draw=none, from=1, to=2]
	\arrow["10"{description}, draw=none, from=3, to=4]
	\arrow["1"{description}, draw=none, from=5, to=6]
	\arrow["2"{description}, draw=none, from=2-3, to=0]
	\arrow["1"{description}, draw=none, from=7, to=8]
	\arrow["\bs", between={0.2}{0.8}, squiggly, from=9, to=3-13]
	\arrow["1"{description}, draw=none, from=10, to=11]
	\arrow["5"{description}, draw=none, from=13, to=14]
	\arrow["9"{description}, draw=none, from=14, to=15]
	\arrow["4"{description}, draw=none, from=15, to=16]
	\arrow["5"{description}, draw=none, from=17, to=18]
	\arrow["9"{description}, draw=none, from=18, to=19]
	\arrow["11"{description}, draw=none, from=3-9, to=9]
	\arrow["5"{description}, draw=none, from=20, to=21]
	\arrow["6"{description}, draw=none, from=21, to=22]
	\arrow["12"{description}, draw=none, from=3-15, to=12]
	\arrow["10"{description}, draw=none, from=24, to=25]
	\arrow["1"{description}, draw=none, from=26, to=27]
	\arrow["13"{description}, draw=none, from=27, to=28]
	\arrow["5"{description}, draw=none, from=29, to=30]
	\arrow["9"{description}, draw=none, from=30, to=31]
	\arrow["14"{description}, draw=none, from=31, to=33]
	\arrow["\equiv"', draw=none, from=32, to=35]
	\arrow["10"{description}, draw=none, from=34, to=36]
	\arrow["1"{description}, draw=none, from=37, to=38]
	\arrow["13"{description}, draw=none, from=38, to=39]
	\arrow["5"{description}, draw=none, from=40, to=41]
	\arrow["6"{description}, draw=none, from=41, to=42]
	\arrow["16"{description}, draw=none, from=42, to=43]
	\arrow["1"{description}, draw=none, from=45, to=46]
	\arrow["2"{description}, draw=none, from=13-6, to=44]
	\arrow["\equiv"'{pos=0.6}, shift right=5, draw=none, from=47, to=8-5]
	\arrow["5"{description}, draw=none, from=48, to=49]
	\arrow["6"{description}, draw=none, from=49, to=50]
	\arrow["7"{description}, draw=none, from=50, to=51]
	\arrow["\equiv"{description}, draw=none, from=52, to=23]
\end{tikzcd}}
\]

\underline{Case $\bu{\bs}_1\bu\bs$}. In order to have the sequence of $\Sg$-steps $\xymatrix{S_1\ar@{~>}[r]^{\bu}&S_2\ar@{~>}[r]^{{\bs}_1}&S_3\ar@{~>}[r]^{\bu}&S_4\ar@{~>}[r]^{\bs}&S_1}$, the $\Sg$-schemes must be of the form
\[
S_1=\hspace{-1.5mm}\adjustbox{scale=0.60}{\begin{tikzcd}
	&& {} & {} \\
	& {} & {} & {} \\
	& {} & {} \\
	{} & {} \\
	{} & {} & {} & {}
	\arrow[from=1-3, to=1-4]
	\arrow[""{name=0, anchor=center, inner sep=0}, from=1-3, to=2-3]
	\arrow[""{name=1, anchor=center, inner sep=0}, from=1-4, to=2-4]
	\arrow[from=2-2, to=2-3]
	\arrow[""{name=2, anchor=center, inner sep=0}, equals, from=2-2, to=3-2]
	\arrow[from=2-3, to=2-4]
	\arrow[""{name=3, anchor=center, inner sep=0}, from=2-3, to=3-3]
	\arrow[""{name=4, anchor=center, inner sep=0}, draw=none, from=2-3, to=5-3]
	\arrow[""{name=5, anchor=center, inner sep=0}, from=2-4, to=5-4]
	\arrow[from=3-2, to=3-3]
	\arrow[from=3-2, to=4-2]
	\arrow[""{name=6, anchor=center, inner sep=0}, from=3-3, to=5-3]
	\arrow[from=4-1, to=4-2]
	\arrow[""{name=7, anchor=center, inner sep=0}, from=4-1, to=5-1]
	\arrow[""{name=8, anchor=center, inner sep=0}, from=4-2, to=5-2]
	\arrow[from=5-1, to=5-2]
	\arrow[from=5-2, to=5-3]
	\arrow[from=5-3, to=5-4]
	\arrow["1"{description}, draw=none, from=0, to=1]
	\arrow["2"{description}, draw=none, from=2, to=3]
	\arrow["3"{description}, draw=none, from=4, to=5]
	\arrow["4"{description}, draw=none, from=7, to=8]
	\arrow["5"{description}, draw=none, from=4-2, to=6]
\end{tikzcd}},
\quad%
S_2=\hspace{-1.5mm}\adjustbox{scale=0.60}{\begin{tikzcd}
	&& {} & {} \\
	& {} & {} \\
	& {} & {} & {} \\
	{} & {} \\
	{} & {} & {} & {}
	\arrow[from=1-3, to=1-4]
	\arrow[from=1-3, to=2-3]
	\arrow[""{name=0, anchor=center, inner sep=0}, from=1-4, to=3-4]
	\arrow[from=2-2, to=2-3]
	\arrow[""{name=1, anchor=center, inner sep=0}, equals, from=2-2, to=3-2]
	\arrow[""{name=2, anchor=center, inner sep=0}, from=2-3, to=3-3]
	\arrow[from=3-2, to=3-3]
	\arrow[from=3-2, to=4-2]
	\arrow[from=3-3, to=3-4]
	\arrow[""{name=3, anchor=center, inner sep=0}, from=3-3, to=5-3]
	\arrow[""{name=4, anchor=center, inner sep=0}, from=3-4, to=5-4]
	\arrow[from=4-1, to=4-2]
	\arrow[""{name=5, anchor=center, inner sep=0}, from=4-1, to=5-1]
	\arrow[""{name=6, anchor=center, inner sep=0}, from=4-2, to=5-2]
	\arrow[from=5-1, to=5-2]
	\arrow[from=5-2, to=5-3]
	\arrow[from=5-3, to=5-4]
	\arrow["2"{description}, draw=none, from=1, to=2]
	\arrow["6"{description}, draw=none, from=2-3, to=0]
	\arrow["7"{description}, draw=none, from=3, to=4]
	\arrow["4"{description}, draw=none, from=5, to=6]
	\arrow["5"{description}, draw=none, from=4-2, to=3]
\end{tikzcd}},
\quad
S_3=\hspace{-1.5mm}
\adjustbox{scale=0.60}{\begin{tikzcd}
	&& {} & {} \\
	& {} & {} \\
	& {} & {} & {} \\
	{} & {} \\
	{} & {} & {} & {}
	\arrow[from=1-3, to=1-4]
	\arrow[from=1-3, to=2-3]
	\arrow[""{name=0, anchor=center, inner sep=0}, from=1-4, to=3-4]
	\arrow[from=2-2, to=2-3]
	\arrow[""{name=1, anchor=center, inner sep=0}, equals, from=2-2, to=3-2]
	\arrow[""{name=2, anchor=center, inner sep=0}, from=2-3, to=3-3]
	\arrow[from=3-2, to=3-3]
	\arrow[from=3-2, to=4-2]
	\arrow[from=3-3, to=3-4]
	\arrow[""{name=3, anchor=center, inner sep=0}, from=3-3, to=5-3]
	\arrow[""{name=4, anchor=center, inner sep=0}, from=3-4, to=5-4]
	\arrow[from=4-1, to=4-2]
	\arrow[""{name=5, anchor=center, inner sep=0}, from=4-1, to=5-1]
	\arrow[""{name=6, anchor=center, inner sep=0}, from=4-2, to=5-2]
	\arrow[from=5-1, to=5-2]
	\arrow[from=5-2, to=5-3]
	\arrow[from=5-3, to=5-4]
	\arrow["2"{description}, draw=none, from=1, to=2]
	\arrow["6"{description}, draw=none, from=2-3, to=0]
	\arrow["9"{description}, draw=none, from=3, to=4]
	\arrow["4"{description}, draw=none, from=5, to=6]
	\arrow["8"{description}, draw=none, from=4-2, to=3]
\end{tikzcd}}
\quad%
\text{and}
\quad%
S_4=\hspace{-1.5mm}
\adjustbox{scale=0.60}{\begin{tikzcd}
	&& {} & {} \\
	& {} & {} & {} \\
	& {} & {} \\
	{} & {} \\
	{} & {} & {} & {}
	\arrow[from=1-3, to=1-4]
	\arrow[""{name=0, anchor=center, inner sep=0}, from=1-3, to=2-3]
	\arrow[""{name=1, anchor=center, inner sep=0}, from=1-4, to=2-4]
	\arrow[from=2-2, to=2-3]
	\arrow[""{name=2, anchor=center, inner sep=0}, equals, from=2-2, to=3-2]
	\arrow[from=2-3, to=2-4]
	\arrow[""{name=3, anchor=center, inner sep=0}, from=2-3, to=3-3]
	\arrow[""{name=4, anchor=center, inner sep=0}, from=2-4, to=5-4]
	\arrow[from=3-2, to=3-3]
	\arrow[from=3-2, to=4-2]
	\arrow[""{name=5, anchor=center, inner sep=0}, from=3-3, to=5-3]
	\arrow[from=4-1, to=4-2]
	\arrow[""{name=6, anchor=center, inner sep=0}, from=4-1, to=5-1]
	\arrow[""{name=7, anchor=center, inner sep=0}, from=4-2, to=5-2]
	\arrow[from=5-1, to=5-2]
	\arrow[from=5-2, to=5-3]
	\arrow[from=5-3, to=5-4]
	\arrow["1"{description}, draw=none, from=0, to=1]
	\arrow["2"{description}, draw=none, from=2, to=3]
	\arrow["10"{description}, draw=none, from=3-3, to=4]
	\arrow["4"{description}, draw=none, from=6, to=7]
	\arrow["8"{description}, draw=none, from=4-2, to=5]
\end{tikzcd}}.
\]
Let
\[
R_1=\hspace{-1.5mm}
\adjustbox{scale=0.60}{\begin{tikzcd}
	&& {} & {} \\
	& {} & {} & {} \\
	& {} & {} & {} \\
	{} & {} \\
	{} & {} & {} & {}
	\arrow[from=1-3, to=1-4]
	\arrow[""{name=0, anchor=center, inner sep=0}, from=1-3, to=2-3]
	\arrow[""{name=1, anchor=center, inner sep=0}, from=1-4, to=2-4]
	\arrow[from=2-2, to=2-3]
	\arrow[""{name=2, anchor=center, inner sep=0}, equals, from=2-2, to=3-2]
	\arrow[from=2-3, to=2-4]
	\arrow[""{name=3, anchor=center, inner sep=0}, from=2-3, to=3-3]
	\arrow[""{name=4, anchor=center, inner sep=0}, from=2-4, to=3-4]
	\arrow[from=3-2, to=3-3]
	\arrow[from=3-2, to=4-2]
	\arrow[from=3-3, to=3-4]
	\arrow[""{name=5, anchor=center, inner sep=0}, from=3-3, to=5-3]
	\arrow[""{name=6, anchor=center, inner sep=0}, from=3-4, to=5-4]
	\arrow[from=4-1, to=4-2]
	\arrow[""{name=7, anchor=center, inner sep=0}, from=4-1, to=5-1]
	\arrow[""{name=8, anchor=center, inner sep=0}, from=4-2, to=5-2]
	\arrow[from=5-1, to=5-2]
	\arrow[from=5-2, to=5-3]
	\arrow[from=5-3, to=5-4]
	\arrow["1"{description}, draw=none, from=0, to=1]
	\arrow["2"{description}, draw=none, from=2, to=3]
	\arrow["11"{description}, draw=none, from=3, to=4]
	\arrow["12"{description}, draw=none, from=5, to=6]
	\arrow["4"{description}, draw=none, from=7, to=8]
	\arrow["5"{description}, draw=none, from=4-2, to=5]
\end{tikzcd}}
\quad \text{ and }\quad
R_2=
\hspace{-1.5mm}\adjustbox{scale=0.60}{\begin{tikzcd}
	&& {} & {} \\
	& {} & {} & {} \\
	& {} & {} & {} \\
	{} & {} \\
	{} & {} & {} & {}
	\arrow[from=1-3, to=1-4]
	\arrow[""{name=0, anchor=center, inner sep=0}, from=1-3, to=2-3]
	\arrow[""{name=1, anchor=center, inner sep=0}, from=1-4, to=2-4]
	\arrow[from=2-2, to=2-3]
	\arrow[""{name=2, anchor=center, inner sep=0}, equals, from=2-2, to=3-2]
	\arrow[from=2-3, to=2-4]
	\arrow[""{name=3, anchor=center, inner sep=0}, from=2-3, to=3-3]
	\arrow[""{name=4, anchor=center, inner sep=0}, from=2-4, to=3-4]
	\arrow[from=3-2, to=3-3]
	\arrow[from=3-2, to=4-2]
	\arrow[from=3-3, to=3-4]
	\arrow[""{name=5, anchor=center, inner sep=0}, from=3-3, to=5-3]
	\arrow[""{name=6, anchor=center, inner sep=0}, from=3-4, to=5-4]
	\arrow[from=4-1, to=4-2]
	\arrow[""{name=7, anchor=center, inner sep=0}, from=4-1, to=5-1]
	\arrow[""{name=8, anchor=center, inner sep=0}, from=4-2, to=5-2]
	\arrow[from=5-1, to=5-2]
	\arrow[from=5-2, to=5-3]
	\arrow[from=5-3, to=5-4]
	\arrow["1"{description}, draw=none, from=0, to=1]
	\arrow["2"{description}, draw=none, from=2, to=3]
	\arrow["11"{description}, draw=none, from=3, to=4]
	\arrow["13"{description}, draw=none, from=5, to=6]
	\arrow["4"{description}, draw=none, from=7, to=8]
	\arrow["8"{description}, draw=none, from=4-2, to=5]
\end{tikzcd}}\, .
\]
The following diagram shows that the given cycle is indeed equivalent to the identity $\Sg$-path.
\[\begin{tikzcd}
	&& {S_2} \\
	{S_1} && {R_1} && {R_2} & {S_3} \\
	&&& {S_4}
	\arrow[""{name=0, anchor=center, inner sep=0}, "\bu"', squiggly, from=1-3, to=2-3]
	\arrow[""{name=1, anchor=center, inner sep=0}, "{{{{{{\bs}_1}}}}}", curve={height=-6pt}, squiggly, from=1-3, to=2-6]
	\arrow[""{name=2, anchor=center, inner sep=0}, "\bu", curve={height=-12pt}, squiggly, from=2-1, to=1-3]
	\arrow["{{{{\bu\equiv \bs}}}}"', squiggly, from=2-1, to=2-3]
	\arrow[""{name=3, anchor=center, inner sep=0}, "\bs"', curve={height=18pt}, squiggly, from=2-1, to=3-4]
	\arrow["{{{{{\bs\equiv {\bs}_1}}}}}"', squiggly, from=2-3, to=2-5]
	\arrow["\bu"', squiggly, from=2-5, to=2-6]
	\arrow[""{name=4, anchor=center, inner sep=0}, "{{{{\bs\equiv \bu}}}}"{description}, squiggly, from=3-4, to=2-5]
	\arrow[""{name=5, anchor=center, inner sep=0}, "\bu"', curve={height=12pt}, squiggly, from=3-4, to=2-6]
	\arrow["\equiv"', draw=none, from=0, to=1]
	\arrow["\equiv"'{pos=0.4}, draw=none, from=2, to=0]
	\arrow["\equiv", draw=none, from=3, to=4]
	\arrow["\equiv"{pos=0.8}, shift left=3, draw=none, from=4, to=5]
\end{tikzcd}\]

  The remaining cases are similar.
\end{proof}

Recall the notion of \textbf{\em configuration of interest} and \textbf{\em $\Sg$-path of interest} given in  \cref{def:interest}.
Recall also  the definition of \textbf{\em canonical $\Sg$-scheme} of level 3 (Definition \ref{assum1}). Using just a bullet instead of $\dot{\Sg}$ to represent canonical $\Sg$-squares, the canonical $\Sg$-scheme will be represented by
$$\Can=\adjustbox{scale=0.60}{\begin{tikzcd}
	&& {} & {} \\
	& {} & {} & {} \\
	{} & {} & {} & {} \\
	{} & {} & {} & {}
	\arrow[from=1-3, to=1-4]
	\arrow[""{name=0, anchor=center, inner sep=0}, from=1-3, to=2-3]
	\arrow[""{name=1, anchor=center, inner sep=0}, from=1-4, to=2-4]
	\arrow[from=2-2, to=2-3]
	\arrow[""{name=2, anchor=center, inner sep=0}, from=2-2, to=3-2]
	\arrow[from=2-3, to=2-4]
	\arrow[""{name=3, anchor=center, inner sep=0}, from=2-3, to=3-3]
	\arrow[""{name=4, anchor=center, inner sep=0}, from=2-4, to=3-4]
	\arrow[from=3-1, to=3-2]
	\arrow[""{name=5, anchor=center, inner sep=0}, from=3-1, to=4-1]
	\arrow[from=3-2, to=3-3]
	\arrow[""{name=6, anchor=center, inner sep=0}, from=3-2, to=4-2]
	\arrow[from=3-3, to=3-4]
	\arrow[""{name=7, anchor=center, inner sep=0}, from=3-3, to=4-3]
	\arrow[""{name=8, anchor=center, inner sep=0}, from=3-4, to=4-4]
	\arrow[from=4-1, to=4-2]
	\arrow[from=4-2, to=4-3]
	\arrow[from=4-3, to=4-4]
	\arrow["\bullet"{description}, draw=none, from=0, to=1]
	\arrow["\bullet"{description}, draw=none, from=2, to=3]
	\arrow["\bullet"{description}, draw=none, from=3, to=4]
	\arrow["\bullet"{description}, draw=none, from=5, to=6]
	\arrow["\bullet"{description}, draw=none, from=6, to=7]
	\arrow["\bullet"{description}, draw=none, from=7, to=8]
\end{tikzcd}}\, .$$

The next proposition, which is  just Proposition \ref{pro:Omega-f} given in more detail, states that the horizontal composition of an identity  $\Omega$ 2-cell with an $\Omega$ 2-cell is also an $\Omega$ 2-cell.

\begin{proposition}\label{pro:alpha.f}  Let $A\xrightarrow{\bar{f}}B\xrightarrow{\bar{g}}C\xrightarrow{\bar{h}}D\xrightarrow{\bar{k}}E$ be $\Sg$-cospans, where $\bar{f}=(f,r)$, $\bar{g}=(g,s)$, $\bar{h}=(h,t)$ and $\bar{k}=(k,u)$.

(1) Let $\Omega\colon (l_1g,m_1u) \rightarrow (l_2g, m_2u)$ be a basic $\Omega$ 2-cell determined by a $\Sg$-step of level 2 of the type $d$ or $u$ between two $\Sg$-schemes with left border $(s,h,t,k)$ and right border $(l_i,m_i)$, respectively. Then the 2-cell $\Omega\circ 1_{\bar{f}}$ is an $\Omega$ 2-cell corresponding to a $\Sg$-path of interest of $\Sg$-schemes of level 3 and left border $(r,g,s,h,t,k)$.

(2) Let $\Omega\colon (l_1f,m_1t) \rightarrow (l_2f,m_2t)$ be a basic $\Omega$ 2-cell determined by a $\Sg$-step of level 2 of the type $d$ or $u$ between two $\Sg$-schemes with left border $(r,g,s,h)$ and right border $(l_i,m_i)$, respectively. Then the 2-cell $1_{\bar{k}}\circ \Omega$ is an $\Omega$ 2-cell corresponding to a $\Sg$-path of interest of $\Sg$-schemes of level 3 and left border $(r,g,s,h,t,k)$.

Moreover, $\Omega\circ 1_{\bar{f}}$ and $1_{\bar{k}}\circ \Omega$ correspond to the $\Sigma$-paths given by the following tables, where $\Sigma$-squares are indicated by numbers, or also sometimes just by a bullet in the case they are canonical $\Sigma$-squares.

\noindent \hspace*{-1mm}\begin{tabular}{|c|c|}\hline &\\
 $\Sigma$-step for $\Omega$&$\Sigma$-path for the composition $\Omega\circ \bar{f}$\\ \hline \hline
\(\adjustbox{scale=0.60}{\begin{tikzcd}
	& {} & {} &&& {} & {} \\
	{} & {} & {} && {} & {} & {} \\
	{} & {} & {} && {} & {} & {}
	\arrow["s", from=1-2, to=1-3]
	\arrow[""{name=0, anchor=center, inner sep=0}, "h"', from=1-2, to=2-2]
	\arrow[""{name=1, anchor=center, inner sep=0}, "{{{h'}}}", from=1-3, to=2-3]
	\arrow["s", from=1-6, to=1-7]
	\arrow[""{name=2, anchor=center, inner sep=0}, "h"', from=1-6, to=2-6]
	\arrow[""{name=3, anchor=center, inner sep=0}, "{{{h'}}}", from=1-7, to=2-7]
	\arrow["t", from=2-1, to=2-2]
	\arrow[""{name=4, anchor=center, inner sep=0}, "k"', from=2-1, to=3-1]
	\arrow["{s'}"', from=2-2, to=2-3]
	\arrow["d", between={0.3}{0.7}, squiggly, from=2-3, to=2-5]
	\arrow[""{name=5, anchor=center, inner sep=0}, "{{{k_1}}}", from=2-3, to=3-3]
	\arrow["t", from=2-5, to=2-6]
	\arrow[""{name=6, anchor=center, inner sep=0}, "k"', from=2-5, to=3-5]
	\arrow["{s'}"', from=2-6, to=2-7]
	\arrow[""{name=7, anchor=center, inner sep=0}, "{{{k_2}}}", from=2-7, to=3-7]
	\arrow["{{{t_1}}}"', from=3-1, to=3-3]
	\arrow["{{{t_2}}}"', from=3-5, to=3-7]
	\arrow["1"{description}, draw=none, from=0, to=1]
	\arrow["1"{description}, draw=none, from=2, to=3]
	\arrow["2"{description}, draw=none, from=4, to=5]
	\arrow["3"{description}, draw=none, from=6, to=7]
\end{tikzcd}}\)
&
 \(
 \adjustbox{scale=0.60}{\begin{tikzcd}
	&& {} & {} &&& {} & {} &&& {} & {} &&& {} & {} \\
	& {} & {} &&& {} & {} & {} && {} & {} & {} && {} & {} & {} \\
	{} & {} & {} && {} & {} & {} & {} & {} & {} & {} & {} & {} & {} & {} & {} \\
	{} & {} & {} & {} & {} & {} & {} & {} & {} & {} & {} & {} & {} & {} & {} & {}
	\arrow["r", from=1-3, to=1-4]
	\arrow["g"', from=1-3, to=2-3]
	\arrow[""{name=0, anchor=center, inner sep=0}, from=1-4, to=4-4]
	\arrow["r", from=1-7, to=1-8]
	\arrow[""{name=1, anchor=center, inner sep=0}, "g"', from=1-7, to=2-7]
	\arrow[""{name=2, anchor=center, inner sep=0}, from=1-8, to=2-8]
	\arrow["r", from=1-11, to=1-12]
	\arrow[""{name=3, anchor=center, inner sep=0}, "g"', from=1-11, to=2-11]
	\arrow[""{name=4, anchor=center, inner sep=0}, from=1-12, to=2-12]
	\arrow["r", from=1-15, to=1-16]
	\arrow["g"', from=1-15, to=2-15]
	\arrow[""{name=5, anchor=center, inner sep=0}, from=1-16, to=4-16]
	\arrow[from=2-2, to=2-3]
	\arrow[""{name=6, anchor=center, inner sep=0}, from=2-2, to=3-2]
	\arrow[""{name=7, anchor=center, inner sep=0}, from=2-3, to=3-3]
	\arrow[from=2-6, to=2-7]
	\arrow[""{name=8, anchor=center, inner sep=0}, from=2-6, to=3-6]
	\arrow[from=2-7, to=2-8]
	\arrow[""{name=9, anchor=center, inner sep=0}, from=2-7, to=3-7]
	\arrow[""{name=10, anchor=center, inner sep=0}, from=2-8, to=3-8]
	\arrow[from=2-10, to=2-11]
	\arrow[""{name=11, anchor=center, inner sep=0}, from=2-10, to=3-10]
	\arrow[from=2-11, to=2-12]
	\arrow[""{name=12, anchor=center, inner sep=0}, from=2-11, to=3-11]
	\arrow[""{name=13, anchor=center, inner sep=0}, from=2-12, to=3-12]
	\arrow[from=2-14, to=2-15]
	\arrow[""{name=14, anchor=center, inner sep=0}, from=2-14, to=3-14]
	\arrow[""{name=15, anchor=center, inner sep=0}, from=2-15, to=3-15]
	\arrow[from=3-1, to=3-2]
	\arrow[""{name=16, anchor=center, inner sep=0}, from=3-1, to=4-1]
	\arrow[from=3-2, to=3-3]
	\arrow[""{name=17, anchor=center, inner sep=0}, from=3-3, to=4-3]
	\arrow[from=3-5, to=3-6]
	\arrow[""{name=18, anchor=center, inner sep=0}, from=3-5, to=4-5]
	\arrow[from=3-6, to=3-7]
	\arrow[from=3-7, to=3-8]
	\arrow[""{name=19, anchor=center, inner sep=0}, from=3-7, to=4-7]
	\arrow[""{name=20, anchor=center, inner sep=0}, from=3-8, to=4-8]
	\arrow[from=3-9, to=3-10]
	\arrow[""{name=21, anchor=center, inner sep=0}, from=3-9, to=4-9]
	\arrow[from=3-10, to=3-11]
	\arrow[from=3-11, to=3-12]
	\arrow[""{name=22, anchor=center, inner sep=0}, from=3-11, to=4-11]
	\arrow[""{name=23, anchor=center, inner sep=0}, from=3-12, to=4-12]
	\arrow[from=3-13, to=3-14]
	\arrow[""{name=24, anchor=center, inner sep=0}, from=3-13, to=4-13]
	\arrow[from=3-14, to=3-15]
	\arrow[""{name=25, anchor=center, inner sep=0}, from=3-15, to=4-15]
	\arrow[from=4-1, to=4-3]
	\arrow[from=4-3, to=4-4]
	\arrow[from=4-5, to=4-7]
	\arrow[from=4-7, to=4-8]
	\arrow[from=4-9, to=4-11]
	\arrow[from=4-11, to=4-12]
	\arrow[from=4-13, to=4-15]
	\arrow[from=4-15, to=4-16]
	\arrow["\bu", between={0.3}{0.7}, squiggly, from=0, to=8]
	\arrow["\bullet"{description}, draw=none, from=1, to=2]
	\arrow["\bullet"{description}, draw=none, from=3, to=4]
	\arrow["1"{description}, draw=none, from=6, to=7]
	\arrow["\bullet"{description}, draw=none, from=7, to=0]
	\arrow["1"{description}, draw=none, from=8, to=9]
	\arrow["\bullet"{description}, draw=none, from=9, to=10]
	\arrow["\bd", between={0.3}{0.7}, squiggly, from=10, to=11]
	\arrow["1"{description}, draw=none, from=11, to=12]
	\arrow["\bullet"{description}, draw=none, from=12, to=13]
	\arrow["\bu", between={0.3}{0.7}, squiggly, from=13, to=14]
	\arrow["1"{description}, draw=none, from=14, to=15]
	\arrow["\bullet"{description}, draw=none, from=15, to=5]
	\arrow["2"{description}, draw=none, from=16, to=17]
	\arrow["2"{description}, draw=none, from=18, to=19]
	\arrow["\bullet"{description}, draw=none, from=19, to=20]
	\arrow["3"{description}, draw=none, from=21, to=22]
	\arrow["\bullet"{description}, draw=none, from=22, to=23]
	\arrow["3"{description}, draw=none, from=24, to=25]
\end{tikzcd}}
\)
\\ \hline
\(\adjustbox{scale=0.60}{\begin{tikzcd}
	& {} & {} &&& {} & {} \\
	{} & {} & {} && {} & {} & {} \\
	{} & {} & {} && {} & {} & {}
	\arrow["s", from=1-2, to=1-3]
	\arrow["h"', from=1-2, to=2-2]
	\arrow["{{h_1}}", from=1-3, to=3-3]
	\arrow["s", from=1-6, to=1-7]
	\arrow["h"', from=1-6, to=2-6]
	\arrow["{{h_2}}", from=1-7, to=3-7]
	\arrow["t", from=2-1, to=2-2]
	\arrow[""{name=0, anchor=center, inner sep=0}, "k"', from=2-1, to=3-1]
	\arrow["2"{description}, draw=none, from=2-2, to=2-3]
	\arrow[""{name=1, anchor=center, inner sep=0}, "{{k_1}}", from=2-2, to=3-2]
	\arrow["u", between={0.3}{0.7}, squiggly, from=2-3, to=2-5]
	\arrow["t", from=2-5, to=2-6]
	\arrow[""{name=2, anchor=center, inner sep=0}, "k"', from=2-5, to=3-5]
	\arrow["3"{description}, draw=none, from=2-6, to=2-7]
	\arrow[""{name=3, anchor=center, inner sep=0}, "{{k_1}}", from=2-6, to=3-6]
	\arrow["{{t_1}}"', from=3-1, to=3-2]
	\arrow["{{s_1}}"', from=3-2, to=3-3]
	\arrow["{{t_1}}"', from=3-5, to=3-6]
	\arrow["{{s_2}}"', from=3-6, to=3-7]
	\arrow["1"{description}, draw=none, from=0, to=1]
	\arrow["1"{description}, draw=none, from=2, to=3]
\end{tikzcd}}\)
&
 \(
 \adjustbox{scale=0.60}{\begin{tikzcd}
	&& {} & {} &&& {} & {} &&& {} & {} &&& {} & {} \\
	& {} & {} & {} && {} & {} & {} && {} & {} & {} && {} & {} & {} \\
	{} & {} & {} & {} & {} & {} & {} & {} & {} & {} & {} & {} & {} & {} & {} & {} \\
	{} & {} & {} & {} & {} & {} & {} & {} & {} & {} & {} & {} & {} & {} & {} & {}
	\arrow["r", from=1-3, to=1-4]
	\arrow["g"', from=1-3, to=2-3]
	\arrow[""{name=0, anchor=center, inner sep=0}, from=1-4, to=4-4]
	\arrow["r", from=1-7, to=1-8]
	\arrow[""{name=1, anchor=center, inner sep=0}, "g"', from=1-7, to=2-7]
	\arrow[""{name=2, anchor=center, inner sep=0}, from=1-8, to=2-8]
	\arrow["r", from=1-11, to=1-12]
	\arrow[""{name=3, anchor=center, inner sep=0}, "g"', from=1-11, to=2-11]
	\arrow[""{name=4, anchor=center, inner sep=0}, from=1-12, to=2-12]
	\arrow["r", from=1-15, to=1-16]
	\arrow["g"', from=1-15, to=2-15]
	\arrow[from=1-16, to=4-16]
	\arrow[from=2-2, to=2-3]
	\arrow[from=2-2, to=3-2]
	\arrow[""{name=5, anchor=center, inner sep=0}, draw=none, from=2-3, to=3-3]
	\arrow[from=2-3, to=4-3]
	\arrow[""{name=6, anchor=center, inner sep=0}, draw=none, from=2-4, to=3-4]
	\arrow[from=2-6, to=2-7]
	\arrow[""{name=7, anchor=center, inner sep=0}, from=2-6, to=3-6]
	\arrow[from=2-7, to=2-8]
	\arrow[from=2-7, to=4-7]
	\arrow[""{name=8, anchor=center, inner sep=0}, draw=none, from=2-8, to=3-8]
	\arrow[from=2-8, to=4-8]
	\arrow[from=2-10, to=2-11]
	\arrow[""{name=9, anchor=center, inner sep=0}, from=2-10, to=3-10]
	\arrow[from=2-11, to=2-12]
	\arrow[from=2-11, to=4-11]
	\arrow[""{name=10, anchor=center, inner sep=0}, draw=none, from=2-12, to=3-12]
	\arrow[from=2-12, to=4-12]
	\arrow[from=2-14, to=2-15]
	\arrow[""{name=11, anchor=center, inner sep=0}, from=2-14, to=3-14]
	\arrow[""{name=12, anchor=center, inner sep=0}, draw=none, from=2-15, to=3-15]
	\arrow[from=2-15, to=4-15]
	\arrow[""{name=13, anchor=center, inner sep=0}, draw=none, from=2-16, to=3-16]
	\arrow[from=3-1, to=3-2]
	\arrow[""{name=14, anchor=center, inner sep=0}, from=3-1, to=4-1]
	\arrow["2"{description}, draw=none, from=3-2, to=3-3]
	\arrow[""{name=15, anchor=center, inner sep=0}, from=3-2, to=4-2]
	\arrow[from=3-5, to=3-6]
	\arrow[""{name=16, anchor=center, inner sep=0}, from=3-5, to=4-5]
	\arrow["2"{description}, draw=none, from=3-6, to=3-7]
	\arrow[""{name=17, anchor=center, inner sep=0}, from=3-6, to=4-6]
	\arrow["5"{description}, draw=none, from=3-7, to=3-8]
	\arrow[from=3-9, to=3-10]
	\arrow[""{name=18, anchor=center, inner sep=0}, from=3-9, to=4-9]
	\arrow["3"{description}, draw=none, from=3-10, to=3-11]
	\arrow[""{name=19, anchor=center, inner sep=0}, from=3-10, to=4-10]
	\arrow["6"{description}, draw=none, from=3-11, to=3-12]
	\arrow[from=3-13, to=3-14]
	\arrow[""{name=20, anchor=center, inner sep=0}, from=3-13, to=4-13]
	\arrow["3"{description}, draw=none, from=3-14, to=3-15]
	\arrow[""{name=21, anchor=center, inner sep=0}, from=3-14, to=4-14]
	\arrow[from=4-1, to=4-2]
	\arrow[from=4-2, to=4-3]
	\arrow[from=4-3, to=4-4]
	\arrow[from=4-5, to=4-6]
	\arrow[from=4-6, to=4-7]
	\arrow[from=4-7, to=4-8]
	\arrow[from=4-9, to=4-10]
	\arrow[from=4-10, to=4-11]
	\arrow[from=4-11, to=4-12]
	\arrow[from=4-13, to=4-14]
	\arrow[from=4-14, to=4-15]
	\arrow[from=4-15, to=4-16]
	\arrow["\bu", between={0.3}{0.7}, squiggly, from=0, to=7]
	\arrow["\bullet"{description}, draw=none, from=1, to=2]
	\arrow["\bullet"{description}, draw=none, from=3, to=4]
	\arrow["\bullet"{description}, draw=none, from=5, to=6]
	\arrow["\bs", between={0.3}{0.7}, squiggly, from=8, to=9]
	\arrow["\bu", between={0.3}{0.7}, squiggly, from=10, to=11]
	\arrow["\bullet"{description}, draw=none, from=12, to=13]
	\arrow["1"{description}, draw=none, from=14, to=15]
	\arrow["1"{description}, draw=none, from=16, to=17]
	\arrow["1"{description}, draw=none, from=18, to=19]
	\arrow["1"{description}, draw=none, from=20, to=21]
\end{tikzcd}}
\)
\\\hline
\end{tabular}
\noindent
\hspace*{-1mm}\begin{tabular}{|c|c|}\hline &\\
$\Sigma$-step for $\Omega$&$\Sigma$-path for the composition $\bar{k}\circ\Omega$\\ \hline \hline
\(\adjustbox{scale=0.60}{\begin{tikzcd}
	& {} & {} &&& {} & {} \\
	{} & {} & {} && {} & {} & {} \\
	{} & {} & {} && {} & {} & {}
	\arrow["r", from=1-2, to=1-3]
	\arrow[""{name=0, anchor=center, inner sep=0}, "g"', from=1-2, to=2-2]
	\arrow[""{name=1, anchor=center, inner sep=0}, "{g'}", from=1-3, to=2-3]
	\arrow["r", from=1-6, to=1-7]
	\arrow[""{name=2, anchor=center, inner sep=0}, "g"', from=1-6, to=2-6]
	\arrow[""{name=3, anchor=center, inner sep=0}, "{g'}", from=1-7, to=2-7]
	\arrow["s", from=2-1, to=2-2]
	\arrow[""{name=4, anchor=center, inner sep=0}, "h"', from=2-1, to=3-1]
	\arrow["{r'}"', from=2-2, to=2-3]
	\arrow["d", between={0.3}{0.7}, squiggly, from=2-3, to=2-5]
	\arrow[""{name=5, anchor=center, inner sep=0}, "{h_1}", from=2-3, to=3-3]
	\arrow["s", from=2-5, to=2-6]
	\arrow[""{name=6, anchor=center, inner sep=0}, "h"', from=2-5, to=3-5]
	\arrow["{r'}"', from=2-6, to=2-7]
	\arrow[""{name=7, anchor=center, inner sep=0}, "{h_2}", from=2-7, to=3-7]
	\arrow["{s_1}"', from=3-1, to=3-3]
	\arrow["{s_2}"', from=3-5, to=3-7]
	\arrow["1"{description}, draw=none, from=0, to=1]
	\arrow["1"{description}, draw=none, from=2, to=3]
	\arrow["2"{description}, draw=none, from=4, to=5]
	\arrow["3"{description}, draw=none, from=6, to=7]
\end{tikzcd}}\)
&
 \(
 \adjustbox{scale=0.60}{\begin{tikzcd}
	&& {} & {} &&& {} & {} &&& {} & {} &&& {} & {} \\
	& {} & {} & {} && {} & {} & {} && {} & {} & {} && {} & {} & {} \\
	{} & {} & {} & {} & {} & {} & {} & {} & {} & {} & {} & {} & {} & {} & {} & {} \\
	{} & {} & {} & {} & {} & {} & {} & {} & {} & {} & {} & {} & {} & {} & {} & {} \\
	&&&& {} & {} && {} & {} & {} && {}
	\arrow["r", from=1-3, to=1-4]
	\arrow[""{name=0, anchor=center, inner sep=0}, "g"', from=1-3, to=2-3]
	\arrow[""{name=1, anchor=center, inner sep=0}, from=1-4, to=2-4]
	\arrow[from=1-7, to=1-8]
	\arrow[""{name=2, anchor=center, inner sep=0}, from=1-7, to=2-7]
	\arrow[""{name=3, anchor=center, inner sep=0}, from=1-8, to=2-8]
	\arrow[from=1-11, to=1-12]
	\arrow[""{name=4, anchor=center, inner sep=0}, from=1-11, to=2-11]
	\arrow[""{name=5, anchor=center, inner sep=0}, from=1-12, to=2-12]
	\arrow["r", from=1-15, to=1-16]
	\arrow[""{name=6, anchor=center, inner sep=0}, "g"', from=1-15, to=2-15]
	\arrow[""{name=7, anchor=center, inner sep=0}, from=1-16, to=2-16]
	\arrow["s", from=2-2, to=2-3]
	\arrow[""{name=8, anchor=center, inner sep=0}, "h", from=2-2, to=3-2]
	\arrow[from=2-3, to=2-4]
	\arrow[""{name=9, anchor=center, inner sep=0}, from=2-4, to=3-4]
	\arrow[from=2-6, to=2-7]
	\arrow[""{name=10, anchor=center, inner sep=0}, from=2-6, to=3-6]
	\arrow[from=2-7, to=2-8]
	\arrow[""{name=11, anchor=center, inner sep=0}, from=2-8, to=3-8]
	\arrow[from=2-10, to=2-11]
	\arrow[""{name=12, anchor=center, inner sep=0}, from=2-10, to=3-10]
	\arrow[from=2-11, to=2-12]
	\arrow[""{name=13, anchor=center, inner sep=0}, from=2-12, to=3-12]
	\arrow[from=2-14, to=2-15]
	\arrow[""{name=14, anchor=center, inner sep=0}, from=2-14, to=3-14]
	\arrow[from=2-15, to=2-16]
	\arrow[""{name=15, anchor=center, inner sep=0}, from=2-16, to=3-16]
	\arrow["t", from=3-1, to=3-2]
	\arrow[""{name=16, anchor=center, inner sep=0}, "k", from=3-1, to=4-1]
	\arrow[from=3-2, to=3-4]
	\arrow[""{name=17, anchor=center, inner sep=0}, from=3-4, to=4-4]
	\arrow["t", from=3-5, to=3-6]
	\arrow[""{name=18, anchor=center, inner sep=0}, equals, from=3-5, to=4-5]
	\arrow[from=3-6, to=3-8]
	\arrow[""{name=19, anchor=center, inner sep=0}, equals, from=3-6, to=4-6]
	\arrow[""{name=20, anchor=center, inner sep=0}, from=3-8, to=4-8]
	\arrow[from=3-9, to=3-10]
	\arrow[""{name=21, anchor=center, inner sep=0}, equals, from=3-9, to=4-9]
	\arrow[from=3-10, to=3-12]
	\arrow[""{name=22, anchor=center, inner sep=0}, equals, from=3-10, to=4-10]
	\arrow[""{name=23, anchor=center, inner sep=0}, from=3-12, to=4-12]
	\arrow[from=3-13, to=3-14]
	\arrow[""{name=24, anchor=center, inner sep=0}, from=3-13, to=4-13]
	\arrow[from=3-14, to=3-16]
	\arrow[""{name=25, anchor=center, inner sep=0}, from=3-16, to=4-16]
	\arrow[from=4-1, to=4-4]
	\arrow[from=4-5, to=4-6]
	\arrow[""{name=26, anchor=center, inner sep=0}, "k", from=4-5, to=5-5]
	\arrow[from=4-6, to=4-8]
	\arrow[""{name=27, anchor=center, inner sep=0}, from=4-6, to=5-6]
	\arrow[""{name=28, anchor=center, inner sep=0}, from=4-8, to=5-8]
	\arrow[from=4-9, to=4-10]
	\arrow[""{name=29, anchor=center, inner sep=0}, from=4-9, to=5-9]
	\arrow[from=4-10, to=4-12]
	\arrow[""{name=30, anchor=center, inner sep=0}, from=4-10, to=5-10]
	\arrow[""{name=31, anchor=center, inner sep=0}, from=4-12, to=5-12]
	\arrow[from=4-13, to=4-16]
	\arrow[from=5-5, to=5-6]
	\arrow[from=5-6, to=5-8]
	\arrow[from=5-9, to=5-10]
	\arrow[from=5-10, to=5-12]
	\arrow["1"{description}, draw=none, from=0, to=1]
	\arrow["1"{description}, draw=none, from=2, to=3]
	\arrow["1"{description}, draw=none, from=4, to=5]
	\arrow["1"{description}, draw=none, from=6, to=7]
	\arrow["2"{description}, draw=none, from=8, to=9]
	\arrow["\bd", between={0.3}{0.7}, squiggly, from=9, to=10]
	\arrow["2"{description}, draw=none, from=10, to=11]
	\arrow["\bs", between={0.3}{0.7}, squiggly, from=11, to=12]
	\arrow["3"{description}, draw=none, from=12, to=13]
	\arrow["\bd", between={0.3}{0.7}, squiggly, from=13, to=14]
	\arrow["3"{description}, draw=none, from=14, to=15]
	\arrow["\bullet"{description}, draw=none, from=16, to=17]
	\arrow["\bullet"{description}, draw=none, from=18, to=19]
	\arrow["4"{description}, draw=none, from=19, to=20]
	\arrow["\bullet"{description}, draw=none, from=21, to=22]
	\arrow["5"{description}, draw=none, from=22, to=23]
	\arrow["\bullet"{description}, draw=none, from=24, to=25]
	\arrow["6"{description}, draw=none, from=26, to=27]
	\arrow["7"{description}, draw=none, from=27, to=28]
	\arrow["6"{description}, draw=none, from=29, to=30]
	\arrow["8"{description}, draw=none, from=30, to=31]
\end{tikzcd}}
\)
\\ \hline
\(\adjustbox{scale=0.60}{\begin{tikzcd}
	& {} & {} &&& {} & {} \\
	{} & {} & {} && {} & {} & {} \\
	{} & {} & {} && {} & {} & {}
	\arrow["r", from=1-2, to=1-3]
	\arrow["g"', from=1-2, to=2-2]
	\arrow["{{h_1}}", from=1-3, to=3-3]
	\arrow["r", from=1-6, to=1-7]
	\arrow["g"', from=1-6, to=2-6]
	\arrow["{{h_2}}", from=1-7, to=3-7]
	\arrow["s", from=2-1, to=2-2]
	\arrow[""{name=0, anchor=center, inner sep=0}, "h"', from=2-1, to=3-1]
	\arrow["2"{description}, draw=none, from=2-2, to=2-3]
	\arrow[""{name=1, anchor=center, inner sep=0}, "{h'}", from=2-2, to=3-2]
	\arrow["u", between={0.3}{0.7}, squiggly, from=2-3, to=2-5]
	\arrow["s", from=2-5, to=2-6]
	\arrow[""{name=2, anchor=center, inner sep=0}, "h"', from=2-5, to=3-5]
	\arrow["3"{description}, draw=none, from=2-6, to=2-7]
	\arrow[""{name=3, anchor=center, inner sep=0}, "{h'}", from=2-6, to=3-6]
	\arrow["{s'}"', from=3-1, to=3-2]
	\arrow["{{s_1}}"', from=3-2, to=3-3]
	\arrow["{s'}"', from=3-5, to=3-6]
	\arrow["{{s_2}}"', from=3-6, to=3-7]
	\arrow["1"{description}, draw=none, from=0, to=1]
	\arrow["1"{description}, draw=none, from=2, to=3]
\end{tikzcd}}\)
&
 \(
 \adjustbox{scale=0.60}{\begin{tikzcd}
	&& {} & {} &&& {} & {} &&& {} & {} &&& {} & {} \\
	& {} & {} & {} && {} & {} & {} && {} & {} & {} && {} & {} & {} \\
	{} & {} & {} & {} & {} & {} & {} & {} & {} & {} & {} & {} & {} & {} & {} & {} \\
	{} & {} & {} & {} & {} & {} & {} & {} & {} & {} & {} & {} & {} & {} & {} & {} \\
	&&&& {} && {} & {} & {} && {} & {}
	\arrow["r", from=1-3, to=1-4]
	\arrow["g"', from=1-3, to=2-3]
	\arrow[from=1-4, to=3-4]
	\arrow["r", from=1-7, to=1-8]
	\arrow["g"', from=1-7, to=2-7]
	\arrow[from=1-8, to=3-8]
	\arrow["r", from=1-11, to=1-12]
	\arrow["g"', from=1-11, to=2-11]
	\arrow[from=1-12, to=3-12]
	\arrow["r", from=1-15, to=1-16]
	\arrow["g"', from=1-15, to=2-15]
	\arrow[from=1-16, to=3-16]
	\arrow["s", from=2-2, to=2-3]
	\arrow[""{name=0, anchor=center, inner sep=0}, "h"', from=2-2, to=3-2]
	\arrow["2"{description}, draw=none, from=2-3, to=2-4]
	\arrow[""{name=1, anchor=center, inner sep=0}, from=2-3, to=3-3]
	\arrow[""{name=2, anchor=center, inner sep=0}, draw=none, from=2-4, to=3-4]
	\arrow[from=2-6, to=2-7]
	\arrow[""{name=3, anchor=center, inner sep=0}, from=2-6, to=3-6]
	\arrow["2"{description}, draw=none, from=2-7, to=2-8]
	\arrow[""{name=4, anchor=center, inner sep=0}, from=2-7, to=3-7]
	\arrow["\bu", between={0.3}{0.7}, squiggly, from=2-8, to=2-10]
	\arrow[from=2-10, to=2-11]
	\arrow[""{name=5, anchor=center, inner sep=0}, from=2-10, to=3-10]
	\arrow["3"{description}, draw=none, from=2-11, to=2-12]
	\arrow[""{name=6, anchor=center, inner sep=0}, from=2-11, to=3-11]
	\arrow[""{name=7, anchor=center, inner sep=0}, draw=none, from=2-12, to=3-12]
	\arrow[from=2-14, to=2-15]
	\arrow[""{name=8, anchor=center, inner sep=0}, from=2-14, to=3-14]
	\arrow["3"{description}, draw=none, from=2-15, to=2-16]
	\arrow[""{name=9, anchor=center, inner sep=0}, from=2-15, to=3-15]
	\arrow[draw=none, from=2-16, to=3-16]
	\arrow["t", from=3-1, to=3-2]
	\arrow[""{name=10, anchor=center, inner sep=0}, "k"', from=3-1, to=4-1]
	\arrow[from=3-2, to=3-3]
	\arrow[from=3-3, to=3-4]
	\arrow[""{name=11, anchor=center, inner sep=0}, from=3-4, to=4-4]
	\arrow["t", from=3-5, to=3-6]
	\arrow[""{name=12, anchor=center, inner sep=0}, equals, from=3-5, to=4-5]
	\arrow[from=3-6, to=3-7]
	\arrow[""{name=13, anchor=center, inner sep=0}, equals, from=3-6, to=4-6]
	\arrow[from=3-7, to=3-8]
	\arrow[""{name=14, anchor=center, inner sep=0}, equals, from=3-7, to=4-7]
	\arrow[""{name=15, anchor=center, inner sep=0}, "{{{d_1}}}", from=3-8, to=4-8]
	\arrow[from=3-9, to=3-10]
	\arrow[""{name=16, anchor=center, inner sep=0}, equals, from=3-9, to=4-9]
	\arrow[from=3-10, to=3-11]
	\arrow[""{name=17, anchor=center, inner sep=0}, equals, from=3-10, to=4-10]
	\arrow[from=3-11, to=3-12]
	\arrow[""{name=18, anchor=center, inner sep=0}, equals, from=3-11, to=4-11]
	\arrow[""{name=19, anchor=center, inner sep=0}, "{{{d_2}}}", from=3-12, to=4-12]
	\arrow[from=3-13, to=3-14]
	\arrow[""{name=20, anchor=center, inner sep=0}, from=3-13, to=4-13]
	\arrow[from=3-14, to=3-15]
	\arrow[from=3-15, to=3-16]
	\arrow[""{name=21, anchor=center, inner sep=0}, from=3-16, to=4-16]
	\arrow[from=4-1, to=4-4]
	\arrow[from=4-5, to=4-6]
	\arrow[""{name=22, anchor=center, inner sep=0}, "k"', from=4-5, to=5-5]
	\arrow[from=4-6, to=4-7]
	\arrow["d"', from=4-7, to=4-8]
	\arrow[""{name=23, anchor=center, inner sep=0}, from=4-7, to=5-7]
	\arrow[""{name=24, anchor=center, inner sep=0}, from=4-8, to=5-8]
	\arrow[from=4-9, to=4-10]
	\arrow[""{name=25, anchor=center, inner sep=0}, "", from=4-9, to=5-9]
	\arrow[from=4-10, to=4-11]
	\arrow["d"', from=4-11, to=4-12]
	\arrow[""{name=26, anchor=center, inner sep=0}, from=4-11, to=5-11]
	\arrow[""{name=27, anchor=center, inner sep=0}, from=4-12, to=5-12]
	\arrow[from=4-13, to=4-16]
	\arrow[from=5-5, to=5-7]
	\arrow[from=5-7, to=5-8]
	\arrow[from=5-9, to=5-11]
	\arrow[from=5-11, to=5-12]
	\arrow["1"{description}, draw=none, from=0, to=1]
	\arrow["\bd", between={0.3}{0.7}, squiggly, from=2, to=3]
	\arrow["1"{description}, draw=none, from=3, to=4]
	\arrow["1"{description}, draw=none, from=5, to=6]
	\arrow["\bd", between={0.3}{0.7},  squiggly, from=7, to=8]
	\arrow["1"{description}, draw=none, from=8, to=9]
	\arrow["\bullet"{description}, draw=none, from=10, to=11]
	\arrow["\bullet"{description}, draw=none, from=12, to=13]
	\arrow["\bullet"{description}, draw=none, from=13, to=14]
	\arrow["4"{description}, draw=none, from=14, to=15]
	\arrow["\bullet"{description}, draw=none, from=16, to=17]
	\arrow["\bullet"{description}, draw=none, from=17, to=18]
	\arrow["7"{description}, draw=none, from=18, to=19]
	\arrow["\bullet"{description}, draw=none, from=20, to=21]
	\arrow["5"{description}, draw=none, from=22, to=23]
	\arrow["6"{description}, draw=none, from=23, to=24]
	\arrow["5"{description}, draw=none, from=25, to=26]
	\arrow["6"{description}, draw=none, from=26, to=27]
\end{tikzcd}}
\)
\\\hline
\end{tabular}

\end{proposition}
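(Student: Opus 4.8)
The plan is to prove Proposition~\ref{pro:alpha.f} by direct construction of the claimed $\Sigma$-paths together with a verification that they give rise to the required $\Omega$ $2$-cells, relying heavily on the coherence results already available, namely Lemma~\ref{lem:Sigma-steps}, Proposition~\ref{pro:adend} and Corollary~\ref{cor:length2}. The whole argument splits into the two symmetric statements (1) and (2), and within each into the two cases according to whether the $\Sigma$-step defining $\Omega$ is of type $d$ or of type $u$; the tables in the statement already prescribe which $\Sigma$-path to use in each of the four cases, so the bulk of the proof is the justification that these prescriptions are correct and that the resulting $2$-cell really is $\Omega\circ 1_{\bar{f}}$ (resp.\ $1_{\bar{k}}\circ\Omega$).

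First I would fix the data: a basic $\Omega$ $2$-cell $\Omega\colon (l_1 g, m_1 u)\Rightarrow (l_2 g, m_2 u)$ comes from a $\Sigma$-step between two level-$2$ $\Sigma$-schemes with left border $(s,h,t,k)$, which by the Square axiom and Definition~\ref{assum1} we may extend to level-$3$ $\Sigma$-schemes with left border $(r,g,s,h,t,k)$ by attaching the canonical $\Sigma$-square of $(r,g)$ on the upper-left and the canonical $\Sigma$-scheme structure where needed. The point is that horizontally composing $\Omega$ with the identity $2$-cell on $\bar{f}=(f,r)$ means, by Definition~\ref{def:horizontal}, composing on the left with $f$ and with the canonical data that builds $(l_i g f, m_i u)$ out of the canonical $\Sigma$-square for $(r, l_i g)$; I would trace through Definition~\ref{def:comp-cospans} and Definition~\ref{def:horizontal} to identify exactly which $\Sigma$-squares and which $\Sigma$-step replacements appear, and match them against the level-$3$ $\Sigma$-path drawn in the right-hand column of the first table. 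This is where the diagrammatic bookkeeping — naming the $\Sigma$-squares by bullets and numbers as in the tables, and checking that each indicated replacement is legitimate (same left border for the replaced $\Sigma$-square) — has to be done carefully but is essentially mechanical given Rule~4' of Proposition~\ref{pro:useful_rules}.

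Next I would observe that the level-$3$ $\Sigma$-path produced is a $\Sigma$-path \emph{of interest} in the sense of Definition~\ref{def:interest}: for the $d$-case one reads off configurations of type $\mathbf{da}$/$\mathbf{db}$ (or $\mathbf{dc}$) joined by $\Sigma$-steps of type $\mathbf{d}$, while for the $u$-case one gets configurations of type $\mathbf{ua}$/$\mathbf{ub}$ joined by steps of type $\mathbf{u}$, possibly with one extra $\mathbf{s}$ or $\mathbf{s}_1$ step as visible in the tables. Verifying membership in the list of seven configurations of interest is a finite check of the shape of each $\Sigma$-scheme appearing in the path. Then I would invoke Proposition~\ref{pro:of-interest5} (the appendix version of Proposition~\ref{pro:of-interest}) to conclude that the $\Omega$ $2$-cell attached to this $\Sigma$-path is well-defined independently of the choices, and I would use Lemma~\ref{lem:Sigma-steps} and Proposition~\ref{pro:adend} to contract or rearrange the path as needed so that the composite of the basic $\Omega$ $2$-cells along it equals the prescribed $\Omega\circ 1_{\bar{f}}$ (the key mechanism being that a basic $\Omega$ $2$-cell built from data which factors through a fixed canonical $\Sigma$-square behaves coherently under the compositor, exactly as exploited in the proofs of Proposition~\ref{pro:comp} and Proposition~\ref{pro:natural-iso}).

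The main obstacle I expect is the matching step: making the abstract description ``$\Omega\circ 1_{\bar{f}}$'' coincide on the nose with the concrete $\Sigma$-path in the table, because horizontal composition in Definition~\ref{def:horizontal} is defined via an auxiliary choice of $\Sigma$-squares and a $2$-cell $\beta'$ (here $\beta'=\id$ since $\Omega$ is already vertical/$\Omega$-type, but the flanking $\Omega_i$ $2$-cells of Definition~\ref{def:horizontal} still intervene), so one must carefully unwind Definition~\ref{def:horizontal} applied to the three-fold vertical composite $\Omega_2^{-1}\cdot[\beta'\circ\alpha,1,1]\cdot\Omega_1$ and check that the $\Sigma$-steps comprising $\Omega_1$, $\Omega_2$ and the middle one assemble, after applying the contraction lemmas, into precisely the displayed path. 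I would handle this exactly as in part~(1) of the proof of Proposition~\ref{pro:comp}: write out the $\Sigma$-extension diagrams, identify the $\tilde\Omega_i$ correction $2$-cells as basic $\Omega$ $2$-cells of type $u$ (or $d$), and use Lemma~\ref{lem:Sigma-steps}(1) to absorb them. The second half of the proposition, the composition $1_{\bar{k}}\circ\Omega$ with its second table, is entirely parallel with left and right exchanged and I would simply remark that it follows by the dual argument (composing on the right with $(t,k)$ via the canonical $\Sigma$-square of the relevant span), so no new idea is needed there.
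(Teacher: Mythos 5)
Your proposal is correct and follows essentially the same route as the paper: unwind Definition~\ref{def:horizontal} using the canonical $\Sigma$-squares, recognise the middle 2-morphism and the flanking $\Omega_i$ of that definition as basic $\Omega$ 2-cells attached to explicit level-3 $\Sigma$-steps, and contract the resulting path into the tabulated one via Lemma~\ref{lem:Sigma-steps}, Proposition~\ref{pro:adend} and Corollary~\ref{cor:length2}. Two small corrections: $\beta'$ is not the identity but the invertible 2-cell $\theta'$ produced by Rule~6/Rule~4' (its invertibility is exactly what makes the middle 2-morphism a basic $\Omega$ 2-cell of type $\mathbf{d}$, resp.\ $\mathbf{s}$), and the appeal to Proposition~\ref{pro:of-interest5} is unnecessary --- the paper proves the present statement before that result, using only the length-$\le 2$ coherence facts you already cite.
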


\begin{proof} (1) Let $\Omega$ be the $\Omega$ 2-cell corresponding to the $\Sg$-step
\[\adjustbox{scale=0.80}{\begin{tikzcd}
	& {} & {} &&& {} & {} \\
	{} & {} & {} && {} & {} & {} \\
	{} & {} & {} && {} & {} & {}
	\arrow["s", from=1-2, to=1-3]
	\arrow[""{name=0, anchor=center, inner sep=0}, "h"', from=1-2, to=2-2]
	\arrow[""{name=1, anchor=center, inner sep=0}, "{h'}", from=1-3, to=2-3]
	\arrow["s", from=1-6, to=1-7]
	\arrow[""{name=2, anchor=center, inner sep=0}, "h"', from=1-6, to=2-6]
	\arrow[""{name=3, anchor=center, inner sep=0}, "{h'}", from=1-7, to=2-7]
	\arrow["t", from=2-1, to=2-2]
	\arrow[""{name=4, anchor=center, inner sep=0}, "k"', from=2-1, to=3-1]
	\arrow["{{s'}}"', from=2-2, to=2-3]
	\arrow["d", between={0.3}{0.7}, squiggly, from=2-3, to=2-5]
	\arrow[""{name=5, anchor=center, inner sep=0}, "{{k_1}}", from=2-3, to=3-3]
	\arrow["t", from=2-5, to=2-6]
	\arrow[""{name=6, anchor=center, inner sep=0}, "k"', from=2-5, to=3-5]
	\arrow["{{s'}}"', from=2-6, to=2-7]
	\arrow[""{name=7, anchor=center, inner sep=0}, "{{k_2}}", from=2-7, to=3-7]
	\arrow["{{t_1}}"', from=3-1, to=3-3]
	\arrow["{{t_2}}"', from=3-5, to=3-7]
	\arrow["1"{description}, draw=none, from=0, to=1]
	\arrow["1"{description}, draw=none, from=2, to=3]
	\arrow["2"{description}, draw=none, from=4, to=5]
	\arrow["3"{description}, draw=none, from=6, to=7]
\end{tikzcd}}\, .\]
Here $(l_i,m_i)=(k_ih',t_i)$.
Let $\Omega$ be represented by  the 2-morphism
\[\begin{tikzcd}
	{} & {} & {} & {} & {} & {} \\
	&&& {} & {} & {} \\
	{} & {} & {} & {} & {} & {}
	\arrow["g", from=1-1, to=1-2]
	\arrow[equals, from=1-1, to=3-1]
	\arrow["{h'}", from=1-2, to=1-3]
	\arrow["{{k_1}}", from=1-3, to=1-4]
	\arrow[equals, from=1-3, to=3-3]
	\arrow[""{name=0, anchor=center, inner sep=0}, "{{d_1}}"', from=1-4, to=2-4]
	\arrow["\theta"', shorten <=15pt, shorten >=11pt, Rightarrow, from=1-4, to=3-3]
	\arrow["{{t_1}}"', from=1-5, to=1-4]
	\arrow[""{name=1, anchor=center, inner sep=0}, equals, from=1-5, to=2-5]
	\arrow["u"', from=1-6, to=1-5]
	\arrow[""{name=2, anchor=center, inner sep=0}, equals, from=1-6, to=2-6]
	\arrow["d", from=2-5, to=2-4]
	\arrow[""{name=3, anchor=center, inner sep=0}, equals, from=2-5, to=3-5]
	\arrow["u", from=2-6, to=2-5]
	\arrow[""{name=4, anchor=center, inner sep=0}, equals, from=2-6, to=3-6]
	\arrow["{{=}}"{description}, draw=none, from=3-1, to=1-3]
	\arrow["g"', from=3-1, to=3-2]
	\arrow["{h'}"', from=3-2, to=3-3]
	\arrow["{{k_2}}"', from=3-3, to=3-4]
	\arrow[""{name=5, anchor=center, inner sep=0}, "{{d_2}}", from=3-4, to=2-4]
	\arrow["{{t_2}}", from=3-5, to=3-4]
	\arrow["u", from=3-6, to=3-5]
	\arrow["\SIGMA"{description}, draw=none, from=0, to=1]
	\arrow["\SIGMA"{description}, draw=none, from=1, to=2]
	\arrow["\SIGMA"{description}, draw=none, from=3, to=4]
	\arrow["\SIGMA"{description}, draw=none, from=5, to=3]
\end{tikzcd}\]
with $\theta$ invertible. Now, in order to compose this 2-morphism with $\bar{f}=(f,r)$, use vertical composition of canonical $\Sigma$-squares and Rule 6 to obtain:
\[
\begin{tikzcd}
	{} & {} \\
	{} & {} \\
	{} & {}
	\arrow["r", from=1-1, to=1-2]
	\arrow[""{name=0, anchor=center, inner sep=0}, "g"', from=1-1, to=2-1]
	\arrow[""{name=1, anchor=center, inner sep=0}, "{{\dot{g}}}", from=1-2, to=2-2]
	\arrow[from=2-1, to=2-2]
	\arrow[""{name=2, anchor=center, inner sep=0}, "{{h'}}"', from=2-1, to=3-1]
	\arrow[""{name=3, anchor=center, inner sep=0}, "{{\dot{h}}}", from=2-2, to=3-2]
	\arrow["{r'}"', from=3-1, to=3-2]
	\arrow["{{\SIGMAc}}"{description}, draw=none, from=0, to=1]
	\arrow["{{\SIGMAc}}"{description}, draw=none, from=2, to=3]
\end{tikzcd}
\qquad \qquad \qquad
\begin{tikzcd}
	{} & {} &&& {} & {} \\
	{} & {} && {} & {} & {} \\
	{} & {} &&& {} & {}
	\arrow["{{r'}}", from=1-1, to=1-2]
	\arrow["{{{{k_1}}}}"', from=1-1, to=2-1]
	\arrow[""{name=0, anchor=center, inner sep=0}, "{{{{k'_1}}}}", from=1-2, to=3-2]
	\arrow[""{name=1, anchor=center, inner sep=0}, "{{{{k'_2}}}}", shift left=3, curve={height=-24pt}, from=1-2, to=3-2]
	\arrow["{{r'}}", from=1-5, to=1-6]
	\arrow["{{{{k_1}}}}"', curve={height=6pt}, from=1-5, to=2-4]
	\arrow["{{{{k_2}}}}", from=1-5, to=2-5]
	\arrow["{{{{k'_2}}}}", from=1-6, to=3-6]
	\arrow["\SIGMA"{description}, draw=none, from=2-1, to=2-2]
	\arrow["{{{{d_1}}}}"', from=2-1, to=3-1]
	\arrow["\theta", Rightarrow, from=2-4, to=2-5]
	\arrow["{{{{d_1}}}}"', curve={height=6pt}, from=2-4, to=3-5]
	\arrow["\SIGMA"{description}, draw=none, from=2-5, to=2-6]
	\arrow["{{{{d_2}}}}", from=2-5, to=3-5]
	\arrow["e"', from=3-1, to=3-2]
	\arrow["e", from=3-5, to=3-6]
	\arrow["{\text{\normalsize $=$}}"{description, pos=0.6}, draw=none, from=1, to=2-4]
	\arrow["{{{{\theta'}}}}", between={0.3}{0.8}, Rightarrow, from=0, to=1]
\end{tikzcd}
\]
Thus, we can use $\theta'\circ \dot{h}\circ \dot{g}$ to obtain the middle 2-morphism of the desired horizontal composition (see Definition \ref{def:horizontal}):
\[\begin{tikzcd}
	{} & {} & {} & {} & {} & {} & {} & {} \\
	{} & {} & {} & {} & {} & {} & {} & {}
	\arrow["f", from=1-1, to=1-2]
	\arrow[""{name=0, anchor=center, inner sep=0}, equals, from=1-1, to=2-1]
	\arrow["{{{\dot{g}}}}", from=1-2, to=1-3]
	\arrow[""{name=1, anchor=center, inner sep=0}, equals, from=1-2, to=2-2]
	\arrow["{\dot{h}}", from=1-3, to=1-4]
	\arrow[""{name=2, anchor=center, inner sep=0}, "{{{k'_1}}}", from=1-4, to=1-5]
	\arrow[""{name=3, anchor=center, inner sep=0}, equals, from=1-4, to=2-4]
	\arrow[""{name=4, anchor=center, inner sep=0}, equals, from=1-5, to=2-5]
	\arrow["e"', from=1-6, to=1-5]
	\arrow["d"', from=1-7, to=1-6]
	\arrow[""{name=5, anchor=center, inner sep=0}, equals, from=1-7, to=2-7]
	\arrow["u"', from=1-8, to=1-7]
	\arrow[""{name=6, anchor=center, inner sep=0}, equals, from=1-8, to=2-8]
	\arrow["f"', from=2-1, to=2-2]
	\arrow["{{{\dot{g}}}}"', from=2-2, to=2-3]
	\arrow["{\dot{h}}"', from=2-3, to=2-4]
	\arrow[""{name=7, anchor=center, inner sep=0}, "{{{k'_2}}}"', from=2-4, to=2-5]
	\arrow["e", from=2-6, to=2-5]
	\arrow["d", from=2-7, to=2-6]
	\arrow["u", from=2-8, to=2-7]
	\arrow["{=}"{description}, draw=none, from=0, to=1]
	\arrow["{=}"{description}, draw=none, from=1, to=3]
	\arrow["{{{\theta'}}}", shorten <=4pt, shorten >=4pt, Rightarrow, from=2, to=7]
	\arrow["{\SIGMA^{\id}}"{description}, draw=none, from=4, to=5]
	\arrow["\SIGMA^{\id}"{description}, draw=none, from=5, to=6]
\end{tikzcd}\]
This 2-morphism is an $\Omega$ 2-morphism. Indeed, from the preceding equalities, we get
\[\begin{tikzcd}
	{} & {} & {} & {} \\
	{} && {} \\
	{} && {} & {}
	\arrow["t", from=1-1, to=1-2]
	\arrow[""{name=0, anchor=center, inner sep=0}, "k"', from=1-1, to=2-1]
	\arrow["{{s'}}", from=1-2, to=1-3]
	\arrow["{{r'}}", from=1-3, to=1-4]
	\arrow[""{name=1, anchor=center, inner sep=0}, "{{k_1}}"', from=1-3, to=2-3]
	\arrow[""{name=2, anchor=center, inner sep=0}, "{{k'_1}}", from=1-4, to=3-4]
	\arrow[""{name=3, anchor=center, inner sep=0}, "{{k'_2}}", shift left=3, curve={height=-30pt}, from=1-4, to=3-4]
	\arrow["{{t_1}}"', from=2-1, to=2-3]
	\arrow[""{name=4, anchor=center, inner sep=0}, equals, from=2-1, to=3-1]
	\arrow[""{name=5, anchor=center, inner sep=0}, "{{d_1}}"', from=2-3, to=3-3]
	\arrow["d"', from=3-1, to=3-3]
	\arrow["e"', from=3-3, to=3-4]
	\arrow["2"{description}, draw=none, from=0, to=1]
	\arrow["{{\theta'}}", between={0.3}{0.8}, Rightarrow, from=2, to=3]
	\arrow["\SIGMA"{description}, draw=none, from=4, to=5]
	\arrow["\SIGMA"{description}, draw=none, from=2-3, to=2]
\end{tikzcd}
\; =\;
\begin{tikzcd}
	{} & {} & {} & {} \\
	{} && {} \\
	{} && {} & {}
	\arrow["t", from=1-1, to=1-2]
	\arrow[""{name=0, anchor=center, inner sep=0}, "k"', from=1-1, to=2-1]
	\arrow["{{{s'}}}", from=1-2, to=1-3]
	\arrow["{{{r'}}}", from=1-3, to=1-4]
	\arrow[""{name=1, anchor=center, inner sep=0}, "{{k_2}}"', from=1-3, to=2-3]
	\arrow[""{name=2, anchor=center, inner sep=0}, "{k'_2}", from=1-4, to=3-4]
	\arrow["{{t_2}}"', from=2-1, to=2-3]
	\arrow[""{name=3, anchor=center, inner sep=0}, equals, from=2-1, to=3-1]
	\arrow[""{name=4, anchor=center, inner sep=0}, "{{d_2}}"', from=2-3, to=3-3]
	\arrow["d"', from=3-1, to=3-3]
	\arrow["e"', from=3-3, to=3-4]
	\arrow["3"{description}, draw=none, from=0, to=1]
	\arrow["\SIGMA"{description}, draw=none, from=3, to=4]
	\arrow["\SIGMA"{description}, draw=none, from=2-3, to=2]
\end{tikzcd}
\]
showing that it represents an $\Omega$ 2-cell corresponding to the $\Sigma$-step
$\,\begin{tikzcd}
	{S_3} & {S_4}
	\arrow["\bd", squiggly, from=1-1, to=1-2]
\end{tikzcd}\,$
 as in \eqref{eq:below0} below.
Combining this with the definition of horizontal composition as described in Definition \ref{def:horizontal},  first consider the diagram
\[\adjustbox{scale=0.60}{\begin{tikzcd}
	&& {} & {} \\
	& {} & {} \\
	{} & {} & {} \\
	{} && {} & {}
	\arrow["r", from=1-3, to=1-4]
	\arrow["g"', from=1-3, to=2-3]
	\arrow[""{name=0, anchor=center, inner sep=0}, from=1-4, to=4-4]
	\arrow["s", dashed, from=2-2, to=2-3]
	\arrow[""{name=1, anchor=center, inner sep=0}, "h"', dashed, from=2-2, to=3-2]
	\arrow[""{name=2, anchor=center, inner sep=0}, "{{h'}}", from=2-3, to=3-3]
	\arrow["t", dashed, from=3-1, to=3-2]
	\arrow[""{name=3, anchor=center, inner sep=0}, "k"', dashed, from=3-1, to=4-1]
	\arrow["{{s'}}"', dashed, from=3-2, to=3-3]
	\arrow[""{name=4, anchor=center, inner sep=0}, "{{k_1}}", from=3-3, to=4-3]
	\arrow["{{t_1}}"', from=4-1, to=4-3]
	\arrow[from=4-3, to=4-4]
	\arrow["1"{description}, draw=none, from=1, to=2]
	\arrow["\bullet"{description}, draw=none, from=2, to=0]
	\arrow["2"{description}, draw=none, from=3, to=4]
\end{tikzcd}}
\begin{tikzcd}
	{} & {}
	\arrow[squiggly, from=1-1, to=1-2]
\end{tikzcd}
\adjustbox{scale=0.60}{\begin{tikzcd}
	&& {} & {} \\
	& {} & {} \\
	{} & {} & {} \\
	{} && {} & {} \\
	{} && {} & {}
	\arrow["r", from=1-3, to=1-4]
	\arrow["g"', from=1-3, to=2-3]
	\arrow[""{name=0, anchor=center, inner sep=0}, from=1-4, to=4-4]
	\arrow["s", dashed, from=2-2, to=2-3]
	\arrow[""{name=1, anchor=center, inner sep=0}, "h"', dashed, from=2-2, to=3-2]
	\arrow[""{name=2, anchor=center, inner sep=0}, "{{{h'}}}", from=2-3, to=3-3]
	\arrow["t", dashed, from=3-1, to=3-2]
	\arrow[""{name=3, anchor=center, inner sep=0}, "k"', dashed, from=3-1, to=4-1]
	\arrow["{{{s'}}}"', dashed, from=3-2, to=3-3]
	\arrow[""{name=4, anchor=center, inner sep=0}, "{{{k_1}}}", from=3-3, to=4-3]
	\arrow["{{{t_1}}}"', from=4-1, to=4-3]
	\arrow[""{name=5, anchor=center, inner sep=0}, equals, from=4-1, to=5-1]
	\arrow[from=4-3, to=4-4]
	\arrow[""{name=6, anchor=center, inner sep=0}, "{{d_1}}"', from=4-3, to=5-3]
	\arrow[""{name=7, anchor=center, inner sep=0}, from=4-4, to=5-4]
	\arrow["d"', from=5-1, to=5-3]
	\arrow[from=5-3, to=5-4]
	\arrow["1"{description}, draw=none, from=1, to=2]
	\arrow["\bullet"{description}, draw=none, from=2, to=0]
	\arrow["2"{description}, draw=none, from=3, to=4]
	\arrow["\SIGMA"{description}, draw=none, from=5, to=6]
	\arrow["\bullet"{description}, draw=none, from=6, to=7]
\end{tikzcd}}
\begin{tikzcd}
	{} & {}
	\arrow[squiggly, from=1-1, to=1-2]
\end{tikzcd}
\adjustbox{scale=0.60}{\begin{tikzcd}
	&& {} & {} \\
	& {} & {} & {} \\
	{} & {} & {} & {} \\
	{} && {} \\
	{} && {} & {}
	\arrow["r", from=1-3, to=1-4]
	\arrow[""{name=0, anchor=center, inner sep=0}, "g"', from=1-3, to=2-3]
	\arrow[""{name=1, anchor=center, inner sep=0}, "{\dot{g}}", from=1-4, to=2-4]
	\arrow["s", dashed, from=2-2, to=2-3]
	\arrow[""{name=2, anchor=center, inner sep=0}, "h"', dashed, from=2-2, to=3-2]
	\arrow[from=2-3, to=2-4]
	\arrow[""{name=3, anchor=center, inner sep=0}, "{{{{{h'}}}}}"', from=2-3, to=3-3]
	\arrow[""{name=4, anchor=center, inner sep=0}, "{\dot{h}}", from=2-4, to=3-4]
	\arrow["t", dashed, from=3-1, to=3-2]
	\arrow[""{name=5, anchor=center, inner sep=0}, "k"', dashed, from=3-1, to=4-1]
	\arrow["{{{{{s'}}}}}"', dashed, from=3-2, to=3-3]
	\arrow[from=3-3, to=3-4]
	\arrow[""{name=6, anchor=center, inner sep=0}, "{{{{{k_1}}}}}"', from=3-3, to=4-3]
	\arrow[""{name=7, anchor=center, inner sep=0}, "{k'_1}", from=3-4, to=5-4]
	\arrow["{{{{{t_1}}}}}"', from=4-1, to=4-3]
	\arrow[""{name=8, anchor=center, inner sep=0}, equals, from=4-1, to=5-1]
	\arrow[""{name=9, anchor=center, inner sep=0}, "{{{{d_1}}}}"', from=4-3, to=5-3]
	\arrow["d"', from=5-1, to=5-3]
	\arrow["e"', from=5-3, to=5-4]
	\arrow["\bullet"{description}, draw=none, from=0, to=1]
	\arrow["1"{description}, draw=none, from=2, to=3]
	\arrow["\bullet"{description}, draw=none, from=3, to=4]
	\arrow["2"{description}, draw=none, from=5, to=6]
	\arrow["\SIGMA"{description}, draw=none, from=8, to=9]
	\arrow["\SIGMA"{description}, draw=none, from=4-3, to=7]
\end{tikzcd}}\, .
\]
Observe that the $\Omega$ 2-cell obtained from the $\Sigma$-path of level 2 given by the solid lines may also be obtained via the $\Sigma$-path of level 3 including the dashed lines, giving rise to
$\, \begin{tikzcd}
	{S_1} & {S_2} & {S_3}
	\arrow["{\bd_1}", squiggly, from=1-1, to=1-2]
	\arrow["\bu", squiggly, from=1-2, to=1-3]
\end{tikzcd}\,$
as in \eqref{eq:below0} below. In a similar way, we also get
$\, \begin{tikzcd}
	{S_4} & {S_5} & {S_6}
	\arrow["{\bu}", squiggly, from=1-1, to=1-2]
	\arrow["\bd_1", squiggly, from=1-2, to=1-3]
\end{tikzcd}\,$
 as in \eqref{eq:below0}. Thus, we see that the composition $\Omega\circ \bar{f}$ is given by the vertical composition of basic $\Omega$ 2-cells corresponding to a $\Sigma$-path
\begin{equation}\label{eq:below0}\begin{tikzcd}
	{S_1} & {S_2} & {S_3} & {S_4} & {S_5} & {S_6}
	\arrow["{{{\bd}_1}}", squiggly, from=1-1, to=1-2]
	\arrow["\bu", squiggly, from=1-2, to=1-3]
	\arrow["\bd", squiggly, from=1-3, to=1-4]
	\arrow["\bu", squiggly, from=1-4, to=1-5]
	\arrow["{{\bd}_1}", squiggly, from=1-5, to=1-6]
\end{tikzcd}\, ,
\end{equation}
where
\(\;S_1=\hspace{-2mm}\adjustbox{scale=0.60}{\begin{tikzcd}
	&& {} & {} \\
	& {} & {} & {} \\
	{} & {} & {} \\
	{} && {} & {}
	\arrow[from=1-3, to=1-4]
	\arrow[from=1-3, to=2-3]
	\arrow[from=1-4, to=4-4]
	\arrow[from=2-2, to=2-3]
	\arrow[""{name=0, anchor=center, inner sep=0}, from=2-2, to=3-2]
	\arrow["\bullet"', shift right=5, draw=none, from=2-3, to=2-4]
	\arrow[""{name=1, anchor=center, inner sep=0}, from=2-3, to=3-3]
	\arrow[from=3-1, to=3-2]
	\arrow[""{name=2, anchor=center, inner sep=0}, from=3-1, to=4-1]
	\arrow[from=3-2, to=3-3]
	\arrow[""{name=3, anchor=center, inner sep=0}, from=3-3, to=4-3]
	\arrow[from=4-1, to=4-3]
	\arrow[from=4-3, to=4-4]
	\arrow["1"{description}, draw=none, from=0, to=1]
	\arrow["2"{description}, draw=none, from=2, to=3]
\end{tikzcd}}\),
\(S_2=\adjustbox{scale=0.60}{\begin{tikzcd}
	&& {} & {} \\
	& {} & {} & {} \\
	{} & {} & {} \\
	{} && {} & {} \\
	{} && {} & {}
	\arrow[from=1-3, to=1-4]
	\arrow[from=1-3, to=2-3]
	\arrow[from=1-4, to=4-4]
	\arrow[from=2-2, to=2-3]
	\arrow[""{name=0, anchor=center, inner sep=0}, from=2-2, to=3-2]
	\arrow["\bullet"', shift right=5, draw=none, from=2-3, to=2-4]
	\arrow[""{name=1, anchor=center, inner sep=0}, from=2-3, to=3-3]
	\arrow[from=3-1, to=3-2]
	\arrow[""{name=2, anchor=center, inner sep=0}, from=3-1, to=4-1]
	\arrow[from=3-2, to=3-3]
	\arrow[""{name=3, anchor=center, inner sep=0}, from=3-3, to=4-3]
	\arrow[from=4-1, to=4-3]
	\arrow[""{name=4, anchor=center, inner sep=0}, equals, from=4-1, to=5-1]
	\arrow[from=4-3, to=4-4]
	\arrow[""{name=5, anchor=center, inner sep=0}, from=4-3, to=5-3]
	\arrow[""{name=6, anchor=center, inner sep=0}, from=4-4, to=5-4]
	\arrow[from=5-1, to=5-3]
	\arrow[from=5-3, to=5-4]
	\arrow["1"{description}, draw=none, from=0, to=1]
	\arrow["2"{description}, draw=none, from=2, to=3]
	\arrow["4"{description}, draw=none, from=4, to=5]
	\arrow["\bullet"{description}, draw=none, from=5, to=6]
\end{tikzcd}}\),
\(S_3=\adjustbox{scale=0.60}{\begin{tikzcd}
	&& {} & {} \\
	& {} & {} & {} \\
	{} & {} & {} & {} \\
	{} && {} \\
	{} && {} & {}
	\arrow["r", from=1-3, to=1-4]
	\arrow[""{name=0, anchor=center, inner sep=0}, "g"', from=1-3, to=2-3]
	\arrow[""{name=1, anchor=center, inner sep=0}, from=1-4, to=2-4]
	\arrow["s", from=2-2, to=2-3]
	\arrow[""{name=2, anchor=center, inner sep=0}, "h"', from=2-2, to=3-2]
	\arrow[from=2-3, to=2-4]
	\arrow[""{name=3, anchor=center, inner sep=0}, from=2-3, to=3-3]
	\arrow[""{name=4, anchor=center, inner sep=0}, from=2-4, to=3-4]
	\arrow["t", from=3-1, to=3-2]
	\arrow[""{name=5, anchor=center, inner sep=0}, "k"', from=3-1, to=4-1]
	\arrow[from=3-2, to=3-3]
	\arrow[from=3-3, to=3-4]
	\arrow[""{name=6, anchor=center, inner sep=0}, "{k_1}", from=3-3, to=4-3]
	\arrow[""{name=7, anchor=center, inner sep=0}, "{k'_1}", from=3-4, to=5-4]
	\arrow[from=4-1, to=4-3]
	\arrow[""{name=8, anchor=center, inner sep=0}, equals, from=4-1, to=5-1]
	\arrow[""{name=9, anchor=center, inner sep=0}, from=4-3, to=5-3]
	\arrow[from=5-1, to=5-3]
	\arrow[from=5-3, to=5-4]
	\arrow["\bullet"{description}, draw=none, from=0, to=1]
	\arrow["1"{description}, draw=none, from=2, to=3]
	\arrow["\bullet"{description}, draw=none, from=3, to=4]
	\arrow["2"{description}, draw=none, from=5, to=6]
	\arrow["4"{description}, draw=none, from=8, to=9]
	\arrow["6"{description}, draw=none, from=4-3, to=7]
\end{tikzcd}}\),
\(S_4=\adjustbox{scale=0.60}{\begin{tikzcd}
	&& {} & {} \\
	& {} & {} & {} \\
	{} & {} & {} & {} \\
	{} && {} \\
	{} && {} & {}
	\arrow["r", from=1-3, to=1-4]
	\arrow[""{name=0, anchor=center, inner sep=0}, "g"', from=1-3, to=2-3]
	\arrow[""{name=1, anchor=center, inner sep=0}, from=1-4, to=2-4]
	\arrow["s", from=2-2, to=2-3]
	\arrow[""{name=2, anchor=center, inner sep=0}, "h"', from=2-2, to=3-2]
	\arrow[from=2-3, to=2-4]
	\arrow[""{name=3, anchor=center, inner sep=0}, from=2-3, to=3-3]
	\arrow[""{name=4, anchor=center, inner sep=0}, from=2-4, to=3-4]
	\arrow["t", from=3-1, to=3-2]
	\arrow[""{name=5, anchor=center, inner sep=0}, "k"', from=3-1, to=4-1]
	\arrow[from=3-2, to=3-3]
	\arrow[from=3-3, to=3-4]
	\arrow[""{name=6, anchor=center, inner sep=0}, "{k_1}", from=3-3, to=4-3]
	\arrow[""{name=7, anchor=center, inner sep=0}, "{k'_1}", from=3-4, to=5-4]
	\arrow[from=4-1, to=4-3]
	\arrow[""{name=8, anchor=center, inner sep=0}, equals, from=4-1, to=5-1]
	\arrow[""{name=9, anchor=center, inner sep=0}, from=4-3, to=5-3]
	\arrow[from=5-1, to=5-3]
	\arrow[from=5-3, to=5-4]
	\arrow["\bullet"{description}, draw=none, from=0, to=1]
	\arrow["1"{description}, draw=none, from=2, to=3]
	\arrow["\bullet"{description}, draw=none, from=3, to=4]
	\arrow["3"{description}, draw=none, from=5, to=6]
	\arrow["5"{description}, draw=none, from=8, to=9]
	\arrow["7"{description}, draw=none, from=4-3, to=7]
\end{tikzcd}}\),
\\
\(S_5=\adjustbox{scale=0.60}{\begin{tikzcd}
	&& {} & {} \\
	& {} & {} & {} \\
	{} & {} & {} \\
	{} && {} & {} \\
	{} && {} & {}
	\arrow[from=1-3, to=1-4]
	\arrow[from=1-3, to=2-3]
	\arrow[from=1-4, to=4-4]
	\arrow[from=2-2, to=2-3]
	\arrow[""{name=0, anchor=center, inner sep=0}, from=2-2, to=3-2]
	\arrow["\bullet"', shift right=5, draw=none, from=2-3, to=2-4]
	\arrow[""{name=1, anchor=center, inner sep=0}, from=2-3, to=3-3]
	\arrow[from=3-1, to=3-2]
	\arrow[""{name=2, anchor=center, inner sep=0}, from=3-1, to=4-1]
	\arrow[from=3-2, to=3-3]
	\arrow[""{name=3, anchor=center, inner sep=0}, from=3-3, to=4-3]
	\arrow[from=4-1, to=4-3]
	\arrow[""{name=4, anchor=center, inner sep=0}, equals, from=4-1, to=5-1]
	\arrow[from=4-3, to=4-4]
	\arrow[""{name=5, anchor=center, inner sep=0}, from=4-3, to=5-3]
	\arrow[""{name=6, anchor=center, inner sep=0}, from=4-4, to=5-4]
	\arrow[from=5-1, to=5-3]
	\arrow[from=5-3, to=5-4]
	\arrow["1"{description}, draw=none, from=0, to=1]
	\arrow["3"{description}, draw=none, from=2, to=3]
	\arrow["5"{description}, draw=none, from=4, to=5]
	\arrow["\bullet"{description}, draw=none, from=5, to=6]
\end{tikzcd}}\)
and
\(S_6=\hspace{-2mm}\adjustbox{scale=0.60}{\begin{tikzcd}
	&& {} & {} \\
	& {} & {} & {} \\
	{} & {} & {} \\
	{} && {} & {}
	\arrow[from=1-3, to=1-4]
	\arrow[from=1-3, to=2-3]
	\arrow[from=1-4, to=4-4]
	\arrow[from=2-2, to=2-3]
	\arrow[""{name=0, anchor=center, inner sep=0}, from=2-2, to=3-2]
	\arrow["\bullet"', shift right=5, draw=none, from=2-3, to=2-4]
	\arrow[""{name=1, anchor=center, inner sep=0}, from=2-3, to=3-3]
	\arrow[from=3-1, to=3-2]
	\arrow[""{name=2, anchor=center, inner sep=0}, from=3-1, to=4-1]
	\arrow[from=3-2, to=3-3]
	\arrow[""{name=3, anchor=center, inner sep=0}, from=3-3, to=4-3]
	\arrow[from=4-1, to=4-3]
	\arrow[from=4-3, to=4-4]
	\arrow["1"{description}, draw=none, from=0, to=1]
	\arrow["3"{description}, draw=none, from=2, to=3]
\end{tikzcd}}
\).
 Let \(R_1=\hspace{-2mm}\adjustbox{scale=0.60}{\begin{tikzcd}
	&& {} & {} \\
	& {} & {} & {} \\
	{} & {} & {} & {} \\
	{} && {} & {}
	\arrow[from=1-3, to=1-4]
	\arrow[""{name=0, anchor=center, inner sep=0}, from=1-3, to=2-3]
	\arrow[""{name=1, anchor=center, inner sep=0}, from=1-4, to=2-4]
	\arrow[from=2-2, to=2-3]
	\arrow[""{name=2, anchor=center, inner sep=0}, from=2-2, to=3-2]
	\arrow[from=2-3, to=2-4]
	\arrow[""{name=3, anchor=center, inner sep=0}, from=2-3, to=3-3]
	\arrow[""{name=4, anchor=center, inner sep=0}, from=2-4, to=3-4]
	\arrow[from=3-1, to=3-2]
	\arrow[""{name=5, anchor=center, inner sep=0}, from=3-1, to=4-1]
	\arrow[from=3-2, to=3-3]
	\arrow[from=3-3, to=3-4]
	\arrow[""{name=6, anchor=center, inner sep=0}, from=3-3, to=4-3]
	\arrow[""{name=7, anchor=center, inner sep=0}, from=3-4, to=4-4]
	\arrow[from=4-1, to=4-3]
	\arrow[from=4-3, to=4-4]
	\arrow["\bullet"{description}, draw=none, from=0, to=1]
	\arrow["1"{description}, draw=none, from=2, to=3]
	\arrow["\bullet"{description}, draw=none, from=3, to=4]
	\arrow["2"{description}, draw=none, from=5, to=6]
	\arrow["\bullet"{description}, draw=none, from=6, to=7]
\end{tikzcd}}\, \)
 and
 \(R_2=\hspace{-2mm}\adjustbox{scale=0.60}{\begin{tikzcd}
	&& {} & {} \\
	& {} & {} & {} \\
	{} & {} & {} & {} \\
	{} && {} & {}
	\arrow[from=1-3, to=1-4]
	\arrow[""{name=0, anchor=center, inner sep=0}, from=1-3, to=2-3]
	\arrow[""{name=1, anchor=center, inner sep=0}, from=1-4, to=2-4]
	\arrow[from=2-2, to=2-3]
	\arrow[""{name=2, anchor=center, inner sep=0}, from=2-2, to=3-2]
	\arrow[from=2-3, to=2-4]
	\arrow[""{name=3, anchor=center, inner sep=0}, from=2-3, to=3-3]
	\arrow[""{name=4, anchor=center, inner sep=0}, from=2-4, to=3-4]
	\arrow[from=3-1, to=3-2]
	\arrow[""{name=5, anchor=center, inner sep=0}, from=3-1, to=4-1]
	\arrow[from=3-2, to=3-3]
	\arrow[from=3-3, to=3-4]
	\arrow[""{name=6, anchor=center, inner sep=0}, from=3-3, to=4-3]
	\arrow[""{name=7, anchor=center, inner sep=0}, from=3-4, to=4-4]
	\arrow[from=4-1, to=4-3]
	\arrow[from=4-3, to=4-4]
	\arrow["\bullet"{description}, draw=none, from=0, to=1]
	\arrow["1"{description}, draw=none, from=2, to=3]
	\arrow["\bullet"{description}, draw=none, from=3, to=4]
	\arrow["3"{description}, draw=none, from=5, to=6]
	\arrow["\bullet"{description}, draw=none, from=6, to=7]
\end{tikzcd}} \).
The equivalences of $\Sg$-paths indicated in the diagram
\[\begin{tikzcd}
	{S_1} & {S_2} & {S_3} & {S_4} & {S_5} & {S_6} \\
	&& {R_1} & {R_2}
	\arrow["{{{\bd}_1}}", squiggly, from=1-1, to=1-2]
	\arrow[""{name=0, anchor=center, inner sep=0}, "\bu"', squiggly, from=1-1, to=2-3]
	\arrow["\bu", squiggly, from=1-2, to=1-3]
	\arrow["\bd", squiggly, from=1-3, to=1-4]
	\arrow["\bu", squiggly, from=1-4, to=1-5]
	\arrow["{{{\bd}_1}}", squiggly, from=1-5, to=1-6]
	\arrow[""{name=1, anchor=center, inner sep=0}, "\bd", squiggly, from=2-3, to=1-3]
	\arrow["\bd"', squiggly, from=2-3, to=2-4]
	\arrow[""{name=2, anchor=center, inner sep=0}, "\bd"', squiggly, from=2-4, to=1-4]
	\arrow[""{name=3, anchor=center, inner sep=0}, "\bu"', squiggly, from=2-4, to=1-6]
	\arrow["\equiv"{description}, draw=none, from=0, to=1-3]
	\arrow["\equiv"{description}, draw=none, from=1-4, to=3]
	\arrow["\equiv"{description}, draw=none, from=1, to=2]
\end{tikzcd}\]
are clear:
 recall from Corollary~\ref{cor:length2} that two $\Sigma$-paths of the form \[\xymatrix{S_1\ar@{~>}[r]^{\mathbf{d}_1}&S_3\ar@{~>}[r]^{\mathbf{u}}&S_2} \text{ and } \xymatrix{S_1\ar@{~>}[r]^{\mathbf{u}}&S_4\ar@{~>}[r]^{\mathbf{d}}&S_2}\] are equivalent.
Therefore, we conclude that $\Omega\circ \bar{f}$ is given by a $\Sigma$-path of the form indicated in the table.

Now let $\Omega$ be the $\Omega$ 2-cell corresponding to the $\Sg$-step
\[\adjustbox{scale=0.80}{\begin{tikzcd}
	& {} & {} &&& {} & {} \\
	{} & {} & {} && {} & {} & {} \\
	{} & {} & {} && {} & {} & {}
	\arrow["s", from=1-2, to=1-3]
	\arrow["h"', from=1-2, to=2-2]
	\arrow["{h_1}", from=1-3, to=3-3]
	\arrow["s", from=1-6, to=1-7]
	\arrow["h"', from=1-6, to=2-6]
	\arrow["{h_2}", from=1-7, to=3-7]
	\arrow["t", from=2-1, to=2-2]
	\arrow[""{name=0, anchor=center, inner sep=0}, "k"', from=2-1, to=3-1]
	\arrow["2"{description}, draw=none, from=2-2, to=2-3]
	\arrow[""{name=1, anchor=center, inner sep=0}, "{k_1}", from=2-2, to=3-2]
	\arrow["u", between={0.3}{0.7}, squiggly, from=2-3, to=2-5]
	\arrow["t", from=2-5, to=2-6]
	\arrow[""{name=2, anchor=center, inner sep=0}, "k"', from=2-5, to=3-5]
	\arrow["3"{description}, draw=none, from=2-6, to=2-7]
	\arrow[""{name=3, anchor=center, inner sep=0}, "{k_1}", from=2-6, to=3-6]
	\arrow["{t_1}"', from=3-1, to=3-2]
	\arrow["{s_1}"', from=3-2, to=3-3]
	\arrow["{t_1}"', from=3-5, to=3-6]
	\arrow["{s_2}"', from=3-6, to=3-7]
	\arrow["1"{description}, draw=none, from=0, to=1]
	\arrow["1"{description}, draw=none, from=2, to=3]
\end{tikzcd}}\, .\]
Thus, here $(l_i,m_i)=(h_i,s_it_i)$.
Let $\Omega$ be represented by  the 2-morphism
\[\begin{tikzcd}
	{} & {} & {} & {} & {} & {} \\
	&& {} & {} & {} & {} \\
	{} & {} & {} & {} & {} & {}
	\arrow["g", from=1-1, to=1-2]
	\arrow[""{name=0, anchor=center, inner sep=0}, equals, from=1-1, to=3-1]
	\arrow["{{{h_1}}}", from=1-2, to=1-3]
	\arrow[""{name=1, anchor=center, inner sep=0}, equals, from=1-2, to=3-2]
	\arrow["{{s_1}}", from=1-3, to=1-4]
	\arrow[""{name=2, anchor=center, inner sep=0}, "{{d_1}}", from=1-3, to=2-3]
	\arrow["\theta"', shorten <=11pt, shorten >=11pt, Rightarrow, from=1-3, to=3-2]
	\arrow[""{name=3, anchor=center, inner sep=0}, equals, from=1-4, to=2-4]
	\arrow["{t_1}"', from=1-5, to=1-4]
	\arrow[""{name=4, anchor=center, inner sep=0}, equals, from=1-5, to=2-5]
	\arrow["u"', from=1-6, to=1-5]
	\arrow[""{name=5, anchor=center, inner sep=0}, equals, from=1-6, to=2-6]
	\arrow["d", from=2-4, to=2-3]
	\arrow[""{name=6, anchor=center, inner sep=0}, equals, from=2-4, to=3-4]
	\arrow["{t_1}", from=2-5, to=2-4]
	\arrow[""{name=7, anchor=center, inner sep=0}, equals, from=2-5, to=3-5]
	\arrow["u"', from=2-6, to=2-5]
	\arrow[""{name=8, anchor=center, inner sep=0}, equals, from=2-6, to=3-6]
	\arrow["g"', from=3-1, to=3-2]
	\arrow["{{h_2}}"', from=3-2, to=3-3]
	\arrow[""{name=9, anchor=center, inner sep=0}, "{{d_2}}"', from=3-3, to=2-3]
	\arrow["{{s_2}}"', from=3-3, to=3-4]
	\arrow["{t_1}", from=3-5, to=3-4]
	\arrow["u", from=3-6, to=3-5]
	\arrow[shorten <=13pt, shorten >=13pt, equals, from=0, to=1]
	\arrow["\SIGMA"{description}, draw=none, from=2, to=3]
	\arrow["\SIGMA"{description}, draw=none, from=3, to=4]
	\arrow["\SIGMA"{description}, draw=none, from=4, to=5]
	\arrow["\SIGMA"{description}, draw=none, from=7, to=6]
	\arrow["\SIGMA"{description}, draw=none, from=7, to=8]
	\arrow["\SIGMA"{description}, draw=none, from=9, to=6]
\end{tikzcd}\]
with $\theta$ invertible. Now, in order to compose it with $\bar{f}=(f,r)$, consider the canonical $\Sg$-square of $r$ along $g$  and an invertible 2-cell $\theta'$ as follows:
\[
\begin{tikzcd}
	{} & {} \\
	{} & {}
	\arrow["r", from=1-1, to=1-2]
	\arrow[""{name=0, anchor=center, inner sep=0}, "g"', from=1-1, to=2-1]
	\arrow[""{name=1, anchor=center, inner sep=0}, "{{{\dot{g}}}}", from=1-2, to=2-2]
	\arrow["{\dot{r}}"', from=2-1, to=2-2]
	\arrow["{{{{\dot{\SIGMA}}}}}"{description}, draw=none, from=0, to=1]
\end{tikzcd}
\qquad \qquad \qquad
\begin{tikzcd}
	{} & {} &&& {} & {} \\
	{} & {} && {} & {} & {} \\
	{} & {} &&& {} & {}
	\arrow["{\dot{r}}", from=1-1, to=1-2]
	\arrow["{h_1}"', from=1-1, to=2-1]
	\arrow[""{name=0, anchor=center, inner sep=0}, "{h'_1}", from=1-2, to=3-2]
	\arrow[""{name=1, anchor=center, inner sep=0}, "{h'_2}", shift left=3, curve={height=-24pt}, from=1-2, to=3-2]
	\arrow["{\dot{r}}", from=1-5, to=1-6]
	\arrow["{h_1}"', curve={height=6pt}, from=1-5, to=2-4]
	\arrow["{h_2}", from=1-5, to=2-5]
	\arrow["{h'_2}", from=1-6, to=3-6]
	\arrow["\SIGMA"{description}, draw=none, from=2-1, to=2-2]
	\arrow["{{{d_1}}}"', from=2-1, to=3-1]
	\arrow["\theta", Rightarrow, from=2-4, to=2-5]
	\arrow["{{{d_1}}}"', curve={height=6pt}, from=2-4, to=3-5]
	\arrow["\SIGMA"{description}, draw=none, from=2-5, to=2-6]
	\arrow["{{{d_2}}}", from=2-5, to=3-5]
	\arrow["e"', from=3-1, to=3-2]
	\arrow["e"', from=3-5, to=3-6]
	\arrow["{{\text{\Large $=$}}}"{description}, draw=none, from=1, to=2-4]
	\arrow["{{{\theta'}}}", shorten <=9pt, shorten >=6pt, Rightarrow, from=0, to=1]
\end{tikzcd}
\]
The 2-morphism of the middle of the representative of $\theta\circ {\bar{f}}$ is given by the diagram
\[\begin{tikzcd}
	{} & {} & {} & {} & {} & {} & {} \\
	{} & {} & {} & {} & {} & {} & {}
	\arrow["f", from=1-1, to=1-2]
	\arrow[""{name=0, anchor=center, inner sep=0}, equals, from=1-1, to=2-1]
	\arrow["{{{{\dot{g}}}}}", from=1-2, to=1-3]
	\arrow[""{name=1, anchor=center, inner sep=0}, "{{{h'_1}}}", from=1-3, to=1-4]
	\arrow[""{name=2, anchor=center, inner sep=0}, equals, from=1-3, to=2-3]
	\arrow[""{name=3, anchor=center, inner sep=0}, equals, from=1-4, to=2-4]
	\arrow["e"', from=1-5, to=1-4]
	\arrow["d"', from=1-6, to=1-5]
	\arrow[""{name=4, anchor=center, inner sep=0}, equals, from=1-6, to=2-6]
	\arrow["{{t_1u}}"', from=1-7, to=1-6]
	\arrow[""{name=5, anchor=center, inner sep=0}, equals, from=1-7, to=2-7]
	\arrow["f"', from=2-1, to=2-2]
	\arrow["{{{{\dot{g}}}}}"', from=2-2, to=2-3]
	\arrow[""{name=6, anchor=center, inner sep=0}, "{{{h'_2}}}"', from=2-3, to=2-4]
	\arrow["e", from=2-5, to=2-4]
	\arrow["d", from=2-6, to=2-5]
	\arrow["{{t_1u}}", from=2-7, to=2-6]
	\arrow["{\text{\normalsize $=$}}"{description}, draw=none, from=0, to=2]
	\arrow["{{{\theta'}}}", between={0.2}{0.8}, Rightarrow, from=1, to=6]
	\arrow["{{{\SIGMA^{\id}}}}"{description}, shift left, draw=none, from=3, to=4]
	\arrow["{\SIGMA^{\id}}"{description}, shift left, draw=none, from=4, to=5]
\end{tikzcd}\, .\]
This 2-morphism is an $\Omega$ 2-morphism. Indeed, it corresponds to a $\Sigma$-step between $\Sigma$-schemes of level 3 of the form $\begin{tikzcd}
	{S_3} & {S_4}
	\arrow["\mathbf{s}", squiggly, from=1-1, to=1-2]
\end{tikzcd}$ as in Diagram \eqref{eq:digcc} with $S_3$ and $S_4$ as indicated below that diagram.
Moreover, taking into account Definition \ref{def:horizontal}, and the original $\Sg$-step of type $u$ between $\Sg$-schemes of level 2, we see that  $\Omega\circ {\bar{f}}$ corresponds to the $\Sg$-path
 formed by the top line of the diagram
 \begin{equation}\label{eq:digcc}
   \begin{tikzcd}
	{S_1} & {S_2} & {S_3} & {S_4} & {S_5} & {S_6} \\
	& {R_1} & {R_2} & {R_3} & {R_4}
	\arrow["{{{{\bd}_1}}}", squiggly, from=1-1, to=1-2]
	\arrow[""{name=0, anchor=center, inner sep=0}, "\bu"', squiggly, from=1-1, to=2-2]
	\arrow["\bu", squiggly, from=1-2, to=1-3]
	\arrow["\bs", squiggly, from=1-3, to=1-4]
	\arrow[""{name=1, anchor=center, inner sep=0}, "{{{\bu\equiv \bs}}}"', squiggly, from=1-3, to=2-3]
	\arrow["\bu", squiggly, from=1-4, to=1-5]
	\arrow[""{name=2, anchor=center, inner sep=0}, "{{{\bu\equiv \bs}}}", squiggly, from=1-4, to=2-4]
	\arrow["{{{{\bd}_1}}}", squiggly, from=1-5, to=1-6]
	\arrow["{{{{\bd}_1}}}", squiggly, from=2-2, to=2-3]
	\arrow["\bs"', curve={height=18pt}, squiggly, from=2-2, to=2-5]
	\arrow["\equiv"', shift right=2, draw=none, from=2-2, to=2-5]
	\arrow["\bs", squiggly, from=2-3, to=2-4]
	\arrow[""{name=3, anchor=center, inner sep=0}, "\bu"', squiggly, from=2-5, to=1-6]
	\arrow["{{{{\bd}_1}}}"', squiggly, from=2-5, to=2-4]
	\arrow["\equiv"{description, pos=0.3}, shift left=3, draw=none, from=0, to=1]
	\arrow["\equiv"{description}, draw=none, from=1, to=2]
	\arrow["\equiv"{description, pos=0.7}, shift left=3, draw=none, from=2, to=3]
\end{tikzcd}\end{equation}
where
\newline
\(S_1=\hspace{-2mm}\adjustbox{scale=0.60}{\begin{tikzcd}
	&& {} & {} \\
	& {} & {} & {} \\
	{} & {} \\
	{} & {} & {} & {}
	\arrow[from=1-3, to=1-4]
	\arrow[from=1-3, to=2-3]
	\arrow[from=1-4, to=4-4]
	\arrow[from=2-2, to=2-3]
	\arrow[from=2-2, to=3-2]
	\arrow["\bullet"', shift right=5, draw=none, from=2-3, to=2-4]
	\arrow[""{name=0, anchor=center, inner sep=0}, from=2-3, to=4-3]
	\arrow[from=3-1, to=3-2]
	\arrow[""{name=1, anchor=center, inner sep=0}, from=3-1, to=4-1]
	\arrow[""{name=2, anchor=center, inner sep=0}, from=3-2, to=4-2]
	\arrow[from=4-1, to=4-2]
	\arrow[from=4-2, to=4-3]
	\arrow[from=4-3, to=4-4]
	\arrow["1"{description}, draw=none, from=1, to=2]
	\arrow["2"{description, pos=0.3}, draw=none, from=3-2, to=0]
\end{tikzcd}}\),
\(S_2=\adjustbox{scale=0.60}{\begin{tikzcd}
	&& {} & {} \\
	& {} & {} & {} \\
	{} & {} \\
	{} & {} & {} & {} \\
	{} & {} & {} & {}
	\arrow["r", from=1-3, to=1-4]
	\arrow["g"', from=1-3, to=2-3]
	\arrow[from=1-4, to=4-4]
	\arrow["s", from=2-2, to=2-3]
	\arrow["h"', from=2-2, to=3-2]
	\arrow["\bullet"', shift right=5, draw=none, from=2-3, to=2-4]
	\arrow[""{name=0, anchor=center, inner sep=0}, from=2-3, to=4-3]
	\arrow["t", from=3-1, to=3-2]
	\arrow[""{name=1, anchor=center, inner sep=0}, "k"', from=3-1, to=4-1]
	\arrow[""{name=2, anchor=center, inner sep=0}, "{k_1}", from=3-2, to=4-2]
	\arrow["{t_1}", from=4-1, to=4-2]
	\arrow[""{name=3, anchor=center, inner sep=0}, equals, from=4-1, to=5-1]
	\arrow["{s_1}", from=4-2, to=4-3]
	\arrow[""{name=4, anchor=center, inner sep=0}, equals, from=4-2, to=5-2]
	\arrow[from=4-3, to=4-4]
	\arrow[""{name=5, anchor=center, inner sep=0}, "{d_1}", from=4-3, to=5-3]
	\arrow[from=4-4, to=5-4]
	\arrow["{t_1}", from=5-1, to=5-2]
	\arrow["d", from=5-2, to=5-3]
	\arrow[from=5-3, to=5-4]
	\arrow["1"{description}, draw=none, from=1, to=2]
	\arrow["2"{description, pos=0.3}, draw=none, from=3-2, to=0]
	\arrow["\bullet"{description}, draw=none, from=3, to=4]
	\arrow["4"{description}, draw=none, from=4, to=5]
\end{tikzcd}}\),
\(S_3=\adjustbox{scale=0.60}{\begin{tikzcd}
	&& {} & {} \\
	& {} & {} & {} \\
	{} & {} \\
	{} & {} & {} & {} \\
	{} & {} & {} & {}
	\arrow["r", from=1-3, to=1-4]
	\arrow[""{name=0, anchor=center, inner sep=0}, "g"', from=1-3, to=2-3]
	\arrow[""{name=1, anchor=center, inner sep=0}, from=1-4, to=2-4]
	\arrow["s", from=2-2, to=2-3]
	\arrow["h"', from=2-2, to=3-2]
	\arrow[from=2-3, to=2-4]
	\arrow[""{name=2, anchor=center, inner sep=0}, from=2-3, to=4-3]
	\arrow[from=2-4, to=5-4]
	\arrow["t", from=3-1, to=3-2]
	\arrow[""{name=3, anchor=center, inner sep=0}, "k"', from=3-1, to=4-1]
	\arrow[""{name=4, anchor=center, inner sep=0}, from=3-2, to=4-2]
	\arrow[from=4-1, to=4-2]
	\arrow[""{name=5, anchor=center, inner sep=0}, equals, from=4-1, to=5-1]
	\arrow[from=4-2, to=4-3]
	\arrow[""{name=6, anchor=center, inner sep=0}, equals, from=4-2, to=5-2]
	\arrow["5"{description}, shift left=5, draw=none, from=4-3, to=4-4]
	\arrow[""{name=7, anchor=center, inner sep=0}, from=4-3, to=5-3]
	\arrow[from=5-1, to=5-2]
	\arrow[from=5-2, to=5-3]
	\arrow[from=5-3, to=5-4]
	\arrow["\bullet"{description}, draw=none, from=0, to=1]
	\arrow["1"{description}, draw=none, from=3, to=4]
	\arrow["2"{description, pos=0.3}, draw=none, from=3-2, to=2]
	\arrow["\bullet"{description}, draw=none, from=5, to=6]
	\arrow["4"{description}, draw=none, from=6, to=7]
\end{tikzcd}}\),
\(S_4=\adjustbox{scale=0.60}{\begin{tikzcd}
	&& {} & {} \\
	& {} & {} & {} \\
	{} & {} \\
	{} & {} & {} & {} \\
	{} & {} & {} & {}
	\arrow["r", from=1-3, to=1-4]
	\arrow[""{name=0, anchor=center, inner sep=0}, "g"', from=1-3, to=2-3]
	\arrow[""{name=1, anchor=center, inner sep=0}, from=1-4, to=2-4]
	\arrow["s", from=2-2, to=2-3]
	\arrow["h"', from=2-2, to=3-2]
	\arrow[from=2-3, to=2-4]
	\arrow[""{name=2, anchor=center, inner sep=0}, from=2-3, to=4-3]
	\arrow[from=2-4, to=5-4]
	\arrow["t", from=3-1, to=3-2]
	\arrow[""{name=3, anchor=center, inner sep=0}, "k"', from=3-1, to=4-1]
	\arrow[""{name=4, anchor=center, inner sep=0}, from=3-2, to=4-2]
	\arrow[from=4-1, to=4-2]
	\arrow[""{name=5, anchor=center, inner sep=0}, equals, from=4-1, to=5-1]
	\arrow[from=4-2, to=4-3]
	\arrow[""{name=6, anchor=center, inner sep=0}, equals, from=4-2, to=5-2]
	\arrow["7"{description}, shift left=5, draw=none, from=4-3, to=4-4]
	\arrow[""{name=7, anchor=center, inner sep=0}, from=4-3, to=5-3]
	\arrow[from=5-1, to=5-2]
	\arrow[from=5-2, to=5-3]
	\arrow[from=5-3, to=5-4]
	\arrow["\bullet"{description}, draw=none, from=0, to=1]
	\arrow["1"{description}, draw=none, from=3, to=4]
	\arrow["3"{description, pos=0.3}, draw=none, from=3-2, to=2]
	\arrow["\bullet"{description}, draw=none, from=5, to=6]
	\arrow["6"{description}, draw=none, from=6, to=7]
\end{tikzcd}}\),
\newline
\(S_5=\adjustbox{scale=0.60}{\begin{tikzcd}
	&& {} & {} \\
	& {} & {} & {} \\
	{} & {} & {} \\
	{} & {} & {} & {} \\
	{} & {} & {} & {}
	\arrow[from=1-3, to=1-4]
	\arrow[from=1-3, to=2-3]
	\arrow[from=1-4, to=4-4]
	\arrow[from=2-2, to=2-3]
	\arrow[from=2-2, to=3-2]
	\arrow["\bullet"', shift right=5, draw=none, from=2-3, to=2-4]
	\arrow[""{name=0, anchor=center, inner sep=0}, from=2-3, to=4-3]
	\arrow[from=3-1, to=3-2]
	\arrow[""{name=1, anchor=center, inner sep=0}, from=3-1, to=4-1]
	\arrow[""{name=2, anchor=center, inner sep=0}, from=3-2, to=4-2]
	\arrow[from=4-1, to=4-2]
	\arrow[""{name=3, anchor=center, inner sep=0}, equals, from=4-1, to=5-1]
	\arrow[from=4-2, to=4-3]
	\arrow[""{name=4, anchor=center, inner sep=0}, equals, from=4-2, to=5-2]
	\arrow[from=4-3, to=3-3]
	\arrow[from=4-3, to=4-4]
	\arrow[""{name=5, anchor=center, inner sep=0}, from=4-3, to=5-3]
	\arrow[from=4-4, to=5-4]
	\arrow[from=5-1, to=5-2]
	\arrow[from=5-2, to=5-3]
	\arrow[from=5-3, to=5-4]
	\arrow["1"{description}, draw=none, from=1, to=2]
	\arrow["3"{description, pos=0.3}, draw=none, from=3-2, to=0]
	\arrow["\bullet"{description}, draw=none, from=3, to=4]
	\arrow["6"{description}, draw=none, from=4, to=5]
\end{tikzcd}}\),
\(S_6=\hspace{-2mm}\adjustbox{scale=0.60}{\begin{tikzcd}
	&& {} & {} \\
	& {} & {} & {} \\
	{} & {} \\
	{} & {} & {} & {}
	\arrow[from=1-3, to=1-4]
	\arrow[from=1-3, to=2-3]
	\arrow[from=1-4, to=4-4]
	\arrow[from=2-2, to=2-3]
	\arrow[from=2-2, to=3-2]
	\arrow["\bullet"', shift right=5, draw=none, from=2-3, to=2-4]
	\arrow[""{name=0, anchor=center, inner sep=0}, from=2-3, to=4-3]
	\arrow[from=3-1, to=3-2]
	\arrow[""{name=1, anchor=center, inner sep=0}, from=3-1, to=4-1]
	\arrow[""{name=2, anchor=center, inner sep=0}, from=3-2, to=4-2]
	\arrow[from=4-1, to=4-2]
	\arrow[from=4-2, to=4-3]
	\arrow[from=4-3, to=4-4]
	\arrow["1"{description}, draw=none, from=1, to=2]
	\arrow["3"{description, pos=0.3}, draw=none, from=3-2, to=0]
\end{tikzcd}}
\),
\(R_1=\hspace{-2mm}\adjustbox{scale=0.60}{\begin{tikzcd}
	&& {} & {} \\
	& {} & {} & {} \\
	{} & {} \\
	{} & {} & {} & {}
	\arrow[from=1-3, to=1-4]
	\arrow[""{name=0, anchor=center, inner sep=0}, from=1-3, to=2-3]
	\arrow[""{name=1, anchor=center, inner sep=0}, from=1-4, to=2-4]
	\arrow[from=2-2, to=2-3]
	\arrow[from=2-2, to=3-2]
	\arrow[from=2-3, to=2-4]
	\arrow[""{name=2, anchor=center, inner sep=0}, from=2-3, to=4-3]
	\arrow[""{name=3, anchor=center, inner sep=0}, from=2-4, to=4-4]
	\arrow[from=3-1, to=3-2]
	\arrow[""{name=4, anchor=center, inner sep=0}, from=3-1, to=4-1]
	\arrow[""{name=5, anchor=center, inner sep=0}, from=3-2, to=4-2]
	\arrow[from=4-1, to=4-2]
	\arrow[from=4-2, to=4-3]
	\arrow[from=4-3, to=4-4]
	\arrow["\bullet"{description}, draw=none, from=0, to=1]
	\arrow["\bullet"{description}, draw=none, from=2, to=3]
	\arrow["1"{description}, draw=none, from=4, to=5]
	\arrow["2"{description, pos=0.3}, draw=none, from=3-2, to=2]
\end{tikzcd}
}\),
 \(R_2=\hspace{-2mm}\adjustbox{scale=0.60}{\begin{tikzcd}
	&& {} & {} \\
	& {} & {} & {} \\
	{} & {} \\
	{} & {} & {} & {} \\
	{} & {} & {} & {}
	\arrow[from=1-3, to=1-4]
	\arrow[""{name=0, anchor=center, inner sep=0}, from=1-3, to=2-3]
	\arrow[""{name=1, anchor=center, inner sep=0}, from=1-4, to=2-4]
	\arrow[from=2-2, to=2-3]
	\arrow[from=2-2, to=3-2]
	\arrow[from=2-3, to=2-4]
	\arrow[""{name=2, anchor=center, inner sep=0}, from=2-3, to=4-3]
	\arrow[""{name=3, anchor=center, inner sep=0}, from=2-4, to=4-4]
	\arrow[from=3-1, to=3-2]
	\arrow[""{name=4, anchor=center, inner sep=0}, from=3-1, to=4-1]
	\arrow[""{name=5, anchor=center, inner sep=0}, from=3-2, to=4-2]
	\arrow[from=4-1, to=4-2]
	\arrow[""{name=6, anchor=center, inner sep=0}, equals, from=4-1, to=5-1]
	\arrow[from=4-2, to=4-3]
	\arrow[""{name=7, anchor=center, inner sep=0}, equals, from=4-2, to=5-2]
	\arrow[from=4-3, to=4-4]
	\arrow[""{name=8, anchor=center, inner sep=0}, from=4-3, to=5-3]
	\arrow[""{name=9, anchor=center, inner sep=0}, from=4-4, to=5-4]
	\arrow[from=5-1, to=5-2]
	\arrow[from=5-2, to=5-3]
	\arrow[from=5-3, to=5-4]
	\arrow["\bullet"{description}, draw=none, from=0, to=1]
	\arrow["\bullet"{description}, draw=none, from=2, to=3]
	\arrow["1"{description}, draw=none, from=4, to=5]
	\arrow["2"{description, pos=0.3}, draw=none, from=3-2, to=2]
	\arrow["\bullet"{description}, draw=none, from=6, to=7]
	\arrow["4"{description}, draw=none, from=7, to=8]
	\arrow["\bullet"{description}, draw=none, from=8, to=9]
\end{tikzcd}}
 \),
\newline
 \(R_3=\adjustbox{scale=0.60}{\begin{tikzcd}
	&& {} & {} \\
	& {} & {} & {} \\
	{} & {} \\
	{} & {} & {} & {} \\
	{} & {} & {} & {}
	\arrow[from=1-3, to=1-4]
	\arrow[""{name=0, anchor=center, inner sep=0}, from=1-3, to=2-3]
	\arrow[""{name=1, anchor=center, inner sep=0}, from=1-4, to=2-4]
	\arrow[from=2-2, to=2-3]
	\arrow[from=2-2, to=3-2]
	\arrow[from=2-3, to=2-4]
	\arrow[""{name=2, anchor=center, inner sep=0}, from=2-3, to=4-3]
	\arrow[""{name=3, anchor=center, inner sep=0}, from=2-4, to=4-4]
	\arrow[from=3-1, to=3-2]
	\arrow[""{name=4, anchor=center, inner sep=0}, from=3-1, to=4-1]
	\arrow[""{name=5, anchor=center, inner sep=0}, from=3-2, to=4-2]
	\arrow[from=4-1, to=4-2]
	\arrow[""{name=6, anchor=center, inner sep=0}, equals, from=4-1, to=5-1]
	\arrow[from=4-2, to=4-3]
	\arrow[""{name=7, anchor=center, inner sep=0}, equals, from=4-2, to=5-2]
	\arrow[from=4-3, to=4-4]
	\arrow[""{name=8, anchor=center, inner sep=0}, from=4-3, to=5-3]
	\arrow[""{name=9, anchor=center, inner sep=0}, from=4-4, to=5-4]
	\arrow[from=5-1, to=5-2]
	\arrow[from=5-2, to=5-3]
	\arrow[from=5-3, to=5-4]
	\arrow["\bullet"{description}, draw=none, from=0, to=1]
	\arrow["\bullet"{description}, draw=none, from=2, to=3]
	\arrow["1"{description}, draw=none, from=4, to=5]
	\arrow["3"{description, pos=0.3}, draw=none, from=3-2, to=2]
	\arrow["\bullet"{description}, draw=none, from=6, to=7]
	\arrow["6"{description}, draw=none, from=7, to=8]
	\arrow["\bullet"{description}, draw=none, from=8, to=9]
\end{tikzcd}}
\)
and
\(R_4=\hspace{-2mm}\adjustbox{scale=0.60}{\begin{tikzcd}
	&& {} & {} \\
	& {} & {} & {} \\
	{} & {} \\
	{} & {} & {} & {}
	\arrow[from=1-3, to=1-4]
	\arrow[""{name=0, anchor=center, inner sep=0}, from=1-3, to=2-3]
	\arrow[""{name=1, anchor=center, inner sep=0}, from=1-4, to=2-4]
	\arrow[from=2-2, to=2-3]
	\arrow[from=2-2, to=3-2]
	\arrow[from=2-3, to=2-4]
	\arrow[""{name=2, anchor=center, inner sep=0}, from=2-3, to=4-3]
	\arrow[""{name=3, anchor=center, inner sep=0}, from=2-4, to=4-4]
	\arrow[from=3-1, to=3-2]
	\arrow[""{name=4, anchor=center, inner sep=0}, from=3-1, to=4-1]
	\arrow[""{name=5, anchor=center, inner sep=0}, from=3-2, to=4-2]
	\arrow[from=4-1, to=4-2]
	\arrow[from=4-2, to=4-3]
	\arrow[from=4-3, to=4-4]
	\arrow["\bullet"{description}, draw=none, from=0, to=1]
	\arrow["\bullet"{description}, draw=none, from=2, to=3]
	\arrow["1"{description}, draw=none, from=4, to=5]
	\arrow["3"{description, pos=0.3}, draw=none, from=3-2, to=2]
\end{tikzcd}
}\).

The equivalences of $\Sigma$-paths indicated in  diagram \eqref{eq:digcc} are clear from Lemma \ref{lem:Sigma-steps} and Proposition \ref{pro:adend}.
Therefore, we conclude that $\Omega\circ \overline{f}$ is given by a $\Sigma$-path of the form indicated in the table.

(2) Let $\Omega$ be the $\Omega$ 2-cell corresponding to the $\Sg$-step
\[\adjustbox{scale=0.80}{\begin{tikzcd}
	& {} & {} &&& {} & {} \\
	{} & {} & {} && {} & {} & {} \\
	{} & {} & {} && {} & {} & {}
	\arrow["r", from=1-2, to=1-3]
	\arrow[""{name=0, anchor=center, inner sep=0}, "g"', from=1-2, to=2-2]
	\arrow[""{name=1, anchor=center, inner sep=0}, "{{g'}}", from=1-3, to=2-3]
	\arrow["r", from=1-6, to=1-7]
	\arrow[""{name=2, anchor=center, inner sep=0}, "g"', from=1-6, to=2-6]
	\arrow[""{name=3, anchor=center, inner sep=0}, "{{g'}}", from=1-7, to=2-7]
	\arrow["s", from=2-1, to=2-2]
	\arrow[""{name=4, anchor=center, inner sep=0}, "h"', from=2-1, to=3-1]
	\arrow["{r'}"', from=2-2, to=2-3]
	\arrow["d", between={0.3}{0.7}, squiggly, from=2-3, to=2-5]
	\arrow[""{name=5, anchor=center, inner sep=0}, "{{h_1}}", from=2-3, to=3-3]
	\arrow["s", from=2-5, to=2-6]
	\arrow[""{name=6, anchor=center, inner sep=0}, "h"', from=2-5, to=3-5]
	\arrow["{r'}"', from=2-6, to=2-7]
	\arrow[""{name=7, anchor=center, inner sep=0}, "{{h_2}}", from=2-7, to=3-7]
	\arrow["{{s_1}}"', from=3-1, to=3-3]
	\arrow["{{s_2}}"', from=3-5, to=3-7]
	\arrow["1"{description}, draw=none, from=0, to=1]
	\arrow["1"{description}, draw=none, from=2, to=3]
	\arrow["2"{description}, draw=none, from=4, to=5]
	\arrow["3"{description}, draw=none, from=6, to=7]
\end{tikzcd}}\, .\]
Thus, here $(l_i,m_i)=(h_ig',s_i)$.
Let it be represented by the 2-morphism
\[\begin{tikzcd}
	{} & {} & {} & {} & {} & {} \\
	&&& {} & {} & {} \\
	{} & {} & {} & {} & {} & {}
	\arrow["f", from=1-1, to=1-2]
	\arrow[""{name=0, anchor=center, inner sep=0}, equals, from=1-1, to=3-1]
	\arrow["{g'}", from=1-2, to=1-3]
	\arrow["{h_1}", from=1-3, to=1-4]
	\arrow[""{name=1, anchor=center, inner sep=0}, equals, from=1-3, to=3-3]
	\arrow[""{name=2, anchor=center, inner sep=0}, "{{{d_1}}}"', from=1-4, to=2-4]
	\arrow["\theta"', shorten <=15pt, shorten >=11pt, Rightarrow, from=1-4, to=3-3]
	\arrow["{s_1}"', from=1-5, to=1-4]
	\arrow[""{name=3, anchor=center, inner sep=0}, equals, from=1-5, to=2-5]
	\arrow["t"', from=1-6, to=1-5]
	\arrow[""{name=4, anchor=center, inner sep=0}, equals, from=1-6, to=2-6]
	\arrow["d", from=2-5, to=2-4]
	\arrow[""{name=5, anchor=center, inner sep=0}, equals, from=2-5, to=3-5]
	\arrow["t", from=2-6, to=2-5]
	\arrow[""{name=6, anchor=center, inner sep=0}, equals, from=2-6, to=3-6]
	\arrow["f"', from=3-1, to=3-2]
	\arrow["{g'}"', from=3-2, to=3-3]
	\arrow["{h_2}"', from=3-3, to=3-4]
	\arrow[""{name=7, anchor=center, inner sep=0}, "{{{d_2}}}", from=3-4, to=2-4]
	\arrow["{s_2}", from=3-5, to=3-4]
	\arrow["t", from=3-6, to=3-5]
	\arrow[shorten <=26pt, shorten >=26pt, equals, from=0, to=1]
	\arrow["\SIGMA"{description}, draw=none, from=2, to=3]
	\arrow["\SIGMA"{description}, draw=none, from=3, to=4]
	\arrow["\SIGMA"{description}, draw=none, from=5, to=6]
	\arrow["\SIGMA"{description}, draw=none, from=7, to=5]
\end{tikzcd}\]
with $\theta$ invertible. Now, in order to compose with $\bar{k}=(k,u)$, just consider the $\Sg$-squares
\[\begin{tikzcd}
	{} & {} \\
	{} & {}
	\arrow["t", from=1-1, to=1-2]
	\arrow[""{name=0, anchor=center, inner sep=0}, "k"', from=1-1, to=2-1]
	\arrow[""{name=1, anchor=center, inner sep=0}, "{k_0}", from=1-2, to=2-2]
	\arrow["{t'}"', from=2-1, to=2-2]
	\arrow["\SIGMA"{description}, draw=none, from=0, to=1]
\end{tikzcd}
\quad \text{and} \quad
\begin{tikzcd}
	{} & {} \\
	{} & {}
	\arrow["d", from=1-1, to=1-2]
	\arrow[""{name=0, anchor=center, inner sep=0}, "{k_0}"', from=1-1, to=2-1]
	\arrow[""{name=1, anchor=center, inner sep=0}, "{k'}", from=1-2, to=2-2]
	\arrow["{d'}"', from=2-1, to=2-2]
	\arrow["\SIGMA"{description}, draw=none, from=0, to=1]
\end{tikzcd}
\]
and their horizontal composition.
Following Definition \ref{def:horizontal}, we obtain the middle 2-morphism of the desired composition $\bar{k}\circ\Omega$:
\[\begin{tikzcd}
	{} & {} & {} & {} & {} & {} & {} & {} \\
	{} & {} & {} & {} & {} & {} & {} & {}
	\arrow["f", from=1-1, to=1-2]
	\arrow[""{name=0, anchor=center, inner sep=0}, equals, from=1-1, to=2-1]
	\arrow["{{{g'}}}", from=1-2, to=1-3]
	\arrow["{{{h_1}}}", from=1-3, to=1-4]
	\arrow[""{name=1, anchor=center, inner sep=0}, equals, from=1-3, to=2-3]
	\arrow["{{{d_1}}}", from=1-4, to=1-5]
	\arrow["\theta", between={0}{0.9}, Rightarrow, from=1-4, to=2-4]
	\arrow["{{{k'}}}", from=1-5, to=1-6]
	\arrow[""{name=2, anchor=center, inner sep=0}, equals, from=1-5, to=2-5]
	\arrow[""{name=3, anchor=center, inner sep=0}, equals, from=1-6, to=2-6]
	\arrow["{{{d't'}}}"', from=1-7, to=1-6]
	\arrow[""{name=4, anchor=center, inner sep=0}, equals, from=1-7, to=2-7]
	\arrow["u"', from=1-8, to=1-7]
	\arrow[""{name=5, anchor=center, inner sep=0}, equals, from=1-8, to=2-8]
	\arrow["f"', from=2-1, to=2-2]
	\arrow["{{{g'}}}"', from=2-2, to=2-3]
	\arrow["{{{h_2}}}"', from=2-3, to=2-4]
	\arrow["{{{d_2}}}"', from=2-4, to=2-5]
	\arrow["{{{k'}}}", from=2-6, to=2-5]
	\arrow["{{{d't'}}}", from=2-7, to=2-6]
	\arrow["u", from=2-8, to=2-7]
	\arrow["{\text{\normalsize $=$}}"{description}, draw=none, from=0, to=1]
	\arrow["{\SIGMA^\id}"{description}, draw=none, from=2, to=3]
	\arrow["{\SIGMA^\id}"{description}, draw=none, from=3, to=4]
	\arrow["{\SIGMA^\id}"{description}, draw=none, from=4, to=5]
\end{tikzcd}\]
This 2-morphism is an $\Omega$ 2-morphism. Indeed, it corresponds to the following  $\Sigma$-step of type $\mathbf{s}$
\[
\adjustbox{scale=0.70}{\begin{tikzcd}
	&& {} & {} \\
	& {} & {} & {} \\
	& {} & {} & {} \\
	{} & {} && {} \\
	{} & {} && {}
	\arrow["r", from=1-3, to=1-4]
	\arrow[""{name=0, anchor=center, inner sep=0}, "g"', from=1-3, to=2-3]
	\arrow[""{name=1, anchor=center, inner sep=0}, "{{g'}}", from=1-4, to=2-4]
	\arrow["s", from=2-2, to=2-3]
	\arrow[""{name=2, anchor=center, inner sep=0}, "h"', from=2-2, to=3-2]
	\arrow["{{r'}}"', from=2-3, to=2-4]
	\arrow[""{name=3, anchor=center, inner sep=0}, "{{h_1}}", from=2-4, to=3-4]
	\arrow["{{s_1}}"', from=3-2, to=3-4]
	\arrow[""{name=4, anchor=center, inner sep=0}, equals, from=3-2, to=4-2]
	\arrow[""{name=5, anchor=center, inner sep=0}, "{{d_1}}", from=3-4, to=4-4]
	\arrow["t", from=4-1, to=4-2]
	\arrow[""{name=6, anchor=center, inner sep=0}, "k"', from=4-1, to=5-1]
	\arrow["d"', from=4-2, to=4-4]
	\arrow[""{name=7, anchor=center, inner sep=0}, "{{k_0}}", from=4-2, to=5-2]
	\arrow[""{name=8, anchor=center, inner sep=0}, "{{k'}}", from=4-4, to=5-4]
	\arrow["{{t'}}"', from=5-1, to=5-2]
	\arrow["{{d'}}"', from=5-2, to=5-4]
	\arrow["\SIGMA"{description}, draw=none, from=0, to=1]
	\arrow["\SIGMA"{description}, draw=none, from=2, to=3]
	\arrow["\SIGMA"{description}, draw=none, from=4, to=5]
	\arrow["\SIGMA"{description}, draw=none, from=6, to=7]
	\arrow["\SIGMA"{description}, draw=none, from=7, to=8]
\end{tikzcd}}
\begin{tikzcd}
	{} & {}
	\arrow["\bs", between={0.1}{1}, squiggly, from=1-1, to=1-2]
\end{tikzcd}
\adjustbox{scale=0.70}{\begin{tikzcd}
	&& {} & {} \\
	& {} & {} & {} \\
	& {} & {} & {} \\
	{} & {} && {} \\
	{} & {} && {}
	\arrow["r", from=1-3, to=1-4]
	\arrow[""{name=0, anchor=center, inner sep=0}, "g"', from=1-3, to=2-3]
	\arrow[""{name=1, anchor=center, inner sep=0}, "{{g'}}", from=1-4, to=2-4]
	\arrow["s", from=2-2, to=2-3]
	\arrow[""{name=2, anchor=center, inner sep=0}, "h"', from=2-2, to=3-2]
	\arrow["{{r'}}"', from=2-3, to=2-4]
	\arrow[""{name=3, anchor=center, inner sep=0}, "{h_2}", from=2-4, to=3-4]
	\arrow["{{s_2}}"', from=3-2, to=3-4]
	\arrow[""{name=4, anchor=center, inner sep=0}, equals, from=3-2, to=4-2]
	\arrow[""{name=5, anchor=center, inner sep=0}, "{d_2}", from=3-4, to=4-4]
	\arrow["t", from=4-1, to=4-2]
	\arrow[""{name=6, anchor=center, inner sep=0}, "k"', from=4-1, to=5-1]
	\arrow["d"', from=4-2, to=4-4]
	\arrow[""{name=7, anchor=center, inner sep=0}, "{{k_0}}", from=4-2, to=5-2]
	\arrow[""{name=8, anchor=center, inner sep=0}, "{{k'}}", from=4-4, to=5-4]
	\arrow["{{t'}}"', from=5-1, to=5-2]
	\arrow["{{d'}}"', from=5-2, to=5-4]
	\arrow["\SIGMA"{description}, draw=none, from=0, to=1]
	\arrow["\SIGMA"{description}, draw=none, from=2, to=3]
	\arrow["\SIGMA"{description}, draw=none, from=4, to=5]
	\arrow["\SIGMA"{description}, draw=none, from=6, to=7]
	\arrow["\SIGMA"{description}, draw=none, from=7, to=8]
\end{tikzcd}}\, .
\]
Observe that, in this case, the $\Omega$ 2-cells $\Omega_i$, $i=1,2$, as in Definition \ref{def:horizontal}, just reduce to the basic $\Omega$ 2-cell determined by the $\Sg$-path of level 1
\[
\adjustbox{scale=0.80}{\begin{tikzcd}
	{} & {} \\
	{} & {}
	\arrow["{s_it}", from=1-1, to=1-2]
	\arrow[""{name=0, anchor=center, inner sep=0}, "k"', from=1-1, to=2-1]
	\arrow[""{name=1, anchor=center, inner sep=0}, from=1-2, to=2-2]
	\arrow[from=2-1, to=2-2]
	\arrow["\dot{\SIGMA}"{description}, draw=none, from=0, to=1]
\end{tikzcd}}
\begin{tikzcd}
	{} & {}
	\arrow[between={0.2}{0.8}, squiggly, from=1-1, to=1-2]
\end{tikzcd}
\adjustbox{scale=0.70}{\begin{tikzcd}
	{} & {} & {} \\
	{} & {} & {} \\
	{} & {} & {}
	\arrow["t", from=1-1, to=1-2]
	\arrow[""{name=0, anchor=center, inner sep=0}, equals, from=1-1, to=2-1]
	\arrow["{{{s_i}}}", from=1-2, to=1-3]
	\arrow[""{name=1, anchor=center, inner sep=0}, equals, from=1-2, to=2-2]
	\arrow[""{name=2, anchor=center, inner sep=0}, "{{{d_i}}}", from=1-3, to=2-3]
	\arrow["t"', from=2-1, to=2-2]
	\arrow[""{name=3, anchor=center, inner sep=0}, "k"', from=2-1, to=3-1]
	\arrow["d"', from=2-2, to=2-3]
	\arrow[""{name=4, anchor=center, inner sep=0}, "{k_0}"{description}, from=2-2, to=3-2]
	\arrow[""{name=5, anchor=center, inner sep=0}, "{{{k'}}}", from=2-3, to=3-3]
	\arrow["{{{t'}}}"', from=3-1, to=3-2]
	\arrow["{{{d'}}}"', from=3-2, to=3-3]
	\arrow["\SIGMA"{description}, draw=none, from=0, to=1]
	\arrow["\SIGMA"{description}, draw=none, from=1, to=2]
	\arrow["\SIGMA"{description}, draw=none, from=3, to=4]
	\arrow["\SIGMA"{description}, draw=none, from=4, to=5]
\end{tikzcd}}\, ,
\]
or, equivalently, determined by the following $\Sg$-step of level 3:
\[
\adjustbox{scale=0.70}{\begin{tikzcd}
	&& {} & {} \\
	& {} & {} & {} \\
	{} & {} && {} \\
	{} &&& {}
	\arrow["r", from=1-3, to=1-4]
	\arrow[""{name=0, anchor=center, inner sep=0}, "g"', from=1-3, to=2-3]
	\arrow[""{name=1, anchor=center, inner sep=0}, "{{g'}}", from=1-4, to=2-4]
	\arrow["s", from=2-2, to=2-3]
	\arrow[""{name=2, anchor=center, inner sep=0}, "h"', from=2-2, to=3-2]
	\arrow["{{r'}}", from=2-3, to=2-4]
	\arrow[""{name=3, anchor=center, inner sep=0}, "{{h_i}}", from=2-4, to=3-4]
	\arrow["t", from=3-1, to=3-2]
	\arrow[""{name=4, anchor=center, inner sep=0}, "k"', from=3-1, to=4-1]
	\arrow["{{s_i}}"', from=3-2, to=3-4]
	\arrow[""{name=5, anchor=center, inner sep=0}, from=3-4, to=4-4]
	\arrow[from=4-1, to=4-4]
	\arrow["\SIGMA"{description}, draw=none, from=0, to=1]
	\arrow["\SIGMA"{description}, draw=none, from=2, to=3]
	\arrow["{\dot{\SIGMA}}"{description}, draw=none, from=4, to=5]
\end{tikzcd}}
\begin{tikzcd}
	{} & {}
	\arrow["\bd", between={0.1}{1}, squiggly, from=1-1, to=1-2]
\end{tikzcd}
\adjustbox{scale=0.70}{\begin{tikzcd}
	&& {} & {} \\
	& {} & {} & {} \\
	{} & {} && {} \\
	{} & {} && {} \\
	{} & {} && {}
	\arrow["r", from=1-3, to=1-4]
	\arrow[""{name=0, anchor=center, inner sep=0}, "g"', from=1-3, to=2-3]
	\arrow[""{name=1, anchor=center, inner sep=0}, "{g'}", from=1-4, to=2-4]
	\arrow["s", from=2-2, to=2-3]
	\arrow[""{name=2, anchor=center, inner sep=0}, "h"', from=2-2, to=3-2]
	\arrow["{r'}", from=2-3, to=2-4]
	\arrow[""{name=3, anchor=center, inner sep=0}, "{h_i}", from=2-4, to=3-4]
	\arrow["t", from=3-1, to=3-2]
	\arrow[""{name=4, anchor=center, inner sep=0}, equals, from=3-1, to=4-1]
	\arrow["{s_i}"', from=3-2, to=3-4]
	\arrow[""{name=5, anchor=center, inner sep=0}, equals, from=3-2, to=4-2]
	\arrow[""{name=6, anchor=center, inner sep=0}, "{d_i}", from=3-4, to=4-4]
	\arrow["t"', from=4-1, to=4-2]
	\arrow[""{name=7, anchor=center, inner sep=0}, "k"', from=4-1, to=5-1]
	\arrow["d"', from=4-2, to=4-4]
	\arrow[""{name=8, anchor=center, inner sep=0}, "{{k_0}}", from=4-2, to=5-2]
	\arrow[""{name=9, anchor=center, inner sep=0}, "{k'}", from=4-4, to=5-4]
	\arrow["{{t'}}"', from=5-1, to=5-2]
	\arrow["{d'}"', from=5-2, to=5-4]
	\arrow["\SIGMA"{description}, draw=none, from=0, to=1]
	\arrow["\SIGMA"{description}, draw=none, from=2, to=3]
	\arrow["{\dot{\SIGMA}}"{description}, draw=none, from=4, to=5]
	\arrow["\SIGMA"{description}, draw=none, from=5, to=6]
	\arrow["\SIGMA"{description}, draw=none, from=7, to=8]
	\arrow["\SIGMA"{description}, draw=none, from=8, to=9]
\end{tikzcd}}
\]
In conclusion, we obtain the $\Sg$-path as in the first row of the second table.

 The case of the second row of the second table, where $\Omega$ corresponds to the $\Sg$-step of type $u$, works similarly.

Finally, it is clear that every of the four $\Sg$-paths given in the above tables leading to $\Omega \circ {\bar{f}}$ and ${\bar{k}}\circ \Omega$ are $\Sg$-paths of interest.
\end{proof}
The following corollary is not quite an immediate special case of part (2) of \cref{pro:alpha.f}, since the desired left border of the $\Sigma$-path of interest is a little different. This is useful for proving \cref{pro:natural-iso}.

\begin{corollary}\label{cor:Omega-h}
 Let $\Omega\colon (l_1f,m_1)\to (l_2f,m_2)$ be the $\Omega$ 2-cell (with the indicated domain and codomain) corresponding to the $\Sg$-step between $\Sigma$-schemes of level 2 with left border $(r,g,s,1)$ of one of the following types:
 \begin{enumerate}
 \item[(1)]
 \(\adjustbox{scale=0.60}{\begin{tikzcd}
	& {} & {} &&& {} & {} \\
	{} & {} & {} && {} & {} & {} \\
	{} & {} & {} && {} & {} & {}
	\arrow["r", from=1-2, to=1-3]
	\arrow[""{name=0, anchor=center, inner sep=0}, "g"', from=1-2, to=2-2]
	\arrow[""{name=1, anchor=center, inner sep=0}, "{{g'}}", from=1-3, to=2-3]
	\arrow["r", from=1-6, to=1-7]
	\arrow[""{name=2, anchor=center, inner sep=0}, "g"', from=1-6, to=2-6]
	\arrow[""{name=3, anchor=center, inner sep=0}, "{{g'}}", from=1-7, to=2-7]
	\arrow["s", from=2-1, to=2-2]
	\arrow[""{name=4, anchor=center, inner sep=0}, equals, from=2-1, to=3-1]
	\arrow["{{r'}}"', from=2-2, to=2-3]
	\arrow["d", between={0.3}{0.7}, squiggly, from=2-3, to=2-5]
	\arrow[""{name=5, anchor=center, inner sep=0}, "{{h_1}}", from=2-3, to=3-3]
	\arrow["s", from=2-5, to=2-6]
	\arrow[""{name=6, anchor=center, inner sep=0}, equals, from=2-5, to=3-5]
	\arrow["{{r'}}", from=2-6, to=2-7]
	\arrow[""{name=7, anchor=center, inner sep=0}, "{{h_2}}", from=2-7, to=3-7]
	\arrow["{{s_1}}"', from=3-1, to=3-3]
	\arrow["{{s_2}}"', from=3-5, to=3-7]
	\arrow["1"{description}, draw=none, from=0, to=1]
	\arrow["1"{description}, draw=none, from=2, to=3]
	\arrow["2"{description}, draw=none, from=4, to=5]
	\arrow["3"{description}, draw=none, from=6, to=7]
\end{tikzcd}}\)
\hspace{3mm} with  $l_i=h_ig'$ and $m_i=s_i$;
\item[(2)]
\(\adjustbox{scale=0.60}{\begin{tikzcd}
	& {} & {} &&& {} & {} \\
	{} & {} & {} && {} & {} & {} \\
	{} & {} & {} && {} & {} & {}
	\arrow["r", from=1-2, to=1-3]
	\arrow["g"', from=1-2, to=2-2]
	\arrow["{{{h_1}}}", from=1-3, to=3-3]
	\arrow["r", from=1-6, to=1-7]
	\arrow["g"', from=1-6, to=2-6]
	\arrow["{{{h_2}}}", from=1-7, to=3-7]
	\arrow["s", from=2-1, to=2-2]
	\arrow[""{name=0, anchor=center, inner sep=0}, equals, from=2-1, to=3-1]
	\arrow["2"{description}, draw=none, from=2-2, to=2-3]
	\arrow[""{name=1, anchor=center, inner sep=0}, "{{h'}}", from=2-2, to=3-2]
	\arrow["u", between={0.3}{0.7}, squiggly, from=2-3, to=2-5]
	\arrow["s", from=2-5, to=2-6]
	\arrow[""{name=2, anchor=center, inner sep=0}, equals, from=2-5, to=3-5]
	\arrow["3"{description}, draw=none, from=2-6, to=2-7]
	\arrow[""{name=3, anchor=center, inner sep=0}, "{{h'}}", from=2-6, to=3-6]
	\arrow["{{s'}}"', from=3-1, to=3-2]
	\arrow["{{{s_1}}}"', from=3-2, to=3-3]
	\arrow["{{s'}}"', from=3-5, to=3-6]
	\arrow["{{{s_2}}}"', from=3-6, to=3-7]
	\arrow["1"{description}, draw=none, from=0, to=1]
	\arrow["1"{description}, draw=none, from=2, to=3]
\end{tikzcd}}\)
\hspace{3mm} with $l_i=h_i$ and $m_i=s_is'$.
\end{enumerate}
and let $\tilde{h}=(h,t)$ be horizontally composable with $\Omega$. Then the $\Omega$ 2-cell $\tilde{h} \circ \Omega$ corresponds to a $\Sg$-path of interest with left border $(r,g,s,h,t,1)$.
\end{corollary}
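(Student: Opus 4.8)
The plan is to repeat, for each of the two parts, the computation carried out in the proof of Proposition~\ref{pro:alpha.f} (which is exactly the post-composition case $1_{\bar k}\circ\Omega$): the present statement is the same situation, except that the relevant left-border vertical is now the identity $1$ and the outer cospan whiskered in is $\bar h=(h,t)$ in place of $\bar k$. Concretely, in part~(1) I would first write down the canonical $2$-morphism representative of $\Omega$ coming from its defining level-$2$ $\Sigma$-step (the displayed diagram, read off as a $\Sigma$-scheme as in \cref{sec:sigma_schemes}), and then form $\bar h\circ\Omega$ strictly following Definition~\ref{def:horizontal}. Since $\Omega$ is a basic $\Omega$ $2$-cell — its ``$\Omega$-direction'' content being an invertible $2$-cell inside a square array of $\Sigma$-squares — the instance of Rule~6 needed in Definition~\ref{def:horizontal} is trivial ($\beta'$ may be taken to be an identity whiskered by the appropriate morphism), and the only genuine input is the family of canonical $\Sigma$-squares used to compute the composite cospans. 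One thereby obtains an expression $\Omega_2^{-1}\cdot\Omega'\cdot\Omega_1$ in which $\Omega'$ is the ``middle'' $2$-morphism of Definition~\ref{def:horizontal} and $\Omega_1,\Omega_2$ are the two auxiliary $\Omega$ $2$-cells.

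Next come the two identifications, carried out exactly as in Proposition~\ref{pro:alpha.f}. First, $\Omega'$ is itself a basic $\Omega$ $2$-cell: it corresponds to the level-$3$ $\Sigma$-step obtained by bordering the original level-$2$ step of $\Omega$ below by the canonical $\Sigma$-squares produced from $\bar h=(h,t)$ and by the trivial bottom row forced by the final $1$; the type of this step (one of $\mathbf d$, $\mathbf s$, $\mathbf u$) is dictated — exactly as in the proof of Proposition~\ref{pro:alpha.f} — by whether the input step is of type $d$ (part~(1)) or $u$ (part~(2)), and one reads the configuration of its two schemes directly off the picture. Secondly, $\Omega_1$ and $\Omega_2$ arise purely from the composition-of-cospans recipe, so each is the $\Omega$ $2$-cell of a short $\Sigma$-path built from canonical $\Sigma$-squares; because the last vertical is an identity, these paths collapse — via Rule~3 and Rule~1 of Proposition~\ref{pro:useful_rules}, just as in the tables of Proposition~\ref{pro:alpha.f} — to a single $\Sigma$-step of type $\mathbf d_1$ between $\Sigma$-schemes of interest. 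Splicing all of this together produces a $\Sigma$-path of shape $\mathbf d_1,\mathbf u,\ast,\mathbf u,\mathbf d_1$ (with $\ast$ the middle step of the previous sentence) between $\Sigma$-schemes of level $3$ with left border $(r,g,s,h,t,1)$.

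It then remains to check that every intermediate $\Sigma$-scheme falls into one of the seven configurations of Definition~\ref{def:interest} — which one does by inspecting the diagrams, copying those of Proposition~\ref{pro:alpha.f} with the bottom-row vertical trivialised — and to reorganise the spliced path, using Lemma~\ref{lem:Sigma-steps}(1),(4), Corollary~\ref{cor:level2}, and, for the length-$2$ commutations of level-$3$ steps, Proposition~\ref{pro:du=ud}, into a $\Sigma$-path of interest; this is precisely the conclusion asserted. I expect the only real obstacle to be combinatorial bookkeeping: keeping track of which $\Sigma$-squares are canonical, confirming that consecutive schemes along the path share the configuration matching the connecting step's type, and dispatching the several sub-cases (parts~(1) and~(2), and the $d$- and $u$-shaped inputs). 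No idea beyond those already used for Proposition~\ref{pro:alpha.f} is required — the diagrams of that proof transfer essentially verbatim once the bottom-row vertical is set to an identity — so the write-up can be largely a reference to, and a mild adaptation of, that proposition.
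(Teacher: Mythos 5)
Your general strategy (run the Definition \ref{def:horizontal} computation as in Proposition \ref{pro:alpha.f}, then invoke the coherence results) starts in the right place, and indeed the paper's proof simply cites Proposition \ref{pro:alpha.f}(2) rather than redoing it. But your plan has a genuine gap: it never accounts for the left border asserted in the statement. The corollary is a post-composition $\bar h\circ\Omega$ in which the \emph{$\Omega$'s own} last border vertical is the identity; transferring the proposition's diagrams ``verbatim with the bottom-row vertical set to an identity'', as you propose, amounts to substituting the proposition's middle cospan by the identity cospan and its $\bar k$ by $\bar h=(h,t)$, and this yields a path of schemes with left border $(r,g,s,1,1,h)$, with $t$ entering only through whiskering at the bottom --- not a path with left border $(r,g,s,h,t,1)$. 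The actual content of the corollary beyond the proposition is exactly the re-reading that repairs this: the paper shows the same 2-cells equivalently correspond to schemes in which $h$ and $t$ are absorbed into the border and an identity row is appended below, producing level-3 schemes with left border $(r,g,s,h,t,1)$; in case (1) the resulting path consists of $\bs$-steps (hence is equivalent to a single $\bs$-step), and in case (2) it is a short path of ${\bs}_1$- and $\bu$-steps, both manifestly of interest. This border bookkeeping is not cosmetic: the corollary is used (e.g.\ in Proposition \ref{pro:natural-iso}) precisely because its path shares the left border $(r,g,s,h,t,1)$ with the associator paths, so that Proposition \ref{pro:of-interest} can be applied; ``a path of interest with some border'' would not suffice.

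A second symptom that the computation was not carried through: the shape you predict, $\mathbf{d}_1,\bu,\ast,\bu,\mathbf{d}_1$, is the one arising in Proposition \ref{pro:alpha.f}(1), i.e.\ for pre-composition $\Omega\circ 1_{\bar f}$. For post-composition $1_{\bar h}\circ\Omega$ the auxiliary 2-cells $\Omega_1,\Omega_2$ of Definition \ref{def:horizontal} collapse to single steps of type $\bd$ (not $\mathbf{d}_1$) and the middle 2-morphism to an $\bs$-type step, as in the proof of Proposition \ref{pro:alpha.f}(2). Note moreover that $\mathbf{d}_1$ does not appear among the configurations of interest of Definition \ref{def:interest}, so a path containing $\mathbf{d}_1$-steps is not automatically a path of interest and cannot be waved through by ``reorganising'' via Lemma \ref{lem:Sigma-steps}, Corollary \ref{cor:level2} or Proposition \ref{pro:du=ud}: one must actually exhibit an equivalent path of interest with the stated border, which is what the paper's rearrangement (identity row plus absorption of $h,t$ into the border) accomplishes and what your write-up would still owe.
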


\begin{proof}
(1) From \cref{pro:alpha.f} we know that the 2-cell $\tilde{h} \circ \Omega$ is an $\Omega$ 2-cell  corresponding to the following $\Sg$-path of $\Sigma$-schemes of left border $(r,g,s,1,1,h)$ --- just consider $\bar{f}$ and $\bar{g}$ as in \cref{pro:alpha.f}, $\bar{h}=(1_C,1_C)$ and $\bar{k}=\tilde{h}$.

 \[
 \adjustbox{scale=0.70}{\begin{tikzcd}
	&& {} & {} &&& {} & {} &&& {} & {} &&& {} & {} \\
	& {} & {} & {} && {} & {} & {} && {} & {} & {} && {} & {} & {} \\
	{} & {} & {} & {} & {} & {} & {} & {} & {} & {} & {} & {} & {} & {} & {} & {} \\
	{} & {} & {} & {} & {} & {} & {} & {} & {} & {} & {} & {} & {} & {} & {} & {} \\
	&&&& {} & {} && {} & {} & {} && {}
	\arrow["r", from=1-3, to=1-4]
	\arrow[""{name=0, anchor=center, inner sep=0}, "g"', from=1-3, to=2-3]
	\arrow[""{name=1, anchor=center, inner sep=0}, from=1-4, to=2-4]
	\arrow[from=1-7, to=1-8]
	\arrow[""{name=2, anchor=center, inner sep=0}, from=1-7, to=2-7]
	\arrow[""{name=3, anchor=center, inner sep=0}, from=1-8, to=2-8]
	\arrow[from=1-11, to=1-12]
	\arrow[""{name=4, anchor=center, inner sep=0}, from=1-11, to=2-11]
	\arrow[""{name=5, anchor=center, inner sep=0}, from=1-12, to=2-12]
	\arrow["r", from=1-15, to=1-16]
	\arrow[""{name=6, anchor=center, inner sep=0}, "g"', from=1-15, to=2-15]
	\arrow[""{name=7, anchor=center, inner sep=0}, from=1-16, to=2-16]
	\arrow["s", from=2-2, to=2-3]
	\arrow[""{name=8, anchor=center, inner sep=0}, equals, from=2-2, to=3-2]
	\arrow[from=2-3, to=2-4]
	\arrow[""{name=9, anchor=center, inner sep=0}, from=2-4, to=3-4]
	\arrow[from=2-6, to=2-7]
	\arrow[""{name=10, anchor=center, inner sep=0}, equals, from=2-6, to=3-6]
	\arrow[from=2-7, to=2-8]
	\arrow[""{name=11, anchor=center, inner sep=0}, from=2-8, to=3-8]
	\arrow[from=2-10, to=2-11]
	\arrow[""{name=12, anchor=center, inner sep=0}, equals, from=2-10, to=3-10]
	\arrow[from=2-11, to=2-12]
	\arrow[""{name=13, anchor=center, inner sep=0}, from=2-12, to=3-12]
	\arrow["s", from=2-14, to=2-15]
	\arrow[""{name=14, anchor=center, inner sep=0}, equals, from=2-14, to=3-14]
	\arrow[from=2-15, to=2-16]
	\arrow[""{name=15, anchor=center, inner sep=0}, from=2-16, to=3-16]
	\arrow[equals, from=3-1, to=3-2]
	\arrow[""{name=16, anchor=center, inner sep=0}, "h"', from=3-1, to=4-1]
	\arrow[from=3-2, to=3-4]
	\arrow[""{name=17, anchor=center, inner sep=0}, from=3-4, to=4-4]
	\arrow[equals, from=3-5, to=3-6]
	\arrow[""{name=18, anchor=center, inner sep=0}, equals, from=3-5, to=4-5]
	\arrow[from=3-6, to=3-8]
	\arrow[""{name=19, anchor=center, inner sep=0}, equals, from=3-6, to=4-6]
	\arrow[""{name=20, anchor=center, inner sep=0}, from=3-8, to=4-8]
	\arrow[equals, from=3-9, to=3-10]
	\arrow[""{name=21, anchor=center, inner sep=0}, equals, from=3-9, to=4-9]
	\arrow[from=3-10, to=3-12]
	\arrow[""{name=22, anchor=center, inner sep=0}, equals, from=3-10, to=4-10]
	\arrow[""{name=23, anchor=center, inner sep=0}, from=3-12, to=4-12]
	\arrow[equals, from=3-13, to=3-14]
	\arrow[""{name=24, anchor=center, inner sep=0}, "h"', from=3-13, to=4-13]
	\arrow[from=3-14, to=3-16]
	\arrow[""{name=25, anchor=center, inner sep=0}, from=3-16, to=4-16]
	\arrow[from=4-1, to=4-4]
	\arrow[equals, from=4-5, to=4-6]
	\arrow[""{name=26, anchor=center, inner sep=0}, "h"', from=4-5, to=5-5]
	\arrow[from=4-6, to=4-8]
	\arrow[""{name=27, anchor=center, inner sep=0}, from=4-6, to=5-6]
	\arrow[""{name=28, anchor=center, inner sep=0}, from=4-8, to=5-8]
	\arrow[equals, from=4-9, to=4-10]
	\arrow[""{name=29, anchor=center, inner sep=0}, from=4-9, to=5-9]
	\arrow[from=4-10, to=4-12]
	\arrow[""{name=30, anchor=center, inner sep=0}, from=4-10, to=5-10]
	\arrow[""{name=31, anchor=center, inner sep=0}, from=4-12, to=5-12]
	\arrow[from=4-13, to=4-16]
	\arrow[from=5-5, to=5-6]
	\arrow[from=5-6, to=5-8]
	\arrow[from=5-9, to=5-10]
	\arrow[from=5-10, to=5-12]
	\arrow["1"{description}, draw=none, from=0, to=1]
	\arrow["1"{description}, draw=none, from=2, to=3]
	\arrow["1"{description}, draw=none, from=4, to=5]
	\arrow["1"{description}, draw=none, from=6, to=7]
	\arrow["2"{description}, draw=none, from=8, to=9]
	\arrow["\bd", between={0.2}{0.8}, squiggly, from=9, to=10]
	\arrow["2"{description}, draw=none, from=10, to=11]
	\arrow["\bs", between={0.2}{0.8}, squiggly, from=11, to=12]
	\arrow["3"{description}, draw=none, from=12, to=13]
	\arrow["\bd", between={0.2}{0.8}, squiggly, from=13, to=14]
	\arrow["3"{description}, draw=none, from=14, to=15]
	\arrow["\bullet"{description}, draw=none, from=16, to=17]
	\arrow["\bullet"{description}, draw=none, from=18, to=19]
	\arrow["4"{description}, draw=none, from=19, to=20]
	\arrow["\bullet"{description}, draw=none, from=21, to=22]
	\arrow["5"{description}, draw=none, from=22, to=23]
	\arrow["\bullet"{description}, draw=none, from=24, to=25]
	\arrow["6"{description}, draw=none, from=26, to=27]
	\arrow["7"{description}, draw=none, from=27, to=28]
	\arrow["6"{description}, draw=none, from=29, to=30]
	\arrow["8"{description}, draw=none, from=30, to=31]
\end{tikzcd}}
\]
Observe that, equivalently, $\bar{h}\circ \Omega$ corresponds to the $\Sg$-path of $\Sg$-schemes of level 2 given by
\[
\adjustbox{scale=0.70}{\begin{tikzcd}
	& {} & {} &&& {} & {} &&& {} & {} &&& {} & {} \\
	{} & {} & {} && {} & {} & {} && {} & {} & {} && {} & {} & {} \\
	{} & {} & {} && {} & {} & {} && {} & {} & {} && {} & {} & {} \\
	{} & {} & {} && {} & {} & {} && {} & {} & {} && {} & {} & {} \\
	&&&& {} && {} && {} && {}
	\arrow["r", from=1-2, to=1-3]
	\arrow[""{name=0, anchor=center, inner sep=0}, "g"', from=1-2, to=2-2]
	\arrow[""{name=1, anchor=center, inner sep=0}, from=1-3, to=2-3]
	\arrow[from=1-6, to=1-7]
	\arrow[""{name=2, anchor=center, inner sep=0}, from=1-6, to=2-6]
	\arrow[""{name=3, anchor=center, inner sep=0}, from=1-7, to=2-7]
	\arrow[from=1-10, to=1-11]
	\arrow[""{name=4, anchor=center, inner sep=0}, from=1-10, to=2-10]
	\arrow[""{name=5, anchor=center, inner sep=0}, from=1-11, to=2-11]
	\arrow["r", from=1-14, to=1-15]
	\arrow[""{name=6, anchor=center, inner sep=0}, "g"', from=1-14, to=2-14]
	\arrow[""{name=7, anchor=center, inner sep=0}, from=1-15, to=2-15]
	\arrow["s", from=2-1, to=2-2]
	\arrow[""{name=8, anchor=center, inner sep=0}, equals, from=2-1, to=3-1]
	\arrow[from=2-2, to=2-3]
	\arrow[""{name=9, anchor=center, inner sep=0}, from=2-3, to=3-3]
	\arrow[from=2-5, to=2-6]
	\arrow[""{name=10, anchor=center, inner sep=0}, equals, from=2-5, to=3-5]
	\arrow[from=2-6, to=2-7]
	\arrow[""{name=11, anchor=center, inner sep=0}, from=2-7, to=3-7]
	\arrow[from=2-9, to=2-10]
	\arrow[""{name=12, anchor=center, inner sep=0}, equals, from=2-9, to=3-9]
	\arrow[from=2-10, to=2-11]
	\arrow[""{name=13, anchor=center, inner sep=0}, from=2-11, to=3-11]
	\arrow["s", from=2-13, to=2-14]
	\arrow[""{name=14, anchor=center, inner sep=0}, equals, from=2-13, to=3-13]
	\arrow[from=2-14, to=2-15]
	\arrow[""{name=15, anchor=center, inner sep=0}, from=2-15, to=3-15]
	\arrow[from=3-1, to=3-3]
	\arrow[""{name=16, anchor=center, inner sep=0}, "h"', from=3-1, to=4-1]
	\arrow[""{name=17, anchor=center, inner sep=0}, from=3-3, to=4-3]
	\arrow[from=3-5, to=3-7]
	\arrow[""{name=18, anchor=center, inner sep=0}, equals, from=3-5, to=4-5]
	\arrow[""{name=19, anchor=center, inner sep=0}, from=3-7, to=4-7]
	\arrow[from=3-9, to=3-11]
	\arrow[""{name=20, anchor=center, inner sep=0}, equals, from=3-9, to=4-9]
	\arrow[""{name=21, anchor=center, inner sep=0}, from=3-11, to=4-11]
	\arrow[from=3-13, to=3-15]
	\arrow[""{name=22, anchor=center, inner sep=0}, "h"', from=3-13, to=4-13]
	\arrow[""{name=23, anchor=center, inner sep=0}, from=3-15, to=4-15]
	\arrow[from=4-1, to=4-3]
	\arrow[from=4-5, to=4-7]
	\arrow[""{name=24, anchor=center, inner sep=0}, "h"', from=4-5, to=5-5]
	\arrow[""{name=25, anchor=center, inner sep=0}, from=4-7, to=5-7]
	\arrow[from=4-9, to=4-11]
	\arrow[""{name=26, anchor=center, inner sep=0}, "h"', from=4-9, to=5-9]
	\arrow[""{name=27, anchor=center, inner sep=0}, from=4-11, to=5-11]
	\arrow[from=4-13, to=4-15]
	\arrow[from=5-5, to=5-7]
	\arrow[from=5-9, to=5-11]
	\arrow["1"{description}, draw=none, from=0, to=1]
	\arrow["1"{description}, draw=none, from=2, to=3]
	\arrow["1"{description}, draw=none, from=4, to=5]
	\arrow["1"{description}, draw=none, from=6, to=7]
	\arrow["2"{description}, draw=none, from=8, to=9]
	\arrow["\bd", between={0.2}{0.8}, squiggly, from=9, to=10]
	\arrow["2"{description}, draw=none, from=10, to=11]
	\arrow["\bd",  between={0.2}{0.8}, squiggly, from=11, to=12]
	\arrow["3"{description}, draw=none, from=12, to=13]
	\arrow["\bd",  between={0.2}{0.8}, squiggly, from=13, to=14]
	\arrow["3"{description}, draw=none, from=14, to=15]
	\arrow["\bullet"{description}, draw=none, from=16, to=17]
	\arrow["4"{description}, draw=none, from=18, to=19]
	\arrow["5"{description}, draw=none, from=20, to=21]
	\arrow["\bullet"{description}, draw=none, from=22, to=23]
	\arrow["{6\oplus 7}"{description}, draw=none, from=24, to=25]
	\arrow["{6\oplus 8}"{description}, draw=none, from=26, to=27]
\end{tikzcd}}
\]
Moreover, adding just an identity row below each $\Sg$-scheme, we obtain a $\Sg$-path between $\Sg$-schemes of level 3 and left border $(r,g,s,h,t,1)$:
\[
\adjustbox{scale=0.70}{\begin{tikzcd}
	&& {} & {} &&& {} & {} &&& {} & {} &&& {} & {} \\
	& {} & {} & {} && {} & {} & {} && {} & {} & {} && {} & {} & {} \\
	& {} & {} & {} && {} & {} & {} && {} & {} & {} && {} & {} & {} \\
	{} & {} & {} & {} && {} & {} & {} && {} & {} & {} & {} & {} & {} & {} \\
	{} & {} && {} & {} & {} && {} & {} & {} && {} & {} & {} && {} \\
	&&&& {} & {} && {} & {} & {} && {}
	\arrow["r", from=1-3, to=1-4]
	\arrow[""{name=0, anchor=center, inner sep=0}, "g"', from=1-3, to=2-3]
	\arrow[""{name=1, anchor=center, inner sep=0}, from=1-4, to=2-4]
	\arrow[from=1-7, to=1-8]
	\arrow[""{name=2, anchor=center, inner sep=0}, from=1-7, to=2-7]
	\arrow[""{name=3, anchor=center, inner sep=0}, from=1-8, to=2-8]
	\arrow[from=1-11, to=1-12]
	\arrow[""{name=4, anchor=center, inner sep=0}, from=1-11, to=2-11]
	\arrow[""{name=5, anchor=center, inner sep=0}, from=1-12, to=2-12]
	\arrow["r", from=1-15, to=1-16]
	\arrow[""{name=6, anchor=center, inner sep=0}, "g"', from=1-15, to=2-15]
	\arrow[""{name=7, anchor=center, inner sep=0}, from=1-16, to=2-16]
	\arrow["s", from=2-2, to=2-3]
	\arrow[""{name=8, anchor=center, inner sep=0}, equals, from=2-2, to=3-2]
	\arrow[from=2-3, to=2-4]
	\arrow[""{name=9, anchor=center, inner sep=0}, from=2-4, to=3-4]
	\arrow[from=2-6, to=2-7]
	\arrow[""{name=10, anchor=center, inner sep=0}, equals, from=2-6, to=3-6]
	\arrow[from=2-7, to=2-8]
	\arrow[""{name=11, anchor=center, inner sep=0}, from=2-8, to=3-8]
	\arrow[from=2-10, to=2-11]
	\arrow[""{name=12, anchor=center, inner sep=0}, equals, from=2-10, to=3-10]
	\arrow[from=2-11, to=2-12]
	\arrow[""{name=13, anchor=center, inner sep=0}, from=2-12, to=3-12]
	\arrow["s", from=2-14, to=2-15]
	\arrow[""{name=14, anchor=center, inner sep=0}, equals, from=2-14, to=3-14]
	\arrow[from=2-15, to=2-16]
	\arrow[""{name=15, anchor=center, inner sep=0}, from=2-16, to=3-16]
	\arrow[from=3-2, to=3-4]
	\arrow[""{name=16, anchor=center, inner sep=0}, "h"', from=3-2, to=4-2]
	\arrow[""{name=17, anchor=center, inner sep=0}, from=3-4, to=4-4]
	\arrow[from=3-6, to=3-8]
	\arrow[""{name=18, anchor=center, inner sep=0}, equals, from=3-6, to=4-6]
	\arrow[""{name=19, anchor=center, inner sep=0}, from=3-8, to=4-8]
	\arrow[from=3-10, to=3-12]
	\arrow[""{name=20, anchor=center, inner sep=0}, equals, from=3-10, to=4-10]
	\arrow[""{name=21, anchor=center, inner sep=0}, from=3-12, to=4-12]
	\arrow[from=3-14, to=3-16]
	\arrow[""{name=22, anchor=center, inner sep=0}, "h"', from=3-14, to=4-14]
	\arrow[""{name=23, anchor=center, inner sep=0}, from=3-16, to=4-16]
	\arrow["t", from=4-1, to=4-2]
	\arrow[""{name=24, anchor=center, inner sep=0}, equals, from=4-1, to=5-1]
	\arrow[from=4-2, to=4-4]
	\arrow[""{name=25, anchor=center, inner sep=0}, equals, from=4-2, to=5-2]
	\arrow[""{name=26, anchor=center, inner sep=0}, equals, from=4-4, to=5-4]
	\arrow[from=4-6, to=4-8]
	\arrow[""{name=27, anchor=center, inner sep=0}, "h"', from=4-6, to=5-6]
	\arrow[""{name=28, anchor=center, inner sep=0}, from=4-8, to=5-8]
	\arrow[from=4-10, to=4-12]
	\arrow[""{name=29, anchor=center, inner sep=0}, "h"', from=4-10, to=5-10]
	\arrow[""{name=30, anchor=center, inner sep=0}, from=4-12, to=5-12]
	\arrow["t", from=4-13, to=4-14]
	\arrow[""{name=31, anchor=center, inner sep=0}, equals, from=4-13, to=5-13]
	\arrow[from=4-14, to=4-16]
	\arrow[""{name=32, anchor=center, inner sep=0}, equals, from=4-14, to=5-14]
	\arrow[""{name=33, anchor=center, inner sep=0}, equals, from=4-16, to=5-16]
	\arrow["t"', from=5-1, to=5-2]
	\arrow[from=5-2, to=5-4]
	\arrow["t", from=5-5, to=5-6]
	\arrow[""{name=34, anchor=center, inner sep=0}, equals, from=5-5, to=6-5]
	\arrow[from=5-6, to=5-8]
	\arrow[""{name=35, anchor=center, inner sep=0}, equals, from=5-6, to=6-6]
	\arrow[""{name=36, anchor=center, inner sep=0}, equals, from=5-8, to=6-8]
	\arrow["t", from=5-9, to=5-10]
	\arrow[""{name=37, anchor=center, inner sep=0}, equals, from=5-9, to=6-9]
	\arrow[from=5-10, to=5-12]
	\arrow[""{name=38, anchor=center, inner sep=0}, equals, from=5-10, to=6-10]
	\arrow[""{name=39, anchor=center, inner sep=0}, equals, from=5-12, to=6-12]
	\arrow[from=5-13, to=5-14]
	\arrow[from=5-14, to=5-16]
	\arrow[from=6-5, to=6-6]
	\arrow[from=6-6, to=6-8]
	\arrow[from=6-9, to=6-10]
	\arrow[from=6-10, to=6-12]
	\arrow["1"{description}, draw=none, from=0, to=1]
	\arrow["1"{description}, draw=none, from=2, to=3]
	\arrow["1"{description}, draw=none, from=4, to=5]
	\arrow["1"{description}, draw=none, from=6, to=7]
	\arrow["2"{description}, draw=none, from=8, to=9]
	\arrow["\bs", between={0.2}{0.8}, squiggly, from=9, to=10]
	\arrow["2"{description}, draw=none, from=10, to=11]
	\arrow["\bs", between={0.2}{0.8}, squiggly, from=11, to=12]
	\arrow["3"{description}, draw=none, from=12, to=13]
	\arrow["\bs", between={0.2}{0.8}, squiggly, from=13, to=14]
	\arrow["3"{description}, draw=none, from=14, to=15]
	\arrow["\bullet"{description}, draw=none, from=16, to=17]
	\arrow["4"{description}, draw=none, from=18, to=19]
	\arrow["5"{description}, draw=none, from=20, to=21]
	\arrow["\bullet"{description}, draw=none, from=22, to=23]
	\arrow["\bullet"{description}, draw=none, from=24, to=25]
	\arrow["\bullet"{description}, draw=none, from=25, to=26]
	\arrow["{{6\oplus 7}}"{description}, draw=none, from=27, to=28]
	\arrow["{{6\oplus 8}}"{description}, draw=none, from=29, to=30]
	\arrow["\bullet"{description}, draw=none, from=31, to=32]
	\arrow["\bullet"{description}, draw=none, from=32, to=33]
	\arrow["\bullet"{description}, draw=none, from=34, to=35]
	\arrow["\bullet"{description}, draw=none, from=35, to=36]
	\arrow["\bullet"{description}, draw=none, from=37, to=38]
	\arrow["\bullet"{description}, draw=none, from=38, to=39]
\end{tikzcd}}\, .\]
The 2-cell $\bar{h}\circ \Omega$ also corresponds to this $\Sg$-path, which indeed is equivalent to just a $\Sg$-step of type $\bs$. Hence, it is a $\Sg$-path of interest.

(2)
Acting in a similar way for the $\Sg$-step $u$, we obtain that the $\Omega$ 2-cell $\bar{h}\circ \Omega$ corresponds to the following $\Sg$-path %
\[\adjustbox{scale=0.70}{\begin{tikzcd}
	&& {} & {} \\
	& {} & {} & {} \\
	& {} & {} & {} \\
	{} & {} && {} \\
	{} & {} && {}
	\arrow["r", from=1-3, to=1-4]
	\arrow["g"', from=1-3, to=2-3]
	\arrow[from=1-4, to=3-4]
	\arrow["s", from=2-2, to=2-3]
	\arrow[""{name=0, anchor=center, inner sep=0}, equals, from=2-2, to=3-2]
	\arrow["2"{description}, draw=none, from=2-3, to=2-4]
	\arrow[""{name=1, anchor=center, inner sep=0}, from=2-3, to=3-3]
	\arrow[from=3-2, to=3-3]
	\arrow[""{name=2, anchor=center, inner sep=0}, "h"', from=3-2, to=4-2]
	\arrow[from=3-3, to=3-4]
	\arrow[""{name=3, anchor=center, inner sep=0}, from=3-4, to=4-4]
	\arrow["t", from=4-1, to=4-2]
	\arrow[""{name=4, anchor=center, inner sep=0}, equals, from=4-1, to=5-1]
	\arrow[from=4-2, to=4-4]
	\arrow[""{name=5, anchor=center, inner sep=0}, equals, from=4-2, to=5-2]
	\arrow[""{name=6, anchor=center, inner sep=0}, equals, from=4-4, to=5-4]
	\arrow[from=5-1, to=5-2]
	\arrow[from=5-2, to=5-4]
	\arrow["1"{description}, draw=none, from=0, to=1]
	\arrow["\bullet"{description}, draw=none, from=2, to=3]
	\arrow["\bullet"{description}, draw=none, from=4, to=5]
	\arrow["\bullet"{description}, draw=none, from=5, to=6]
\end{tikzcd}}
\adjustbox{scale=0.80}{\begin{tikzcd}
	{} & {}
	\arrow["{{\bs}_1}", squiggly, from=1-1, to=1-2]
\end{tikzcd}}
\adjustbox{scale=0.70}{\begin{tikzcd}
	&& {} & {} \\
	& {} & {} & {} \\
	& {} & {} & {} \\
	& {} & {} & {} \\
	{} & {} & {} & {} \\
	{} & {} & {} & {}
	\arrow["r", from=1-3, to=1-4]
	\arrow["g"', from=1-3, to=2-3]
	\arrow[from=1-4, to=3-4]
	\arrow["s", from=2-2, to=2-3]
	\arrow[""{name=0, anchor=center, inner sep=0}, equals, from=2-2, to=3-2]
	\arrow["2"{description}, draw=none, from=2-3, to=2-4]
	\arrow[""{name=1, anchor=center, inner sep=0}, from=2-3, to=3-3]
	\arrow[from=3-2, to=3-3]
	\arrow[""{name=2, anchor=center, inner sep=0}, equals, from=3-2, to=4-2]
	\arrow[from=3-3, to=3-4]
	\arrow[""{name=3, anchor=center, inner sep=0}, equals, from=3-3, to=4-3]
	\arrow[""{name=4, anchor=center, inner sep=0}, from=3-4, to=4-4]
	\arrow[from=4-2, to=4-3]
	\arrow[""{name=5, anchor=center, inner sep=0}, from=4-2, to=5-2]
	\arrow[from=4-3, to=4-4]
	\arrow[""{name=6, anchor=center, inner sep=0}, from=4-3, to=5-3]
	\arrow[""{name=7, anchor=center, inner sep=0}, equals, from=4-4, to=5-4]
	\arrow[from=5-1, to=5-2]
	\arrow[""{name=8, anchor=center, inner sep=0}, equals, from=5-1, to=6-1]
	\arrow[from=5-2, to=5-3]
	\arrow[""{name=9, anchor=center, inner sep=0}, equals, from=5-2, to=6-2]
	\arrow[from=5-3, to=5-4]
	\arrow[""{name=10, anchor=center, inner sep=0}, equals, from=5-3, to=6-3]
	\arrow[""{name=11, anchor=center, inner sep=0}, equals, from=5-4, to=6-4]
	\arrow[from=6-1, to=6-2]
	\arrow[from=6-2, to=6-3]
	\arrow[from=6-3, to=6-4]
	\arrow["1"{description}, draw=none, from=0, to=1]
	\arrow["\bullet"{description}, draw=none, from=2, to=3]
	\arrow["4"{description}, draw=none, from=3, to=4]
	\arrow["5"{description}, draw=none, from=5, to=6]
	\arrow["6"{description}, draw=none, from=6, to=7]
	\arrow["\bullet"{description}, draw=none, from=8, to=9]
	\arrow["\bullet"{description}, draw=none, from=9, to=10]
	\arrow["\bullet"{description}, draw=none, from=10, to=11]
\end{tikzcd}}
\adjustbox{scale=0.80}{\begin{tikzcd}
	{} & {}
	\arrow["{\bu}",  squiggly, from=1-1, to=1-2]
\end{tikzcd}}
\adjustbox{scale=0.70}{\begin{tikzcd}
	&& {} & {} \\
	& {} & {} & {} \\
	& {} & {} & {} \\
	& {} & {} & {} \\
	{} & {} & {} & {} \\
	{} & {} & {} & {}
	\arrow["r", from=1-3, to=1-4]
	\arrow["g"', from=1-3, to=2-3]
	\arrow[from=1-4, to=3-4]
	\arrow["s", from=2-2, to=2-3]
	\arrow[""{name=0, anchor=center, inner sep=0}, equals, from=2-2, to=3-2]
	\arrow["3"{description}, draw=none, from=2-3, to=2-4]
	\arrow[""{name=1, anchor=center, inner sep=0}, from=2-3, to=3-3]
	\arrow[from=3-2, to=3-3]
	\arrow[""{name=2, anchor=center, inner sep=0}, equals, from=3-2, to=4-2]
	\arrow[from=3-3, to=3-4]
	\arrow[""{name=3, anchor=center, inner sep=0}, equals, from=3-3, to=4-3]
	\arrow[""{name=4, anchor=center, inner sep=0}, from=3-4, to=4-4]
	\arrow[from=4-2, to=4-3]
	\arrow[""{name=5, anchor=center, inner sep=0}, "h"', from=4-2, to=5-2]
	\arrow[from=4-3, to=4-4]
	\arrow[""{name=6, anchor=center, inner sep=0}, from=4-3, to=5-3]
	\arrow[""{name=7, anchor=center, inner sep=0}, equals, from=4-4, to=5-4]
	\arrow["t", from=5-1, to=5-2]
	\arrow[""{name=8, anchor=center, inner sep=0}, equals, from=5-1, to=6-1]
	\arrow[from=5-2, to=5-3]
	\arrow[""{name=9, anchor=center, inner sep=0}, equals, from=5-2, to=6-2]
	\arrow[from=5-3, to=5-4]
	\arrow[""{name=10, anchor=center, inner sep=0}, equals, from=5-3, to=6-3]
	\arrow[""{name=11, anchor=center, inner sep=0}, equals, from=5-4, to=6-4]
	\arrow[from=6-1, to=6-2]
	\arrow[from=6-2, to=6-3]
	\arrow[from=6-3, to=6-4]
	\arrow["1"{description}, draw=none, from=0, to=1]
	\arrow["\bullet"{description}, draw=none, from=2, to=3]
	\arrow["7"{description}, draw=none, from=3, to=4]
	\arrow["5"{description}, draw=none, from=5, to=6]
	\arrow["8"{description}, draw=none, from=6, to=7]
	\arrow["\bullet"{description}, draw=none, from=8, to=9]
	\arrow["\bullet"{description}, draw=none, from=9, to=10]
	\arrow["\bullet"{description}, draw=none, from=10, to=11]
\end{tikzcd}}
\adjustbox{scale=0.80}{\begin{tikzcd}
	{} & {}
	\arrow["{\bs_1}",  squiggly, from=1-1, to=1-2]
\end{tikzcd}}
\adjustbox{scale=0.70}{\begin{tikzcd}
	&& {} & {} \\
	& {} & {} & {} \\
	& {} & {} & {} \\
	{} & {} && {} \\
	{} & {} && {}
	\arrow["r", from=1-3, to=1-4]
	\arrow["g"', from=1-3, to=2-3]
	\arrow[from=1-4, to=3-4]
	\arrow["s", from=2-2, to=2-3]
	\arrow[""{name=0, anchor=center, inner sep=0}, equals, from=2-2, to=3-2]
	\arrow["3"{description}, draw=none, from=2-3, to=2-4]
	\arrow[""{name=1, anchor=center, inner sep=0}, from=2-3, to=3-3]
	\arrow[from=3-2, to=3-3]
	\arrow[""{name=2, anchor=center, inner sep=0}, "h"', from=3-2, to=4-2]
	\arrow[from=3-3, to=3-4]
	\arrow[""{name=3, anchor=center, inner sep=0}, from=3-4, to=4-4]
	\arrow["t", from=4-1, to=4-2]
	\arrow[""{name=4, anchor=center, inner sep=0}, equals, from=4-1, to=5-1]
	\arrow[from=4-2, to=4-4]
	\arrow[""{name=5, anchor=center, inner sep=0}, equals, from=4-2, to=5-2]
	\arrow[""{name=6, anchor=center, inner sep=0}, equals, from=4-4, to=5-4]
	\arrow[from=5-1, to=5-2]
	\arrow[from=5-2, to=5-4]
	\arrow["1"{description}, draw=none, from=0, to=1]
	\arrow["\bullet"{description}, draw=none, from=2, to=3]
	\arrow["\bullet"{description}, draw=none, from=4, to=5]
	\arrow["\bullet"{description}, draw=none, from=5, to=6]
\end{tikzcd}}
\]
which is of interest.
\end{proof}

The final theorem of this section states that every two $\Sg$-paths of interest, of any finite length, between two $\Sg$-schemes of level 3, are equivalent. As an auxiliary tool,  we  consider certain $\Sg$-paths into the canonical $\Sg$-scheme, $\Can$.

For a $\Sg$-scheme of a given configuration of interest $x$ (see Definition \ref{def:interest}) there may exist several $\Sg$-paths from it into the canonical $\Sg$-scheme. In the next table we choose one of these $\Sg$-paths to be the  \textbf{\em $x$-canonical $\Sg$-path}.

\begin{table}[h]
\centering
\caption{$x$-canonical $\Sigma$-paths}
\label{tab:paths}
\begin{tabular}{|c|c|}\hline
$x$&$x$-canonical $\Sg$-path\\ \hline \hline
$\mathbf{da}$
&
\(\adjustbox{scale=0.60}{\begin{tikzcd}
	&& {} & {} \\
	& {} & {} \\
	{} & {} & {} & {} \\
	{} &&& {}
	\arrow[from=1-3, to=1-4]
	\arrow[from=1-3, to=2-3]
	\arrow[""{name=0, anchor=center, inner sep=0}, from=1-4, to=3-4]
	\arrow[from=2-2, to=2-3]
	\arrow[""{name=1, anchor=center, inner sep=0}, from=2-2, to=3-2]
	\arrow[""{name=2, anchor=center, inner sep=0}, from=2-3, to=3-3]
	\arrow[from=3-1, to=3-2]
	\arrow[""{name=3, anchor=center, inner sep=0}, from=3-1, to=4-1]
	\arrow[from=3-2, to=3-3]
	\arrow[from=3-3, to=3-4]
	\arrow[""{name=4, anchor=center, inner sep=0}, from=3-4, to=4-4]
	\arrow[from=4-1, to=4-4]
	\arrow["1"{description}, draw=none, from=1, to=2]
	\arrow["2"{description}, draw=none, from=2-3, to=0]
	\arrow["3"{description}, draw=none, from=3, to=4]
\end{tikzcd}}
\begin{tikzcd}
	{} & {}
	\arrow["{\bd}",  squiggly, from=1-1, to=1-2]
\end{tikzcd}
\hspace*{-3mm}
\adjustbox{scale=0.60}{\begin{tikzcd}
	&& {} & {} \\
	& {} & {} \\
	{} & {} & {} & {} \\
	{} & {} & {} & {}
	\arrow[from=1-3, to=1-4]
	\arrow[from=1-3, to=2-3]
	\arrow[""{name=0, anchor=center, inner sep=0}, from=1-4, to=3-4]
	\arrow[from=2-2, to=2-3]
	\arrow[""{name=1, anchor=center, inner sep=0}, from=2-2, to=3-2]
	\arrow[""{name=2, anchor=center, inner sep=0}, from=2-3, to=3-3]
	\arrow[from=3-1, to=3-2]
	\arrow[""{name=3, anchor=center, inner sep=0}, from=3-1, to=4-1]
	\arrow[from=3-2, to=3-3]
	\arrow[""{name=4, anchor=center, inner sep=0}, from=3-2, to=4-2]
	\arrow[from=3-3, to=3-4]
	\arrow[""{name=5, anchor=center, inner sep=0}, from=3-3, to=4-3]
	\arrow[""{name=6, anchor=center, inner sep=0}, from=3-4, to=4-4]
	\arrow[from=4-1, to=4-2]
	\arrow[from=4-2, to=4-3]
	\arrow[from=4-3, to=4-4]
	\arrow["1"{description}, draw=none, from=1, to=2]
	\arrow["2"{description}, draw=none, from=2-3, to=0]
	\arrow["\bullet"{description}, draw=none, from=3, to=4]
	\arrow["\bullet"{description}, draw=none, from=4, to=5]
	\arrow["\bullet"{description}, draw=none, from=5, to=6]
\end{tikzcd}}
\begin{tikzcd}
	{} & {}
	\arrow["{\bu}",  squiggly, from=1-1, to=1-2]
\end{tikzcd}
\hspace*{-3mm}
\adjustbox{scale=0.60}{\begin{tikzcd}
	&& {} & {} \\
	& {} & {} & {} \\
	{} & {} & {} & {} \\
	{} & {} & {} & {}
	\arrow[from=1-3, to=1-4]
	\arrow[""{name=0, anchor=center, inner sep=0}, from=1-3, to=2-3]
	\arrow[""{name=1, anchor=center, inner sep=0}, from=1-4, to=2-4]
	\arrow[from=2-2, to=2-3]
	\arrow[""{name=2, anchor=center, inner sep=0}, from=2-2, to=3-2]
	\arrow[from=2-3, to=2-4]
	\arrow[""{name=3, anchor=center, inner sep=0}, from=2-3, to=3-3]
	\arrow[""{name=4, anchor=center, inner sep=0}, from=2-4, to=3-4]
	\arrow[from=3-1, to=3-2]
	\arrow[""{name=5, anchor=center, inner sep=0}, from=3-1, to=4-1]
	\arrow[from=3-2, to=3-3]
	\arrow[""{name=6, anchor=center, inner sep=0}, from=3-2, to=4-2]
	\arrow[from=3-3, to=3-4]
	\arrow[""{name=7, anchor=center, inner sep=0}, from=3-3, to=4-3]
	\arrow[""{name=8, anchor=center, inner sep=0}, from=3-4, to=4-4]
	\arrow[from=4-1, to=4-2]
	\arrow[from=4-2, to=4-3]
	\arrow[from=4-3, to=4-4]
	\arrow["\bullet"{description}, draw=none, from=0, to=1]
	\arrow["1"{description}, draw=none, from=2, to=3]
	\arrow["\bullet"{description}, draw=none, from=3, to=4]
	\arrow["\bullet"{description}, draw=none, from=5, to=6]
	\arrow["\bullet"{description}, draw=none, from=6, to=7]
	\arrow["\bullet"{description}, draw=none, from=7, to=8]
\end{tikzcd}}
\begin{tikzcd}
	{} & {}
	\arrow["{\bs}",  squiggly, from=1-1, to=1-2]
\end{tikzcd}
\Can
\)
\\ \hline
$\mathbf{db}$
&
\(\adjustbox{scale=0.60}{\begin{tikzcd}
	&& {} & {} \\
	& {} & {} & {} \\
	{} & {} && {} \\
	{} &&& {}
	\arrow[from=1-3, to=1-4]
	\arrow[""{name=0, anchor=center, inner sep=0}, from=1-3, to=2-3]
	\arrow[""{name=1, anchor=center, inner sep=0}, from=1-4, to=2-4]
	\arrow[from=2-2, to=2-3]
	\arrow[""{name=2, anchor=center, inner sep=0}, from=2-2, to=3-2]
	\arrow[from=2-3, to=2-4]
	\arrow[""{name=3, anchor=center, inner sep=0}, from=2-4, to=3-4]
	\arrow[from=3-1, to=3-2]
	\arrow[""{name=4, anchor=center, inner sep=0}, from=3-1, to=4-1]
	\arrow[from=3-2, to=3-4]
	\arrow[""{name=5, anchor=center, inner sep=0}, from=3-4, to=4-4]
	\arrow[from=4-1, to=4-4]
	\arrow["1"{description}, draw=none, from=0, to=1]
	\arrow["2"{description}, draw=none, from=2, to=3]
	\arrow["3"{description}, draw=none, from=4, to=5]
\end{tikzcd}}
\begin{tikzcd}
	{} & {}
	\arrow["{\bd}",  squiggly, from=1-1, to=1-2]
\end{tikzcd}
\hspace*{-3mm}
\adjustbox{scale=0.60}{\begin{tikzcd}
	&& {} & {} \\
	& {} & {} & {} \\
	{} & {} && {} \\
	{} & {} && {}
	\arrow[from=1-3, to=1-4]
	\arrow[""{name=0, anchor=center, inner sep=0}, from=1-3, to=2-3]
	\arrow[""{name=1, anchor=center, inner sep=0}, from=1-4, to=2-4]
	\arrow[from=2-2, to=2-3]
	\arrow[""{name=2, anchor=center, inner sep=0}, from=2-2, to=3-2]
	\arrow[from=2-3, to=2-4]
	\arrow[""{name=3, anchor=center, inner sep=0}, from=2-4, to=3-4]
	\arrow[from=3-1, to=3-2]
	\arrow[""{name=4, anchor=center, inner sep=0}, from=3-1, to=4-1]
	\arrow[from=3-2, to=3-4]
	\arrow[""{name=5, anchor=center, inner sep=0}, from=3-2, to=4-2]
	\arrow[""{name=6, anchor=center, inner sep=0}, from=3-4, to=4-4]
	\arrow[from=4-1, to=4-2]
	\arrow[from=4-2, to=4-4]
	\arrow["1"{description}, draw=none, from=0, to=1]
	\arrow["2"{description}, draw=none, from=2, to=3]
	\arrow["\bullet"{description}, draw=none, from=4, to=5]
	\arrow["\bullet"{description}, draw=none, from=5, to=6]
\end{tikzcd}}
\begin{tikzcd}
	{} & {}
	\arrow["{\bs}",  squiggly, from=1-1, to=1-2]
\end{tikzcd}
\hspace*{-3mm}
\adjustbox{scale=0.60}{\begin{tikzcd}
	&& {} & {} \\
	& {} & {} & {} \\
	{} & {} & {} & {} \\
	{} & {} & {} & {}
	\arrow[from=1-3, to=1-4]
	\arrow[""{name=0, anchor=center, inner sep=0}, from=1-3, to=2-3]
	\arrow[""{name=1, anchor=center, inner sep=0}, from=1-4, to=2-4]
	\arrow[from=2-2, to=2-3]
	\arrow[""{name=2, anchor=center, inner sep=0}, from=2-2, to=3-2]
	\arrow[from=2-3, to=2-4]
	\arrow[""{name=3, anchor=center, inner sep=0}, from=2-3, to=3-3]
	\arrow[""{name=4, anchor=center, inner sep=0}, from=2-4, to=3-4]
	\arrow[from=3-1, to=3-2]
	\arrow[""{name=5, anchor=center, inner sep=0}, from=3-1, to=4-1]
	\arrow[from=3-2, to=3-3]
	\arrow[""{name=6, anchor=center, inner sep=0}, from=3-2, to=4-2]
	\arrow[from=3-3, to=3-4]
	\arrow[""{name=7, anchor=center, inner sep=0}, from=3-3, to=4-3]
	\arrow[""{name=8, anchor=center, inner sep=0}, from=3-4, to=4-4]
	\arrow[from=4-1, to=4-2]
	\arrow[from=4-2, to=4-3]
	\arrow[from=4-3, to=4-4]
	\arrow["1"{description}, draw=none, from=0, to=1]
	\arrow["\bullet"{description}, draw=none, from=2, to=3]
	\arrow["\bullet"{description}, draw=none, from=3, to=4]
	\arrow["\bullet"{description}, draw=none, from=5, to=6]
	\arrow["\bullet"{description}, draw=none, from=6, to=7]
	\arrow["\bullet"{description}, draw=none, from=7, to=8]
\end{tikzcd}}
\begin{tikzcd}
	{} & {}
	\arrow["{\bu}",  squiggly, from=1-1, to=1-2]
\end{tikzcd}
\Can
\)
\\ \hline
$\mathbf{dc}$
&
\(\adjustbox{scale=0.60}{\begin{tikzcd}
	&& {} & {} \\
	& {} & {} \\
	& {} & {} & {} \\
	{} & {} && {} \\
	{} &&& {}
	\arrow[from=1-3, to=1-4]
	\arrow[from=1-3, to=2-3]
	\arrow[""{name=0, anchor=center, inner sep=0}, from=1-4, to=3-4]
	\arrow[from=2-2, to=2-3]
	\arrow[""{name=1, anchor=center, inner sep=0}, equals, from=2-2, to=3-2]
	\arrow[""{name=2, anchor=center, inner sep=0}, from=2-3, to=3-3]
	\arrow[from=3-2, to=3-3]
	\arrow[""{name=3, anchor=center, inner sep=0}, from=3-2, to=4-2]
	\arrow[from=3-3, to=3-4]
	\arrow[""{name=4, anchor=center, inner sep=0}, from=3-4, to=4-4]
	\arrow[from=4-1, to=4-2]
	\arrow[""{name=5, anchor=center, inner sep=0}, from=4-1, to=5-1]
	\arrow[from=4-2, to=4-4]
	\arrow[""{name=6, anchor=center, inner sep=0}, from=4-4, to=5-4]
	\arrow[from=5-1, to=5-4]
	\arrow["1"{description}, draw=none, from=1, to=2]
	\arrow["2"{description}, draw=none, from=2-3, to=0]
	\arrow["3"{description}, draw=none, from=3, to=4]
	\arrow["4"{description}, draw=none, from=5, to=6]
\end{tikzcd}}
\begin{tikzcd}
	{} & {}
	\arrow["{\bd}",  squiggly, from=1-1, to=1-2]
\end{tikzcd}
\hspace*{-3mm}
\adjustbox{scale=0.60}{\begin{tikzcd}
	&& {} & {} \\
	& {} & {} \\
	& {} & {} & {} \\
	{} & {} && {} \\
	{} & {} && {}
	\arrow[from=1-3, to=1-4]
	\arrow[from=1-3, to=2-3]
	\arrow[""{name=0, anchor=center, inner sep=0}, from=1-4, to=3-4]
	\arrow[from=2-2, to=2-3]
	\arrow[""{name=1, anchor=center, inner sep=0}, equals, from=2-2, to=3-2]
	\arrow[""{name=2, anchor=center, inner sep=0}, from=2-3, to=3-3]
	\arrow[from=3-2, to=3-3]
	\arrow[""{name=3, anchor=center, inner sep=0}, from=3-2, to=4-2]
	\arrow[from=3-3, to=3-4]
	\arrow[""{name=4, anchor=center, inner sep=0}, from=3-4, to=4-4]
	\arrow[from=4-1, to=4-2]
	\arrow[""{name=5, anchor=center, inner sep=0}, from=4-1, to=5-1]
	\arrow[from=4-2, to=4-4]
	\arrow[""{name=6, anchor=center, inner sep=0}, from=4-2, to=5-2]
	\arrow[""{name=7, anchor=center, inner sep=0}, from=4-4, to=5-4]
	\arrow[from=5-1, to=5-2]
	\arrow[from=5-2, to=5-4]
	\arrow["1"{description}, draw=none, from=1, to=2]
	\arrow["2"{description}, draw=none, from=2-3, to=0]
	\arrow["3"{description}, draw=none, from=3, to=4]
	\arrow["\bullet"{description}, draw=none, from=5, to=6]
	\arrow["\bullet"{description}, draw=none, from=6, to=7]
\end{tikzcd}}
\begin{tikzcd}
	{} & {}
	\arrow["{\bs_1}",  squiggly, from=1-1, to=1-2]
\end{tikzcd}
\hspace*{-3mm}
\adjustbox{scale=0.60}{\begin{tikzcd}
	&& {} & {} \\
	& {} & {} \\
	& {} & {} & {} \\
	{} & {} & {} & {} \\
	{} & {} & {} & {}
	\arrow[from=1-3, to=1-4]
	\arrow[from=1-3, to=2-3]
	\arrow[""{name=0, anchor=center, inner sep=0}, from=1-4, to=3-4]
	\arrow[from=2-2, to=2-3]
	\arrow[""{name=1, anchor=center, inner sep=0}, equals, from=2-2, to=3-2]
	\arrow[""{name=2, anchor=center, inner sep=0}, from=2-3, to=3-3]
	\arrow[from=3-2, to=3-3]
	\arrow[""{name=3, anchor=center, inner sep=0}, from=3-2, to=4-2]
	\arrow[from=3-3, to=3-4]
	\arrow[""{name=4, anchor=center, inner sep=0}, from=3-3, to=4-3]
	\arrow[""{name=5, anchor=center, inner sep=0}, from=3-4, to=4-4]
	\arrow[from=4-1, to=4-2]
	\arrow[""{name=6, anchor=center, inner sep=0}, from=4-1, to=5-1]
	\arrow[from=4-2, to=4-3]
	\arrow[""{name=7, anchor=center, inner sep=0}, from=4-2, to=5-2]
	\arrow[from=4-3, to=4-4]
	\arrow[""{name=8, anchor=center, inner sep=0}, from=4-3, to=5-3]
	\arrow[""{name=9, anchor=center, inner sep=0}, from=4-4, to=5-4]
	\arrow[from=5-1, to=5-2]
	\arrow[from=5-2, to=5-3]
	\arrow[from=5-3, to=5-4]
	\arrow["1"{description}, draw=none, from=1, to=2]
	\arrow["2"{description}, draw=none, from=2-3, to=0]
	\arrow["\bullet"{description}, draw=none, from=3, to=4]
	\arrow["\bullet"{description}, draw=none, from=4, to=5]
	\arrow["\bullet"{description}, draw=none, from=6, to=7]
	\arrow["\bullet"{description}, draw=none, from=7, to=8]
	\arrow["\bullet"{description}, draw=none, from=8, to=9]
\end{tikzcd}}
\begin{tikzcd}
	{} & {}
	\arrow["{\bu}",  squiggly, from=1-1, to=1-2]
\end{tikzcd}
\hspace*{-3mm}
\adjustbox{scale=0.60}{\begin{tikzcd}
	&& {} & {} \\
	& {} & {} & {} \\
	& {} & {} & {} \\
	{} & {} & {} & {} \\
	{} & {} & {} & {}
	\arrow[from=1-3, to=1-4]
	\arrow[""{name=0, anchor=center, inner sep=0}, from=1-3, to=2-3]
	\arrow[""{name=1, anchor=center, inner sep=0}, from=1-4, to=2-4]
	\arrow[from=2-2, to=2-3]
	\arrow[""{name=2, anchor=center, inner sep=0}, equals, from=2-2, to=3-2]
	\arrow[from=2-3, to=2-4]
	\arrow[""{name=3, anchor=center, inner sep=0}, from=2-3, to=3-3]
	\arrow[""{name=4, anchor=center, inner sep=0}, from=2-4, to=3-4]
	\arrow[from=3-2, to=3-3]
	\arrow[""{name=5, anchor=center, inner sep=0}, from=3-2, to=4-2]
	\arrow[from=3-3, to=3-4]
	\arrow[""{name=6, anchor=center, inner sep=0}, from=3-3, to=4-3]
	\arrow[""{name=7, anchor=center, inner sep=0}, from=3-4, to=4-4]
	\arrow[from=4-1, to=4-2]
	\arrow[""{name=8, anchor=center, inner sep=0}, from=4-1, to=5-1]
	\arrow[from=4-2, to=4-3]
	\arrow[""{name=9, anchor=center, inner sep=0}, from=4-2, to=5-2]
	\arrow[from=4-3, to=4-4]
	\arrow[""{name=10, anchor=center, inner sep=0}, from=4-3, to=5-3]
	\arrow[""{name=11, anchor=center, inner sep=0}, from=4-4, to=5-4]
	\arrow[from=5-1, to=5-2]
	\arrow[from=5-2, to=5-3]
	\arrow[from=5-3, to=5-4]
	\arrow["\bullet"{description}, draw=none, from=0, to=1]
	\arrow["1"{description}, draw=none, from=2, to=3]
	\arrow["\bullet"{description}, draw=none, from=3, to=4]
	\arrow["\bullet"{description}, draw=none, from=5, to=6]
	\arrow["\bullet"{description}, draw=none, from=6, to=7]
	\arrow["\bullet"{description}, draw=none, from=8, to=9]
	\arrow["\bullet"{description}, draw=none, from=9, to=10]
	\arrow["\bullet"{description}, draw=none, from=10, to=11]
\end{tikzcd}}
\begin{tikzcd}
	{} & {}
	\arrow["{\bs}",  squiggly, from=1-1, to=1-2]
\end{tikzcd}
\Can
\)
\\ \hline
$\mathbf{ua}$
&
\(\adjustbox{scale=0.60}{\begin{tikzcd}
	&& {} & {} \\
	& {} & {} & {} \\
	{} & {} & {} & {} \\
	{} & {} & {} & {}
	\arrow[from=1-3, to=1-4]
	\arrow[from=1-3, to=2-3]
	\arrow[from=1-4, to=4-4]
	\arrow[from=2-2, to=2-3]
	\arrow[from=2-2, to=3-2]
	\arrow[""{name=0, anchor=center, inner sep=0}, draw=none, from=2-3, to=3-3]
	\arrow[""{name=1, anchor=center, inner sep=0}, from=2-3, to=4-3]
	\arrow[""{name=2, anchor=center, inner sep=0}, draw=none, from=2-4, to=3-4]
	\arrow[from=3-1, to=3-2]
	\arrow[""{name=3, anchor=center, inner sep=0}, from=3-1, to=4-1]
	\arrow[""{name=4, anchor=center, inner sep=0}, from=3-2, to=4-2]
	\arrow[from=4-1, to=4-2]
	\arrow[from=4-2, to=4-3]
	\arrow[from=4-3, to=4-4]
	\arrow["3"{description}, draw=none, from=0, to=2]
	\arrow["1"{description}, draw=none, from=3, to=4]
	\arrow["2"{description}, draw=none, from=3-2, to=1]
\end{tikzcd}}
\begin{tikzcd}
	{} & {}
	\arrow["{\bu}",  squiggly, from=1-1, to=1-2]
\end{tikzcd}
\hspace*{-3mm}
\adjustbox{scale=0.60}{\begin{tikzcd}
	&& {} & {} \\
	& {} & {} & {} \\
	{} & {} & {} & {} \\
	{} & {} & {} & {}
	\arrow[from=1-3, to=1-4]
	\arrow[""{name=0, anchor=center, inner sep=0}, from=1-3, to=2-3]
	\arrow[""{name=1, anchor=center, inner sep=0}, from=1-4, to=2-4]
	\arrow[from=2-2, to=2-3]
	\arrow[from=2-2, to=3-2]
	\arrow[from=2-3, to=2-4]
	\arrow[draw=none, from=2-3, to=3-3]
	\arrow[""{name=2, anchor=center, inner sep=0}, from=2-3, to=4-3]
	\arrow[from=2-4, to=4-4]
	\arrow[from=3-1, to=3-2]
	\arrow[""{name=3, anchor=center, inner sep=0}, from=3-1, to=4-1]
	\arrow[""{name=4, anchor=center, inner sep=0}, from=3-2, to=4-2]
	\arrow["\bullet"{description}, draw=none, from=3-3, to=3-4]
	\arrow[from=4-1, to=4-2]
	\arrow[from=4-2, to=4-3]
	\arrow[from=4-3, to=4-4]
	\arrow["\bullet"{description}, draw=none, from=0, to=1]
	\arrow["1"{description}, draw=none, from=3, to=4]
	\arrow["2"{description}, draw=none, from=3-2, to=2]
\end{tikzcd}}
\begin{tikzcd}
	{} & {}
	\arrow["{\bs}",  squiggly, from=1-1, to=1-2]
\end{tikzcd}
\hspace*{-3mm}
\adjustbox{scale=0.60}{\begin{tikzcd}
	&& {} & {} \\
	& {} & {} & {} \\
	{} & {} & {} & {} \\
	{} & {} & {} & {}
	\arrow[from=1-3, to=1-4]
	\arrow[""{name=0, anchor=center, inner sep=0}, from=1-3, to=2-3]
	\arrow[""{name=1, anchor=center, inner sep=0}, from=1-4, to=2-4]
	\arrow[from=2-2, to=2-3]
	\arrow[""{name=2, anchor=center, inner sep=0}, from=2-2, to=3-2]
	\arrow[from=2-3, to=2-4]
	\arrow[""{name=3, anchor=center, inner sep=0}, from=2-3, to=3-3]
	\arrow[""{name=4, anchor=center, inner sep=0}, from=2-4, to=3-4]
	\arrow[from=3-1, to=3-2]
	\arrow[""{name=5, anchor=center, inner sep=0}, from=3-1, to=4-1]
	\arrow[from=3-2, to=3-3]
	\arrow[""{name=6, anchor=center, inner sep=0}, from=3-2, to=4-2]
	\arrow[from=3-3, to=3-4]
	\arrow[""{name=7, anchor=center, inner sep=0}, from=3-3, to=4-3]
	\arrow[""{name=8, anchor=center, inner sep=0}, from=3-4, to=4-4]
	\arrow[from=4-1, to=4-2]
	\arrow[from=4-2, to=4-3]
	\arrow[from=4-3, to=4-4]
	\arrow["\bullet"{description}, draw=none, from=0, to=1]
	\arrow["\bullet"{description}, draw=none, from=2, to=3]
	\arrow["\bullet"{description}, draw=none, from=3, to=4]
	\arrow["1"{description}, draw=none, from=5, to=6]
	\arrow["\bullet"{description}, draw=none, from=6, to=7]
	\arrow["\bullet"{description}, draw=none, from=7, to=8]
\end{tikzcd}}
\begin{tikzcd}
	{} & {}
	\arrow["{\bd}",  squiggly, from=1-1, to=1-2]
\end{tikzcd}
\Can
\)
\\ \hline
$\mathbf{ub}$
&
\(\adjustbox{scale=0.60}{\begin{tikzcd}
	&& {} & {} \\
	& {} & {} & {} \\
	{} & {} & {} & {} \\
	{} & {} & {} & {}
	\arrow[from=1-3, to=1-4]
	\arrow[from=1-3, to=2-3]
	\arrow[from=1-4, to=4-4]
	\arrow[from=2-2, to=2-3]
	\arrow[""{name=0, anchor=center, inner sep=0}, from=2-2, to=3-2]
	\arrow[""{name=1, anchor=center, inner sep=0}, from=2-3, to=3-3]
	\arrow[""{name=2, anchor=center, inner sep=0}, draw=none, from=2-4, to=3-4]
	\arrow[from=3-1, to=3-2]
	\arrow[""{name=3, anchor=center, inner sep=0}, from=3-1, to=4-1]
	\arrow[from=3-2, to=3-3]
	\arrow[""{name=4, anchor=center, inner sep=0}, from=3-3, to=4-3]
	\arrow[from=4-1, to=4-2]
	\arrow[from=4-2, to=4-3]
	\arrow[from=4-3, to=4-4]
	\arrow["1"{description}, draw=none, from=0, to=1]
	\arrow["3"{description}, draw=none, from=1, to=2]
	\arrow["2"{description}, draw=none, from=3, to=4]
\end{tikzcd}}
\begin{tikzcd}
	{} & {}
	\arrow["{\bu}",  squiggly, from=1-1, to=1-2]
\end{tikzcd}
\hspace*{-3mm}
\adjustbox{scale=0.60}{\begin{tikzcd}
	&& {} & {} \\
	& {} & {} & {} \\
	{} & {} & {} & {} \\
	{} && {} & {}
	\arrow[from=1-3, to=1-4]
	\arrow[""{name=0, anchor=center, inner sep=0}, from=1-3, to=2-3]
	\arrow[""{name=1, anchor=center, inner sep=0}, from=1-4, to=2-4]
	\arrow[from=2-2, to=2-3]
	\arrow[""{name=2, anchor=center, inner sep=0}, from=2-2, to=3-2]
	\arrow[from=2-3, to=2-4]
	\arrow[""{name=3, anchor=center, inner sep=0}, from=2-3, to=3-3]
	\arrow[""{name=4, anchor=center, inner sep=0}, from=2-4, to=3-4]
	\arrow[from=3-1, to=3-2]
	\arrow[""{name=5, anchor=center, inner sep=0}, from=3-1, to=4-1]
	\arrow[from=3-2, to=3-3]
	\arrow[from=3-3, to=3-4]
	\arrow[""{name=6, anchor=center, inner sep=0}, from=3-3, to=4-3]
	\arrow[""{name=7, anchor=center, inner sep=0}, from=3-4, to=4-4]
	\arrow[from=4-1, to=4-3]
	\arrow[from=4-3, to=4-4]
	\arrow["\bullet"{description}, draw=none, from=0, to=1]
	\arrow["1"{description}, draw=none, from=2, to=3]
	\arrow["\bullet"{description}, draw=none, from=3, to=4]
	\arrow["2"{description}, draw=none, from=5, to=6]
	\arrow["\bullet"{description}, draw=none, from=6, to=7]
\end{tikzcd}}
\begin{tikzcd}
	{} & {}
	\arrow["{\bd}",  squiggly, from=1-1, to=1-2]
\end{tikzcd}
\hspace*{-3mm}
\adjustbox{scale=0.60}{\begin{tikzcd}
	&& {} & {} \\
	& {} & {} & {} \\
	{} & {} & {} & {} \\
	{} & {} & {} & {}
	\arrow[from=1-3, to=1-4]
	\arrow[""{name=0, anchor=center, inner sep=0}, from=1-3, to=2-3]
	\arrow[""{name=1, anchor=center, inner sep=0}, from=1-4, to=2-4]
	\arrow[from=2-2, to=2-3]
	\arrow[""{name=2, anchor=center, inner sep=0}, from=2-2, to=3-2]
	\arrow[from=2-3, to=2-4]
	\arrow[""{name=3, anchor=center, inner sep=0}, from=2-3, to=3-3]
	\arrow[""{name=4, anchor=center, inner sep=0}, from=2-4, to=3-4]
	\arrow[from=3-1, to=3-2]
	\arrow[""{name=5, anchor=center, inner sep=0}, from=3-1, to=4-1]
	\arrow[from=3-2, to=3-3]
	\arrow[""{name=6, anchor=center, inner sep=0}, from=3-2, to=4-2]
	\arrow[from=3-3, to=3-4]
	\arrow[""{name=7, anchor=center, inner sep=0}, from=3-3, to=4-3]
	\arrow[""{name=8, anchor=center, inner sep=0}, from=3-4, to=4-4]
	\arrow[from=4-1, to=4-2]
	\arrow[from=4-2, to=4-3]
	\arrow[from=4-3, to=4-4]
	\arrow["\bullet"{description}, draw=none, from=0, to=1]
	\arrow["1"{description}, draw=none, from=2, to=3]
	\arrow["\bullet"{description}, draw=none, from=3, to=4]
	\arrow["\bullet"{description}, draw=none, from=5, to=6]
	\arrow["\bullet"{description}, draw=none, from=6, to=7]
	\arrow["\bullet"{description}, draw=none, from=7, to=8]
\end{tikzcd}}
\begin{tikzcd}
	{} & {}
	\arrow["{\bs}",  squiggly, from=1-1, to=1-2]
\end{tikzcd}
\Can
\)
\\ \hline
$\mathbf{s}$
&
\(\adjustbox{scale=0.60}{\begin{tikzcd}
	&& {} & {} \\
	& {} & {} & {} \\
	{} & {} \\
	{} & {} && {}
	\arrow[from=1-3, to=1-4]
	\arrow[""{name=0, anchor=center, inner sep=0}, from=1-3, to=2-3]
	\arrow[""{name=1, anchor=center, inner sep=0}, from=1-4, to=2-4]
	\arrow[from=2-2, to=2-3]
	\arrow[from=2-2, to=3-2]
	\arrow[from=2-3, to=2-4]
	\arrow[""{name=2, anchor=center, inner sep=0}, from=2-4, to=4-4]
	\arrow[from=3-1, to=3-2]
	\arrow[""{name=3, anchor=center, inner sep=0}, from=3-1, to=4-1]
	\arrow[""{name=4, anchor=center, inner sep=0}, from=3-2, to=4-2]
	\arrow[from=4-1, to=4-2]
	\arrow[from=4-2, to=4-4]
	\arrow["1"{description}, draw=none, from=0, to=1]
	\arrow["2"{description}, draw=none, from=3, to=4]
	\arrow["3"{description}, draw=none, from=3-2, to=2]
\end{tikzcd}}
\begin{tikzcd}
	{} & {}
	\arrow["{\bs}",  squiggly, from=1-1, to=1-2]
\end{tikzcd}
\hspace*{-3mm}
\adjustbox{scale=0.60}{\begin{tikzcd}
	&& {} & {} \\
	& {} & {} & {} \\
	{} & {} & {} & {} \\
	{} & {} & {} & {}
	\arrow[from=1-3, to=1-4]
	\arrow[""{name=0, anchor=center, inner sep=0}, from=1-3, to=2-3]
	\arrow[""{name=1, anchor=center, inner sep=0}, from=1-4, to=2-4]
	\arrow[from=2-2, to=2-3]
	\arrow[""{name=2, anchor=center, inner sep=0}, from=2-2, to=3-2]
	\arrow[from=2-3, to=2-4]
	\arrow[""{name=3, anchor=center, inner sep=0}, from=2-3, to=3-3]
	\arrow[""{name=4, anchor=center, inner sep=0}, from=2-4, to=3-4]
	\arrow[from=3-1, to=3-2]
	\arrow[""{name=5, anchor=center, inner sep=0}, from=3-1, to=4-1]
	\arrow[from=3-2, to=3-3]
	\arrow[""{name=6, anchor=center, inner sep=0}, from=3-2, to=4-2]
	\arrow[from=3-3, to=3-4]
	\arrow[""{name=7, anchor=center, inner sep=0}, from=3-3, to=4-3]
	\arrow[""{name=8, anchor=center, inner sep=0}, from=3-4, to=4-4]
	\arrow[from=4-1, to=4-2]
	\arrow[from=4-2, to=4-3]
	\arrow[from=4-3, to=4-4]
	\arrow["1"{description}, draw=none, from=0, to=1]
	\arrow["\bullet"{description}, draw=none, from=2, to=3]
	\arrow["\bullet"{description}, draw=none, from=3, to=4]
	\arrow["2"{description}, draw=none, from=5, to=6]
	\arrow["\bullet"{description}, draw=none, from=6, to=7]
	\arrow["\bullet"{description}, draw=none, from=7, to=8]
\end{tikzcd}}
\begin{tikzcd}
	{} & {}
	\arrow["{\bd}",  squiggly, from=1-1, to=1-2]
\end{tikzcd}
\hspace*{-3mm}
\adjustbox{scale=0.60}{\begin{tikzcd}
	&& {} & {} \\
	& {} & {} & {} \\
	{} & {} & {} & {} \\
	{} & {} & {} & {}
	\arrow[from=1-3, to=1-4]
	\arrow[""{name=0, anchor=center, inner sep=0}, from=1-3, to=2-3]
	\arrow[""{name=1, anchor=center, inner sep=0}, from=1-4, to=2-4]
	\arrow[from=2-2, to=2-3]
	\arrow[""{name=2, anchor=center, inner sep=0}, from=2-2, to=3-2]
	\arrow[from=2-3, to=2-4]
	\arrow[""{name=3, anchor=center, inner sep=0}, from=2-3, to=3-3]
	\arrow[""{name=4, anchor=center, inner sep=0}, from=2-4, to=3-4]
	\arrow[from=3-1, to=3-2]
	\arrow[""{name=5, anchor=center, inner sep=0}, from=3-1, to=4-1]
	\arrow[from=3-2, to=3-3]
	\arrow[""{name=6, anchor=center, inner sep=0}, from=3-2, to=4-2]
	\arrow[from=3-3, to=3-4]
	\arrow[""{name=7, anchor=center, inner sep=0}, from=3-3, to=4-3]
	\arrow[""{name=8, anchor=center, inner sep=0}, from=3-4, to=4-4]
	\arrow[from=4-1, to=4-2]
	\arrow[from=4-2, to=4-3]
	\arrow[from=4-3, to=4-4]
	\arrow["1"{description}, draw=none, from=0, to=1]
	\arrow["\bullet"{description}, draw=none, from=2, to=3]
	\arrow["\bullet"{description}, draw=none, from=3, to=4]
	\arrow["\bullet"{description}, draw=none, from=5, to=6]
	\arrow["\bullet"{description}, draw=none, from=6, to=7]
	\arrow["\bullet"{description}, draw=none, from=7, to=8]
\end{tikzcd}}
\begin{tikzcd}
	{} & {}
	\arrow["{\bu}",  squiggly, from=1-1, to=1-2]
\end{tikzcd}
\Can
\)
\\ \hline
$\mathbf{s}_1$
&
\(\adjustbox{scale=0.60}{\begin{tikzcd}
	&& {} & {} \\
	& {} & {} & {} \\
	& {} & {} & {} \\
	{} & {} && {} \\
	{} & {} && {}
	\arrow[from=1-3, to=1-4]
	\arrow[from=1-3, to=2-3]
	\arrow[from=1-4, to=2-4]
	\arrow[from=2-2, to=2-3]
	\arrow[""{name=0, anchor=center, inner sep=0}, equals, from=2-2, to=3-2]
	\arrow["2"{description}, draw=none, from=2-3, to=2-4]
	\arrow[""{name=1, anchor=center, inner sep=0}, from=2-3, to=3-3]
	\arrow[from=2-4, to=3-4]
	\arrow[from=3-2, to=3-3]
	\arrow[from=3-2, to=4-2]
	\arrow[from=3-3, to=3-4]
	\arrow[from=3-4, to=5-4]
	\arrow[from=4-1, to=4-2]
	\arrow[""{name=2, anchor=center, inner sep=0}, from=4-1, to=5-1]
	\arrow["4"{description}, draw=none, from=4-2, to=4-4]
	\arrow[""{name=3, anchor=center, inner sep=0}, from=4-2, to=5-2]
	\arrow[from=5-1, to=5-2]
	\arrow[from=5-2, to=5-4]
	\arrow["1"{description}, draw=none, from=0, to=1]
	\arrow["3"{description}, draw=none, from=2, to=3]
\end{tikzcd}}
\begin{tikzcd}
	{} & {}
	\arrow["{\bs_1}",  squiggly, from=1-1, to=1-2]
\end{tikzcd}
\hspace*{-3mm}
\adjustbox{scale=0.60}{\begin{tikzcd}
	&& {} & {} \\
	& {} & {} & {} \\
	& {} & {} & {} \\
	{} & {} & {} & {} \\
	{} & {} & {} & {}
	\arrow[from=1-3, to=1-4]
	\arrow[from=1-3, to=2-3]
	\arrow[from=1-4, to=2-4]
	\arrow[from=2-2, to=2-3]
	\arrow[""{name=0, anchor=center, inner sep=0}, equals, from=2-2, to=3-2]
	\arrow["2"{description}, draw=none, from=2-3, to=2-4]
	\arrow[""{name=1, anchor=center, inner sep=0}, from=2-3, to=3-3]
	\arrow[from=2-4, to=3-4]
	\arrow[from=3-2, to=3-3]
	\arrow[""{name=2, anchor=center, inner sep=0}, from=3-2, to=4-2]
	\arrow[from=3-3, to=3-4]
	\arrow[""{name=3, anchor=center, inner sep=0}, from=3-3, to=4-3]
	\arrow[""{name=4, anchor=center, inner sep=0}, from=3-4, to=4-4]
	\arrow[from=4-1, to=4-2]
	\arrow[""{name=5, anchor=center, inner sep=0}, from=4-1, to=5-1]
	\arrow[from=4-2, to=4-3]
	\arrow[""{name=6, anchor=center, inner sep=0}, from=4-2, to=5-2]
	\arrow[from=4-3, to=4-4]
	\arrow[""{name=7, anchor=center, inner sep=0}, from=4-3, to=5-3]
	\arrow[""{name=8, anchor=center, inner sep=0}, from=4-4, to=5-4]
	\arrow[from=5-1, to=5-2]
	\arrow[from=5-2, to=5-3]
	\arrow[from=5-3, to=5-4]
	\arrow["1"{description}, draw=none, from=0, to=1]
	\arrow["\bullet"{description}, draw=none, from=2, to=3]
	\arrow["\bullet"{description}, draw=none, from=3, to=4]
	\arrow["3"{description}, draw=none, from=5, to=6]
	\arrow["\bullet"{description}, draw=none, from=6, to=7]
	\arrow["\bullet"{description}, draw=none, from=7, to=8]
\end{tikzcd}}
\begin{tikzcd}
	{} & {}
	\arrow["{\bu}",  squiggly, from=1-1, to=1-2]
\end{tikzcd}
\hspace*{-3mm}
\adjustbox{scale=0.60}{\begin{tikzcd}
	&& {} & {} \\
	& {} & {} & {} \\
	& {} & {} & {} \\
	{} & {} & {} & {} \\
	{} & {} & {} & {}
	\arrow[from=1-3, to=1-4]
	\arrow[""{name=0, anchor=center, inner sep=0}, from=1-3, to=2-3]
	\arrow[""{name=1, anchor=center, inner sep=0}, from=1-4, to=2-4]
	\arrow[from=2-2, to=2-3]
	\arrow[""{name=2, anchor=center, inner sep=0}, equals, from=2-2, to=3-2]
	\arrow[from=2-3, to=2-4]
	\arrow[""{name=3, anchor=center, inner sep=0}, from=2-3, to=3-3]
	\arrow[""{name=4, anchor=center, inner sep=0}, from=2-4, to=3-4]
	\arrow[from=3-2, to=3-3]
	\arrow[""{name=5, anchor=center, inner sep=0}, from=3-2, to=4-2]
	\arrow[from=3-3, to=3-4]
	\arrow[""{name=6, anchor=center, inner sep=0}, from=3-3, to=4-3]
	\arrow[""{name=7, anchor=center, inner sep=0}, from=3-4, to=4-4]
	\arrow[from=4-1, to=4-2]
	\arrow[""{name=8, anchor=center, inner sep=0}, from=4-1, to=5-1]
	\arrow[from=4-2, to=4-3]
	\arrow[""{name=9, anchor=center, inner sep=0}, from=4-2, to=5-2]
	\arrow[from=4-3, to=4-4]
	\arrow[""{name=10, anchor=center, inner sep=0}, from=4-3, to=5-3]
	\arrow[""{name=11, anchor=center, inner sep=0}, from=4-4, to=5-4]
	\arrow[from=5-1, to=5-2]
	\arrow[from=5-2, to=5-3]
	\arrow[from=5-3, to=5-4]
	\arrow["\bullet"{description}, draw=none, from=0, to=1]
	\arrow["1"{description}, draw=none, from=2, to=3]
	\arrow["\bullet"{description}, draw=none, from=3, to=4]
	\arrow["\bullet"{description}, draw=none, from=5, to=6]
	\arrow["\bullet"{description}, draw=none, from=6, to=7]
	\arrow["3"{description}, draw=none, from=8, to=9]
	\arrow["\bullet"{description}, draw=none, from=9, to=10]
	\arrow["\bullet"{description}, draw=none, from=10, to=11]
\end{tikzcd}}
\begin{tikzcd}
	{} & {}
	\arrow["{\bs}",  squiggly, from=1-1, to=1-2]
\end{tikzcd}
\hspace*{-3mm}
\adjustbox{scale=0.60}{\begin{tikzcd}
	&& {} & {} \\
	& {} & {} & {} \\
	{} & {} & {} & {} \\
	{} & {} & {} & {}
	\arrow[from=1-3, to=1-4]
	\arrow[""{name=0, anchor=center, inner sep=0}, from=1-3, to=2-3]
	\arrow[""{name=1, anchor=center, inner sep=0}, from=1-4, to=2-4]
	\arrow[from=2-2, to=2-3]
	\arrow[""{name=2, anchor=center, inner sep=0}, from=2-2, to=3-2]
	\arrow[from=2-3, to=2-4]
	\arrow[""{name=3, anchor=center, inner sep=0}, from=2-3, to=3-3]
	\arrow[""{name=4, anchor=center, inner sep=0}, from=2-4, to=3-4]
	\arrow[from=3-1, to=3-2]
	\arrow[""{name=5, anchor=center, inner sep=0}, from=3-1, to=4-1]
	\arrow[from=3-2, to=3-3]
	\arrow[""{name=6, anchor=center, inner sep=0}, from=3-2, to=4-2]
	\arrow[from=3-3, to=3-4]
	\arrow[""{name=7, anchor=center, inner sep=0}, from=3-3, to=4-3]
	\arrow[""{name=8, anchor=center, inner sep=0}, from=3-4, to=4-4]
	\arrow[from=4-1, to=4-2]
	\arrow[from=4-2, to=4-3]
	\arrow[from=4-3, to=4-4]
	\arrow["\bullet"{description}, draw=none, from=0, to=1]
	\arrow["\bullet"{description}, draw=none, from=2, to=3]
	\arrow["\bullet"{description}, draw=none, from=3, to=4]
	\arrow["3"{description}, draw=none, from=5, to=6]
	\arrow["\bullet"{description}, draw=none, from=6, to=7]
	\arrow["\bullet"{description}, draw=none, from=7, to=8]
\end{tikzcd}}
\begin{tikzcd}
	{} & {}
	\arrow["{\bd}",  squiggly, from=1-1, to=1-2]
\end{tikzcd}
\Can
\)
\\ \hline
\end{tabular}
\end{table}

\begin{theorem}\label{thm:of-interest5} Every two $\Sg$-paths of interest starting and ending at the same $\Sg$-schemes are equivalent.
\end{theorem}

\begin{proof}

\textbf{A.} First, we show that for every  $\Sg$-scheme of interest exhibiting two different kinds of configurations of interest, the corresponding two canonical $\Sg$-paths are equivalent.

1) $\mathbf{da}$ and $\mathbf{db}$. A $\Sg$-scheme exhibiting the both configurations looks as follows:
$$S_1=\adjustbox{scale=0.60}{\begin{tikzcd}
	&& {} & {} \\
	& {} & {} & {} \\
	{} & {} & {} & {} \\
	{} &&& {}
	\arrow[from=1-3, to=1-4]
	\arrow[""{name=0, anchor=center, inner sep=0}, from=1-3, to=2-3]
	\arrow[""{name=1, anchor=center, inner sep=0}, from=1-4, to=2-4]
	\arrow[from=2-2, to=2-3]
	\arrow[""{name=2, anchor=center, inner sep=0}, from=2-2, to=3-2]
	\arrow[from=2-3, to=2-4]
	\arrow[""{name=3, anchor=center, inner sep=0}, from=2-3, to=3-3]
	\arrow[""{name=4, anchor=center, inner sep=0}, from=2-4, to=3-4]
	\arrow[from=3-1, to=3-2]
	\arrow[""{name=5, anchor=center, inner sep=0}, from=3-1, to=4-1]
	\arrow[from=3-2, to=3-3]
	\arrow[from=3-3, to=3-4]
	\arrow[""{name=6, anchor=center, inner sep=0}, from=3-4, to=4-4]
	\arrow[from=4-1, to=4-4]
	\arrow["1"{description}, draw=none, from=0, to=1]
	\arrow["2"{description}, draw=none, from=2, to=3]
	\arrow["3"{description}, draw=none, from=3, to=4]
	\arrow["4"{description}, draw=none, from=5, to=6]
\end{tikzcd}}\,.$$
Taking the $\mathbf{da}$-canonical $\Sg$-path and the $\mathbf{db}$-canonical $\Sg$-path  we obtain the top and the bottom lines of the diagram below. The indicated equivalences between $\Sg$-paths are given by  \cref{cor:length2}. Hence, the two canonical $\Sg$-paths are equivalent.
\[\begin{tikzcd}
	{S_1} & {S_2} & {S_3} & \Can \\
	& {S_4} & {S_5}
	\arrow["d", squiggly, from=1-1, to=1-2]
	\arrow[""{name=0, anchor=center, inner sep=0}, "\bd"', squiggly, from=1-1, to=2-2]
	\arrow["\bu", squiggly, from=1-2, to=1-3]
	\arrow[""{name=1, anchor=center, inner sep=0}, "\bs", squiggly, from=1-2, to=2-3]
	\arrow["\bs", squiggly, from=1-3, to=1-4]
	\arrow[""{name=2, anchor=center, inner sep=0}, "\bd"', squiggly, from=2-2, to=1-2]
	\arrow["\bs"', squiggly, from=2-2, to=2-3]
	\arrow[""{name=3, anchor=center, inner sep=0}, "\bu"', squiggly, from=2-3, to=1-4]
	\arrow["\equiv"{description}, shift left=2, draw=none, from=0, to=2]
	\arrow["\equiv"{description}, draw=none, from=1, to=3]
	\arrow["\equiv"{description, pos=0.6}, shift right=3, draw=none, from=2, to=1]
\end{tikzcd}\, .\]

2) $\mathbf{da}$ and $\mathbf{dc}$. A $\Sg$-scheme exhibiting the both configurations is of the form
$$S_1=\adjustbox{scale=0.60}{\begin{tikzcd}
	&& {} & {} \\
	& {} & {} \\
	& {} & {} & {} \\
	{} & {} & {} & {} \\
	{} &&& {}
	\arrow[from=1-3, to=1-4]
	\arrow[from=1-3, to=2-3]
	\arrow[""{name=0, anchor=center, inner sep=0}, from=1-4, to=3-4]
	\arrow[from=2-2, to=2-3]
	\arrow[""{name=1, anchor=center, inner sep=0}, equals, from=2-2, to=3-2]
	\arrow[""{name=2, anchor=center, inner sep=0}, from=2-3, to=3-3]
	\arrow[from=3-2, to=3-3]
	\arrow[""{name=3, anchor=center, inner sep=0}, from=3-2, to=4-2]
	\arrow[from=3-3, to=3-4]
	\arrow[""{name=4, anchor=center, inner sep=0}, from=3-3, to=4-3]
	\arrow[""{name=5, anchor=center, inner sep=0}, from=3-4, to=4-4]
	\arrow[from=4-1, to=4-2]
	\arrow[""{name=6, anchor=center, inner sep=0}, from=4-1, to=5-1]
	\arrow[from=4-2, to=4-3]
	\arrow[from=4-3, to=4-4]
	\arrow[""{name=7, anchor=center, inner sep=0}, from=4-4, to=5-4]
	\arrow[from=5-1, to=5-4]
	\arrow["1"{description}, draw=none, from=1, to=2]
	\arrow["2"{description}, draw=none, from=2-3, to=0]
	\arrow["3"{description}, draw=none, from=3, to=4]
	\arrow["4"{description}, draw=none, from=4, to=5]
	\arrow["5"{description}, draw=none, from=6, to=7]
\end{tikzcd}}\,.$$
 Putting the $\mathbf{da}$-canonical $\Sg$-path and the $\mathbf{dc}$-canonical $\Sg$-path on the top and on the bottom of the following diagram, respectively, we obtain equivalences of $\Sg$-paths showing that the two canonical $\Sg$-paths are equivalent.
\[\begin{tikzcd}
	{S_1} && {S_2} & {S_3} &&& \Can \\
	& {S_4} && {S_5} && {S_6}
	\arrow["\bd", squiggly, from=1-1, to=1-3]
	\arrow[""{name=0, anchor=center, inner sep=0}, "\bd"', squiggly, from=1-1, to=2-2]
	\arrow["\bu", squiggly, from=1-3, to=1-4]
	\arrow[""{name=1, anchor=center, inner sep=0}, "{{{{\bs}_1}}}"', squiggly, from=1-3, to=2-4]
	\arrow["\bs", squiggly, from=1-4, to=1-7]
	\arrow[""{name=2, anchor=center, inner sep=0}, "\bs"', squiggly, from=1-4, to=2-6]
	\arrow["{{{{\bs}_1}}}"', squiggly, from=2-2, to=2-4]
	\arrow["\bu"', squiggly, from=2-4, to=2-6]
	\arrow[""{name=3, anchor=center, inner sep=0}, "\bs"', squiggly, from=2-6, to=1-7]
	\arrow["\equiv"{description}, draw=none, from=0, to=1]
	\arrow["\equiv"{description}, draw=none, from=1, to=2]
	\arrow["\equiv"{description}, draw=none, from=2, to=3]
\end{tikzcd}\]

3) $\mathbf{da}$ and $\mathbf{ua}$. Consider the $\Sg$-schemes
\[S_1=\adjustbox{scale=0.60}{\begin{tikzcd}
	&& {} & {} \\
	& {} & {} \\
	{} & {} & {} & {} \\
	{} & {} & {} & {}
	\arrow[from=1-3, to=1-4]
	\arrow[from=1-3, to=2-3]
	\arrow[""{name=0, anchor=center, inner sep=0}, from=1-4, to=3-4]
	\arrow[from=2-2, to=2-3]
	\arrow[""{name=1, anchor=center, inner sep=0}, from=2-2, to=3-2]
	\arrow[""{name=2, anchor=center, inner sep=0}, from=2-3, to=3-3]
	\arrow[from=3-1, to=3-2]
	\arrow[""{name=3, anchor=center, inner sep=0}, from=3-1, to=4-1]
	\arrow[from=3-2, to=3-3]
	\arrow[""{name=4, anchor=center, inner sep=0}, from=3-2, to=4-2]
	\arrow[from=3-3, to=3-4]
	\arrow[""{name=5, anchor=center, inner sep=0}, from=3-3, to=4-3]
	\arrow[""{name=6, anchor=center, inner sep=0}, from=3-4, to=4-4]
	\arrow[from=4-1, to=4-2]
	\arrow[from=4-2, to=4-3]
	\arrow[from=4-3, to=4-4]
	\arrow["1"{description}, draw=none, from=1, to=2]
	\arrow["2"{description}, draw=none, from=2-3, to=0]
	\arrow["3"{description}, draw=none, from=3, to=4]
	\arrow["4"{description}, draw=none, from=4, to=5]
	\arrow["5"{description}, draw=none, from=5, to=6]
\end{tikzcd}}
\qquad\text{ and }\qquad%
R=\adjustbox{scale=0.60}{\begin{tikzcd}
	&& {} & {} \\
	& {} & {} & {} \\
	{} & {} & {} & {} \\
	{} & {} & {} & {}
	\arrow[from=1-3, to=1-4]
	\arrow[""{name=0, anchor=center, inner sep=0}, from=1-3, to=2-3]
	\arrow[""{name=1, anchor=center, inner sep=0}, from=1-4, to=2-4]
	\arrow[from=2-2, to=2-3]
	\arrow[""{name=2, anchor=center, inner sep=0}, from=2-2, to=3-2]
	\arrow[from=2-3, to=2-4]
	\arrow[""{name=3, anchor=center, inner sep=0}, from=2-3, to=3-3]
	\arrow[""{name=4, anchor=center, inner sep=0}, from=2-4, to=3-4]
	\arrow[from=3-1, to=3-2]
	\arrow[""{name=5, anchor=center, inner sep=0}, from=3-1, to=4-1]
	\arrow[from=3-2, to=3-3]
	\arrow[""{name=6, anchor=center, inner sep=0}, from=3-2, to=4-2]
	\arrow[from=3-3, to=3-4]
	\arrow[""{name=7, anchor=center, inner sep=0}, from=3-3, to=4-3]
	\arrow[""{name=8, anchor=center, inner sep=0}, from=3-4, to=4-4]
	\arrow[from=4-1, to=4-2]
	\arrow[from=4-2, to=4-3]
	\arrow[from=4-3, to=4-4]
	\arrow["\bullet"{description}, draw=none, from=0, to=1]
	\arrow["1"{description}, draw=none, from=2, to=3]
	\arrow["\bullet"{description}, draw=none, from=3, to=4]
	\arrow["3"{description}, draw=none, from=5, to=6]
	\arrow["4"{description}, draw=none, from=6, to=7]
	\arrow["\bullet"{description}, draw=none, from=7, to=8]
\end{tikzcd}}
\, .
\]
 A $\Sg$-scheme that exhibits both configurations, $\mathbf{da}$ and $\mathbf{ua}$, is of the form of the $\Sg$-scheme $S_1$. Let the $\mathbf{da}$-canonical and $\mathbf{ua}$-canonical $\Sg$-paths be, respectively, the top line and the bottom line of the following diagram.
\[\begin{tikzcd}
	{S_1} & {S_2} && {S_3} & \Can \\
	& {S_4} & R & {S_5}
	\arrow["\bd", squiggly, from=1-1, to=1-2]
	\arrow[""{name=0, anchor=center, inner sep=0}, "\bu"', squiggly, from=1-1, to=2-2]
	\arrow["\bu", squiggly, from=1-2, to=1-4]
	\arrow["\bs", squiggly, from=1-4, to=1-5]
	\arrow["{\bu\equiv\bs}", squiggly, from=2-2, to=2-3]
	\arrow["\bs"', curve={height=18pt}, squiggly, from=2-2, to=2-4]
	\arrow["\equiv"', shift right, draw=none, from=2-2, to=2-4]
	\arrow[""{name=1, anchor=center, inner sep=0}, "\bd"{description}, squiggly, from=2-3, to=1-4]
	\arrow["\bs", squiggly, from=2-3, to=2-4]
	\arrow[""{name=2, anchor=center, inner sep=0}, "\bd"', squiggly, from=2-4, to=1-5]
	\arrow["\equiv"{description}, shift left, draw=none, from=0, to=1]
	\arrow["\equiv"{description}, draw=none, from=1, to=2]
\end{tikzcd}\]
 Hence, by  \cref{cor:length2}, the two canonical $\Sg$-paths are equivalent.

 4) $\mathbf{da}$ and $\mathbf{ub}$. We have $S_1=\adjustbox{scale=0.60}{\begin{tikzcd}
	&& {} & {} \\
	& {} & {} \\
	{} & {} & {} & {} \\
	{} & {} & {} & {}
	\arrow[from=1-3, to=1-4]
	\arrow[from=1-3, to=2-3]
	\arrow[""{name=0, anchor=center, inner sep=0}, from=1-4, to=3-4]
	\arrow[from=2-2, to=2-3]
	\arrow[""{name=1, anchor=center, inner sep=0}, from=2-2, to=3-2]
	\arrow[""{name=2, anchor=center, inner sep=0}, from=2-3, to=3-3]
	\arrow[from=3-1, to=3-2]
	\arrow[""{name=3, anchor=center, inner sep=0}, from=3-1, to=4-1]
	\arrow[from=3-2, to=3-3]
	\arrow[from=3-3, to=3-4]
	\arrow[""{name=4, anchor=center, inner sep=0}, from=3-3, to=4-3]
	\arrow[""{name=5, anchor=center, inner sep=0}, from=3-4, to=4-4]
	\arrow[from=4-1, to=4-2]
	\arrow[from=4-2, to=4-3]
	\arrow[from=4-3, to=4-4]
	\arrow["1"{description}, draw=none, from=1, to=2]
	\arrow["2"{description}, draw=none, from=2-3, to=0]
	\arrow["3"{description}, draw=none, from=3, to=4]
	\arrow["4"{description}, draw=none, from=4, to=5]
\end{tikzcd}}\; $ and can show the equivalence using
\[\begin{tikzcd}
	{S_1} & {S_2} & {S_3} & \Can \\
	& {S_4}
	\arrow["\bd", squiggly, from=1-1, to=1-2]
	\arrow[""{name=0, anchor=center, inner sep=0}, "\bu"', squiggly, from=1-1, to=2-2]
	\arrow["\bu", squiggly, from=1-2, to=1-3]
	\arrow["\bs", squiggly, from=1-3, to=1-4]
	\arrow[""{name=1, anchor=center, inner sep=0}, "\bd"', squiggly, from=2-2, to=1-3]
	\arrow["\equiv"{description}, draw=none, from=0, to=1]
\end{tikzcd}\, .\]

5) $\mathbf{da}$ and $\mathbf{s}$. The combination of the two configurations give a $\Sg$-scheme of the form $$S_1=\adjustbox{scale=0.60}{\begin{tikzcd}
	&& {} & {} \\
	& {} & {} & {} \\
	{} & {} & {} & {} \\
	{} & {} && {}
	\arrow[from=1-3, to=1-4]
	\arrow[""{name=0, anchor=center, inner sep=0}, from=1-3, to=2-3]
	\arrow[""{name=1, anchor=center, inner sep=0}, from=1-4, to=2-4]
	\arrow[from=2-2, to=2-3]
	\arrow[""{name=2, anchor=center, inner sep=0}, from=2-2, to=3-2]
	\arrow[from=2-3, to=2-4]
	\arrow[""{name=3, anchor=center, inner sep=0}, from=2-3, to=3-3]
	\arrow[""{name=4, anchor=center, inner sep=0}, from=2-4, to=3-4]
	\arrow[from=3-1, to=3-2]
	\arrow[""{name=5, anchor=center, inner sep=0}, from=3-1, to=4-1]
	\arrow[from=3-2, to=3-3]
	\arrow[""{name=6, anchor=center, inner sep=0}, from=3-2, to=4-2]
	\arrow[from=3-3, to=3-4]
	\arrow[""{name=7, anchor=center, inner sep=0}, from=3-4, to=4-4]
	\arrow[from=4-1, to=4-2]
	\arrow[from=4-2, to=4-4]
	\arrow["1"{description}, draw=none, from=0, to=1]
	\arrow["2"{description}, draw=none, from=2, to=3]
	\arrow["3"{description}, draw=none, from=3, to=4]
	\arrow["4"{description}, draw=none, from=5, to=6]
	\arrow["5"{description}, draw=none, from=6, to=7]
\end{tikzcd}}\, ,$$
  and the equivalence of the two  canonical $\Sg$-paths is shown by the diagram
\[\begin{tikzcd}
	{S_1} & {S_2} & {S_3} & \Can \\
	& {S_4} & {S_5}
	\arrow["\bd", squiggly, from=1-1, to=1-2]
	\arrow[""{name=0, anchor=center, inner sep=0}, "\bs"', squiggly, from=1-1, to=2-2]
	\arrow["\bu", squiggly, from=1-2, to=1-3]
	\arrow[""{name=1, anchor=center, inner sep=0}, "\bs"{description}, squiggly, from=1-2, to=2-3]
	\arrow["\bs", squiggly, from=1-3, to=1-4]
	\arrow["\bd"', squiggly, from=2-2, to=2-3]
	\arrow[""{name=2, anchor=center, inner sep=0}, "\bu"', squiggly, from=2-3, to=1-4]
	\arrow["\equiv"{description}, draw=none, from=0, to=1]
	\arrow["\equiv"{description}, draw=none, from=1, to=2]
\end{tikzcd}\,.\]

6) $\mathbf{da}$ and $\mathbf{s}_1$. The equivalence is given by the following diagram
\[\begin{tikzcd}
	& {S_2} & {S_3} \\
	{S_1} & {R_1} & {R_2} && \Can \\
	& {S_4} & {S_5} & {S_6}
	\arrow["\bu", squiggly, from=1-2, to=1-3]
	\arrow[""{name=0, anchor=center, inner sep=0}, "{{{\bs}_1}}", squiggly, from=1-2, to=2-2]
	\arrow[""{name=1, anchor=center, inner sep=0}, "{{{\bs}}}", squiggly, from=1-3, to=2-3]
	\arrow[""{name=2, anchor=center, inner sep=0}, "\bs", squiggly, from=1-3, to=2-5]
	\arrow["\bd", squiggly, from=2-1, to=1-2]
	\arrow["\equiv"{description}, draw=none, from=2-1, to=2-2]
	\arrow["{{{\bs}_1}}"', squiggly, from=2-1, to=3-2]
	\arrow["\bu"', squiggly, from=2-2, to=2-3]
	\arrow[""{name=3, anchor=center, inner sep=0}, "\bd"', squiggly, from=2-2, to=3-2]
	\arrow["\bs"', squiggly, from=2-3, to=2-5]
	\arrow[""{name=4, anchor=center, inner sep=0}, "\bd", squiggly, from=2-3, to=3-3]
	\arrow["\bu"', squiggly, from=3-2, to=3-3]
	\arrow["\bs"', squiggly, from=3-3, to=3-4]
	\arrow[""{name=5, anchor=center, inner sep=0}, "\bd"', squiggly, from=3-4, to=2-5]
	\arrow["\equiv"{description}, draw=none, from=0, to=1]
	\arrow["\equiv"', draw=none, from=1, to=2]
	\arrow["\equiv"{description}, draw=none, from=3, to=4]
	\arrow["\equiv"{description}, draw=none, from=4, to=5]
\end{tikzcd}\]
where
\[
S_1=\hspace{-2mm}\adjustbox{scale=0.60}{\begin{tikzcd}
	&& {} & {} \\
	& {} & {} \\
	& {} & {} & {} \\
	{} & {} & {} & {} \\
	{} & {} && {}
	\arrow[from=1-3, to=1-4]
	\arrow[from=1-3, to=2-3]
	\arrow[""{name=0, anchor=center, inner sep=0}, from=1-4, to=3-4]
	\arrow[from=2-2, to=2-3]
	\arrow[""{name=1, anchor=center, inner sep=0}, equals, from=2-2, to=3-2]
	\arrow[""{name=2, anchor=center, inner sep=0}, from=2-3, to=3-3]
	\arrow[from=3-2, to=3-3]
	\arrow[""{name=3, anchor=center, inner sep=0}, from=3-2, to=4-2]
	\arrow[from=3-3, to=3-4]
	\arrow[""{name=4, anchor=center, inner sep=0}, from=3-3, to=4-3]
	\arrow[""{name=5, anchor=center, inner sep=0}, from=3-4, to=4-4]
	\arrow[from=4-1, to=4-2]
	\arrow[""{name=6, anchor=center, inner sep=0}, from=4-1, to=5-1]
	\arrow[from=4-2, to=4-3]
	\arrow[""{name=7, anchor=center, inner sep=0}, from=4-2, to=5-2]
	\arrow[from=4-3, to=4-4]
	\arrow[""{name=8, anchor=center, inner sep=0}, from=4-4, to=5-4]
	\arrow[from=5-1, to=5-2]
	\arrow[from=5-2, to=5-4]
	\arrow["1"{description}, draw=none, from=1, to=2]
	\arrow["2"{description}, draw=none, from=2-3, to=0]
	\arrow["3"{description}, draw=none, from=3, to=4]
	\arrow["4"{description}, draw=none, from=4, to=5]
	\arrow["5"{description}, draw=none, from=6, to=7]
	\arrow["6"{description}, draw=none, from=7, to=8]
\end{tikzcd}}\, ,\quad
S_2=\hspace{-2mm}
\adjustbox{scale=0.60}{\begin{tikzcd}
	&& {} & {} \\
	& {} & {} \\
	& {} & {} & {} \\
	{} & {} & {} & {} \\
	{} & {} & {} & {}
	\arrow[from=1-3, to=1-4]
	\arrow[from=1-3, to=2-3]
	\arrow[""{name=0, anchor=center, inner sep=0}, from=1-4, to=3-4]
	\arrow[from=2-2, to=2-3]
	\arrow[""{name=1, anchor=center, inner sep=0}, equals, from=2-2, to=3-2]
	\arrow[""{name=2, anchor=center, inner sep=0}, from=2-3, to=3-3]
	\arrow[from=3-2, to=3-3]
	\arrow[""{name=3, anchor=center, inner sep=0}, from=3-2, to=4-2]
	\arrow[from=3-3, to=3-4]
	\arrow[""{name=4, anchor=center, inner sep=0}, from=3-3, to=4-3]
	\arrow[""{name=5, anchor=center, inner sep=0}, from=3-4, to=4-4]
	\arrow[from=4-1, to=4-2]
	\arrow[""{name=6, anchor=center, inner sep=0}, from=4-1, to=5-1]
	\arrow[from=4-2, to=4-3]
	\arrow[""{name=7, anchor=center, inner sep=0}, from=4-2, to=5-2]
	\arrow[from=4-3, to=4-4]
	\arrow[""{name=8, anchor=center, inner sep=0}, from=4-3, to=5-3]
	\arrow[""{name=9, anchor=center, inner sep=0}, from=4-4, to=5-4]
	\arrow[from=5-1, to=5-2]
	\arrow[from=5-2, to=5-3]
	\arrow[from=5-3, to=5-4]
	\arrow["1"{description}, draw=none, from=1, to=2]
	\arrow["2"{description}, draw=none, from=2-3, to=0]
	\arrow["3"{description}, draw=none, from=3, to=4]
	\arrow["4"{description}, draw=none, from=4, to=5]
	\arrow["\bullet"{description}, draw=none, from=6, to=7]
	\arrow["\bullet"{description}, draw=none, from=7, to=8]
	\arrow["\bullet"{description}, draw=none, from=8, to=9]
\end{tikzcd}}\, , \quad
S_3=\hspace{-2mm}
\adjustbox{scale=0.60}{\begin{tikzcd}
	&& {} & {} \\
	& {} & {} & {} \\
	& {} & {} \\
	{} & {} & {} & {} \\
	{} & {} & {} & {}
	\arrow[from=1-3, to=1-4]
	\arrow[""{name=0, anchor=center, inner sep=0}, from=1-3, to=2-3]
	\arrow[""{name=1, anchor=center, inner sep=0}, from=1-4, to=2-4]
	\arrow[from=2-2, to=2-3]
	\arrow[""{name=2, anchor=center, inner sep=0}, equals, from=2-2, to=3-2]
	\arrow[from=2-3, to=2-4]
	\arrow[""{name=3, anchor=center, inner sep=0}, from=2-3, to=3-3]
	\arrow[""{name=4, anchor=center, inner sep=0}, from=2-4, to=4-4]
	\arrow[from=3-2, to=3-3]
	\arrow[""{name=5, anchor=center, inner sep=0}, from=3-2, to=4-2]
	\arrow[""{name=6, anchor=center, inner sep=0}, from=3-3, to=4-3]
	\arrow[from=4-1, to=4-2]
	\arrow[""{name=7, anchor=center, inner sep=0}, from=4-1, to=5-1]
	\arrow[from=4-2, to=4-3]
	\arrow[""{name=8, anchor=center, inner sep=0}, from=4-2, to=5-2]
	\arrow[from=4-3, to=4-4]
	\arrow[""{name=9, anchor=center, inner sep=0}, from=4-3, to=5-3]
	\arrow[""{name=10, anchor=center, inner sep=0}, from=4-4, to=5-4]
	\arrow[from=5-1, to=5-2]
	\arrow[from=5-2, to=5-3]
	\arrow[from=5-3, to=5-4]
	\arrow["\bullet"{description}, draw=none, from=0, to=1]
	\arrow["1"{description}, draw=none, from=2, to=3]
	\arrow["3"{description}, draw=none, from=5, to=6]
	\arrow["\bullet"{description}, draw=none, from=3-3, to=4]
	\arrow["\bullet"{description}, draw=none, from=7, to=8]
	\arrow["\bullet"{description}, draw=none, from=8, to=9]
	\arrow["\bullet"{description}, draw=none, from=9, to=10]
\end{tikzcd}}\, ,\quad
S_4=\hspace{-2mm}
\adjustbox{scale=0.60}{\begin{tikzcd}
	&& {} & {} \\
	& {} & {} \\
	& {} & {} & {} \\
	{} & {} & {} & {} \\
	{} & {} & {} & {}
	\arrow[from=1-3, to=1-4]
	\arrow[from=1-3, to=2-3]
	\arrow[""{name=0, anchor=center, inner sep=0}, from=1-4, to=3-4]
	\arrow[from=2-2, to=2-3]
	\arrow[""{name=1, anchor=center, inner sep=0}, equals, from=2-2, to=3-2]
	\arrow[""{name=2, anchor=center, inner sep=0}, from=2-3, to=3-3]
	\arrow[from=3-2, to=3-3]
	\arrow[""{name=3, anchor=center, inner sep=0}, from=3-2, to=4-2]
	\arrow[from=3-3, to=3-4]
	\arrow[""{name=4, anchor=center, inner sep=0}, from=3-3, to=4-3]
	\arrow[""{name=5, anchor=center, inner sep=0}, from=3-4, to=4-4]
	\arrow[from=4-1, to=4-2]
	\arrow[""{name=6, anchor=center, inner sep=0}, from=4-1, to=5-1]
	\arrow[from=4-2, to=4-3]
	\arrow[""{name=7, anchor=center, inner sep=0}, from=4-2, to=5-2]
	\arrow[from=4-3, to=4-4]
	\arrow[""{name=8, anchor=center, inner sep=0}, from=4-3, to=5-3]
	\arrow[""{name=9, anchor=center, inner sep=0}, from=4-4, to=5-4]
	\arrow[from=5-1, to=5-2]
	\arrow[from=5-2, to=5-3]
	\arrow[from=5-3, to=5-4]
	\arrow["1"{description}, draw=none, from=1, to=2]
	\arrow["2"{description}, draw=none, from=2-3, to=0]
	\arrow["\bullet"{description}, draw=none, from=3, to=4]
	\arrow["\bullet"{description}, draw=none, from=4, to=5]
	\arrow["5"{description}, draw=none, from=6, to=7]
	\arrow["\bullet"{description}, draw=none, from=7, to=8]
	\arrow["\bullet"{description}, draw=none, from=8, to=9]
\end{tikzcd}}\, ,
\]
\[
S_5=\hspace{-2mm}
\adjustbox{scale=0.60}{\begin{tikzcd}
	&& {} & {} \\
	& {} & {} & {} \\
	& {} & {} & {} \\
	{} & {} & {} & {} \\
	{} & {} & {} & {}
	\arrow[from=1-3, to=1-4]
	\arrow[""{name=0, anchor=center, inner sep=0}, from=1-3, to=2-3]
	\arrow[""{name=1, anchor=center, inner sep=0}, from=1-4, to=2-4]
	\arrow[from=2-2, to=2-3]
	\arrow[""{name=2, anchor=center, inner sep=0}, equals, from=2-2, to=3-2]
	\arrow[from=2-3, to=2-4]
	\arrow[""{name=3, anchor=center, inner sep=0}, from=2-3, to=3-3]
	\arrow[""{name=4, anchor=center, inner sep=0}, from=2-4, to=3-4]
	\arrow[from=3-2, to=3-3]
	\arrow[""{name=5, anchor=center, inner sep=0}, from=3-2, to=4-2]
	\arrow[from=3-3, to=3-4]
	\arrow[""{name=6, anchor=center, inner sep=0}, from=3-3, to=4-3]
	\arrow[""{name=7, anchor=center, inner sep=0}, from=3-4, to=4-4]
	\arrow[from=4-1, to=4-2]
	\arrow[""{name=8, anchor=center, inner sep=0}, from=4-1, to=5-1]
	\arrow[from=4-2, to=4-3]
	\arrow[""{name=9, anchor=center, inner sep=0}, from=4-2, to=5-2]
	\arrow[from=4-3, to=4-4]
	\arrow[""{name=10, anchor=center, inner sep=0}, from=4-3, to=5-3]
	\arrow[""{name=11, anchor=center, inner sep=0}, from=4-4, to=5-4]
	\arrow[from=5-1, to=5-2]
	\arrow[from=5-2, to=5-3]
	\arrow[from=5-3, to=5-4]
	\arrow["\bullet"{description}, draw=none, from=0, to=1]
	\arrow["1"{description}, draw=none, from=2, to=3]
	\arrow["\bullet"{description}, draw=none, from=3, to=4]
	\arrow["\bullet"{description}, draw=none, from=5, to=6]
	\arrow["\bullet"{description}, draw=none, from=6, to=7]
	\arrow["5"{description}, draw=none, from=8, to=9]
	\arrow["\bullet"{description}, draw=none, from=9, to=10]
	\arrow["\bullet"{description}, draw=none, from=10, to=11]
\end{tikzcd}}\, ,\quad
R_1=\hspace{-2mm}
\adjustbox{scale=0.60}{\begin{tikzcd}
	&& {} & {} \\
	& {} & {} \\
	& {} & {} & {} \\
	{} & {} & {} & {} \\
	{} & {} & {} & {}
	\arrow[from=1-3, to=1-4]
	\arrow[from=1-3, to=2-3]
	\arrow[""{name=0, anchor=center, inner sep=0}, from=1-4, to=3-4]
	\arrow[from=2-2, to=2-3]
	\arrow[""{name=1, anchor=center, inner sep=0}, equals, from=2-2, to=3-2]
	\arrow[""{name=2, anchor=center, inner sep=0}, from=2-3, to=3-3]
	\arrow[from=3-2, to=3-3]
	\arrow[""{name=3, anchor=center, inner sep=0}, from=3-2, to=4-2]
	\arrow[from=3-3, to=3-4]
	\arrow[""{name=4, anchor=center, inner sep=0}, from=3-3, to=4-3]
	\arrow[""{name=5, anchor=center, inner sep=0}, from=3-4, to=4-4]
	\arrow[from=4-1, to=4-2]
	\arrow[""{name=6, anchor=center, inner sep=0}, from=4-1, to=5-1]
	\arrow[from=4-2, to=4-3]
	\arrow[""{name=7, anchor=center, inner sep=0}, from=4-2, to=5-2]
	\arrow[from=4-3, to=4-4]
	\arrow[""{name=8, anchor=center, inner sep=0}, from=4-3, to=5-3]
	\arrow[""{name=9, anchor=center, inner sep=0}, from=4-4, to=5-4]
	\arrow[from=5-1, to=5-2]
	\arrow[from=5-2, to=5-3]
	\arrow[from=5-3, to=5-4]
	\arrow["1"{description}, draw=none, from=1, to=2]
	\arrow["2"{description}, draw=none, from=2-3, to=0]
	\arrow["\bullet"{description}, draw=none, from=3, to=4]
	\arrow["\bullet"{description}, draw=none, from=4, to=5]
	\arrow["\bullet"{description}, draw=none, from=6, to=7]
	\arrow["\bullet"{description}, draw=none, from=7, to=8]
	\arrow["\bullet"{description}, draw=none, from=8, to=9]
\end{tikzcd}} \quad \text{and} \quad
R_2=\hspace{-2mm}
\adjustbox{scale=0.60}{\begin{tikzcd}
	&& {} & {} \\
	& {} & {} & {} \\
	& {} & {} & {} \\
	{} & {} & {} & {} \\
	{} & {} & {} & {}
	\arrow[from=1-3, to=1-4]
	\arrow[""{name=0, anchor=center, inner sep=0}, from=1-3, to=2-3]
	\arrow[""{name=1, anchor=center, inner sep=0}, from=1-4, to=2-4]
	\arrow[from=2-2, to=2-3]
	\arrow[""{name=2, anchor=center, inner sep=0}, equals, from=2-2, to=3-2]
	\arrow[from=2-3, to=2-4]
	\arrow[""{name=3, anchor=center, inner sep=0}, from=2-3, to=3-3]
	\arrow[""{name=4, anchor=center, inner sep=0}, from=2-4, to=3-4]
	\arrow[from=3-2, to=3-3]
	\arrow[""{name=5, anchor=center, inner sep=0}, from=3-2, to=4-2]
	\arrow[from=3-3, to=3-4]
	\arrow[""{name=6, anchor=center, inner sep=0}, from=3-3, to=4-3]
	\arrow[""{name=7, anchor=center, inner sep=0}, from=3-4, to=4-4]
	\arrow[from=4-1, to=4-2]
	\arrow[""{name=8, anchor=center, inner sep=0}, from=4-1, to=5-1]
	\arrow[from=4-2, to=4-3]
	\arrow[""{name=9, anchor=center, inner sep=0}, from=4-2, to=5-2]
	\arrow[from=4-3, to=4-4]
	\arrow[""{name=10, anchor=center, inner sep=0}, from=4-3, to=5-3]
	\arrow[""{name=11, anchor=center, inner sep=0}, from=4-4, to=5-4]
	\arrow[from=5-1, to=5-2]
	\arrow[from=5-2, to=5-3]
	\arrow[from=5-3, to=5-4]
	\arrow["\bullet"{description}, draw=none, from=0, to=1]
	\arrow["1"{description}, draw=none, from=2, to=3]
	\arrow["\bullet"{description}, draw=none, from=3, to=4]
	\arrow["\bullet"{description}, draw=none, from=5, to=6]
	\arrow["\bullet"{description}, draw=none, from=6, to=7]
	\arrow["\bullet"{description}, draw=none, from=8, to=9]
	\arrow["\bullet"{description}, draw=none, from=9, to=10]
	\arrow["\bullet"{description}, draw=none, from=10, to=11]
\end{tikzcd}}\; .
\]

7) $\mathbf{db}$ and $\mathbf{dc}$. Put $S_1=\adjustbox{scale=0.60}{\begin{tikzcd}
	&& {} & {} \\
	& {} & {} & {} \\
	& {} & {} & {} \\
	{} & {} && {} \\
	{} &&& {}
	\arrow[from=1-3, to=1-4]
	\arrow[""{name=0, anchor=center, inner sep=0}, from=1-3, to=2-3]
	\arrow[""{name=1, anchor=center, inner sep=0}, from=1-4, to=2-4]
	\arrow[from=2-2, to=2-3]
	\arrow[""{name=2, anchor=center, inner sep=0}, equals, from=2-2, to=3-2]
	\arrow[from=2-3, to=2-4]
	\arrow[""{name=3, anchor=center, inner sep=0}, from=2-3, to=3-3]
	\arrow[""{name=4, anchor=center, inner sep=0}, from=2-4, to=3-4]
	\arrow[from=3-2, to=3-3]
	\arrow[""{name=5, anchor=center, inner sep=0}, from=3-2, to=4-2]
	\arrow[from=3-3, to=3-4]
	\arrow[""{name=6, anchor=center, inner sep=0}, from=3-4, to=4-4]
	\arrow[from=4-1, to=4-2]
	\arrow[""{name=7, anchor=center, inner sep=0}, from=4-1, to=5-1]
	\arrow[from=4-2, to=4-4]
	\arrow[""{name=8, anchor=center, inner sep=0}, from=4-4, to=5-4]
	\arrow[from=5-1, to=5-4]
	\arrow["1"{description}, draw=none, from=0, to=1]
	\arrow["2"{description}, draw=none, from=2, to=3]
	\arrow["3"{description}, draw=none, from=3, to=4]
	\arrow["4"{description}, draw=none, from=5, to=6]
	\arrow["5"{description}, draw=none, from=7, to=8]
\end{tikzcd}}$.
We can construct a diagram of the form
\[\begin{tikzcd}
	{S_1} & {S_2} & {S_3} && \Can \\
	&& {S_4} & {S_5}
	\arrow["\bd", squiggly, from=1-1, to=1-2]
	\arrow["\bs", squiggly, from=1-2, to=1-3]
	\arrow[""{name=0, anchor=center, inner sep=0}, "{{{\bs}_1}}"', squiggly, from=1-2, to=2-3]
	\arrow["\bu", squiggly, from=1-3, to=1-5]
	\arrow[""{name=1, anchor=center, inner sep=0}, "\bs"', squiggly, from=2-3, to=1-3]
	\arrow["\bu"', squiggly, from=2-3, to=2-4]
	\arrow[""{name=2, anchor=center, inner sep=0}, "\bs"', squiggly, from=2-4, to=1-5]
	\arrow["\equiv"{description}, draw=none, from=0, to=1-3]
	\arrow["\equiv"{description}, draw=none, from=1, to=2]
\end{tikzcd}\]
showing that the two canonical $\Sg$-paths are equivalent.

 8) $\mathbf{db}$ and $\mathbf{ua}$. Put $S_1=\adjustbox{scale=0.60}{\begin{tikzcd}
	&& {} & {} \\
	& {} & {} & {} \\
	{} & {} & {} & {} \\
	{} & {} & {} & {}
	\arrow[from=1-3, to=1-4]
	\arrow[""{name=0, anchor=center, inner sep=0}, from=1-3, to=2-3]
	\arrow[""{name=1, anchor=center, inner sep=0}, from=1-4, to=2-4]
	\arrow[from=2-2, to=2-3]
	\arrow[""{name=2, anchor=center, inner sep=0}, from=2-2, to=3-2]
	\arrow[from=2-3, to=2-4]
	\arrow[""{name=3, anchor=center, inner sep=0}, from=2-3, to=3-3]
	\arrow[""{name=4, anchor=center, inner sep=0}, from=2-4, to=3-4]
	\arrow[from=3-1, to=3-2]
	\arrow[""{name=5, anchor=center, inner sep=0}, from=3-1, to=4-1]
	\arrow[from=3-2, to=3-3]
	\arrow[""{name=6, anchor=center, inner sep=0}, from=3-2, to=4-2]
	\arrow[from=3-3, to=3-4]
	\arrow[""{name=7, anchor=center, inner sep=0}, from=3-3, to=4-3]
	\arrow[""{name=8, anchor=center, inner sep=0}, from=3-4, to=4-4]
	\arrow[from=4-1, to=4-2]
	\arrow[from=4-2, to=4-3]
	\arrow[from=4-3, to=4-4]
	\arrow["1"{description}, draw=none, from=0, to=1]
	\arrow["2"{description}, draw=none, from=2, to=3]
	\arrow["3"{description}, draw=none, from=3, to=4]
	\arrow["4"{description}, draw=none, from=5, to=6]
	\arrow["5"{description}, draw=none, from=6, to=7]
	\arrow["6"{description}, draw=none, from=7, to=8]
\end{tikzcd}}$,
\(R_1=\adjustbox{scale=0.60}{\begin{tikzcd}
	&& {} & {} \\
	& {} & {} & {} \\
	{} & {} & {} & {} \\
	{} & {} & {} & {}
	\arrow[from=1-3, to=1-4]
	\arrow[""{name=0, anchor=center, inner sep=0}, from=1-3, to=2-3]
	\arrow[""{name=1, anchor=center, inner sep=0}, from=1-4, to=2-4]
	\arrow[from=2-2, to=2-3]
	\arrow[""{name=2, anchor=center, inner sep=0}, from=2-2, to=3-2]
	\arrow[from=2-3, to=2-4]
	\arrow[""{name=3, anchor=center, inner sep=0}, from=2-3, to=3-3]
	\arrow[""{name=4, anchor=center, inner sep=0}, from=2-4, to=3-4]
	\arrow[from=3-1, to=3-2]
	\arrow[""{name=5, anchor=center, inner sep=0}, from=3-1, to=4-1]
	\arrow[from=3-2, to=3-3]
	\arrow[""{name=6, anchor=center, inner sep=0}, from=3-2, to=4-2]
	\arrow[from=3-3, to=3-4]
	\arrow[""{name=7, anchor=center, inner sep=0}, from=3-3, to=4-3]
	\arrow[""{name=8, anchor=center, inner sep=0}, from=3-4, to=4-4]
	\arrow[from=4-1, to=4-2]
	\arrow[from=4-2, to=4-3]
	\arrow[from=4-3, to=4-4]
	\arrow["1"{description}, draw=none, from=0, to=1]
	\arrow["2"{description}, draw=none, from=2, to=3]
	\arrow["3"{description}, draw=none, from=3, to=4]
	\arrow["4"{description}, draw=none, from=5, to=6]
	\arrow["5"{description}, draw=none, from=6, to=7]
	\arrow["\bullet"{description}, draw=none, from=7, to=8]
\end{tikzcd}}
$
and
\(R_2= \adjustbox{scale=0.60}{\begin{tikzcd}
	&& {} & {} \\
	& {} & {} & {} \\
	{} & {} & {} & {} \\
	{} & {} & {} & {}
	\arrow[from=1-3, to=1-4]
	\arrow[""{name=0, anchor=center, inner sep=0}, from=1-3, to=2-3]
	\arrow[""{name=1, anchor=center, inner sep=0}, from=1-4, to=2-4]
	\arrow[from=2-2, to=2-3]
	\arrow[""{name=2, anchor=center, inner sep=0}, from=2-2, to=3-2]
	\arrow[from=2-3, to=2-4]
	\arrow[""{name=3, anchor=center, inner sep=0}, from=2-3, to=3-3]
	\arrow[""{name=4, anchor=center, inner sep=0}, from=2-4, to=3-4]
	\arrow[from=3-1, to=3-2]
	\arrow[""{name=5, anchor=center, inner sep=0}, from=3-1, to=4-1]
	\arrow[from=3-2, to=3-3]
	\arrow[""{name=6, anchor=center, inner sep=0}, from=3-2, to=4-2]
	\arrow[from=3-3, to=3-4]
	\arrow[""{name=7, anchor=center, inner sep=0}, from=3-3, to=4-3]
	\arrow[""{name=8, anchor=center, inner sep=0}, from=3-4, to=4-4]
	\arrow[from=4-1, to=4-2]
	\arrow[from=4-2, to=4-3]
	\arrow[from=4-3, to=4-4]
	\arrow["1"{description}, draw=none, from=0, to=1]
	\arrow["\bullet"{description}, draw=none, from=2, to=3]
	\arrow["\bullet"{description}, draw=none, from=3, to=4]
	\arrow["4"{description}, draw=none, from=5, to=6]
	\arrow["\bullet"{description}, draw=none, from=6, to=7]
	\arrow["\bullet"{description}, draw=none, from=7, to=8]
\end{tikzcd}}\).\newline
 Then use the diagram
\[\begin{tikzcd}
	& {S_2} & {S_3} \\
	{S_1} & {R_1} & {R_2} & \Can \\
	& {S_4} & {S_5}
	\arrow["\bs", squiggly, from=1-2, to=1-3]
	\arrow[""{name=0, anchor=center, inner sep=0}, "\bd", squiggly, from=1-2, to=2-2]
	\arrow[""{name=1, anchor=center, inner sep=0}, "\bd", squiggly, from=1-3, to=2-3]
	\arrow["\bu", squiggly, from=1-3, to=2-4]
	\arrow["\bd", squiggly, from=2-1, to=1-2]
	\arrow["\equiv"{description}, draw=none, from=2-1, to=2-2]
	\arrow["\bu"', squiggly, from=2-1, to=3-2]
	\arrow["\bs"', squiggly, from=2-2, to=2-3]
	\arrow["\equiv"{description}, draw=none, from=2-3, to=2-4]
	\arrow[""{name=2, anchor=center, inner sep=0}, "\bu"', squiggly, from=3-2, to=2-2]
	\arrow["\bs"', squiggly, from=3-2, to=3-3]
	\arrow[""{name=3, anchor=center, inner sep=0}, "\bu"', squiggly, from=3-3, to=2-3]
	\arrow["\bd"', squiggly, from=3-3, to=2-4]
	\arrow["\equiv"{description}, draw=none, from=0, to=1]
	\arrow["\equiv"{description}, draw=none, from=2, to=3]
\end{tikzcd}\,.\]

  9) $\mathbf{db}$ and $\mathbf{ub}$. Put $S_1=\hspace{-1.5mm}\adjustbox{scale=0.60}{\begin{tikzcd}
	&& {} & {} \\
	& {} & {} & {} \\
	{} & {} & {} & {} \\
	{} & {} & {} & {}
	\arrow[from=1-3, to=1-4]
	\arrow[""{name=0, anchor=center, inner sep=0}, from=1-3, to=2-3]
	\arrow[""{name=1, anchor=center, inner sep=0}, from=1-4, to=2-4]
	\arrow[from=2-2, to=2-3]
	\arrow[""{name=2, anchor=center, inner sep=0}, from=2-2, to=3-2]
	\arrow[from=2-3, to=2-4]
	\arrow[""{name=3, anchor=center, inner sep=0}, from=2-3, to=3-3]
	\arrow[""{name=4, anchor=center, inner sep=0}, from=2-4, to=3-4]
	\arrow[from=3-1, to=3-2]
	\arrow[""{name=5, anchor=center, inner sep=0}, from=3-1, to=4-1]
	\arrow[from=3-2, to=3-3]
	\arrow[from=3-3, to=3-4]
	\arrow[""{name=6, anchor=center, inner sep=0}, from=3-3, to=4-3]
	\arrow[""{name=7, anchor=center, inner sep=0}, from=3-4, to=4-4]
	\arrow[from=4-1, to=4-2]
	\arrow[from=4-2, to=4-3]
	\arrow[from=4-3, to=4-4]
	\arrow["1"{description}, draw=none, from=0, to=1]
	\arrow["2"{description}, draw=none, from=2, to=3]
	\arrow["3"{description}, draw=none, from=3, to=4]
	\arrow["4"{description}, draw=none, from=5, to=6]
	\arrow["5"{description}, draw=none, from=6, to=7]
\end{tikzcd}}\; $
and use the diagram
\(
\begin{tikzcd}
	{S_1} & {S_2} & {S_3} & \Can \\
	{S_4} & R & {S_5}
	\arrow["\bd", squiggly, from=1-1, to=1-2]
	\arrow["\bu"', squiggly, from=1-1, to=2-1]
	\arrow[""{name=0, anchor=center, inner sep=0}, "\bd"', squiggly, from=1-1, to=2-2]
	\arrow["\bs", squiggly, from=1-2, to=1-3]
	\arrow["\bu", squiggly, from=1-3, to=1-4]
	\arrow["\equiv"{description}, draw=none, from=2-1, to=2-2]
	\arrow["\bd"', curve={height=18pt}, squiggly, from=2-1, to=2-3]
	\arrow[""{name=1, anchor=center, inner sep=0}, "\bd"{description}, squiggly, from=2-2, to=1-2]
	\arrow[""{name=2, anchor=center, inner sep=0}, "\bs"{description}, squiggly, from=2-2, to=1-3]
	\arrow["\bu"', squiggly, from=2-2, to=2-3]
	\arrow[""{name=3, anchor=center, inner sep=0}, "\bs", squiggly, from=2-3, to=1-4]
	\arrow["\equiv"{description}, shift left=3, draw=none, from=0, to=1]
	\arrow["\equiv"{description}, shift left=3, draw=none, from=1, to=2]
	\arrow["\equiv"{description}, draw=none, from=2, to=3]
\end{tikzcd}
\),
where $R$ is defined in the obvious way, to conclude that the two canonical $\Sg$-steps are equivalent.

 10) $\mathbf{db}$ and $\mathbf{s}$. Put $S_1=\adjustbox{scale=0.60}{\begin{tikzcd}
	&& {} & {} \\
	& {} & {} & {} \\
	{} & {} && {} \\
	{} & {} && {}
	\arrow[from=1-3, to=1-4]
	\arrow[""{name=0, anchor=center, inner sep=0}, from=1-3, to=2-3]
	\arrow[""{name=1, anchor=center, inner sep=0}, from=1-4, to=2-4]
	\arrow[from=2-2, to=2-3]
	\arrow[""{name=2, anchor=center, inner sep=0}, from=2-2, to=3-2]
	\arrow[from=2-3, to=2-4]
	\arrow[""{name=3, anchor=center, inner sep=0}, from=2-4, to=3-4]
	\arrow[from=3-1, to=3-2]
	\arrow[""{name=4, anchor=center, inner sep=0}, from=3-1, to=4-1]
	\arrow[from=3-2, to=3-4]
	\arrow[""{name=5, anchor=center, inner sep=0}, from=3-2, to=4-2]
	\arrow[""{name=6, anchor=center, inner sep=0}, from=3-4, to=4-4]
	\arrow[from=4-1, to=4-2]
	\arrow[from=4-2, to=4-4]
	\arrow["1"{description}, draw=none, from=0, to=1]
	\arrow["2"{description}, draw=none, from=2, to=3]
	\arrow["3"{description}, draw=none, from=4, to=5]
	\arrow["4"{description}, draw=none, from=5, to=6]
\end{tikzcd}}$
and use the diagram
\[\begin{tikzcd}
	{S_1} & {S_2} & {S_3} & \Can \\
	& {S_4}
	\arrow["\bd", squiggly, from=1-1, to=1-2]
	\arrow[""{name=0, anchor=center, inner sep=0}, "\bs"', squiggly, from=1-1, to=2-2]
	\arrow["\bs", squiggly, from=1-2, to=1-3]
	\arrow["\bu", squiggly, from=1-3, to=1-4]
	\arrow[""{name=1, anchor=center, inner sep=0}, "\bd"', squiggly, from=2-2, to=1-3]
	\arrow["\equiv"{description}, draw=none, from=0, to=1]
\end{tikzcd}\]
to conclude the equivalence of the two canonical $\Sg$-paths.

   11) $\mathbf{db}$ and $\mathbf{s}_1$. Put $S_1=\hspace{-1.5mm}\adjustbox{scale=0.60}{\begin{tikzcd}
	&& {} & {} \\
	& {} & {} & {} \\
	& {} & {} & {} \\
	{} & {} && {} \\
	{} & {} && {}
	\arrow[from=1-3, to=1-4]
	\arrow[""{name=0, anchor=center, inner sep=0}, from=1-3, to=2-3]
	\arrow[""{name=1, anchor=center, inner sep=0}, from=1-4, to=2-4]
	\arrow[from=2-2, to=2-3]
	\arrow[""{name=2, anchor=center, inner sep=0}, equals, from=2-2, to=3-2]
	\arrow[from=2-3, to=2-4]
	\arrow[""{name=3, anchor=center, inner sep=0}, from=2-3, to=3-3]
	\arrow[""{name=4, anchor=center, inner sep=0}, from=2-4, to=3-4]
	\arrow[from=3-2, to=3-3]
	\arrow[""{name=5, anchor=center, inner sep=0}, from=3-2, to=4-2]
	\arrow[from=3-3, to=3-4]
	\arrow[""{name=6, anchor=center, inner sep=0}, from=3-4, to=4-4]
	\arrow[from=4-1, to=4-2]
	\arrow[""{name=7, anchor=center, inner sep=0}, from=4-1, to=5-1]
	\arrow[from=4-2, to=4-4]
	\arrow[""{name=8, anchor=center, inner sep=0}, from=4-2, to=5-2]
	\arrow[""{name=9, anchor=center, inner sep=0}, from=4-4, to=5-4]
	\arrow[from=5-1, to=5-2]
	\arrow[from=5-2, to=5-4]
	\arrow["1"{description}, draw=none, from=0, to=1]
	\arrow["2"{description}, draw=none, from=2, to=3]
	\arrow["3"{description}, draw=none, from=3, to=4]
	\arrow["4"{description}, draw=none, from=5, to=6]
	\arrow["5"{description}, draw=none, from=7, to=8]
	\arrow["6"{description}, draw=none, from=8, to=9]
\end{tikzcd}}$
and consider
$R= \hspace{-1.5mm}\adjustbox{scale=0.60}{\begin{tikzcd}
	&& {} & {} \\
	& {} & {} & {} \\
	& {} & {} & {} \\
	{} & {} & {} & {} \\
	{} & {} & {} & {}
	\arrow[from=1-3, to=1-4]
	\arrow[""{name=0, anchor=center, inner sep=0}, from=1-3, to=2-3]
	\arrow[""{name=1, anchor=center, inner sep=0}, from=1-4, to=2-4]
	\arrow[from=2-2, to=2-3]
	\arrow[""{name=2, anchor=center, inner sep=0}, equals, from=2-2, to=3-2]
	\arrow[from=2-3, to=2-4]
	\arrow[""{name=3, anchor=center, inner sep=0}, from=2-3, to=3-3]
	\arrow[""{name=4, anchor=center, inner sep=0}, from=2-4, to=3-4]
	\arrow[from=3-2, to=3-3]
	\arrow[""{name=5, anchor=center, inner sep=0}, from=3-2, to=4-2]
	\arrow[from=3-3, to=3-4]
	\arrow[""{name=6, anchor=center, inner sep=0}, from=3-3, to=4-3]
	\arrow[""{name=7, anchor=center, inner sep=0}, from=3-4, to=4-4]
	\arrow[from=4-1, to=4-2]
	\arrow[""{name=8, anchor=center, inner sep=0}, from=4-1, to=5-1]
	\arrow[from=4-2, to=4-3]
	\arrow[""{name=9, anchor=center, inner sep=0}, from=4-2, to=5-2]
	\arrow[from=4-3, to=4-4]
	\arrow[""{name=10, anchor=center, inner sep=0}, from=4-3, to=5-3]
	\arrow[""{name=11, anchor=center, inner sep=0}, from=4-4, to=5-4]
	\arrow[from=5-1, to=5-2]
	\arrow[from=5-2, to=5-3]
	\arrow[from=5-3, to=5-4]
	\arrow["1"{description}, draw=none, from=0, to=1]
	\arrow["2"{description}, draw=none, from=2, to=3]
	\arrow["3"{description}, draw=none, from=3, to=4]
	\arrow["\bullet"{description}, draw=none, from=5, to=6]
	\arrow["\bullet"{description}, draw=none, from=6, to=7]
	\arrow["\bullet"{description}, draw=none, from=8, to=9]
	\arrow["\bullet"{description}, draw=none, from=9, to=10]
	\arrow["\bullet"{description}, draw=none, from=10, to=11]
\end{tikzcd}}$.
Use the following diagram
\[\begin{tikzcd}
	& {S_2} && {S_3} \\
	{S_1} & {R} &&& \Can\, , \\
	& {S_4} & {S_5} & {S_6}
	\arrow["\bs", squiggly, from=1-2, to=1-4]
	\arrow[""{name=0, anchor=center, inner sep=0}, "{{\bs}_1}"{pos=0.4}, squiggly, from=1-2, to=2-2]
	\arrow["\bu", squiggly, from=1-4, to=2-5]
	\arrow["\bd", squiggly, from=2-1, to=1-2]
	\arrow["\equiv"{description}, draw=none, from=2-1, to=2-2]
	\arrow["{{\bs}_1}"', squiggly, from=2-1, to=3-2]
	\arrow[""{name=1, anchor=center, inner sep=0}, "\bs"{description}, squiggly, from=2-2, to=1-4]
	\arrow["{(\ast)}"{description}, draw=none, from=2-2, to=2-5]
	\arrow["\bd"', squiggly, from=3-2, to=2-2]
	\arrow["\bu"', squiggly, from=3-2, to=3-3]
	\arrow["\bs"', squiggly, from=3-3, to=3-4]
	\arrow["\bd"', squiggly, from=3-4, to=2-5]
	\arrow["\equiv", draw=none, from=0, to=1]
\end{tikzcd}\]
where the equivalence in $(\ast)$ is obtained as for the case $\mathbf{db}$ and $\mathbf{ua}$.

  12) $\mathbf{dc}$ and $\mathbf{ua}$. Put $S_1=\hspace{-2mm}\adjustbox{scale=0.60}{\begin{tikzcd}
	&& {} & {} \\
	& {} & {} & {} \\
	& {} & {} & {} \\
	{} & {} & {} & {} \\
	{} & {} & {} & {}
	\arrow[from=1-3, to=1-4]
	\arrow[from=1-3, to=2-3]
	\arrow[from=1-4, to=3-4]
	\arrow[from=2-2, to=2-3]
	\arrow[""{name=0, anchor=center, inner sep=0}, equals, from=2-2, to=3-2]
	\arrow["2"{description}, draw=none, from=2-3, to=2-4]
	\arrow[""{name=1, anchor=center, inner sep=0}, from=2-3, to=3-3]
	\arrow[from=3-2, to=3-3]
	\arrow[""{name=2, anchor=center, inner sep=0}, from=3-2, to=4-2]
	\arrow[from=3-3, to=3-4]
	\arrow[""{name=3, anchor=center, inner sep=0}, from=3-3, to=4-3]
	\arrow[""{name=4, anchor=center, inner sep=0}, from=3-4, to=4-4]
	\arrow[from=4-1, to=4-2]
	\arrow[""{name=5, anchor=center, inner sep=0}, from=4-1, to=5-1]
	\arrow[from=4-2, to=4-3]
	\arrow[""{name=6, anchor=center, inner sep=0}, from=4-2, to=5-2]
	\arrow[from=4-3, to=4-4]
	\arrow[""{name=7, anchor=center, inner sep=0}, from=4-3, to=5-3]
	\arrow[""{name=8, anchor=center, inner sep=0}, from=4-4, to=5-4]
	\arrow[from=5-1, to=5-2]
	\arrow[from=5-2, to=5-3]
	\arrow[from=5-3, to=5-4]
	\arrow["1"{description}, draw=none, from=0, to=1]
	\arrow["3"{description}, draw=none, from=2, to=3]
	\arrow["4"{description}, draw=none, from=3, to=4]
	\arrow["5"{description}, draw=none, from=5, to=6]
	\arrow["6"{description}, draw=none, from=6, to=7]
	\arrow["7"{description}, draw=none, from=7, to=8]
\end{tikzcd}}$,
$R_1= \hspace{-2mm}\adjustbox{scale=0.60}{\begin{tikzcd}
	&& {} & {} \\
	& {} & {} & {} \\
	& {} & {} & {} \\
	{} & {} & {} & {} \\
	{} & {} & {} & {}
	\arrow[from=1-3, to=1-4]
	\arrow[from=1-3, to=2-3]
	\arrow[from=1-4, to=3-4]
	\arrow[from=2-2, to=2-3]
	\arrow[""{name=0, anchor=center, inner sep=0}, equals, from=2-2, to=3-2]
	\arrow["2"{description}, draw=none, from=2-3, to=2-4]
	\arrow[""{name=1, anchor=center, inner sep=0}, from=2-3, to=3-3]
	\arrow[from=3-2, to=3-3]
	\arrow[""{name=2, anchor=center, inner sep=0}, from=3-2, to=4-2]
	\arrow[from=3-3, to=3-4]
	\arrow[""{name=3, anchor=center, inner sep=0}, from=3-3, to=4-3]
	\arrow[""{name=4, anchor=center, inner sep=0}, from=3-4, to=4-4]
	\arrow[from=4-1, to=4-2]
	\arrow[""{name=5, anchor=center, inner sep=0}, from=4-1, to=5-1]
	\arrow[from=4-2, to=4-3]
	\arrow[""{name=6, anchor=center, inner sep=0}, from=4-2, to=5-2]
	\arrow[from=4-3, to=4-4]
	\arrow[""{name=7, anchor=center, inner sep=0}, from=4-3, to=5-3]
	\arrow[""{name=8, anchor=center, inner sep=0}, from=4-4, to=5-4]
	\arrow[from=5-1, to=5-2]
	\arrow[from=5-2, to=5-3]
	\arrow[from=5-3, to=5-4]
	\arrow["1"{description}, draw=none, from=0, to=1]
	\arrow["\bullet"{description}, draw=none, from=2, to=3]
	\arrow["\bullet"{description}, draw=none, from=3, to=4]
	\arrow["5"{description}, draw=none, from=5, to=6]
	\arrow["\bullet"{description}, draw=none, from=6, to=7]
	\arrow["\bullet"{description}, draw=none, from=7, to=8]
\end{tikzcd}}\; $
\hspace{1mm} and \hspace{1mm}
$R_2=\hspace{-2mm}\adjustbox{scale=0.60}{\begin{tikzcd}
	&& {} & {} \\
	& {} & {} & {} \\
	& {} & {} & {} \\
	{} & {} & {} & {} \\
	{} & {} & {} & {}
	\arrow[from=1-3, to=1-4]
	\arrow[""{name=0, anchor=center, inner sep=0}, from=1-3, to=2-3]
	\arrow[""{name=1, anchor=center, inner sep=0}, from=1-4, to=2-4]
	\arrow[from=2-2, to=2-3]
	\arrow[""{name=2, anchor=center, inner sep=0}, equals, from=2-2, to=3-2]
	\arrow[from=2-3, to=2-4]
	\arrow[""{name=3, anchor=center, inner sep=0}, from=2-3, to=3-3]
	\arrow[""{name=4, anchor=center, inner sep=0}, from=2-4, to=3-4]
	\arrow[from=3-2, to=3-3]
	\arrow[""{name=5, anchor=center, inner sep=0}, from=3-2, to=4-2]
	\arrow[from=3-3, to=3-4]
	\arrow[""{name=6, anchor=center, inner sep=0}, from=3-3, to=4-3]
	\arrow[""{name=7, anchor=center, inner sep=0}, from=3-4, to=4-4]
	\arrow[from=4-1, to=4-2]
	\arrow[""{name=8, anchor=center, inner sep=0}, from=4-1, to=5-1]
	\arrow[from=4-2, to=4-3]
	\arrow[""{name=9, anchor=center, inner sep=0}, from=4-2, to=5-2]
	\arrow[from=4-3, to=4-4]
	\arrow[""{name=10, anchor=center, inner sep=0}, from=4-3, to=5-3]
	\arrow[""{name=11, anchor=center, inner sep=0}, from=4-4, to=5-4]
	\arrow[from=5-1, to=5-2]
	\arrow[from=5-2, to=5-3]
	\arrow[from=5-3, to=5-4]
	\arrow["\bullet"{description}, draw=none, from=0, to=1]
	\arrow["1"{description}, draw=none, from=2, to=3]
	\arrow["\bullet"{description}, draw=none, from=3, to=4]
	\arrow["\bullet"{description}, draw=none, from=5, to=6]
	\arrow["\bullet"{description}, draw=none, from=6, to=7]
	\arrow["5"{description}, draw=none, from=8, to=9]
	\arrow["\bullet"{description}, draw=none, from=9, to=10]
	\arrow["\bullet"{description}, draw=none, from=10, to=11]
\end{tikzcd}}$.\newline
Use the  diagram
\[\begin{tikzcd}
	& {S_2} && {S_3} && {S_4} \\
	{S_1} && {R_1} && {R_2} && \Can \\
	&& {S_5} &&& {S_6}
	\arrow["{{\bs}_1}", squiggly, from=1-2, to=1-4]
	\arrow["\bu", squiggly, from=1-4, to=1-6]
	\arrow["\bs", squiggly, from=1-6, to=2-7]
	\arrow[""{name=0, anchor=center, inner sep=0}, "\bd", squiggly, from=2-1, to=1-2]
	\arrow["{{\bs}_1}"', squiggly, from=2-1, to=2-3]
	\arrow[""{name=1, anchor=center, inner sep=0}, "\bu"', squiggly, from=2-1, to=3-3]
	\arrow[""{name=2, anchor=center, inner sep=0}, "\bd"', squiggly, from=2-3, to=1-4]
	\arrow[""{name=3, anchor=center, inner sep=0}, "\bu"', squiggly, from=2-3, to=2-5]
	\arrow[""{name=4, anchor=center, inner sep=0}, "\bd"', squiggly, from=2-5, to=1-6]
	\arrow["\equiv"{description}, draw=none, from=2-5, to=2-7]
	\arrow[""{name=5, anchor=center, inner sep=0}, "\bs"', squiggly, from=2-5, to=3-6]
	\arrow[""{name=6, anchor=center, inner sep=0}, "\bs"', squiggly, from=3-3, to=2-5]
	\arrow["\bs"', squiggly, from=3-3, to=3-6]
	\arrow["\bd"', squiggly, from=3-6, to=2-7]
	\arrow["\equiv"{description}, draw=none, from=0, to=2]
	\arrow["\equiv"', draw=none, from=1, to=3]
	\arrow["\equiv"{description}, draw=none, from=2, to=4]
	\arrow["\equiv"', draw=none, from=6, to=5]
\end{tikzcd}\]
to conclude the equivalence of the two canonical $\Sg$-paths.

13)   $\mathbf{dc}$ and $\mathbf{ub}$. Put $S_1=\hspace{-2mm}\adjustbox{scale=0.60}{\begin{tikzcd}
	&& {} & {} \\
	& {} & {} & {} \\
	& {} & {} & {} \\
	{} & {} & {} & {} \\
	{} && {} & {}
	\arrow[from=1-3, to=1-4]
	\arrow[from=1-3, to=2-3]
	\arrow[from=1-4, to=3-4]
	\arrow[from=2-2, to=2-3]
	\arrow[""{name=0, anchor=center, inner sep=0}, equals, from=2-2, to=3-2]
	\arrow["2"{description}, draw=none, from=2-3, to=2-4]
	\arrow[""{name=1, anchor=center, inner sep=0}, from=2-3, to=3-3]
	\arrow[from=3-2, to=3-3]
	\arrow[""{name=2, anchor=center, inner sep=0}, from=3-2, to=4-2]
	\arrow[from=3-3, to=3-4]
	\arrow[""{name=3, anchor=center, inner sep=0}, from=3-3, to=4-3]
	\arrow[""{name=4, anchor=center, inner sep=0}, from=3-4, to=4-4]
	\arrow[from=4-1, to=4-2]
	\arrow[""{name=5, anchor=center, inner sep=0}, from=4-1, to=5-1]
	\arrow[from=4-2, to=4-3]
	\arrow[from=4-3, to=4-4]
	\arrow[""{name=6, anchor=center, inner sep=0}, from=4-3, to=5-3]
	\arrow[""{name=7, anchor=center, inner sep=0}, from=4-4, to=5-4]
	\arrow[from=5-1, to=5-3]
	\arrow[from=5-3, to=5-4]
	\arrow["1"{description}, draw=none, from=0, to=1]
	\arrow["3"{description}, draw=none, from=2, to=3]
	\arrow["4"{description}, draw=none, from=3, to=4]
	\arrow["5"{description}, draw=none, from=5, to=6]
	\arrow["6"{description}, draw=none, from=6, to=7]
\end{tikzcd}}.\;$
We find that the two canonical $\Sg$-paths  are equivalent  by the diagram
\[\begin{tikzcd}
	& {S_2} && {S_3} && {S_4} \\
	{S_1} && {R_1} && {R_2} && \Can \\
	&& {S_5} &&& {S_6}
	\arrow["{{{{{{\bs}_1}}}}}", squiggly, from=1-2, to=1-4]
	\arrow[""{name=0, anchor=center, inner sep=0}, "{{{\bd\equiv {\bs}_1}}}"{description}, squiggly, from=1-2, to=2-3]
	\arrow["\bu", squiggly, from=1-4, to=1-6]
	\arrow["\bs", squiggly, from=1-6, to=2-7]
	\arrow[""{name=1, anchor=center, inner sep=0}, "\bd", squiggly, from=2-1, to=1-2]
	\arrow["\bd"', squiggly, from=2-1, to=2-3]
	\arrow[""{name=2, anchor=center, inner sep=0}, "\bu"', squiggly, from=2-1, to=3-3]
	\arrow[""{name=3, anchor=center, inner sep=0}, "{{{{\bs}_1}}}"', squiggly, from=2-3, to=1-4]
	\arrow["\bu"', squiggly, from=2-3, to=2-5]
	\arrow[""{name=4, anchor=center, inner sep=0}, "{{{\bs}_1\equiv \bs}}"{description}, squiggly, from=2-5, to=1-6]
	\arrow["\bd"', squiggly, from=3-3, to=3-6]
	\arrow[""{name=5, anchor=center, inner sep=0}, "{{{\bu\equiv \bs}}}"{description}, squiggly, from=3-6, to=2-5]
	\arrow[""{name=6, anchor=center, inner sep=0}, "\bs"', squiggly, from=3-6, to=2-7]
	\arrow["\equiv"'{pos=0.6}, shift left=4, draw=none, from=0, to=3]
	\arrow["\equiv"{description, pos=0.3}, shift right=2, draw=none, from=1, to=0]
	\arrow["\equiv"{description}, draw=none, from=2, to=5]
	\arrow["\equiv"{description}, draw=none, from=3, to=4]
	\arrow["\equiv"{description}, shift right=2, draw=none, from=4, to=6]
\end{tikzcd}\]
where

\(R_1=\hspace{-2mm}\adjustbox{scale=0.60}{\begin{tikzcd}
	&& {} & {} \\
	& {} & {} & {} \\
	& {} & {} & {} \\
	{} & {} & {} & {} \\
	{} & {} & {} & {}
	\arrow[from=1-3, to=1-4]
	\arrow[from=1-3, to=2-3]
	\arrow[from=1-4, to=3-4]
	\arrow[from=2-2, to=2-3]
	\arrow[""{name=0, anchor=center, inner sep=0}, equals, from=2-2, to=3-2]
	\arrow["2"{description}, draw=none, from=2-3, to=2-4]
	\arrow[""{name=1, anchor=center, inner sep=0}, from=2-3, to=3-3]
	\arrow[from=3-2, to=3-3]
	\arrow[""{name=2, anchor=center, inner sep=0}, from=3-2, to=4-2]
	\arrow[from=3-3, to=3-4]
	\arrow[""{name=3, anchor=center, inner sep=0}, from=3-3, to=4-3]
	\arrow[""{name=4, anchor=center, inner sep=0}, from=3-4, to=4-4]
	\arrow[from=4-1, to=4-2]
	\arrow[""{name=5, anchor=center, inner sep=0}, from=4-1, to=5-1]
	\arrow[from=4-2, to=4-3]
	\arrow[""{name=6, anchor=center, inner sep=0}, from=4-2, to=5-2]
	\arrow[from=4-3, to=4-4]
	\arrow[""{name=7, anchor=center, inner sep=0}, from=4-3, to=5-3]
	\arrow[""{name=8, anchor=center, inner sep=0}, from=4-4, to=5-4]
	\arrow[from=5-1, to=5-2]
	\arrow[from=5-2, to=5-3]
	\arrow[from=5-3, to=5-4]
	\arrow["1"{description}, draw=none, from=0, to=1]
	\arrow["3"{description}, draw=none, from=2, to=3]
	\arrow["4"{description}, draw=none, from=3, to=4]
	\arrow["\bullet"{description}, draw=none, from=5, to=6]
	\arrow["\bullet"{description}, draw=none, from=6, to=7]
	\arrow["\bullet"{description}, draw=none, from=7, to=8]
\end{tikzcd}}\) \hspace{6mm} and \hspace{6mm}
 \(R_2=
 \hspace{-2mm}\adjustbox{scale=0.60}{\begin{tikzcd}
	&& {} & {} \\
	& {} & {} & {} \\
	& {} & {} & {} \\
	{} & {} & {} & {} \\
	{} & {} & {} & {}
	\arrow[from=1-3, to=1-4]
	\arrow[""{name=0, anchor=center, inner sep=0}, from=1-3, to=2-3]
	\arrow[""{name=1, anchor=center, inner sep=0}, from=1-4, to=2-4]
	\arrow[from=2-2, to=2-3]
	\arrow[""{name=2, anchor=center, inner sep=0}, equals, from=2-2, to=3-2]
	\arrow[from=2-3, to=2-4]
	\arrow[""{name=3, anchor=center, inner sep=0}, from=2-3, to=3-3]
	\arrow[""{name=4, anchor=center, inner sep=0}, from=2-4, to=3-4]
	\arrow[from=3-2, to=3-3]
	\arrow[""{name=5, anchor=center, inner sep=0}, from=3-2, to=4-2]
	\arrow[from=3-3, to=3-4]
	\arrow[""{name=6, anchor=center, inner sep=0}, from=3-3, to=4-3]
	\arrow[""{name=7, anchor=center, inner sep=0}, from=3-4, to=4-4]
	\arrow[from=4-1, to=4-2]
	\arrow[""{name=8, anchor=center, inner sep=0}, from=4-1, to=5-1]
	\arrow[from=4-2, to=4-3]
	\arrow[""{name=9, anchor=center, inner sep=0}, from=4-2, to=5-2]
	\arrow[from=4-3, to=4-4]
	\arrow[""{name=10, anchor=center, inner sep=0}, from=4-3, to=5-3]
	\arrow[""{name=11, anchor=center, inner sep=0}, from=4-4, to=5-4]
	\arrow[from=5-1, to=5-2]
	\arrow[from=5-2, to=5-3]
	\arrow[from=5-3, to=5-4]
	\arrow["\bullet"{description}, draw=none, from=0, to=1]
	\arrow["1"{description}, draw=none, from=2, to=3]
	\arrow["\bullet"{description}, draw=none, from=3, to=4]
	\arrow["3"{description}, draw=none, from=5, to=6]
	\arrow["\bullet"{description}, draw=none, from=6, to=7]
	\arrow["\bullet"{description}, draw=none, from=8, to=9]
	\arrow["\bullet"{description}, draw=none, from=9, to=10]
	\arrow["\bullet"{description}, draw=none, from=10, to=11]
\end{tikzcd}}\, \).

14)  $\mathbf{dc}$ and $\mathbf{s}$. Put $S_1=\hspace{-2mm}\adjustbox{scale=0.60}{\begin{tikzcd}
	&& {} & {} \\
	& {} & {} & {} \\
	& {} & {} & {} \\
	{} & {} && {} \\
	{} & {} && {}
	\arrow[from=1-3, to=1-4]
	\arrow[""{name=0, anchor=center, inner sep=0}, from=1-3, to=2-3]
	\arrow[""{name=1, anchor=center, inner sep=0}, from=1-4, to=2-4]
	\arrow[from=2-2, to=2-3]
	\arrow[""{name=2, anchor=center, inner sep=0}, equals, from=2-2, to=3-2]
	\arrow[from=2-3, to=2-4]
	\arrow[""{name=3, anchor=center, inner sep=0}, from=2-3, to=3-3]
	\arrow[""{name=4, anchor=center, inner sep=0}, from=2-4, to=3-4]
	\arrow[from=3-2, to=3-3]
	\arrow[""{name=5, anchor=center, inner sep=0}, from=3-2, to=4-2]
	\arrow[from=3-3, to=3-4]
	\arrow[""{name=6, anchor=center, inner sep=0}, from=3-4, to=4-4]
	\arrow[from=4-1, to=4-2]
	\arrow[""{name=7, anchor=center, inner sep=0}, from=4-1, to=5-1]
	\arrow[from=4-2, to=4-4]
	\arrow[""{name=8, anchor=center, inner sep=0}, from=4-2, to=5-2]
	\arrow[""{name=9, anchor=center, inner sep=0}, from=4-4, to=5-4]
	\arrow[from=5-1, to=5-2]
	\arrow[from=5-2, to=5-4]
	\arrow["1"{description}, draw=none, from=0, to=1]
	\arrow["2"{description}, draw=none, from=2, to=3]
	\arrow["3"{description}, draw=none, from=3, to=4]
	\arrow["4"{description}, draw=none, from=5, to=6]
	\arrow["5"{description}, draw=none, from=7, to=8]
	\arrow["6"{description}, draw=none, from=8, to=9]
\end{tikzcd}}$.
We use the following diagram to show the two canonical $\Sg$-paths are  equivalent.
\[\begin{tikzcd}
	& {S_2} && {S_3} && {S_4} \\
	{S_1} &&&&&& \Can \\
	&& {S_5} &&& {S_6}
	\arrow["{{{{\bs}_1}}}", squiggly, from=1-2, to=1-4]
	\arrow[""{name=0, anchor=center, inner sep=0}, "\bs"', squiggly, from=1-2, to=3-6]
	\arrow["\bu", squiggly, from=1-4, to=1-6]
	\arrow[""{name=1, anchor=center, inner sep=0}, "\bs", squiggly, from=1-4, to=3-6]
	\arrow["\bs", squiggly, from=1-6, to=2-7]
	\arrow["\bd", squiggly, from=2-1, to=1-2]
	\arrow[""{name=2, anchor=center, inner sep=0}, "\bs"', squiggly, from=2-1, to=3-3]
	\arrow["\bd"', squiggly, from=3-3, to=3-6]
	\arrow["\bu"', squiggly, from=3-6, to=2-7]
	\arrow["\equiv", draw=none, from=0, to=1]
	\arrow["\equiv"{description}, draw=none, from=1, to=2-7]
	\arrow["\equiv", draw=none, from=2, to=0]
\end{tikzcd}\]

15) $\mathbf{dc}$ and $\mathbf{s}_1$. Put $S_1=\hspace{-2mm}\adjustbox{scale=0.60}{\begin{tikzcd}
	&& {} & {} \\
	& {} & {} & {} \\
	& {} & {} & {} \\
	{} & {} && {} \\
	{} & {} && {}
	\arrow[from=1-3, to=1-4]
	\arrow[from=1-3, to=2-3]
	\arrow[from=1-4, to=3-4]
	\arrow[from=2-2, to=2-3]
	\arrow[""{name=0, anchor=center, inner sep=0}, equals, from=2-2, to=3-2]
	\arrow["2"{description}, draw=none, from=2-3, to=2-4]
	\arrow[""{name=1, anchor=center, inner sep=0}, from=2-3, to=3-3]
	\arrow[from=3-2, to=3-3]
	\arrow[""{name=2, anchor=center, inner sep=0}, from=3-2, to=4-2]
	\arrow[from=3-3, to=3-4]
	\arrow[""{name=3, anchor=center, inner sep=0}, from=3-4, to=4-4]
	\arrow[from=4-1, to=4-2]
	\arrow[""{name=4, anchor=center, inner sep=0}, from=4-1, to=5-1]
	\arrow[from=4-2, to=4-4]
	\arrow[""{name=5, anchor=center, inner sep=0}, from=4-2, to=5-2]
	\arrow[""{name=6, anchor=center, inner sep=0}, from=4-4, to=5-4]
	\arrow[from=5-1, to=5-2]
	\arrow[from=5-2, to=5-4]
	\arrow["1"{description}, draw=none, from=0, to=1]
	\arrow["3"{description}, draw=none, from=2, to=3]
	\arrow["4"{description}, draw=none, from=4, to=5]
	\arrow["5"{description}, draw=none, from=5, to=6]
\end{tikzcd}}\, $.
The following diagram shows the two canonical $\Sg$-paths are  equivalent.
\[\begin{tikzcd}
	& {S_2} && {S_3} && {S_4} \\
	{S_1} & {S_5} && {S_6} && {S_7} & \Can
	\arrow["{{{{{\bs}_1}}}}", squiggly, from=1-2, to=1-4]
	\arrow["\bu", squiggly, from=1-4, to=1-6]
	\arrow[""{name=0, anchor=center, inner sep=0}, "\bs", squiggly, from=1-6, to=2-7]
	\arrow[""{name=1, anchor=center, inner sep=0}, "\bd", squiggly, from=2-1, to=1-2]
	\arrow["{{\bs}_1}"', squiggly, from=2-1, to=2-2]
	\arrow[""{name=2, anchor=center, inner sep=0}, "\bd"{description}, squiggly, from=2-2, to=1-4]
	\arrow["\bu"', squiggly, from=2-2, to=2-4]
	\arrow[""{name=3, anchor=center, inner sep=0}, "\bd"{description}, squiggly, from=2-4, to=1-6]
	\arrow["\bs"', squiggly, from=2-4, to=2-6]
	\arrow["\bd"', squiggly, from=2-6, to=2-7]
	\arrow["\equiv"{description}, draw=none, from=1, to=2]
	\arrow["\equiv"{description}, draw=none, from=2, to=3]
	\arrow["\equiv"{description}, draw=none, from=3, to=0]
\end{tikzcd}\]

The remaining six cases ($\mathbf{ua}$ \& $\mathbf{ub}$,  $\mathbf{ua}$ \& $\mathbf{s}$, $\mathbf{ua}$ \& $\mathbf{s}_1$, $\mathbf{ub}$ \& $\mathbf{s}$, $\mathbf{ub}$ \& $\mathbf{s}_1$ and $\mathbf{s}$ \& $\mathbf{s}_1$) are proved with similar easy arguments.

\noindent \textbf{B.} We can now prove the theorem.  Observe (in \cref{tab:paths}) that given a $\Sg$-step of interest of a certain type $k$, say
$$\xymatrix{S_1\ar@{~>}[r]^k&S_2},$$
then $S_1$ and $S_2$ have a common configuration $y$ where the name of $y$ starts with the character $k$. Moreover, the $y$-canonical $\Sg$-path from $S_1$ to $\Can$ factors through $\xymatrix{S_1\ar@{~>}[r]^k&S_2}$; more precisely,  we have
\begin{equation}\label{eq:factor}
\begin{tikzcd}
	{S_1} & R & \dots & \Can \\
	{S_2}
	\arrow["k", squiggly, from=1-1, to=1-2]
	\arrow["k"', squiggly, from=1-1, to=2-1]
	\arrow[squiggly, from=1-2, to=1-3]
	\arrow[squiggly, from=1-3, to=1-4]
	\arrow["k"', squiggly, from=2-1, to=1-2]
\end{tikzcd}
\end{equation}
where the top $\Sg$-path is the $y$-canonical $\Sg$-path from $S_1$ to $\Can$ and the bottom line is the $y$-canonical $\Sg$-path from $S_2$ to $\Can$.

Now suppose we have a $\Sg$-path of interest
\[\begin{tikzcd}
	{S_1} & {S_2} & \dots & {S_n}
	\arrow["{k_1}", squiggly, from=1-1, to=1-2]
	\arrow["{k_2}", squiggly, from=1-2, to=1-3]
	\arrow["{k_{n-1}}", squiggly, from=1-3, to=1-4]
\end{tikzcd}\, .\]
In the following diagram, let $p_i$ be the $y_i$-canonical $\Sg$-path from $S_i$ to $\Can$, and let $q_i$ be the reverse of the $y_i$-canonical $\Sg$-path from $S_{i+1}$ to $\Can$.
\[\begin{tikzcd}
	{S_1} & \Can & {S_2} & \Can & {S_3} & \Can & {S_{n-1}} & \Can & {S_n}
	\arrow["{{{p_1}}}", dashed, from=1-1, to=1-2]
	\arrow[""{name=0, anchor=center, inner sep=0}, "{{{k_1}}}", curve={height=-24pt}, squiggly, from=1-1, to=1-3]
	\arrow["{{{q_1}}}", dashed, from=1-2, to=1-3]
	\arrow[""{name=1, anchor=center, inner sep=0}, "{{{\text{identity $\Sg$-step}}}}"', curve={height=24pt}, squiggly, from=1-2, to=1-4]
	\arrow["{{{p_2}}}", dashed, from=1-3, to=1-4]
	\arrow[""{name=2, anchor=center, inner sep=0}, "{{{k_2}}}", curve={height=-24pt}, squiggly, from=1-3, to=1-5]
	\arrow["{{{q_2}}}", dashed, from=1-4, to=1-5]
	\arrow[""{name=3, anchor=center, inner sep=0}, "{{{\text{identity $\Sg$-step}}}}"', curve={height=24pt}, squiggly, from=1-4, to=1-6]
	\arrow["{{{p_3}}}", dashed, from=1-5, to=1-6]
	\arrow["{{\text{\large $\dots$}}}"{description}, draw=none, from=1-6, to=1-7]
	\arrow["{{p_{n-1}}}", dashed, from=1-7, to=1-8]
	\arrow[""{name=4, anchor=center, inner sep=0}, "{{k_{n-1}}}", curve={height=-24pt}, squiggly, from=1-7, to=1-9]
	\arrow["{{{q_{n-1}}}}", dashed, from=1-8, to=1-9]
	\arrow["\equiv"{description, pos=0.3}, draw=none, from=1-2, to=0]
	\arrow["\equiv"{description, pos=0.3}, draw=none, from=1-3, to=1]
	\arrow["\equiv"{description, pos=0.3}, draw=none, from=1-4, to=2]
	\arrow["\equiv"{description, pos=0.3}, draw=none, from=1-5, to=3]
	\arrow["\equiv"{description, pos=0.3}, draw=none, from=1-8, to=4]
\end{tikzcd}\]
By the above argument we have the equivalences on the top, and the $\Omega$ 2-cell corresponding to each
\(\begin{tikzcd}
	\Can & {S_{i+1}} & \Can
	\arrow["{q_i}", squiggly, from=1-1, to=1-2]
	\arrow["{p_{i+1}}", squiggly, from=1-2, to=1-3]
\end{tikzcd}\)
is the identity, since $p_{i+1}$ is a canonical $\Sg$-path, $q_i$ is the reverse of a canonical $\Sg$-path, and canonical $\Sg$-paths departing from the same $\Sg$-scheme are equivalent.
 Consequently,  we conclude that the given $\Sg$-path is indeed equivalent to
\begin{equation}\label{eq:S-path}
\begin{tikzcd}
	{S_1} && \Can && {S_n}
	\arrow["{p_1}", dashed, from=1-1, to=1-3]
	\arrow["{q_{n-1}}", dashed, from=1-3, to=1-5]
\end{tikzcd}\,.
\end{equation}
Since all canonical $\Sg$-paths from a given $\Sg$-scheme to $\Can$ are equivalent, we conclude that all $\Sg$-paths of interest from $S_1$ to $S_n$ are equivalent to \eqref{eq:S-path}.
\end{proof}

\section{Appendix: Full proof of the universal property}\label{sec:appendixB}

Here we give a detailed account of the omitted calculations in the proof of the main result of \cref{sec:universal}. We will make extensive use of string diagrams. We provide a very basic introduction here to fix conventions, but see \cite{marsden2014stringdiagrams} for more details.

Recall that in a string diagram 1-morphisms are represented by wires, 2-morphisms are represented by dots and objects are represented by regions. Our string diagrams should be read from bottom to top (for vertical composition) and left to right (for horizontal composition). We will not label the regions of the diagrams, which should be clear from context.

Since identity morphisms are suppressed in string diagrams, the units and counits for an adjunction take a particularly appealing form, appearing to allow us `turn corners'. The triangle identities then say we can `pull the wires straight'.
\begin{center}
\vspace{-3pt}
\begin{minipage}{0.49\textwidth}
\begin{equation*}
\begin{tikzpicture}[scale=0.5,baseline={([yshift=-0.5ex]current bounding box.center)}]
\path coordinate[dot, label=above:$\eta$] (eta) ++(1,1) coordinate (a) ++(1,1) coordinate[dot, label=below:$\epsilon$] (epsilon)
 ++(1,-1) coordinate (b) ++(0,-2) coordinate[label=below:$f$] (br)
 (eta) ++(-1,1) coordinate (c) ++(0,2) coordinate[label=above:$f$] (tl);
\draw (tl) -- (c) to[out=-90, in=180] (eta) to[out=0, in=-90] (a) to[out=90, in=180] (epsilon) to[out=0, in=90] (b) -- (br);
\end{tikzpicture}
\enspace=\enspace
\begin{tikzpicture}[scale=0.5,baseline={([yshift=-0.5ex]current bounding box.center)}]
\path coordinate[label=below:$f$] (b) ++(0,4) coordinate[label=above:$f$] (t);
\draw (b) -- (t);
\end{tikzpicture}
\end{equation*}
\end{minipage}
\begin{minipage}{0.49\textwidth}
\begin{equation*}
\begin{tikzpicture}[scale=0.5,baseline={([yshift=-0.5ex]current bounding box.center)}]
\path coordinate[dot, label=below:$\epsilon$] (epsilon) ++(1,-1) coordinate (a) ++(1,-1) coordinate[dot, label=above:$\eta$] (eta)
 ++(1,1) coordinate (b) ++(0,2) coordinate[label=above:$f_*$] (tr)
 (epsilon) ++(-1,-1) coordinate (c) ++(0,-2) coordinate[label=below:$f_*$] (bl);
\draw (bl) -- (c) to[out=90, in=180] (epsilon) to[out=0, in=90] (a) to[out=-90, in=180] (eta) to[out=0, in=-90] (b) -- (tr);
\end{tikzpicture}
\enspace=\enspace
\begin{tikzpicture}[scale=0.5,baseline={([yshift=-0.5ex]current bounding box.center)}]
\path coordinate[label=below:$f_*$] (b) ++(0,4) coordinate[label=above:$f_*$] (t);
\draw (b) -- (t);
\end{tikzpicture}
\end{equation*}
\end{minipage}
\end{center}
In what follows we will suppress the dots and labels for units and counits as they can be readily understood from context.

\begin{lemma} \label{lem:transformation_at_adjoint}
 Let $H_1, H_2\colon \cata \to \catc$ be pseudofunctors and let $\upsilon\colon H_1 \to H_2$ be a pseudonatural transformation. Suppose $r\colon B \to I$ is a morphism in $\cata$ that has a right adjoint $r_*\colon I \to B$ (with unit $\eta$ and counit $\epsilon$). Then the 2-morphism $\upsilon_{r_*}\colon H_2(r_*) \circ \upsilon_I \to \upsilon_B \circ H_1(r_*)$ in the pseudonaturality square for $r_*$ is the inverse of the mate of 2-morphism $\upsilon_r$ in the pseudonaturality square for $r$.
\end{lemma}
\begin{proof}
 First note that $H_{1,2}(r_*)$ is right adjoint to $H_{1,2}(r)$ with unit $\gamma^{H_{1,2}\,-1}_{r_*,r} \circ H_{1,2}(\eta) \circ \iota^{H_{1,2}}_I$ and counit $\iota^{H_{1,2}\,-1}_B \circ H_{1,2}(\epsilon) \circ \gamma^{H_{1,2}}_{r,r_*}$.

 So we can express the mate of $\upsilon_r$ as follows.
 \ctikzfig{pseudonatural_at_adjoint}

 Now consider the composition $(\upsilon_r)_* \circ \upsilon_{r_*}$. We have
 \begin{equation*}
  \tikzfig{pseudonatural_at_adjoint_compose1} \quad = \quad
  \tikzfig{pseudonatural_at_adjoint_compose2}
  \quad = \quad
  \tikzfig{pseudonatural_at_adjoint_compose3}
 \end{equation*}
 where we have used the compositor, 2-dimensional naturality and unitor axioms of the pseudonatural transformation $\upsilon$ in turn for the first equality and the triangle identity for the second equality.
 Thus, we have shown $(\upsilon_r)_* \circ \upsilon_{r_*} = 1_{H_2(r_*) \upsilon_I}$. In a similar way, one can show $\upsilon_{r_*} \circ (\upsilon_r)_* = 1_{\upsilon_B H_1(r_*)}$ and hence these are inverse 2-morphisms, as required.
\end{proof}

\begin{theorem}\label{thm:universal_prop_append}
 Let $\catx$ be a 2-category admitting a  calculus of left lax fractions for $\Sigma$. The pseudofunctor $P_\Sigma\colon \catx \to \catx[\Sigma_*]$ as defined in \cref{def:Psigma} is the universal (strictly unital) pseudofunctor %
 that satisfies the conditions of \cref{prop:P_gives_laris}.
 More precisely, we have
 \begin{enumerate}[label=\normalfont(\alph*)]
  \item If $F\colon \catx \to \catc$ is a pseudofunctor sending $\Sigma$-morphisms to laris and $\Sigma$-squares to BC squares, then there is a pseudofunctor $H\colon \catx[\Sigma_*] \to \catc$ such that $F \simeq H \circ P_\Sigma$.
  \item If $H,H'\colon \catx[\Sigma_*] \to \catc$ are pseudofunctors and $\xi\colon H \circ P_\Sigma \to H' \circ P_\Sigma$ is a pseudonatural transformation for which the pseudonaturality squares for $r\colon B \to I$ in $\catx$ are BC squares whenever $r \in \Sigma$, then there is a pseudonatural transformation $\upsilon\colon H \to H'$ such that $\xi \cong \upsilon \circ P_\Sigma$.
  \item If $H,H'\colon \catx[\Sigma_*] \to \catc$ are pseudofunctors, $\upsilon, \upsilon' \colon H \to H'$ are pseudonatural transformations, and $\aleph \colon \upsilon \circ P_\Sigma \to \upsilon' \circ P_\Sigma$ is a modification, then there is a unique modification $\beth\colon \upsilon \to \upsilon'$ such that $\aleph = \beth \circ P_\Sigma$.
\end{enumerate}
\end{theorem}

We break up the proof into three parts.

\begin{proof}[Proof of (a)]
 Let $\catc$ be a bicategory and let $F\colon \catx \to \catc$ be a pseudofunctor that sends 1-cells in $\Sigma$ to laris in $\catc$ and $\Sigma$-squares to BC squares.
 We may assume the unitors of $\catc$ are identities and that $F$ is strictly unitary.
 For each lari $f$ of $\catc$ we choose a right adjoint $f_*$ (and associated unit/counits $\eta^f$ and $\epsilon^f$) such that $(1_X)_* = 1_X$ for every identity 1-cell (and $\eta^f = \epsilon^f = \id$ in this case).
 There is a pseudofunctor $H\colon \catx[\Sigma_*] \to \catc$ defined as follows.

 \begin{itemize} %
  \item On objects, $H(X) = F(X)$,
  \item On morphisms, $H( (f,r) ) = (F r)_* (F f)$,
  \item On 2-morphisms, $H\left(\begin{tikzcd}
	A & I & B \\
	& X & B \\
	A & J & B
	\arrow["f", from=1-1, to=1-2]
	\arrow[equals, from=1-1, to=3-1]
	\arrow[""{name=0, anchor=center, inner sep=0}, "{x_1}"', from=1-2, to=2-2]
	\arrow["\alpha"', shorten <=15pt, shorten >=15pt, Rightarrow, from=1-2, to=3-1]
	\arrow["r"', from=1-3, to=1-2]
	\arrow[""{name=1, anchor=center, inner sep=0}, equals, from=1-3, to=2-3]
	\arrow["{x_3}"', from=2-3, to=2-2]
	\arrow["g"', from=3-1, to=3-2]
	\arrow[""{name=2, anchor=center, inner sep=0}, "{x_2}", from=3-2, to=2-2]
	\arrow[""{name=3, anchor=center, inner sep=0}, equals, from=3-3, to=2-3]
	\arrow["s", from=3-3, to=3-2]
	\arrow["{\SIGMA^{\delta_1}}"{marking, allow upside down}, draw=none, from=0, to=1]
	\arrow["{\SIGMA^{\delta_2}}"{marking, allow upside down}, draw=none, from=2, to=3]
  \end{tikzcd}\right) = \!\!\!
  \begin{tikzcd}
	{F A} & {F I} & {F B} \\
	& {F X} & {F B} \\
	{F A} & {F J} & {F B}
	\arrow[""{name=0, anchor=center, inner sep=0}, "Ff", from=1-1, to=1-2]
	\arrow[""{name=1, anchor=center, inner sep=0}, "\text{\tiny $F(x_1 f)$}"'{yshift=2pt,xshift=2pt}, from=1-1, to=2-2, gray]
	\arrow[equals, from=1-1, to=3-1]
	\arrow[""{name=2, anchor=center, inner sep=0}, "{F x_1}"{pos=0.3,xshift=-1pt}, from=1-2, to=2-2]
	\arrow["(Fr)_*", to=1-3, from=1-2]
	\arrow[""{name=3, anchor=center, inner sep=0}, equals, from=1-3, to=2-3]
	\arrow["F(\delta_1)_*"{xshift=-4pt}, shorten <=6pt, shorten >=6pt, Rightarrow, from=1-3, to=2-2]
	\arrow["F(x_3)_*"'{pos=0.4, yshift=1pt}, to=2-3, from=2-2]
	\arrow["F(\delta_2)_*^{-1}"{xshift=-5pt}, shorten <=6pt, shorten >=6pt, Rightarrow, from=2-2, to=3-3, yshift=-3pt,xshift=-1pt]
	\arrow[""{name=4, anchor=center, inner sep=0}, "\text{\tiny $F(x_2 g)$}"{yshift=-2pt,xshift=2pt}, from=3-1, to=2-2, gray]
	\arrow[""{name=5, anchor=center, inner sep=0}, "{F g}"', from=3-1, to=3-2]
	\arrow[""{name=6, anchor=center, inner sep=0}, "{F x_2}"'{pos=0.3,xshift=-1pt}, from=3-2, to=2-2]
	\arrow[""{name=7, anchor=center, inner sep=0}, equals, from=3-3, to=2-3]
	\arrow["(Fs)_*"', to=3-3, from=3-2]
	\arrow["{F(\alpha)}"', shorten <=22pt, shorten >=22pt, Rightarrow, from=0, to=5]
	\arrow["\text{\tiny $\gamma^F_{x_1,f}$}"'{xshift=2pt}, shorten >=2pt, Rightarrow, from=1-2, to=1, xshift=4pt, yshift=-2pt, gray]
	\arrow["\text{\tiny $\gamma^F_{x_2,g}{}^{\!\!\!\!-1}$}"'{yshift=-2pt,xshift=3pt}, shorten <=2pt, Rightarrow, from=4, to=3-2, xshift=4pt,yshift=3pt, gray]
  \end{tikzcd}$, \\
  where $F(\delta_1)_*$ and $F(\delta_2)_*$ denote the mate of $F(\delta_1)$ and $F(\delta_2)$, respectively.
  \item The unitors $\iota^H_X\colon 1_{H(X)} \to H(1_X)$ are identites,
  \item The compositors $\gamma^H_{(g,s),(f,r)} \colon H( (g,s) ) \circ H( (f,r) ) \to H( (g,s) \circ (f,r) )$ are given by the composite
  {\tiny\begin{align*}
   & {( F(s)_* F(g) ) \circ ( F(r)_* F(f) )}
    \xrightarrow{\alpha^{\catc\; -1}_{F(s)_*F(g),F(r)_*,F(f)}}
    {((F(s)_* F(g)) \circ F(r)_*) \circ F(f)}
    \xrightarrow{\alpha^\catc_{F(s)_*,F(g),F(r)_*} \circ F(f)} \\
   & {(F(s)_* \circ (F(g) F(r)_*)) \circ F(f)}
    \xrightarrow{(F(s)_* \circ F(\dot{\beta})_*) \circ F(f)}
    {(F(s)_* \circ (F(\dot{r})_* F(\dot{g}))) \circ F(f)}
    \xrightarrow{\alpha^{\catc\; -1}_{F(s)_*,F(\dot{r}),F(\dot{g})}\circ F(f)} \\
   & {((F(s)_* F(\dot{r})_*) \circ F(\dot{g}))) \circ F(f)}
    \xrightarrow{\alpha^{\catc}_{F(s)_*F(\dot{r}),F(\dot{g}),F(f)}}
    {(F(s)_* F(\dot{r})_*) \circ (F(\dot{g}) F(f))}
    \xrightarrow{(F(s)_* F(\dot{r})_*) \circ \gamma^F_{\dot{g},f}} \\
   & {(F(s)_* F(\dot{r})_*) \circ F(\dot{g} f)}
    \xrightarrow{\sigma \circ F(\dot{g} f)}
    {(F(\dot{r}) F(s))_* \circ F(\dot{g} f)}
    \xrightarrow{(\gamma^{F\; -1}_{\dot{r},s})_* \circ F(\dot{g} f)}
    {F(\dot{r} s)_* F(\dot{g} f)}
  \end{align*}}
  where $\dot{g}$, $\dot{r}$ and $\dot{\beta}$ are given by the composite $(g,s) \circ (f,r) = (\dot{g}f, \dot{r}s)$ from

  \begin{equation}\label{eq:compositor_beta_dot_append}
   \begin{tikzcd}
	A & I & B \\
	& {\dot{B}} & J & C\rlap{,}
	\arrow["f", from=1-1, to=1-2]
	\arrow[""{name=0, anchor=center, inner sep=0}, "{\dot{g}}"', from=1-2, to=2-2]
	\arrow["r"', from=1-3, to=1-2]
	\arrow[""{name=1, anchor=center, inner sep=0}, "g", from=1-3, to=2-3]
	\arrow["{\dot{r}}", from=2-3, to=2-2]
	\arrow["s", from=2-4, to=2-3]
	\arrow["{\SIGMA^{\dot{\beta}}}"{marking, allow upside down}, draw=none, from=0, to=1]
   \end{tikzcd}
  \end{equation}
  and where $\alpha^\catc_{\bullet,\bullet,\bullet}$ denotes the associator for $\catc$, $\gamma^F_{\bullet,\bullet}$ denotes the compositor for $F$, $F(\dot{\beta})_*$ is the mate of $F(\Sigma^{\dot{\beta}})$, $\sigma\colon F(s)_* F(\dot{r})_* \to (F(\dot{r}) F(s))_*$ is the canonical isomorphism given by composition of adjoints and $(\gamma^{F\; -1}_{\dot{r},s})_*\colon (F(\dot{r}) F(s))_* \to F(\dot{r} s)_*$ is the mate of the inverse of the compositor.
 \end{itemize}

 We show this is indeed a pseudofunctor. Here it will be useful to make use of string diagrams.

 (a1) First we must show that $H$ is well-defined on 2-morphisms.
 Let us consider a $\Sigma$-extension of the 2-morphism $(\alpha, x_1, x_2, x_3, \delta_1 , \delta_2)$:
 \[\begin{tikzcd}
	A && I && B \\
	& X & {} & X & B \\
	&& D && B \\
	& X & {} & X & B \\
	A && J && B\rlap{.}
	\arrow[""{name=0, anchor=center, inner sep=0}, "f", from=1-1, to=1-3]
	\arrow[Rightarrow, no head, from=1-1, to=5-1]
	\arrow["{{{{{x_1}}}}}"{description}, curve={height=6pt}, from=1-3, to=2-2]
	\arrow[""{name=1, anchor=center, inner sep=0}, "{{{{x_1}}}}"{description}, curve={height=-6pt}, from=1-3, to=2-4]
	\arrow["{{{z_1}}}"{description}, from=1-3, to=3-3]
	\arrow["{{{r}}}"', from=1-5, to=1-3]
	\arrow[""{name=2, anchor=center, inner sep=0}, Rightarrow, no head, from=1-5, to=2-5]
	\arrow["{d}"{description}, curve={height=6pt}, from=2-2, to=3-3]
	\arrow["{{{{{\theta_1^{-1}}}}}}"', Rightarrow, from=2-3, to=2-2]
	\arrow["{{{{{\theta_1}}}}}"', Rightarrow, from=2-4, to=2-3]
	\arrow[""{name=3, anchor=center, inner sep=0}, "{d}"{description}, curve={height=-6pt}, from=2-4, to=3-3]
	\arrow["{{{{{x_3}}}}}"', from=2-5, to=2-4]
	\arrow[""{name=4, anchor=center, inner sep=0}, Rightarrow, no head, from=2-5, to=3-5]
	\arrow["z_3"', from=3-5, to=3-3] %
	\arrow[""{name=5, anchor=center, inner sep=0}, Rightarrow, no head, from=3-5, to=4-5]
	\arrow["{d}"{description}, curve={height=-6pt}, from=4-2, to=3-3]
	\arrow["{{{{{\theta_2}}}}}", Rightarrow, from=4-2, to=4-3]
	\arrow[""{name=6, anchor=center, inner sep=0}, "{d}"{description}, curve={height=6pt}, from=4-4, to=3-3]
	\arrow["{{{{{\theta_2}}}}}"', Rightarrow, from=4-4, to=4-3]
	\arrow["{{{{{x_3}}}}}"', from=4-5, to=4-4]
	\arrow[""{name=7, anchor=center, inner sep=0}, Rightarrow, no head, from=4-5, to=5-5]
	\arrow[""{name=8, anchor=center, inner sep=0}, "g"', from=5-1, to=5-3]
	\arrow["{{{z_2}}}"{description}, from=5-3, to=3-3]
	\arrow["{{{{{x_2}}}}}"{description}, curve={height=-6pt}, from=5-3, to=4-2]
	\arrow[""{name=9, anchor=center, inner sep=0}, "{{{{x_2}}}}"{description}, curve={height=6pt}, from=5-3, to=4-4]
	\arrow["{{{s}}}", from=5-5, to=5-3]
	\arrow["{d\circ\alpha}"', shift right=5, shorten <=26pt, shorten >=26pt, Rightarrow, from=0, to=8]
	\arrow["{{{{{\SIGMA^{\delta_1}}}}}}"{description}, draw=none, from=1, to=2]
	\arrow["{{{{{\SIGMA^{\psi}}}}}}"{description}, draw=none, from=3, to=4]
	\arrow["{{{{\SIGMA^{\psi}}}}}"{description}, draw=none, from=6, to=5]
	\arrow["{{{{{\SIGMA^{\delta_2}}}}}}"{description}, draw=none, from=9, to=7]
 \end{tikzcd}\]
 Applying $H$ to $(\alpha, x_1, x_2, x_3, \delta_1 , \delta_2)$ gives
 \[\begin{tikzcd}
	{F A} & {F I} & {F B} \\
	& {F X} & {F B} \\
	{F A} & {F J} & {F B}\rlap{,}
	\arrow[""{name=0, anchor=center, inner sep=0}, "Ff", from=1-1, to=1-2]
	\arrow[""{name=1, anchor=center, inner sep=0}, "\text{\tiny $F(x_1 f)$}"'{yshift=2pt,xshift=2pt}, from=1-1, to=2-2, gray]
	\arrow[equals, from=1-1, to=3-1]
	\arrow[""{name=2, anchor=center, inner sep=0}, "{F x_1}"{pos=0.3,xshift=-1pt}, from=1-2, to=2-2]
	\arrow["(Fr)_*", to=1-3, from=1-2]
	\arrow[""{name=3, anchor=center, inner sep=0}, equals, from=1-3, to=2-3]
	\arrow["F(\delta_1)_*"{xshift=-4pt}, shorten <=6pt, shorten >=6pt, Rightarrow, from=1-3, to=2-2]
	\arrow["F(x_3)_*"'{pos=0.4, yshift=1pt}, to=2-3, from=2-2]
	\arrow["F(\delta_2)_*^{-1}"{xshift=-5pt}, shorten <=6pt, shorten >=6pt, Rightarrow, from=2-2, to=3-3, yshift=-3pt,xshift=-1pt]
	\arrow[""{name=4, anchor=center, inner sep=0}, "\text{\tiny $F(x_2 g)$}"{yshift=-2pt,xshift=2pt}, from=3-1, to=2-2, gray]
	\arrow[""{name=5, anchor=center, inner sep=0}, "{F g}"', from=3-1, to=3-2]
	\arrow[""{name=6, anchor=center, inner sep=0}, "{F x_2}"'{pos=0.3,xshift=-1pt}, from=3-2, to=2-2]
	\arrow[""{name=7, anchor=center, inner sep=0}, equals, from=3-3, to=2-3]
	\arrow["(Fs)_*"', to=3-3, from=3-2]
	\arrow["{F(\alpha)}"', shorten <=22pt, shorten >=22pt, Rightarrow, from=0, to=5]
	\arrow["\text{\tiny $\gamma^F_{x_1,f}$}"'{xshift=2pt}, shorten >=2pt, Rightarrow, from=1-2, to=1, xshift=4pt, yshift=-2pt, gray]
	\arrow["\text{\tiny $\gamma^F_{x_2,g}{}^{\!\!\!\!-1}$}"'{yshift=-2pt,xshift=3pt}, shorten <=2pt, Rightarrow, from=4, to=3-2, xshift=4pt,yshift=3pt, gray]
  \end{tikzcd}\]
  while applying $H$ to the above $\Sigma$-extension gives
  \[\begin{tikzcd}[sep=large]
	{F A} & {F I} & {F B} \\
	& {F X} & {F B} \\
	{F A} & {F J} & {F B}\rlap{.}
	\arrow[""{name=0, anchor=center, inner sep=0}, "Ff", from=1-1, to=1-2]
	\arrow[""{name=1, anchor=center, inner sep=0}, "\text{\tiny $F(z_1 f)$}"'{yshift=2pt,xshift=2pt}, from=1-1, to=2-2, gray]
	\arrow[equals, from=1-1, to=3-1]
	\arrow[""{name=2, anchor=center, inner sep=0}, "{F z_1}"{pos=0.2,xshift=-1pt}, from=1-2, to=2-2]
	\arrow["(Fr)_*", to=1-3, from=1-2]
	\arrow[""{name=3, anchor=center, inner sep=0}, equals, from=1-3, to=2-3]
	\arrow["\text{\tiny $\substack{F(\theta_1r \cdot d\delta_1 \\ {} \cdot \psi)_*}$}"{pos=0.5,xshift=-2pt}, shorten <=10pt, shorten >=10pt, Rightarrow, from=1-3, to=2-2, yshift=5pt,xshift=-5pt]
	\arrow["F(z_3)_*"'{pos=0.4, yshift=1pt}, to=2-3, from=2-2]
	\arrow["\text{\tiny $\substack{F(\theta_2r \cdot d\delta_2 \\ {} \cdot \psi)_*^{-1}}$}"{pos=0.6, xshift=-5pt}, shorten <=10pt, shorten >=10pt, Rightarrow, from=2-2, to=3-3, yshift=-5pt,xshift=-5pt]
	\arrow[""{name=4, anchor=center, inner sep=0}, "\text{\tiny $F(z_2 g)$}"{yshift=-2pt,xshift=2pt}, from=3-1, to=2-2, gray]
	\arrow[""{name=5, anchor=center, inner sep=0}, "{F g}"', from=3-1, to=3-2]
	\arrow[""{name=6, anchor=center, inner sep=0}, "{F z_2}"'{pos=0.2,xshift=-1pt}, from=3-2, to=2-2]
	\arrow[""{name=7, anchor=center, inner sep=0}, equals, from=3-3, to=2-3]
	\arrow["(Fs)_*"', to=3-3, from=3-2]
	\arrow["\text{\tiny $\substack{F(\theta_2g \cdot d\alpha \\ {} \cdot \theta_1^{-1}f)}$}"', shorten <=30pt, shorten >=30pt, Rightarrow, from=0, to=5, xshift=5pt]
	\arrow["\text{\tiny $\gamma^F_{z_1,f}$}"'{xshift=2pt}, shorten >=2pt, Rightarrow, from=1-2, to=1, xshift=4pt, yshift=-2pt, gray]
	\arrow["\text{\tiny $\gamma^F_{z_2,g}{}^{\!\!\!\!-1}$}"'{yshift=-2pt,xshift=3pt}, shorten <=2pt, Rightarrow, from=4, to=3-2, xshift=4pt,yshift=3pt, gray]
  \end{tikzcd}\]
  To show these are equal we express both in terms of string diagrams.
  The former gives
  \begin{equation}\label{eq:H(alpha)}
  \tikzfig{H_alpha_}
  \end{equation}
  where the blue box depicts the inverse of the 2-morphism represented in the following diagram.
  \begin{equation}\label{eq:inv_of_mate}
  \tikzfig{mate_of_alpha}
  \end{equation}

  The latter gives
  \begin{equation}\label{eq:H(extension)}
  \tikzfig{H_extension_}
  \end{equation}
  (Here the three boxes represent the three rectangles in the diagram for $H$ applied to the $\Sigma$-extension.)
  Also, $F(\theta_2 r \cdot d\delta_2 \cdot \psi)_*$ is represented by
  \begin{equation}\label{eq:inv_of_mate_ext}
  \tikzfig{mate_of_extension} \!.
  \end{equation}

  In the string diagram \ref{eq:H(extension)}, $F(\theta_1)$ (together with its nearby compositor) cancels with $F(\theta_1^{-1})$ (and its compositor), and $F(\theta_2)$ (and its compositor) cancels with the $F(\theta_2)^{-1}$ (and its compositor) from the inverse of $F(\theta_2 r \cdot d\delta_2 \cdot \psi)_*$.

  Now we compose \cref{eq:H(extension)} and \cref{eq:H(alpha)} on the top with
  \begin{equation}\label{eq:fragment1}
   \tikzfig{fragment1}
  \end{equation}
  (which is an isomorphism since $\eta^{F(z_3)}$ is and $F(\delta_2)$ forms a BC square).
  Part of this cancels with the blue box in \cref{eq:H(alpha)} and another part partially cancels with the blue box in \cref{eq:H(extension)}.
  So what was \cref{eq:H(alpha)} becomes
  \ctikzfig{result1}
  and what was \cref{eq:H(extension)} yields
  \begin{center}\adjustbox{scale=0.8}{\tikzfig{result2}}\end{center}
  where $\widetilde{\delta}^{-1}$ denotes the inverse of
  \ctikzfig{fragment2}
  Observe that if we compose both of these with $\epsilon^{F(x_3)}$ (whiskered with the appropriate morphisms) on the top and apply one of the triangle identities we arrive at the same result in each case.
  While $\epsilon^{F(x_3)}$ is not an isomorphism, its resulting composition with \cref{eq:fragment1} is one by the BC condition for $F(\delta_2)$ and the form of the resulting inverse of its mate. So we have proved the equality of the two original 2-cells, and hence $H$ is well-defined on 2-cells.

  (a2) It is easy to see $H$ sends identity 2-cells to identity 2-cells by direct computation. %
  We now show that $H$ preserves vertical composition of 2-cells. Consider 2-morphisms $\bar{\alpha} = (\alpha, x_1, x_2, x_3, \delta_1, \delta_2)\colon (f,r) \Rightarrow (g,s)$ and $\bar{\beta} = (\beta, y_1, y_2, y_3, \epsilon_1, \epsilon_2)\colon (g,s) \Rightarrow (h,t)$. By applying Rule 4' to $\Sigma^{\delta_2}$ and $\Sigma^{\epsilon_1}$ we obtain
  \begin{equation*}
   \begin{tikzcd}
    B & J \\
    B & X & Y & {} \\
    B & D
    \arrow["s", from=1-1, to=1-2]
    \arrow[""{name=0, anchor=center, inner sep=0}, Rightarrow, no head, from=1-1, to=2-1]
    \arrow[""{name=1, anchor=center, inner sep=0}, "{{x_2}}", from=1-2, to=2-2]
    \arrow["{y_1}", curve={height=-6pt}, from=1-2, to=2-3]
    \arrow["{x_3}", from=2-1, to=2-2]
    \arrow[""{name=2, anchor=center, inner sep=0}, Rightarrow, no head, from=2-1, to=3-1]
    \arrow["\gamma", Rightarrow, from=2-2, to=2-3]
    \arrow["\cong"', draw=none, from=2-2, to=2-3]
    \arrow[""{name=3, anchor=center, inner sep=0}, "{{d_x}}", from=2-2, to=3-2]
    \arrow["{d_y}", curve={height=-6pt}, from=2-3, to=3-2]
    \arrow["u"', from=3-1, to=3-2]
    \arrow["{\SIGMA^{\delta_2}}"{description}, draw=none, from=0, to=1]
    \arrow["{\SIGMA^{\phi_x}}"{description}, draw=none, from=2, to=3]
   \end{tikzcd}
   =\qquad
   \begin{tikzcd}
    B & J \\
    B & Y \\
    B & D \rlap{\,.}
    \arrow["s", from=1-1, to=1-2]
    \arrow[""{name=0, anchor=center, inner sep=0}, Rightarrow, no head, from=1-1, to=2-1]
    \arrow[""{name=1, anchor=center, inner sep=0}, "{{y_1}}", from=1-2, to=2-2]
    \arrow["{{y_3}}", dashed, from=2-1, to=2-2]
    \arrow[""{name=2, anchor=center, inner sep=0}, Rightarrow, no head, from=2-1, to=3-1]
    \arrow[""{name=3, anchor=center, inner sep=0}, "{{d_y}}", from=2-2, to=3-2]
    \arrow["u"', from=3-1, to=3-2]
    \arrow["{\SIGMA^{\epsilon_1}}"{description}, draw=none, from=0, to=1]
    \arrow["{\SIGMA^{\phi_y}}"{description}, draw=none, from=2, to=3]
   \end{tikzcd}
  \end{equation*}
  We may use $\Sigma^{\phi_x}$ and $\gamma$ to obtain an $\Sigma$-extension of $\bar{\alpha}$ and $\Sigma^{\phi_y}$ to obtain a $\Sigma$-extension of $\bar{\beta}$, which agree on their abutting $\Sigma$-squares. By well-definedness $H$ we may replace $\bar{\alpha}$ and $\bar{\beta}$ by these. Thus, we may assume without loss of generality that $\Sigma^{\delta_2} = \Sigma^{\epsilon_1}$. Such 2-morphisms compose in a particularly simple way (since the necessary $\Sigma$-squares can be chosen to be identities). In particular, we may simply compose the $\alpha$ and $\beta$ and remove the matching $\Sigma$-squares. It is now easy to see that the repeated parts also cancel after applying $H$, since the mate and the inverse of the mate of the same morphism end up adjacent to each other and cancel. Thus, $H$ preserves vertical composition.

  (a3) Now we show naturality of the compositors $\gamma^H$.
  We can represent $\gamma^H_{(g,s),(f,r)}$ using string diagrams (omitting the associators, which we already know to be natural) as follows:
  \ctikzfig{compositor}
  where $\Sigma^{\dot{\beta}}$ refers to the canonical $\Sigma$-square for $r$ and $g$  as in (\ref{eq:compositor_beta_dot_append}) above.
  We can immediately simplify this using one of the triangle identities.

  \begin{equation}\label{eq:compositor}
   \tikzfig{compositor2}
  \end{equation}

  Now given horizontally composable 2-morphisms $\bar{\alpha}=(\alpha, x_1,x_2,x_3, \delta_1, \delta_2)\colon (f_1,r_1) \to (f_2,r_2)$ and $\bar{\beta}=(\beta, y_1,y_2,y_3, \epsilon_1, \epsilon_2)\colon (g_1,s_1) \to (g_2,s_2)$ we must prove
  \begin{equation}\label{eq:naturality_of_gamma_append}
   \gamma_{(g_2,s_2),(f_2,r_2)} \cdot (H(\bar{\beta})\circ H(\bar{\alpha})) = H(\bar{\beta} \circ \bar{\alpha}) \cdot \gamma_{(g_1,s_1),(f_1,r_1)}.
  \end{equation}

  Expressing the left-hand side of \cref{eq:naturality_of_gamma_append}
  in terms of string diagrams we have:
  \begin{equation}\label{eq:naturality_LHS}
   \adjustbox{scale=0.9}{\tikzfig{naturality_LHS}}
  \end{equation}
  For the right-hand side of \cref{eq:naturality_of_gamma_append} we need to find the horizontal composite $\bar{\beta} \circ \bar{\alpha}$.
  This is given by some $\Omega$ 2-cells composed with the core 2-cell
  \[\begin{tikzcd}
		A & {I_1} & X & V & Y & C \\
		&&& V & Y & C \\
		A & {I_2} & X & V & Y & C
		\arrow["{f_1}", from=1-1, to=1-2]
		\arrow[Rightarrow, no head, from=1-1, to=3-1]
		\arrow["{{x_1}}", from=1-2, to=1-3]
		\arrow["{{y_1'}}", from=1-3, to=1-4]
		\arrow["\alpha"', shorten <=11pt, shorten >=11pt, Rightarrow, from=1-3, to=3-1]
		\arrow[Rightarrow, no head, from=1-3, to=3-3]
		\arrow["v", tail reversed, no head, from=1-4, to=1-5]
		\arrow[""{name=0, anchor=center, inner sep=0}, Rightarrow, no head, from=1-4, to=2-4]
		\arrow["{{\beta'}}"', shorten <=11pt, shorten >=8pt, Rightarrow, from=1-4, to=3-3]
		\arrow["{y_3}", tail reversed, no head, from=1-5, to=1-6]
		\arrow[""{name=1, anchor=center, inner sep=0}, Rightarrow, no head, from=1-6, to=2-6]
		\arrow["v", from=2-5, to=2-4]
		\arrow["{y_3}", from=2-6, to=2-5]
		\arrow[""{name=2, anchor=center, inner sep=0}, Rightarrow, no head, from=2-6, to=3-6]
		\arrow["{f_2}"', from=3-1, to=3-2]
		\arrow["{{x_2}}"', from=3-2, to=3-3]
		\arrow["{{y'_2}}"', from=3-3, to=3-4]
		\arrow[""{name=3, anchor=center, inner sep=0}, Rightarrow, no head, from=3-4, to=2-4]
		\arrow["v", from=3-5, to=3-4]
		\arrow["{y_3}", from=3-6, to=3-5]
		\arrow["{\SIGMA^{\id}}"{description}, draw=none, from=0, to=1]
		\arrow["{\SIGMA^{\id}}"{description}, draw=none, from=3, to=2]
	\end{tikzcd}\]
	where $\beta'$ is obtained from the equality
	\begin{equation}\label{eq:defining_beta_prime_append}
	\begin{tikzcd}
		B & X &&& B & X \\
		{J_1} &&& {J_1} & {J_2} \\
		Y & V &&& Y & V
		\arrow["{{x_3}}", from=1-1, to=1-2]
		\arrow["{{g_1}}"', from=1-1, to=2-1]
		\arrow[""{name=0, anchor=center, inner sep=0}, "{{y'_1}}", from=1-2, to=3-2]
		\arrow[""{name=1, anchor=center, inner sep=0}, "{{y'_2}}"{pos=0.6}, curve={height=-30pt}, from=1-2, to=3-2]
		\arrow["{{x_3}}", from=1-5, to=1-6]
		\arrow["{{g_1}}"', curve={height=6pt}, from=1-5, to=2-4]
		\arrow["{{g_2}}", from=1-5, to=2-5]
		\arrow[""{name=2, anchor=center, inner sep=0}, "{{y'_2}}", from=1-6, to=3-6]
		\arrow["{{y_1}}"', from=2-1, to=3-1]
		\arrow["\beta", Rightarrow, from=2-4, to=2-5]
		\arrow["{{y_1}}"', curve={height=6pt}, from=2-4, to=3-5]
		\arrow["{{y_2}}", from=2-5, to=3-5]
		\arrow["v", from=3-1, to=3-2]
		\arrow["v", from=3-5, to=3-6]
		\arrow["{{\text{\Large =}}}"{description, pos=0.6}, draw=none, from=1, to=2-4]
		\arrow["{{\beta'}}"{pos=0.6}, shorten <=12pt, shorten >=6pt, Rightarrow, from=0, to=1]
		\arrow["{{\SIGMA^{\xi_1}}}"{description}, draw=none, from=2-1, to=0]
		\arrow["{{\SIGMA^{\xi_2}}}"{description}, draw=none, from=2-5, to=2]
	\end{tikzcd}
	\end{equation}
	as in \cref{eq:(A12)}.
	Now we find the $\Omega$ 2-cells $\Omega_i$ as in \cref{eq:Omega-i} for $i = 1,2$. The $d$-type $\Sigma$-step uses the basic $\Omega$ 2-cell
	\[\begin{tikzcd}
	{\dot{B}_i} & {\dot{B}_i} & {J_i} & C \\
	& {\tilde{J}_i} & Y & C \\
	{\dot{B}_i} & {\tilde{J}_i} & Y & C \rlap{\,.}
	\arrow["", equals, from=1-1, to=1-2]
	\arrow[equals, from=1-1, to=3-1]
	\arrow[""{name=0, anchor=center, inner sep=0}, "{\tilde{y}_i}"', from=1-2, to=2-2]
	\arrow["\id"', shorten <=15pt, shorten >=15pt, Rightarrow, from=1-2, to=3-1]
	\arrow["{\dot{r}_i}"', from=1-3, to=1-2]
	\arrow[""{name=1, anchor=center, inner sep=0}, from=1-3, to=2-3]
	\arrow["y_i", from=1-3, to=2-3]
	\arrow["{s_i}"', from=1-4, to=1-3]
	\arrow[""{name=2, anchor=center, inner sep=0}, equals, from=1-4, to=2-4]
	\arrow["{\tilde{r}_i}"{description}, from=2-3, to=2-2]
	\arrow["{y_3}"{description}, from=2-4, to=2-3]
	\arrow[""{name=3, anchor=center, inner sep=0}, equals, from=2-4, to=3-4]
	\arrow["{\tilde{y}_i}"', from=3-1, to=3-2]
	\arrow[""{name=4, anchor=center, inner sep=0}, equals, from=3-2, to=2-2]
	\arrow["{\tilde{r}_i}", from=3-3, to=3-2]
	\arrow["{y_3}", from=3-4, to=3-3]
	\arrow["{\SIGMA^{\tilde{\alpha}_i}}"{description}, draw=none, from=0, to=1]
	\arrow["{\SIGMA^{\epsilon_i}}"{description}, draw=none, from=1, to=2]
	\arrow["{\SIGMA^\id}"{description}, draw=none, from=4, to=3]
  \end{tikzcd}\] %
  For the $u$-type $\Sigma$-step we use Rule 4' to form the equality
  \begin{equation}\label{eq:defining_thetas_append}
  \begin{tikzcd}
	B & {I_i} \\
	{J_i} & {\dot{B}_i} \\
	Y & {\tilde{J}_i} \\
	Y & {D_i}
	\arrow["{r_i}", from=1-1, to=1-2]
	\arrow[""{name=0, anchor=center, inner sep=0}, "{g_i}"', from=1-1, to=2-1]
	\arrow[""{name=1, anchor=center, inner sep=0}, "{\dot{g}_i}"', from=1-2, to=2-2]
	\arrow["d^2_i y'_ix_i"{name=2}, curve={height=-30pt}, from=1-2, to=4-2]
	\arrow["{\dot{r}_i}", from=2-1, to=2-2]
	\arrow[""{name=3, anchor=center, inner sep=0}, "{y_i}"', from=2-1, to=3-1]
	\arrow[""{name=4, anchor=center, inner sep=0}, "{\tilde{y}_i}"', from=2-2, to=3-2]
	\arrow["{\tilde{r}_i}", from=3-1, to=3-2]
	\arrow[""{name=5, anchor=center, inner sep=0}, equals, from=3-1, to=4-1]
	\arrow[""{name=6, anchor=center, inner sep=0}, "{d_i^1}"', from=3-2, to=4-2]
	\arrow["{d_i}"', from=4-1, to=4-2]
	\arrow["\SIGMAc"{marking, allow upside down}, draw=none, from=0, to=1]
	\arrow["{\SIGMA^{\tilde{\alpha}_i}}"{marking, allow upside down}, draw=none, from=3, to=4]
	\arrow["{\phi_i}", shorten <=4pt, shorten >=4pt, Rightarrow, from=4, to=2]
	\arrow["{\SIGMA^{\theta^1_i}}"{marking, allow upside down}, draw=none, from=5, to=6]
  \end{tikzcd}
  \ =\ \ %
  \begin{tikzcd}
	B & {I_i} \\
	{J_i} \\
	Y & V \\
	Y & {D_i}
	\arrow["{r_i}", from=1-1, to=1-2]
	\arrow["{g_i}"', from=1-1, to=2-1]
	\arrow[""{name=0, anchor=center, inner sep=0}, "{y'_ix_i}", from=1-2, to=3-2]
	\arrow["{y_i}"', from=2-1, to=3-1]
	\arrow["v", from=3-1, to=3-2]
	\arrow[""{name=1, anchor=center, inner sep=0}, equals, from=3-1, to=4-1]
	\arrow[""{name=2, anchor=center, inner sep=0}, "{d_i^2}", from=3-2, to=4-2]
	\arrow["{d_i}"', from=4-1, to=4-2]
	\arrow["{\SIGMA^{\xi_i \odot \delta_i}}"{marking, allow upside down}, draw=none, from=2-1, to=0]
	\arrow["{\SIGMA^{\theta^2_i}}"{marking, allow upside down}, draw=none, from=1, to=2]
  \end{tikzcd}
  \end{equation}
  giving the basic $\Omega$ 2-cell
  \[\begin{tikzcd}
	{I_i} & {\tilde{J}_i} & Y \\
	& {D_i} & Y \\
	{I_i} & V & Y \rlap{\,.}
	\arrow["{\tilde{y}_i \dot{g}_i}", from=1-1, to=1-2]
	\arrow[equals, from=1-1, to=3-1]
	\arrow[""{name=0, anchor=center, inner sep=0}, "{d^1_i}"', from=1-2, to=2-2]
	\arrow["{\phi_i}"', shorten <=15pt, shorten >=15pt, Rightarrow, from=1-2, to=3-1]
	\arrow["{\tilde{r}_i}"', from=1-3, to=1-2]
	\arrow[""{name=1, anchor=center, inner sep=0}, equals, from=1-3, to=2-3]
	\arrow["{d_i}"{description}, from=2-3, to=2-2]
	\arrow[""{name=2, anchor=center, inner sep=0}, equals, from=2-3, to=3-3]
	\arrow["{y'_ix_i}"', from=3-1, to=3-2]
	\arrow[""{name=3, anchor=center, inner sep=0}, "{d^2_i}", from=3-2, to=2-2]
	\arrow["v", from=3-3, to=3-2]
	\arrow["{\SIGMA^{\theta^1_i}}"{description}, draw=none, from=0, to=1]
	\arrow["{\SIGMA^{\theta^2_i}}"{description}, draw=none, from=3, to=2]
  \end{tikzcd}\]
  Now we can use these to form the composite $\Omega$ 2-cells. We have
  \[\begin{tikzcd}
	{I_i} & {\dot{B}_i} & {\dot{B}_i} & {J_i} & C \\
	&& {\tilde{J}_i} & Y & C \\
	{I_i} & {\dot{B}_i} & {\tilde{J}_i} & Y & C \\
	&& {D_i} & Y & C \\
	{I_i} && V & Y & C
	\arrow["{\dot{g}_i}", from=1-1, to=1-2]
	\arrow[equals, from=1-1, to=3-1]
	\arrow[equals, from=1-2, to=1-3]
	\arrow["\id"', between={0.3}{0.7}, Rightarrow, from=1-2, to=3-1]
	\arrow[equals, from=1-2, to=3-2]
	\arrow[""{name=0, anchor=center, inner sep=0}, "{\tilde{y}_i}"', from=1-3, to=2-3]
	\arrow["\id"', between={0.3}{0.7}, Rightarrow, from=1-3, to=3-2]
	\arrow["{\dot{r}_i}"', from=1-4, to=1-3]
	\arrow["{s_i}"', from=1-5, to=1-4]
	\arrow[""{name=1, anchor=center, inner sep=0}, equals, from=1-5, to=2-5]
	\arrow["{\tilde{r}_i}"', from=2-4, to=2-3]
	\arrow["{y_3}"', from=2-5, to=2-4]
	\arrow["{\dot{g}_i}", from=3-1, to=3-2]
	\arrow[equals, from=3-1, to=5-1]
	\arrow["{\tilde{y}_i}", from=3-2, to=3-3]
	\arrow[""{name=2, anchor=center, inner sep=0}, equals, from=3-3, to=2-3]
	\arrow[""{name=3, anchor=center, inner sep=0}, "{d^1_i}"', from=3-3, to=4-3]
	\arrow["{\phi_i}"', between={0.35}{0.65}, Rightarrow, from=3-3, to=5-1]
	\arrow["{\tilde{r}_i}"', from=3-4, to=3-3]
	\arrow[""{name=4, anchor=center, inner sep=0}, equals, from=3-4, to=4-4]
	\arrow[""{name=5, anchor=center, inner sep=0}, equals, from=3-5, to=2-5]
	\arrow["{y_3}"', from=3-5, to=3-4]
	\arrow[""{name=6, anchor=center, inner sep=0}, equals, from=3-5, to=4-5]
	\arrow["{d_i}", from=4-4, to=4-3]
	\arrow[""{name=7, anchor=center, inner sep=0}, equals, from=4-4, to=5-4]
	\arrow["{y_3}", from=4-5, to=4-4]
	\arrow["{y'_ix_i}"', from=5-1, to=5-3]
	\arrow[""{name=8, anchor=center, inner sep=0}, "{d^2_i}", from=5-3, to=4-3]
	\arrow["v", from=5-4, to=5-3]
	\arrow[""{name=9, anchor=center, inner sep=0}, equals, from=5-5, to=4-5]
	\arrow["{y_3}", from=5-5, to=5-4]
	\arrow["{\SIGMA^{\epsilon_i \oplus \tilde{\alpha}_i}}"{marking, allow upside down}, draw=none, from=0, to=1]
	\arrow["{\SIGMA^\id}"{marking, allow upside down}, draw=none, from=2, to=5]
	\arrow["{\SIGMA^{\theta^1_i}}"{description}, draw=none, from=3, to=4]
	\arrow["{\SIGMA^\id}"{marking, allow upside down}, draw=none, from=4, to=6]
	\arrow["{\SIGMA^\id}"{marking, allow upside down}, draw=none, from=7, to=9]
	\arrow["{\SIGMA^{\theta^2_i}}"{description}, draw=none, from=8, to=7]
  \end{tikzcd}\]
  and using the obvious choices of $\Sigma$-square to compute the composite %
  and after whiskering with $f_i$ we arrive at $\Omega_i$ for $i = 1,2$.
  \[\begin{tikzcd}
	A & {I_i} & {\dot{B}_i} & C \\
	&& {\tilde{J}_i} & C \\
	&& {D_i} & C \\
	A & {I_i} & V & C
	\arrow["{f_i}", from=1-1, to=1-2]
	\arrow[equals, from=1-1, to=4-1]
	\arrow["{\dot{g}_i}", from=1-2, to=1-3]
	\arrow["\id"', between={0.4}{0.6}, Rightarrow, from=1-2, to=4-1]
	\arrow[equals, from=1-2, to=4-2]
	\arrow[""{name=0, anchor=center, inner sep=0}, "{\tilde{y}_i}"', from=1-3, to=2-3]
	\arrow["{\phi_i}"', between={0.35}{0.65}, Rightarrow, from=1-3, to=4-2]
	\arrow["{\dot{r}_i s_i}"', from=1-4, to=1-3]
	\arrow[""{name=1, anchor=center, inner sep=0}, equals, from=1-4, to=2-4]
	\arrow[""{name=2, anchor=center, inner sep=0}, "{d^1_i}"', from=2-3, to=3-3]
	\arrow["{\tilde{r}_i y_3}"', from=2-4, to=2-3]
	\arrow[""{name=3, anchor=center, inner sep=0}, equals, from=2-4, to=3-4]
	\arrow["{d_i y_3}"', from=3-4, to=3-3]
	\arrow["{f_i}"', from=4-1, to=4-2]
	\arrow["{y'_ix_i}"', from=4-2, to=4-3]
	\arrow[""{name=4, anchor=center, inner sep=0}, "{d^2_i}", from=4-3, to=3-3]
	\arrow[""{name=5, anchor=center, inner sep=0}, equals, from=4-4, to=3-4]
	\arrow["{v y_3}", from=4-4, to=4-3]
	\arrow["{\SIGMA^{\epsilon_i \oplus \tilde{\alpha}_i}}"{marking, allow upside down}, draw=none, from=0, to=1]
	\arrow["{\SIGMA^{\theta^1_i y_3}}"{description}, draw=none, from=2, to=3]
	\arrow["{\SIGMA^{\theta^2_i y_3}}"{description}, draw=none, from=4, to=5]
  \end{tikzcd}\]
  Finally, we will obtain the horizontal composite by vertically composing the core 2-cell from before with $\Omega_1$ and $\Omega_2^{-1}$. The first composite with $\Omega_1$ gives %
  \[\begin{tikzcd}
	A && {\dot{B}_1} & C \\
	&& {\tilde{J}_1} & C \\
	A && {D_1} & C \\
	A && V & C \rlap{\,.}
	\arrow["{\dot{g}_1 f_1}", from=1-1, to=1-3]
	\arrow[equals, from=1-1, to=3-1]
	\arrow[""{name=0, anchor=center, inner sep=0}, "{\tilde{y}_1}"', from=1-3, to=2-3]
	\arrow["{\phi_1 f_1}"', between={0.35}{0.65}, Rightarrow, from=1-3, to=3-1]
	\arrow["{\dot{r}_1 s_1}"', from=1-4, to=1-3]
	\arrow[""{name=1, anchor=center, inner sep=0}, equals, from=1-4, to=2-4]
	\arrow[""{name=2, anchor=center, inner sep=0}, "{d^1_1}"', from=2-3, to=3-3]
	\arrow["{\tilde{r}_1 y_3}"', from=2-4, to=2-3]
	\arrow[""{name=3, anchor=center, inner sep=0}, equals, from=2-4, to=3-4]
	\arrow["{d^2_1 y'_1 x_1 f_1}", from=3-1, to=3-3]
	\arrow[equals, from=3-1, to=4-1]
	\arrow["{d^2_1(\beta' \circ \alpha)}"', xshift=5pt, between={0.2}{0.8}, Rightarrow, from=3-3, to=4-1]
	\arrow["{d_1 y_3}"', from=3-4, to=3-3]
	\arrow["{f_2}"', from=4-1, to=4-3]
	\arrow[""{name=4, anchor=center, inner sep=0}, "{d^2_1}", from=4-3, to=3-3]
	\arrow[""{name=5, anchor=center, inner sep=0}, equals, from=4-4, to=3-4]
	\arrow["{v y_3}", from=4-4, to=4-3]
	\arrow["{\SIGMA^{\epsilon_1 \oplus \tilde{\alpha}_1}}"{marking, allow upside down}, draw=none, from=0, to=1]
	\arrow["{\SIGMA^{\theta^1_1 y_3}}"{description}, draw=none, from=2, to=3]
	\arrow["{\SIGMA^{\theta^2_1 y_3}}"{description}, draw=none, from=4, to=5]
  \end{tikzcd}\]
  Then for the second composite with $\Omega_2^{-1}$ (which is obtained from $\Omega_2$ by inverting $\phi_i$ and swapping the top and bottom $\Sigma$-squares) we use the equality
  \begin{equation}\label{eq:defining_psis_append}
  \begin{tikzcd}
	C & V \\
	C & {D_1} & {D_2} \\
	C & E
	\arrow["{v y_3}", from=1-1, to=1-2]
	\arrow[""{name=0, anchor=center, inner sep=0}, equals, from=1-1, to=2-1]
	\arrow[""{name=1, anchor=center, inner sep=0}, "{d^2_1}", from=1-2, to=2-2]
	\arrow["{d^2_2}", curve={height=-6pt}, from=1-2, to=2-3]
	\arrow["{d_1 y_3}", from=2-1, to=2-2]
	\arrow[""{name=2, anchor=center, inner sep=0}, equals, from=2-1, to=3-1]
	\arrow["\zeta", between={0.2}{0.8}, Rightarrow, from=2-2, to=2-3]
	\arrow[""{name=3, anchor=center, inner sep=0}, "{e_1}", from=2-2, to=3-2]
	\arrow["{e_2}", curve={height=-6pt}, from=2-3, to=3-2]
	\arrow["e"', from=3-1, to=3-2]
	\arrow["{\SIGMA^{\theta^2_1 y_3}}"{marking, allow upside down}, draw=none, from=0, to=1]
	\arrow["{\SIGMA^{\psi_1}}"{marking, allow upside down}, draw=none, from=2, to=3]
  \end{tikzcd}
  \ =\ %
  \begin{tikzcd}
	C & V \\
	C & {D_2} \\
	C & E
	\arrow["{v y_3}", from=1-1, to=1-2]
	\arrow[""{name=0, anchor=center, inner sep=0}, equals, from=1-1, to=2-1]
	\arrow[""{name=1, anchor=center, inner sep=0}, "{d^2_2}", from=1-2, to=2-2]
	\arrow["{d_2 y_3}", from=2-1, to=2-2]
	\arrow[""{name=2, anchor=center, inner sep=0}, equals, from=2-1, to=3-1]
	\arrow[""{name=3, anchor=center, inner sep=0}, "{e_2}", from=2-2, to=3-2]
	\arrow["e"', from=3-1, to=3-2]
	\arrow["{\SIGMA^{\theta^2_2 y_3}}"{marking, allow upside down}, draw=none, from=0, to=1]
	\arrow["{\SIGMA^{\psi_2}}"{marking, allow upside down}, draw=none, from=2, to=3]
  \end{tikzcd}
  \end{equation}
  to obtain the final composite
  \[\begin{tikzcd}
	A && {\dot{B}_1} & C \\
	&& {\tilde{J}_1} & C \\
	&& {D_1} & C \\
	A & V & E & C \\
	&& {D_2} & C \\
	&& {\tilde{J}_2} & C \\
	A && {\dot{B}_2} & C \rlap{\,.}
	\arrow["{\dot{g}_1 f_1}", from=1-1, to=1-3]
	\arrow[equals, from=1-1, to=4-1]
	\arrow[""{name=0, anchor=center, inner sep=0}, "{\tilde{y}_1}"', from=1-3, to=2-3]
	\arrow["{\text{\tiny $\begin{aligned} e_1 d^2_1(\beta' \circ \alpha) \\ {} \cdot e_1 \phi_1 f_1 \end{aligned}$}}"', shift left=4, between={0.3}{0.7}, Rightarrow, from=1-3, to=4-1]
	\arrow["{\dot{r}_1 s_1}"', from=1-4, to=1-3]
	\arrow[""{name=1, anchor=center, inner sep=0}, equals, from=1-4, to=2-4]
	\arrow[""{name=2, anchor=center, inner sep=0}, "{d^1_1}"', from=2-3, to=3-3]
	\arrow["{\tilde{r}_1 y_3}"', from=2-4, to=2-3]
	\arrow[""{name=3, anchor=center, inner sep=0}, equals, from=2-4, to=3-4]
	\arrow[""{name=4, anchor=center, inner sep=0}, "{e_1}", from=3-3, to=4-3]
	\arrow["{d_1 y_3}"', from=3-4, to=3-3]
	\arrow["{y'_2x_2f_2}"', from=4-1, to=4-2]
	\arrow[equals, from=4-1, to=7-1]
	\arrow[""{name=5, anchor=center, inner sep=0}, "{e_1 d_1^2}", curve={height=-12pt}, from=4-2, to=4-3]
	\arrow[""{name=6, anchor=center, inner sep=0}, "{e_2 d^2_2}"', curve={height=12pt}, from=4-2, to=4-3]
	\arrow["{e_2 \phi_2^{-1} f_2}"', between={0.3}{0.7}, Rightarrow, from=4-3, to=7-1]
	\arrow[""{name=7, anchor=center, inner sep=0}, equals, from=4-4, to=3-4]
	\arrow["e", from=4-4, to=4-3]
	\arrow[""{name=8, anchor=center, inner sep=0}, "{e_2}"', from=5-3, to=4-3]
	\arrow[""{name=9, anchor=center, inner sep=0}, equals, from=5-4, to=4-4]
	\arrow["{d_2 y_3}", from=5-4, to=5-3]
	\arrow[""{name=10, anchor=center, inner sep=0}, "{d^1_2}", from=6-3, to=5-3]
	\arrow[""{name=11, anchor=center, inner sep=0}, equals, from=6-4, to=5-4]
	\arrow["{\tilde{r}_2 y_3}", from=6-4, to=6-3]
	\arrow["{\dot{g}_2 f_2}"', from=7-1, to=7-3]
	\arrow[""{name=12, anchor=center, inner sep=0}, "{\tilde{y}_2}", from=7-3, to=6-3]
	\arrow[""{name=13, anchor=center, inner sep=0}, equals, from=7-4, to=6-4]
	\arrow["{\dot{r}_2 s_2}", from=7-4, to=7-3]
	\arrow["{\SIGMA^{\epsilon_1 \oplus \tilde{\alpha}_1}}"{marking, allow upside down}, draw=none, from=0, to=1]
	\arrow["{\SIGMA^{\theta^1_1 y_3}}"{description}, draw=none, from=2, to=3]
	\arrow["{\SIGMA^{\psi_1}}"{description}, draw=none, from=4, to=7]
	\arrow["\zeta"', between={0.2}{0.8}, Rightarrow, from=5, to=6]
	\arrow["{\SIGMA^{\psi_2}}"{marking, allow upside down}, draw=none, from=8, to=9]
	\arrow["{\SIGMA^{\theta^1_2 y_3}}"{marking, allow upside down}, shift right, draw=none, from=10, to=11]
	\arrow["{\SIGMA^{\epsilon_2 \oplus \tilde{\alpha}_2}}"{marking, allow upside down}, draw=none, from=12, to=13]
  \end{tikzcd}\]

   The result of applying $H$ to this is represented by the
   string diagram \eqref{eq:below} below,
   where $\Xi_*^{-1}$ denotes the inverse of the following 2-cell:
  \begin{equation}\label{eq:Xi_star}
   \tikzfig{Xi_part_of_composite}
  \end{equation}

  \begin{equation}\label{eq:below}\tikzfig{H_of_horiz_composite}
   \end{equation}%

  We now obtain the right-hand side of \cref{eq:naturality_of_gamma_append} by composing this with $\gamma_{(g_1,s_1),(f_1,r_1)}$.
  Using a triangle inequality this right-hand side can be represented by the following string diagram.

  \begin{equation}\label{eq:naturality_RHS}
   \adjustbox{scale=0.9}{\tikzfig{naturality_RHS_simplified1}} %
  \end{equation}

  To show the string diagrams \eqref{eq:naturality_LHS} and \eqref{eq:naturality_RHS} give the same 2-cell, we may first compose each with (an appropriately whiskered form of) the isomorphism $\Xi_*$ (\eqref{eq:Xi_star}). This cancels with $\Xi_*^{-1}$ part of the right-hand side. Next we can `bend the wire' $F(e)_*$ by composing with $\epsilon^{F(e)}$ in a similar way. (Note that this is again an invertible operation.) Thus, we arrive at a modified right-hand side (where we have also cancelled a compositor and its inverse involving $\dot{r}_1 s_1$)

  \begin{equation}\label{eq:naturality_RHS2}
   \adjustbox{scale=0.9}{\tikzfig{naturality_RHS_modified2}}
  \end{equation}
  and a modified left-hand side (after applying a triangle identity and cancelling inverse compositors)

  \begin{equation}\label{eq:naturality_LHS2}
   \adjustbox{scale=0.9}{\tikzfig{naturality_LHS_modified1}}
  \end{equation}

  To show these are equal we will need to use \cref{eq:defining_beta_prime_append,,eq:defining_psis_append,eq:defining_thetas_append}. %
  Applying $F$ each of these we obtain the following string diagrams.
  \begin{equation}\label{eq:defining_beta_prime2}
   \tikzfig{F_beta_prime_equation_LHS}
   \quad=\quad
   \tikzfig{F_beta_prime_equation_RHS}
  \end{equation}

  \begin{equation}\label{eq:defining_psis2}
   \adjustbox{scale=1.1}{\tikzfig{F_psi_equation_LHS}}
   \quad=\quad
   \adjustbox{scale=1.1}{\tikzfig{F_psi_equation_RHS}}
  \end{equation}

  \begin{equation}\label{eq:defining_thetas2}
   \adjustbox{scale=1.1}{\tikzfig{F_theta_equation_LHS}}
   \quad=\quad
   \adjustbox{scale=1.1}{\tikzfig{F_theta_equation_RHS}}
  \end{equation}

  First can find the left-hand side of \cref{eq:defining_thetas2} (for $i=1$) in the modified right-hand side \cref{eq:naturality_RHS2}.
  Replacing it with the right-hand side of \cref{eq:defining_thetas2} then gives the following (where the highlighted section is the part that has been changed).

  \begin{equation}\label{eq:naturality_RHS3}
   \tikzfig{naturality_RHS_modified3}
  \end{equation}

  Next we apply \cref{eq:defining_psis2} to give the following.
  \begin{equation}\label{eq:naturality_RHS4}
   \tikzfig{naturality_RHS_modified4}
  \end{equation}

  Now we use \cref{eq:defining_beta_prime2} to obtain
  \begin{equation}\label{eq:naturality_RHS5}
   \adjustbox{scale=0.95}{\tikzfig{naturality_RHS_modified5}}
  \end{equation}

  We will now use \cref{eq:defining_thetas2} once more. Composing both sides of \cref{eq:defining_thetas2} for $i=2$ with $F(\phi_2)^{-1}$ together with the appropriate compositors with have

  \begin{equation}\label{eq:defining_thetas3}
   \adjustbox{scale=0.8}{\tikzfig{F_theta_equation_LHS2}}
   \quad=\quad
   \adjustbox{scale=0.8}{\tikzfig{F_theta_equation_RHS2}}
  \end{equation}

  The left-hand side of this equation now appears in the left-hand side (\eqref{eq:naturality_LHS2}) of the equation we are trying to prove.
  We arrive at the following.
  \begin{equation}\label{eq:naturality_LHS3}
   \adjustbox{scale=0.80}{\tikzfig{naturality_LHS_modified2}}
  \end{equation}

  Now by using the triangle identities to introduce zig-zags (and consolodating compositors) we can identify copies of $F(\delta_2)_*$ and $F(\epsilon_2)_*$ in this diagram.
  \begin{equation}\label{eq:naturality_LHS4}
   \tikzfig{naturality_LHS_modified2_highlight}
  \end{equation}
  These then cancel with the blue boxes and after simplifying the resulting diagram we arrive at

  \begin{equation}\label{eq:naturality_LHS5}
   \adjustbox{scale=0.85}{\tikzfig{naturality_LHS_modified4}}
  \end{equation}
  This is now exactly the same as Diagram \eqref{eq:naturality_RHS5} and thus we have shown \cref{eq:naturality_of_gamma_append} and the compositors are indeed natural.

  (a4) It is easy to see that the unit coherence condition for the compositor holds. We now show the associativity condition.
  Suppose we have the triple composite $(h,t) \circ (g,s) \circ (f,r)$. If we associate to the right we have %
  \[\begin{tikzcd}
	\bullet & \bullet & \bullet \\
	& \bullet & \bullet & \bullet
	\arrow["f", from=1-1, to=1-2]
	\arrow[""{name=0, anchor=center, inner sep=0}, "{\dot{g}}"', from=1-2, to=2-2]
	\arrow["r"', from=1-3, to=1-2]
	\arrow[""{name=1, anchor=center, inner sep=0}, "g", from=1-3, to=2-3]
	\arrow["{\dot{r}}", from=2-3, to=2-2]
	\arrow["s", from=2-4, to=2-3]
	\arrow["{\SIGMA^{\dot{\alpha}}}"{description}, draw=none, from=0, to=1]
  \end{tikzcd}\]
  and
  \[\begin{tikzcd}
	\bullet & \bullet & \bullet & \bullet & \bullet \\
	&& \bullet && \bullet & \bullet \rlap{\,.}
	\arrow["f", from=1-1, to=1-2]
	\arrow["{{\dot{g}}}", from=1-2, to=1-3]
	\arrow[""{name=0, anchor=center, inner sep=0}, "{{\ddot{h}}}"', from=1-3, to=2-3]
	\arrow["{{\dot{r}}}"', from=1-4, to=1-3]
	\arrow["s"', from=1-5, to=1-4]
	\arrow[""{name=1, anchor=center, inner sep=0}, "h", from=1-5, to=2-5]
	\arrow["{{\dot{r}'}}", from=2-5, to=2-3]
	\arrow["t", from=2-6, to=2-5]
	\arrow["{{\SIGMA^{\ddot{\alpha}}}}"{description}, draw=none, from=0, to=1]
  \end{tikzcd}\]
  Associating to the left we have

  \[\begin{tikzcd}
	\bullet & \bullet & \bullet \\
	& \bullet & \bullet & \bullet
	\arrow["g", from=1-1, to=1-2]
	\arrow[""{name=0, anchor=center, inner sep=0}, "{\dot{h}}"', from=1-2, to=2-2]
	\arrow["s"', from=1-3, to=1-2]
	\arrow[""{name=1, anchor=center, inner sep=0}, "h", from=1-3, to=2-3]
	\arrow["{\dot{s}}", from=2-3, to=2-2]
	\arrow["t", from=2-4, to=2-3]
	\arrow["{\SIGMA^{\dot{\beta}}}"{description}, draw=none, from=0, to=1]
  \end{tikzcd}\]
  and
  \[\adjustbox{scale=0.9}{\begin{tikzcd}
	\bullet & \bullet & \bullet \\
	&& \bullet \\
	& \bullet & \bullet & \bullet & {\bullet \rlap{\,.}}
	\arrow["f", from=1-1, to=1-2]
	\arrow[""{name=0, anchor=center, inner sep=0}, "{\dot{h}'}"', from=1-2, to=3-2]
	\arrow["r"', from=1-3, to=1-2]
	\arrow["g", from=1-3, to=2-3]
	\arrow["{\dot{h}}", from=2-3, to=3-3]
	\arrow["{\ddot{r}}", from=3-3, to=3-2]
	\arrow["{\dot{s}}", from=3-4, to=3-3]
	\arrow["t", from=3-5, to=3-4]
	\arrow["{\SIGMA^{\ddot{\beta}}}"{description}, draw=none, from=0, to=2-3]
  \end{tikzcd}}\]
  To show the associativity condition we must show the following equality of string diagrams.
  \begin{equation}\label{eq:compositor_assoc}
   \adjustbox{scale=0.85}{\tikzfig{assoc_LHS}}
   \quad =\quad
   \adjustbox{scale=0.85}{\tikzfig{assoc_RHS}}
  \end{equation}
  We now need to expand the 2-cell $H(\Assoc_{\bar{h},\bar{g},\bar{h}})$.
  Recall that $\Assoc_{\bar{h},\bar{g},\bar{h}}$ is given by the $\Sigma$-path
  \[\adjustbox{scale=0.9}{\begin{tikzcd}
	& \bullet & \bullet &&& \bullet & \bullet &&& \bullet & \bullet \\
	\bullet & \bullet & {} && \bullet & \bullet & \bullet && \bullet & \bullet & \bullet \\
	\bullet & \bullet & \bullet && \bullet & \bullet & \bullet && \bullet && \bullet {\rlap{\;.}}
	\arrow["r", from=1-2, to=1-3]
	\arrow["g"', from=1-2, to=2-2]
	\arrow[""{name=0, anchor=center, inner sep=0}, "{\dot{h}'}", from=1-3, to=3-3]
	\arrow["r", from=1-6, to=1-7]
	\arrow[""{name=1, anchor=center, inner sep=0}, "g"', from=1-6, to=2-6]
	\arrow[""{name=2, anchor=center, inner sep=0}, "{\dot{g}}", from=1-7, to=2-7]
	\arrow["r", from=1-10, to=1-11]
	\arrow[""{name=3, anchor=center, inner sep=0}, "g"', from=1-10, to=2-10]
	\arrow[""{name=4, anchor=center, inner sep=0}, "{\dot{g}}", from=1-11, to=2-11]
	\arrow["s", from=2-1, to=2-2]
	\arrow[""{name=5, anchor=center, inner sep=0}, "h"', from=2-1, to=3-1]
	\arrow[""{name=6, anchor=center, inner sep=0}, "{\dot{h}}", from=2-2, to=3-2]
	\arrow["{u}", between={0.3}{0.7}, squiggly, from=2-3, to=2-5]
	\arrow["s", from=2-5, to=2-6]
	\arrow[""{name=7, anchor=center, inner sep=0}, "h"', from=2-5, to=3-5]
	\arrow["{\dot{r}}", from=2-6, to=2-7]
	\arrow[""{name=8, anchor=center, inner sep=0}, "{\dot{h}}", from=2-6, to=3-6]
	\arrow["{d}", between={0.3}{0.7}, squiggly, from=2-7, to=2-9]
	\arrow[""{name=9, anchor=center, inner sep=0}, "{\tilde{h}}", from=2-7, to=3-7]
	\arrow["s", from=2-9, to=2-10]
	\arrow[""{name=10, anchor=center, inner sep=0}, "h"', from=2-9, to=3-9]
	\arrow["{\dot{r}}", from=2-10, to=2-11]
	\arrow[""{name=11, anchor=center, inner sep=0}, "{\ddot{h}}", from=2-11, to=3-11]
	\arrow["{\dot{s}}"', from=3-1, to=3-2]
	\arrow["{\ddot{r}}"', from=3-2, to=3-3]
	\arrow["{\dot{s}}"', from=3-5, to=3-6]
	\arrow["{\tilde{r}}"', from=3-6, to=3-7]
	\arrow["{\dot{r}'}"', from=3-9, to=3-11]
	\arrow["{\SIGMA^{\dot{\alpha}}}"{description}, draw=none, from=1, to=2]
	\arrow["{\SIGMA^{\dot{\alpha}}}"{description}, draw=none, from=3, to=4]
	\arrow["{\SIGMA^{\dot{\beta}}}"{description}, between={0.2}{0.8}, no body, from=5, to=6]
	\arrow["{\SIGMA^{\ddot{\beta}}}"{description}, draw=none, from=2-2, to=0]
	\arrow["{\SIGMA^{\dot{\beta}}}"{description}, between={0.2}{0.8}, no body, from=7, to=8]
	\arrow["{\SIGMA^{\dot{\gamma}}}"{description}, draw=none, from=8, to=9]
	\arrow["{\SIGMA^{\ddot{\alpha}}}"{description}, draw=none, from=10, to=11]
  \end{tikzcd}}\]
  Using Rule 4' we have the equalities
  \begin{equation}\label{eq:assoc_cell_eq_1}
   \begin{tikzcd}
	\bullet & \bullet \\
	\bullet & {} \\
	\bullet & \bullet \\
	\bullet & \bullet
	\arrow["r", from=1-1, to=1-2]
	\arrow["g"', from=1-1, to=2-1]
	\arrow[""{name=0, anchor=center, inner sep=0}, "{\dot{h}'}", from=1-2, to=3-2]
	\arrow[""{name=1, anchor=center, inner sep=0}, "{d_2 \tilde{h} \dot{g}}", curve={height=-30pt}, from=1-2, to=4-2]
	\arrow["{\dot{h}}"', from=2-1, to=3-1]
	\arrow[""{name=2, anchor=center, inner sep=0}, draw=none, from=2-2, to=3-2]
	\arrow["{\ddot{r}}", from=3-1, to=3-2]
	\arrow[""{name=3, anchor=center, inner sep=0}, equals, from=3-1, to=4-1]
	\arrow[""{name=4, anchor=center, inner sep=0}, "{d_1}", from=3-2, to=4-2]
	\arrow["d"', from=4-1, to=4-2]
	\arrow["{\SIGMA^{\ddot{\beta}}}"{description}, draw=none, from=2-1, to=0]
	\arrow["\phi", between={0.2}{0.8}, Rightarrow, from=2, to=1]
	\arrow["{\SIGMA^{\delta_1}}"{description}, draw=none, from=3, to=4]
   \end{tikzcd}
   \quad = \quad
   \begin{tikzcd}
	\bullet & \bullet \\
	\bullet & \bullet \\
	\bullet & \bullet \\
	\bullet & \bullet
	\arrow["r", from=1-1, to=1-2]
	\arrow[""{name=0, anchor=center, inner sep=0}, "g"', from=1-1, to=2-1]
	\arrow[""{name=1, anchor=center, inner sep=0}, "{\dot{g}}", from=1-2, to=2-2]
	\arrow["{\dot{r}}", from=2-1, to=2-2]
	\arrow[""{name=2, anchor=center, inner sep=0}, "{\dot{h}}"', from=2-1, to=3-1]
	\arrow[""{name=3, anchor=center, inner sep=0}, "{\tilde{h}}", from=2-2, to=3-2]
	\arrow["{\tilde{r}}", from=3-1, to=3-2]
	\arrow[""{name=4, anchor=center, inner sep=0}, equals, from=3-1, to=4-1]
	\arrow[""{name=5, anchor=center, inner sep=0}, "{d_2}", from=3-2, to=4-2]
	\arrow["d"', from=4-1, to=4-2]
	\arrow["{\SIGMA^{\dot{\alpha}}}"{description}, draw=none, from=0, to=1]
	\arrow["{\SIGMA^{\dot{\gamma}}}"{description}, draw=none, from=2, to=3]
	\arrow["{\SIGMA^{\delta_2}}"{description}, draw=none, from=4, to=5]
   \end{tikzcd}
  \end{equation}
  and
  \begin{equation}\label{eq:assoc_cell_eq_2}
   \begin{tikzcd}
	\bullet & \bullet & \bullet \\
	\bullet & \bullet & \bullet \\
	\bullet && \bullet
	\arrow["s", from=1-1, to=1-2]
	\arrow[""{name=0, anchor=center, inner sep=0}, "h"', from=1-1, to=2-1]
	\arrow["{{\dot{r}}}", from=1-2, to=1-3]
	\arrow[""{name=1, anchor=center, inner sep=0}, "{{\dot{h}}}"', from=1-2, to=2-2]
	\arrow[""{name=2, anchor=center, inner sep=0}, "{{\tilde{h}}}", from=1-3, to=2-3]
	\arrow[""{name=3, anchor=center, inner sep=0}, "{{e_2 \ddot{h}}}", curve={height=-30pt}, from=1-3, to=3-3]
	\arrow["{{\dot{s}}}"', from=2-1, to=2-2]
	\arrow[""{name=4, anchor=center, inner sep=0}, equals, from=2-1, to=3-1]
	\arrow["{{\tilde{r}}}"', from=2-2, to=2-3]
	\arrow[""{name=5, anchor=center, inner sep=0}, "{{e_1}}", from=2-3, to=3-3]
	\arrow["e"', from=3-1, to=3-3]
	\arrow["{{\SIGMA^{\dot{\beta}}}}"{description}, draw=none, from=0, to=1]
	\arrow["{{\SIGMA^{\dot{\gamma}}}}"{description}, draw=none, from=1, to=2]
	\arrow["{{\SIGMA^{\epsilon_1}}}"{description}, draw=none, from=4, to=5]
	\arrow["\psi", between={0}{0.8}, Rightarrow, from=2-3, to=3]
   \end{tikzcd}
   \quad = \quad
   \begin{tikzcd}
	\bullet & \bullet & \bullet \\
	\bullet && \bullet \\
	\bullet && \bullet \rlap{\;.}
	\arrow["s", from=1-1, to=1-2]
	\arrow[""{name=0, anchor=center, inner sep=0}, "h"', from=1-1, to=2-1]
	\arrow["{\dot{r}}", from=1-2, to=1-3]
	\arrow[""{name=1, anchor=center, inner sep=0}, "{\ddot{h}}", from=1-3, to=2-3]
	\arrow["{\dot{r}'}"', from=2-1, to=2-3]
	\arrow[""{name=2, anchor=center, inner sep=0}, equals, from=2-1, to=3-1]
	\arrow[""{name=3, anchor=center, inner sep=0}, "{e_2}", from=2-3, to=3-3]
	\arrow["e"', from=3-1, to=3-3]
	\arrow["{\SIGMA^{\ddot{\alpha}}}"{description}, draw=none, from=0, to=1]
	\arrow["{\SIGMA^{\epsilon_2}}"{description}, draw=none, from=2, to=3]
   \end{tikzcd}
  \end{equation}
  \Cref{eq:assoc_cell_eq_1} and \cref{eq:assoc_cell_eq_2} give, respectively,  the $\Omega$ 2-cells
  \[\begin{tikzcd}
	\bullet && \bullet & \bullet & \bullet \\
	&& \bullet & \bullet & \bullet \\
	\bullet & \bullet & \bullet & \bullet & \bullet
	\arrow["{\dot{h}'}", from=1-1, to=1-3]
	\arrow[equals, from=1-1, to=3-1]
	\arrow[""{name=0, anchor=center, inner sep=0}, "{d_1}"', from=1-3, to=2-3]
	\arrow["\phi"', between={0.3}{0.7}, Rightarrow, from=1-3, to=3-1]
	\arrow["{\ddot{r}}"', from=1-4, to=1-3]
	\arrow[""{name=1, anchor=center, inner sep=0}, equals, from=1-4, to=2-4]
	\arrow["{\dot{s}}"', from=1-5, to=1-4]
	\arrow[""{name=2, anchor=center, inner sep=0}, equals, from=1-5, to=2-5]
	\arrow["d"', from=2-4, to=2-3]
	\arrow["{\dot{s}}"', from=2-5, to=2-4]
	\arrow["{\dot{g}}"', from=3-1, to=3-2]
	\arrow["{\tilde{h}}"', from=3-2, to=3-3]
	\arrow[""{name=3, anchor=center, inner sep=0}, "{d_2}", from=3-3, to=2-3]
	\arrow[""{name=4, anchor=center, inner sep=0}, equals, from=3-4, to=2-4]
	\arrow["{\tilde{r}}", from=3-4, to=3-3]
	\arrow[""{name=5, anchor=center, inner sep=0}, equals, from=3-5, to=2-5]
	\arrow["{\dot{s}}", from=3-5, to=3-4]
	\arrow["{\SIGMA^{\delta_1}}"{description}, between={0.2}{0.8}, no body, two heads, from=0, to=1]
	\arrow["{\SIGMA^\id}"{description}, draw=none, from=1, to=2]
	\arrow["{\SIGMA^{\delta_2}}"{description}, draw=none, from=3, to=4]
	\arrow["{\SIGMA^\id}"{description}, draw=none, from=4, to=5]
  \end{tikzcd}
  \;\;\;\;
  \text{and}
  \;\;\;\;
  \begin{tikzcd}
	\bullet & \bullet & \bullet & \bullet & \bullet \\
	&& \bullet && \bullet \\
	\bullet & \bullet & \bullet && {\bullet \rlap{\;.}}
	\arrow["{\dot{g}}", from=1-1, to=1-2]
	\arrow[equals, from=1-1, to=3-1]
	\arrow["{\tilde{h}}", from=1-2, to=1-3]
	\arrow["\id"', between={0.3}{0.7}, Rightarrow, from=1-2, to=3-1]
	\arrow[equals, from=1-2, to=3-2]
	\arrow[""{name=0, anchor=center, inner sep=0}, "{e_1}", from=1-3, to=2-3]
	\arrow["\psi"', between={0.3}{0.7}, Rightarrow, from=1-3, to=3-2]
	\arrow["{\tilde{r}}"', from=1-4, to=1-3]
	\arrow["{\dot{s}}"', from=1-5, to=1-4]
	\arrow[""{name=1, anchor=center, inner sep=0}, equals, from=1-5, to=2-5]
	\arrow["e"', from=2-5, to=2-3]
	\arrow["{\dot{g}}"', from=3-1, to=3-2]
	\arrow["{\ddot{h}}"', from=3-2, to=3-3]
	\arrow[""{name=2, anchor=center, inner sep=0}, "{e_2}"', from=3-3, to=2-3]
	\arrow[""{name=3, anchor=center, inner sep=0}, equals, from=3-5, to=2-5]
	\arrow["{\dot{r}'}", from=3-5, to=3-3]
	\arrow["{\SIGMA^{\epsilon_1}}"{description}, draw=none, from=0, to=1]
	\arrow["{\SIGMA^{\epsilon_2}}"{description}, draw=none, from=2, to=3]
  \end{tikzcd}\]
  We can then compose these using
  \begin{equation}\label{eq:assoc_eq_3}
   \begin{tikzcd}
	\bullet & \bullet & \bullet \\
	\bullet & \bullet & \bullet \\
	\bullet && \bullet
	\arrow["{\dot{s}}", from=1-1, to=1-2]
	\arrow[""{name=0, anchor=center, inner sep=0}, equals, from=1-1, to=2-1]
	\arrow["{\tilde{r}}", from=1-2, to=1-3]
	\arrow[""{name=1, anchor=center, inner sep=0}, equals, from=1-2, to=2-2]
	\arrow[""{name=2, anchor=center, inner sep=0}, "{d_2}", from=1-3, to=2-3]
	\arrow[""{name=3, anchor=center, inner sep=0}, "{e_1 z_2}", curve={height=-24pt}, from=1-3, to=3-3]
	\arrow["{\dot{s}}"', from=2-1, to=2-2]
	\arrow[""{name=4, anchor=center, inner sep=0}, equals, from=2-1, to=3-1]
	\arrow["d"', from=2-2, to=2-3]
	\arrow[""{name=5, anchor=center, inner sep=0}, "{z_1}", from=2-3, to=3-3]
	\arrow["z"', from=3-1, to=3-3]
	\arrow["{\SIGMA^\id}"{description}, draw=none, from=0, to=1]
	\arrow["{\SIGMA^{\delta_2}}"{description}, draw=none, from=1, to=2]
	\arrow["{\SIGMA^{\zeta_1}}"{description}, draw=none, from=4, to=5]
	\arrow["\theta", between={0}{0.8}, Rightarrow, from=2-3, to=3]
   \end{tikzcd}
   \quad = \quad
   \begin{tikzcd}
	\bullet & \bullet & \bullet \\
	\bullet && \bullet \\
	\bullet && \bullet
	\arrow["{\dot{s}}", from=1-1, to=1-2]
	\arrow[""{name=0, anchor=center, inner sep=0}, equals, from=1-1, to=2-1]
	\arrow["{\tilde{r}}", from=1-2, to=1-3]
	\arrow[""{name=1, anchor=center, inner sep=0}, "{e_1}", from=1-3, to=2-3]
	\arrow["e"', from=2-1, to=2-3]
	\arrow[""{name=2, anchor=center, inner sep=0}, equals, from=2-1, to=3-1]
	\arrow[""{name=3, anchor=center, inner sep=0}, "{z_2}", from=2-3, to=3-3]
	\arrow["z"', from=3-1, to=3-3]
	\arrow["{\SIGMA^{\epsilon_1}}"{description}, draw=none, from=0, to=1]
	\arrow["{\SIGMA^{\zeta_2}}"{description}, draw=none, from=2, to=3]
   \end{tikzcd}
  \end{equation}
  to give (after whiskering with $f$ and $t$)

  \begin{equation}\label{eq:Assoc}
   \adjustbox{scale=0.9}{\begin{tikzcd}
	\bullet & \bullet &&& \bullet & \bullet & \bullet & \bullet \\
	&&&& \bullet & \bullet & \bullet \\
	& \bullet & \bullet & \bullet & \bullet && \bullet & \bullet \\
	&&&& \bullet && \bullet \\
	\bullet & \bullet && \bullet & \bullet && \bullet & \bullet \rlap{\;.}
	\arrow["f", from=1-1, to=1-2]
	\arrow[equals, from=1-1, to=5-1]
	\arrow["{\dot{h}'}", from=1-2, to=1-5]
	\arrow[equals, from=1-2, to=3-2]
	\arrow[""{name=0, anchor=center, inner sep=0}, "{d_1}"', from=1-5, to=2-5]
	\arrow["{z_1 \phi}"', between={0.3}{0.7}, Rightarrow, from=1-5, to=3-2]
	\arrow["{\ddot{r}}"', from=1-6, to=1-5]
	\arrow[""{name=1, anchor=center, inner sep=0}, equals, from=1-6, to=2-6]
	\arrow["{\dot{s}}"', from=1-7, to=1-6]
	\arrow[""{name=2, anchor=center, inner sep=0}, equals, from=1-7, to=2-7]
	\arrow["t"', from=1-8, to=1-7]
	\arrow[equals, from=1-8, to=3-8]
	\arrow[""{name=3, anchor=center, inner sep=0}, "{z_1}", from=2-5, to=3-5]
	\arrow["d", from=2-6, to=2-5]
	\arrow["{\dot{s}}", from=2-7, to=2-6]
	\arrow[""{name=4, anchor=center, inner sep=0}, equals, from=2-7, to=3-7]
	\arrow["{\dot{g}}", from=3-2, to=3-3]
	\arrow["{z_2 \psi \dot{g}}"', between={0.3}{0.7}, Rightarrow, from=3-2, to=5-5]
	\arrow["{\tilde{h}}", from=3-3, to=3-4]
	\arrow[""{name=5, anchor=center, inner sep=0}, "{z_1 d_2}", curve={height=-12pt}, from=3-4, to=3-5]
	\arrow[""{name=6, anchor=center, inner sep=0}, "{z_2 e_1}"', curve={height=12pt}, from=3-4, to=3-5]
	\arrow["z", from=3-7, to=3-5]
	\arrow["t"', from=3-8, to=3-7]
	\arrow[""{name=7, anchor=center, inner sep=0}, "{z_2}"', from=4-5, to=3-5]
	\arrow[""{name=8, anchor=center, inner sep=0}, equals, from=4-7, to=3-7]
	\arrow["e", from=4-7, to=4-5]
	\arrow["f"', from=5-1, to=5-2]
	\arrow[equals, from=5-2, to=3-2]
	\arrow["{\dot{g}}"', from=5-2, to=5-4]
	\arrow["{\ddot{h}}"', from=5-4, to=5-5]
	\arrow[""{name=9, anchor=center, inner sep=0}, "{e_2}", from=5-5, to=4-5]
	\arrow[""{name=10, anchor=center, inner sep=0}, equals, from=5-7, to=4-7]
	\arrow["{\dot{r}'}", from=5-7, to=5-5]
	\arrow[equals, from=5-8, to=3-8]
	\arrow["t", from=5-8, to=5-7]
	\arrow["{\SIGMA^{\delta_1}}"{description}, draw=none, from=0, to=1]
	\arrow["{\SIGMA^\id}"{description}, draw=none, from=1, to=2]
	\arrow["{\SIGMA^{\zeta_1}}"{description}, draw=none, from=3, to=4]
	\arrow["\theta"', between={0.2}{0.8}, Rightarrow, from=5, to=6]
	\arrow["{\SIGMA^{\zeta_2}}"{description}, draw=none, from=7, to=8]
	\arrow["{\SIGMA^{\epsilon_2}}"{description}, draw=none, from=9, to=10]
   \end{tikzcd}}
  \end{equation}
  This is $\Assoc_{\bar{h},\bar{g},\bar{h}}$.
  Now we apply $H$ to this 2-cell. Expressing the resulting 2-cell in terms of string diagrams we arrive at

  \begin{equation}
   \adjustbox{scale=0.85}{\tikzfig{H_assoc}}
  \end{equation}
  where $(F(\zeta_2 \odot \epsilon_2) \circ F(t)_*)_*^{-1}$ is given by
  \begin{equation}
   \adjustbox{scale=0.8}{\tikzfig{H_assoc_box}}
  \end{equation}

  Getting rid of some bends and cancelling some compositors, the right-hand side of \cref{eq:compositor_assoc} becomes \eqref{eq:assoc_a} below. Then, composing both sides of the desired equality with (inverses of) compositors (and their mates), using triangle identites, and pealing off the functors $F(f)$ and $F(t)_*$ from either side, we find it suffices to show the equality \eqref{eq:compositor_assoc2} on the next page.%
  \begin{equation}\label{eq:assoc_a}
   \adjustbox{scale=0.8}{\tikzfig{assoc_RHS2}}
  \end{equation}

  \begin{equation}\label{eq:compositor_assoc2}
   \adjustbox{scale=0.8}{\tikzfig{assoc_LHS2}}
   \!\!\quad=\quad\!\!
   \adjustbox{scale=0.7}{\tikzfig{assoc_RHS3}}
  \end{equation}

  Now recall \cref{eq:assoc_cell_eq_1,eq:assoc_cell_eq_2,eq:assoc_eq_3}. Applying $F$ to these we obtain the following string diagrams.
  \begin{equation}\label{eq:assoc_cell_eq_1_diag}
   \tikzfig{F_assoc_cell_1_LHS}
   \quad=\quad
   \tikzfig{F_assoc_cell_1_RHS}
  \end{equation}
  \begin{equation}\label{eq:assoc_cell_eq_2_diag}
   \adjustbox{scale=1.1}{\tikzfig{F_assoc_cell_2_LHS}}
   \quad=\quad
   \adjustbox{scale=1.1}{\tikzfig{F_assoc_cell_2_RHS}}
  \end{equation}
  \begin{equation}\label{eq:assoc_cell_eq_3_diag}
   \adjustbox{scale=1.1}{\tikzfig{F_assoc_cell_3_LHS}}
   \quad=\quad
   \adjustbox{scale=1.1}{\tikzfig{F_assoc_cell_3_RHS}}
  \end{equation}

  We can use these to simplify the right-hand side of \cref{eq:compositor_assoc2}. Applying \cref{eq:assoc_cell_eq_1_diag} we have
  \ctikzfig{assoc_RHS4}

  Next using \cref{eq:assoc_cell_eq_3_diag} we find
  \ctikzfig{assoc_RHS5}

  Now using \cref{eq:assoc_cell_eq_2_diag} we obtain
  \ctikzfig{assoc_RHS6}

  Finally, (after using a triangle identity) we notice that the part highlighted below is the mate of $F(\zeta_2 \cdot \epsilon_2)$ and hence will cancel with the box.
  \ctikzfig{assoc_RHS7}

  Now after combining the compositors we find this is precisely the left-hand side of \cref{eq:compositor_assoc2} and so we have shown the desired equality. Hence, the associativity coherence law holds.

  Thus, we have shown that $H$ is a pseudofunctor.
  It is then easy to see that $F = H \circ P_\Sigma$. %
  The hardest part is checking that the compositor of $H \circ P_\Sigma$ agrees with that of $F$. This holds since the 2-cell $\dot{\beta}$ in the canonical $\Sigma$-square in \cref{eq:compositor_beta_dot_append} for $\gamma^H_{(g,1),(f,1)}$ %
  is chosen to be the identity and due to strict unitarity of the bicategories involved.
\end{proof}
\begin{proof}[Proof of (b)]
  Now we show the `2-dimensional' universality condition. Assume $H, H'\colon \catx[\Sigma_*] \to \catc$ are strictly unitary pseudofunctors and $\xi\colon H \circ P_\Sigma \to H' \circ P_\Sigma$ is a pseudonatural transformation for which the pseudonaturality squares involving $\xi_r$ are BC squares whenever $r \in \Sigma$. We define $\upsilon\colon H \to H'$ by
  \begin{itemize}
   \item $\upsilon_X = \xi_X$ for objects $X \in \catx$,
   \item $\upsilon_{(f,r)}$ for $(f,r) \in \catx[\Sigma_*]$ is given by the composite
   \[\begin{tikzcd}
	{H(A)} & {H(I)} & {H(B)} \\
	{H'(A)} & {H'(I)} & {H'(B)}
	\arrow["{H(f)}", from=1-1, to=1-2]
	\arrow["{\xi_A}"', from=1-1, to=2-1]
	\arrow["{H(r)_*}", from=1-2, to=1-3]
	\arrow["{\xi_I}", from=1-2, to=2-2]
	\arrow["{\xi_B}", from=1-3, to=2-3]
	\arrow["{\xi_f}"', shorten <=4pt, shorten >=4pt, Rightarrow, from=2-1, to=1-2]
	\arrow["{H'(f)}"', from=2-1, to=2-2]
	\arrow["{(\xi_r)_*^{-1}}"', shift left, shorten <=4pt, shorten >=4pt, Rightarrow, from=2-2, to=1-3]
	\arrow["{H'(r)_*}"', from=2-2, to=2-3]
   \end{tikzcd}\]
   where we write $H(f)$ for $H((f,\id)) = H(P_\Sigma(f))$ and
   where $(\xi_r)_*^{-1}$ denotes the inverse of the mate of $\xi_r$. (Note that this inverse exists by the BC assumption.)
  \end{itemize}

  We start by showing $\upsilon$ is a pseudonatural transformation. The unit condition follows from the unit condition for $\xi$ and strict unitarity (so that $\xi_{1_X} = \id_{\xi_X}$).

    The compositor condition is
  \begin{equation}\label{eq:pseudonatural_mult_cond}
   \tikzfig{pseudonaturality_mult_LHS}
   \qquad=\qquad
   \tikzfig{pseudonaturality_mult_RHS}
  \end{equation}
  Expanding the definitions this desired equality becomes
  \begin{equation}\label{eq:pseudonatural_mult_cond_2}
   \adjustbox{scale=0.9}{\tikzfig{pseudonaturality_mult_LHS2}}
   \quad=\quad
   \adjustbox{scale=0.9}{\tikzfig{pseudonaturality_mult_RHS2}}
  \end{equation}

  Now we can compose both sides with the mates of $\xi_{\dot{r}s}$, $\xi_r$ and $\xi_s$, and aslso $\xi_f^{-1}$ and $\xi_g^{-1}$), as appropriate in order to cancel the boxes, yielding (after using a triangle identity on the right-hand side).
  \begin{equation}\label{eq:pseudonatural_mult_cond_3}
   \adjustbox{scale=0.7}{\tikzfig{pseudonaturality_mult_LHS3}}
   \ =\
   \adjustbox{scale=0.75}{\tikzfig{pseudonaturality_mult_RHS3}}
  \end{equation}

  To show this equality we will simplify the left-hand side. First we can use the compositor condition for pseudonaturality of $\xi$ to move $\xi_{\dot{g}f}$ past the compositor and partially cancel it with $\xi_f^{-1}$. Then we can add in $\xi_r$ and its inverse near the bottom right of the diagram.

  \ctikzfig{pseudonaturality_mult_LHS4}

  Using a triangle identity on the left and the compositor pseudonaturality condition of $\xi$ on the bottom right we then have

  \ctikzfig{pseudonaturality_mult_LHS5}

  Next we can use the compositor condition pseudonatural transformation $\xi$ we can move $\xi_g^{-1}$ and $\xi_{\dot{r}}^{-1}$ up past the compositor, then using the 2-dimensional naturality condition for $\xi$ we can move $\xi_{\dot{r}g}^{-1}$ past $H'(\dot{\beta})$.
  Finally, moving past the compositor again we can cancel with the $\xi_r$ and $\xi_{\dot{g}}$. In this way we arrive at the string diagram from the right-hand side and thus the equality holds.

  The final condition to show pseudonaturality is the 2-dimensional naturality condition.
  \begin{equation}\label{eq:pseudonatural_nat_cond}
   \tikzfig{pseudonaturality_nat_LHS}
   \qquad=\qquad
   \tikzfig{pseudonaturality_nat_RHS}
  \end{equation}

  Expanding this out, we see that the desired equality becomes
  \begin{equation}\label{eq:pseudonatural_nat_cond2}
   \tikzfig{pseudonaturality_nat_LHS2}
   \qquad=\qquad
   \tikzfig{pseudonaturality_nat_RHS2}
  \end{equation}

  As before we now compose both sides with isomorphisms such as $(\xi_{r_2})_*$, $\xi_{f_1}^{-1}$, $(\xi_{r_1})_*$ and $H'(\delta_2)_*$ so as to cancel with most of the boxes (and apply a triangle identity on the right-hand side). We are left with
  \begin{equation}\label{eq:pseudonatural_nat_cond3}
   \tikzfig{pseudonaturality_nat_LHS5}
   \quad=\quad
   \tikzfig{pseudonaturality_nat_RHS5}
  \end{equation}
  Again we can use the pseudonaturality of $\xi$ on the left-hand side to `pull the $\xi$ wire' to the left of the diagram.
  \begin{equation}\label{eq:pseudonatural_nat_cond4_LHS}
   \tikzfig{pseudonaturality_nat_LHS6}
  \end{equation}
  Similarly, on the right-hand side we can move the $\xi$ wire and then use a triangle inequality to obtain
  \begin{equation}\label{eq:pseudonatural_nat_cond4_RHS}
   \tikzfig{pseudonaturality_nat_RHS6}
  \end{equation}
  The highlighted segment is the mate of $H(\delta_2)$ and hence cancels with the box. After using a triangle equality, this gives exactly the diagram corresponding the the left-hand side found in \cref{eq:pseudonatural_nat_cond4_LHS}.

  Thus, we have shown the 2-dimensional naturality condition and $\upsilon$ is indeed a pseudonatural transformation.
  Simply by expanding the definition it is now also easy to see that $\xi = \upsilon \circ P_\Sigma$.
\end{proof}

\begin{proof}[Proof of (c)]
  Finally, we show the `3-dimensional' universality condition. For data as in the statement of the theorem we simply set $\beth_X = \aleph_X$.

 Let us show that $\beth$ is indeed a modification. The necessary naturality condition on $\beth$ holds for 1-morphisms of the form $(f,1)$ by same condition on $\aleph$. It then suffices to show the condition for 1-morphisms of the form $(1,r)$, since every morphism of $\catx[\Sigma_*]$ is a composite of these two kinds.

 We want to show
 \begin{equation}\label{eq:modification_at_r_adjoint}
   \tikzfig{modification_cond_LHS}
   \qquad=\qquad
   \tikzfig{modification_cond_RHS}
 \end{equation}
 Taking inverses, this is equivalent to
 \begin{equation}\label{eq:modification_at_r_adjoint2}
   \tikzfig{modification_cond_LHS2}
   \qquad=\qquad
   \tikzfig{modification_cond_RHS2}
 \end{equation}
 Now from \cref{lem:transformation_at_adjoint} we know that the inverse of $\upsilon_{r_*}$ is the mate of $\upsilon_{r}$ (and similarly for $\upsilon'_{r}$).
 Replacing $(\upsilon_{r_*})^{-1}$ and $(\upsilon'_{r_*})^{-1}$ with $(\upsilon_{r})_*$ and $(\upsilon'_{r})_*$  in the diagram above we arrive at
 \begin{equation}\label{eq:modification_at_r_adjoint3}
   \tikzfig{modification_cond_LHS3}
   \qquad=\qquad
   \tikzfig{modification_cond_RHS3}
 \end{equation}
 and these are equal since the original $\aleph$ is a modification.

 Hence $\beth$ is a modification. It is also clearly the only modification $\beth$ such that $\aleph = \beth \circ P_\Sigma$.
\end{proof}

\bibliographystyle{abbrv}
\bibliography{references}

@book{gabriel1967calculus,
  title={Calculus of Fractions and Homotopy Theory},
  author={Gabriel, Peter and Zisman, Michel},
  series={Ergebnisse der {M}athematik und ihrer {G}renzgebiete. 2. {F}olge},
  volume={35},
  publisher={Springer, Berlin, Heidelberg},
  year={1967}
}

@article{sousa2017calculus,
  title={A calculus of lax fractions},
  author={Sousa, Lurdes},
  journal={J. Pure Appl. Algebra},
  volume={221},
  number={2},
  pages={422--448},
  year={2017}
}

@article{pronk1996etendues,
  title={Etendues and stacks as bicategories of fractions},
  author={Pronk, Dorette A},
  journal={Compos. Math.},
  volume={102},
  number={3},
  pages={243--303},
  year={1996}
}

@article{pronkscull2022,
  title={Bicategories of fractions revisited: towards small homs and canonical 2-cells},
  author={Pronk, Dorette and Scull, Laura},
  journal={Theory Appl. Categ.},
  volume={38},
 number={24},
  pages={913--1014},
  year={2022}
}

@misc{manuellsousa2,
title = {Lax monoidal functors and other special functors as bicategories of lax fractions},
author = {Manuell, Graham and Sousa, Lurdes},
note = {In preparation}
}

@article{lucatellisousa,
title = {On lax epimorphisms and the associated factorization},
author = {Lucatelli Nunes, Fernando and Sousa, Lurdes},
journal = {J. Pure Appl. Algebra},
volume = {226},
number = {12},
pages = {107--126},
year = {2022}
}

@article{bourkegarner,
title = {Algebraic weak factorisation systems {II}: categories of weak maps},
author = {Bourke, John and Garner, Richard},
journal = {J. Pure Appl. Algebra},
volume = {220},
number = {1},
pages = {148--174},
year = {2016}
}

@article{dawson_pare_pronk1,
title = {Adjoining adjoints},
author = {Dawson, Robert J. M. and Par\'e, Robert and Pronk, Dorette},
journal = {Adv. Math.},
volume = {178},
number = {1},
pages = {99--140},
year = {2003}
}

@article{hermida2000representable,
  title={Representable multicategories},
  author={Hermida, Claudio},
  journal={Adv. Math.},
  volume={151},
  number={2},
  pages={164--225},
  year={2000},
  publisher={Elsevier}
}

@book{johnsonyau2D,
  author    = {Johnson, Niles and Yau, Donald},
  title     = {2-Dimensional Categories},
  year      = {2021},
  publisher = {Oxford University Press},
}

@incollection{lack2009,
  author    = {Stephen Lack},
  title     = {A 2-Categories Companion},
  booktitle = {Towards Higher Categories},
  editor    = {John C. Baez and James D. M.  May},
  pages     = {105--191},
  year      = {2009},
  publisher = {Springer},
  series    = {IMA Volumes in Mathematics and its Applications},
  volume    = {152},
  doi       = {10.1007/978-1-4419-1524-5_4},
  url       = {https://link.springer.com/chapter/10.1007/978-1-4419-1524-5_4}
}

@misc{marsden2014stringdiagrams,
  title={Category theory using string diagrams},
  author={Marsden, Daniel},
  note={arXiv preprint, \href{https://arxiv.org/abs/1401.7220}{arXiv:1401.7220}},
  year={2014}
}

\vspace{2mm}

\begin{tabular}{p{6.7cm}p{6.7cm}}

\noindent Graham \textsc{Manuell} \newline
Stellenbosch University\newline
Stellenbosch, South Africa\newline
\href{mailto:graham@manuell.me}{\sf graham@manuell.me}

&

\noindent Lurdes \textsc{Sousa} \newline
Polythecnic of Viseu, Portugal \newline
CMUC, University of Coimbra, Portugal\newline
\href{mailto:sousa@estv.ipv.pt}{\sf sousa@estv.ipv.pt}

\end{tabular}

\end{document}